\theoremstyle{definition}
\newtheorem*{rep@theorem}{\rep@title}
\newcommand{\newreptheorem}[2]{%
\newenvironment{rep#1}[1]{%
 \def\rep@title{#2 \ref{##1}}%
 \begin{rep@theorem}}%
 {\end{rep@theorem}}}
\numberwithin{equation}{section}
\newtheorem{dfn}{Definition}[section]
\newtheorem{thm}[dfn]{Theorem}
\newtheorem{lm}[dfn]{Lemma}
\newtheorem{crl}[dfn]{Corollary}
\newtheorem{thmA}{Theorem}
\newtheorem{thmM}{Theorem}
\newtheorem{lmA}{Lemma}
\theoremstyle{remark}
\newtheorem{rmk}[dfn]{Remark}
\newcommand{\e}{\varepsilon}
\newcommand{\pt}{\partial}
\newcommand{\mc}[1]{\mathcal{#1}}
\newcommand{\mf}[1]{\mathfrak{#1}}
\newcommand{\ms}[1]{\mathscr{#1}}
\newcommand{\ol}[1]{\overline{#1}}
\newcommand{\ra}{\rightarrow}
\newcommand{\R}{\mathbb{R}}
\newcommand{\A}{\mathbb{A}d\mathbb{S}}
\renewcommand{\H}{\mathbb{H}}
\newcommand{\E}{\mathbb{E}}
\renewcommand{\S}{\mathbb{S}}
\newcommand{\RP}{\mathbb{RP}}
\newcommand{\len}{{\rm len}}
\newcommand{\conv}{{\rm conv}}
\newcommand{\clconv}{{\rm conv}}
\newcommand{\cl}{{\rm cl}}
\newcommand{\ct}{\textsc{ct}}
\newcommand{\inter}{{\rm int}}
\renewcommand{\hat}{\widehat}
\renewcommand{\tilde}{\widetilde}
\DeclareMathOperator{\sys}{sys}
\DeclareFontFamily{U}{mathb}{}
\DeclareFontShape{U}{mathb}{m}{n}{<-5.5> mathb5 <5.5-6.5> mathb6 
<6.5-7.5> mathb7 <7.5-8.5> mathb8 <8.5-9.5> mathb9 <9.5-11> mathb10 
<11-> mathb12}{}
\DeclareSymbolFont{mathb}{U}{mathb}{m}{n}
\DeclareMathSymbol{\bdia}{\mathbin}{mathb}{"0C}
\title{Polyhedral surfaces in anti-de Sitter (2+1)-spacetimes}
\author{Roman Prosanov}
\date{}
\begin{document}

\maketitle

\begin{abstract}
We first prove that given a Fuchsian representation $\rho_\circ: \pi_1S \ra {\rm PSL}(2,\R)$, where $S$ is a closed oriented surface of genus $\geq 2$, any
hyperbolic cone-metric on $S$ with  cone-angles $>2\pi$ isometrically embeds as a future-convex
bent Cauchy surface in a globally hyperbolic maximal Cauchy compact (GHMC) anti-de Sitter
(2+1)-spacetime whose left representation is $\rho_\circ$. Second, we show that given any two such cone-metrics, there exists a GHMC anti-de Sitter
(2+1)-spacetime in which the cone-metrics embed simultaneously, one as a future-convex bent Cauchy surface and one as a past-convex. Furthermore, in both cases we establish that such a spacetime and embeddings are unique provided that the cone-metrics are sufficiently small.
\end{abstract}

\section{Introduction}

\subsection{Motivation}

The motivation for this paper is twofold. First, we want to take another step in the investigation of anti-de Sitter (2+1)-spacetimes, interest in which arises from various perspectives. One source of interest to anti-de Sitter spacetimes is their role in the AdS/CFT correspondence, see, e.g., the fundamental paper of Witten~\cite{Wit2}. On the other hand, (2+1)-dimensional gravity is an important test ground for quantum gravity theories, see, e.g., the book of Carlip~\cite{Car} for an introduction to the subject. 

The (2+1)-spacetimes that satisfy the Einstein equations necessarily have constant sectional curvature. Up to scaling, the sectional curvature is $1$, $0$ or $-1$, and the respective spacetimes are called \emph{de Sitter, Minkowski} or \emph{anti-de Sitter}. Here we focus on a special type of (2+1)-spacetimes, called \emph{GHMC} spacetimes, see the definitions below. Anti-de Sitter geometry is a Lorentzian cousin of hyperbolic geometry. Due to a special structure of the isometry group of the model anti-de Sitter space in dimension 3, there are deep connections between the geometry of GHMC anti-de Sitter (2+1)-spacetimes and Teichm\"uller theory, see, e.g., the pioneering article~\cite{Mes} of Mess. GHMC (2+1)-spacetimes are homeomorphic to $S \times \R$, where $S$ is a closed oriented surface. We consider the case of surfaces of genus $\geq 2$. In~\cite{Mes} Mess showed that GHMC anti-de Sitter (2+1)-spacetimes are classified by their holonomy, which belongs to $\mc T \times \mc T$, the square of the Teichm\"uller space of $S$. 

In the present article we study two Alexandrov--Weyl-type problems in this context. In particular, we prove that a GHMC anti-de Sitter (2+1)-spacetime is uniquely determined by the intrinsic geometry of two its spacelike slices, provided that the slices are convex in the opposite directions, are bent and their intrinsic metrics are small enough in some sense. Here bent is a weakening of the notion of polyhedrality, see Section~\ref{sec:bentsurf} for details. We also prove that all pairs of concave hyperbolic cone-metrics on $S$ are realized as the intrinsic metrics of such pairs of slices. This resolves the existence part of Question 3.6 in~\cite{BBD+} and makes a progress towards its uniqueness part. To our knowledge, the only previously known global rigidity result of such type was the case when the intrinsic metrics of the slices have constant curvatures $\kappa^+,\kappa^- < -1$ satisfying $\kappa^+=-\kappa^-/(\kappa^-+1)$. This follows from the work~\cite{BMS} of Bonsante--Mondello--Schlenker. In particular, no smooth counterpart to our rigidity result is known.

The second goal of this paper is to exhibit the significance of the projective nature of anti-de Sitter spacetimes. The main benefit of considering anti-de Sitter geometry as a subgeometry of projective geometry is the possibility to use geometric transitions. The initial interest in geometric transitions comes from the study of degenerations of geometric structures; see, e.g., the thesis of Hodgson~\cite{Hod} and the article~\cite{Por} of Porti in the Riemannian case and the articles~\cite{Dan, RS, Dia} of Danciger, Riolo--Seppi and Diaf in the case of changing signature. In the Riemannian case geometric transition was notably used as one of the ingredients in the proofs of the geometrization theorem for 3-orbifolds~\cite{BLP, CHK} due to Boileau--Leeb--Porti and Cooper--Hodgson--Kerckhoff. In our article we rely on geometric transitions from anti-de Sitter geometry to Minkowski and co-Minkowski geometry. In particular, the proofs of the main results are based on a recent solution to similar problems in Minkowski geometry~\cite{FP} due to Fillastre and the author. We refer to the survey~\cite{FS5} of Fillastre--Seppi on geometric transitions between projective subgeometries. 

\subsection{Statement of the results}
\label{sec:statement}

We refer to the book~\cite{One} of O'Neill as a main reference on Lorentzian geometry.
For us, a \emph{spacetime} is a connected, oriented and time-oriented Lorentzian manifold. A \emph{Cauchy hypersurface} in a spacetime is a hypersurface $\Sigma$ such that every inextensible causal curve intersects $\Sigma$ exactly once. A spacetime is called \emph{globally hyperbolic} (abbreviated as GH) if it admits a Cauchy hypersurface. All Cauchy hypersurfaces are homeomorphic to each other. For a Cauchy hypersurface $\Sigma$ Geroch~\cite{Ger} proved that the spacetime admits a parameterization $\Sigma \times \R$, where every fiber $\Sigma \times \{r\}$ is a Cauchy hypersurface. A GH spacetime is \emph{Cauchy compact} if its Cauchy hypersurfaces are compact. A GH spacetime $\Omega$ is \emph{maximal} if every isometric embedding $\Omega \ra \Omega'$ into another GH spacetime that sends some Cauchy hypersurface of $\Omega$ to a Cauchy hypersurface of $\Omega'$ is onto. A globally hyperbolic maximal Cauchy compact spacetime is abbreviated as GHMC. From now on we focus on dimension (2+1). In such case, $\Sigma \cong S$ where $S$ is a closed oriented surface of genus $k$. For anti-de Sitter spacetimes, $k \geq 1$. However, the case $k=1$ is somewhat exceptional and we focus on the case $k\geq 2$.  

Let $\A^3$ be anti-de Sitter 3-space, see the definition in Section~\ref{sec:geometries}. Denote the identity component of its isometry group by $G_-$. Every GHMC anti-de Sitter (2+1)-spacetime has a holonomy representation $\rho: \pi_1S \ra G_-$ defined up to conjugation by $G_-$. 

Let $G={\rm PSL}(2, \R)$, the identity component of the isometry group of the hyperbolic plane $\H^2$. The special feature of anti-de Sitter geometry in dimension 3 is that there is a canonical isomorphism $G_-\cong G \times G$. For a holonomy representation $\rho: \pi_1S \ra G_-$, let $\rho^l,$ $\rho^r: \pi_1S \ra G$ be its left and right projections with respect to $G_- \cong G \times G$. In~\cite{Mes} Mess proved that both $\rho^l,$ $\rho^r$ are \emph{Fuchsian}, i.e., discrete and faithful orientation-preserving representations $\pi_1S \ra G$. Furthermore, Mess showed that each pair of Fuchsian representations is realized as a holonomy representation of a unique GHMC anti-de Sitter (2+1)-spacetime. 

Let $\Sigma$ be a convex Cauchy surface in a GHMC anti-de Sitter (2+1)-spacetime $\Omega$. The first anti-de Sitter version of the Alexandrov--Weyl problem that we consider here studies the intrinsic metric of $\Sigma$ and investigates up to which degree this intrinsic metric prescribes the pair $(\Omega, \Sigma)$. Dimensional considerations show that the missing information matches the size of half the holonomy. Thus, one can conjecture that one can determine $(\Omega, \Sigma)$ from the intrinsic metric on $\Sigma$ and half the holonomy. Every convex Cauchy surface is either \emph{future-convex} or \emph{past-convex}, depending on the direction in which it is convex with respect to the time-orientation of $\Omega$.

We focus on the polyhedral side of things. If $\Sigma$ is polyhedral, then the intrinsic metric is a concave hyperbolic cone-metric, i.e., it is locally isometric to the hyperbolic plane $\H^2$ except at finitely many points, where it is isometric to hyperbolic cones with cone-angles $>2\pi$. However, such metrics admit convex isometric realizations in anti-de Sitter (2+1)-spacetimes that are not polyhedral in the naive sense. They can be additionally bent along geodesic laminations. We call a surface \emph{bent} if it satisfies the respective weak notion of polyhedrality, see Section~\ref{sec:bentsurf} for a precise definition. Bent surfaces provide the right setting for the polyhedral anti-de Sitter versions of the Alexandrov--Weyl problem.

We can now formulate the first main result of our article:

\begin{thmA}
\label{main}
Let $S$ be a closed oriented surface of genus $\geq 2$, $V \subset S$ a finite non-empty set, $\rho_\circ: \pi_1 S \ra G$ a Fuchsian representation and $d$ a concave hyperbolic cone-metric on $(S, V)$. Then there exist a GHMC anti-de Sitter (2+1)-spacetime $\Omega\cong S\times \R$ whose left representation is $\rho_\circ$ as well as a future-convex bent isometric embedding $(S, d) \rightarrow \Omega$. Furthermore, there exists a non-empty open set $U=U(\rho_\circ)$ in the space of cone-metrics on $(S, V)$ for which the realization is unique.
\end{thmA}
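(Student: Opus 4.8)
The plan is to prove the statement by a continuity method on the space of future-convex bent Cauchy surfaces, using the Minkowski realization theorem of~\cite{FP} together with the geometric transitions from anti-de Sitter to Minkowski and co-Minkowski geometry, both to start the argument and to extract the local uniqueness.

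First I would introduce the deformation space $\mathcal{D}$ of isometry classes of pairs $(\Omega,\Sigma)$, where $\Omega$ is a GHMC anti-de Sitter $(2+1)$-spacetime and $\Sigma\subset\Omega$ is a future-convex bent Cauchy surface. Using that a bent surface is determined by its induced metric together with its bending measured lamination, and conversely, $\mathcal{D}$ carries a natural manifold structure; once one imposes the constraints that make the developed surface close up equivariantly, its dimension equals $\dim\mathcal{T}(S)+\dim\mathcal{M}$, where $\mathcal{T}(S)$ is the Teichm\"uller space of $S$, identified via Mess with the space of possible left holonomies, and $\mathcal{M}=\mathcal{M}(S,V)$ is the space of concave hyperbolic cone-metrics on $(S,V)$. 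The object of study is the map
\[
\Phi\colon\mathcal{D}\longrightarrow\mathcal{T}(S)\times\mathcal{M},\qquad (\Omega,\Sigma)\longmapsto\bigl(\rho^l(\Omega),\ \mathrm{ind}(\Sigma)\bigr),
\]
sending a realization to the left holonomy of its spacetime and the induced metric of its surface; the assertion of the theorem is that $\Phi$ is onto, and is injective over a non-empty open subset of every slice $\{\rho_\circ\}\times\mathcal{M}$.

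Second, I would show that $\Phi$ is a local homeomorphism with non-empty image. Local injectivity follows via the implicit function theorem from an infinitesimal rigidity statement --- a future-convex bent surface carries no non-trivial first-order isometric deformation preserving the left holonomy --- which I would establish by degenerating, through the geometric transition, to the Minkowski (and, in the nearly-smooth regime, co-Minkowski) pictures, where the corresponding rigidity is available from~\cite{FP}, and then propagating it along $\mathcal{D}$ by connectedness. For non-emptiness I would transport a Minkowski realization: by~\cite{FP}, a small concave hyperbolic cone-metric $d$ is realized by a future-convex bent Cauchy surface in a GHMC Minkowski spacetime with linear holonomy $\rho_\circ$; feeding this through the anti-de Sitter/Minkowski transition produces, for small transition parameter, a GHMC anti-de Sitter spacetime with left holonomy $\rho_\circ$ containing a future-convex bent Cauchy surface whose induced metric is close to $d$, and the local homeomorphism property then corrects it to induced metric exactly $d$.

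Third --- the crux --- I would prove that $\Phi$ is proper. Given realizations with fixed left holonomy $\rho_\circ$ and induced metrics $d_i\to d_\infty$ in $\mathcal{M}$, one must extract a convergent subsequence of surfaces; the dangers are that the right holonomies $\rho^r_i$ escape to the boundary of $\mathcal{T}(S)$ and that the bending measured laminations $\lambda_i$ degenerate, with masses blowing up or concentrating. The right holonomies remain in a compact part of $\mathcal{T}(S)$ because a GHMC anti-de Sitter spacetime whose right holonomy degenerates cannot carry a Cauchy surface whose intrinsic geometry stays within a fixed compact family; the total mass of $\lambda_i$ is bounded by the total cone-angle excess of $d_i$ via a Gauss--Bonnet estimate, and local concentration of mass is ruled out using the convexity and embeddedness of the $\Sigma_i$ together with width bounds in the ambient spacetimes, in parallel with the analogous estimates of~\cite{FP} in the Minkowski case. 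Granting properness, $\Phi$ is a proper local homeomorphism onto a non-empty open and closed subset of the connected space $\mathcal{T}(S)\times\mathcal{M}$, hence onto all of it, which is the existence statement. Finally, for $d$ in a suitable neighbourhood $U(\rho_\circ)$ of the small cone-metrics, a compactness argument forces every realization of $d$ to lie near the collapsed locus, where the transition identifies the fibre of $\Phi$ over $(\rho_\circ,d)$ with the corresponding fibre of the Minkowski realization map --- a single point, by the uniqueness half of~\cite{FP}. I expect the properness step, and within it the a priori control of the bending laminations, to be the main obstacle.
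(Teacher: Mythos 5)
Your overall strategy (continuity method, geometric transition to the Minkowski picture both to seed the argument and to read off uniqueness for small metrics, properness as the crux) is the same as the paper's, and both your properness sketch and your uniqueness-near-the-collapsed-locus argument are in the right spirit. But there is a genuine gap in your existence argument. You deduce surjectivity from ``$\Phi$ is a proper local homeomorphism onto an open and closed subset,'' which requires local injectivity of $\Phi$ at \emph{every} point of $\mathcal{D}$, i.e.\ infinitesimal rigidity of every future-convex bent Cauchy surface with prescribed left holonomy. That is exactly what is not known: the paper explicitly leaves the statement that the intrinsic metric map is a homeomorphism as a conjecture. Your proposed derivation of it --- establish rigidity in the degenerate Minkowski/co-Minkowski limit and ``propagate it along $\mathcal{D}$ by connectedness'' --- does not work: non-degeneracy of a differential is an open condition, not a closed one, so it cannot be propagated from a degenerate locus into the interior of a connected deformation space; moreover, at a bent surface touching the boundary of the convex core (where the bending lamination may have non-isolated leaves) the map is not even known to be differentiable, so there is nothing to propagate.

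The paper avoids this entirely. It blows up both the deformation space and the space of metrics at the fully collapsed configuration, so that the boundary of the blow-up carries the (spherized) Minkowski problem; the glued map $\mc I_\vee^s$ is proper, continuous, sends boundary to boundary, is $C^1$ with non-degenerate differential \emph{only along the boundary}, and restricts there to a diffeomorphism by the Minkowski theorem of~\cite{FP}. A proper map between manifolds with boundary of equal dimension whose boundary restriction has degree one itself has degree one, hence is onto --- no interior local injectivity is needed. To repair your argument you must either prove interior infinitesimal rigidity (a substantially harder open problem) or replace the ``open and closed'' step with such a boundary-degree argument.
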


Here $U$ is a ``strong neighborhood of zero'' in the space of cone-metrics, which we will specify further on. In other words, we prove the uniqueness part provided that $d$ is ``sufficiently small'' is some strong sense. 

Note that if $V$ is empty, so $d$ is just a hyperbolic metric, then it is classical that in such case there exists a unique such $\Omega$ and a unique future-convex bent isometric embedding $\phi: (S, d) \rightarrow \Omega$. Indeed, it follows from a combination of the Kerckhoff--Thurston earthquake theorem~\cite{Ker} and of observations of Mess~\cite{Mes} that there exists a unique $\Omega$, whose left representation is $\rho_\circ$ and whose intrinsic metric of the future-convex boundary of the convex core is $d$. See Section~\ref{sec:domain} for a definition of the convex core and Section~\ref{sec:holonaway} for a connection between earthquakes and anti-de Sitter geometry. On the other hand, it is straightforward to deduce from the definition of bent surface that the image of any such embedding $\phi$ of a hyperbolic surface must coincide with the future-convex boundary of the convex core.

Further dimensional considerations show that if we have two convex Cauchy surfaces $\Sigma^\pm \subset \Omega$, then the pair of their intrinsic metrics, in principle, has enough amount of data to prescribe the triple $(\Omega, \Sigma^+, \Sigma^-)$. For technical reasons one needs to assume that $\Sigma^\pm$ are convex in the different directions, i.e., one is future-convex and the other one is past-convex. In such case they bound a totally convex subset inside $\Omega$. We now formulate the second main result of our article:

\begin{thmA}
\label{main2}
Let $S$ be a closed oriented surface of genus $\geq 2$, $V^\pm \subset S$ two finite non-empty sets and $d^\pm$ concave hyperbolic cone-metrics on $(S, V^\pm)$. Then there exist a GHMC anti-de Sitter (2+1)-spacetime $\Omega\cong S\times \R$ as well as future-/past-convex bent isometric embeddings $(S, d^\pm) \rightarrow \Omega$ respectively. Furthermore, there exist a non-empty open set $U$ in the space of pairs of cone-metrics on $(S, V^\pm)$ for which the realization is unique.
\end{thmA}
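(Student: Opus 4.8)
The plan is to deduce Theorem~\ref{main2} from Theorem~\ref{main} by a fixed-point argument on Teichm\"uller space. Reversing the time-orientation of a GHMC anti-de Sitter (2+1)-spacetime exchanges future-convex and past-convex Cauchy surfaces and exchanges its left and right representations, so Theorem~\ref{main} is equivalent to the statement that for every Fuchsian $\rho$ and every concave hyperbolic cone-metric $d$ on $(S,V)$ there is a GHMC anti-de Sitter spacetime with \emph{right} representation $\rho$ carrying a past-convex bent isometric embedding of $(S,d)$. Applying the future version to $d^+$ (prescribing the left representation) and the past version to $d^-$ (prescribing the right representation) produces set-valued maps $\mc F^+\colon\mc T\rightrightarrows\mc T$ and $\mc F^-\colon\mc T\rightrightarrows\mc T$, which to a prescribed representation associate the possible ``other'' representations; by Theorem~\ref{main} both have non-empty values everywhere. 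A GHMC anti-de Sitter spacetime in which $d^+$ embeds future-convexly and $d^-$ past-convexly is exactly one whose holonomy $(\rho^l,\rho^r)$ satisfies $\rho^r\in\mc F^+(\rho^l)$ and $\rho^l\in\mc F^-(\rho^r)$, i.e.\ a fixed point of $\mc F^-\circ\mc F^+$, and the two simultaneous bent embeddings come with it.

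I would produce such a fixed point by a continuity argument anchored at the Fuchsian locus through the geometric transition to Minkowski and co-Minkowski geometry. When the cone angles of $d^\pm$ at $V^\pm$ are close to $2\pi$ and the metrics are close to a single fixed hyperbolic metric $d_0$, every simultaneous realization should be a small perturbation of the Fuchsian anti-de Sitter spacetime whose left and right representations both equal the holonomy of $d_0$ and whose convex core is a single totally geodesic surface serving at once as a degenerate future- and past-convex surface. The first-order deformations of such a configuration off the Fuchsian locus are governed by co-Minkowski (and Minkowski) geometry, where the analogous simultaneous-realization result and the relevant infinitesimal rigidity---a Schl\"afli-type variational identity---are established in~\cite{FP}; transporting this through the transition and applying the implicit function theorem (together with the compactness statement below, which forces any realization of such small data to be near the Fuchsian one) gives a unique simultaneous realization for all $(d^+,d^-)$ in a neighbourhood $U$ of the limiting configuration, which is the assertion about $U$. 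To drop the smallness hypothesis from the existence part I would argue globally: the ``graphs'' $\mc A^+,\mc A^-\subset\mc T\times\mc T$ of $\mc F^+,\mc F^-$ are closed, of the same dimension as $\mc T$, and each projects properly with degree $\pm 1$ onto one of the two factors, so a degree/intersection-number count forces $\mc A^+\cap\mc A^-\neq\emptyset$ for every $(d^+,d^-)$. (The infinitesimal rigidity that makes the Fuchsian base case transverse may genuinely fail for large cone-metrics, which is why uniqueness is claimed only on $U$.)

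The hardest part will be the compactness input behind the non-emptiness and properness of $\mc F^\pm$---the closedness step of the continuity scheme. Along a sequence $(d^+_n,d^-_n)$ converging in the space of concave hyperbolic cone-metrics one must exclude degeneration of the realizing spacetimes $\Omega_n$ and of the two bent surfaces in them: a priori bounds are needed on the holonomies $(\rho^l_n,\rho^r_n)$ in $\mc T\times\mc T$, on the bending measured laminations of $\Sigma^\pm_n$, and on the time-separation between $\Sigma^+_n$ and $\Sigma^-_n$. Here it is crucial that $\Sigma^+_n$ is future-convex and $\Sigma^-_n$ past-convex, so that they bound a single totally convex domain $K_n$ containing the convex core: that one domain controls both surfaces simultaneously, and its intrinsic and extrinsic geometry is constrained by $d^\pm_n$ together with the cone-point data, which is what should make the estimates attainable. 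A lesser difficulty is that we deal with bent rather than genuinely polyhedral surfaces; I would handle this by proving everything first for polyhedral cone-metrics and then passing to the closure of the space of polyhedral embeddings, as in the proof of Theorem~\ref{main}.
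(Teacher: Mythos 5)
Your reduction of Theorem~\ref{main2} to a fixed-point problem for the correspondences $\mc F^\pm$ is a genuinely different architecture from the paper, which instead reruns the continuity method directly on the product configuration space: it blows up the space of triples $(\theta,f^+,f^-)$, $\theta\in\mc T\times\mc T$, along the whole Fuchsian locus $\{(\rho,\rho)\}$ (a bundle over Teichm\"uller space, not a single point), anchors the degree computation at the boundary via the simultaneous Minkowski realization result of~\cite{FP} (one future-complete and one past-complete spacetime with the same holonomy), and then proves properness of the resulting map $\mc I\mc I_\vee^s$. Your local analysis near the Fuchsian locus is consistent with that.

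The genuine gap is in your global existence step. The claim that closedness, equidimensionality, and proper degree-$\pm1$ projections onto complementary factors force $\mc A^+\cap\mc A^-\neq\emptyset$ is false in a non-compact ambient space: in $\R^n\times\R^n$ the diagonal and the set $\{(y+v,y)\}$, $v\neq 0$, are both graphs over complementary factors with proper degree-one projections, yet they are disjoint (equivalently, a composition of two homeomorphisms of $\R^n$ need not have a fixed point). Since $\mc T\times\mc T$ is contractible and non-compact, the intersection number of two middle-dimensional locally finite cycles is not determined by the individual projection degrees; it is only defined and computable after one shows that the intersection locus stays in a compact set along a deformation of $(d^+,d^-)$ to the base case. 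That joint compactness is exactly the hard content: one needs that if $(d^+_i,d^-_i)$ converge then the full holonomies $\theta_i$ subconverge, which in the paper rests on the bound $l_{\Sigma}\le l_{\rho^\pm}$ for the convex-core boundaries together with Diallo's compactness for pairs of such metrics when the limit is non-degenerate, and, at the blow-up, on the Bonsante--Schlenker inequality $i(\lambda^+,\lambda^-)\ge C\,l_{\rho^-}(\lambda^-)\min\{h_0,l_{\rho^-}(\lambda^-)\}$, cosmological-time and grafted-metric estimates on level surfaces, and a density argument reducing to bending laminations supported on simple closed curves. You correctly flag the need for a priori holonomy and bending bounds, but you neither supply these inputs nor route them into an argument that actually uses them: as written, your degree count bypasses the joint compactness and therefore does not establish existence. (A smaller inaccuracy: the infinitesimal rigidity in~\cite{FP} is not a Schl\"afli-type identity but a transversality statement for the two holonomy immersions $\phi^\pm_{d^\pm}$ into $T\mc T$.)
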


Here if $V^\pm$ are empty, then the existence was shown by Diallo in~\cite[Appendix A]{BDMS}. The uniqueness is fully open.

\subsection{Proof ideas}

The initial setup for the proofs of the both results is the \emph{continuity method} introduced by Weyl~\cite{Wey} and Alexandrov~\cite{Ale}. We mostly focus our exposition on the proof of Theorem~\ref{main}. The proof of Theorem~\ref{main2} is quite similar, except few details, which we will mention in the end of the section.

Fix a pair $(S, V)$ and a Fuchsian representation $\rho_\circ: \pi_1S \ra G$. By Mess~\cite{Mes}, the space of GHMC anti-de Sitter (2+1)-spacetimes whose left representation is $\rho_\circ$ is parameterized by $\mc T$, which is the space of Fuchsian representations $\pi_1S \ra G$ up to conjugation. Every convex bent surface is uniquely determined by the position of its vertices. Hence, the space of future-convex bent surfaces in such spacetimes with vertices marked by $V$ can be parameterized by a finite-dimensional manifold $\mc P_-^s=\mc P_-^s(\rho_\circ, V)$, which is a fibration over $\mc T$. (The meaning of the superscript ``s'' will be clarified in Section~\ref{sec:bentsurf}.)

On the other hand, there is a natural space $\mc D_-^s=\mc D_-^s(S, V)$ of concave hyperbolic cone-metrics on $(S, V)$ up to isotopy. This is also a finite-dimensional manifold. By considering the intrinsic metric of a bent surface, one defines the \emph{intrinsic metric map}
\[\mc I_-^s: \mc P_-^s \ra \mc D_-^s,\]
which is continuous. We prove that it is surjective and show that there is a subset $U \subset \mc P_-^s$ such that for every $x \in U$, $\mc I_-^s(x)$ has only preimage in $\mc P_-^s$. We conjecture that $\mc I_-^s$ is a homeomorphism. 

In order to describe the main ingredient of the proof, we need to turn to the Minkowski side of things. A GHMC Minkowski spacetime is either future-complete or past-complete. In~\cite{Mes} Mess parameterized future-complete GHMC Minkowski spacetimes by $T\mc T$, the tangent bundle of $\mc T$. A version of the Alexandrov--Weyl problem can be formulated if instead of the left representation one prescribes the \emph{linear part} of the holonomy (i.e., the base point in the Mess parameterization) and instead of a hyperbolic cone-metric one prescribes a Euclidean cone-metric.

This problem was solved in~\cite{FP} by Fillastre--Prosanov. The proof is similarly based on the continuity method. We fix $(S, V)$ and $\rho_\circ$. Let $\mc P_0^s=\mc P_0^s(\rho_\circ, V)$ be the space of future-convex polyhedral surfaces in GHMC Minkowski (2+1)-spacetimes. (We highlight that in the Minkowsi case every bent surface is polyhedral, as follows from~\cite{FP}.) Let $\mc D_0^s$ be the space of concave Euclidean cone-metrics on $(S, V)$ up to isotopy. There is the intrinsic metric map
\[\mc I_0^s: \mc P_0^s \ra \mc D_0^s.\]
The following result was shown in~\cite{FP}: 

\begin{thmM}
\label{minkowski}
$\mc I_0^s$ is a $C^1$-diffeomorphism. 
\end{thmM}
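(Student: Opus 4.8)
\textbf{Proof proposal for Theorem~\ref{minkowski} (that $\mc I_0^s$ is a $C^1$-diffeomorphism).}

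The plan is to run the Alexandrov--Weyl continuity method for future-convex polyhedral surfaces in GHMC Minkowski $(2+1)$-spacetimes with fixed linear holonomy $\rho_\circ$, and then upgrade the resulting homeomorphism to a $C^1$-diffeomorphism by a local analysis of the differential of $\mc I_0^s$. First I would set up the two finite-dimensional manifolds carefully: $\mc P_0^s=\mc P_0^s(\rho_\circ,V)$, the space of (marked) future-convex polyhedral Cauchy surfaces with vertex set $V$ in the GHMC Minkowski spacetimes with linear part $\rho_\circ$ — which fibers over $T\mc T$ by Mess's parameterization, with the fiber recording the positions of the $|V|$ cone points — and $\mc D_0^s$, the space of concave Euclidean cone-metrics on $(S,V)$ up to isotopy. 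A dimension count shows $\dim\mc P_0^s=\dim\mc D_0^s=6g-6+2|V|$: the translation-part factor $T_{[\rho_\circ]}\mc T$ contributes $6g-6$, and the cone points contribute $2|V|$ on each side (positions of vertices vs.\ $|V|$ cone angles together with the Teichm\"uller-type moduli of the underlying punctured conformal structure — here one should phrase $\mc D_0^s$ via the cone angles and the conformal/metric moduli so that the count is manifest). I would then verify that $\mc I_0^s$ is well-defined and continuous (the intrinsic metric of a convex polyhedron depends continuously, indeed smoothly, on the vertex data), and that both spaces are connected manifolds of the same dimension.

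The core of the argument is that $\mc I_0^s$ is \emph{proper} and a \emph{local homeomorphism}; since $\mc D_0^s$ is connected, properness plus local injectivity (via invariance of domain) forces surjectivity and then, because the number of preimages is locally constant on a connected space, a covering map of some finite degree $N$. Local injectivity is where the Minkowski analogue of Alexandrov's infinitesimal rigidity enters: one must show that an infinitesimal isometric deformation of a future-convex polyhedral Cauchy surface in Minkowski space that fixes the linear holonomy and the combinatorics is trivial, i.e., $d\mc I_0^s$ is injective at every point. I would prove this by a Minkowski-signature version of the classical Dehn--Alexandrov--Weyl rigidity: associate to an infinitesimal flex the usual "rotation field" / Darboux-type vector field on the $1$-skeleton, use the convexity (all dihedral angles on the correct side) to show a sign condition on the index sum around each vertex and each face, and conclude the flex is a trivial (ambient Killing) deformation modulo the fixed-holonomy constraint. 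Properness I would get from an a priori control: along a sequence in $\mc P_0^s$ whose intrinsic metrics converge in $\mc D_0^s$, the cone angles and the diameters stay bounded, the linear holonomy is fixed, and the Gauss-map/support-function description of convex Minkowski polyhedra (as in the Fillastre--Prosanov setup) prevents vertices from escaping to the lightcone at infinity or colliding; so a subsequence converges in $\mc P_0^s$. Combined, these give that $\mc I_0^s$ is an $N$-sheeted covering; to see $N=1$ I would exhibit one metric with a unique realization — the natural candidate is the "Fuchsian" limit where the translation part is zero and all cone angles degenerate appropriately, or more robustly a direct uniqueness argument near the boundary stratum — or, alternatively, note that $\mc D_0^s$ is simply connected (it is a cell, being cut out by the open cone-angle conditions inside a Teichm\"uller-type space) so any connected covering is trivial.

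Once $\mc I_0^s$ is a homeomorphism, the $C^1$ upgrade is the last step: $\mc I_0^s$ is smooth (the intrinsic metric is a real-analytic function of the vertex coordinates away from degeneracies), and at each point $d\mc I_0^s$ is an injective linear map between spaces of equal dimension, hence an isomorphism; so $\mc I_0^s$ is a local $C^1$-diffeomorphism, and being a homeomorphism it is a global $C^1$-diffeomorphism. \emph{The main obstacle} I anticipate is the infinitesimal rigidity statement in Lorentzian signature: unlike the Euclidean case one cannot directly quote Cauchy--Alexandrov, and one must handle the indefinite metric on the polyhedron's link, the bent/degenerate faces permitted by the "$s$"-decoration, and the fact that the deformation is only required to preserve \emph{half} the data (the linear holonomy, not the full translational part); making the index/sign argument airtight under these constraints — ruling out nontrivial flexes that move the translation part while keeping the induced metric fixed — is the technical heart. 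A secondary difficulty is pinning down properness uniformly up to the strata where cone points merge or angles hit $2\pi$; here I would lean on the explicit support-function coordinates for convex Minkowski polyhedra and a compactness argument in that chart.
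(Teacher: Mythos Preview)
The paper does not prove Theorem~\ref{minkowski}; it is quoted as \cite[Theorem~1.4]{FP} and used as a black box (see the sentences ``The following result was shown in~\cite{FP}'' and ``Then~\cite[Theorem 1.4]{FP} states that\ldots''). So there is no proof in the paper to compare your proposal against. Your outline is in the spirit of the continuity method that \cite{FP} does carry out, but since the paper itself supplies nothing beyond the citation, I can only comment on your sketch on its own terms.

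Two concrete issues. First, your dimension count is off: you write $\dim\mc P_0^s=\dim\mc D_0^s=6g-6+2|V|$, but the vertices of a polyhedral Cauchy surface live in the $3$-dimensional spacetime $\Omega_\tau^+$, not on the $2$-dimensional surface, so the fiber contributes $3|V|$, and the edge-length charts on $\mc D_0$ give $|E(\ms T)|=6k-6+3n$ as well. The correct common dimension is $6g-6+3|V|$. Second, you assert that $\mc I_0^s$ is smooth ``away from degeneracies'', but the combinatorics of the face decomposition genuinely changes inside $\mc P_0^s$, and across such walls the map is only $C^1$; this is exactly why the theorem states $C^1$-diffeomorphism rather than smooth diffeomorphism, and one needs an argument like Lemma~\ref{techn3} (matching values and first derivatives across combinatorial strata) to even get $C^1$. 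Your sketch does not address this.

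On the substantive side, the step you flag as the ``main obstacle'' --- infinitesimal rigidity in Lorentzian signature allowing the translation cocycle to move --- is indeed the heart of \cite{FP}, and it is not handled by a naive transplant of the Euclidean Dehn--Cauchy sign argument. If you want to reconstruct the proof, that is the place where a genuinely new idea is required; the rest of your outline (properness via support-function/cosmological-time control, degree argument, simple connectivity of $\mc D_0^s$) is broadly the right scaffolding.
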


Both spaces $\mc P_0^s$ and $\mc D_0^s$ have a natural $\R$-action by scaling and $\mc I_0^s$ is $\R$-equivariant. Denote the respective $\R$-quotients by $\S(\mc P_0^s)$ and $\S(\mc D_0^s)$ and the induced map by $\S(\mc I_0^s)$. 

Minkowski geometry can be considered as the infinitesimal version of anti-de Sitter geometry. Our further construction is a development of this observation. There is a special topological end of the space $\mc P_-^s$ corresponding to a ``fully degenerate configuration''. Using geometric transitions we blow-up $\mc P_-^s$ at this end, obtaining the space $\mc P_\vee^s$, which is $\mc P_-^s \cup \S(\mc P_0^s)$ endowed with a natural topology of a manifold with boundary. Similarly we obtain a blow-up $\mc D_\vee^s$ of $\mc D_-^s$, which is $\mc D_-^s\cup\S(\mc D_0^s)$. The maps $\mc I_-^s$ and $\S(\mc I_0^s)$ glue together into a map
\[\mc I_\vee^s: \mc P_\vee^s \ra \mc D_\vee^s,\]
which is $C^1$ near $\pt\mc P_\vee^s$. The proof of Theorem~\ref{main} follows easily from Theorem~\ref{minkowski} and two main lemmas:

\begin{lmA}
\label{ml}
The differential of $\mc I_\vee^s$ is non-degenerate on $\pt\mc P_\vee^s$.
\end{lmA}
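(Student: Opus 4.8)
The differential of $\mc I_\vee^s$ on the boundary $\pt\mc P_\vee^s=\S(\mc P_0^s)$ splits, with respect to the decomposition of the tangent space into a part tangent to the boundary and a transverse direction, into the differential of $\S(\mc I_0^s)$ along the boundary plus a term describing the variation in the transverse (``opening the anti-de Sitter structure'') direction. The strategy is to show separately that (i) the tangential block is an isomorphism, and (ii) the transverse derivative is not contained in the image of the tangential block, so that the full differential has trivial kernel; since $\mc P_\vee^s$ and $\mc D_\vee^s$ have the same dimension, non-degeneracy follows.

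For (i), the tangential block is exactly the differential of $\S(\mc I_0^s)$, the projectivized intrinsic metric map in Minkowski geometry. By Theorem~\ref{minkowski}, $\mc I_0^s$ is a $C^1$-diffeomorphism, and since it is $\R$-equivariant for the scaling actions, the induced map $\S(\mc I_0^s)$ on the quotients is a $C^1$-diffeomorphism as well; in particular its differential is everywhere invertible. So the only real content is (ii). Here I would compute, for a point on $\pt\mc P_\vee^s$, the first-order effect on the intrinsic cone-metric of deforming the anti-de Sitter spacetime transversally to the degenerate locus while keeping the Minkowski data fixed. Concretely, one parameterizes a path in $\mc P_-^s$ approaching the boundary point, writes the anti-de Sitter metric and the embedding in the projective model as a family degenerating to the Minkowski picture (this is the geometric transition used to construct $\mc P_\vee^s$), and extracts the leading-order term of the induced metric in the transverse parameter. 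The claim to verify is that this leading term is \emph{not} realized by any first-order variation of the Minkowski polyhedral surface, i.e.\ it is not in the image of the differential of $\S(\mc I_0^s)$; equivalently, the combined map is injective on tangent vectors.

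The cleanest way to organize (ii) is to identify the transverse direction with a genuinely new geometric degree of freedom and track what it does to the metric. Blowing up the ``fully degenerate configuration'' adds, roughly, the choice of how the second (right) representation separates from the first; infinitesimally this is a nonzero element of a cohomology group $H^1(\pi_1 S, \mathfrak{g})$-type space (a holonomy variation), and one must check that moving in this direction changes the intrinsic metric in a way transverse to all the ``polyhedral'' variations already accounted for by $\mc D_0^s$. I expect this to come down to a Schläfli-type / first-variation-of-length formula for the edge lengths and cone-angles of the bent surface under the transition: the derivative of each edge length picks up a term linear in the holonomy variation, and one shows the resulting linear functional on the boundary tangent space is not identically zero on the kernel one is trying to rule out. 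An alternative, possibly shorter, route: show directly that $\mc I_\vee^s$ is a local homeomorphism near $\pt\mc P_\vee^s$ by a dimension count plus properness of the restriction to a collar, but the differential statement as phrased really does seem to want the two-block argument above.

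\textbf{Main obstacle.} The hard part is step (ii): making precise the transverse derivative at the boundary and proving it escapes the image of the tangential block. This requires (a) a careful choice of the $C^1$ structure on $\mc P_\vee^s$ near its boundary coming from the geometric transition — so that ``transverse derivative'' is well defined and computable — and (b) a robust first-variation formula for the intrinsic metric of a bent (not merely polyhedral) anti-de Sitter surface as the spacetime degenerates, handling the bending laminations uniformly. Controlling the bending part, rather than just the vertex part, is what makes this delicate; for polyhedral surfaces a Schläfli argument suffices, but the bent case needs the analogue valid for measured laminations, presumably by approximation from the polyhedral case together with the continuity already built into the setup.
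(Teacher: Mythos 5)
Your overall decomposition — tangential block handled by Theorem~\ref{minkowski}, plus a check that a transverse vector acquires a nonzero transverse component in the target — is exactly the skeleton of the paper's argument, and part (i) is correct as you state it. The problem is part (ii), which you leave as "the claim to verify" and for which your proposed route rests on a misidentification of what the transverse direction actually is. You describe it as a new holonomy degree of freedom ("how the right representation separates from the left", an element of an $H^1$-type space) whose effect on the metric would have to be extracted via a Schl\"afli-type first-variation formula, with the bending lamination as the main difficulty. But the blow-up is performed at the single fully degenerate configuration $o_-$, so the boundary $\pt\mc P_\vee^s\cong\S(\mc P_0^s)$ already records the \emph{direction} of degeneration (including the infinitesimal holonomy $\tau$); the transverse direction at a boundary point $x=\S(y)$ is the \emph{radial} direction, realized by the curve $t\mapsto\mc E(t\tilde y)$. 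The computation needed is therefore not a Schl\"afli variation at all: one only has to show that the induced metrics $d_t$ of $\Sigma(\tilde f_t)$ satisfy $\dot d_0=\mc I_0(y)$ in a fixed edge-length chart, which follows from the Taylor expansion of the anti-de Sitter metric at $o$ (Lemma~\ref{techn4}: rescaled spacelike distances along $C^1$ curves through $o$ converge to Minkowski distances between the tangent vectors). Since $\mc I_0(y)$ is a nonzero point of the cone $\Phi^{\ms T}$, the curve $d_t$ leaves $\pt\mc D_\vee^s$ transversally, which is the required nonvanishing of the transverse component.

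Two further points your plan does not address. First, to even speak of a fixed chart $\Phi^{\ms T}$ along the curve $x_t$ one needs Lemma~\ref{transition}: for small $t>0$ the configuration $\tilde x_t$ is strictly polyhedral and its face celluation subdivides that of $y$; this is what removes your worry about bending laminations — near the boundary there are none to control, and no approximation argument from the polyhedral case is needed. Second, when the face celluation of $y$ is not a triangulation one cannot use a single chart; the paper handles this with the auxiliary maps $\mc I_{\vee,j}$ attached to the triangulations subdividing the celluation and the gluing Lemma~\ref{techn3}. As written, your proposal identifies the right target statement but does not supply the argument for it, and the method you sketch for the transverse derivative would lead you into substantially harder (and unnecessary) analysis.
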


\begin{lmA}
\label{ml2}
The map $\mc I_\vee^s$ is proper.
\end{lmA}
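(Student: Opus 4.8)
The plan is to prove properness by contradiction: suppose $(x_n) \subset \mc P_\vee^s$ is a sequence escaping every compact set, while $\mc I_\vee^s(x_n) \to d_\infty \in \mc D_\vee^s$ converges. We must rule this out. The space $\mc P_\vee^s = \mc P_-^s \cup \S(\mc P_0^s)$ fibers over $\mc T \cup \S(\dots)$, so $(x_n)$ escaping to infinity means either (i) the underlying right representation $\rho^r_n$ (equivalently the point in $\mc T$, since $\rho^l$ is fixed $=\rho_\circ$) degenerates in $\mc T$, or (ii) it stays in a compact part of $\mc T$ but the configuration of vertices (positions of the bent surface inside the spacetime, i.e.\ the support planes / bending data) degenerates — vertices collide, the surface is pushed towards the initial/final singularity, dihedral contributions blow up, or two vertices come together in the cone-metric. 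Since $\mc I_\vee^s(x_n)$ converges in the blown-up space, the limit metric $d_\infty$ is either an honest concave hyperbolic cone-metric with vertex set of cardinality $|V|$ (if $d_\infty \in \mc D_-^s$), or a point of $\S(\mc D_0^s)$, a concave Euclidean cone-metric up to scale (if $d_\infty \in \pt \mc D_\vee^s$). In either case the limit has \emph{all} $|V|$ cone points with angles bounded away from $2\pi$ and the total area is controlled, so no collision of vertices and no collapse of the metric happens downstairs.

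The first step is to handle degeneration of $\rho^r_n$ in $\mc T$. Here I would use the standard fact (Mess, and the geometry of the convex core of a GHMC AdS spacetime) that the intrinsic metric of a future-convex Cauchy surface dominates — in the sense of length of closed curves — a definite multiple of the metric coming from $\rho^l_\circ$ and also of the metric coming from $\rho^r_n$; more precisely the left and right metrics on the convex core boundary are obtained from $d_n$ by left/right earthquakes, and a future-convex bent surface sits in the future of the future-convex core boundary, so its intrinsic metric has systole bounded below by that of the core boundary, which in turn controls both $\rho^l$ and $\rho^r$. If $\rho^r_n \to \pt\mc T$ then some closed geodesic length for $\rho^r_n$ tends to $0$ or $\infty$; the first forces the systole of $d_n$ to $0$, contradicting convergence of $d_n$ to $d_\infty$ (whose systole is positive); the second forces some length in $d_n$ to $\infty$, again contradicting convergence (or, in the Euclidean boundary case, contradicting the area normalization). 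The analogous estimate on the co-Minkowski / Minkowski boundary component $\S(\mc P_0^s)$ is exactly the properness half of Theorem~\ref{minkowski}, which we may assume.

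The second, and genuinely harder, step is to exclude degeneration of the \emph{vertex configuration} while $\rho^r_n$ stays in a compact part of $\ol{\mc T}\setminus\pt\mc T$. The idea is to show that the intrinsic metric ``sees'' any such degeneration. If two marked points $v_i, v_j$ collide along the bent surface, the distance between the corresponding cone points in $d_n$ goes to $0$, which is impossible since $d_\infty$ has $|V|$ distinct cone points. If the bent surface runs off towards the initial singularity of $\Omega_n$ (the past boundary of the convex core, or beyond), the second fundamental form / bending blows up, and one shows this forces either the area of $d_n$ to blow up or a cone angle of $d_n$ to escape to $2\pi$ or to $\infty$; the key tool is a Gauss–Bonnet / comparison bound relating the extrinsic bending to the defect of the intrinsic cone angles from $2\pi$, together with the fact that the surface lies in the fixed causal diamond between the two convex-core boundaries of $\Omega_n$, whose geometry is uniformly controlled once $\rho^l=\rho_\circ$ and $\rho^r_n$ are in a compact set. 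I expect this step — giving a clean a priori bound that prevents the bent surface from degenerating inside a spacetime of bounded geometry, uniformly including the behaviour near $\pt\mc P_\vee^s$ where the AdS estimates must limit onto the Minkowski ones — to be the main obstacle, and it is where the bulk of the technical work (uniform estimates, Gauss–Bonnet bookkeeping, compactness of the relevant moduli of causal diamonds) will go. Once both steps are in place, $(x_n)$ stays in a compact subset of $\mc P_\vee^s$, a contradiction, so $\mc I_\vee^s$ is proper.
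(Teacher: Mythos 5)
Your overall shape (control the holonomy via length‑function comparisons, then the vertex positions; note that colliding vertices would force two cone points of $d_n$ to collide) matches the paper's strategy, but two of your key mechanisms do not work as stated. First, the Gauss--Bonnet argument for Step 2 is not viable: the intrinsic Gauss--Bonnet identity for a concave hyperbolic cone-metric involves only the cone curvatures and the area, and the extrinsic bending along edges and laminations simply does not enter it, so a future-convex bent surface can drift towards the initial singularity (the spacelike part of $\pt^+\tilde\Omega_\rho$) with its bending concentrating on edges while its cone angles and area stay fixed. The intrinsic quantity that actually degenerates is the \emph{systole}, and proving this requires the cosmological time: the paper's Lemmas~\ref{ctinf} and~\ref{sysct} give a quantitative lower bound on $\inf_\Sigma\ct_\rho$ in terms of $\sys(\Sigma)$ and the area of a level surface, proved via the canonical block decomposition of $\Omega_\rho$ and Lipschitz estimates for the gradient flow of $\ct$ (Lemmas~\ref{ctcompare},~\ref{timeestimate}); there is no Gauss--Bonnet substitute for this. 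Relatedly, your Step 1 comparisons run in the wrong direction: what is available is $l_\Sigma\le l_{\rho^+}$ (Lemma~\ref{syst}, the surface is dominated by the future core boundary), which gives a uniform \emph{lower} bound on $l_{\rho_i^+}$; precompactness then follows from the Papadopoulos--Th\'eret compactness of balls for the asymmetric Thurston metric (Lemma~\ref{thurstmetric}) and is transferred to $\rho_i$ by the earthquake theorems, not from ``some length tends to $0$ or $\infty$''.

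Second, and most seriously, the case that makes the lemma nontrivial is missing: a sequence $x_i\in\mc P_-^s$ (interior points) whose images $\mc I_\vee^s(x_i)$ converge to a point of $\pt\mc D_\vee^s$. Invoking the properness half of Theorem~\ref{minkowski} only handles sequences already lying on $\pt\mc P_\vee^s$. For interior sequences converging to the boundary one must show that $\rho_i\to\rho_\circ$ (done in the paper via dual trees, the Skora duality and Lemma~\ref{treecomparison}), that all vertices collapse to a point at a common rate $t_i$, and that the rescaled configurations converge in $\S(\mc P_0^s)$ rather than to a degenerate configuration; the last step is a two-scale contradiction comparing the geometric rate $t_i$ with the metric rate $t_i'$ from the Lipschitz convergence $d_i/t_i'\to d$ (equations~(\ref{collapseblow}),~(\ref{edgerembound}) and Corollary~\ref{lipschmetr}), and it needs the compactification $\mc P_\vee^\bdia$ and the rescaled convergence of domains and cosmological times under $g_{t_i}$ from Sections~\ref{sec:convblowup} and~\ref{sec:convmp2}. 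Without this machinery the properness at the blow-up --- precisely the part on which the uniqueness statement of Theorem~\ref{main} rests --- remains unproven.
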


The proof of Theorem~\ref{main2} follows the same pattern. The necessary Minkowski result, see Theorem~\ref{minkowski2}, was also basically established by Fillastre--Prosanov in~\cite{FP}. The main difference is the proof of the properness of the respective induced metric map at the blow-up, which requires additional tools. 

Our work is divided into two parts, corresponding to the proofs of Theorems~\ref{main} and~\ref{main2}.
In the first part, we construct the blow-ups and prove Lemma~\ref{ml} in Section~\ref{sec:blowups}. In Section~\ref{sec:proof} we deduce Theorem~\ref{main} from the main lemmas. In Section~\ref{sec:properness} we obtain Lemma~\ref{ml2}. In the second part, in Section~\ref{sec:changes} we describe the necessary changes in the setup for the proof of Theorem~\ref{main2}. In Section~\ref{sec:prop2} we establish the required properness result in this context. We finish the paper with two appendices containing some results that we use, which might be of independent interest. In Appendix~\ref{sec:intmet} we study intrinsic metrics of general convex surfaces in $\A^3$ and derive results on their convergence. In Appendix~\ref{sec:busfel} we obtain a Busemann--Feller-type lemma for GHMC anti-de Sitter (2+1)-spacetimes.

\subsection{Context}

In~\cite{Wey} Weyl asked whether any smooth Riemannian metric of positive curvature can be realized as the intrinsic metric of the boundary of a unique smooth convex body in Euclidean 3-space $\E^3$. This problem has two parts: the realization part and the rigidity part (the uniqueness). Weyl formulated a version of continuity method and implemented a part of it. Several geometers made contributions in its further developments, culminating in a positive resolution of the realization part by Nirenberg~\cite{Nir}. The rigidity part is due to Cohn-Vossen~\cite{CV} in the analytic class and to Herglotz~\cite{Her} in the smooth class.  

In~\cite{Ale} Alexandrov formulated and proved a polyhedral version of the Weyl problem. Furthermore, in the same paper he searched for a common generalization of the smooth and polyhedral cases. This led him to develop the notion of what now is known as \emph{Alexandrov space} and what now belongs to one of the cornerstones of modern geometry. In~\cite{Ale} Alexandrov proved the realization part of the problem in this generalized context. The rigidity part for general convex bodies was later supplied by Pogorelov in~\cite{Pog}. Note that from the combination of works of Alexandrov and Pogorelov another solution to the original smooth Weyl problem follows. 

The Alexandrov--Weyl problem was generalized in multiple directions. The one that is the most relevant to our paper is the direction of hyperbolic geometry. It was observed already by Alexandrov that his proofs from~\cite{Ale} generalize directly to convex bodies in hyperbolic 3-space $\H^3$. Pogorelov in~\cite{Pog} developed a method to prove the rigidity for general convex bodies in $\H^3$. Their works also lead to a resolution of the smooth problem in $\H^3$. Curiously, a direct proof of either realization or rigidity part for smooth convex bodies in $\H^3$ is not known. 

The hyperbolic version of the problem has an interesting further generalization that the Euclidean version lacks. Convex bodies in either $\E^3$ or $\H^3$ have the trivial topology of the 3-ball and all the original works use this significantly (namely, that the boundary has positive Euler characteristic). Starting from the 70s, the geometrization program of Thurston~\cite{Thu} implied that in some sense ``most'' compact 3-manifolds are hyperbolic, which generated a lot of interest in the study of hyperbolic 3-manifolds. In particular, ``most'' compact 3-manifolds with non-empty boundary can be given a hyperbolic metric with convex boundary. It is natural to formulate an analogue of the Alexandrov--Weyl problem for such manifolds. It has required a considerable development of the existing techniques to deal with the case of non-trivial topology. The smooth realization part was proven by Labourie in~\cite{Lab}, the smooth rigidity part was established by Schlenker in~\cite{Sch}. The polyhedral counterpart was obtained by Prosanov in~\cite{Pro2}, though the rigidity was shown only under a genericity assumption. The case of general convex boundary is open, though some progress was obtained in~\cite{Slu, Pro}. 

GHMC anti-de Sitter (2+1)-spacetimes exhibit properties similar to the so-called \emph{quasi-Fuchsian} hyperbolic 3-manifolds; see, e.g., the paper~\cite{KS} of Krasnov--Schlenker for a simultaneous treatment. As it was mentioned in Section~\ref{sec:statement}, there are two ways to formulate an Alexandrov--Weyl problem for them. In the first one, we prescribe half the holonomy and the intrinsic metric of a convex Cauchy surface. The realization part of the smooth version  was obtained by Tamburelli in~\cite{Tam} and the rigidity part was established by Chen--Schlenker in~\cite{CS}. It is interesting to note that the methods of the present paper, dealing with the polyhedral case, are very different from the techniques of the mentioned articles. We note that polyhedral surfaces in GHMC anti-de Sitter (2+1)-spacetimes have a curious interpretation via flippable tilings on hyperbolic surfaces, see~\cite{FS6} by Fillastre--Schlenker, which are polyhedral analogues of the earthquakes. We, however, do not pursue this perspective.

Mess classified in~\cite{Mes} the GHMC anti-de Sitter and Minkowski (2+1)-spacetimes. The classification of the de Sitter ones was finished by Scannell in~\cite{Sca}. Our type of the Alexandrov--Weyl problem can be formulated for all of them. Interestingly, each case requires different techniques. The smooth version for Minkowski spacetimes was obtained by Trapani--Valli in~\cite{TV}. As we already mentioned, the polyhedral version was established by Fillastre--Prosanov in~\cite{FP}. The problem remains open for de Sitter spacetimes.

Alternatively to prescribing a left/right part of the holonomy, one can drop half of the holonomy by restricting themselves to the diagonal of $\mc T \times \mc T$. In such case a smooth problem was resolved by Labourie--Schlenker in~\cite{LS} and a polyhedral one was done by Fillastre in~\cite{Fil}.

The second version of the Alexandrov--Weyl problem for GHMC (2+1)-spacetimes asks to prescribe the intrinsic metrics of two Cauchy surfaces that are convex in the opposite directions. For anti-de Sitter spacetimes, in the smooth case the realization part was shown by Tamburelli in~\cite{Tam}. For the rigidity part, except the mentioned work~\cite{BMS} of Bonsante--Mondello--Schlenker, a partial progress was obtained by Prosanov--Schlenker in~\cite{PS}, both for smooth and polyhedral surfaces. Namely, in~\cite{PS} the authors show that there exist an open subset $U$ in the configuration space of the triples $(\Omega, \Sigma^+, \Sigma^-)$ such that all triples in $U$ are uniquely determined inside $U$ by the intrinsic metrics of $\Sigma^\pm$. However, a priori, these intrinsic metrics still might be realized by triples outside of $U$. This is the main difference in flavor between the results of~\cite{PS} and our rigidity results: Here we establish the global rigidity for subsets of cone-metrics.

For Minkowski and de Sitter spacetimes such a version requires embedding the surfaces in two different spacetimes that have the same holonomy, one spacetime is future-complete and one is past-complete. For Minkowski spacetimes this was proven in the smooth case in~\cite{Smi} by Smith and in the polyhedral case in~\cite{FP} by Fillastre--Prosanov. For de Sitter spacetimes one needs to restrict to the quasi-Fuchsian ones. In this setting the smooth case was obtained by Schlenker in~\cite{Sch} and the polyhedral case by Prosanov in~\cite{Pro3}.

There is a lot of other research related to the Alexandrov--Weyl problem that we are not able to mention here. However, there is one topic that we need to include. This is the question whether quasi-Fuchsian hyperbolic 3-manifolds or GHMC anti-de Sitter (2+1)-spacetimes are determined by the bending laminations of their convex cores (the definition can be found in Section~\ref{sec:bentsurf}) and which measured laminations arise as such. In the hyperbolic case the realization part was obtained by Bonahon--Otal in~\cite{BO} and the rigidity part was proven in~\cite{DS} by Dular--Schlenker. There was a previous progress on the local rigidity for small bending laminations due to Bonahon in~\cite{Bon3}. In~\cite{Ser} Series proved a compactness result for small laminations, establishing their global rigidity. In the anti-de Sitter case the paper~\cite{BS3} by Bonsante--Schlenker showed the realization part and showed the global rigidity of small laminations. This needs to be mentioned because the papers~\cite{Bon3} and~\cite{BS3} are the main inspirations for our techniques here. They introduce the blow-up on the respective deformation spaces and use it to obtain rigidity results. However, their constructions of blow-up are different from ours and do not require geometric transitions. Instead they rely on a study of the infinitesimal geometry of the Teichm\"uller space. The compactness results for small laminations from~\cite{Ser} and~\cite{BS3} are the main prototypes for the compactness result for small metrics required for Theorem~\ref{main2} and established in Section~\ref{sec:prop2}. 

For some other aspects of the Alexandrov--Weyl-type problems, we refer to the recent surveys~\cite{Sch2} and~\cite{Sch3} of Schlenker.

\vskip+0.2cm

\textbf{Acknowledgments.} I am very grateful to Fran\c{c}ois Fillastre, Jean-Marc Schlenker, Andrea Seppi and Graham Smith for useful discussions. This work was done at three universities: the University of Vienna, TU Wien and the Autonomous University of Barcelona. I am very thankful to all of them for comfortable working conditions. I am very grateful to Michael Eichmair, Ivan Izmestiev and Joan Porti for their support.

This research was funded in whole by the Austrian Science Fund (FWF)\\ https://doi.org/10.55776/ESP12 and https://doi.org/10.55776/J4955. For open access purposes, the author has applied a CC BY public copyright license to any author-accepted manuscript version arising
from this submission.

\vskip+0.8cm
\begin{center}
{\LARGE \textit{Part 1}}
\end{center}

The goal of this part is to prove Theorem~\ref{main}.

\section{Notation}

In this section we give a brief list of the notation that we use the most in Part 1. We do not give the definitions here, since they are given in the respective sections. For the whole manuscript $S$ is a closed oriented surface of genus $k\geq 2$, $V \subset S$ is a finite set of size $n\geq 1$, $G:={\rm PSL(2,\R)}$, $\rho_\circ: \pi_1S \ra G$ is a fixed Fuchsian representation. 
\begin{itemize}
\item $\tilde V \subset \tilde S$ is the preimage of $V$ in $\tilde S$ equipped with the $\pi_1S$-action; we fix some lift $V \ra \tilde V$, which we call canonical;

\item $\mc D_0=\mc D_0(V)$ is the space of Euclidean cone-metrics on $(S, V)$; 

\item $\mc D_0^c=\mc D_0^c(V) \subset \mc D_0$ is the subset of concave Euclidean cone-metrics;

\item $\mc D_0^s=\mc D_0^s(V) \subset \mc D_0^c$ is the subset of concave Euclidean cone-metrics with $V(d)=V$;

\item $\mc D_-=\mc D_-(V)$ is the space of hyperbolic cone-metrics on $(S, V)$;

\item $\mc D_-^c=\mc D_-^c(V) \subset \mc D_-$ is the subset of concave hyperbolic cone-metrics;

\item $\mc D_-^s=\mc D_-^s(V) \subset \mc D_-^c$ is the subset of concave hyperbolic cone-metrics with $V(d)=V$;

\item $\mc D_\vee^c=\mc D_\vee^c(V)$ is the blow-up of the space of concave hyperbolic cone-metrics;

\item $G_0 \cong G \ltimes \R^{2,1}$ and $G_- \cong G \times G$ are the identity components of the isometry groups of Minkowski 3-space and anti-de Sitter 3-space;

\item $\mc R$ is the Teichm\"uller component of the representation variety of $\pi_1S$ in $G$;

\item $\mc T$ is the Teichm\"uller space of $S$, which we mostly consider as the $G$-quotient of $\mc R$; note that we will have the same standard notation $\rho$ for elements of both $\mc R$ and $\mc T$; similarly, we will use $\tau$ for an element of both $T_{\rho_\circ}\mc R$ and $T_{\rho_\circ}\mc T$; the meaning should be each time clear from the context;

\item for $\tau \in T_{\rho_\circ}\mc R$, $\iota_\tau=\iota_{\rho_\circ,\tau}: \pi_1S \ra G_0$ is the representation obtained by twisting $\rho_\circ$ by $\tau$;

\item for $\rho \in \mc R$, $\theta_\rho=\theta_{\rho_\circ, \rho}: \pi_1S \ra G_-$ is the representation $(\rho_\circ, \rho)$;

\item for $\tau \in T_{\rho_\circ}\mc R$, $\tilde\Omega_\tau^+=\tilde\Omega_{\iota_\tau}^+ \subset \R^{2,1}$ is the future-complete domain of discontinuity of $\iota_\tau$;

\item for $\tau \in T_{\rho_\circ}\mc T$, $\Omega_\tau^+=\Omega_{\iota_\tau}^+$ is the $\iota_\tau$-quotient of $\tilde\Omega_\tau$; 

\item for $\rho \in \mc R$, $\tilde\Omega_\rho=\tilde\Omega_{\theta_\rho} \subset \A^3$ is the domain of discontinuity of $\theta_\rho$; $\tilde C_\rho=\tilde C_{\theta_\rho}$ is its convex core; $\tilde\Omega^\pm_\rho=\tilde\Omega^\pm_{\theta_\rho}$ are the future-convex and past-convex components of $\tilde\Omega_\rho\backslash\tilde C_\rho$;

\item for $\rho \in \mc R$, $\Lambda_\rho=\Lambda_{\theta_\rho} \subset \pt\A^3$ is the limit set of $\theta_\rho$;

\item for $\rho \in \mc T$, $\Omega_\rho=\Omega_{\theta_\rho}$ is the quotient of $\tilde\Omega_\rho$; $C_\rho=C_{\theta_\rho}$ is its convex core; $\Omega^\pm_\rho=\Omega^\pm_{\theta_\rho}$ are the future-convex and past-convex components of $\Omega_\rho\backslash C_\rho$;

\item $\tilde{\mc P}_0=\tilde{\mc P}_0(\rho_\circ, V):=T_{\rho_\circ}\mc R \times (\R^{2,1})^V$; we denote its elements by $(\tau, \tilde f)$ where $\tilde f: V \ra \R^{2,1}$; it will be helpful to consider $\tilde f$ as a $\pi_1S$-equivariant map $\tilde f: \tilde V \ra \R^{2,1}$ using the canonical lift $V \rightarrow \tilde V$;

\item $\tilde{\mc P}_0^w=\tilde{\mc P}_0^w(\rho_\circ, V) \subset \tilde{\mc P}_0$ is the subset of $(\tau, \tilde f)$ where $\tilde f(\tilde V) \subset \tilde\Omega_\tau^+$;

\item $\tilde{\mc P}_0^c=\tilde{\mc P}_0^c(\rho_\circ, V) \subset \tilde{\mc P}^w_0$ is the subset of $(\tau, \tilde f)$ in a convex position;

\item $\tilde{\mc P}_0^s=\tilde{\mc P}_0^s(\rho_\circ, V) \subset \tilde{\mc P}^s_0$ is the subset of $(\tau, \tilde f)$ in a strictly convex position;

\item $\mc P_0^s \subset \mc P_0^c \subset \mc P_0^w$ are the quotients of the respective spaces above by the $G_0$- and $\pi_1S$-actions; their elements are $(\tau, f)$, $\tau \in T_{\rho_\circ}\mc T$, $f: V \ra \Omega_\tau$;

\item $\tilde{\mc P}_-=\tilde{\mc P}_-(\rho_\circ, V):=\mc R \times (\A^3)^V$; we denote its elements by $(\rho, \tilde f)$ where $\tilde f: V \ra \A^3$; it will be helpful to consider $\tilde f$ as a $\pi_1S$-equivariant map $\tilde f: \tilde V \ra \A^3$ using the canonical lift $V \rightarrow \tilde V$;

\item $\tilde{\mc P}_-^w=\tilde{\mc P}_-^w(\rho_\circ, V) \subset \tilde{\mc P}_-$ is the subset of $(\rho, \tilde f)$ where $\tilde f(\tilde V) \subset \tilde\Omega_\rho^+$;

\item $\tilde{\mc P}_-^c=\tilde{\mc P}_-^c(\rho_\circ, V) \subset \tilde{\mc P}^w_-$ is the subset of $(\rho, \tilde f)$ in a convex position;

\item $\tilde{\mc P}_-^s=\tilde{\mc P}_-^s(\rho_\circ, V) \subset \tilde{\mc P}^s_-$ is the subset of $(\rho, \tilde f)$ in a strictly convex position;

\item $\mc P_-^s \subset \mc P_-^c \subset \mc P_-^w$ are the quotients of the respective spaces above by the $G_-$- and $\pi_1S$-actions; their elements are $(\rho, f)$, $\rho \in \mc T$, $f: V \ra \Omega_\rho$;

\item $\mc P_\vee^c=\mc P_\vee^c(\rho_\circ, V)$ is the blow-up of the space $\mc P_-^c$;

\item $\mc I_0: \mc P_0^c \ra \mc D_0^c$ is the Minkowski intrinsic metric map;

\item $\mc I_-: \mc P_-^c \ra \mc D_-^c$ is the anti-de Sitter intrinsic metric map;

\item $\mc I_\vee: \mc P_\vee^c \ra \mc D_\vee^c$ is the blow-up of $\mc I_-$.
\end{itemize}


\section{Construction of blow-ups}
\label{sec:blowups}

\subsection{Cones and blow-ups}
\label{sec:cone}

Consider an $m$-dimensional real vector space $X$, denote its origin by $o$ and the space of rays from $o$ by $\S(X)$, called the \emph{spherization} of $X$. We have a projection $\sigma: (X \backslash o) \ra \S(X)$. A subset $C \subset (X \backslash o)$ is a \emph{cone} if for every $x \in C$ and $t \in \R_{> 0}$, we have $t x \in C$. Note that we require the origin to not belong to $C$ and do not require $C$ to be convex. Denote $\sigma(C)$ by $\S(C)$. A map $\phi: C^1 \ra C^2$ between cones is \emph{coned} if $\phi(t x)=t \phi(x)$ for all $x \in C^1$, $t \in \R_{>0}$. For $A \subset X$, a cone spanned by $A$ is the smallest cone containing $A\backslash o$.

For an open cone $C$ let $\kappa: \S(C) \ra C$ be a smooth section of $\sigma$. We define the \emph{blow-up} $C_\vee$ of $C$ by
\[C_\vee:=\{(x,t): x \in {\rm im}(\kappa), t \in [0, +\infty)\}\subset C \times [0, +\infty)\]
endowed with the induced topology. Clearly, its topology is independent on $\kappa$. There are natural identifications $\inter(C_\vee) \cong C$ and $\pt C_\vee \cong \S(C)$, which we will often use implicitly. We are now interested in a criterion, when a smooth map $\phi: C^1 \ra C^2$, extending continuously to send the origin to the origin, extends to a smooth map $\phi_\vee: C_\vee^1 \ra C_\vee^2$, which we then call the \emph{blow-up} of $\phi$. First we need a technical lemma.

\begin{lm}
\label{techn2}
Let $f$ and $g$ be smooth functions on $A \times [0, \e)$, where $A$ is a domain in $\R^m$. By $f^{(k)}, g^{(k)}: A \times [0, \e) \ra \R$ we denote the $k$-th derivatives in the last variable. Suppose that there is $k \geq 0$ such that for all $x \in A$ we have $f(x,0)=g(x,0)=\ldots=f^{(k)}(x, 0)=g^{(k)}(x,0)=0$, but $g^{(k+1)}(x, 0) \neq 0$. Furthermore, assume that $g\neq 0$ on $A \times (0, \e)$. Then the function $f/g$ admits a smooth extension to $A \times [0, \e)$.
\end{lm}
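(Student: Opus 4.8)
The statement is a one-variable assertion with parameters: for each fixed $x \in A$, the function $t \mapsto f(x,t)/g(x,t)$ is smooth on $(0,\e)$ and we want to see that it extends smoothly across $t=0$, with smooth dependence on $x$. The natural tool is a parametrized version of Taylor's theorem with integral remainder (Hadamard's lemma). First I would apply Hadamard's lemma iteratively: since $f(x,0)=f^{(1)}(x,0)=\dots=f^{(k)}(x,0)=0$, one can write $f(x,t)=t^{k+1}\,\tilde f(x,t)$ for a smooth function $\tilde f$ on $A\times[0,\e)$; explicitly $\tilde f(x,t)=\frac{1}{k!}\int_0^1 (1-s)^k f^{(k+1)}(x,st)\,ds$, which is manifestly smooth in $(x,t)$. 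Likewise $g(x,t)=t^{k+1}\,\tilde g(x,t)$ with $\tilde g$ smooth, and $\tilde g(x,0)=\frac{1}{(k+1)!}g^{(k+1)}(x,0)\neq 0$ by hypothesis.

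Next I would verify that $\tilde g$ is nowhere zero on $A\times[0,\e)$, perhaps after shrinking $\e$. On $A\times(0,\e)$ we have $\tilde g(x,t)=g(x,t)/t^{k+1}\neq 0$ because $g\neq 0$ there, and on $A\times\{0\}$ we have $\tilde g(x,0)\neq 0$ by the derivative hypothesis. So $\tilde g$ never vanishes on all of $A\times[0,\e)$. Then $f/g = (t^{k+1}\tilde f)/(t^{k+1}\tilde g) = \tilde f/\tilde g$ on $A\times(0,\e)$, and $\tilde f/\tilde g$ is a quotient of smooth functions with nonvanishing denominator, hence smooth on $A\times[0,\e)$. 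This smooth function is the desired extension of $f/g$.

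The only genuine subtlety — and the step I'd expect to need the most care — is the bookkeeping in the iterated Hadamard lemma to get the factor $t^{k+1}$ cleanly, together with checking that the resulting $\tilde f$, $\tilde g$ really are jointly smooth in $(x,t)$ up to the boundary $t=0$ (not merely smooth in $t$ for fixed $x$). This follows from differentiation under the integral sign in the integral-remainder formula, using that $f$ and $g$ are smooth on the closed half-open product $A\times[0,\e)$ so all the integrands $f^{(k+1)}(x,st)$, $g^{(k+1)}(x,st)$ and their $x$- and $t$-derivatives are continuous on $A\times[0,1]\times[0,\e)$. One should also note the hypothesis "$g^{(k+1)}(x,0)\neq 0$ for all $x$" is used in two places: to produce the exact order-$(k+1)$ vanishing of $g$ (so that the same power $t^{k+1}$ is extracted from both $f$ and $g$), and to guarantee $\tilde g(x,0)\neq 0$. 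Everything else is routine.
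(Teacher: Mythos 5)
Your proof is correct and follows exactly the route the paper intends: the paper's entire proof is the one-line remark ``This follows from the Taylor theorem,'' and your iterated Hadamard-lemma factorization $f=t^{k+1}\tilde f$, $g=t^{k+1}\tilde g$ with $\tilde g$ nonvanishing (at $t=0$ by the hypothesis on $g^{(k+1)}$, and for $t>0$ since $g\neq 0$ there) is precisely the standard way to fill in that remark. Nothing further is needed.
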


This follows from the Taylor theorem. Now we can establish our criterion.

\begin{lm}
\label{smooth}
Let $\phi: C^1 \ra C^2$ be a smooth map, extending continuously to send the origin of $C^1$ to the origin of $C^2$. Let the map $\grave \phi:(x, t) \mapsto \phi(tx)$ be smooth on $C^1 \times [0, +\infty)$. Then $\phi$ admits a smooth extension $\phi_\vee: C_\vee^1 \ra C_\vee^2$.
\end{lm}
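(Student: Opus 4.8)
The strategy is to write everything in the coordinates provided by the blow-up construction and reduce the claim to the technical Lemma \ref{techn2}. Fix smooth sections $\kappa_i: \S(C^i) \ra C^i$ for $i=1,2$, so that $C_\vee^i = \{(x,t): x \in \mathrm{im}(\kappa_i),\ t \geq 0\}$, with $\inter(C_\vee^i)\cong C^i$ via $(x,t)\mapsto tx$ and $\pt C_\vee^i \cong \S(C^i)$ via $x\mapsto (x,0)$. The map to be extended is, in interior coordinates, $(x,t)\mapsto \phi(tx)$ followed by the identification $C^2 \cong \inter(C_\vee^2)$; so I must show that $y\mapsto \phi(y)$, rewritten as a pair $(\text{direction in }\mathrm{im}(\kappa_2),\ \text{scalar in }\R_{>0})$, extends smoothly across $t=0$. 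Writing $\grave\phi(x,t)=\phi(tx)$, which is smooth on $C^1\times[0,+\infty)$ by hypothesis, and noting $\grave\phi(x,0)=\phi(o)=o$, the point $\grave\phi(x,t)$ for $t>0$ lies in $C^2\backslash o$; I need to split it as $\grave\phi(x,t) = r(x,t)\cdot \kappa_2\big(\sigma(\grave\phi(x,t))\big)$ where $r(x,t)\in\R_{>0}$, and show both the direction $\sigma(\grave\phi(x,t))$ (as a point of $\mathrm{im}(\kappa_2)$) and the scalar $r(x,t)$ extend smoothly to $t=0$.

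First I would handle the scalar factor. Pick a linear functional $\ell$ on the ambient space $X^2$ of $C^2$ that is positive on $\mathrm{im}(\kappa_2)$ near the relevant directions (possible locally since $\mathrm{im}(\kappa_2)$ is a smooth section, hence bounded away from $0$ on compacta; one works locally in $x$ and patches, as the extension is unique once it exists). Let $k\geq 0$ be the smallest order for which $\pt_t^{k+1}\big(\ell\circ\grave\phi\big)(x,0)\neq 0$ — such $k$ exists at least locally because $\grave\phi(x,\cdot)$ is not identically zero for the relevant $x$ (its values fill a neighborhood of rays in $C^2$), and $\ell\circ\grave\phi$ vanishes at $t=0$. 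Then for each coordinate component $\phi_j$ of $\grave\phi$ in a basis of $X^2$, Lemma \ref{techn2} applied with $f=\phi_j\circ\grave\phi$ (more precisely $f = $ the $j$-th component of $\grave\phi$) and $g = \ell\circ\grave\phi$ shows that the ratios $\phi_j(tx)/(\ell\circ\grave\phi)(x,t)$ extend smoothly to $t=0$; hence the direction $\sigma(\grave\phi(x,t))$, which is the ray through $(\phi_1(tx):\dots:\phi_m(tx))$, depends smoothly on $(x,t)$ up to $t=0$, and composing with $\kappa_2$ gives the smooth extension of the $\mathrm{im}(\kappa_2)$-component. For the scalar component, in the blow-up chart $t\mapsto$ (the $[0,+\infty)$-coordinate), one checks directly that the appropriate coordinate of $\grave\phi(x,t)$ along $\kappa_2$ vanishes at $t=0$ together with $\grave\phi$ itself, so that the resulting $\phi_\vee$ sends $\pt C_\vee^1$ into $\pt C_\vee^2$ and is smooth.

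The main obstacle I anticipate is the bookkeeping around the order-of-vanishing constant $k$ and the fact that it can depend on $x$: Lemma \ref{techn2} is stated for a \emph{fixed} $k$ on all of $A\times[0,\e)$, so to apply it cleanly I must restrict to subdomains $A\subset \mathrm{im}(\kappa_1)$ on which $k(x)$ is constant, verify the smooth extension there, and then invoke uniqueness of continuous/smooth extensions to glue the pieces into a globally smooth $\phi_\vee$. A subtlety is showing that such a stratification by constant $k$ behaves well — the cleanest route is: the set where $\pt_t^{j}(\ell\circ\grave\phi)(x,0)=0$ for all $j\leq k$ is closed, and on the locus of interest one has, by the coned nature of $\phi$ in the limit (since $\grave\phi(x,t)=\phi(tx)$ makes $\pt_t^{j}\grave\phi(x,0)$ a homogeneous degree-$j$ expression in the derivatives of $\phi$), enough control to see $k$ is locally constant. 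I would also make explicit that the construction is independent of the auxiliary choices ($\kappa_i$, $\ell$, the basis), which again follows from uniqueness of the extension on the dense interior $C^1\subset C_\vee^1$.
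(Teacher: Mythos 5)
Your decomposition of $\phi(tx)$ into a direction in $\mathrm{im}(\kappa_2)$ and a scalar is the right picture of what $\phi_\vee$ must be, but the way you propose to prove smoothness of the direction has a genuine gap. You apply Lemma~\ref{techn2} with $g=\ell\circ\grave\phi$, which forces you to control the order of vanishing $k(x)$ of $\grave\phi(x,\cdot)$ at $t=0$. Two problems: (i) the hypotheses of Lemma~\ref{smooth} do not guarantee that a finite $k$ exists --- ``$\grave\phi(x,\cdot)$ is not identically zero'' does not rule out a smooth function all of whose $t$-derivatives vanish at $t=0$; and (ii) even where $k(x)$ is finite, it is only upper semicontinuous, so the locus $\{k(x)=k_0\}$ is locally closed rather than open, and your proposed stratify-and-glue step does not produce an open cover on which Lemma~\ref{techn2} applies with a single $k$. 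The sentence ``enough control to see $k$ is locally constant'' is precisely the assertion that needs proof, and I do not see how to supply it from the stated hypotheses.

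The paper's proof sidesteps all of this by normalizing by $t$ itself rather than by the magnitude of the image: it sets $\psi(x,t):=\bigl(\phi(tx)/t,\,t\bigr)$ and applies Lemma~\ref{techn2} with $g(x,t)=t$, for which $k=0$ uniformly in $x$ (this is the ``simple partial case'' mentioned right after the proof). The re-projectivization is then delegated to the fixed smooth submersion $\chi: C^2\times[0,+\infty)\ra C^2_\vee$ sending $(y,t)\mapsto ty$ for $t>0$ and $(y,0)\mapsto\sigma(y)$, and $\phi_\vee$ is the restriction of $\chi\circ\psi$ to $C^1_\vee$. I recommend you restructure your argument along these lines: divide by $t$, not by $\ell\circ\grave\phi$; the entire order-of-vanishing discussion then disappears. (Both arguments do implicitly use that $\partial_t\grave\phi(x,0)$ is a nonzero vector determining a ray of $\S(C^2)$, which holds in all the applications in the paper; but that is a much weaker and more checkable condition than what your stratification requires.)
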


\begin{proof}
With the help of some map $\kappa: \S(C^1) \ra C^1$ consider $C_\vee^1$ as a subset of $C^1 \times [0, +\infty)$. Define a map $\psi: C^1 \times (0, +\infty) \ra C^2 \times (0, +\infty)$ by
\[\psi(x, t):=\left(\frac{\phi(tx)}{t}, t\right).\]
Due to Lemma~\ref{techn2}, it admits a continuous extension to a smooth map $\psi: C^1 \times [0, +\infty) \ra C^2 \times [0, +\infty)$. There is a natural submersion $\chi: C^2 \times [0, +\infty) \ra C_\vee^2$: we send $(x, t)$ to $tx$ when $t>0$ and send $(x, 0)$ to $\sigma(x)\in\S(C^2) \cong \pt C_\vee^2$. Clearly, the restriction of $\chi\circ\psi$ to $C^1 \subset C^1_\vee \subset C^1 \times [0,+\infty)$ is $\phi$. Its restriction to $C^1_\vee$ gives the desired extension of $\phi$. 
\end{proof}

We remark that here we used only a simple partial case of Lemma~\ref{techn2}, but we will use Lemma~\ref{techn2} again in a slightly deeper situation.

A \emph{coned manifold} is a manifold with an atlas with charts in cones, and coned transition maps. A coned manifold has a natural smooth action of $\R_{>0}$. To a coned manifold one can associate its \emph{spherization} $\S(M)$. We naturally define coned maps between coned manifolds. For a coned map $\phi: M_1 \ra M_2$, $\S(\phi)$ is the respective map $\S(M_1) \ra \S(M_2)$. For a smooth map $\phi: M_1 \ra M_2$ sometimes we can define its blow-up $\phi_\vee$.


A subset $C \subset (X \backslash o)$ is a \emph{lower cone}, if for every $x \in C$ and $t \in \R_{> 0}$, $t\leq 1$, we have $t x \in C$. Every lower cone $C$ spans a cone, which can be then used to define the blow-up $C_\vee$ of $C$. For $A \subset X$ a lower cone spanned by $A$ is the smallest lower cone containing $A \backslash o$.


%

\subsection{The spaces of cone-metrics}
\label{sec:metrdef}

We recall some basic notions from the respective sections of~\cite{Pro3, Pro2, FP}. Let $\mc H=\mc H(V)$ be the group of self-homeomorphisms of $S$ fixing $V$ and isotopic to the identity. Let $\mc H^\sharp=\mc H^\sharp(V)$ be its normal subgroup of the ones that are isotopic to the identity relative to $V$. Define $\mc B=\mc B(V):=\mc H/\mc H^\sharp$, the \emph{pure braid group} of $(S, V)$. 

A \emph{triangulation} $\ms T$ of $(S, V)$ is a collection of simple disjoint arcs with endpoints in $V$ that cut $S$ into triangles. Two triangulations are \emph{equivalent} (resp. \emph{weakly equivalent}) if they differ by $h \in \mc H^\sharp$ (resp. by $h \in \mc H$). The set of edges of a triangulation $\ms T$ is denoted by $E(\ms T)$. 

A \emph{hyperbolic cone-metric} (resp. \emph{Euclidean cone-metric}) on $(S, V)$ is locally isometric to the hyperbolic plane $\H^2$ (resp. the Euclidean plane $\E^2$) except possibly at some points of $V$, where it is locally isometric to a hyperbolic cone (resp. a Euclidean cone). We say a \emph{cone-metric} meaning either hyperbolic or Euclidean cone-metric. The set of cone-points of a cone-metric $d$, i.e., those where the cone-angle is $\neq 2\pi$, is denoted by $V(d) \subseteq V$. The \emph{curvature} $\kappa_d(v)$ of $v \in V$ in $d$ is $2\pi$ minus the cone-angle of $v$ in $d$. We call $d$ \emph{concave} if $\kappa_d \in (-\infty, 0]^V$. For a triangulation $\ms T$ of $(S, V)$ we say that a cone-metric $d$ is \emph{$\ms T$-triangulable} (resp. \emph{weakly $\ms T$-triangulable}) if there exists a triangulation equivalent (resp. weakly equivalent) to $\ms T$ that is geodesic in $d$. 

Let $\mc D_-=\mc D_-(V)$ be the set of hyperbolic cone-metrics on $(S, V)$ up to isometries belonging to $\mc H$. Let $\mc D_-^\sharp=\mc D_-^\sharp(V)$ be the set of hyperbolic cone-metrics on $(S, V)$ up to isometries belonging to $\mc H^\sharp$. We define similarly the sets $\mc D_0$, $\mc D_0^\sharp$. The group $\mc B$ acts on $\mc D_-^\sharp$, $\mc D_0^\sharp$ and the sets $\mc D_-$, $\mc D_0$ are the respective quotients. For a cone-metric $d$ and a triangulation $\ms T$ the property that $d$ is $\ms T$-triangulable (resp. weakly $\mc T$-triangulable) holds by definition for its class in $\mc D_-^\sharp$ or $\mc D_0^\sharp$ (resp. in $\mc D_-$ or $\mc D_0$). If $d$ is $\ms T$-triangulable, then a respective geodesic triangulation is unique. However, this does not hold for weak equivalence. This technical nuance is the main reason why we sometimes invoke $\mc D_-^\sharp$ and $\mc D_0^\sharp$ in this paper, as we will be mostly using $\mc D_-$ and $\mc D_0$. We note that every cone-metric is $\ms T$-triangulable for some $\ms T$, see, e.g.,~\cite{Izm}. 

For a triangulation $\ms T$ let $\mc D_-^\sharp(\ms T)$ and $\mc D_0^\sharp(\ms T)$ be the subsets of $\ms T$-triangulable cone-metrics. There are the edge-lengths charts $\phi^\ms T_-:\mc D_-^\sharp(\ms T) \ra \R^{E(\ms T)}$, $\phi^\ms T_0:\mc D_0^\sharp(\ms T) \ra \R^{E(\ms T)}$ sending $\mc D_-^\sharp(\ms T)$ and $\mc D_0^\sharp(\ms T)$ injectively onto the open polyhedral cone in $\R^{E(\ms T)}$ defined by the triangle inequalities. These charts endow $\mc D_-^\sharp$ and $\mc D_0^\sharp$ with structures of smooth manifolds of dimension $3(n-k)$, where $n=|V|$ and $k$ is the genus of $S$. Furthermore, for $\mc D_0^\sharp$ the transition maps are coned and endow $\mc D_0^\sharp$ with the structure of a coned manifold. In the case of $\mc D_-$ the intersections of charts are not subcones and the transition maps are not coned, so a coned structure and a spherization are not defined for $\mc D_-^\sharp$.

Let $\mc C=\mc C(V)$ be the set of conformal structures on $S$ up to conformal maps belonging to $\mc H^\sharp$. 
It has a natural topology of a smooth manifold of dimension $2n-3k$, see~\cite{FM}.
Every cone-metric defines a conformal structure on $S$. Consider the map $\mc U_-: \mc D_-^\sharp \ra \mc C \times \R^V$, which sends a cone-metric $d$ to the respective conformal structure and to the tuple $\kappa_d$ of the curvatures of $V$. The work of Troyanov~\cite{Tro2} implies that $\mc U_-$ is a diffeomorphism onto the domain that is defined by the conditions $\kappa_d \in (-\infty, 2\pi)^V$ and $\sum_{v \in V}\kappa_d(v)<2\pi(2-2k)$. The group $\mc B$ acts on $\mc D_-^\sharp$ equivariantly with respect to $\mc U_-$, where on the image it acts on $\mc C$ and preserves $\kappa_d$. As a subgroup of the pure mapping class group of $(S, V)$, it acts properly discontinuously on $\mc C$, see~\cite{FM}. By definition, it also acts freely. Hence, it endows $\mc D_-$ with the structure of a smooth manifold of dimension $3(n-k)$ so that $\mc D_-^\sharp \ra \mc D_-$ is a covering (actually, a universal covering). For $\mc D_0^\sharp$ we have a map $\mc U_0: \mc D_0^\sharp \ra \mc C \times \R^V \times \R_{>0}$. The last component here stands for the area of a cone-metric. The work of Troyanov~\cite{Tro} implies that $\mc D_0^\sharp$ is sent diffeomorphically onto the hypersurface defined by the conditions $\kappa_d \in (-\infty, 2\pi)^V$ and $\sum_{v \in V}\kappa_d(v)=2\pi(2-2k)$. The group $\mc B$ again acts on $\mc D_0$ equivariantly with respect to $\mc U_0$, where on the image it acts on $\mc C$ and preserves $\kappa_d$ and the area. Hence, it also acts freely and properly discontinuously. The coned structure of $\mc D_0$ is expressed in the scaling of the area. Thereby, $\mc D_0$ is endowed with a structure of a coned manifold of dimension $3(n-k)$.

We denote by $\mc D_{-}^s \subset \mc D_-$ and $\mc D_{0}^s \subset \mc D_0$ the (open) subsets of concave cone-metrics $d$ with $V(d)=V$. We denote by $\mc D_{-}^c \subset \mc D_-$ and $\mc D_{0}^c \subset \mc D_0$ the (closed) subsets of concave cone-metrics. 

\subsection{Blow-up of the space of cone-metrics}
\label{sec:metr}


Pick a triangulation $\ms T$ of $(S, V)$ and the charts $\phi^\ms T_-:\mc D_-^\sharp(\ms T) \ra \R^{E(\ms T)}$, $\phi^\ms T_0:\mc D_0^\sharp(\ms T) \ra \R^{E(\ms T)}$ from Section~\ref{sec:metrdef}.
The images of both $\phi^\ms T_0$, $\phi^\ms T_-$ are the same open cone in $\R^{E(\ms T)}$, which we denote by $\Phi^{\ms T}$. Now define $\mc D_\vee^\sharp(\ms T):=\mc D_-^\sharp(\ms T)\cup\S(\mc D_0^\sharp(\ms T))$ and define a map $\phi^\ms T_\vee: \mc D_\vee^\sharp(\ms T) \ra  \Phi^\ms T_\vee$, which coincides with $\phi^\ms T_-$ on $\mc D_-^\sharp(\ms T)$ and with $\S(\phi^\ms T_0)$ on $\S(\mc D_0^\sharp(\ms T))$.
We claim

\begin{lm}
\label{metrics}
The maps $\phi^\ms T_\vee$ equip $\mc D_-^\sharp\cup\S(\mc D_0^\sharp)$ with a topology of a smooth manifold with boundary. 
\end{lm}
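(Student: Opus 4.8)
The plan is to build the smooth manifold-with-boundary structure chart-by-chart, using the blow-up construction of Section~\ref{sec:cone} applied to each edge-length chart, and then to verify that the transition maps between the resulting charts are smooth up to the boundary. First I would fix, for each triangulation $\ms T$ of $(S,V)$, the common image cone $\Phi^\ms T \subset \R^{E(\ms T)}$ of $\phi^\ms T_0$ and $\phi^\ms T_-$; since $\Phi^\ms T$ is an open cone, its blow-up $\Phi^\ms T_\vee$ is a smooth manifold with boundary, and $\phi^\ms T_\vee$ is by construction a bijection from $\mc D_\vee^\sharp(\ms T)=\mc D_-^\sharp(\ms T)\cup\S(\mc D_0^\sharp(\ms T))$ onto $\Phi^\ms T_\vee$. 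This defines the candidate smooth atlas, with interiors coming from the already-known smooth structure on $\mc D_-^\sharp$ and boundary points coming from the coned structure on $\mc D_0^\sharp$. One must also check that the $\mc D_\vee^\sharp(\ms T)$ cover $\mc D_-^\sharp\cup\S(\mc D_0^\sharp)$: this is exactly the statement (recalled in Section~\ref{sec:metrdef}, cf.~\cite{Izm}) that every cone-metric, hyperbolic or Euclidean, is $\ms T$-triangulable for some $\ms T$, applied to metrics in $\mc D_-^\sharp$ and to rays of metrics in $\mc D_0^\sharp$.

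The core of the argument is the compatibility of two charts $\phi^{\ms T_1}_\vee$ and $\phi^{\ms T_2}_\vee$ on the overlap. On the interior part the transition map is the known smooth transition map of $\mc D_-^\sharp$, so the only real work is near the boundary. Here I would argue as follows. The edge-length function $\ell^{\ms T_2}_e$ of an edge $e\in E(\ms T_2)$, expressed in the coordinates $\phi^{\ms T_1}_-$ on $\mc D_-^\sharp(\ms T_1)\cap\mc D_-^\sharp(\ms T_2)$, is a smooth function of the lengths $\ell^{\ms T_1}_{e'}$; its Euclidean analogue, expressed in $\phi^{\ms T_1}_0$, is the corresponding smooth \emph{homogeneous-degree-one} function of the Euclidean lengths (this is the same computation done with the hyperbolic trigonometric identities replaced by their $O(\varepsilon)$-linearizations, i.e. ordinary plane trigonometry). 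The point is that the hyperbolic length formula and the Euclidean one are related by a geometric transition: rescaling a hyperbolic cone-metric by $t$ and letting $t\to 0$ converts hyperbolic edge-length relations into the homogeneous Euclidean ones. Concretely, if $d$ has $\ms T_1$-edge-lengths $(\ell_{e'})$, then $t\cdot d$ has $\ms T_1$-lengths $(t\ell_{e'})$ (in the natural rescaling of $\R^{E(\ms T_1)}$) and one uses that the map $(x,t)\mapsto \bigl(\text{hyperbolic }\ms T_2\text{-lengths of the metric with }\ms T_1\text{-lengths }tx\bigr)$ extends smoothly to $t=0$, where it equals $t$ times the Euclidean $\ms T_2$-length function evaluated at $x$. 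This is precisely the hypothesis of Lemma~\ref{smooth} (with $C^1,C^2$ the cones $\Phi^{\ms T_1},\Phi^{\ms T_2}$ and $\phi$ the hyperbolic change of edge-lengths), so Lemma~\ref{smooth} yields a smooth extension of the transition map to the blow-up, and its restriction to the boundary is $\S$ of the Euclidean transition map, matching $\phi^{\ms T_2}_\vee\circ(\phi^{\ms T_1}_\vee)^{-1}$ there. Applying the same reasoning with $\ms T_1$ and $\ms T_2$ swapped gives a smooth inverse, so the transition is a diffeomorphism of manifolds with boundary.

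I would then record two routine points. First, Hausdorffness and second-countability: $\mc D_-^\sharp$ and $\mc D_0^\sharp$ are second-countable manifolds, there are only countably many equivalence classes of triangulations, each $\mc D_\vee^\sharp(\ms T)$ is second-countable, and the boundary $\S(\mc D_0^\sharp)$ is metrizable, so Hausdorffness follows by separating interior points from each other in $\mc D_-^\sharp$, boundary points from each other in $\S(\mc D_0^\sharp)$, and an interior point from a boundary point using the collar coordinate $t$ of any chart containing the boundary point. Second, one must check the boundary is genuinely a boundary, i.e. that $\phi^\ms T_\vee$ sends $\S(\mc D_0^\sharp(\ms T))$ to $\pt\Phi^\ms T_\vee$ and $\mc D_-^\sharp(\ms T)$ to the interior, which is immediate from the definitions of $\phi^\ms T_-$, $\S(\phi^\ms T_0)$ and the blow-up.

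\textbf{Main obstacle.} The delicate step is establishing that the hyperbolic-to-hyperbolic change-of-edge-lengths map, viewed through the geometric transition, satisfies the smoothness-up-to-$t=0$ hypothesis of Lemma~\ref{smooth}, and that its boundary value is exactly $\S$ of the Euclidean change-of-edge-lengths map; this is where one must genuinely use the real-analyticity of hyperbolic trigonometry in the scaling parameter together with the fact that its zeroth-order term in $t$ reproduces plane geometry. Everything else — covering, Hausdorffness, the interior compatibility — is bookkeeping that reduces to already-cited facts about $\mc D_-^\sharp$, $\mc D_0^\sharp$, and the action of $\mc B$.
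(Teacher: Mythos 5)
Your overall strategy is the same as the paper's: blow up each edge-length chart, reduce to the compatibility of two charts near the boundary, and use Lemma~\ref{smooth} to show that the hyperbolic change-of-edge-lengths map extends smoothly to the blow-up with boundary value the (spherized) Euclidean change of edge-lengths. The covering, Hausdorffness, and boundary-identification points you record are indeed routine.

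However, there is a genuine gap in the core step. You propose to apply Lemma~\ref{smooth} ``with $C^1,C^2$ the cones $\Phi^{\ms T_1},\Phi^{\ms T_2}$ and $\phi$ the hyperbolic change of edge-lengths,'' but that map is only defined on the overlap $\Phi^{\ms T_1,\ms T_2}=\phi^{\ms T_1}_-\bigl(\mc D_-^\sharp(\ms T_1)\cap\mc D_-^\sharp(\ms T_2)\bigr)$, and this overlap is \emph{not} a subcone of $\Phi^{\ms T_1}$ (the paper points this out explicitly in Section~\ref{sec:metrdef}: the transition maps of $\mc D_-^\sharp$ are not coned and the chart intersections are not subcones). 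Lemma~\ref{smooth} requires the domain to be an open cone, so it cannot be invoked directly. Worse, before one can even speak of smoothness of the transition at a boundary point, one must show that the overlap $\Phi^{\ms T_1,\ms T_2}_\vee$ is a submanifold with boundary of $\Phi^{\ms T_1}_\vee$, i.e., that a boundary point of the overlap has a whole neighborhood in $\Phi^{\ms T_1}_\vee$ contained in the overlap; a priori the overlap could approach the boundary in a cusp, and then the blown-up charts would not be compatible as charts of a manifold with boundary. The paper resolves both issues at once with Lemma~\ref{techn1}: reducing to a single flip in a quadrilateral $Q$, the condition of being both $\ms T$- and $\ms T'$-triangulable is the strict convexity of $Q$, an open condition on the angles; the angles extend continuously to the blow-up (by the cosine law and Lemma~\ref{techn2}) and are \emph{strictly monotone} along each ray $t\mapsto tx$ by Lemma~\ref{techn1}, so the rays starting near a boundary overlap point stay in the overlap and span a genuine subcone $\breve\Phi^{\ms T,\ms T'}$ on which Lemma~\ref{smooth} then legitimately applies. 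Your proof needs this monotonicity (or some substitute establishing openness of the overlap near the boundary and a coned domain for Lemma~\ref{smooth}) to go through.
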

We denote this manifold by $\mc D_\vee^\sharp$. We will need

\begin{lm}
\label{techn1}
Let $ABC$ and $A'B'C'$ be two hyperbolic triangles, whose respective side-length differ by $t$, $0<t<1$, so that $A'B'C'$ is smaller. Then the respective angles of $A'B'C'$ are strictly bigger than the respective angles of $ABC$.
\end{lm}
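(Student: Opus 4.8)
The plan is to deduce the statement from the hyperbolic law of cosines, which expresses each angle of a hyperbolic triangle monotonically in terms of the three side-lengths. Recall that in a hyperbolic triangle with sides $a, b, c$ opposite to vertices $A, B, C$, we have
\[
\cosh a = \cosh b \cosh c - \sinh b \sinh c \cos\ang A,
\]
hence
\[
\cos\ang A = \frac{\cosh b \cosh c - \cosh a}{\sinh b \sinh c}.
\]
So it suffices to show that along the path $s \mapsto (a-s, b-s, c-s)$ for $s \in [0, t]$ — which stays inside the region of valid triangles, since the triangle inequalities are preserved under subtracting a common constant and $A'B'C'$ is assumed to be an honest smaller triangle — the right-hand side above is strictly decreasing in $s$ (equivalently, $\cos\ang A$ strictly decreases, so $\ang A$ strictly increases), and symmetrically for $\ang B$, $\ang C$.

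First I would reduce to a one-variable monotonicity computation: set $F(s) = \dfrac{\cosh(b-s)\cosh(c-s) - \cosh(a-s)}{\sinh(b-s)\sinh(c-s)}$ and show $F'(s) < 0$ for $s \in [0, t]$. Differentiating is elementary but slightly tedious; the cleaner route is to use the addition formulas to rewrite the numerator. Using $\cosh(b-s)\cosh(c-s) = \tfrac12(\cosh(b+c-2s) + \cosh(b-c))$ and $\sinh(b-s)\sinh(c-s) = \tfrac12(\cosh(b+c-2s) - \cosh(b-c))$, set $x = b+c-2s$, $p = \cosh(b-c) \geq 1$, $q = \cosh(a-s)$. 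Then
\[
F = \frac{\cosh x + p - 2q}{\cosh x - p}.
\]
As $s$ increases by $ds$, $x$ decreases by $2\,ds$ so $\cosh x$ decreases (since $x > 0$ on the relevant range — here one uses that $b+c > a > s$, consequences of the triangle inequalities), while $q = \cosh(a-s)$ also decreases. One then checks that $\partial F/\partial(\cosh x) > 0$ and $\partial F/\partial q < 0$, so moving along the decrease of both $\cosh x$ and $q$ forces $F$ to decrease. Concretely $\partial F/\partial q = -2/(\cosh x - p) < 0$ because $\cosh x - p = 2\sinh(b-s)\sinh(c-s) > 0$; and $\partial F/\partial(\cosh x) = (\, -p + 2q - (-p)\,)/(\cosh x - p)^2$ wait — more carefully, $\partial F/\partial(\cosh x) = \big((\cosh x - p) - (\cosh x + p - 2q)\big)/(\cosh x - p)^2 = (2q - 2p)/(\cosh x - p)^2$, which has the sign of $q - p = \cosh(a-s) - \cosh(b-c)$. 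This is $\geq 0$ precisely when $a - s \geq |b-c|$, i.e. when $(a-s) + (c-s) \geq b$ and $(a-s)+(b-s) \geq c$ after rearranging — but those are again among the triangle inequalities for $A'B'C'$, hence hold. Thus both partials push $F$ strictly down as $s$ grows (the strictness coming from $\partial F/\partial q < 0$ strictly), giving $F'(s) < 0$, so $\ang A$ strictly increases. Repeating verbatim for the other two angles finishes the proof.

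The main obstacle I anticipate is purely bookkeeping: one must verify that all the intermediate quantities ($x > 0$, $\cosh x - p > 0$, $q - p \geq 0$) stay in the right ranges for every $s \in [0, t]$, and this hinges entirely on the hypothesis that $A'B'C'$ (and hence every triangle $(a-s, b-s, c-s)$ with $0 \le s \le t$, by convexity of the triangle-inequality cone in the direction $(-1,-1,-1)$) is a genuine nondegenerate hyperbolic triangle. No deep input is needed beyond the hyperbolic law of cosines and the convexity of the admissible region; the content is the sign computation above. (Alternatively, one could invoke the classical fact that a hyperbolic triangle with smaller sides has larger angle-sum / larger area-defect, but the direct law-of-cosines argument gives the stronger pointwise statement about each individual angle that the lemma asserts.)
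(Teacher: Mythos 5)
There is a genuine error in the central sign computation, and it is exactly where the content of the lemma lives. In your decomposition $F=\dfrac{\cosh x+p-2q}{\cosh x-p}$ with $q=\cosh(a-s)$, you correctly find $\partial F/\partial q=-2/(\cosh x-p)<0$, but you then conclude that the decrease of $q$ along the path pushes $F$ down. It does the opposite: $\frac{\partial F}{\partial q}\cdot\frac{dq}{ds}$ is a product of two strictly negative quantities, hence strictly positive. Geometrically this is just the hinge theorem: shrinking only the side $a$ opposite the angle, with $b,c$ fixed, strictly \emph{decreases} that angle, so this channel works against the conclusion; only the $\cosh x$ channel (shrinking the two adjacent sides) pushes the angle up. The lemma is therefore not a consequence of coordinatewise monotonicity. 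One must show that the negative term dominates the positive one, i.e.\ (writing $a'=a-s$, $b'=b-s$, $c'=c-s$) that
\[2\sinh(b'+c')\bigl(\cosh a'-\cosh(b'-c')\bigr)>\sinh a'\,\bigl(\cosh(b'+c')-\cosh(b'-c')\bigr),\]
which is a genuine inequality needing its own argument: in the equilateral case the two competing terms have ratio $2\cosh L/(\cosh L+1)$, which tends to $1$ as $L\to0$, so the cancellation is delicate and no crude bound will do. As written, your argument establishes nothing about the sign of $F'(s)$, so the proof is incomplete at its key step.

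Two smaller remarks. First, the paper does not prove this lemma itself; it cites Lemma 2.3.9 of~\cite{Pro}, so there is no internal proof to compare against, but the statement is certainly not the triviality your sign-check would make it. Second, given the hypothesis $0<t<1$ and the way the lemma is applied (monotonicity of the angles along the rays $t\mapsto tx$ in the edge-length cone), the intended comparison is almost certainly multiplicative ($a'=ta$, etc.) rather than additive ($a'=a-t$); the same competing-signs difficulty arises in either reading, so this does not change the diagnosis, but you should be aware that the version you set out to prove is not the one the paper uses.
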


This was shown in~\cite[Lemma 2.3.9]{Pro}.

\begin{proof}[Proof of Lemma~\ref{metrics}.]
Let us see what we need to do. The system of charts $\phi^\ms T_\vee$ is an atlas on $\mc D_\vee^\sharp$ with values in smooth manifolds with boundary $\Phi^\ms T_\vee$. We need to understand the behavior of the transition maps around the boundary points. Let $\ms T$ and $\ms T'$ be two triangulations. Denote by $\mc D_0^\sharp(\ms T, \ms T')$, $\mc D_-^\sharp(\ms T, \ms T')$ the sets of cone-metrics that are both $\ms T$- and $\ms T'$-triangulable, denote $\mc D_-^\sharp(\ms T, \ms T')\cup\S(\mc D_0^\sharp(\ms T, \ms T'))$ by $\mc D_\vee^\sharp(\ms T, \ms T')$, and define $\Phi^{\ms T, \ms T'}_\vee:=\phi^\ms T_\vee(\mc D_\vee^\sharp(\ms T, \ms T'))$. The first thing is to check that $\Phi^{\ms T, \ms T'}_\vee$ is a submanifold with boundary of $\Phi^{\ms T}_\vee$, i.e. that for every point of $\Phi^{\ms T, \ms T'}_\vee \cap \pt \Phi^{\ms T}_\vee$, its neighborhood in $\Phi^{\ms T}_\vee$ belongs to $\Phi^{\ms T, \ms T'}_\vee$. Second, one needs to show that the transition map $\phi^{\ms T, \ms T'}_\vee: \Phi^{\ms T, \ms T'}_\vee \ra \Phi^{\ms T', \ms T}_\vee$ is smooth at the boundary points. It is enough to consider the case of $\ms T$ and $\ms T'$ that differ by a flip in one quadrilateral $Q$. Note that $d \in \mc D_0^\sharp(\ms T, \ms T')$ or $d \in \mc D_-^\sharp(\ms T, \ms T')$ if and only if $Q$ is strictly convex in $d$. 

For the moment we identify $\Phi^{\ms T}_\vee$ with $\mc D_\vee^\sharp(\ms T)$. Pick an arbitrary triangle of $\ms T$ with the edge-lengths $a$, $b$ and $c$ and the angle $\alpha$ opposite to the $a$-edge, which are all considered as functions on $\Phi^{\ms T}$. Recall the hyperbolic cosine law
\[\cos \alpha=\frac{\cosh b\cosh c-\cosh a}{\sinh b\sinh c}.\]
Consider the function $\grave\alpha$ on $\Phi^{\ms T} \times (0, +\infty)$ defined by $\grave\alpha(x,t):=\alpha(tx)$. By Lemma~\ref{techn2}, it extends smoothly to $\Phi^{\ms T} \times [0, +\infty)$. Hence, as in the proof of Lemma~\ref{smooth}, $\alpha$ extends to a smooth function on $\Phi^{\ms T}_\vee$, which we continue to denote by $\alpha$. An easy computation shows that on $\pt\Phi^{\ms T}_\vee$ it is equal to the respective angle in the respective class of metrics in $\S(\mc D_0^\sharp(\ms T))$.

The set $\Phi^{\ms T, \ms T'}_\vee \cap \pt \Phi^{\ms T}_\vee$ is an open subset of $\pt \Phi^{\ms T}_\vee$. Pick $x \in \Phi^{\ms T, \ms T'}_\vee \cap \pt \Phi^{\ms T}_\vee$. Consider the foliation of $\Phi^{\ms T}_\vee$ by the curves of the form $tx$, $t \in \R_{\geq 0}$. Observe that from Lemma~\ref{techn1}, every angle of every triangle of $\ms T$ is strictly monotonous along every such curve. Then all the curves that start sufficiently close to $x$ belong to $\Phi^{\ms T, \ms T'}_\vee$. On the other hand, clearly they span a neighborhood of $x$ in $\Phi^{\ms T}_\vee$. This shows that $\Phi^{\ms T, \ms T'}_\vee$ is a submanifold with boundary. Denote by $\breve\Phi^{\ms T, \ms T'}_\vee$ the subset of $\Phi^{\ms T, \ms T'}_\vee$ spanned by these curves. Its interior corresponds to an open cone $\breve\Phi^{\ms T, \ms T'}$ in $\R^{E(\ms T)}$.

Now we  need to verify the smoothness of the transition map. It is enough to check it on $\breve\Phi^{\ms T, \ms T'}_\vee$. Return to the quadrilateral $Q$. Denote the edge-lengths of $Q$ by $a$, $b$, $c$ and $d$ in this order, and the lengths of the diagonals by $e$ and $f$, where $e$ is the length of an edge of $\ms T$ and this edge passes in the corner between the edges of lengths $a$ and $d$, and $f$ is the length of an edge of $\ms T'$. Let $\alpha$ be the angle between the edges of lengths $a$ and $d$, and let it be split by the $e$-diagonal into the angles $\alpha_1$ and $\alpha_2$; $\alpha_1$ is adjacent to the $a$-edge. We consider all these as functions on $\breve\Phi^{\ms T, \ms T'}$. We have
\[\cosh f =\cosh a\cosh d-\sinh a \sinh d \cos \alpha , \]
\[\alpha=\alpha_1+\alpha_2.\]

Let $\grave \alpha_1$ and $\grave \alpha_2$ be the respective functions on $\breve\Phi^{\ms T, \ms T'} \times (0,+\infty)$. Since they extend to smooth functions on $\breve\Phi^{\ms T, \ms T'} \times [0,+\infty)$, it follows that the respective function $\grave f$ also extends to a smooth function on $\breve\Phi^{\ms T, \ms T'} \times [0,+\infty)$. Denote the restriction of $\phi^{\ms T, \ms T'}_\vee$ to $\breve\Phi^{\ms T, \ms T'}$ by $\breve\phi^{\ms T, \ms T'}$. By Lemma~\ref{smooth}, it has a smooth blow-up $\breve\phi^{\ms T, \ms T'}_\vee$ on $\breve\Phi^{\ms T, \ms T'}_\vee$. An easy computation shows that on $\pt \breve\Phi^{\ms T, \ms T'}_\vee\cong \S(\mc D_0^\sharp(\ms T, \ms T'))$ the map $\breve\phi^{\ms T, \ms T'}_\vee$ coincides with $\phi^{\ms T, \ms T'}_\vee$. The proof is finished.

\end{proof}

The group $\mc B$ acts on both spaces $\mc D_-^\sharp$ and $\S(\mc D_0^\sharp)$ by diffeomorphisms, freely and properly discontinuously. We need to see that these actions combine into a properly discontinuously action on $\mc D_\vee^\sharp$ by diffeomorphisms. Pick $h \in \mc H\backslash \mc H^\sharp$. By definition, for a triangulation $\ms T$ the triangulation $\ms T':=h_*(\ms T)$ is not equivalent to $\ms T$. The class of $h$ in $\mc B$ induces a diffeomorphism $\R^{E(\ms T)} \ra \R^{E(\ms T')}$, which sends $\Phi^\ms T$ onto $\Phi^{\ms T'}$. From this and from our construction, it is clear that the class of $h$ induces a self-diffeomorphism of $\mc D_\vee^\sharp$ and that the proper discontinuity of the actions of $\mc B$ on $\mc D_-^\sharp$ and $\S(\mc D_0^\sharp)$ implies the proper discontinuity of the action on $\mc D_\vee^\sharp$. We denote the obtained quotient by $\mc D_\vee$.

From the viewpoint of the Troyanov parameterization, $\mc D_\vee^\sharp$ is diffeomorphic to the domain in $\mc C \times \R^V$ given by $\kappa_d \in (-\infty, 2\pi)$ and $\sum_v \kappa_d(v)\leq 2\pi(2-2k)$. However, we will not pursue this viewpoint.

Define $\mc D_{\vee}^s:=\mc D_{-}^s \cup \S(\mc D_{0}^s)\subset \mc D_\vee$. Lemma~\ref{techn1} implies that $\mc D_{\vee}^s$ is a submanifold with boundary. Define also  $\mc D_{\vee}^c:=\mc D_{-}^c \cup \S(\mc D_{0}^c)\subset \mc D_\vee$.

\subsection{Projective geometry of spacetimes of constant curvature}

\subsubsection{Projective geometries and geometric transition}
\label{sec:geometries}

We will employ the theory of geometric transition from Minkowski and co-Minkowski geometries to anti-de Sitter geometry. The geometric transition from co-Minkowski geometry to anti-de Sitter geometry was introduced and popularized by Danciger~\cite{Dan}, with co-Minkowski geometry having the name half-pipe geometry in~~\cite{Dan}. For us, the main role will be played by the transition from Minkowski to anti-de Sitter, though we will also need to employ the one from co-Minkowski. We will rely on the exposition of Fillastre--Seppi~\cite{FS5}.

%
%
%

We work with $\R^4$ as well as with its projectivization $\RP^3$. Consider on $\R^4$ the quadratic form 
\[q(x):=x_1^2+x_2^2-x_3^2-x_4^2\]
and let $b$ be the associated bilinear form.
Define anti-de Sitter 3-space, $\A^3$, as the projective quotient of the quadric
\[\{x \in \R^4: q(x)=-1\},\]
define Minkowski 3-space, $\R^{2,1}$, as the projective quotient of the degenerate quadric
\[\{x \in \R^4: x_4^2=1\},\]
and define co-Minkowski 3-space, $^*\R^{2,1}$, as the projective quotient of the degenerate quadric
\[\{x \in \R^4: x_1^2+x_2^2-x_3^2=-1\}.\]
We consider $\RP^3$ oriented, which induces an orientation on all the mentioned domains.

Let $G_-$ be the identity component of ${\rm PO}(2,2)$. Note that it is isomorphic to $G \times G$. Define $G_0$ to be the subgroup of ${\rm PGL}(4, \R)$ consisting of the projectivized matrices of the form
\[\begin{pmatrix}
\raisebox{-0.2\height}[0pt][0pt]{\LARGE$A$} & \begin{matrix}
t_1 \\ t_2 \\ t_3
\end{matrix}\\
\begin{matrix}
0 & 0 & 0
\end{matrix}
& 1
\end{pmatrix}\]
where $A \in G$. For $g \in {\rm PGL}(4,\R)$ we define its \emph{dual} by $g^*:=(g^a)^{-1}$, where $g^a$ is the adjoint to $g$ with respect to $b$. Then the elements of $G_-$ are self-dual. Define $G_0^*$ to be the subgroup of ${\rm PGL}(4, \R)$ consisting of the duals to $g \in G_0$. In other words, it consists of the projectivized matrices of the form
\[\begin{pmatrix}
\raisebox{-0.2\height}[0pt][0pt]{\LARGE$A$} & \begin{matrix}
0 \\ 0 \\ 0
\end{matrix}\\
\begin{matrix}
t_1 & t_2 & t_3
\end{matrix}
& 1
\end{pmatrix}\]
where $A \in G$. The both groups $G_0$ and $G_0^*$ are isomorphic to $G \ltimes \R^3$.

In the pairs $(\A^3, G_-)$, $(\R^{2,1}, G_0)$ and $(^*\R^{2,1}, G_0^*)$ the corresponding Lie groups act smoothly and transitively on the corresponding spaces. Thus, they are geometries in the sense of Thurston~\cite{Thu}. (Furthermore, one can say projective geometries, to emphasize that the spaces are domains in $\RP^3$ and the groups are subgroups of ${\rm PGL}(4, \R)$.) We consider $\A^3$ and $\R^{2,1}$ endowed with the Lorentzian metrics induced by $b$, which are invariant with respect to the corresponding groups. The metric of $\A^3$ has constant sectional curvature $-1$, while the metric of $\R^{2,1}$ has constant sectional curvature 0. We fix a time-orientation on $\A^3$ and $\R^{2,1}$ so that the anti-de Sitter future directions for $p \in \A^3 \cap \R^{2,1}$ are future also for the Minkowski metric. The space $^*\R^{2,1}$ does not have a $G_0^*$-invariant non-degenerate pseudo-Riemannian metric. A natural metric on it is degenerate and is not induced by $b$, but we anyway will not use it. 

We will employ the basics of convex geometry in $\RP^3$. Recall that a subset $C \subset \RP^3$ is \emph{convex} if it is contained in an affine chart and is convex there. It is \emph{properly convex} if its closure is convex. We also say that a subset of $\RP^3$ is \emph{closely convex} if it is the closure of a convex subset. For $X \subset \RP^3$ we denote by $\cl(X)$ and $\conv(X)$ the closure and the closed convex hull of $X$ in $\RP^3$, where the latter means the inclusion minimal closely convex set containing $X$. We also make a convention that when we speak about the boundary of $X$, we mean it in $\RP^3$ in the sense of general topology, unless we indicate otherwise. In particular, the boundary of a not full-dimensional closed subset of $\RP^3$ is meant the subset itself. When we speak about the Hausdorff convergence, we mean it on the closed subsets of $\RP^3$.

We will rely on the projective duality in $\RP^3$ with respect to $q$. For a point $p \in \RP^3$, $p^* \subset \RP^3$ stands for the dual plane to $p$.
Let $C$ be a closely convex subset of $\RP^3$.
It determines a closed convex cone $K \subset \R^4$. Define the \emph{dual} $C^* \subset \RP^3$ of $C$ as the projective quotient of the set
\[\{x \in \R^4: b(x, x')\geq 0,~~~\forall x' \in K\}.\]
For an alternative definition, we say that a plane intersects $C$ transversely, if it intersects the projective span of $C$ transversely and intersects the relative interior of $C$. 
Then $C^*$ is exactly the set of points dual to the planes that do not intersect $C$ transversely. Note that $C^*$ is also closely convex and that the duality is polar, i.e., $C^{**}=C$. Note that if $C \subset \RP^3$ is closely convex and $g \in {\rm PGL}(4,\R)$, then $(gC)^*=g^*C^*$.

We will need a special interpretation for $\pt \A^3$. To this purpose, let $\rm Mat(2)$ be the space of $2\times 2$ real matrices. Consider an isomorphism 
\[\R^4 \ra \rm Mat(2),\]
\[(x_1, x_2, x_3, x_4) \mapsto
\begin{pmatrix}
x_1+x_3 & x_2+x_4 \\
x_2-x_4 & x_3-x_1
\end{pmatrix}.\]
Note that under this isomorphism we get
\[\pt \A^3\cong \{[A] \in \mathbb P(\rm{Mat}(2)): {\rm rank}(A)=1\}.\]
We use this to construct a diffeomorphism
\begin{equation}
\label{boundaryads}
\pt \A^3 \ra \RP^1 \times \RP^1,
\end{equation}
\[[A] \mapsto ({\rm im}(A), {\rm ker}(A)).\]
Recall that $\pt \A^3$ has a conformal Lorentzian structure, see, e.g.,~\cite[Section 2.2]{BS2}. Via identification~(\ref{boundaryads}), the sets $\{p\} \times \mathbb {RP}^1$ and $\mathbb {RP}^1 \times \{p\}$ correspond to the lightlike lines in $\pt \A^3$.

We will employ two natural charts on $\RP^3$. The first is
\begin{equation}
\label{chart}
\{x \in \RP^3: x_4 \neq 0\} \ra \R^3,
\end{equation}
\[[x_1, x_2, x_3, x_4] \mapsto \left(\frac{x_1}{x_4}, \frac{x_2}{x_4}, \frac{x_4}{x_4}\right)=:(y_1, y_2, y_3).\]
We note that $\R^{2,1}$, considered as a subset of $\RP^3$, is exactly the domain of this chart. Because of this, we will call it \emph{the Minkowski chart}. We fix the base point $o:=[0,0,0,1] \in \RP^3$, which corresponds to the origin of the chart. Via identification~(\ref{boundaryads}), the diagonal $\{(p,p): p \in \mathbb{RP}^1\}$ corresponds to $\pt \A^3 \cap o^*\cong \RP^1$.

The other chart that we will use is
\begin{equation}
\label{dualchart}
\{x \in \RP^3: x_3 \neq 0\} \ra \R^3,
\end{equation}
\[[x_1, x_2, x_3, x_4] \mapsto \left(\frac{x_1}{x_3}, \frac{x_2}{x_3}, \frac{x_4}{x_3}\right)=:(z_1, z_2, z_3).\]

The set $^*\R^{2,1}$ embeds in this chart, and because of this we will call it \emph{the co-Minkowski chart}. 
Recall that the hyperbolic plane $\H^2$ is frequently defined as 
\[\{y \in \R^{2,1}: y_1^2+y_2^2-y_3^2=-1,~~~y_3>0\},\]
where we use the coordinates of the Minkowski chart. We, however, prefer to call $\H^2$  the radial projection of this set from $o$ to $o^*$. Note that the intersection of $o^*$ with the co-Minkowski chart is the $z_1z_2$-coordinate plane. Then, as a point set, $\H^2$ is the open disk around the origin in this plane, which is the unit disk when the plane is endowed with the standard Euclidean metric. Then the co-Minkowski chart provides us the identification $^*\R^{2,1}\cong\mathbb \H^2 \times \R$.


The relative boundary $\pt_r \H^2$ coincides with $\pt \A^3 \cap o^*$ and hence gets identified with $\RP^1$. On the other hand, it is a unit Euclidean circle $\S^1$. Given a tangent vector $\xi$ at $p \in \pt_r \H^2$, we associate to it the point $(p, a) \in \pt^*\R^{2,1}$, where $a$ is the oriented length of $\xi$ in the Euclidean metric, where the clockwise direction is considered positive. This produces an identification 
 $\pt ^*\R^{2,1} \cong T\RP^1$, which we will rely on.

%

Let $g_t \in {\rm PGL}(4,\R)$ be the projectivization of the matrix 
\[\begin{pmatrix}
1/t & 0 & 0 & 0\\
0 & 1/t & 0 & 0\\
0 & 0 & 1/t & 0\\
0 & 0 & 0 & 1
\end{pmatrix}\]
Minkowski geometry $(\R^{2,1}, G_0)$ is the limit of $g_t(\A^3, G_-)$ as $t \ra 0$: $\cl(\R^{2,1})$ is the Hausdorff limit of $g_t\cl(\A^3)$, and $G_0$ is the Chabauty limit of $g_tG_-g_t^{-1}$ as subgroups of ${\rm PGL}(4,\R)$. In the Minkowski chart, $g_t$ just acts as the homothety from $o$ with the coefficient $1/t$. We have a natural identification $T_{o}\A^3 \cong \R^{2,1}$. Under this identification, if $p_t: [0,1] \ra \A^3$ is a differentiable curve with $p_0=o$, then $g_tp_t$ converges to $\dot p_0$ as $t \ra 0$, where $\dot p_0$ is considered as a point in  $\R^{2,1}\subset \RP^3$. 

The dual $g^*_t \in {\rm PGL}(4,\R)$ is the projectivization of the matrix 
\[\begin{pmatrix}
1 & 0 & 0 & 0\\
0 & 1 & 0 & 0\\
0 & 0 & 1 & 0\\
0 & 0 & 0 & 1/t
\end{pmatrix}\]
Then co-Minkowski geometry $(^*\R^{2,1}, G_0^*)$ is the limit of $g_t^*(\A^3, G_-)$ as $t \ra 0$.

\subsubsection{Anti-de Sitter and Minkowski spacetimes}


An anti-de Sitter (resp. Minkowski) (2+1)-spacetime has a $(\A^3, G_-)$-structure (resp. a $(\R^{2,1}, G_0)$-structure) in the sense of Thurston~\cite{Thu}. Thereby, geodesic segments in such spacetimes are segments of projective lines in local charts. By a convex surface in such spacetimes we mean an embedded orientable surface that is everywhere locally convex in the same direction in local charts. A convex surface $\Sigma$ is called \emph{future-convex} if locally the future cone of every point of $\Sigma$ is on the convex side. Similarly we define a past-convex surface. 

Recall from the introduction the notion of GHMC spacetimes and recall that in~\cite{Mes} Mess classified anti-de Sitter and Minkowski GHMC (2+1)-spacetimes. We now need to go to further details of this classification. We first mention the case of the anti-de Sitter ones. Let $\Omega$ be such a spacetime homeomorphic to $S \times \R$. Pick a pair $(\theta, m)$ of a holonomy $\theta: \pi_1S \ra G_-$ and a $\theta$-equivariant developing map $m: \tilde\Omega \ra \A^3$, preserving the orientation and time-orientation. Recall that $G_-\cong G \times G$, hence $\theta$ can be represented as $(\rho^l, \rho^r)$, where $\rho^l,\rho^r: \pi_1S \ra G$. Mess proved that both $\rho^l, \rho^r$ are discrete and faithful, hence belong to $\mc R$. Furthermore, $m$ is an embedding onto a convex domain in $\A^3$. On the other hand, for a given $(\rho^l, \rho^r) \in \mc R\times \mc R$ Mess constructed a unique maximal convex domain of discontinuity in $\A^3$. We will recall the details of this construction in the next section. Given that a pair of a holonomy and of a developing map is defined up to action of $G_-$, this gives a parameterization of marked isometry classes of GHMC anti-de Sitter (2+1)-spacetimes by $\mc T \times \mc T$. We note that such spacetimes are always timelike incomplete, i.e., no timelike geodesic can be extended infinitely in either direction.

Now consider the Minkowski situation. Let $\Omega$ be such a spacetime and $\theta: \pi_1S\ra G_0$ be its holonomy. Recall that $G_0\cong G \ltimes \R^{2,1}$. Mess proved that the projection $\rho: \pi_1S \ra G$ of $\theta$ is discrete and faithful, hence belongs to $\mc R$. It is twisted by a \emph{$\rho$-cocycle} $\tau: \pi_1S \ra \R^{2,1}$, which for all $\gamma_1, \gamma_2 \in \pi_1S$ satisfies
\[\tau(\gamma_1\gamma_2)=\rho(\gamma_1)\tau(\gamma_2)+\tau(\gamma_1).\]
We recall that there is a canonical identification $\R^{2,1}\cong\mf {so}(2,1)\cong\mf{sl}(2, \R)$, see, e.g.,~\cite{FS}. Furthermore, the space of $\mf{sl}(2,\R)$-valued $\rho$-cocycles is naturally identified with $T_\rho\mc R$, see, e.g.,~\cite{Gol}. We perceive $\tau$ as an element of $T_\rho\mc R$. Furthermore, Mess showed that the associated developing map $m: \tilde\Omega \ra \R^{2,1}$ is also an embedding onto a convex domain in $\R^{2,1}$. For every $(\rho, \tau) \in T\mc R$ he constructed two maximal convex domains of discontinuity in $\R^{2,1}$, one future-complete and one past-complete. Here a spacetime is \emph{future-complete} if every timelike geodesic extends infinitely in the future. Similarly one defines past-complete. This parameterizes marked isometry classes of future-/past-complete Minkowski GHMC (2+1)-spacetimes by $T\mc T$. We note that if two representations to $G_0$ are conjugated by $x \in \R^{2,1}$, then the respective cocycles differ by a $\rho$-coboundary, which is a cocycle $\tau: \pi_1S \ra \R^{2,1}$ satisfying for all $\gamma \in \pi_1S$
\[\tau(\gamma)=\rho(\gamma)x-x.\]

\subsubsection{Domains of discontinuity}
\label{sec:domain}

Here we describe the construction of domains of discontinuity, as some parts of the construction will be of use to us. We start from the anti-de Sitter situation. We refer to the excellent exposition of Bonsante--Seppi~\cite{BS2}. We will always have $\rho^l=\rho_\circ$ and will vary only $\rho^r$.
Pick $\rho \in \mc R$ and define $\theta_\rho: \pi_1S \ra G_- \cong G \times G$ by $\theta_\rho:=(\rho_\circ, \rho)$.

Let $\mc{QS}$ be the space of \emph{quasisymmetric homeomorphisms} $h: \RP^1 \ra \RP^1$. The exact definition of quasisymmetry and the topology on $\mc{QS}$ are a bit technical and not much relevant for us, we refer for them to~\cite[Chapter 16]{GL},~\cite{GS},~\cite[Chapter III]{Leh}. (We recall that the space of \emph{normalized} quasisymmetric homeomorphisms, i.e., those that fix 0, 1 and $\infty$, is frequently called the \emph{universal Teichm\"uller space}, as it contains all classical Teichm\"uller spaces.) Three facts will be relevant for us: that $\mc{QS}$ has a structure of a (complex) Banach space; that its topology is stronger than the topology of uniform convergence; and that there exists a smooth embedding $\mc R \times \mc R \hookrightarrow \mc{QS}$, where $(\rho^l, \rho^r) \in \mc R\times \mc R$ is sent to a unique $h \in \mc{QS}$ such that
for every $\gamma \in \pi_1S$, for the extensions of $\rho^l(\gamma)$ and $\rho^r(\gamma)$ to $\RP^1\cong\pt_r\H^2$, we have
\begin{equation}
\label{conjt}
\rho^l(\gamma)=h^{-1}\rho^r(\gamma) h.
\end{equation}
In our case, we have $\rho^l=\rho_\circ$, so we restrict the embedding above to $\mc R \hookrightarrow \mc{QS}$. For $\rho \in \mc R$ denote the respective homeomorphism by $h_\rho$. Equation~(\ref{conjt}) then turns into
\begin{equation}
\label{conj}
\rho_\circ(\gamma)=h_\rho^{-1}\rho(\gamma) h_\rho.
\end{equation}

Define $\Lambda_\rho \subset \pt\A^3$ to be the graph of $h_\rho$ via the identification $\pt\A^3 \cong \RP^1 \times \RP^1$ given by~(\ref{boundaryads}). From~\cite[Lemma 4.5.2]{BS2}, it is a achronal with respect to the causal structure of $\pt\A^3$ and is contained in an affine chart. Define $\tilde C_\rho:=\conv(\Lambda_\rho)\cap \A^3$. Because $\Lambda_\rho$ is achronal, one can see that 
\[\conv(\Lambda_\rho)=\tilde C_\rho\cup\Lambda_\rho=\cl(\tilde C_\rho).\]
Define $\tilde\Omega_\rho$ to be the interior of $\conv(\Lambda_\rho)^*$. Note that $\tilde C_\rho \subset \tilde\Omega_\rho\subset \A^3$ and $\pt\tilde\Omega_\rho\cap\pt\A^3=\Lambda_\rho$.
By a combination of~\cite[Proposition 4.6.4 and Proposition 5.4.4]{BS2}, $\theta_\rho$ acts freely and properly discontinuously on $\tilde\Omega_\rho$ and it is a maximal convex domain in $\A^3$ with this property (actually, maximal in $\RP^3$ with this property). The space $\Omega_\rho:=\tilde\Omega_\rho/\theta_\rho(\pi_1S)$ is a GHMC spacetime. For $\rho_1, \rho_2 \in \mc R$ different by conjugation, $\Omega_{\rho_1}$ and $\Omega_{\rho_2}$ are marked isometric, hence we can use the notation $\Omega_\rho$ for $\rho\in\mc T$.


\begin{lm}
\label{limset}
The set $\Lambda_\rho$ is the \emph{limit set} for $\theta_\rho$ in $\tilde\Omega_\rho$, i.e., for every $p \in \tilde\Omega_\rho$, the set of accumulation points of the $\theta_\rho$-orbit of $p$ is exactly $\Lambda_\rho$. 
\end{lm}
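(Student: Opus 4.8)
The claim has two inclusions, and I plan to handle both through the projective dynamics of divergent sequences $\theta_\rho(\gamma_n)$ on $\RP^3$, compared against $\Lambda_\rho$ via the defining property of the dual. First a reduction: since $\theta_\rho$ acts freely and properly discontinuously on $\tilde\Omega_\rho$, the orbit $O$ of a point $p\in\tilde\Omega_\rho$ is closed and discrete in $\tilde\Omega_\rho$, so the accumulation set $\Lambda(p):=\cl(O)\setminus\tilde\Omega_\rho$ is closed in $\RP^3$, and $\Lambda(p)$ is exactly the set of subsequential limits of $\theta_\rho(\gamma_n)p$ over sequences of pairwise distinct $\gamma_n\in\pi_1S$ (for such a sequence, $\theta_\rho(\gamma_n)p$ cannot subconverge inside $\tilde\Omega_\rho$ by proper discontinuity, and conversely any accumulation point is approached by the orbit along distinct group elements by freeness). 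The decisive dynamical input is: for such a sequence, after passing to a subsequence there are $\xi^+,\xi^-\in\Lambda_\rho$ with $\theta_\rho(\gamma_n)\to\xi^+$ uniformly on compact subsets of $\RP^3\setminus(\xi^-)^*$. The other ingredient is the elementary observation that $\tilde\Omega_\rho\cap\zeta^*=\emptyset$ for every $\zeta\in\Lambda_\rho$: by definition $\tilde\Omega_\rho=\inter(\conv(\Lambda_\rho)^*)$, so any $[v]\in\tilde\Omega_\rho$ satisfies $b(v,w)>0$ for every nonzero $w$ in the closed cone $K$ over $\conv(\Lambda_\rho)$; a lift of $\zeta$ is such a $w$ (since $\zeta\in\Lambda_\rho\subseteq\conv(\Lambda_\rho)$), whence $b(v,w)\neq0$, i.e.\ $[v]\notin\zeta^*$.

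To prove the dynamical statement, write $\theta_\rho(\gamma_n)=(\rho_\circ(\gamma_n),\rho(\gamma_n))$. As $\rho_\circ$ is discrete and faithful, $\rho_\circ(\gamma_n)$ leaves every compact subset of $G$, so after passing to a subsequence we have the standard north--south behavior: there are $x^\pm\in\RP^1$ with $\rho_\circ(\gamma_n)\to x^+$ uniformly on compact subsets of $\RP^1\setminus\{x^-\}$. By~(\ref{conj}), $\rho(\gamma_n)=h_\rho\circ\rho_\circ(\gamma_n)\circ h_\rho^{-1}$ as self-maps of $\RP^1$, and since $h_\rho$ is a homeomorphism of the compact space $\RP^1$, hence uniformly continuous, $\rho(\gamma_n)\to h_\rho(x^+)$ uniformly on compact subsets of $\RP^1\setminus\{h_\rho(x^-)\}$. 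Put $\xi^\pm:=(x^\pm,h_\rho(x^\pm))$; these lie in $\Lambda_\rho$ because $\Lambda_\rho$ is the graph of $h_\rho$ under~(\ref{boundaryads}). Now work in $\RP^3=\mathbb{P}(\mathrm{Mat}(2))$ with the action $[M]\mapsto[\rho_\circ(\gamma_n)\,M\,\rho(\gamma_n)^{-1}]$: using the rank-one asymptotics of $\rho_\circ(\gamma_n)$ and of $\rho(\gamma_n)^{-1}$ given by their singular value decompositions (the top singular value strictly dominates, since $\gamma_n$ is eventually non-trivial), one checks that $\theta_\rho(\gamma_n)[M]\to\xi^+$ uniformly on compact subsets of the complement of the projective plane $P:=\{[M]:M(h_\rho(x^-))\subseteq x^-\}$, and that $P$ is precisely $(\xi^-)^*$; indeed $P\cap\pt\A^3$ consists of the two lightlike lines through $\xi^-$, which characterizes the dual plane of $\xi^-$.

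Combining the two ingredients finishes the proof. For $p\in\tilde\Omega_\rho$ we have $p\notin(\xi^-)^*$ by the first paragraph, so $\theta_\rho(\gamma_n)p\to\xi^+\in\Lambda_\rho$; as the sequence was arbitrary, $\Lambda(p)\subseteq\Lambda_\rho$. For the reverse inclusion, apply the dynamical statement to $\gamma_n=\gamma^n$ for a fixed $\gamma\in\pi_1S\setminus\{1\}$, which is $\rho_\circ$-hyperbolic since $\rho_\circ$ is cocompact Fuchsian; then $x^\pm$ are the attracting/repelling fixed points of $\rho_\circ(\gamma)$, and $\xi^+=(x^+,h_\rho(x^+))$ is the attracting fixed point of $\theta_\rho(\gamma)$ on $\pt\A^3$, so $\xi^+\in\Lambda(p)$. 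The attracting fixed points of non-trivial elements of a cocompact Fuchsian group are dense in $\RP^1$, hence their images under $z\mapsto(z,h_\rho(z))$ are dense in the graph $\Lambda_\rho$; since $\Lambda(p)$ is closed, $\Lambda_\rho\subseteq\Lambda(p)$, and equality follows. I expect the only genuine obstacle to be the computation in the second paragraph --- verifying that the repelling projective plane of the divergent sequence is exactly $(\xi^-)^*$ for a boundary point $\xi^-\in\Lambda_\rho$ --- which will require some care with the identification~(\ref{boundaryads}) and with the singular-value asymptotics in the $\mathrm{Mat}(2)$-model; everything else is soft.
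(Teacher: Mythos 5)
Your proposal is correct, but it takes a genuinely different route from the paper. The paper's proof is a two-line hybrid: the inclusion $\Lambda_\rho\subseteq\Lambda_\rho(p)$ is simply quoted from Barbot, and the reverse inclusion is obtained by duality — if $q\in\Lambda_\rho(p)\setminus\Lambda_\rho$ then $q\in\pt\tilde\Omega_\rho\cap\A^3$, and one compares the dual planes: $p^*$ is spacelike and stays at a fixed positive timelike distance $a>0$ from $\cl(\tilde C_\rho)$ along the whole orbit (the distance being an isometry invariant), whereas $q^*$ is a supporting plane of $\cl(\tilde C_\rho)$ at distance zero, so $q^*$ cannot be an accumulation point of the orbit of $p^*$. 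You instead prove both inclusions from scratch via the proximal dynamics of divergent sequences $(\rho_\circ(\gamma_n),\rho(\gamma_n))$ on $\mathbb P(\mathrm{Mat}(2))$: north--south dynamics on $\RP^1$ for $\rho_\circ(\gamma_n)$, transported to $\rho(\gamma_n)$ by the conjugating homeomorphism $h_\rho$ (uniform continuity on the compact circle is exactly what is needed there), then the rank-one singular-value asymptotics identifying the attracting point as $\xi^+=(x^+,h_\rho(x^+))\in\Lambda_\rho$ and the repelling plane as $(\xi^-)^*$; the reverse inclusion follows from density of attracting fixed points of hyperbolic elements in $\Lambda_\rho$. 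The computation you flag does check out: $\{[M]:M(h_\rho(x^-))\subseteq x^-\}$ is a single linear condition on $\mathrm{Mat}(2)$, hence a projective plane, and its intersection with $\pt\A^3$ is the union of the two lightlike lines through $\xi^-$, which forces it to be $(\xi^-)^*$; likewise $[\hat A M\hat B]$ has image $x^+$ and kernel $h_\rho(x^+)$ independently of $M$. What your approach buys is self-containedness (no appeal to Barbot) and a stronger conclusion (locally uniform convergence to $\xi^+$ off a dual plane); what the paper's approach buys is brevity and the reuse of the duality/timelike-distance mechanism that appears elsewhere in the text. One small slip worth fixing: the strict domination of the top singular value is justified by $\rho_\circ(\gamma_n)$ leaving every compact set of $G$ (which you correctly establish from discreteness and faithfulness), not by $\gamma_n$ being "eventually non-trivial".
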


\begin{proof}
Let $\Lambda_\rho(p)$ be the limit set of $p \in \tilde\Omega_\rho$. Clearly, it is closed and $\theta_\rho$-invariant. By a result of Barbot~\cite[Theorem 10.13]{Bar3}, $\Lambda_\rho(p) \supset \Lambda_\rho$. Suppose that there is $q \in \Lambda_\rho(p)\backslash \Lambda_\rho$. Since $\theta_\rho$ acts properly discontinuously on $\tilde\Omega_\rho$, we have $q \in \pt\tilde\Omega_\rho$. Since $(\pt\tilde\Omega_\rho\backslash\Lambda_\rho) \subset \A^3$, we have $q \in \A^3$. Consider the plane $p^*$. This is a spacelike plane in $\A^3$ that is disjoint from $\cl(\tilde C_\rho)$. Hence, the maximal timelike distance between $p^*$ and $\cl(\tilde C_\rho)$ in the past from $p^*$ is some $a>0$. Since $\theta_\rho$ acts by isometries, for all the $\theta_\rho$-orbit of $p^*$ the timelike distance to $\cl(\tilde C)$ is $a$. On the other hand, $q^*$ is supporting to $\cl(\tilde C_\rho)$, hence the maximal timelike distance between $q^*$ and $\cl(\tilde C_\rho)$ is zero. Hence, $q^*$ cannot be an accumulation point for the orbit of $p^*$.
\end{proof}

When $\rho\neq \rho_\circ$, $\tilde C_\rho$ is full-dimensional and $\tilde \Omega_\rho$ is properly convex. The set $\Lambda_\rho$ divides $\pt \tilde C_\rho$ into two components, the future- and the past-convex ones, which we denote by $\pt^+\tilde C_\rho$ and $\pt^- \tilde C_\rho$ respectively. Similarly, $\Lambda_\rho$ divides $\pt\tilde \Omega_\rho$ into two components, which we also denote by $\pt^+\tilde\Omega_\rho$ and $\pt^-\tilde\Omega_\rho$. Next, we denote the connected components of the complement of $\tilde \Omega_\rho$ to $\tilde C_\rho$ by $\tilde \Omega^+_\rho$ and $\tilde\Omega^-_\rho$ respectively, where $\tilde \Omega^+_\rho$ is bounded between $\pt^+\tilde\Omega_\rho$ and $\pt^+\tilde C_\rho$. 

In the case $\rho=\rho_\circ$, $\tilde C_{\rho}$ coincides with $\H^2 \subset o^*$. Then $\tilde\Omega_{\rho}$ coincides with $^*\R^{2,1}$ as a set. In particular, it is convex, but not properly convex. Note that its intersection with the Minkowski chart is the union of the two open cones based at $o$ and spanned by $\H^2$. We consider then $\pt^+\tilde C_{\rho}$, $\pt^-\tilde C_\rho$ coinciding with $\tilde C_\rho$. As for $\pt^+\tilde\Omega_\rho$, $\pt^-\tilde\Omega_\rho$, we denote so the boundaries of the respective cones. The domains $\tilde \Omega^+_\rho$ and $\tilde\Omega^-_\rho$ are defined the same way as before.

We denote by $C_\rho\subset\Omega_\rho$ the projection of $\tilde C_\rho$. We define $\pt^\pm C_\rho$, $\Omega^\pm_\rho$ in an obvious way.


Now we pass to the construction in the Minkowski case. The initial description of Mess was quite different from his construction in the anti-de Sitter case. We, however, need to give a description that is similar to the anti-de Sitter one. To this purpose, we will employ the duality between $\R^{2,1}$ and $^*\R^{2,1}$. Pick $\tau \in T_{\rho_\circ}\mc T$. Define $\iota_\tau: \pi_1S \ra G_0$ to be the representation obtained by twisting $\rho_\circ$ by $\tau$. 
Denote by $\iota^*_\tau: \pi_1S \ra G_0^*$ the \emph{dual representation}.

Consider the identification $^*\R^{2,1} \cong \H^2 \times \R$. A continuous function $b: \H^2 \ra \R$ is called \emph{$\tau$-equivariant} if its graph is $\iota_\tau^*$-invariant as a subset of $^*\R^{2,1}$. From~\cite[Corollary 3.14]{BF}, there exists a function $a_\tau: \RP^1 \ra \R$ such that any $\tau$-equivariant function on $\H^2$ extends continuously to $\RP^1$ by $a_\tau$. It follows, in particular, that $a_\tau$ is the unique $\tau$-equivariant function on $\RP^1$, i.e., whose graph, which we denote by $\Lambda_\tau \subset \pt^*\R^{2,1}$, is $\iota_\tau^*$-invariant. Via the identification $\pt^*\R^{2,1}\cong T\RP^1$, $\Lambda_\tau$ determines a vector field $\xi_\tau$ on $\RP^1$. Recall that $\tau$ can be considered as a function $\tau: \pi_1S \ra \mf{sl}(2,\R)$ and the latter may be interpreted as the algebra of the Killing fields on $\H^2$. The $\tau$-equivariance of $a_\tau$ translates as the condition that for every $\gamma \in \pi_1S$, for the extensions of $\rho_\circ(\gamma)$ and $\tau(\gamma)$ to $\RP^1$, we have
\begin{equation}
\label{infconj}
\tau(\gamma)=\xi_\tau-\rho_\circ(\gamma)_*\xi_\tau,
\end{equation} 
where we perceive each $\tau(\gamma)$ as the extension of a Killing field. It follows that $\xi_\tau$ is a unique vector field on $\RP^1$ satisfying such condition.

Define $\tilde C_\tau:=\conv(\Lambda_\tau)\cap {^*\R^{2,1}}$ and define $\tilde\Omega_\tau$ to be the interior of $\conv(\Lambda_\tau)^*$. One can check that $\iota_\tau$ acts on it freely and properly discontinuously and that $\tilde\Omega_\tau$ is the maximal convex subset of $\RP^3$ with this property. However, it is not contained in $\R^{2,1}$. Its intersection with $\R^{2,1}$ consists of two convex domains, which we denote by $\tilde\Omega^+_\tau$ and $\tilde\Omega^-_\tau$. One can see that $\tilde\Omega_\tau=\tilde\Omega^+_\tau\cup \H^2\cup \tilde\Omega^-_\tau$ (recall that by $\H^2$ we mean a disk in $o^*$). In this case, $\tilde\Omega_\tau$ is properly convex if and only if $\tau$ is not a coboundary. We have $\pt_r\H^2 \subset \pt \tilde\Omega_\tau$ and, provided that $\tilde\Omega_\tau$ is properly convex, $\pt_r\H^2$ divides $\pt \tilde\Omega_\tau$ into two components, which we denote by $\pt^+\tilde\Omega_\tau$ and $\pt^-\tilde\Omega_\tau$ respectively. In the case when $\tilde\Omega_\tau$ is not properly convex, we use the same convention for $\pt^+\tilde\Omega_\tau$ and $\pt^-\tilde\Omega_\tau$ as in the anti-de Sitter situation.

We need to check that $\tilde\Omega^\pm_\tau$ are indeed the same domains that were described by Mess. Let $B^+_\tau$ be the set of points in $^*\R^{2,1}$ that are below the upper boundary component of $\tilde C_\tau$. This is a convex subset of $\RP^3$. The interior of the intersection of $\cl(B^+_\tau)^*$ with $\R^{2,1}$, which we denote by $B_\tau^{+*}$, is a convex future-complete $\iota_\tau$-invariant subset of $\R^{2,1}$. Furthermore, since $\cl(B^+_\tau)$ is the closed convex hull of points in $\pt^*\R^{2,1}$ (those that are below $\Lambda_\tau$), $\cl(B_\tau^{+})^*$ is the intersection of the future half-spaces of a set of lightlike planes in $\R^{2,1}$. Thus, it is what is called a \emph{regular domain} in the terminology of Bonsante~\cite{Bon}. By~\cite[Theorem 5.1]{Bon}, it is a unique regular domain. Since the output of the construction of Mess are also regular domains, the constructions produce the same result. We define the quotients $\Omega^\pm_\tau:=\tilde\Omega^\pm_\tau/\iota_\tau(\pi_1S)$. These are GHMC spacetimes. When $\tau_1,\tau_2 \in T_{\rho_\circ}\mc R$ differ by a coboundary, $\Omega_{\tau_1}$ and $\Omega_{\tau_2}$ are marked isometric, hence we can use the notation $\Omega_\tau$ for $\tau\in T_{\rho_\circ}\mc T$.

\subsubsection{Convergence of domains of discontinuity}

In this subsection, when we speak about convergence of closed subsets of $\RP^3$, we mean the Hausdorff convergence, unless we specify otherwise. 

Let $\rho_i \ra \rho$ in $\mc R$. Since the embedding $\mc R \hookrightarrow \mc{QS}$ is continuous, the respective homeomorphisms $h_{\rho_i}$ converge to $h_\rho$ in $\mc{QS}$. This particularly means that they converge to $h_\rho$ uniformly. From this it follows

\begin{lm}
\label{convlimsetbase}
The sets  $\Lambda_{\rho_i}$ converge to $\Lambda_\rho$.
\end{lm}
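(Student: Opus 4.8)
The sets $\Lambda_{\rho_i}$ converge to $\Lambda_\rho$.

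We need to show Hausdorff convergence of graphs of homeomorphisms given uniform convergence.The plan is to deduce Hausdorff convergence of the graphs $\Lambda_{\rho_i} \subset \pt\A^3 \cong \RP^1 \times \RP^1$ directly from the uniform convergence $h_{\rho_i} \ra h_\rho$ noted just above the statement. Since $\mc{QS}$ carries a topology stronger than uniform convergence, the continuity of the embedding $\mc R \hookrightarrow \mc{QS}$ gives $h_{\rho_i} \ra h_\rho$ in $\mc{QS}$, hence uniformly on the compact space $\RP^1$. It then remains to observe that, for homeomorphisms of a compact metric space, uniform convergence of the maps implies Hausdorff convergence of their graphs inside $\RP^1 \times \RP^1$.

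The key steps, in order, are as follows. First, fix a metric on $\RP^1 \times \RP^1$ compatible with its topology (e.g.\ the product of the round metric on each factor). Second, for the ``half'' of Hausdorff convergence asserting that every point of $\Lambda_\rho$ is approximated: given $(p, h_\rho(p)) \in \Lambda_\rho$, the point $(p, h_{\rho_i}(p)) \in \Lambda_{\rho_i}$ satisfies $\dist\big((p,h_{\rho_i}(p)),(p,h_\rho(p))\big) = \dist(h_{\rho_i}(p), h_\rho(p)) \leq \sup_{q} \dist(h_{\rho_i}(q), h_\rho(q)) \ra 0$ by uniform convergence. Third, for the other half, that $\Lambda_{\rho_i}$ does not develop spurious accumulation far from $\Lambda_\rho$: any point of $\Lambda_{\rho_i}$ is $(q, h_{\rho_i}(q))$ for some $q$, and $\dist\big((q,h_{\rho_i}(q)),(q,h_\rho(q))\big) \ra 0$ uniformly in $q$, so $\Lambda_{\rho_i}$ lies in an $\e_i$-neighborhood of $\Lambda_\rho$ with $\e_i \ra 0$. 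Together these two inclusions give $\Lambda_{\rho_i} \ra \Lambda_\rho$ in the Hausdorff sense.

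There is essentially no serious obstacle here: the only point requiring a little care is that ``Hausdorff convergence'' is taken among closed subsets of $\RP^3$ (per the convention fixed at the start of the subsection), so one should note that each $\Lambda_{\rho_i}$ and $\Lambda_\rho$ is a compact subset of $\pt\A^3 \subset \RP^3$, and that the subspace metric on $\pt\A^3$ induces the same notion of Hausdorff convergence as the ambient one for subsets of this fixed compact set. Thus I would keep the proof to a sentence or two: recall that $h_{\rho_i} \ra h_\rho$ uniformly because the topology of $\mc{QS}$ dominates uniform convergence, then note that uniform convergence of homeomorphisms of the compact space $\RP^1$ forces Hausdorff convergence of their graphs in $\RP^1 \times \RP^1 \cong \pt\A^3$, which is exactly the claim.
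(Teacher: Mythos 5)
Your proof is correct and is exactly the deduction the paper intends: the paper simply asserts that the lemma "follows" from the uniform convergence of $h_{\rho_i}$ to $h_\rho$, and your two-sided estimate making the graph convergence explicit is the standard way to fill that in.
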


Since $\cl(\tilde C_\rho)=\conv(\Lambda_\rho)$, we get

\begin{crl}
\label{convccorebase}
The sets  $\cl(\tilde C_{\rho_i})$ converge to $\cl(\tilde C_\rho)$.
\end{crl}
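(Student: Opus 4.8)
The plan is to deduce this directly from Lemma~\ref{convlimsetbase} together with the identity $\cl(\tilde C_\rho)=\conv(\Lambda_\rho)$ (valid for every $\rho\in\mc R$, as noted above): the statement is just the continuity of the closed convex hull operation in $\RP^3$ with respect to Hausdorff convergence, in the situation where all the sets involved stay in a common compact part of a fixed affine chart. So the real content is a localization step plus a standard fact about convex hulls in $\R^3$.

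First I would reduce to a fixed affine chart. Since $\Lambda_\rho$ is achronal, it is disjoint from some projective plane $P\subset\RP^3$; fix the affine chart $\mc A:=\RP^3\setminus P\cong\R^3$. By Lemma~\ref{convlimsetbase} the $\Lambda_{\rho_i}$ converge to $\Lambda_\rho$ in the Hausdorff metric, and as $\Lambda_\rho$ is compact and disjoint from $P$, for all large $i$ the set $\Lambda_{\rho_i}$ lies in the closed $\tfrac12\dist(\Lambda_\rho,P)$-neighbourhood $K$ of $\Lambda_\rho$, which is a fixed compact subset of $\mc A$. Discarding finitely many terms, we may assume this for all $i$. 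For subsets of $\mc A$ contained in $K$, the closed convex hull taken in $\RP^3$ coincides with the ordinary closed convex hull in $\R^3$ and remains inside a fixed compact convex subset of $\mc A$ (this uses only the basics of projective convex geometry recalled above). Hence $\cl(\tilde C_{\rho_i})=\conv(\Lambda_{\rho_i})$ and $\cl(\tilde C_\rho)=\conv(\Lambda_\rho)$ are Euclidean convex hulls all contained in one compact set.

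It then remains to invoke the continuity of the convex hull map on compact subsets of $\R^3$: if $\Lambda_i\to\Lambda$ in the Hausdorff metric inside a fixed compact set, then $\conv(\Lambda_i)\to\conv(\Lambda)$. I would verify the two Kuratowski inclusions. For one, if $x$ is an accumulation point of a sequence $x_i\in\conv(\Lambda_i)$, write $x_i=\sum_{j=0}^{3}t^{(i)}_jy^{(i)}_j$ with $y^{(i)}_j\in\Lambda_i$, $t^{(i)}_j\geq0$, $\sum_jt^{(i)}_j=1$ (Carath\'eodory in $\R^3$); passing to a subsequence, $t^{(i)}_j\to t_j$ and $y^{(i)}_j\to y_j\in\Lambda$ by Hausdorff convergence, so $x=\sum_jt_jy_j\in\conv(\Lambda)$. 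For the other, given $x=\sum_jt_jy_j\in\conv(\Lambda)$ with $y_j\in\Lambda$, pick $y^{(i)}_j\in\Lambda_i$ with $y^{(i)}_j\to y_j$; then $\conv(\Lambda_i)\ni\sum_jt_jy^{(i)}_j\to x$. Since all sets lie in one compact set, these give Hausdorff convergence $\conv(\Lambda_{\rho_i})\to\conv(\Lambda_\rho)$, i.e.\ $\cl(\tilde C_{\rho_i})\to\cl(\tilde C_\rho)$.

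The only mildly delicate point is the localization step — making sure convex hulls in $\RP^3$ of sets confined to a common affine chart agree with Euclidean ones and do not escape that chart — but this is routine given the projective convex geometry already set up. I would also note that the degenerate case $\rho=\rho_\circ$ (where $\tilde C_{\rho_\circ}$ is the $2$-dimensional disk $\H^2\subset o^*$) needs no separate treatment: $\Lambda_{\rho_\circ}$ is still a compact achronal graph, and nothing above used full-dimensionality of $\tilde C_\rho$.
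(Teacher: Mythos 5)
Your proposal is correct and follows the same route as the paper, which simply takes closed convex hulls of the limit sets from Lemma~\ref{convlimsetbase} and invokes the identity $\cl(\tilde C_\rho)=\conv(\Lambda_\rho)$; you have merely written out the standard localization-plus-Carath\'eodory argument for continuity of the convex hull that the paper leaves implicit. No issues.
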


Since $\cl(\tilde\Omega_\rho)$ is dual to $\cl(\tilde C_\rho)$, we obtain

\begin{crl}
\label{convdombase}
The sets  $\cl(\tilde\Omega_{\rho_i})$ converge to $\cl(\tilde\Omega_\rho)$.
\end{crl}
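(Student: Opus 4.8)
The plan is to deduce this from Corollary~\ref{convccorebase} via the continuity of projective duality. First recall that $\tilde\Omega_\rho$ is by definition the interior of $\conv(\Lambda_\rho)^*$, and that $\conv(\Lambda_\rho)=\cl(\tilde C_\rho)$. When $\rho\neq\rho_\circ$ the set $\cl(\tilde C_\rho)$ is properly convex, so its dual is properly convex as well, hence full-dimensional; when $\rho=\rho_\circ$ the set $\cl(\tilde C_{\rho_\circ})=\cl(\H^2)\subset o^*$ is a $2$-disk whose dual is the solid cylinder $\cl({}^*\R^{2,1})$, again full-dimensional. In all cases therefore $\cl(\tilde\Omega_\rho)=\conv(\Lambda_\rho)^*=\cl(\tilde C_\rho)^*$, and likewise $\cl(\tilde\Omega_{\rho_i})=\cl(\tilde C_{\rho_i})^*$. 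Since $\cl(\tilde C_{\rho_i})\to\cl(\tilde C_\rho)$ by Corollary~\ref{convccorebase}, the statement reduces to the assertion that $C\mapsto C^*$ is continuous for Hausdorff convergence on the family of closely convex sets in question.

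I would prove this continuity directly, working in $\R^4$. Since $\Lambda_\rho$ is achronal it lies in an open affine chart $A$, and by Lemma~\ref{convlimsetbase} (in fact by the convergence $h_{\rho_i}\to h_\rho$ in $\mc{QS}$, which is stronger than uniform convergence) we have $\Lambda_{\rho_i}\subset A$ for $i$ large; fixing a linear functional cutting out $A$ one obtains lifts $\tilde\Lambda_{\rho_i},\tilde\Lambda_\rho$ to the unit sphere of $\R^4$ with $\tilde\Lambda_{\rho_i}\to\tilde\Lambda_\rho$ in the Hausdorff sense. The cone over $\conv(\Lambda_\rho)$ is the closed convex cone spanned by $\tilde\Lambda_\rho$, so the inequalities defining the dual need only be tested on the limit set: $\cl(\tilde\Omega_\rho)$ is the projectivization of $K_\rho:=\bigcap_{\lambda\in\tilde\Lambda_\rho}\{x:b(x,\lambda)\geq 0\}$, and the same holds with $\rho$ replaced by $\rho_i$. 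It thus remains to show $K_{\rho_i}\to K_\rho$. Upper semicontinuity is automatic. For the reverse bound, pick $x_0$ in the interior of $K_\rho$; by compactness of $\tilde\Lambda_\rho$ there is $\e>0$ with $b(x_0,\lambda)\geq\e$ for all $\lambda\in\tilde\Lambda_\rho$, hence $b(x_0,\lambda)\geq\e/2$ for all $\lambda\in\tilde\Lambda_{\rho_i}$ once $i$ is large. This uniform lower bound exhibits $x_0$ as a ``deep'' interior point of all the $K_{\rho_i}$; combined with the convergence of the defining families of half-spaces it forces Hausdorff convergence of the cones, and projectivizing yields $\cl(\tilde\Omega_{\rho_i})\to\cl(\tilde\Omega_\rho)$. (Alternatively, one may simply invoke the known continuity of polar duality of convex bodies after passing to affine charts.)

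The step I expect to be the crux is the lower semicontinuity of the half-space intersection: in general a convergent family of half-spaces can have a strictly larger intersection in the limit, and what rescues us is exactly that $\inter\cl(\tilde\Omega_\rho)\neq\emptyset$ in all cases, which provides the uniformly deep interior point $x_0$. One should also keep an eye on the exceptional base point $\rho=\rho_\circ$: there $K_{\rho_\circ}$ contains the whole line $\R o$, so it is not properly convex, but it still has non-empty interior, so the argument goes through unchanged; equivalently, one can check the convergence $\cl(\tilde\Omega_{\rho_i})\to\cl({}^*\R^{2,1})$ by hand using the explicit descriptions of these domains.
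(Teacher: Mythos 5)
Your proposal is correct and follows the paper's route exactly: the paper's entire proof is the one-line observation that $\cl(\tilde\Omega_\rho)$ is dual to $\cl(\tilde C_\rho)$, so the statement follows from Corollary~\ref{convccorebase} and the continuity of polar duality. You simply supply the standard verification of that continuity (lifting to cones in $\R^4$ and using a uniformly deep interior point for lower semicontinuity), which the paper leaves implicit.
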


Here and in what follows we perceive all mentioned sets as subsets of $\RP^3$. We will rely on the following elementary principle.

\begin{lm}
\label{convcompl}
Let $C_i$ be a sequence of closely convex subsets of $\RP^3$ converging to a closely convex subset $C$. Then $\pt C_i$ converge to $\pt C$ and $\cl(\RP^3\backslash C_i)$ converge to $\cl(\RP^3\backslash C)$.
\end{lm}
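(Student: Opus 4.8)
The plan is to derive both convergences from a single elementary fact about convex sets together with a compactness argument. Since $\RP^3$ is compact, the space of its closed subsets with the Hausdorff metric is a compact metric space, so it is enough to show that every subsequential limit of $\pt C_i$ equals $\pt C$, and likewise that every subsequential limit of $\cl(\RP^3\setminus C_i)=\RP^3\setminus\inter C_i$ equals $\RP^3\setminus\inter C$. The degenerate cases (where $C$ or some $C_i$ is empty or all of $\RP^3$) are immediate, so I would assume $C\neq\RP^3$ is contained in an affine chart; then $C$ is compact, hence $C_i$ lies in that same chart for $i$ large, and all estimates can be performed in a fixed $\R^3\subset\RP^3$ with its Euclidean metric.

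The workhorse is: if $x\in\inter C$ then there exist $r>0$ and $N$ with $B(x,r)\subset\inter C_i$ for all $i\geq N$. This follows by a simplex argument: pick a simplex $\Delta\subset\inter C$ with $x$ in its interior, approximate its vertices by points of $C_i$ (possible because $C=\lim C_i$), and note that for $i$ large the resulting simplices $\Delta^i$ still contain a fixed ball about $x$ while $\Delta^i\subset C_i$ by convexity. Granting this, the ``upper'' inclusions are immediate: a sequence of points of $\pt C_i$ cannot accumulate at a point of $\inter C$, since that point is eventually engulfed by a ball inside $\inter C_i$; combined with $\pt C_i\subset C_i\to C$ this gives $\limsup\pt C_i\subset\pt C$, and the same observation applied to points of $\RP^3\setminus\inter C_i$ gives $\limsup\cl(\RP^3\setminus C_i)\subset\RP^3\setminus\inter C$.

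For the ``lower'' inclusions it suffices to prove $\pt C\subset\liminf\pt C_i$: a point $x\notin C$ fails to lie in $C_i$ for $i$ large (Hausdorff convergence), and a point $x\in\pt C$ is, by the statement for boundaries, a limit of points of $\pt C_i\subset\RP^3\setminus\inter C_i$. So fix $p\in\pt C$. If $C$ has empty interior, then $\pt C=C$ and $C_i$ lies in an $\e_i$-neighbourhood of the affine span of $C$ with $\e_i\to0$; choosing $\tilde p_i\in C_i$ with $\tilde p_i\to p$ and following a line through $\tilde p_i$ transverse to that span, one exits $C_i$ (at a point of $\pt C_i$) within distance $O(\e_i)$ of $\tilde p_i$, which yields the desired $p_i\to p$. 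If $C$ is full-dimensional, fix $x_0\in\inter C$ and a point $q$ on the ray from $x_0$ through $p$ lying strictly beyond $p$, so $q\notin C$; for $i$ large the openness fact gives $x_0\in\inter C_i$ while $q\notin C_i$, so the segment $[x_0,q]$, which contains $p$, meets $C_i$ in a subsegment $[x_0,p_i]$ with $p_i\in\pt C_i$.

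The crux is then to verify $p_i\to p$. Any subsequential limit of $p_i$ lies on $[x_0,q]\cap C=[x_0,p]$, because $\dist(p_i,C)\to0$ and the ray from $x_0$ through $p$ leaves $C$ exactly at $p$. And the limit cannot lie strictly between $x_0$ and $p$: a point slightly beyond it on $[x_0,p)\subset\inter C$ would, by the openness fact, eventually belong to $C_i$ and yet lie beyond $p_i$ on $[x_0,q]$, contradicting $C_i\cap[x_0,q]=[x_0,p_i]$. This last step --- preventing the exit points $p_i$ of a fixed segment from the moving sets $C_i$ from lagging at an interior point of $C$ --- is the main obstacle, and it is exactly where convexity of the $C_i$ enters essentially; it is also the reason the argument splits according to the dimension of $C$.
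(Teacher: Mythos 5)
The paper states this lemma without proof (it is invoked as an ``elementary principle''), so there is no argument of the author's to compare yours against; your write-up is a reasonable attempt to supply one, and the part you correctly identify as the crux --- showing that the exit points $p_i$ of the fixed segment $[x_0,q]$ from the moving sets $C_i$ cannot lag at an interior point of $C$ --- is handled correctly, as are the interior-engulfing simplex argument and the reduction of the second convergence to the first.

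There is, however, a genuine gap in your opening reduction: ``$C\neq\RP^3$'' does not imply that $C$ is contained in an affine chart. A closely convex set is by definition the \emph{closure} in $\RP^3$ of a convex subset of an affine chart, and when that convex set is unbounded its closure picks up points of the plane at infinity of every chart; for instance the closure of a half-space, or of the solid cylinder $^*\R^{2,1}=\H^2\times\R$ in the co-Minkowski chart, is a proper closely convex subset of $\RP^3$ contained in no affine chart. This is not a vacuous case for the paper: the lemma is applied to $\cl(\tilde\Omega_{\rho_i})\to\cl(\tilde\Omega_{\rho_\circ})=\cl({}^*\R^{2,1})$ (openness of $\tilde{\mc P}_-^w$, Corollary~\ref{convdom2-}), which is exactly of this type. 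Consequently your Euclidean estimates (fixed $\R^3$, balls $B(x,r)$, $\e_i$-neighborhoods, perpendiculars to the affine span) are not available as written. The argument can be repaired --- e.g., lift to $S^3$ with the round metric, where closely convex sets become closed convex subsets of closed hemispheres and segments/simplices make global sense, or observe that both the engulfing step and the exit-point step are local and only use that the projective segment inside $C_i$ joining an interior point to any other point of $C_i$ stays in $C_i$ --- but as stated the reduction silently excludes cases the lemma is actually used for, so you should either restrict the statement you prove to properly convex sets or carry out one of these fixes.
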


For the proof of the next lemma we need the following basic claim.

\begin{lm}
\label{subconvtoconv}
Let $X$ be a Hausdorff topological space and $\{x_i\}$ be a sequence in $X$ with the property that every its subsequence contains a further subsequence that converges to $x \in X$. Then $\{x_i\}$ converges to $x$.
\end{lm}

\begin{lm}
\label{convtemp1-}
The sets $\cl(\pt^\pm\tilde C_{\rho_i})$ converge to $\cl(\pt^\pm\tilde C_{\rho})$.
\end{lm}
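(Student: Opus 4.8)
The plan is to deduce convergence of the closures of the boundary components $\cl(\pt^\pm\tilde C_{\rho_i})$ from the already-established convergence $\cl(\tilde C_{\rho_i}) \ra \cl(\tilde C_\rho)$ (Corollary~\ref{convccorebase}) and $\Lambda_{\rho_i}\ra\Lambda_\rho$ (Lemma~\ref{convlimsetbase}), using the subsequence principle of Lemma~\ref{subconvtoconv}. So first I would pass to an arbitrary subsequence; since $\cl(\tilde C_{\rho_i})$ sits inside a fixed affine chart for $i$ large (because $\Lambda_{\rho_i}\ra\Lambda_\rho$ and $\Lambda_\rho$ is achronal, hence contained in an affine chart, with the same true of a neighbourhood), the sets $\cl(\pt^+\tilde C_{\rho_i})$ are closed subsets of a fixed compact set, so by a standard Blaschke-type compactness argument they subconverge in the Hausdorff metric to some closed set $Z^+$. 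It then suffices to identify $Z^+=\cl(\pt^+\tilde C_\rho)$; by Lemma~\ref{subconvtoconv} this gives convergence of the full sequence, and symmetrically for the minus component.

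The identification $Z^+ = \cl(\pt^+\tilde C_\rho)$ is the heart of the matter. One inclusion is relatively soft: since $\pt^+\tilde C_{\rho_i}\subset \pt\tilde C_{\rho_i}$ and $\pt\tilde C_{\rho_i}\ra\pt\tilde C_\rho$ by Lemma~\ref{convcompl} applied to Corollary~\ref{convccorebase}, any Hausdorff limit of $\pt^+\tilde C_{\rho_i}$ lies in $\pt\tilde C_\rho = \pt^+\tilde C_\rho\cup\pt^-\tilde C_\rho$; I would then argue that a point of $Z^+$ cannot lie in the open part $\pt^-\tilde C_\rho\setminus\Lambda_\rho$, using that $\pt^+\tilde C_{\rho_i}$ and $\pt^-\tilde C_{\rho_i}$ are separated by $\Lambda_{\rho_i}$ and that the future-convex components are characterized time-orientation-wise: a point in the interior of $\pt^-\tilde C_\rho$ has a past-pointing supporting direction, which is an open condition ruling out approximation by points of $\pt^+\tilde C_{\rho_i}$. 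The reverse inclusion $\cl(\pt^+\tilde C_\rho)\subset Z^+$ requires producing, for each $q\in\pt^+\tilde C_\rho$, points $q_i\in\pt^+\tilde C_{\rho_i}$ with $q_i\ra q$: I would take $q_i$ to be, say, the point where the future-directed timelike ray from a fixed interior basepoint through $q$ meets $\pt\tilde C_{\rho_i}$ (for $i$ large this ray does meet $\cl(\tilde C_{\rho_i})$ since the core converges and has nonempty interior, in the case $\rho\neq\rho_\circ$), and such a point necessarily lies on the future component $\pt^+\tilde C_{\rho_i}$; convergence $q_i\ra q$ follows from $\cl(\tilde C_{\rho_i})\ra\cl(\tilde C_\rho)$. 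For points $q\in\Lambda_\rho = \cl(\pt^+\tilde C_\rho)\cap\cl(\pt^-\tilde C_\rho)$ one uses $\Lambda_{\rho_i}\ra\Lambda_\rho$ directly, since $\Lambda_{\rho_i}\subset\cl(\pt^+\tilde C_{\rho_i})$.

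The degenerate case $\rho=\rho_\circ$ needs separate handling, since then $\tilde C_{\rho_\circ}=\H^2\subset o^*$ is not full-dimensional and by convention $\pt^+\tilde C_{\rho_\circ}=\pt^-\tilde C_{\rho_\circ}=\tilde C_{\rho_\circ}=\cl(\H^2)$. Here the claim is just that $\cl(\pt^\pm\tilde C_{\rho_i})\ra\cl(\H^2)$, which follows immediately from Corollary~\ref{convccorebase} together with the sandwich $\Lambda_{\rho_i}\subset\pt^\pm\tilde C_{\rho_i}\subset\tilde C_{\rho_i}$ and $\cl(\tilde C_{\rho_i})\ra\cl(\H^2)$, forcing both $\cl(\pt^+\tilde C_{\rho_i})$ and $\cl(\pt^-\tilde C_{\rho_i})$ to converge to the same limit $\cl(\H^2)$.

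I expect the main obstacle to be the argument that a Hausdorff limit point of the future components cannot land in the relative interior of the past component $\pt^-\tilde C_\rho$ — i.e.\ making precise the "openness" of the future-vs-past convexity condition at a boundary point in a way that survives the limit. Once that separation statement is in place, everything else is routine compactness plus the explicit projection construction of the approximating points.
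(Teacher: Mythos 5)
Your treatment of the case $\rho\neq\rho_\circ$ is a genuinely different route from the paper's and is workable: where you argue via supporting planes (a limit of points of $\pt^+\tilde C_{\rho_i}$ inherits a supporting plane with the body weakly in its past, which is incompatible with lying in the relative interior of $\pt^-\tilde C_\rho$, where every supporting plane has the full-dimensional body in its future) and via radial approximation from an interior basepoint, the paper instead projects $\pt\tilde C_{\rho_i}$ onto the sphere of directions from an interior point and uses an \emph{oriented} Hausdorff convergence of the Jordan curves $\Lambda_{\rho_i}$ to conclude that the two complementary domains converge separately. One small imprecision in your version: the ray from a single fixed interior basepoint through an arbitrary $q\in\pt^+\tilde C_\rho$ need not be timelike (near $\Lambda_\rho$ it is not), so you should either choose the basepoint $p_q$ slightly in the timelike past of each $q$, or only approximate a dense set of such $q$ and handle $\Lambda_\rho$ via Lemma~\ref{convlimsetbase}; this is a local fix.

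The genuine gap is in the degenerate case $\rho=\rho_\circ$. The sandwich $\Lambda_{\rho_i}\subset\cl(\pt^+\tilde C_{\rho_i})\subset\cl(\tilde C_{\rho_i})$ only pins the Hausdorff limit between $\pt_r\H^2$ (a circle) and $\cl(\H^2)$ (the closed disk); it does \emph{not} force convergence to the whole disk. A priori the future components could collapse onto the circle (or onto the circle plus a proper subset of the disk) while the past components fill out the rest, and nothing in your two containments excludes this. This is exactly the point where the paper does real work: it projects $\cl(\pt^\pm\tilde C_{\rho_i})$ orthogonally to the $z_1z_2$-plane in the co-Minkowski chart and uses that a continuous image of a $2$-disk whose boundary converges to $\pt_r\H^2$ must cover the whole disk in the limit (a degree/no-retraction argument), after which the flattening of the bodies upgrades convergence of projections to convergence of the sets. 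You need some version of this topological step; "follows immediately from the sandwich" is not a proof.
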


\begin{proof}
We consider separately the cases $\rho=\rho_\circ$ and $\rho\neq\rho_\circ$. Consider first the former case, thus $\pt^\pm\tilde C_{\rho_\circ}=\tilde C_{\rho_\circ}$. Consider the co-Minkowski chart. Introduce the standard Euclidean metric on it. In this metric, $\cl(\tilde C_{\rho_\circ})$ is the closed unit disk in the $z_1z_2$ plane. Since $\cl(\tilde C_{\rho_i})$ converge to $\cl(\tilde C_{\rho_\circ})$, for all large enough $i$, $\cl(\tilde C_{\rho_i})$ belongs to the chart. Consider the orthogonal projection of $\cl(\pt^\pm\tilde C_{\rho_i})$ to the $z_1z_2$ plane. The images are continuous images of a 2-disk. Up to subsequence, they converge to a subset of $\cl(\tilde C_{\rho_\circ})$, while the images of the boundaries of the disks converge to the relative boundary of $\cl(\tilde C_{\rho_\circ})$. Then the images of the disks converge to $\cl(\tilde C_{\rho_\circ})$. Using Lemma~\ref{subconvtoconv}, the projections of $\cl(\pt^\pm\tilde C_{\rho_i})$ converge to $\cl(\tilde C_{\rho_\circ})$ for the initial sequence. This implies that $\cl(\pt^\pm\tilde C_{\rho_i})$ converge to $\cl(\pt^\pm\tilde C_{\rho})$.

Now we pass to the case $\rho \neq \rho_\circ$. First we need an interlude. Let $\psi_i$ be a sequence of simple closed curves in some ambient manifold converging in the Hausdorff sense to a simple closed curve $\psi$. Assume that all the curves are oriented. We say that $\psi_i$ converge to $\psi$ \emph{orientedly} if for a positive triple $p^1$, $p^2$, $p^3$ of distinct points on $\psi$ and for a sequence of triples $p^1_i$, $p^2_i$, $p^3_i$ on $\psi_i$, converging to $p^1$, $p^2$, $p^3$ respectively, all but finitely many triples are positive. One can observe that then it holds for any initial triple and any converging sequence of triples. 

Now consider a 2-sphere $S^2$ and assume that $\psi_i$ is a sequence of oriented simple closed Lipschitz curves on $S^2$ converging orientedly to an oriented simple closed Lipschitz curve $\psi$. Each curve divides $S^2$ into two domains. Orient $S^2$ and denote by $D^+$ the domain of $S^2$, for which $\psi$ is the boundary and for which at each point of $\psi$, where $\psi$ is differentiable, the direction along $\psi$ together with a direction outside the domain is positive. Similarly we define $D^+_i$. One can observe then that the closures of $D^+_i$ converge in the Hausdorff sense to the closure of $D^+$. 

Orient $\RP^3$ and recall that $\A^3$ is future-oriented. The future-orientation of $\A^3$ induces a future-orientation on $\pt\A^3$. Further, this induces an orientation on every spacelike curve on $\pt\A^3$ by demanding that at every point, where the curve is differentiable, the direction along the curve, a future direction along $\pt\A^3$ and a direction outwards $\A^3$ form a positive triple in $\RP^3$. In particular, this induces an orientation on all $\Lambda_\rho$. When $\rho_i$ converge to $\rho$, the convergence of $\Lambda_{\rho_i}$ to $\Lambda_\rho$ comes from the convergence of graphs under $\pt\A^3\cong \RP^1\times\RP^1$. Thus, this convergence is oriented.

Now return to our problem. Due to Corollary~\ref{convccorebase}, $\cl(\tilde C_{\rho_i})$ converge to $\cl(\tilde C_{\rho})$. Since these are closely convex sets, $\pt\tilde C_{\rho_i}$ converge to $\pt\tilde C_{\rho}$. Due to Lemma~\ref{convlimsetbase}, $\Lambda_{\rho_i}$ converge to $\Lambda_{\rho}$. Pick $p \in \inter(\tilde C_{\rho})$. For all large enough $i$, we have $p \in \inter(\tilde C_{\rho_i})$. We project $\pt\tilde C_{\rho_i}$ and $\pt\tilde C_{\rho}$ onto the sphere of directions from $p$, which we denote by $S^2$. Then $\Lambda_{\rho_i}$ and $\Lambda_{\rho}$ are homeomorphically projected onto simple closed Lipschitz curves on $S^2$, which we denote by $\psi_i$ and $\psi$, and $\pt^\pm\tilde C_{\rho_i}$, $\pt^\pm\tilde C_{\rho}$ are projected homeomorphically onto domains bounded by $\psi_i$ and $\psi$. Due to our observation and by construction, the closures of the former domains converge to the closure of the latter. Hence $\cl(\pt^\pm\tilde C_{\rho_i})$ converge to $\cl(\pt^\pm\tilde C_{\rho})$.
\end{proof}

Just in the same way one shows

\begin{lm}
\label{convtemp2-}
The sets $\cl(\pt^\pm\tilde \Omega_{\rho_i})$ converge to $\cl(\pt^\pm\tilde \Omega_{\rho})$.
\end{lm}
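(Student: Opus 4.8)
The plan is to imitate the proof of Lemma~\ref{convtemp1-}, interchanging the roles of the convex core and the domain of discontinuity: everywhere one replaces $\tilde C$ by $\tilde\Omega$, invokes Corollary~\ref{convdombase} in place of Corollary~\ref{convccorebase}, and uses that $\cl(\tilde\Omega_\rho)$ is closely convex, so that by Lemma~\ref{convcompl} the boundaries $\pt\tilde\Omega_{\rho_i}$ converge to $\pt\tilde\Omega_\rho$. As before one treats the cases $\rho\neq\rho_\circ$ and $\rho=\rho_\circ$ separately, and the final passage from subsequential to honest convergence is via Lemma~\ref{subconvtoconv}.

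For $\rho\neq\rho_\circ$ the domain $\tilde\Omega_\rho$ is properly convex and $\Lambda_\rho\subset\pt\tilde\Omega_\rho$ divides it into $\pt^\pm\tilde\Omega_\rho$. Pick $p\in\inter(\tilde C_\rho)$; by Corollary~\ref{convccorebase} we have $p\in\inter(\tilde C_{\rho_i})\subset\inter(\tilde\Omega_{\rho_i})$ for all large $i$. Project $\pt\tilde\Omega_{\rho_i}$ and $\pt\tilde\Omega_\rho$ homeomorphically onto the sphere of directions $S^2$ from $p$ (using proper convexity). Then $\Lambda_{\rho_i}$, $\Lambda_\rho$ go to simple closed Lipschitz curves $\psi_i$, $\psi$, while $\pt^\pm\tilde\Omega_{\rho_i}$, $\pt^\pm\tilde\Omega_\rho$ go to the domains they bound. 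Since $\Lambda_{\rho_i}\to\Lambda_\rho$ orientedly — established in the proof of Lemma~\ref{convtemp1-} from the convergence of the graphs under $\pt\A^3\cong\RP^1\times\RP^1$ — the curves $\psi_i$ converge to $\psi$ orientedly, and the observation on orientedly converging simple closed Lipschitz curves on $S^2$ (again from the proof of Lemma~\ref{convtemp1-}) gives that the closures of the corresponding domains converge. Combined with $\cl(\pt\tilde\Omega_{\rho_i})\to\cl(\pt\tilde\Omega_\rho)$, this yields $\cl(\pt^\pm\tilde\Omega_{\rho_i})\to\cl(\pt^\pm\tilde\Omega_\rho)$.

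For $\rho=\rho_\circ$ one works in the co-Minkowski chart, where $\cl(\tilde\Omega_{\rho_\circ})=\cl(^*\R^{2,1})$ is the closed solid cylinder over the unit disk, $\tilde C_{\rho_\circ}$ is its mid-disk, and $\pt^+\tilde\Omega_{\rho_\circ}$, $\pt^-\tilde\Omega_{\rho_\circ}$ are the two closed half-cylinders (each containing the mid-disk). By Corollary~\ref{convdombase} and Lemma~\ref{convcompl}, $\cl(\tilde\Omega_{\rho_i})$ and $\pt\tilde\Omega_{\rho_i}$ converge to the closed solid cylinder and to its boundary. The remaining point is to see which half of $\pt\tilde\Omega_{\rho_i}$ converges to $\cl(\pt^+\tilde\Omega_{\rho_\circ})$: this follows because $\pt^+\tilde\Omega_{\rho_i}$ lies in the future of $\pt^+\tilde C_{\rho_i}$, and by Lemma~\ref{convtemp1-} the surfaces $\pt^+\tilde C_{\rho_i}$ converge to the mid-disk $\tilde C_{\rho_\circ}$; hence every accumulation point of $\cl(\pt^+\tilde\Omega_{\rho_i})$ lies in the future half of $\pt\tilde\Omega_{\rho_\circ}$, i.e. in $\cl(\pt^+\tilde\Omega_{\rho_\circ})$, and a squeeze argument as in the first case of the proof of Lemma~\ref{convtemp1-}, upgraded from subsequences by Lemma~\ref{subconvtoconv}, gives the desired convergence (and symmetrically for $\pt^-$).

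I expect the main obstacle to be exactly this last point in the case $\rho=\rho_\circ$: unlike in Lemma~\ref{convtemp1-}, where the limit surface is the flat disk $\tilde C_{\rho_\circ}$ and convergence is forced by a single orthogonal projection, here the limit surfaces $\pt^\pm\tilde\Omega_{\rho_\circ}$ are nonflat (half-cylinders, i.e. lightlike cones based at $o$), so one must carefully use the time-orientation — via the position of $\pt^\pm\tilde\Omega_{\rho_i}$ relative to the convex core — both to separate the two boundary components in the limit and to rule out any leakage of $\pt^+\tilde\Omega_{\rho_i}$ toward the past half-cylinder.
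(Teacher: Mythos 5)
Your proposal follows exactly the route the paper intends (the paper's own ``proof'' is literally ``just in the same way one shows'', i.e.\ repeat the argument of Lemma~\ref{convtemp1-} with $\tilde C$ replaced by $\tilde\Omega$ and Corollary~\ref{convccorebase} by Corollary~\ref{convdombase}), and both the generic case $\rho\neq\rho_\circ$ via the oriented spherical projection and the strategy for the degenerate case are sound.

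One point in the degenerate case needs correction, because as written it is internally inconsistent. You describe $\cl(\pt^\pm\tilde\Omega_{\rho_\circ})$ as ``the two closed half-cylinders (each containing the mid-disk)'', i.e.\ as containing $\cl(\H^2)=\cl(\tilde C_{\rho_\circ})$. With that reading the lemma would be false: $\H^2$ lies in the interior of $\tilde\Omega_{\rho_i}$ for all $i$, and since $\cl(\pt\tilde\Omega_{\rho_i})\ra\cl(\pt\tilde\Omega_{\rho_\circ})$ (the lateral cylinder together with the point $o$ at infinity), no point of $\H^2$ is an accumulation point of $\pt\tilde\Omega_{\rho_i}$, so the $\liminf$-inclusion of your squeeze argument would fail on $\H^2$. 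The correct reading of ``the boundaries of the respective cones'' here is the lateral boundaries only, i.e.\ $\pt^\pm\tilde\Omega_{\rho_\circ}$ are the future/past light cones of $o$, whose closures are topological disks with boundary $\Lambda_{\rho_\circ}=\pt_r\H^2$ and which are disjoint from $\H^2$. This is in fact what you implicitly use two lines later when you identify $\cl(\pt^+\tilde\Omega_{\rho_\circ})$ with ``the future half of $\pt\tilde\Omega_{\rho_\circ}$''; with that identification your argument — $\limsup$ contained in the future half because $\pt^+\tilde\Omega_{\rho_i}$ is in the causal future of $\pt^+\tilde C_{\rho_i}\ra\cl(\H^2)$, and $\liminf$ covering the future half by the squeeze against $\cl(\pt\tilde\Omega_{\rho_i})\ra\cl(\pt\tilde\Omega_{\rho_\circ})$, upgraded by Lemma~\ref{subconvtoconv} — is correct. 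So: delete ``each containing the mid-disk'' and state the limit objects as the closed light cones of $o$, and the proof stands.
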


We need it for

\begin{crl}
\label{convdom2-}
The sets $\RP^3\backslash \tilde\Omega^+_{\rho_i}$ converge to $\RP^3\backslash \tilde\Omega^+_\rho$.
\end{crl}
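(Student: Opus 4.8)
The plan is to split the complement of $\tilde\Omega^+_\rho$ into three ``layers'' that converge separately, and then take the union. Since $\tilde C_\rho \subset \tilde\Omega_\rho \subset \A^3$ and $\tilde\Omega_\rho$ is the disjoint union of $\tilde C_\rho$ and its two complementary domains $\tilde\Omega^\pm_\rho$, one has
\[\RP^3 \backslash \tilde\Omega^+_\rho \;=\; \cl(\tilde\Omega^-_\rho) \;\cup\; \cl(\tilde C_\rho) \;\cup\; \bigl(\RP^3 \backslash \tilde\Omega_\rho\bigr),\]
and likewise for each $\rho_i$: indeed $\RP^3\backslash\tilde\Omega^+_\rho = \tilde\Omega^-_\rho \sqcup \tilde C_\rho \sqcup (\RP^3\backslash\tilde\Omega_\rho)$ as a set, this set is closed, and taking closures distributes over the finite union (with $\cl(\RP^3\backslash\tilde\Omega_\rho) = \RP^3\backslash\tilde\Omega_\rho$ since $\tilde\Omega_\rho$ is open). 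Because Hausdorff convergence in the compact space $\RP^3$ passes through finite unions, it suffices to prove that each of the three layers converges to its $\rho$-counterpart. The middle layer $\cl(\tilde C_{\rho_i}) \to \cl(\tilde C_\rho)$ is exactly Corollary~\ref{convccorebase}. For the outer layer, $\tilde\Omega_\rho$ is open and convex, hence equals the interior of its closure, so $\RP^3\backslash\tilde\Omega_\rho = \cl\bigl(\RP^3\backslash\cl(\tilde\Omega_\rho)\bigr)$; now apply Lemma~\ref{convcompl} to the convergence $\cl(\tilde\Omega_{\rho_i}) \to \cl(\tilde\Omega_\rho)$ from Corollary~\ref{convdombase} to get $\RP^3\backslash\tilde\Omega_{\rho_i} \to \RP^3\backslash\tilde\Omega_\rho$.

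The remaining, and only non-routine, point is $\cl(\tilde\Omega^-_{\rho_i}) \to \cl(\tilde\Omega^-_\rho)$. Here the difficulty is genuine: by Lemmas~\ref{convtemp1-} and~\ref{convtemp2-} the bounding topological $2$-spheres $\cl(\pt^-\tilde C_{\rho_i}) \cup \cl(\pt^-\tilde\Omega_{\rho_i})$ converge, but convergence of these surfaces does not on its own control the solid regions they enclose. I would deal with this exactly as in the proof of Lemma~\ref{convtemp1-}. Assume first $\rho \neq \rho_\circ$, pick $p \in \inter\tilde C_\rho$ (so that by Corollary~\ref{convccorebase} a fixed ball around $p$ lies in $\tilde C_{\rho_i} \subset \tilde\Omega_{\rho_i}$ for all large $i$), and project radially from $p$ onto the sphere of directions $S^2$. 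As in Lemma~\ref{convtemp1-}, every ray from $p$ leaves $\tilde C_{\rho_i}$ at a single point $c_i(u) \in \pt\tilde C_{\rho_i}$, lying on the segment to the single exit point $\omega_i(u) \in \pt\tilde\Omega_{\rho_i}$, and
\[\cl(\tilde\Omega^-_{\rho_i}) \;=\; \bigcup_{u \in \cl(D^-_i)} \bigl[\,c_i(u),\, \omega_i(u)\,\bigr],\]
where $D^-_i \subset S^2$ is the (disk-like) set of directions along which the ray leaves $\tilde C_{\rho_i}$ through $\pt^-\tilde C_{\rho_i}$. Convergence of the convex bodies $\cl(\tilde C_{\rho_i})$ and $\cl(\tilde\Omega_{\rho_i})$, all containing the fixed ball around $p$, gives uniform convergence of the radial exit maps $u \mapsto c_i(u)$ and $u \mapsto \omega_i(u)$, while Lemma~\ref{convtemp1-} gives $\cl(D^-_i) \to \cl(D^-)$ in $S^2$; together with the analogous description of $\cl(\tilde\Omega^-_\rho)$, and passing to subsequences and invoking Lemma~\ref{subconvtoconv} as in Lemma~\ref{convtemp1-}, this yields $\cl(\tilde\Omega^-_{\rho_i}) \to \cl(\tilde\Omega^-_\rho)$. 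The case $\rho = \rho_\circ$ is handled like the first part of the proof of Lemma~\ref{convtemp1-}: one works in the co-Minkowski chart, where $\cl(\tilde C_{\rho_\circ})$ is the closed unit disk in the $z_1z_2$-plane and $\tilde\Omega^-_{\rho_\circ}$ is one of the two open cones over it with apex $o$, projects the relevant sets, and concludes with Lemma~\ref{subconvtoconv}.

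Assembling the three convergences through the displayed decomposition (for each $\rho_i$ and for $\rho$) yields $\RP^3\backslash\tilde\Omega^+_{\rho_i} \to \RP^3\backslash\tilde\Omega^+_\rho$, which is the claim. I expect the main obstacle to be precisely the convergence $\cl(\tilde\Omega^-_{\rho_i}) \to \cl(\tilde\Omega^-_\rho)$: upgrading Hausdorff convergence of the enclosing surfaces to Hausdorff convergence of the enclosed solids is exactly the point that forced the radial-projection (``graph over $S^2$'') device in the proof of Lemma~\ref{convtemp1-}, and reusing that device is what makes the argument close; everything else is the elementary bookkeeping above.
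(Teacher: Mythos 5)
Your proof is correct in outline and lands on essentially the same skeleton as the paper's --- write $\RP^3\backslash\tilde\Omega^+_\rho$ as (closure of the ``past part'' of $\tilde\Omega_\rho$) $\cup$ $(\RP^3\backslash\tilde\Omega_\rho)$, handle the outer piece with Corollary~\ref{convdombase} and Lemma~\ref{convcompl}, and handle the inner piece via the convergence of the bounding surfaces --- but you take a noticeably more laborious route on the inner piece. The paper does not treat $\cl(\tilde\Omega^-_{\rho_i})$ on its own: it introduces the domain $\tilde\Omega^{--}_\rho$, which is the union of $\tilde\Omega^-_\rho$ with the convex core, i.e.\ exactly your two ``inner layers'' taken together. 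The point is that this union is a \emph{convex} set whose boundary is $\pt^+\tilde C_\rho\cup\Lambda_\rho\cup\pt^-\tilde\Omega_\rho$, each piece of which converges by Lemmas~\ref{convlimsetbase},~\ref{convtemp1-} and~\ref{convtemp2-}; convergence of the boundaries of convex bodies then gives convergence of the bodies for free. By splitting off $\cl(\tilde\Omega^-_{\rho_i})$ separately you are forced to prove Hausdorff convergence of a \emph{non-convex} solid, which is precisely what drives you to the radial-segment parametrization $\bigcup_u[c_i(u),\omega_i(u)]$. That argument does go through for $\rho\neq\rho_\circ$ (uniform convergence of the radial functions of the two convex bodies plus convergence of the direction sets $\cl(D^-_i)$ is enough), so this part of your proof is fine, just longer than necessary.

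The one place where your route is genuinely shakier is the case $\rho=\rho_\circ$. There $\tilde C_{\rho_\circ}=\H^2$ has empty interior, so there is no basepoint $p\in\inter(\tilde C_\rho)$ and the radial-segment description collapses; and the device you point to as a substitute --- the first part of the proof of Lemma~\ref{convtemp1-} --- is an argument about continuous images of $2$-disks converging to a $2$-disk, which does not transfer as stated to the $3$-dimensional solid $\cl(\tilde\Omega^-_{\rho_i})$ converging to a solid cone. You would need to supply a genuinely new argument here. The cleanest repair is exactly the paper's observation: replace $\cl(\tilde\Omega^-_{\rho_i})\cup\cl(\tilde C_{\rho_i})$ by the single convex body $\cl(\tilde\Omega^{--}_{\rho_i})$, which is full-dimensional for every $\rho_i$ and for both limits $\rho=\rho_\circ$ and $\rho\neq\rho_\circ$, so no case distinction is needed at all.
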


\begin{proof}
Define $\tilde\Omega^{--}_\rho$ to be the future of $\pt^+\tilde C_\rho$ in $\tilde\Omega_\rho$. This is a convex domain, whose boundary is $\pt^+\tilde C_\rho \cup \Lambda_\rho\cup \pt^-\tilde\Omega_\rho$. We define similarly $\tilde\Omega^{--}_{\rho_i}$. Lemmas~\ref{convlimsetbase},~\ref{convtemp1-} and~\ref{convtemp2-} imply that $\pt\tilde\Omega^{--}_{\rho_i}$ converge to $\pt\tilde\Omega^{--}_\rho$. Hence,  $\tilde\Omega^{--}_{\rho_i}$ converge to $\tilde\Omega^{--}_\rho$. It remains to observe that 
\[\RP^3\backslash \tilde\Omega^+_{\rho_i}=\cl(\tilde\Omega^{--}_{\rho_i})\cup(\RP^3\backslash \tilde\Omega_{\rho_i}),\]
\[\RP^3\backslash \tilde\Omega^+_\rho=\cl(\tilde\Omega^{--}_\rho)\cup(\RP^3\backslash \tilde\Omega_\rho)\]
and use Corollary~\ref{convdombase} and Lemma~\ref{convcompl}.
\end{proof}

\subsubsection{Spaces of bent surfaces}
\label{sec:bentsurf}

Define $\tilde{\mc P}_-=\tilde{\mc P}_-(\rho_\circ, V):=\mc R \times (\A^3)^V$. We will denote its elements by $(\rho, \tilde f)$ where $\tilde f: V \ra \A^3$. Fix a lift $V \rightarrow \tilde V$ that we call canonical. Using it, we extend $\tilde f$ to a $\theta_\rho$-equivariant map $\tilde f: \tilde V \ra \A^3$. We will sometimes consider elements of $V$ as elements of $\tilde V$ via the canonical lift. For $v \in V$ we denote by $\tilde{V \backslash v}$ the preimage of $V\backslash v$ in $\tilde V$. Denote by $\tilde{\mc P}^w_- \subset \tilde{\mc P}_-$ the subset of $(\rho, \tilde f)$ such that $\tilde f(V) \subset \tilde\Omega_\rho$. Due to Lemma~\ref{convcompl}, $\tilde{\mc P}^w_-$ is open. We say that $\tilde f$ is in a \emph{(future-)convex position} if $\tilde f(V) \subset (\tilde\Omega^+_\rho \cup \pt^+\tilde C_\rho)$, $\tilde f$ is injective and for every $v \in V$ we have $\tilde f(v) \notin \inter(\clconv(\tilde f(\tilde{V \backslash v}))).$ We say that $\tilde f$ is in a \emph{strictly (future-)convex position} if $\tilde f(V) \subset \tilde\Omega^+_\rho$ and for every $v \in V$ we have $\tilde f(v) \notin \clconv(\tilde f(\tilde{V \backslash v}))$. Note that in Part 1 we deal only with maps in future-convex/strictly future-convex position, so we will omit the word future in this context. However, we will resume to use it in Part 2, where we will need similarly to define a \emph{past-convex position}. We denote the subset of $(\rho, \tilde f)$ when $\tilde f$ is in convex position by $\tilde{\mc P}_-^c\subset \tilde{\mc P}_-^w$ and the subset when $\tilde f$ is in strictly convex position by $\tilde {\mc P}_-^s \subset \tilde{\mc P}_-^c$. Due to Corollary~\ref{convdom2-}, the latter is open in $\tilde{\mc P}_-^w$, thereby it is a manifold of dimension $3(n-k)$. For $(\rho, \tilde f)\in \tilde{\mc P}_-^w$ define $\clconv(\tilde f):=\clconv(\tilde f(\tilde V))$. Due to Lemma~\ref{limset}, $\cl(\tilde C_\rho)\subset \clconv(\tilde f)$. The boundary of $\clconv(\tilde f)$ consists of $\Lambda_\rho$, a future-convex and a past-convex spacelike surfaces. We denote the future-convex one by $\Sigma(\tilde f)$. When $(\rho, \tilde f) \in \tilde{\mc P}_-^c$, the past-convex one is $\pt^-\tilde C_\rho$. 

In a similar way we define $\tilde {\mc P}_0=\tilde {\mc P}_0(\rho_\circ, V) := T_{\rho_\circ}\mc R \times (\R^{2,1})^V$ and denote its elements by $(\tau, \tilde f)$ where $\tilde f: \tilde V \ra \R^{2,1}$ is a $\iota_\tau$-equivariant map. Define $\tilde {\mc P}_0^w$ as the subset of those $(\tau, \tilde f)$ that $\tilde f(V) \subset \tilde\Omega_\tau^+$; define $\tilde{\mc P}_0^c$ as the subset of those that, in addition, are injective and for every $v \in V$ we have $\tilde f(v) \notin \inter(\clconv(\tilde f(\tilde{V \backslash v})))$; define $\tilde{\mc P}_0^s$ as the subset of those that for every $v \in V$ we have $\tilde f(v) \notin \clconv(\tilde f(\tilde{V \backslash v}))$.
Note that $\tilde{\mc P}_0$ naturally has a structure of a vector space and the other spaces are cones in it. We use the notation $\conv(\tilde f)$, $\Sigma(\tilde f)$ similarly as above. 

Lemmas~\ref{limset} and~\ref{convlimsetbase} imply

\begin{lm}
\label{convconv}
Let $(\rho_i, \tilde f_i) \ra (\rho, \tilde f)$ in $\tilde{\mc P}_-^w$. Then $\clconv(\tilde f_i) \ra \clconv(\tilde f)$. 
\end{lm}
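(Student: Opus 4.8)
The plan is to reduce the Hausdorff convergence of the closed convex hulls to the Hausdorff convergence of their generating sets, and then to analyze the latter by separating the orbit points that stay bounded away from $\pt\A^3$ from those escaping to the limit set.

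\emph{Setup and reduction.} Since $\clconv(\tilde f)=\clconv(\tilde f(\tilde V))=\clconv(\cl(\tilde f(\tilde V)))$, and likewise for each $\tilde f_i$, it suffices to prove $\cl(\tilde f_i(\tilde V))\ra\cl(\tilde f(\tilde V))$ in the Hausdorff sense, together with the routine observation that all the sets $\clconv(\tilde f_i)$ eventually lie in one affine chart of $\RP^3$ — this follows from Corollary~\ref{convdombase} when $\rho\neq\rho_\circ$ (as $\clconv(\tilde f_i)\subset\cl(\tilde\Omega_{\rho_i})$ and $\cl(\tilde\Omega_\rho)$ is properly convex), and from the co-Minkowski chart when $\rho=\rho_\circ$; indeed, the closed convex hull is continuous with respect to Hausdorff convergence of compact subsets of a fixed affine chart. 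Now $\tilde V$ is a union of $|V|$ orbits of $\pi_1S$ under $\theta_\rho$, each contained in $\tilde\Omega_\rho$ since $(\rho,\tilde f)\in\tilde{\mc P}_-^w$, so by Lemma~\ref{limset} and proper discontinuity of $\theta_\rho$ on $\tilde\Omega_\rho$ we get $\cl(\tilde f(\tilde V))=\tilde f(\tilde V)\sqcup\Lambda_\rho$, and similarly $\cl(\tilde f_i(\tilde V))=\tilde f_i(\tilde V)\sqcup\Lambda_{\rho_i}$.

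\emph{Lower limit (easy).} For $x=\theta_\rho(\gamma)\tilde f(v)\in\tilde f(\tilde V)$ the points $\theta_{\rho_i}(\gamma)\tilde f_i(v)\in\tilde f_i(\tilde V)$ converge to $x$: indeed $\rho_i\ra\rho$ in $\mc R$ forces $\theta_{\rho_i}(\gamma)\ra\theta_\rho(\gamma)$ in $G_-\subset{\rm PGL}(4,\R)$ (the left factor $\rho_\circ$ being fixed), $\tilde f_i(v)\ra\tilde f(v)$, and the projective action is continuous. For $x\in\Lambda_\rho$, Lemma~\ref{convlimsetbase} supplies $x_i\in\Lambda_{\rho_i}\subset\cl(\tilde f_i(\tilde V))$ with $x_i\ra x$. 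Hence $\cl(\tilde f(\tilde V))$ lies in the lower (Painlev\'e--Kuratowski) limit of the $\cl(\tilde f_i(\tilde V))$; passing to convex hulls, whose lower limit is again closed and convex, we get that $\clconv(\tilde f)$ lies in the lower limit of the $\clconv(\tilde f_i)$.

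\emph{Upper limit (the obstacle).} It remains to show $\limsup_i\clconv(\tilde f_i)\subseteq\clconv(\tilde f)$. By Carath\'eodory in the common affine chart, any point of $\limsup_i\clconv(\tilde f_i)$ is a limit of convex combinations of at most four points of $\cl(\tilde f_i(\tilde V))$, so after extracting a subsequence it suffices to prove: if $w_i\in\cl(\tilde f_i(\tilde V))$ and $w_i\ra w$, then $w\in\cl(\tilde f(\tilde V))=\tilde f(\tilde V)\sqcup\Lambda_\rho$. If $w_i\in\Lambda_{\rho_i}$ for infinitely many $i$, Lemma~\ref{convlimsetbase} gives $w\in\Lambda_\rho$. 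Otherwise write $w_i=\theta_{\rho_i}(\gamma_i)\tilde f_i(v_i)$ and pass to a subsequence with $v_i\equiv v$; if $\{\gamma_i\}$ is finite, a further subsequence has $\gamma_i\equiv\gamma$ and then $w=\theta_\rho(\gamma)\tilde f(v)\in\tilde f(\tilde V)$ by the argument above. The remaining case $\gamma_i\ra\infty$ in $\pi_1S$ is the heart of the matter and I expect it to be the main difficulty: one needs a uniform-in-$\rho$ strengthening of Lemma~\ref{limset}. Since $w_i\in\tilde\Omega_{\rho_i}$ and $\cl(\tilde\Omega_{\rho_i})\ra\cl(\tilde\Omega_\rho)$ by Corollary~\ref{convdombase}, we have $w\in\cl(\tilde\Omega_\rho)$, and as $\cl(\tilde\Omega_\rho)\cap\pt\A^3=\Lambda_\rho$ it is enough to exclude $w\in\A^3$. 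To this end I would use the convergence-group dynamics of the maps $\theta_{\rho_i}(\gamma_i)$ on $\RP^3$: their attracting and repelling fixed points lie on $\Lambda_{\rho_i}$, and the left coordinates of these fixed points are the attracting/repelling fixed points of $\rho_\circ(\gamma_i)$ on the \emph{fixed} circle $\RP^1\cong\pt_r\H^2$, so after a subsequence they converge, necessarily to points of $\Lambda_\rho$ by Lemma~\ref{convlimsetbase}; off any fixed neighborhood of the limiting repelling point the $\theta_{\rho_i}(\gamma_i)$ collapse uniformly towards the limiting attracting point, whereas $\theta_{\rho_i}(\gamma_i)^{-1}w_i=\tilde f_i(v)\ra\tilde f(v)$ stays in a compact subset of $\tilde\Omega_\rho$ disjoint from $\pt\A^3$; comparing the two forces $w$ onto $\pt\A^3$, so $w\in\Lambda_\rho$. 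Making this dynamical comparison rigorous — in particular handling the degenerate positions where $\tilde f_i(v)$ approaches the repelling locus of $\theta_{\rho_i}(\gamma_i)$ — is the delicate point; an equivalent route is to prove directly that for every neighborhood $U$ of $\Lambda_\rho$ there are a finite $F\subset\pi_1S$ and an index $i_0$ with $\theta_{\rho_i}(\gamma)\tilde f_i(v)\in U$ for all $i\geq i_0$, $v\in V$ and $\gamma\notin F$, which amounts to a uniform version of proper discontinuity near $\Lambda_\rho$.
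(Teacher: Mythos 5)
Your proposal follows the same strategy the paper intends: the paper's entire proof of Lemma~\ref{convconv} is the single sentence that Lemmas~\ref{limset} and~\ref{convlimsetbase} imply it, i.e., decompose $\cl(\tilde f(\tilde V))=\tilde f(\tilde V)\sqcup\Lambda_\rho$ via Lemma~\ref{limset}, invoke Lemma~\ref{convlimsetbase} for the limit sets, and pass to convex hulls. What you add, correctly, is the observation that the upper-limit half is not a formal consequence of those two lemmas: one must show that accumulation points of $\theta_{\rho_i}(\gamma_i)\tilde f_i(v)$ with $\gamma_i\ra\infty$ lie on $\Lambda_\rho$, which is a uniform-in-$i$ strengthening of Lemma~\ref{limset} that the paper elides entirely. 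So the only step you leave as a sketch is precisely the only nontrivial content.

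That sketch does close, and the ``degenerate positions'' you worry about cannot occur. Writing $\theta_{\rho_i}(\gamma_i)=(\rho_\circ(\gamma_i),\rho_i(\gamma_i))$, both factors have translation lengths tending to infinity: the left because $\rho_\circ$ is a fixed cocompact Fuchsian representation, the right because $\rho_i\ra\rho$ gives uniformly bi-Lipschitz marked length spectra. Hence $\theta_{\rho_i}(\gamma_i)$ is uniformly proximal on $\RP^3$, and its repelling hyperplane is the tangent plane to $\pt\A^3$ at a point of $\Lambda_{\rho_i}$; such a plane is dual to a point of $\conv(\Lambda_{\rho_i})$ and is therefore disjoint from $\tilde\Omega_{\rho_i}$. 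Since $\tilde f_i(v)\ra\tilde f(v)\in\tilde\Omega_\rho$ while $\Lambda_{\rho_i}\ra\Lambda_\rho$, the points $\tilde f_i(v)$ stay at a uniformly positive distance from these repelling hyperplanes, so after extraction $w_i$ converges to the limit of the attracting fixed points, which lies on $\Lambda_\rho$ by Lemma~\ref{convlimsetbase}. (The alternative you mention, uniformizing the dual-plane/timelike-distance invariant from the proof of Lemma~\ref{limset} via Corollary~\ref{convccorebase}, only excludes $w\in\pt\tilde\Omega_\rho\cap\A^3$; excluding accumulation in the interior $\tilde\Omega_\rho$ still requires the dynamical argument or a continuous family of equivariant developing maps, so the proximality route is the one to carry out.)
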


Moreover, the same proof as the proof of Lemma~\ref{convtemp1-} imply

\begin{lm}
\label{convsurf}
Let $(\rho_i, \tilde f_i) \ra (\rho, \tilde f)$ in $\tilde{\mc P}_-^w$. Then $\cl(\Sigma(\tilde f_i)) \ra \cl(\Sigma(\tilde f))$. 
\end{lm}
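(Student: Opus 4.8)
The plan is to repeat the proof of Lemma~\ref{convtemp1-} almost verbatim, with $\clconv(\tilde f)$ playing the role of $\cl(\tilde C_\rho)$ and $\Sigma(\tilde f)$ the role of $\pt^+\tilde C_\rho$, the main difference being that no separate treatment of the case $\rho=\rho_\circ$ is needed: $\clconv(\tilde f)$ is always a full-dimensional \emph{properly} convex body, whereas $\tilde C_{\rho_\circ}$ is a flat disk. Full-dimensionality is built into the statement (it is what makes $\Sigma(\tilde f)$ a surface), and proper convexity follows from Lemma~\ref{limset}: the accumulation set of $\tilde f(\tilde V)$ is $\Lambda_\rho$, so $\cl(\tilde f(\tilde V))=\tilde f(\tilde V)\cup\Lambda_\rho$ is compact and lies in an affine chart of $\RP^3$ --- the co-Minkowski chart when $\rho=\rho_\circ$ (there $\Lambda_{\rho_\circ}=\pt_r\H^2$ is the unit circle in the $z_1z_2$-plane and $\tilde f(\tilde V)\subset{}^*\R^{2,1}$), and any affine chart containing the properly convex set $\cl(\tilde\Omega_\rho)\supseteq\clconv(\tilde f)$ when $\rho\neq\rho_\circ$ --- so $\clconv(\tilde f)=\conv(\cl(\tilde f(\tilde V)))$ is a compact convex body.

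With this in hand I would run the argument of the case $\rho\neq\rho_\circ$ of Lemma~\ref{convtemp1-}. By Lemma~\ref{convconv}, $\clconv(\tilde f_i)\ra\clconv(\tilde f)$ in the Hausdorff sense, hence $\pt\clconv(\tilde f_i)\ra\pt\clconv(\tilde f)$ by Lemma~\ref{convcompl}. By Lemma~\ref{convlimsetbase}, $\Lambda_{\rho_i}\ra\Lambda_\rho$, and, exactly as in the proof of Lemma~\ref{convtemp1-}, this convergence is oriented with respect to the future-orientation of $\pt\A^3$. Fix $p\in\inter(\clconv(\tilde f))$; then $p\in\inter(\clconv(\tilde f_i))$ for all large $i$. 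Radially project $\pt\clconv(\tilde f_i)$ and $\pt\clconv(\tilde f)$ onto the sphere of directions $S^2$ at $p$; since the bodies are compact and full-dimensional these projections are homeomorphisms onto $S^2$, and, $\pt\clconv(\tilde f_i)$ converging to $\pt\clconv(\tilde f)$, the projections and their inverses converge uniformly. Under these maps, $\Lambda_{\rho_i},\Lambda_\rho$ go to oriented simple closed Lipschitz curves $\psi_i,\psi$ with $\psi_i\ra\psi$ orientedly, while $\cl(\Sigma(\tilde f_i)),\cl(\Sigma(\tilde f))$ go homeomorphically onto the closures of the complementary domains $D_i,D$ lying on the side singled out by the future-orientation (the $D^+$-side in the notation of the interlude in the proof of Lemma~\ref{convtemp1-}). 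By the observation of that interlude, $\cl(D_i)\ra\cl(D)$, and pulling back through the uniformly convergent radial homeomorphisms yields $\cl(\Sigma(\tilde f_i))\ra\cl(\Sigma(\tilde f))$.

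I expect the write-up to be essentially bookkeeping. The only ingredient not already present in Lemma~\ref{convtemp1-} is the proper convexity of $\clconv(\tilde f)$ in the degenerate case $\rho=\rho_\circ$, handled above; and --- as in that lemma --- the one thing requiring attention is tracking orientations so that $\Sigma(\tilde f)$, the future-convex component of $\pt\clconv(\tilde f)$, is matched with the correct complementary domain of $\psi$ on $S^2$.
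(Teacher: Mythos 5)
Your proposal is correct and is exactly what the paper does: the paper's entire "proof" of this lemma is the remark that it follows by the same argument as Lemma~\ref{convtemp1-}, and you have carried out precisely that argument (oriented convergence of $\Lambda_{\rho_i}$, radial projection from an interior point, the $D^+$-domain observation), together with the sensible observation that only the non-degenerate case of that proof is needed here.
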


The group $G_-$ acts on $\tilde{\mc P}_-$ from the left by conjugation on $\mc R$ and by isometries on $(\A^3)^V$. This action is free and properly discontinuous. Furthermore, $\pi_1S$ acts on $\tilde{\mc P}_-$ from the left fiberwise, via $\theta_\rho$ on $\{\rho\} \times (\A^3)^V$. This action is free and properly discontinuous on $\tilde{\mc P}_-^w$. These two actions commute and we denote the quotient by $\mc P_-^w$. Its elements are the pairs $(\rho, f)$ where $\rho\in\mc T$ and $f: V \ra \Omega_\rho$. Define the subsets $\mc P_-^s, \mc P_-^c$ in the obvious manner. We denote by $\conv(f)$, $\Sigma(f)$ the projections of $\conv(\tilde f)$, $\Sigma(\tilde f)$ for some lifts $(\rho, \tilde f) \in \tilde{\mc P}_-^w$. In the same way we define $\mc P_0^w \supset {\mc P}_0^c \supset {\mc P}_0^s$. They all have a coned structure; ${\mc P}_0^w$ and ${\mc P}_0^s$ are coned manifolds. Their elements are the pairs $(\tau, f)$ where $\tau\in T_{\rho_\circ}\mc T$ and $f: V \ra \Omega^+_\tau$.

One might expect that the surfaces $\Sigma(\tilde f)$ are (locally) polyhedral, i.e., around every point they coincide with a part of the boundary of a convex polyhedron in $\RP^3$ (by a polyhedron we mean the convex hull of finitely many points). Curiously, while it holds for $(\tau, \tilde f) \in \tilde{\mc P}_0^w$ (see~\cite[Lemma 2.7]{FP}), it does not hold for  $(\rho, \tilde f) \in \tilde{\mc P}_-^w$. The main reason for this is that $\pt^+\tilde C_\rho$ can be non-polyhedral. As a convex spacelike surface, it is endowed with the intrinsic metric, see details in Section~\ref{sec:intmet}. There is an isometry $\H^2 \ra \pt^+\tilde C_\rho$. The surface $\pt^+\tilde C_\rho$ is totally geodesic apart from a closed set of complete geodesics of $\A^3$. The preimage of this set is a geodesic lamination in $\H^2$ invariant with respect to a Fuchsian representation of $\pi_1S$, which we denote by $\rho^+ \in \mc R$. The data of how $\pt^+\tilde C_\rho$ is bent in $\A^3$ defines a transverse measure on the geodesic lamination. Denote the $\rho^+$-projection of the obtained measured lamination to $S$ by $\lambda^+$, which is a measured geodesic lamination on $S$. It is called the \emph{bending lamination} of $\pt^+C_\rho$. The measure of an isolated leaf of $\lambda^+$ is the exterior dihedral angle in $\A^3$ of the respective bending line of $\pt^+\tilde C_\rho$. Measured geodesic laminations on $S$ naturally form a PL-manifold $\mc{ML}=\mc{ML}(S)$, homeomorphic to a $(6k-6)$-dimensional ball. For an introduction to measured geodesic laminations we refer to~\cite{Bon2, Mar}, and for the details of this construction we refer to~\cite{Mes, BS2}. This is similar to the geometry of quasi-Fuchsian hyperbolic 3-manifolds, see, e.g.,~\cite{CEG, BO}.

From the work of Mess~\cite{Mes}, any measured geodesic lamination can appear as $\lambda^+$. In particular, it can have non-isolated leaves. In this case, $\pt^+\tilde C_\rho$ is non-polyhedral and $\Sigma(\tilde f)$ can be non-polyhedral as well. In particular, the image of $\tilde f$ can belong to $\pt^+\tilde C_\rho$, in which case $\Sigma(\tilde f) = \pt^+\tilde C_\rho$. Even if $\tilde f$ is in a strictly convex position, $\Sigma(\tilde f)$ still can have nonempty intersection with $\pt^+\tilde C_\rho$, in which case it can fail to be polyhedral. All non-polyhedrality, however, anyway comes only from the intersection with $\pt^+\tilde C_\rho$.

Let $\Sigma \subset \RP^3$ be a (locally) convex embedded surface. If $p \in \Sigma$ does not belong to the relative interior of any segment belonging to $\Sigma$, we call $p$ a \emph{vertex} of $\Sigma$. If it belongs to the relative interiors of two such segments with distinct tangents, $p$ is called \emph{regular}. Otherwise, it is called an \emph{edge-point}.  A \emph{face} of $\Sigma$ is the closure in $\Sigma$ of a connected component of the set of regular points. An \emph{edge} is the closure of a maximal segment in $\Sigma$ consisting from edge-points. We say that $\Sigma$ is \emph{bent} if the set of vertices is discrete. It is \emph{strictly polyhedral} if it is polyhedral and each face is isomorphic to a (compact affine) polygon. These notions are local and extend to convex surfaces in anti-de Sitter spacetimes (and in locally projective manifolds in general). For every $(\rho, \tilde f) \in \tilde{\mc P}_-^w$, $\Sigma(\tilde f)$ is a bent surface. It is strictly polyhedral if and only if the timelike distance between $\Sigma(\tilde f)$ and $\pt^+\tilde C_\rho$ is positive. A proof is the same as in the hyperbolic case, see~\cite[Corollary 3.19]{Pro2}. In such case, we also say that $\tilde f$ is strictly polyhedral. This notion extends to the elements of $\mc P_-^w$. The set of vertices of $\Sigma(f)$ will be denoted by $V(f)\subseteq V$. We have $(\rho, f) \in \mc P_-^s$ if and only if $V(f)=V$. We denote the respective subsets of strictly polyhedral elements by $\tilde{\mc P}_{-, sp}^c$, $\tilde{\mc P}_{-, sp}^s$, $\mc P_{-, sp}^c$, $\mc P_{-, sp}^s$. As for $(\tau, \tilde f) \in \tilde{\mc P}_0^c$, it was shown in~\cite[Lemma 2.7]{FP} that in fact $\tilde f$ is always strictly polyhedral.

Pick $\rho \in \mc T$ and consider $\Omega_\rho$. By construction, for every Cauchy surface $\Sigma \subset\Omega_\rho$ there exists a homeomorphism $\zeta: \Sigma \ra S$, which is defined up to isotopy. For $(\rho, f) \in \mc P_-^c$, $\zeta: \Sigma(f) \ra S$ can be chosen so that $\zeta \circ f$ is the identity on $V$. Then such $\zeta$ is chosen up to $h \in \mc H$. If $(\rho, f) \in \mc P_{-, sp}^c$ and such $\zeta$ is chosen for $\Sigma(f)$, it pushes forward the edges of $\Sigma(f)$ to a celluation of $(S, V)$. A \emph{celluation} of $(S, V)$ is defined similarly as a triangulation with the difference that now we allow cells with arbitrary number of vertices as faces and also allow them to contain some points of $V$ in the interior. The notions of equivalence and weak equivalence apply also to celluations. By a \emph{face celluation} of $(\rho,  f)$ we will mean a celluation of $(S, V)$ as above, which is then defined up to weak equivalence. We will abuse the terminology and say that a celluation $\ms C_1$ of $(S, V)$ is a subdivision of a celluation $\ms C_2$ if $\ms C_1$ is weakly equivalent to a subdivision of $\ms C_2$ in a straightforward sense. All these notions apply also to the elements of $\mc P_0^c$. It was shown in~\cite[Lemma 2.13]{FP} that for any $(\tau, f) \in \mc P_0^c$ there exists a neighborhood $U \ni (\tau, f)$ such that for every $(\tau', f') \in U$ the face celluation of $f'$ is a subdivision of the face celluation of $f$. The same proof works to prove the same claim for $(\rho, f) \in \mc P_{-, sp}^c$. We can also speak about face celluations of $\tilde f$ for $(\rho, \tilde f) \in  \tilde{\mc P}_{-, sp}^c$ or $(\tau, \tilde f) \in \tilde {\mc P}_0^c$, where we mean the respective decompositions of $(\tilde S, \tilde V)$ and the equivalences are $\pi_1S$-invariant.

\subsection{Blow-up of the space of bent surfaces}

\subsubsection{Convergence of domains of discontinuity at the blow-up}
\label{sec:convblowup}

Consider a continuous curve $\rho_t: [0,1] \ra \mc R$ with $\rho_0=\rho_\circ$, differentiable at $t=0$ with $\dot\rho_0=\tau \in T_{\rho_\circ}\mc R$. As $t\ra 0$, we have

\begin{lm}
\label{convlimset}
The sets $g_t^*\Lambda_{\rho_t}$ converge to $\Lambda_\tau$.
\end{lm}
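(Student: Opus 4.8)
The statement concerns the convergence, after renormalization by the dual transition family $g_t^*$, of the limit sets $\Lambda_{\rho_t}\subset\pt\A^3$ to the co-Minkowski limit set $\Lambda_\tau\subset\pt\,{^*\R^{2,1}}$. The plan is to work on the boundary circle $\RP^1\cong\pt_r\H^2$ and track how the graph $\Lambda_{\rho_t}\subset\RP^1\times\RP^1$ (the graph of $h_{\rho_t}$) degenerates. Since $\rho_0=\rho_\circ$, we have $h_{\rho_\circ}=\mathrm{id}$, so $\Lambda_{\rho_\circ}$ is the diagonal, which under the identification $\pt\A^3\cong\RP^1\times\RP^1$ sits inside $o^*$, i.e. exactly where $g_t^*$ has its ``expanding'' behaviour in the last coordinate. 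The first step is to understand the first-order behaviour of $h_{\rho_t}$: differentiating the conjugacy relation~(\ref{conj}), $\rho_\circ(\gamma)=h_{\rho_t}^{-1}\rho_t(\gamma)h_{\rho_t}$, at $t=0$ produces a vector field $\dot h_0$ on $\RP^1$ satisfying precisely the cocycle equation that characterizes $\xi_\tau$ in~(\ref{infconj}). So I would first establish that $t\mapsto h_{\rho_t}$ is differentiable at $t=0$ (using the smoothness of $\mc R\hookrightarrow\mc{QS}$ and of the curve $\rho_t$) with derivative the vector field $\xi_\tau$, and moreover that $h_{\rho_t}=\mathrm{id}+t\,\xi_\tau+o(t)$ in a sense strong enough — at least uniform $C^0$, which is what $\mc{QS}$-convergence gives — to control the graph.

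The second step is the explicit computation of $g_t^*$ acting on the graph near the diagonal. Writing a point of $\pt\A^3$ near $o^*$ in the $\RP^1\times\RP^1\cong\pt\A^3$ coordinates, parametrize the diagonal by $\RP^1$ and a nearby graph point over $p\in\RP^1$ by $(p,h_{\rho_t}(p))$; the displacement $h_{\rho_t}(p)-p$ is $O(t)$. Under the identification $\pt\,{^*\R^{2,1}}\cong T\RP^1$ used in the excerpt — a tangent vector $\xi$ at $p$ going to $(p,a)$ with $a$ the oriented Euclidean length — I need to check that $g_t^*$, which multiplies the $x_4$-coordinate by $1/t$, has the effect of rescaling the ``transverse'' displacement by $1/t$ while fixing the base point $p$, so that $g_t^*(p,h_{\rho_t}(p))$ converges to the point of $T\RP^1$ corresponding to $\xi_\tau(p)$, i.e. to the point of $\Lambda_\tau$ over $p$. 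This is the co-Minkowski analogue of the statement quoted in the excerpt that $g_t p_t\to\dot p_0$ for a curve through $o$: here the ``curve through $o^*$'' is $t\mapsto(p,h_{\rho_t}(p))$ and its derivative is $\xi_\tau(p)$. Concretely one lifts to $\R^4$, computes that the ratio of the $x_4$-component to the $x_1,x_2,x_3$-components of the lift of $(p,h_{\rho_t}(p))$ is $O(t)$, so after applying $g_t^*$ it becomes $O(1)$, and its limit is governed by $\frac{d}{dt}\big|_0 h_{\rho_t}(p)=\xi_\tau(p)$.

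The third step upgrades pointwise convergence to Hausdorff convergence of the sets. Here I would use: (i) the convergence $h_{\rho_t}\to\mathrm{id}$ is \emph{uniform} on $\RP^1$ (stronger-than-uniform, by $\mc{QS}$), and in fact the renormalized family $\tfrac{1}{t}(h_{\rho_t}-\mathrm{id})$ converges uniformly to $\xi_\tau$ — this last uniformity is the point that needs care, and I expect it is available because the curve $\rho_t$ is differentiable at $0$ as a map into the Banach space $\mc{QS}$, so the difference quotient converges in $\mc{QS}$, hence uniformly; (ii) $\Lambda_\tau$ is the graph of the continuous function $a_\tau$ on $\RP^1$, so uniform convergence of the renormalized graphs over the compact circle gives Hausdorff convergence. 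A clean way to package this: cover $\RP^1$ by finitely many arcs on each of which $g_t^*$ restricted to a neighbourhood of the diagonal in $\pt\A^3$ is, in suitable coordinates, close to the fibrewise dilation $(p,s)\mapsto(p,s/t)$, and apply the uniform estimate on each; then conclude with a compactness/subsequence argument of the type of Lemma~\ref{subconvtoconv} already used elsewhere in the paper.

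\medskip

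\noindent\textbf{Main obstacle.} The delicate point is the \emph{uniformity} of the first-order expansion of $h_{\rho_t}$ — i.e. that $\tfrac{1}{t}(h_{\rho_t}-\mathrm{id})\to\xi_\tau$ not just pointwise but uniformly on $\RP^1$ — together with checking that the coordinate change identifying a neighbourhood of the diagonal in $\pt\A^3$ with a neighbourhood of the zero section in $T\RP^1\cong\pt\,{^*\R^{2,1}}$ intertwines $g_t^*$ with the fibrewise dilation up to errors that are uniformly $o(1)$ after renormalization. The differentiability of $\rho_t$ into the Banach space $\mc{QS}$ should give the uniform estimate for free; the coordinate bookkeeping near $o^*$ is a routine but slightly fiddly computation in $\R^4$, parallel to the Minkowski computation $g_t p_t\to\dot p_0$ recalled in Section~\ref{sec:geometries}, and I would organize it by the same lift-to-$\R^4$-and-expand method.
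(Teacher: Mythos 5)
Your proposal is correct and follows essentially the same route as the paper's proof: differentiate the conjugacy relation to identify $\dot h_0=\xi_\tau$ via the uniqueness of solutions of~(\ref{infconj}), observe that in the co-Minkowski chart $g_t^*$ fixes $z_1,z_2$ and dilates $z_3$ by $1/t$ so that $g_t^*(p,h_t(p))\to(p,\dot h_0(p))$, and upgrade to Hausdorff convergence via the Fr\'echet differentiability of $t\mapsto h_t$ in $\mc{QS}$, whose topology dominates the uniform one. The "main obstacle" you flag — uniformity of $(h_t-\mathrm{id})/t\to\xi_\tau$ — is resolved in the paper exactly as you anticipate.
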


\begin{proof}
Let $h_t:=h_{\rho_t}$ be the respective homeomorphisms conjugating $\rho_t$ to $\rho_\circ$ given by~(\ref{conj}). Since the embedding $\mc R \hookrightarrow \mc{QS}$ is smooth, one can differentiate the path $h_t$ at zero and get a vector field $\dot h_0$. By differentiating the condition 
$\rho_\circ=h_t^{-1}\rho_t h_t,$ we get that $\dot h_0$ satisfies~(\ref{infconj}). By uniqueness, it coincides with $\xi_\tau$. Now we pass to the co-Minkowski coordinate chart, and notice that in this chart $g_t^*$ acts by preserving $z_1, z_2$ and multiplying $z_3$ by $1/t$. Then for any $p \in \RP^1$, the $g_t^*$-images of $(p, h_t(p)) \in \RP^1\times\RP^1\cong \pt \A^3$ converge to $(p, \dot h_0(p)) \in T\RP^1 \cong \pt ^*\R^{2,1}$ as $t \ra 0$. Pick any $\R$-chart for $\RP^1$, so $h_t$, $h$, $\dot h_0$ become $\R$-valued functions defined on a domain in $\R$. Since the embedding $\mc R \hookrightarrow \mc{QS}$ is smooth, by considering the Fr\'echet derivative of $h_t$ at $t=0$ in the uniform topology, it follows that in the chart $(h_t-h_0)/t$ converge to $\dot h_0$ uniformly. This implies that $\Lambda_\tau$ is the Hausdorff limit of $g_t^*\Lambda_{\rho_t}$, as desired.
\end{proof}

By taking the convex hulls, we get

\begin{crl}
The sets $g_t^*\cl(\tilde C_{\rho_t})$ converge to  $\cl(\tilde C_\tau)$.
\end{crl}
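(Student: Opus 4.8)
The plan is to transport Lemma~\ref{convlimset} through the closed convex hull operation. Recall from Section~\ref{sec:domain} that $\cl(\tilde C_{\rho_t})=\conv(\Lambda_{\rho_t})$ and $\cl(\tilde C_\tau)=\conv(\Lambda_\tau)$, where in both cases $\conv$ denotes the closed convex hull taken in $\RP^3$. Since $g_t^*\in{\rm PGL}(4,\R)$ is a projective transformation and each $\Lambda_{\rho_t}$ is achronal, hence contained in an affine chart, taking the closed convex hull commutes with $g_t^*$ on it: $g_t^*\conv(\Lambda_{\rho_t})=\conv(g_t^*\Lambda_{\rho_t})$. Thus $g_t^*\cl(\tilde C_{\rho_t})=\conv(g_t^*\Lambda_{\rho_t})$, and it remains to prove that this converges to $\conv(\Lambda_\tau)$ as $t\ra 0$.

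First I would invoke Lemma~\ref{convlimset}: the sets $g_t^*\Lambda_{\rho_t}$ converge in the Hausdorff sense to $\Lambda_\tau$. As in the proof of that lemma, one may carry out the whole discussion inside the co-Minkowski chart~(\ref{dualchart}): the set $\Lambda_\tau$ is compact and lies in this chart, so by Hausdorff convergence $g_t^*\Lambda_{\rho_t}$ lies in a fixed compact subset of this affine chart for all sufficiently small $t$. On a fixed compact subset of an affine chart of $\RP^3$ the closed convex hull operation is continuous with respect to the Hausdorff metric --- this is the standard Euclidean statement applied inside the chart, together with the observation that the closed convex hull of a compact subset of the chart stays inside the chart. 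Applying this continuity to $g_t^*\Lambda_{\rho_t}\ra\Lambda_\tau$ gives $\conv(g_t^*\Lambda_{\rho_t})\ra\conv(\Lambda_\tau)=\cl(\tilde C_\tau)$, which is the assertion.

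The only step needing a bit of care --- and the main, if minor, obstacle --- is the uniform containment in a single affine chart: the closed convex hull in $\RP^3$ of a set that merely happens to lie in \emph{some} affine chart is not continuous under limits unless the charts can be chosen uniformly, since convexity in $\RP^3$ is chart-dependent. Here this is automatic because the Hausdorff limit $\Lambda_\tau$ is itself a compact subset of the co-Minkowski chart, with room to spare, so Hausdorff convergence traps all nearby $g_t^*\Lambda_{\rho_t}$ in that same chart. Everything else is routine; the same argument, with $\conv$ replaced by the polar dual (also continuous on closely convex sets uniformly contained in a chart), handles the analogous statements for the dual domains $\cl(\tilde\Omega)$.
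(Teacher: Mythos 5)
Your argument is exactly the paper's: the paper derives this corollary from Lemma~\ref{convlimset} with the single phrase ``by taking the convex hulls,'' and your proposal just spells out why the closed convex hull operation is Hausdorff-continuous here (projective equivariance of $\conv$, plus uniform containment of the sets $g_t^*\Lambda_{\rho_t}$ in a fixed compact subset of the co-Minkowski chart since the limit $\Lambda_\tau$ is compact there). The details you supply, including the caveat about chart-dependence of projective convexity, are correct.
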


By passing to the dual sets, we obtain

\begin{crl}
\label{convdom}
The sets $g_t\cl(\tilde\Omega_{\rho_t})$ converge to $\cl(\tilde\Omega_\tau)$.
\end{crl}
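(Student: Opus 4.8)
The plan is to deduce this corollary from the preceding one (the Hausdorff convergence $g_t^*\cl(\tilde C_{\rho_t})\to\cl(\tilde C_\tau)$) by dualizing, in exact analogy with the deduction of Corollary~\ref{convdombase} from Corollary~\ref{convccorebase} in the non-blow-up setting. Recall that by construction $\cl(\tilde\Omega_{\rho_t})$ is the dual of $\conv(\Lambda_{\rho_t})=\cl(\tilde C_{\rho_t})$, and likewise $\cl(\tilde\Omega_\tau)$ is the dual of $\cl(\tilde C_\tau)$. Thus the claim is that the duals of $g_t^*\cl(\tilde C_{\rho_t})$ converge to the dual of $\cl(\tilde C_\tau)$, and I would get this from two ingredients.

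The first is the commutation of duality with the projective action: for a closely convex $C$ and $g\in{\rm PGL}(4,\R)$ one has $(gC)^*=g^*C^*$, as recalled in Section~\ref{sec:geometries}, and the operation $g\mapsto g^*$ on ${\rm PGL}(4,\R)$ is an involution. Applying this with the element $g_t^*$ in place of $g$ and using $(g_t^*)^*=g_t$ gives
\[
g_t\,\cl(\tilde\Omega_{\rho_t})=g_t\big(\cl(\tilde C_{\rho_t})^{*}\big)=\big(g_t^*\,\cl(\tilde C_{\rho_t})\big)^{*}.
\]
The second is the preceding corollary: $g_t^*\,\cl(\tilde C_{\rho_t})\to\cl(\tilde C_\tau)$ in the Hausdorff sense as $t\to 0$. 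It then remains to invoke the continuity of the projective duality map $C\mapsto C^*$ with respect to Hausdorff convergence of closely convex subsets of $\RP^3$ — the same input already used to pass from Corollary~\ref{convccorebase} to Corollary~\ref{convdombase} — to conclude that $\big(g_t^*\,\cl(\tilde C_{\rho_t})\big)^{*}\to\cl(\tilde C_\tau)^{*}=\cl(\tilde\Omega_\tau)$.

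The point requiring care, and the only real obstacle, is the continuity of $C\mapsto C^{*}$ at the limit $\cl(\tilde C_\tau)$. When $\tau$ is not a coboundary, $\cl(\tilde C_\tau)$ is a properly convex body and, after passing to the co-Minkowski chart, all the sets $g_t^*\cl(\tilde C_{\rho_t})$ eventually lie in a fixed compact subset of that chart, so continuity of duality is classical. When $\tau$ is a coboundary, $\cl(\tilde C_\tau)$ degenerates (it reduces to the closed disk $\cl(\H^2)\subset o^*$) and $\tilde\Omega_\tau$ fails to be properly convex; in that case I would instead verify the Hausdorff convergence of the duals directly, controlling the supporting planes of $g_t^*\cl(\tilde C_{\rho_t})$ uniformly from the convergence $g_t^*\Lambda_{\rho_t}\to\Lambda_\tau$ (which is oriented, in the sense used in the proof of Lemma~\ref{convtemp1-}) and the fact that these sets contain $\cl(\H^2)$ in the limit. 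Either way no genuinely new difficulty beyond the non-degenerate analogues arises, and the corollary follows.
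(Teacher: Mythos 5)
Your proposal is correct and is exactly the paper's argument: the paper deduces Corollary~\ref{convdom} from the preceding corollary with the single phrase ``by passing to the dual sets,'' which is precisely the identity $g_t\,\cl(\tilde\Omega_{\rho_t})=\bigl(g_t^*\,\cl(\tilde C_{\rho_t})\bigr)^{*}$ together with continuity of projective duality under Hausdorff convergence that you spell out. Your extra care about the degenerate case ($\tau$ a coboundary, where $\tilde\Omega_\tau$ is not properly convex) is a point the paper leaves implicit, and your proposed handling of it is sound.
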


\begin{lm}
\label{convlimset2}
The sets $g_t\Lambda_{\rho_t}$ converge to $\Lambda_{\rho_\circ}$.
\end{lm}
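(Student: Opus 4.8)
The plan is to reduce the statement to an easy computation in the co-Minkowski chart~(\ref{dualchart}), in which $g_t$ acts in a transparent way and the target limit set lies inside the plane that $g_t$ fixes. The first step is to identify $\Lambda_{\rho_\circ}$. In the case $\rho=\rho_\circ$ we have $\tilde C_{\rho_\circ}=\H^2\subset o^*$, hence, since $\Lambda_{\rho_\circ}=\pt\tilde\Omega_{\rho_\circ}\cap\pt\A^3=\conv(\Lambda_{\rho_\circ})\cap\pt\A^3=\cl(\tilde C_{\rho_\circ})\cap\pt\A^3$ and $\cl(\H^2)=\H^2\cup\pt_r\H^2$ with $\H^2\subset\A^3$ and $\pt_r\H^2=\pt\A^3\cap o^*$, we get $\Lambda_{\rho_\circ}=\pt_r\H^2$. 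In the co-Minkowski chart this is the Euclidean unit circle in the $z_1z_2$-plane, i.e.\ in $o^*\cap\{x_3\neq 0\}=\{z_3=0\}$; in particular $\Lambda_{\rho_\circ}$ is compactly contained in the co-Minkowski chart.

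\textbf{The argument.} Since $\rho_t\ra\rho_\circ$ in $\mc R$, Lemma~\ref{convlimsetbase} (applied along the curve) gives $\Lambda_{\rho_t}\ra\Lambda_{\rho_\circ}=\pt_r\H^2$ in the Hausdorff sense. Because $\pt_r\H^2$ is compactly contained in the co-Minkowski chart, for all small $t$ the set $\Lambda_{\rho_t}$ lies in a fixed compact neighborhood $N$ of $\pt_r\H^2$ inside that chart, and all Hausdorff distances can be measured there with the Euclidean metric $(z_1,z_2,z_3)$. Writing a point of $\Lambda_{\rho_t}$ as $(z_1,z_2,z_3)$, the $z_3$-coordinates are uniformly bounded, and the orthogonal projection $(z_1,z_2,z_3)\mapsto(z_1,z_2,0)$, which is $1$-Lipschitz and fixes $\pt_r\H^2$ pointwise, sends $\Lambda_{\rho_t}$ to a set converging to $\pt_r\H^2$. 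Now the projectivization $g_t$ of $\mathrm{diag}(1/t,1/t,1/t,1)$ acts on the co-Minkowski chart by $(z_1,z_2,z_3)\mapsto(z_1,z_2,tz_3)$; it preserves the chart (and $N$, since $t\le 1$) and fixes the plane $\{z_3=0\}=o^*$ pointwise. Hence $g_t\Lambda_{\rho_t}$ has the same $(z_1,z_2)$-projection as $\Lambda_{\rho_t}$, which converges to $\pt_r\H^2$, while its $z_3$-coordinates are $t$ times those of $\Lambda_{\rho_t}$ and therefore tend to $0$ uniformly. Thus $g_t\Lambda_{\rho_t}$ eventually lies in a fixed compact part of the chart and converges there, in the Euclidean Hausdorff metric, to $\pt_r\H^2=\Lambda_{\rho_\circ}$, which yields Hausdorff convergence in $\RP^3$.

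\textbf{What is delicate.} I do not expect a genuine obstacle: the whole point is the soft observation that $\Lambda_{\rho_\circ}$ lies in the plane fixed by $g_t$, so contracting towards that plane cannot change the limit. The two points needing a little care are the usual reduction of Hausdorff convergence in $\RP^3$ to convergence inside a fixed affine chart — legitimate here because both the limit and, for small $t$, all of $g_t\Lambda_{\rho_t}$ lie in a compact subset of the co-Minkowski chart — and the bookkeeping that in this chart $g_t$ \emph{contracts} the $z_3$-direction, which is the exact opposite of the expansion used in the proof of Lemma~\ref{convlimset}; this is precisely why the derivative $\tau=\dot\rho_0$ plays no role and only the continuity of $t\mapsto\rho_t$ at $t=0$ is used.
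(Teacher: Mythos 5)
Your proof is correct and rests on the same two facts as the paper's: Lemma~\ref{convlimsetbase} gives $\Lambda_{\rho_t}\ra\Lambda_{\rho_\circ}$, and $g_t$ fixes $o^*\supset\Lambda_{\rho_\circ}$ pointwise while only moving points toward that plane. The paper packages the second step abstractly (each point of $\Lambda_{\rho_\circ}$ has a basis of neighborhoods $U$ with $g_tU\subset U$, plus the Hausdorff--Vietoris equivalence), whereas you make it explicit in the co-Minkowski chart via the $1$-Lipschitz projection and the uniform contraction of $z_3$; the substance is the same.
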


\begin{proof}
Due to Lemma~\ref{convlimsetbase}, $\Lambda_{\rho_t}$ converge to $\Lambda_{\rho_\circ}$. However, $\Lambda_{\rho_\circ}$ is pointwise fixed by all $g_t$ and every point of $\Lambda_{\rho_\circ}$ has a basis of neighborhoods $\{U_i\}$ such that for all $t$ and $i$ we have $g_tU_i \subset U_i$. From this and from the equivalence of the Hausorff topology on the space of closed subsets of $\RP^3$ to the Vietoris topology, it follows that $g_t\Lambda_{\rho_t}$ converge to $\Lambda_{\rho_\circ}$. 
\end{proof}

\begin{crl}
\label{convccore2}
The sets $g_t\cl(\tilde C_{\rho_t})$ converge to  $\cl(\tilde C_{\rho_\circ})$.
\end{crl}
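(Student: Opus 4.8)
The plan is to deduce the statement directly from Lemma~\ref{convlimset2} by taking closed convex hulls, exactly as Corollary~\ref{convccorebase} was deduced from Lemma~\ref{convlimsetbase}. Recall that since $\Lambda_\rho$ is achronal for every $\rho \in \mc R$, one has $\cl(\tilde C_\rho) = \conv(\Lambda_\rho)$. The transformations $g_t$ are projective, hence commute with taking closed convex hulls of sets lying in an affine chart; thus $g_t\cl(\tilde C_{\rho_t}) = \conv(g_t\Lambda_{\rho_t})$ whenever the right-hand side is understood in a chart containing $g_t\Lambda_{\rho_t}$.

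First I would invoke Lemma~\ref{convlimset2}, which gives that $g_t\Lambda_{\rho_t}$ converges in the Hausdorff sense to $\Lambda_{\rho_\circ}$. Since $\Lambda_{\rho_\circ}$ is compact, achronal, and contained in a fixed affine chart $A \subset \RP^3$ on which $\cl(\tilde C_{\rho_\circ}) = \conv(\Lambda_{\rho_\circ})$ is a compact convex body, Hausdorff convergence ensures that $g_t\Lambda_{\rho_t} \subset A$ for all sufficiently small $t$, and moreover that these sets remain inside a fixed compact subset of $A$. On compact subsets of $A \cong \R^3$ the closed convex hull operation is continuous with respect to Hausdorff convergence, so $\conv(g_t\Lambda_{\rho_t})$, computed inside $A$, converges to $\conv(\Lambda_{\rho_\circ}) = \cl(\tilde C_{\rho_\circ})$.

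It then remains to check that $\conv(g_t\Lambda_{\rho_t})$ computed inside $A$ agrees with $g_t\cl(\tilde C_{\rho_t})$. This holds because $\cl(\tilde C_{\rho_t}) = \conv(\Lambda_{\rho_t})$ is contained in the affine chart bounded by a supporting plane of $\cl(\tilde C_{\rho_t})$, $g_t$ restricts to an affine isomorphism of that chart onto another affine chart of $\RP^3$, and the closed convex hull of a subset lying in an affine chart is independent of the choice of chart containing it. Combining this identification with the convergence from the previous paragraph yields $g_t\cl(\tilde C_{\rho_t}) \to \cl(\tilde C_{\rho_\circ})$.

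The only genuine subtlety — and the point requiring the most care — is this uniform-chart argument: since the family $g_t$ degenerates as $t \to 0$, one must make sure that no part of $g_t\cl(\tilde C_{\rho_t})$ escapes towards the plane at infinity of the fixed chart $A$, so that the convex hulls do not "wrap around" and the continuity of $\conv$ genuinely applies. This is precisely what Lemma~\ref{convlimset2} supplies: it confines $g_t\Lambda_{\rho_t}$ near $\Lambda_{\rho_\circ}$, hence confines the convex hulls near $\cl(\tilde C_{\rho_\circ})$, which lies well inside $A$.
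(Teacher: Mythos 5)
Your proposal is correct and is exactly the paper's (implicit) argument: the corollary is stated as an immediate consequence of Lemma~\ref{convlimset2} together with the identity $\cl(\tilde C_\rho)=\conv(\Lambda_\rho)$ and the fact that the projective maps $g_t$ commute with taking closed convex hulls, just as Corollary~\ref{convccorebase} follows from Lemma~\ref{convlimsetbase}. You have merely spelled out the routine chart and Hausdorff-continuity details that the paper leaves unstated.
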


The next three results are obtained the same as Lemmas~\ref{convtemp1-},~\ref{convtemp2-} and Corollary~\ref{convdom2-}.

\begin{lm}
\label{convtemp1}
The sets $g_t\cl(\pt^-\tilde C_{\rho_t})$ converge to $\cl(\tilde C_{\rho_\circ})$.
\end{lm}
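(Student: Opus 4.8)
The plan is to reproduce, almost verbatim, the argument in the proof of Lemma~\ref{convtemp1-} in the case $\rho=\rho_\circ$, with the sequence $\rho_i\to\rho_\circ$ replaced by the family obtained by applying $g_t$ to $\rho_t$, and with the inputs Lemma~\ref{convlimsetbase} and Corollary~\ref{convccorebase} replaced by their blow-up counterparts Lemma~\ref{convlimset2} and Corollary~\ref{convccore2}. For the (possibly empty) set of times $t$ with $\rho_t=\rho_\circ$ there is nothing to check, since then $\pt^-\tilde C_{\rho_t}$ equals $\tilde C_{\rho_\circ}\subset o^*$ and $g_t$ fixes $o^*$ pointwise; so I may concentrate on $t$ with $\rho_t\neq\rho_\circ$, for which $\tilde C_{\rho_t}$ is full-dimensional, $\Lambda_{\rho_t}$ is a Jordan curve on $\pt\cl(\tilde C_{\rho_t})$, and $\cl(\pt^-\tilde C_{\rho_t})$ is a closed topological $2$-disk with boundary $\Lambda_{\rho_t}$.

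I would work in the co-Minkowski chart endowed with the standard Euclidean metric, in which $\cl(\tilde C_{\rho_\circ})$ is the closed unit disk of the $z_1z_2$-plane and $g_t$ acts by $(z_1,z_2,z_3)\mapsto(z_1,z_2,tz_3)$. By Corollary~\ref{convccore2} the sets $g_t\cl(\tilde C_{\rho_t})$ converge to $\cl(\tilde C_{\rho_\circ})$, hence for small $t$ lie in this chart within a fixed compact neighbourhood of $\cl(\tilde C_{\rho_\circ})$; since $g_t\cl(\pt^-\tilde C_{\rho_t})\subseteq g_t\cl(\tilde C_{\rho_t})$, every subsequential Hausdorff limit of $g_t\cl(\pt^-\tilde C_{\rho_t})$ is contained in $\cl(\tilde C_{\rho_\circ})$, which lies in the $z_1z_2$-plane. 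Projecting orthogonally onto that plane by $\pi$, the set $\pi\bigl(g_t\cl(\pt^-\tilde C_{\rho_t})\bigr)$ is a continuous image of a closed $2$-disk whose boundary image is $\pi(g_t\Lambda_{\rho_t})$, and by Lemma~\ref{convlimset2} this converges to $\pi(\Lambda_{\rho_\circ})=\Lambda_{\rho_\circ}=\pt_r\H^2$, the unit circle bounding $\cl(\tilde C_{\rho_\circ})$. The topological input, already used in the proof of Lemma~\ref{convtemp1-}, is that a continuous image of a closed $2$-disk whose boundary curve is Hausdorff-close to a Jordan curve $J$ must contain every point of the region bounded by $J$ that is farther than the Hausdorff error from $J$ (otherwise that boundary curve would be null-homotopic in the plane minus such a point while having winding number $\pm1$ around it). Hence $\liminf_t\pi\bigl(g_t\cl(\pt^-\tilde C_{\rho_t})\bigr)\supseteq\cl(\tilde C_{\rho_\circ})$, while at the same time $\pi\bigl(g_t\cl(\pt^-\tilde C_{\rho_t})\bigr)\subseteq\pi\bigl(g_t\cl(\tilde C_{\rho_t})\bigr)\to\cl(\tilde C_{\rho_\circ})$; combining these and invoking Lemma~\ref{subconvtoconv} to pass from subsequences to the full family gives $\pi\bigl(g_t\cl(\pt^-\tilde C_{\rho_t})\bigr)\to\cl(\tilde C_{\rho_\circ})$. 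Finally, since every subsequential limit of $g_t\cl(\pt^-\tilde C_{\rho_t})$ already lies in the $z_1z_2$-plane, where $\pi$ is the identity, this forces $g_t\cl(\pt^-\tilde C_{\rho_t})\to\cl(\tilde C_{\rho_\circ})$.

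I do not anticipate a genuine obstacle, consistently with the paper's remark that this is ``obtained the same as Lemma~\ref{convtemp1-}''. The only points requiring some care are verifying that $\cl(\pt^-\tilde C_{\rho_t})$ is really a closed $2$-disk with Jordan boundary $\Lambda_{\rho_t}$, so that the winding-number argument applies, and keeping track that in this blown-up setting it is Lemma~\ref{convlimset2} that controls the convergence of the boundary circles and Corollary~\ref{convccore2} that confines the disks — these taking over the roles of Lemma~\ref{convlimsetbase} and Corollary~\ref{convccorebase} from the original argument.
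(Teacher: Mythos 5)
Your proposal is correct and follows essentially the same route as the paper, which simply declares this lemma to be "obtained the same as Lemma~\ref{convtemp1-}": you reuse the $\rho=\rho_\circ$ case of that proof verbatim, with Lemma~\ref{convlimset2} and Corollary~\ref{convccore2} substituted for Lemma~\ref{convlimsetbase} and Corollary~\ref{convccorebase}, projecting to the $z_1z_2$-plane of the co-Minkowski chart and applying Lemma~\ref{subconvtoconv}. Your explicit winding-number justification fills in the step the paper leaves implicit ("then the images of the disks converge"), and your computation of the $g_t$-action in the co-Minkowski chart is accurate.
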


\begin{lm}
\label{convtemp2}
The sets $g_t\cl(\pt^+\tilde \Omega_{\rho_t})$ converge to $\cl(\pt^+\tilde \Omega_\tau)$.
\end{lm}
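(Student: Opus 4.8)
The plan is to imitate the proof of Lemma~\ref{convtemp1-} (equivalently Lemma~\ref{convtemp2-}), inserting the transformations $g_t$ exactly as in the passage from Lemma~\ref{convtemp1-} to Lemma~\ref{convtemp1}, and to split into the cases where $\tilde\Omega_\tau$ is or is not properly convex. Assume first $\tilde\Omega_\tau$ is properly convex (i.e.\ $\tau$ is not a coboundary), so that $\rho_t\neq\rho_\circ$ and each $\tilde\Omega_{\rho_t}$ is properly convex for small $t>0$. The two inputs are: Corollary~\ref{convdom}, which gives $g_t\cl(\tilde\Omega_{\rho_t})\ra\cl(\tilde\Omega_\tau)$ and hence, via Lemma~\ref{convcompl}, both $g_t\pt\tilde\Omega_{\rho_t}\ra\pt\tilde\Omega_\tau$ and $\cl(\RP^3\backslash g_t\tilde\Omega_{\rho_t})\ra\cl(\RP^3\backslash\tilde\Omega_\tau)$; and Lemma~\ref{convlimset2}, which gives $g_t\Lambda_{\rho_t}\ra\Lambda_{\rho_\circ}=\pt_r\H^2$, the curve that divides $\pt\tilde\Omega_\tau$ into $\pt^+\tilde\Omega_\tau$ and $\pt^-\tilde\Omega_\tau$.

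I would then fix a point $p\in\H^2\subset\inter(\tilde\Omega_\tau)$, observe (using the convergence of the complements) that $p\in\inter(g_t\tilde\Omega_{\rho_t})$ for all small $t$, and project the boundaries $g_t\pt\tilde\Omega_{\rho_t}$ and $\pt\tilde\Omega_\tau$ radially from $p$ onto the sphere of directions $S^2$. By proper convexity these projections are homeomorphisms onto $S^2$; the dividing curves project to simple closed Lipschitz curves $\psi_t,\psi$ with $\psi_t\ra\psi$, and $\pt^+\tilde\Omega_{\rho_t}$, $\pt^+\tilde\Omega_\tau$ project to the domains bounded by them. Provided $\psi_t\ra\psi$ \emph{orientedly} in the sense of the proof of Lemma~\ref{convtemp1-}, the general topological fact recorded there — that the closures of the distinguished domains $D^+$ converge in the Hausdorff sense when the oriented curves converge orientedly — finishes the argument after pulling back by the radial projections, yielding $g_t\cl(\pt^+\tilde\Omega_{\rho_t})\ra\cl(\pt^+\tilde\Omega_\tau)$.

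The main obstacle, exactly as in Lemmas~\ref{convtemp1-} and~\ref{convtemp2-}, is to justify the \emph{oriented} convergence of the dividing curves. Each $\Lambda_{\rho_t}$ carries the orientation induced by the future-orientation of $\pt\A^3$ (analyzed through $\pt\A^3\cong\RP^1\times\RP^1$ in the proof of Lemma~\ref{convtemp1-}), and $g_t$ preserves the orientation of $\RP^3$ and the time-orientation, so $g_t\Lambda_{\rho_t}$ inherits this orientation; what must be checked is that as $g_t\ra\mathrm{id}$ the future-orientation of $\pt\A^3$ degenerates near $o^*$ to the orientation of $\pt_r\H^2=\pt\A^3\cap o^*$ for which $\pt^+\tilde\Omega_\tau$ is the distinguished component $D^+$. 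This is a routine but slightly delicate check of the same flavor as the orientation discussion in the proof of Lemma~\ref{convtemp1-}; combined with Lemma~\ref{convlimset2} it gives the required oriented convergence.

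Finally, if $\tilde\Omega_\tau$ is not properly convex, then $\tilde\Omega_\tau\cap\R^{2,1}$ is the union of the two open cones based at $o$ and spanned by $\H^2$, and $\pt^+\tilde\Omega_\tau$ is the boundary of one of them; here I would argue as in the case $\rho=\rho_\circ$ of the proof of Lemma~\ref{convtemp1-}, working in the co-Minkowski chart with its Euclidean metric, orthogonally projecting $g_t\cl(\pt^+\tilde\Omega_{\rho_t})$ onto a fixed coordinate plane, noting that these are continuous images of a disk whose boundary circles converge to the relative boundary of the projection of $\cl(\pt^+\tilde\Omega_\tau)$, so that the projected disks converge to that projection, and then invoking Lemma~\ref{subconvtoconv} to upgrade subsequential convergence to full convergence. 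The companion statement (the analogue of Corollary~\ref{convdom2-}) then follows by the same complementation argument as in the proof of Corollary~\ref{convdom2-}, using Lemma~\ref{convcompl} together with the convergences established above.
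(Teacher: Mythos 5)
Your treatment of the main case --- $\tau$ not a coboundary, so that $\tilde\Omega_\tau$ is properly convex --- is correct and is exactly the argument the paper intends (its proof of this lemma is only a pointer back to Lemmas~\ref{convtemp1-} and~\ref{convtemp2-}): the inputs are Corollary~\ref{convdom} together with Lemma~\ref{convcompl}, and Lemma~\ref{convlimset2} for the dividing curves $g_t\Lambda_{\rho_t}\ra\Lambda_{\rho_\circ}=\pt_r\H^2$; the radial projection from an interior point then reduces everything to the topological fact about orientedly convergent Jordan curves on $S^2$. The orientation verification you flag is indeed the only point of substance there; note only that $g_t$ does not preserve $\pt\A^3$, so the orientation on $g_t\Lambda_{\rho_t}$ must be transported by $g_t$ rather than read off from the conformal structure of $\pt\A^3$ directly, but this causes no real difficulty.

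The degenerate case, however, does not go through as you describe it. When $\tau$ is a coboundary, $\cl(\pt^+\tilde\Omega_\tau)$ is the closure of the future lightcone of a point (for $\tau=0$, of $o$ itself); in the co-Minkowski chart this is a vertical half-cylinder over the unit circle $\pt_r\H^2$, not a flat disk. The orthogonal-projection argument in the case $\rho=\rho_\circ$ of Lemma~\ref{convtemp1-} works only because there the limit $\cl(\tilde C_{\rho_\circ})$ is contained in the $z_1z_2$-plane: the ambient sets $\cl(\tilde C_{\rho_i})$ are already known to collapse onto that plane, so Hausdorff convergence of the boundary pieces is equivalent to convergence of their planar projections. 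Here the orthogonal projection of the limit surface to the $z_1z_2$-plane is merely the unit circle, and convergence of the projections of $g_t\cl(\pt^+\tilde \Omega_{\rho_t})$ carries essentially no information about where along the cylinder (or on which of the two lightcone halves bounding $\tilde\Omega_\tau$) the surfaces actually accumulate. So this sub-case needs a different argument: for instance, one can exploit that $\cl(\tilde\Omega^+_\tau\cup\H^2)$ is still properly convex and compare with Lemma~\ref{convtemp1}, or rule out accumulation of $g_t\cl(\pt^+\tilde \Omega_{\rho_t})$ on $\cl(\pt^-\tilde\Omega_\tau)$ and on $\H^2$ by a separation argument using $g_t\Lambda_{\rho_t}\ra\pt_r\H^2$ and the time-orientation. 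As written, the step ``the projected disks converge to that projection, hence the sets converge'' is exactly the implication that fails once the limit is not flat.
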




\begin{crl}
\label{convdom2}
The sets $g_t(\RP^3\backslash \tilde\Omega^{+}_{\rho_t})$ converge to $\RP^3\backslash\tilde\Omega^+_\tau$.
\end{crl}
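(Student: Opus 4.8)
The plan is to reproduce the proof of Corollary~\ref{convdom2-}, with the two-dimensional ``convex core'' $\H^2$ of the Minkowski domain $\tilde\Omega_\tau$ taking the place of $\pt^+\tilde C_\rho$. For each $t$ with $\rho_t\neq\rho_\circ$ let $\tilde\Omega^{--}_{\rho_t}$ be the future of $\pt^+\tilde C_{\rho_t}$ in $\tilde\Omega_{\rho_t}$; as in Corollary~\ref{convdom2-} this is a convex domain with $\pt\tilde\Omega^{--}_{\rho_t}=\pt^+\tilde C_{\rho_t}\cup\Lambda_{\rho_t}\cup\pt^-\tilde\Omega_{\rho_t}$ and
\[\RP^3\backslash\tilde\Omega^+_{\rho_t}=\cl(\tilde\Omega^{--}_{\rho_t})\cup(\RP^3\backslash\tilde\Omega_{\rho_t}).\]
On the Minkowski side let $\tilde\Omega^{--}_\tau$ be the component of $\tilde\Omega_\tau\backslash\H^2$ distinct from $\tilde\Omega^+_\tau$, i.e.\ (consistently with the anti-de Sitter description) the future of $\H^2$ in $\tilde\Omega_\tau$; it is a convex domain, $\pt\tilde\Omega^{--}_\tau=\cl(\H^2)\cup\cl(\pt^-\tilde\Omega_\tau)$ with $\cl(\H^2)\cap\cl(\pt^-\tilde\Omega_\tau)=\pt_r\H^2=\Lambda_{\rho_\circ}$, and one checks exactly as before that
\[\RP^3\backslash\tilde\Omega^+_\tau=\cl(\tilde\Omega^{--}_\tau)\cup(\RP^3\backslash\tilde\Omega_\tau).\]

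The first step is to show $g_t\,\pt\tilde\Omega^{--}_{\rho_t}\to\pt\tilde\Omega^{--}_\tau$ in the Hausdorff topology. By Lemma~\ref{convlimset2}, $g_t\Lambda_{\rho_t}\to\Lambda_{\rho_\circ}=\pt_r\H^2$. The analogue of Lemma~\ref{convtemp1} for the future boundary component of the core holds by the same proof (the argument is symmetric in the two boundary components, since under $g_t$ the whole of $\cl(\tilde C_{\rho_t})$ degenerates onto $\cl(\H^2)$ by Corollary~\ref{convccore2}), so $g_t\cl(\pt^+\tilde C_{\rho_t})\to\cl(\tilde C_{\rho_\circ})=\cl(\H^2)$. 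Likewise the analogue of Lemma~\ref{convtemp2} for the past component of $\pt\tilde\Omega$ gives $g_t\cl(\pt^-\tilde\Omega_{\rho_t})\to\cl(\pt^-\tilde\Omega_\tau)$; that the two components $\pt^\pm\tilde\Omega_{\rho_t}$ are not swapped in the limit follows, exactly as in the second case of the proof of Lemma~\ref{convtemp1-}, from the orientedness of the convergence $g_t\Lambda_{\rho_t}\to\pt_r\H^2$ inherited from the convergence of graphs under $\pt\A^3\cong\RP^1\times\RP^1$. Taking the union of these three convergences yields $g_t\,\pt\tilde\Omega^{--}_{\rho_t}\to\pt\tilde\Omega^{--}_\tau$.

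Since $\tilde\Omega^{--}_{\rho_t}$ and $\tilde\Omega^{--}_\tau$ are closely convex and their boundaries converge, it follows that $g_t\cl(\tilde\Omega^{--}_{\rho_t})\to\cl(\tilde\Omega^{--}_\tau)$: any subsequential Hausdorff limit of $g_t\cl(\tilde\Omega^{--}_{\rho_t})$ is closely convex with boundary $\pt\tilde\Omega^{--}_\tau$, hence equals $\cl(\tilde\Omega^{--}_\tau)$, and Lemma~\ref{subconvtoconv} upgrades this to convergence of the whole family. On the other hand $g_t\cl(\tilde\Omega_{\rho_t})\to\cl(\tilde\Omega_\tau)$ by Corollary~\ref{convdom}, whence Lemma~\ref{convcompl} gives $g_t(\RP^3\backslash\tilde\Omega_{\rho_t})=\RP^3\backslash g_t\tilde\Omega_{\rho_t}\to\RP^3\backslash\tilde\Omega_\tau$. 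As Hausdorff convergence passes to finite unions,
\[g_t(\RP^3\backslash\tilde\Omega^+_{\rho_t})=g_t\cl(\tilde\Omega^{--}_{\rho_t})\cup g_t(\RP^3\backslash\tilde\Omega_{\rho_t})\longrightarrow\cl(\tilde\Omega^{--}_\tau)\cup(\RP^3\backslash\tilde\Omega_\tau)=\RP^3\backslash\tilde\Omega^+_\tau,\]
which is the assertion. The parameters $t$ with $\rho_t=\rho_\circ$ play no role in a statement about $t\to 0$; and if $\tau$ is itself a coboundary, so that $\tilde\Omega_{\rho_t}$, resp.\ $\tilde\Omega_\tau$, fails to be properly convex, all the sets above are still defined by the conventions of Section~\ref{sec:domain} and the same argument applies.

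The only non-routine point is the convergence $g_t\cl(\pt^+\tilde C_{\rho_t})\to\cl(\H^2)$: here the convex core $\tilde C_{\rho_t}$ flattens onto the disk $\cl(\H^2)$ under $g_t$, so its future boundary no longer converges to a spacelike surface, and one must check that the limit exhausts all of $\cl(\H^2)$ rather than a proper subset. This is precisely the flattening argument used to prove Lemma~\ref{convtemp1} (modeled on the first case of the proof of Lemma~\ref{convtemp1-}): one projects $g_t\cl(\pt^+\tilde C_{\rho_t})$ orthogonally onto the $z_1z_2$-plane of the co-Minkowski chart, observes that the images are continuous images of disks whose boundary circles converge to $\pt_r\H^2$, and invokes Lemma~\ref{subconvtoconv}. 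This is the single place where the degeneracy of the Minkowski ``convex core'' is essential.
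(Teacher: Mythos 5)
Your proof is correct and follows essentially the same route as the paper, which proves Corollary~\ref{convdom2} simply by declaring it "obtained the same as" Corollary~\ref{convdom2-}, i.e.\ via the decomposition $\RP^3\backslash\tilde\Omega^+=\cl(\tilde\Omega^{--})\cup(\RP^3\backslash\tilde\Omega)$ together with the blown-up convergence statements for $\Lambda$, $\pt^+\tilde C$ and $\pt^-\tilde\Omega$. You have merely written out the details the paper leaves implicit, including the one genuinely degenerate step (the core flattening onto $\cl(\H^2)$), and done so correctly.
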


\subsubsection{$K$-surfaces foliation}
\label{sec:ksurf}

We will rely on an important theorem of Barbot--Beguin--Zeghib~\cite{BBZ}:

\begin{thm}
\label{BBZ}
For every $\rho \in \mc T$ there exists a smooth foliation of $\Omega^+_\rho$ by Cauchy surfaces of constant Gauss curvature. 
\end{thm}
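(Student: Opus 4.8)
The plan is to build the foliation leaf by leaf, indexing the leaves by their constant Gauss curvature, following the by-now standard scheme for curvature foliations of globally hyperbolic spacetimes: existence of one surface per curvature value via barriers, uniqueness via the maximum principle, then nestedness and exhaustion. Fix $\rho \in \mc T$ and work in $\Omega^+_\rho$, which is globally hyperbolic, squeezed between the two convex surfaces $\pt^+ C_\rho$ and $\pt^+\Omega_\rho$, and carries a regular cosmological time function measured from $\pt^+ C_\rho$ (bounded above, since anti-de Sitter spacetimes are timelike incomplete).

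First I would set up the analytic problem. Writing a candidate Cauchy surface as the graph of a function over a fixed reference Cauchy surface of $\Omega^+_\rho$, the condition ``intrinsic Gauss curvature identically $K$'' becomes, via the Gauss equation in $\A^3$ (which expresses the intrinsic curvature of a spacelike surface through $-1$ and the determinant of its second fundamental form), a quasilinear equation of Monge--Amp\`ere type; it is elliptic exactly on the convex spacelike surfaces, which is precisely the relevant class here, since the equation forces the second fundamental form to be definite. To produce a solution for each admissible $K$ I would use barriers: $\pt^+ C_\rho$, whose intrinsic curvature takes the extreme value $-1$ everywhere (and which is totally geodesic away from its bending locus), serves — after a mild smoothing, or directly in the viscosity sense — as the barrier on the convex-core side, while a level set of the cosmological time close to $\pt^+\Omega_\rho$, where the Cauchy surfaces become highly convex, serves as the barrier on the other side. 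With both barriers in place I would invoke the existence theory for spacelike hypersurfaces of prescribed curvature between barriers in globally hyperbolic spacetimes (in the spirit of Bartnik, Gerhardt and Ecker). The two analytic inputs one must supply are a uniform gradient estimate keeping the approximating surfaces uniformly spacelike — this uses global hyperbolicity and the confinement between the barriers — and a uniform bound on the second fundamental form; elliptic regularity then yields a smooth $K$-surface $\Sigma_K$.

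Uniqueness and nestedness come from the geometric maximum principle. If $\Sigma, \Sigma'$ are two closed $K$-surfaces, I would use the product structure of $\Omega^+_\rho$ (or the cosmological time) to slide one of them through the future until they first touch; at a first contact point they are tangent and one lies locally to one causal side of the other, and comparing their second fundamental forms through the ellipticity of the constant-Gauss-curvature operator forces them to agree near that point, hence everywhere by connectedness. Running the same argument with $K_1 < K_2$ shows that the unique surfaces $\Sigma_{K_1}$ and $\Sigma_{K_2}$ are disjoint and causally ordered. Moreover $K \mapsto \Sigma_K$ is smooth: the linearisation of the Gauss-curvature operator at a convex $K$-surface is, up to a positive factor, an operator of the form $-\Delta_{\Sigma_K} + q$ with $q>0$ on the closed surface $\Sigma_K$, hence an isomorphism, and the implicit function theorem applies.

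The last step — and, to my mind, the main obstacle — is to show that the nested family $\{\Sigma_K\}$ exhausts $\Omega^+_\rho$. Its union is open by the previous steps, so one must control the behaviour of $\Sigma_K$ as $K$ runs to the two endpoints of the admissible interval. Toward the endpoint where the surfaces become nearly totally geodesic one expects $\Sigma_K \to \pt^+ C_\rho$ in the Hausdorff sense, which should follow from the curvature estimates together with $\pt^+ C_\rho$ being a limiting barrier. The genuinely delicate end is the one where the extrinsic curvature blows up: here $\Sigma_K$ must escape every compact subset of $\Omega^+_\rho$ and accumulate on $\pt^+\Omega_\rho$, but $\pt^+\Omega_\rho$ is only an achronal, partly lightlike boundary — the anti-de Sitter analogue of the initial singularity of a flat GHMC spacetime — so it cannot be used directly as a smooth barrier. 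I would treat this end by comparison with an explicit model: rescaling $\Omega^+_\rho$ near $\pt^+\Omega_\rho$ and exploiting the geometric transition $g_t$ of Section~\ref{sec:convblowup} (together with Corollary~\ref{convdom}) to compare with the flat regular domain $\tilde\Omega^+_\tau$, whose cosmological time and $K$-surface foliation are understood on the Minkowski side. Granting these two limits, $\bigcup_K \Sigma_K$ is a nonempty, open and closed subset of the connected manifold $\Omega^+_\rho$, hence all of it; together with the nestedness and the smooth dependence on $K$, this exhibits the $\Sigma_K$ as the leaves of a smooth foliation of $\Omega^+_\rho$ by Cauchy surfaces of constant Gauss curvature.
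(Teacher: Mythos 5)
The paper does not prove this statement at all: it is quoted verbatim from Barbot--B\'eguin--Zeghib~\cite{BBZ}, and the only argument the author supplies is the upgrade from ``continuous foliation'' (which is all that \cite{BBZ} literally asserts) to ``smooth foliation''. That upgrade is done not by an implicit-function-theorem argument but by a trick: \cite{BBZ2} shows the CMC foliation of $\Omega_\rho$ is smooth (in fact analytic), and the normal evolution into the past of the $H$-CMC leaf for a definite time produces a future-convex $K$-surface with $K=H(H+\sqrt{H^2/4+1})/2$, so the smooth CMC foliation transports to a smooth $K$-surface foliation. Your IFT route (linearizing the Gauss-curvature operator and checking the zeroth-order term is positive) is a legitimate alternative for that one step, but it is not what the paper does.

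As a self-contained proof of the theorem itself, your proposal reproduces the general architecture of \cite{BBZ} (barriers, existence of one $K$-surface per value, maximum-principle uniqueness and nesting, exhaustion), but the two steps you yourself flag as delicate are genuine gaps, and they are precisely where the substance of \cite{BBZ} lies. First, the barrier on the singular end: you propose using a level set of the cosmological time near $\pt^+\Omega_\rho$, but these level sets are only $C^{1,1}$ and, more importantly, establishing that they have a one-sided curvature bound suitable for a barrier is essentially equivalent to the hard estimates of \cite{BBZ}, which require the canonical decomposition of $\Omega^+_\rho$ into solid, thin and Misner blocks and explicit computations in each block (the same decomposition the present paper recalls in Section~\ref{sec:ct}, where it also notes that some of the naive Lipschitz estimates for the cosmological-time flow are actually \emph{false} in the anti-de Sitter setting and need correction by a factor $\cos^{-1}(r)$ --- a warning sign for any argument that treats the $\ct$-level sets as well-behaved barriers). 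Second, the exhaustion as $K\to-\infty$ is only gestured at via a geometric-transition heuristic; no actual comparison argument is given, and the flat model is not isometric to a neighborhood of $\pt^+\Omega_\rho$, so one would still need quantitative control. You also do not identify the admissible range of $K$ (here $(-\infty,-1)$) or verify the sign condition in the linearized operator. In short: the outline is the right one, but as written it is a plan for a proof, not a proof, whereas the paper sidesteps the issue entirely by citation.
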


By the Gauss equation, the sectional curvature at a point of a surface with the Gauss curvature $K$ is $-K-1$. The leaves of the foliation are strictly convex. By strictly convex we mean a smooth convex surface with non-degenerate shape operator.

Note that in~\cite{BBZ} the authors state only that the foliation is continuous. However, in another paper~\cite{BBZ2} they prove that $\Omega_\rho$ is foliated by CMC-surfaces (of constant mean curvature), and there they show that this foliation is smooth (in fact, analytic). It was observed that if $\Sigma$ is a CMC-surface of mean curvature $H$, then its normal evolution in past at time $H/2+1+\sqrt{H^2/4+1}$ is a future-convex $K$-surface with $K=H(H+\sqrt{H^2/4+1})/2$, see, e.g.,~\cite[Proposition 7.1.4]{BS2}. Thus, the smoothness of the $K$-surface foliation also follows. 

Pick $\rho \in \mc R$. Let $L \subset \Omega_\rho$ be a leaf of the foliation from Theorem~\ref{BBZ} and $\tilde L \subset\tilde\Omega_\rho$ be its preimage. From Lemma~\ref{limset}, $\cl(\tilde L)=\tilde L \cup \Lambda_\rho$. Pick $p \in \Lambda_\rho$, let $\Pi$ be the tangent plane at $p$ to $\pt\A^3$. We claim

\begin{lm}
\label{duallimset}
For any $p_i \ra p$, $p_i \in \tilde L$, the supporting planes $\Pi_i$ at $p_i$ to $\tilde L$ converge to $\Pi$.
\end{lm}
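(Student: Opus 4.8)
The plan is to argue by contradiction using the convergence results already established together with the convexity of $\tilde L$. First I would fix a complete timelike geodesic $c$ through $p$ meeting $\tilde L$, and work in the affine chart where $\cl(\tilde\Omega_\rho)$ is properly convex (possible since $\Lambda_\rho$ is contained in an affine chart, hence so is $\tilde C_\rho$ and so is $\tilde\Omega_\rho$); in this chart the notion of supporting plane is unambiguous. For each $p_i \to p$ with $p_i \in \tilde L$, choose a supporting plane $\Pi_i$ to $\tilde L$ at $p_i$. By compactness of the space of planes (the Grassmannian of affine $2$-planes in this chart), after passing to a subsequence we may assume $\Pi_i \to \Pi'$ for some plane $\Pi'$ through $p$. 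By Lemma~\ref{subconvtoconv} it suffices to show that every such subsequential limit $\Pi'$ equals $\Pi$, the tangent plane to $\pt\A^3$ at $p$.

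The first step is to show $\Pi'$ is supporting to $\cl(\tilde L) = \tilde L \cup \Lambda_\rho$. This is a standard limiting argument: since $\Pi_i$ has all of $\tilde L$ on one side (the convex side), and $\tilde L$ together with $\Lambda_\rho$ have, by Lemma~\ref{limset}, the closure $\cl(\tilde L)$ sitting inside $\cl(\tilde C_\rho)^* \cap (\text{half-space})$ — more precisely since $\tilde L$ is a Cauchy surface in $\tilde\Omega_\rho$ whose closure adds exactly $\Lambda_\rho$, the convex side of each $\Pi_i$ contains $\cl(\tilde L)$ minus possibly boundary points; passing to the limit, $\Pi'$ has $\cl(\tilde L)$, hence $\Lambda_\rho$, weakly on one side. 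Thus $\Pi'$ is a supporting plane to $\conv(\Lambda_\rho) = \cl(\tilde C_\rho)$ at the point $p \in \Lambda_\rho \subset \pt\tilde C_\rho$. Dually, $\Pi'^*$ is a point of $\cl(\tilde\Omega_\rho) = \conv(\Lambda_\rho)^*$ lying on $\pt\A^3$, i.e. $\Pi'^* \in \Lambda_\rho$; equivalently $\Pi'$ is one of the null supporting planes to $\cl(\tilde C_\rho)$ along $\Lambda_\rho$. Since $\Lambda_\rho$ is achronal and $p \in \Lambda_\rho$, the only plane tangent to $\pt\A^3$ at a point of $\Lambda_\rho$ and supporting $\cl(\tilde C_\rho)$ there is $\Pi$ itself — this uses that the graph $\Lambda_\rho = \mathrm{graph}(h_\rho)$ under $\pt\A^3\cong\RP^1\times\RP^1$ meets the lightlike foliation of $\pt\A^3$ transversally, so at $p$ there is a unique null support plane, namely the tangent plane $\Pi$ to $\pt\A^3$. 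Hence $\Pi' = \Pi$.

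The potential subtlety, and the step I would be most careful about, is the assertion that any subsequential limit $\Pi'$ is \emph{null and tangent to $\pt\A^3$ at $p$} rather than merely a supporting plane to $\cl(\tilde C_\rho)$ passing through $p$: a priori $\Pi'$ could be a spacelike supporting plane to $\tilde C_\rho$ touching at an interior point of $\tilde C_\rho$ that happens to equal $p$ — but $p \in \Lambda_\rho \subset \pt\tilde C_\rho$, so this cannot happen, and any support plane at a boundary point $p\in\Lambda_\rho$ of the convex set $\cl(\tilde C_\rho)$ whose dual lies on $\pt\A^3$ must be the null tangent plane. To nail this down cleanly I would invoke the dual picture: supporting planes to $\cl(\tilde C_\rho)$ at $p$ correspond to points of $\cl(\tilde\Omega_\rho)$ on the dual line $p^*$ of $p$; among these, the ones with dual in $\pt\A^3$ form exactly $p^* \cap \Lambda_\rho$, which by achronality of $\Lambda_\rho$ is the single point $p$ (the point dual to $\Pi$). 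Combining with the first step, $\Pi' = \Pi$ for every subsequential limit, and Lemma~\ref{subconvtoconv} gives $\Pi_i \to \Pi$ for the original sequence, as claimed.
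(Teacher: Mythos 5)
Your reduction discards essential information, and the key step fails. What your limiting argument actually establishes is only that every subsequential limit $\Pi'$ is a supporting plane of $\cl(\tilde L)$ (hence of $\conv(\Lambda_\rho)=\cl(\tilde C_\rho)$) containing $p$. But such planes are \emph{not} unique in general: the convex body $\conv(\cl(\tilde L))$ has $\pt^-\tilde C_\rho$ as its other boundary component, and a plane containing a totally geodesic face of $\pt^-\tilde C_\rho$ is a \emph{spacelike} supporting plane of both $\cl(\tilde C_\rho)$ and $\conv(\cl(\tilde L))$ whose projective closure contains the ideal vertices of that face, which are points of $\Lambda_\rho$. For such a point $p$ there are therefore several supporting planes at $p$, only one of which is the tangent plane $\Pi=p^*$. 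Your attempt to exclude the others ("any support plane \dots whose dual lies on $\pt\A^3$ must be the null tangent plane") is circular: the whole issue is to show that $\Pi'^*$ lies on $\pt\A^3$ rather than in $\pt^\pm\tilde\Omega_\rho\cap p^*\subset\A^3$, and that intersection can be nonempty; your dual-picture count $p^*\cap\Lambda_\rho=\{p\}$ only enumerates the supporting planes that are already tangent to $\pt\A^3$. (Two smaller slips: $p^*$ is a plane, not a line, and $p\in\pt\A^3$ can never be an interior point of $\tilde C_\rho$.) Note also that the $C^1$-regularity of the boundary of the convex set bounded by $\cl(\tilde L^+)$ and $\cl(\tilde L^-)$ along $\Lambda_\rho$ is \emph{deduced from} this lemma (Corollary~\ref{ksurfunion}), so uniqueness of the supporting plane at $p$ cannot be assumed.

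The information you must keep is that $\Pi_i$ is the tangent plane of the smooth, strictly convex surface $\tilde L$ at an \emph{interior} point $p_i$ escaping to infinity. The paper exploits this via duality: $q_i:=\Pi_i^*$ lies on the dual surface $\tilde L^*\subset\tilde\Omega_\rho^-$, a properly embedded $\theta_\rho$-invariant surface with $\cl(\tilde L^*)=\tilde L^*\cup\Lambda_\rho$; since $p_i$ leaves every compact subset of $\tilde L$, the points $q_i$ leave every compact subset of $\tilde L^*$, so any subsequential limit $p'$ of $q_i$ lies in $\Lambda_\rho$, and then $\Pi_i\to(p')^*$, the tangent plane to $\pt\A^3$ at $p'$. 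Only at that stage does your acausality argument apply: $(p')^*\ni p$ forces $p$ and $p'$ onto a common lightlike line of $\pt\A^3$, whence $p'=p$ and $\Pi'=\Pi$. Your final step coincides with the paper's; the gap is everything before it.
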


\begin{proof}
Consider the dual surface $\tilde L^*$, which consists from the points dual to the supporting planes to $\tilde L$. Then it is a strictly past-convex $\theta_\rho$-invariant surface in $\tilde \Omega^-_\rho$. From Lemma~\ref{limset}, $\cl(\tilde L^*)=\tilde L^* \cup \Lambda_\rho$. Let $q_i \in \tilde L^*$ be dual to $\Pi_i$. Up to subsequence, they converge to $p' \in \Lambda_\rho$. Let $\Pi'$ be the tangent plane at $p'$ to $\pt \A^3$. Since $(p')^*=\Pi'$, the respective subsequence of $\Pi_i$ converge to $\Pi'$. But then $\Pi'$ must pass through $p$. Since $\Lambda_\rho$ is achronal, $p'=p$, hence $\Pi'=\Pi$.  Using Lemma~\ref{subconvtoconv}, we get the desired result.
\end{proof}

Define now $\tilde L^+=\tilde L$ and pick a leaf $\tilde L^-$ of a similar foliation of $\tilde\Omega_\rho^-$. Then $\cl(\tilde L^+)$ and $\cl(\tilde L^-)$ bound a convex set $C$ with $\pt C=\tilde L^+ \cup \Lambda_\rho \cup \tilde L^-$. Lemma~\ref{duallimset} implies that at every point of $\Lambda_\rho$, $C$ has a unique supporting plane. Since $\tilde L^\pm$ are strictly convex, we have

\begin{crl}
\label{ksurfunion}
$\pt C$ is $C^1$ and touches $\A^3$ along $\Lambda_\rho$. 
\end{crl}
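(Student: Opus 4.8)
The plan is to exploit the decomposition $\pt C=\tilde L^+\cup\Lambda_\rho\cup\tilde L^-$ recorded just above the statement, and to treat the ``smooth part'' $\tilde L^+\cup\tilde L^-$ and the ``limit part'' $\Lambda_\rho$ separately. On $\tilde L^\pm$ the boundary $\pt C$ coincides with a leaf of a $K$-surface foliation, which by Theorem~\ref{BBZ} is a smooth strictly convex surface; hence there $\pt C$ is smooth, a fortiori $C^1$, with a single, smoothly varying tangent plane at each such point. Thus the whole question concentrates at the points of $\Lambda_\rho$.

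The first real step is the one already flagged before the statement: Lemma~\ref{duallimset}, applied to $\tilde L^+$ and to the leaf $\tilde L^-$ of the corresponding foliation of $\tilde\Omega_\rho^-$, shows that $C$ has a \emph{unique} supporting plane at every point of $\Lambda_\rho$. Combined with the smoothness on $\tilde L^\pm$, this says that $C$ --- viewed as a closed convex set inside an affine chart of $\RP^3$ (legitimate near each point of $\Lambda_\rho$, and globally when $\rho\neq\rho_\circ$) --- admits a unique supporting plane at every boundary point. I would then invoke the classical fact of convex geometry that such a convex set has $C^1$ boundary: the (now single-valued) Gauss map of a convex set is automatically continuous, so the field of tangent planes of $\pt C$ is continuous. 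This yields the first assertion of the corollary.

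It remains to identify, for each $p\in\Lambda_\rho$, the tangent plane of $\pt C$ at $p$ with the tangent plane $\Pi$ of $\pt\A^3$ at $p$ --- which is exactly the meaning of ``$\pt C$ touches $\A^3$ along $\Lambda_\rho$''. Since $\cl(\tilde L^+)=\tilde L^+\cup\Lambda_\rho$ by Lemma~\ref{limset}, I would choose $p_i\in\tilde L^+$ with $p_i\to p$. The tangent plane $\Pi_i$ to $\tilde L^+$ at $p_i$ is a supporting plane to $C$ at the boundary point $p_i$, and by Lemma~\ref{duallimset} the $\Pi_i$ converge to $\Pi$; on the other hand, by the $C^1$-regularity just established, $\Pi_i$ also converges to the unique tangent plane of $\pt C$ at $p$. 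Hence that tangent plane equals $\Pi$, and since $p\in\Lambda_\rho$ was arbitrary, $\pt C$ is tangent to $\pt\A^3$ all along $\Lambda_\rho$.

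I do not expect a serious obstacle here: the corollary is largely an assembly of Lemma~\ref{duallimset}, the smoothness of the $K$-leaves, and a standard convex-geometry fact. The two points deserving a line of care are (i) placing the relevant part of the configuration inside a single affine chart so that the classical statement applies verbatim, and (ii) upgrading ``unique supporting plane at every boundary point'' to genuine $C^1$-regularity, i.e.\ continuity --- not merely single-valuedness --- of the Gauss map; both are routine.
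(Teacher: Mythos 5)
Your proposal is correct and follows essentially the same route as the paper, which likewise deduces the corollary from Lemma~\ref{duallimset} (uniqueness of the supporting plane at each point of $\Lambda_\rho$, and its identification with the tangent plane to $\pt\A^3$) together with the strict convexity of $\tilde L^\pm$; you have merely made explicit the standard convex-geometry step that pointwise uniqueness of supporting planes yields continuity of the tangent-plane field.
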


\subsubsection{Construction of the blow-up}
\label{sec:blowupconstr}

The space $\tilde{\mc P}_0$ is a real vector space and $\tilde{\mc P}_0^s$ is an open cone in it. The representation $\theta_{\rho_\circ}$ fixes a point in $\A^3$, which we assume to be $o$.
Let $o_-=(\rho_\circ, \tilde f) \in \tilde{\mc P}_-$ be the configuration with $\tilde f(\tilde V)=o$. We have the identification $T_{o}\A^3 \cong \R^{2,1}$. This produces an identification $T_{o_-}\tilde{\mc P}_- \cong \tilde{\mc P}_0$. 

\begin{lm}
\label{weaktransition}
Let $x_t=(\rho_t, \tilde f_t): [0, 1] \ra \tilde{\mc P}_-$ be a $C^1$-curve with $x_0=o_-$ and $\dot x_0=(\tau, \tilde f)\in\tilde{\mc P}_0^w$. Then for all small enough $t>0$ we have $x_t \in \tilde{\mc P}_{-}^w$. 
\end{lm}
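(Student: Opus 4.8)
The plan is to exploit the convergence results for domains of discontinuity at the blow-up, specifically Corollary~\ref{convdom2}, together with the fact that the condition $\tilde f_t(V)\subset \tilde\Omega_{\rho_t}^+$ is open in an appropriate sense. The key point is that as $t\to 0$, the curve $x_t=(\rho_t,\tilde f_t)$ degenerates: $\rho_t\to\rho_\circ$ and $\tilde f_t(\tilde V)\to o$, so one cannot directly apply the openness of $\tilde{\mc P}_-^w$ (Lemma~\ref{convcompl} applied naively gives nothing since the limit configuration $o_-$ sits in the degenerate domain $\tilde\Omega_{\rho_\circ}={^*\R^{2,1}}$, and $o$ itself lies on $\pt^+\tilde C_{\rho_\circ}=\H^2$, not in the open domain $\tilde\Omega_{\rho_\circ}^+$). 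The remedy is to rescale by $g_t$, which blows up the neighborhood of $o$ and turns the degeneration into the honest Minkowski picture governed by $\dot x_0=(\tau,\tilde f)$.

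First I would pass to the rescaled configuration: apply $g_t$ to everything. By the identification $T_o\A^3\cong\R^{2,1}$ recalled in Section~\ref{sec:geometries} — namely that for a differentiable curve $p_t$ with $p_0=o$ one has $g_tp_t\to\dot p_0$ in $\R^{2,1}\subset\RP^3$ — the points $g_t\tilde f_t(v)$ converge, for each $v\in\tilde V$, to $\tilde f(v)\in\R^{2,1}$ (here $\tilde f$ is read as the equivariant map $\tilde V\to\R^{2,1}$ obtained from $\dot x_0$, using the canonical lift). Since $\dot x_0\in\tilde{\mc P}_0^w$, by definition $\tilde f(\tilde V)\subset\tilde\Omega_\tau^+$. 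On the other hand, by Corollary~\ref{convdom2}, the sets $g_t(\RP^3\setminus\tilde\Omega_{\rho_t}^+)$ converge in the Hausdorff sense to $\RP^3\setminus\tilde\Omega_\tau^+$, which is a closed set disjoint from the finite set $\tilde f(V)$ (it suffices to check the finitely many points in the image of the canonical lift, the rest following by equivariance).

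Next I would combine these two convergences. Fix a compact neighborhood basis; since $\tilde f(V)$ is a finite set at positive distance (in any auxiliary metric on a compact piece of the relevant affine chart — note $\cl(\tilde\Omega_\tau)$ is contained in an affine chart) from the closed set $\RP^3\setminus\tilde\Omega_\tau^+$, and since $g_t(\RP^3\setminus\tilde\Omega_{\rho_t}^+)\to\RP^3\setminus\tilde\Omega_\tau^+$ while $g_t\tilde f_t(v)\to\tilde f(v)$, standard properties of Hausdorff convergence (a point converging to a point outside the Hausdorff limit of a sequence of closed sets eventually lies outside those closed sets) give that for all small enough $t>0$ and all $v$ in the canonical lift, $g_t\tilde f_t(v)\notin g_t(\RP^3\setminus\tilde\Omega_{\rho_t}^+)$, i.e.\ $g_t\tilde f_t(v)\in g_t\tilde\Omega_{\rho_t}^+$. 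Applying $g_t^{-1}$ yields $\tilde f_t(v)\in\tilde\Omega_{\rho_t}^+$ for all $v$, hence — by $\theta_{\rho_t}$-equivariance — for all of $\tilde V$. Since $\tilde\Omega_{\rho_t}^+\subset\tilde\Omega_{\rho_t}$, this is exactly the statement $x_t\in\tilde{\mc P}_-^w$.

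I expect the \textbf{main obstacle} to be purely technical bookkeeping around the Hausdorff convergence in $\RP^3$: one must make sure that the set $\RP^3\setminus\tilde\Omega_\tau^+$ really is at positive distance from the finite point set, which requires choosing a good affine chart (the co-Minkowski or Minkowski chart) in which $\cl(\tilde\Omega_\tau)$ is compact and the relevant sets stay uniformly inside, and that the Hausdorff convergence of $g_t(\RP^3\setminus\tilde\Omega_{\rho_t}^+)$ interacts correctly with the convergence of the points — i.e.\ invoking the Vietoris-topology characterization (as in the proof of Lemma~\ref{convlimset2}) or Lemma~\ref{subconvtoconv} to upgrade subsequential statements to the full limit. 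Everything else is a direct consequence of the convergence lemmas already established in Section~\ref{sec:convblowup} and the $g_t$-rescaling dictionary from Section~\ref{sec:geometries}.
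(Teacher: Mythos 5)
Your proposal is correct and follows essentially the same route as the paper: rescale by $g_t$, use the fact that $g_t\tilde f_t(v)\to\tilde f(v)$ together with the Hausdorff convergence of the rescaled complements of the domains of discontinuity, and conclude that the points eventually lie in the domain. The only (harmless) difference is that you invoke Corollary~\ref{convdom2} and obtain the stronger containment $\tilde f_t(v)\in\tilde\Omega^+_{\rho_t}$, whereas the paper's proof of this lemma uses Corollary~\ref{convdom} and only concludes $\tilde f_t(v)\in\tilde\Omega_{\rho_t}$ — the paper runs exactly your variant of the argument at the start of the proof of Lemma~\ref{transition}.
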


\begin{proof}
Let $p_t: [0,1] \ra \A^3$ be a $C^1$-curve with $p_0=o$ and $\dot p_0 \in \tilde\Omega^+_\tau$. Due to Corollary~\ref{convdom}, $g_t(\RP^3\backslash \tilde\Omega_{\rho_t})$ converge to $\RP^3\backslash\tilde\Omega_\tau$. Thereby, $g_t p_t \in g_t\tilde\Omega_{\rho_t}$ for all small enough $t$, which is equivalent to $p_t \in \tilde\Omega_{\rho_t}$.
From this we see that for all small enough $t$, for each $v \in V$ we have $\tilde f_t(v) \in \tilde \Omega_{\rho_t}$.
\end{proof}

Our main technical result is

\begin{lm}
\label{transition}
Let $x_t=(\rho_t, \tilde f_t): [0, 1] \ra \tilde{\mc P}_-$ be a $C^1$-curve with $x_0=o_-$ and $\dot x_0=(\tau, \tilde f)\in\tilde{\mc P}_0^s$. Then for all small enough $t>0$ we have $x_t \in \tilde{\mc P}_{-, sp}^s$ and the face celluation of $\tilde f_t$ is a subdivision of the face celluation of $\tilde f$.
\end{lm}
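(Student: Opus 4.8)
The plan is to run a geometric transition argument, using the blow-up machinery set up in Section~\ref{sec:convblowup} together with the $K$-surface foliation from Section~\ref{sec:ksurf}. The starting point is Lemma~\ref{weaktransition}, which already gives $x_t \in \tilde{\mc P}_-^w$ for small $t$, so it remains to upgrade "weakly convex position" to "strictly convex position" and to "strictly polyhedral", and then to control the combinatorics of the face celluation. The key translation is as follows: conditions on $\tilde f_t$ involving $\tilde\Omega^+_{\rho_t}$, $\pt^+\tilde C_{\rho_t}$ and convex hulls should be pulled back by $g_t$ (the dilation from the transition $g_t(\A^3,G_-) \to (\R^{2,1},G_0)$), so that they become conditions about $g_t\tilde f_t(\tilde V)$, which converges to $\dot{\tilde f}_0 = \tilde f$ as a $\pi_1S$-equivariant map $\tilde V \to \R^{2,1}$, with $g_t\cl(\tilde\Omega_{\rho_t})$, $g_t\cl(\pt^+\tilde\Omega_{\rho_t})$, $g_t(\RP^3\setminus\tilde\Omega^+_{\rho_t})$ converging to the corresponding Minkowski objects by Corollary~\ref{convdom}, Lemma~\ref{convtemp2} and Corollary~\ref{convdom2}. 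Since $(\tau,\tilde f)\in\tilde{\mc P}_0^s$, i.e.\ $\tilde f(\tilde V)\subset\tilde\Omega^+_\tau$ and $\tilde f(v)\notin\clconv(\tilde f(\tilde{V\setminus v}))$ for every $v$, the convergence of the relevant domains and of finitely many convex hulls (working modulo $\pi_1S$, so only finitely many orbit representatives and their neighbors matter, by cocompactness) forces the same strict-convex-position conditions for $g_t\tilde f_t$, hence for $\tilde f_t$, once $t$ is small. This handles $x_t \in \tilde{\mc P}_-^s$.

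Next I would prove $x_t\in\tilde{\mc P}_{-,sp}^s$, i.e.\ that $\Sigma(\tilde f_t)$ is strictly polyhedral. By the criterion recalled after the definition of bent surfaces, this is equivalent to the timelike distance between $\Sigma(\tilde f_t)$ and $\pt^+\tilde C_{\rho_t}$ being positive. The point of the $K$-surface foliation (Theorem~\ref{BBZ}, Lemma~\ref{duallimset}, Corollary~\ref{ksurfunion}) is that one can slip a strictly convex smooth leaf $\tilde L^+_{\rho_t}$ of $\Omega^+_{\rho_t}$ just above $\pt^+\tilde C_{\rho_t}$, whose closure is $C^1$ and tangent to $\A^3$ exactly along $\Lambda_{\rho_t}$; after applying $g_t$ and passing to the limit one checks, using Corollary~\ref{convccore2} and Lemma~\ref{convtemp1} (so $g_t\cl(\pt^+\tilde C_{\rho_t})$ and $g_t\cl(\pt^-\tilde C_{\rho_t})$ both collapse onto $\cl(\tilde C_{\rho_\circ})=\H^2\subset o^*$ in the limit), that for small $t$ one may choose such a $K$-surface leaf that is disjoint from $\tilde f_t(\tilde V)$ and separates $\tilde f_t(\tilde V)$ from $\pt^+\tilde C_{\rho_t}$ in the timelike direction. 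This gives the strictly positive timelike gap, hence strict polyhedrality. (Here one uses that $\tilde f\in\tilde{\mc P}_0^s$ already puts $\tilde f(\tilde V)$ strictly inside $\tilde\Omega^+_\tau$, bounded away from $\H^2$.)

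For the last assertion, about the face celluation being a subdivision, the argument is the local-stability statement of~\cite[Lemma 2.13]{FP} transported through the transition. Concretely: take the Minkowski configuration $(\tau,\tilde f)\in\tilde{\mc P}_0^s\subset\tilde{\mc P}_0^c$, which by~\cite[Lemma 2.7]{FP} is strictly polyhedral with a well-defined $\pi_1S$-invariant face celluation $\ms C$ of $(\tilde S,\tilde V)$. The celluation of $\tilde f_t$ is read off from which subsets of $\tilde f_t(\tilde V)$ are co-supported by a common plane; after applying $g_t$, these planes are supporting planes to $\clconv(g_t\tilde f_t)$, and as $t\to 0$ both $g_t\tilde f_t(\tilde V)\to\tilde f(\tilde V)$ and (modulo $\pi_1S$) the supporting planes to $\clconv(g_t\tilde f_t)$ converge to supporting planes of $\conv(\tilde f)$; so any face of $\tilde f_t$ must be contained in a face of $\tilde f$, which is exactly the subdivision statement. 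I expect the main obstacle to be the combination of the last two steps: one must uniformly (in the orbit, and as $t\to0$) control the position of the bent surface $\Sigma(\tilde f_t)$ relative to the non-polyhedral boundary $\pt^+\tilde C_{\rho_t}$ — in particular rule out that vertices of $\tilde f_t$ touch $\pt^+\tilde C_{\rho_t}$ — while simultaneously keeping track of the degeneration of the supporting planes of the convex hull; the $K$-surface foliation and the $C^1$-regularity statement of Corollary~\ref{ksurfunion} are precisely what make this control possible, so the argument has to be organized so that a single choice of leaf does both jobs (separation and combinatorial stability) at once.
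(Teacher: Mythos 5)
You have correctly identified the convergence toolbox (Corollaries~\ref{convdom},~\ref{convdom2}, Lemma~\ref{convtemp2}) and the fact that the $K$-surface foliation of Section~\ref{sec:ksurf} must enter, but your plan misses the central idea of the paper's proof and the substitute arguments have real gaps. The paper does not deduce strict polyhedrality and the celluation from limits of supporting planes of $\clconv(\tilde f_t)$; instead it builds, from a triangulation $\ms T$ subdividing the face celluation of $\tilde f$, an explicit equivariant simplicial surface $F_t$ with vertices $\tilde f_t(\tilde V)$, proves it is \emph{locally} convex and spacelike for small $t$ (via a third-order Taylor computation: the orientation function $\zeta(t)=\langle \tilde f_t(\tilde v_4)-\tilde f_t(\tilde v_1),(\tilde f_t(\tilde v_2)-\tilde f_t(\tilde v_1))\times(\tilde f_t(\tilde v_3)-\tilde f_t(\tilde v_1))\rangle$ satisfies $\zeta(0)=\zeta'(0)=\zeta''(0)=0$ but $\zeta'''(0)\neq 0$, so the combinatorics of $\tilde f$ control the signs for small $t$), and then runs a Stoker-type local-to-global argument: the Euclidean Gauss map of $F_t$, extended to the compactification $\tilde S\cup\Lambda_{\rho_t}$, is a proper local homeomorphism onto the simply connected disk $J^+\subset\S^2$, hence a homeomorphism, which forces every locally supporting plane to be globally supporting. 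This is exactly where Corollary~\ref{ksurfunion} is used — to show that normals of spacelike supporting planes land in $J^+$ and that globally supporting planes at limit points have normals in $J\cup J^-$. The conclusion $F_t=\Sigma(\tilde f_t)$ then delivers strict convexity, strict polyhedrality and the face celluation all at once. Your proposal contains no local convexity verification and no local-to-global step, and these are the heart of the matter: local convexity of a non-closed surface does not imply global convexity, so nothing in your plan identifies $\Sigma(\tilde f_t)$.

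Concretely, two of your steps would fail as written. First, for the subdivision claim you argue face-by-face that supporting planes of $\clconv(g_t\tilde f_t)$ converge to supporting planes of $\conv(\tilde f)$; but a priori a face of $\Sigma(\tilde f_t)$ could be spanned by vertices $\tilde f_t(\tilde v)$ and $\tilde f_t(\gamma\tilde w)$ with $\gamma$ of arbitrarily large translation length, and these pairs fall into infinitely many $\pi_1S$-orbits of pairs, so "finitely many cases by cocompactness" does not apply and there is no uniform $t_0$ ruling them all out; the same non-uniformity prevents you from excluding accumulation of edges near $\pt^+\tilde C_{\rho_t}$, i.e., from proving strict polyhedrality. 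Second, the separation argument ("slip a $K$-surface leaf between $\tilde f_t(\tilde V)$ and $\pt^+\tilde C_{\rho_t}$") controls only the vertex set, whereas strict polyhedrality requires a positive timelike gap between the whole surface $\Sigma(\tilde f_t)$ and $\pt^+\tilde C_{\rho_t}$; passing from the vertices to the surface is precisely a global convexity statement, which you have not established, and the leaf would moreover have to be chosen uniformly while both $g_t\pt^+\tilde C_{\rho_t}$ and the candidate leaves collapse onto $\H^2\subset o^*$. (Your first step, strict convex position via Hausdorff convergence of $g_t\clconv(\tilde f_t(\tilde{V\backslash v}))$, is salvageable but also under-justified: these are hulls of infinite orbits, and their convergence needs $g_t\Lambda_{\rho_t}\ra\Lambda_{\rho_\circ}$ together with the identification of $\Lambda_{\rho_\circ}$ as the limit set of the Minkowski orbit, which the paper only establishes later, in Lemma~\ref{hausdconv}.)
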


Note that we do not mean that the face celluations of $\tilde f_t$ are weakly equivalent for all small enough $t$.

\begin{proof}
By the same argument as in the proof of Lemma~\ref{weaktransition}, only using Corollary~\ref{convdom2} instead of Corollary~\ref{convdom}, we get that for all small enough $t$, for each $v \in V$ we have $\tilde f_t(v) \in \tilde \Omega^+_{\rho_t}$.
It remains to show that for all small enough $t$, $\tilde f_t$ is in a strictly convex position, is strictly polyhedral and that its face celluation is a subdivision of the face celluation of $\tilde f$. We first describe the proof idea. Suppose for simplicity that the face celluation of $\tilde f$ is a triangulation. Then this triangulation together with the positions of points $\tilde f_t(\tilde V)$ allow us to define a simplicial surface $F_t$. We will first observe that, provided $t$ is small enough, $F_t$ is locally convex. We will need then to show that $F_t$ is globally convex, i.e., $F_t \subset \clconv(F_t)$, or, equivalently, every locally supporting plane is globally supporting. We will use some ideas of Stoker, who showed that a closed smooth locally convex surface in $\R^3$ is necessarily globally convex, see~\cite{Sto}. However, this fact is notably false for non-closed surfaces. But what helps us is that, provided $t$ is small enough, $F_t$ is spacelike for anti-de Sitter geometry, which then restricts its global behavior.

Now we pass to the details.
Consider the Minkowski chart. We perturb it slightly so that $\cl(\tilde\Omega^+_{\rho_\circ})$ is contained in the domain of the perturbed chart. In what follows we will consider only small enough $t$ so that $\cl(\tilde\Omega^+_{\rho_t})$ is also contained there, which is possible, since $\cl(\tilde\Omega^+_{\rho_t}\cup\tilde C_{\rho_t})$ are properly convex sets converging to $\cl(\tilde\Omega^+_{\rho_\circ})$ as $t \rightarrow 0$, due to Lemmas~\ref{convtemp1-} and~\ref{convtemp2-}. We consider the perturbed chart as a vector space with the orientation induced from $\RP^3$, and equip it with a Euclidean metric. 
For a plane $\Pi$ we say an \emph{orientation of $\Pi$} for a choice which half-space with respect to $\Pi$ to call positive, and which to call negative. For an oriented triple of distinct points $p_1, p_2, p_3 \in \Pi$, we say that their order \emph{induces an orientation on $\Pi$}, by calling positive the half-space towards which the vector $(p_2-p_1) \times (p_3-p_1)$ points. Here and in what follows the cross product and the scalar product are Euclidean.


We make two observations. First, we claim that for every quadruple $\tilde v_1, \tilde v_2, \tilde v_3, \tilde v_4 \in \tilde V$ such that $\tilde f(\tilde v_1)$, $\tilde f(\tilde v_2)$, $\tilde f(\tilde v_3)$, $\tilde f(\tilde v_4)$ are affinely independent, for all small enough $t$, the points $\tilde f_t(\tilde v_1)$, $\tilde f_t(\tilde v_2)$, $\tilde f_t(\tilde v_3)$, $\tilde f_t(\tilde v_4)$ are also affinely independent, and $\tilde f_t(\tilde v_4)$ lies in the half-space of the same sign with respect to the plane spanned by $\tilde f_t(\tilde v_1), \tilde f_t(\tilde v_2), \tilde f_t(\tilde v_3)$, as the sign of the half-space with respect to the plane spanned by $\tilde f(\tilde v_1), \tilde f(\tilde v_2), \tilde f(\tilde v_3)$ containing $\tilde f(\tilde v_4)$, where we consider the planes oriented by the order of points $\tilde v_1, \tilde v_2, \tilde v_3$. Indeed, we consider the function
\[\zeta(t):=\langle \tilde f_t(\tilde v_4)-\tilde f_t(\tilde v_1), (\tilde f_t(\tilde v_2)-\tilde f_t(\tilde v_1))\times(\tilde f_t(\tilde v_3)-\tilde f_t(\tilde v_1))\rangle.\]
Note that $\zeta(0)=\zeta'(0)=\zeta''(0)=0$, but 
\[\zeta'''(0):=\langle \tilde f(\tilde v_4)-\tilde f(\tilde v_1), (\tilde f(\tilde v_2)-\tilde f(\tilde v_1))\times(\tilde f(\tilde v_3)-\tilde f(\tilde v_1))\rangle\neq 0.\]
Thus, for all small enough $t$, the function $\zeta(t)$ has the same sign as $\zeta'''(t)$, and our claim holds.

Second, we notice that the plane spanned by $\tilde f_t(\tilde v_1)$, $\tilde f_t(\tilde v_2)$, $\tilde f_t(\tilde v_3)$ converges to the plane passing through $o$ that is parallel in the Minkowski chart to the plane spanned by $\tilde f(\tilde v_1)$, $\tilde f(\tilde v_2)$, $\tilde f(\tilde v_3)$ as $t \ra 0$. Note that this means that for all small enough $t$ such a plane is spacelike for $\A^3$. Further, the union of any two planes that correspond to adjacent faces in the face celluation of $\tilde f$ is future-convex for all small enough $t$.

For the moment, we suppose that the face celluation of $\tilde f$ is a triangulation $\ms T$ of $(\tilde S, \tilde V)$. 
We consider $\tilde S$ oriented so that the positive normals to faces of $\tilde f$ point to the concave side. For $t>0$, to every triangle $T$ of $\ms T$ we associate an oriented plane $\Pi_t(T)$ spanned by the respective points of $\tilde f_t(\tilde V)$. We assume that $t$ is small enough so that (1) for every $T$ and every $\tilde v \in \tilde V$ that is adjacent to at least one vertex of $T$ in $\ms T$, $\tilde f_t(\tilde v)$ is in the negative half-space with respect to $\Pi_t(T)$. Due to the $\theta_{\rho_t}$-invariance, it is enough to check this only for finitely many cases, hence this indeed holds for all small enough $t$ because of the first observation above. Moreover, we set $t$ small enough so that (2) every $\Pi_t(T)$ is spacelike for $\A^3$, does not intersect $\tilde C_{\rho_t}$, and for every $\Pi_t(T)$ and $\Pi_t(T')$ of adjacent $T$ and $T'$, the intersection of the negative half-spaces is future-convex for $\A^3$. Because of the $\theta_{\rho_t}$-invariance, again it is enough to check this for finitely many cases, 
hence it is indeed true for all small enough $t$ because of the second observation above.
We extend $\tilde f_t$ to a simplicial map $F=F_t: \tilde S \ra \A^3$ with respect to $\mc T$. Due to assumption (1), $F$ is a PL-immersion, i.e., is locally injective. (The local injectivity is non-trivial only at vertices, where it means that the links are embedded.) Furthermore, $F$ is locally convex with respect to the orientation, i.e., the Euclidean dihedral angle of every edge, determined by the orientations of the adjacent faces, is less than $\pi$. Next, assumptions (1) and (2) together mean that $F$ is locally future-convex, which implies that for each face the past directions with respect to $\A^3$ are positive. Observe that assumption (1) concerns only with polyhedral geometry, while assumption (2) is about anti-de Sitter geometry. 



Let $\S^2$ be the Euclidean unit sphere (in the perturbed Minkowski chart). Consider the limit set $\Lambda=\Lambda_{\rho_t} \subset \pt \A^3$. Look at the Euclidean Gauss map on $\pt \A^3$, sending a point on $\pt \A^3$ to the exterior unit normal to $\pt \A^3$. Its restriction to $\Lambda$ is a homeomorphism onto a Jordan curve $J \subset \S^2$. Denote the components of $\S^2 \backslash J$ by $J^+$ and $J^-$. Consider the foliations of $\tilde\Omega^\pm_{\rho_t}$ by $K$-surfaces, given by Theorem~\ref{BBZ}. We denote the respective leaves by $\tilde L_K^\pm$. Due to Corollary~\ref{ksurfunion}, the surface $\ol L_K:=\tilde L^+_K \cup \Lambda \cup \tilde L^-_K$ is a strictly convex $C^1$-surface, which touches $\pt \A^3$ along $\Lambda$.
Thereby, the Euclidean Gauss map on $\ol L_K$ also sends $\Lambda$ onto $J$. Assume that the notation $J^+$, $J^-$ is chosen so that the Euclidean Gauss map on $\tilde L^+_K$ has values in $J^+$.

Let $\Pi$ be a spacelike plane disjoint with $\tilde C_{\rho_t}$ that intersects $\tilde\Omega^+_{\rho_t}$. Let $\nu \in \S^2$ be its Euclidean normal, directing in the past with respect to $\A^3$. Then $\Pi$ is tangent to a unique $\tilde L^+_K$ for some $K$. Thus, $\nu \in J^+$. 

Let $(\tilde S, \ms T^*)$ be a $\pi_1S$-invariant celluation of $\tilde S$ dual to $\ms T$. We now apply the Euclidean Gauss map to the parameterized surface $F$. Recall that $F$ is simplicial, immersed and locally convex. We consider its Gauss map as a map $G: (\tilde S, \ms T^*) \ra \S^2$, sending the topological dual cell of each vertex homeomorphically onto the respective geometric cell in $\S^2$, respecting the vertices. Then $G$ is a local homeomorphism. Due to assumption (2) on $t$ and the observation just above, $G$ values in $J^+$. 

Now define $\tilde S^\bdia$ as the abstract union $\tilde S \cup \Lambda$. We define a topology on $\tilde S^\bdia$ using the map $F$: open sets are of the form $(U\cap \Lambda)\cup F^{-1}(U)$ for open $U \subset \RP^3$. Then $\tilde S^\bdia$ is compact. We extend $G$ to $\tilde S^\bdia$ by sending $s \in \Lambda$ to the exterior normal to $\pt\A^3$ at $s$. We claim that this extension is continuous. Indeed, let $p_i \in \tilde S$ converge to $s \in \Lambda$. Up to subsequence, we can assume that $p_i$ belong to the orbit of a single cell $C$ of $\ms T^*$. It is enough to assume that $p_i$ are in the orbit of a vertex of $C$. Each $F(p_i)$ is in the interior of a face of $F$, let $\Pi_i$ be the plane containing the face. All $\Pi_i$ belong to the orbit of a plane $\Pi$. Up to subsequence, $\Pi_i$ converge to a plane $\Pi'$. Since $F(p_i)$ converge to $s$, we have $\Pi' \ni s$. On the other hand, let $\tilde L_K^+$ be the $K$-surface tangent to $\Pi$. Then it is tangent to all $\Pi_i$. By Corollary~\ref{ksurfunion}, any subsequence of $\Pi_i$ can converge only to planes tangent to $\pt\A^3$ at points of $\Lambda$. Hence, $\Pi'$ is the tangent plane to $\pt \A^3$ at $s$. Then Lemma~\ref{subconvtoconv} implies that the initial sequence $\Pi_i$ converges to $\Pi'$. It follows that $G$ is continuous.

Because $G$ extends continuously to $\tilde S^\bdia$, which is compact, and $G(\Lambda) =J$, we see that $G|_{\tilde S}$ is proper as a map to $J^+$. Since $G|_{\tilde S}$ is a local homeomorphism, $G|_{\tilde S}$ is a covering map onto $J^+$. But since $J^+$ is simply connected, it is a homeomorphism. 

We now claim that for every $s \in \Lambda$ and every globally supporting plane $\Pi$ to $F(\tilde S) \cup \Lambda$, its exterior normal $\nu$ is in $J\cup J^-$. Indeed, suppose that $\nu \in J^+$. Then there exists $p \in F(\tilde S)$ such that the parallel plane $\Pi'$ to $\Pi$ at $p$ is locally supporting $F(\tilde S)$. Let $\tilde L^+_K$ be the $K$-surface tangent to $\Pi'$. Due to Corollary~\ref{ksurfunion}, $\Pi'$ belongs to the side of $\nu$ from $\Pi$. Then $p$ belongs to the wrong side of $\Pi$, which is a contradiction.

We now claim that every locally supporting plane to $F(\tilde S)$ is globally supporting. Indeed, pick such a plane $\Pi$ with an exterior normal $\nu \in J^+$. Thus, there is corresponding $q \in \tilde S$ such that $G(q)=\nu$. Suppose that there are points of $F(\tilde S)$ from both sides of $\Pi$. Consider points $p_1$, $p_2\in \cl(F(\tilde S))=F(\tilde S) \cup \Lambda$ on each side from $\Pi$ that are at the maximal Euclidean distance from $\Pi$. The planes that are parallel to $\Pi$ through $p_1$, $p_2$ are globally supporting $F(\tilde S) \cup \Lambda$. Then the exterior normals to these planes are in the opposite directions. One of them is $\nu$, let it be at $p_1$. Then $p_1\notin \Lambda$. But then there is $q_1 \in \tilde S$, $q_1\neq q$, such that $G(q_1)=G(q)=\nu$. It follows that $G$ is not injective, which is a contradiction. In turn, this implies that $F$ is injective and convex, i.e., $F(\tilde S) \subset \pt\clconv(F(\tilde S))=\pt\clconv(F(\tilde V))$, and is equal to a component of $(\pt\clconv(F(\tilde V))) \backslash \Lambda$. Note that it means that $F(\tilde S)$ is future-convex for $\A^3$, so $F(\tilde S)=\Sigma(\tilde f_t)$. In particular, $\tilde f_t$ is strictly polyhedral and its face celluation is $\ms T$.

Now suppose that the face celluation of $\tilde f$ is not simplicial, denote it by $\ms C$. Let $\ms T_1, \ldots, \ms T_r$ be representatives of all weak equivalence classes of $\pi_1 S$-invariant triangulations subdividing $\ms C$. We choose $t$ small enough so that assumptions (1) and (2) work for all triangles of every $\ms T_j$, $j=1,\ldots,r$, but the part of assumption (2) on the adjacent faces is meant only for the adjacent faces along the edges of $\ms C$. For a fixed $t$ and a given non-triangular face $Q$ of $\ms C$ with vertex set $\tilde V_Q$ we look at $\clconv(\tilde f_t(\tilde V_Q))$. Its future-convex part provides a decomposition of $Q$. We do this for a representative of the $\pi_1S$-orbit of every face. We obtain a celluation $\ms C_t$ subdividing $\ms C$. Then the same argument as for the triangulation case shows that $\ms C_t$ is the face celluation of $\tilde f_t$.
\end{proof}

Fix an affine connection on $\mc T$, lift it to a $G$-invariant connection on $\mc R$.  Together with the standard connection on $\R^{2,1}$, this produces an affine connection on $\tilde{\mc P}_-$. Denote by $\mc E: \tilde{\mc P}_0 \ra \tilde{\mc P}_-$ the exponential map. It produces a diffeomorphism between a pierced neighborhood $U$ of $o_-$ in $\tilde{\mc P}_-$ with a lower cone in $T_{o_-} \tilde{\mc P}_-\cong\tilde{\mc P}_0$. We say that this induces a lower-cone structure on $U$ (based at $o_-$). We then make the blow-up on $U$, and glue it with the rest of $\tilde{\mc P}_-$. Denote the resulting manifold with boundary by $\tilde{\mc P}_\vee$. It is independent on the choice of connection on $\tilde{\mc P}_-$. We have an identification $\pt \tilde{\mc P}_\vee \cong \S(\tilde{\mc P}_0)$. Define $\tilde{\mc P}_\vee^w:= \tilde{\mc P}_-^w \cup \S(\tilde{\mc P}_0^w) \subset \tilde{\mc P}_\vee$ and in the same way define $\tilde{\mc P}_\vee^c$, $\tilde{\mc P}_\vee^s$. We have

\begin{lm}
\label{subman}
$\tilde{\mc P}_\vee^w$, $\tilde{\mc P}_\vee^s$ are submanifolds with boundary of $\tilde{\mc P}_\vee$.
\end{lm}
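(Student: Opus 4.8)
The plan is to prove that $\tilde{\mc P}_\vee^w$ and $\tilde{\mc P}_\vee^s$ are \emph{open} subsets of $\tilde{\mc P}_\vee$. This suffices: an open subset of a manifold with boundary is a manifold with boundary and inherits the ambient interior/boundary decomposition, so $\tilde{\mc P}_\vee^w$, $\tilde{\mc P}_\vee^s$ are then automatically submanifolds with boundary, with boundaries $\S(\tilde{\mc P}_0^w)$, $\S(\tilde{\mc P}_0^s)$. Away from $\pt\tilde{\mc P}_\vee=\S(\tilde{\mc P}_0)$ the openness is already recorded in the text: $\tilde{\mc P}_-^w$ is open in $\inter\tilde{\mc P}_\vee=\tilde{\mc P}_-$ by Lemma~\ref{convcompl}, and $\tilde{\mc P}_-^s$ is open by Corollary~\ref{convdom2-}. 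Inside $\pt\tilde{\mc P}_\vee$ openness is also immediate, since $\tilde{\mc P}_0^w$ and $\tilde{\mc P}_0^s$ are open cones in $\tilde{\mc P}_0$, so $\S(\tilde{\mc P}_0^w)$, $\S(\tilde{\mc P}_0^s)$ are open in $\S(\tilde{\mc P}_0)$. Hence the real task is to show that every point of $\S(\tilde{\mc P}_0^w)$ (resp.\ $\S(\tilde{\mc P}_0^s)$) has a \emph{full} $\tilde{\mc P}_\vee$-neighborhood inside $\tilde{\mc P}_\vee^w$ (resp.\ $\tilde{\mc P}_\vee^s$); this amounts to proving Lemma~\ref{weaktransition} (resp.\ Lemma~\ref{transition}) in a form uniform in the direction of the curve.

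Treat $\tilde{\mc P}_\vee^w$ first. Fix a point of $\S(\tilde{\mc P}_0^w)$; in the blow-up chart near $\pt\tilde{\mc P}_\vee$ coming from the exponential map $\mc E$, a neighborhood of it is modeled on $\{(\xi,t):\xi\in\mathrm{im}(\kappa),\ t\in[0,\e)\}$ with $(\xi,t)$ representing $\mc E(t\xi)$ for $t>0$, $\kappa$ a local section of $\sigma$; let $\xi_0\in\mathrm{im}(\kappa)\cap\tilde{\mc P}_0^w$ be the section point over our ray. Since $\tilde{\mc P}_-^w$ is open, $\{(\xi,t):t>0,\ \mc E(t\xi)\in\tilde{\mc P}_-^w\}$ is open, so it is enough to produce a neighborhood $W\ni\xi_0$ and $\delta>0$ with $\mc E(t\xi)\in\tilde{\mc P}_-^w$ for all $\xi\in W$, $t\in(0,\delta)$. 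If this fails there are $\xi_i\to\xi_0$ (in $\mathrm{im}(\kappa)$), $t_i\to 0^+$, with $y_i:=\mc E(t_i\xi_i)=(\rho_i,\tilde f_i)\notin\tilde{\mc P}_-^w$; write $\xi_i=(\tau_i,\tilde f^0_i)\to\xi_0=(\tau_0,\tilde f^0_0)$. From the $1$-jet of $\mc E$ at $o_-$ and the relation $g_tp_t\to\dot p_0$ for differentiable curves $p_t$ through $o$, applied uniformly in $i$, one gets $g_{t_i}\tilde f_i(v)\to\tilde f^0_0(v)$ in $\RP^3$ for each $v\in V$. Since $\rho\mapsto h_\rho$ is smooth into $\mc{QS}$, in an $\R$-chart for $\RP^1$ one has $(h_{\rho_i}-\mathrm{id})/t_i\to\xi_{\tau_0}$ uniformly, so the argument of Lemma~\ref{convlimset} applies verbatim and yields $g^*_{t_i}\Lambda_{\rho_i}\to\Lambda_{\tau_0}$; taking convex hulls, dualizing and invoking Lemma~\ref{convcompl} we obtain $g_{t_i}(\RP^3\backslash\tilde\Omega_{\rho_i})\to\RP^3\backslash\tilde\Omega_{\tau_0}$. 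As $\xi_0\in\tilde{\mc P}_0^w$ gives $\tilde f^0_0(v)\in\tilde\Omega^+_{\tau_0}\subset\tilde\Omega_{\tau_0}$, an open set disjoint from the closed set $\RP^3\backslash\tilde\Omega_{\tau_0}$, for large $i$ the point $g_{t_i}\tilde f_i(v)$ avoids $g_{t_i}(\RP^3\backslash\tilde\Omega_{\rho_i})$, i.e.\ $\tilde f_i(v)\in\tilde\Omega_{\rho_i}$; this for all $v$ gives $y_i\in\tilde{\mc P}_-^w$, a contradiction. Hence $\tilde{\mc P}_\vee^w$ is open.

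For $\tilde{\mc P}_\vee^s$ the scheme is the same, now following the proof of Lemma~\ref{transition}. Given a putative sequence $y_i=\mc E(t_i\xi_i)=(\rho_i,\tilde f_i)\notin\tilde{\mc P}_-^s$ with $\xi_i\to\xi_0\in\tilde{\mc P}_0^s$ and $t_i\to0^+$, one first gets $\tilde f_i(v)\in\tilde\Omega^+_{\rho_i}$ as above, using the direction-uniform analog of Corollary~\ref{convdom2}; it remains to see that $\tilde f_i(v)\notin\clconv(\tilde f_i(\tilde{V\backslash v}))$ for all $v$ and large $i$. For this one uses the direction-uniform analog of Lemma~\ref{convconv}, $g_{t_i}\clconv(\tilde f_i(\tilde{V\backslash v}))\to\clconv(\tilde f^0_0(\tilde{V\backslash v}))$, which follows from $g_{t_i}\Lambda_{\rho_i}\to\Lambda_{\rho_\circ}=\pt_r\H^2$ (the direction-uniform analog of Lemma~\ref{convlimset2}) and the $\pi_1S$-equivariant convergence $g_{t_i}\tilde f_i\to\tilde f^0_0$, together with $\xi_0\in\tilde{\mc P}_0^s$ which makes $\tilde f^0_0(v)\notin\clconv(\tilde f^0_0(\tilde{V\backslash v}))$ --- a closed condition, hence stable for large $i$. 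This contradicts $y_i\notin\tilde{\mc P}_-^s$ and completes the proof. I expect the main obstacle to be setting up these ``uniform in the direction'' versions of Corollaries~\ref{convdom},~\ref{convdom2} and Lemmas~\ref{convlimset},~\ref{convlimset2},~\ref{convconv}: each is proved in the text by differentiation at $t=0$ along a single curve, so one must check that the underlying estimates --- Fr\'echet differentiability of $\rho\mapsto h_\rho$ in $\mc{QS}$, the limit $g_t\exp_o(tw)\to w$, and the equivariant reduction to finitely many vertices and faces that in Lemma~\ref{transition} controls the strictly convex position and the face celluation --- are locally uniform in the $1$-jet data. This is routine but is where the real work lies.
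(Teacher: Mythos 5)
Your reduction to openness is sound, and your contradiction scheme would work, but it takes a genuinely different (and heavier) route than the paper, and the part you defer as ``routine'' is exactly the part the paper is engineered to avoid. The paper deduces Lemma~\ref{subman} from the purely ray-wise statements (Lemmas~\ref{weaktransition} and~\ref{transition}: for \emph{each fixed} direction $\xi_0$ the radial curve $t\mapsto\mc E(t\xi_0)$ eventually lies in $\tilde{\mc P}_-^w$, resp.\ $\tilde{\mc P}_{-,sp}^s$) together with the point-set Lemma~\ref{techn5}. The whole content of Lemma~\ref{techn5} is that, because $A=\tilde{\mc P}_-^w$ (resp.\ $\tilde{\mc P}_-^s$) is already known to be open in $\tilde{\mc P}_-$, the ``first exit height'' function $f(x)=\inf\{y_m: y\in r_x,\ y\notin A,\ y\neq x\}$ is lower semicontinuous and positive, so a uniform lower bound on a neighborhood of each boundary point — i.e.\ precisely the ``uniform in the direction'' entry statement you are after — comes for free, with no need to revisit any of the geometric convergence arguments. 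So the direction-uniform analogues of Lemmas~\ref{convlimset},~\ref{convlimset2},~\ref{convconv} and Corollaries~\ref{convdom},~\ref{convdom2} that you flag as ``where the real work lies'' are simply not needed for this lemma.

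If you do want to keep your sequential contradiction argument, note that the paper's own device for handling sequences $t_i\xi_i\to 0$ with $\xi_i\to\xi_0$ is Lemma~\ref{rosental} (Rosenthal's $C^1$-interpolation), used in exactly this way in the proof of Lemma~\ref{domainsec}: interpolate infinitely many of the points $t_i\xi_i$ by a $C^1$-curve through the origin whose tangent at $0$ is a positive multiple of $\xi_0$, hence still in the open cone $\tilde{\mc P}_0^w$ (resp.\ $\tilde{\mc P}_0^s$), and then apply the already-proved Lemmas~\ref{weaktransition} and~\ref{transition} to that curve. This closes your argument without proving any new uniform estimates. As written, your proposal is not wrong, but it substitutes a substantial body of unproved (though true) uniform convergence claims for a two-line semicontinuity argument; either import Lemma~\ref{techn5} or Lemma~\ref{rosental} to discharge them.
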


This follows from Lemmas~\ref{weaktransition},~\ref{transition} and 

\begin{lm}
\label{techn5}
In $\R^m$ let $A$ be an open subset of the upper half-space $\{x: x_m>0\}$, and $B$ be an open subset of the hyperplane $\{x: x_m=0\}$. For $x \in B$ let $r_x: [0, +\infty) \ra \R^m$ be the vertical positive ray based in $x$ and parameterized by the $m$-th coordinate. Assume that for every $x \in B$ there exists $\e=\e(x)$ such that for all $t \in (0,\e)$ we have $r_x(t) \in A$. Then $A \cup B$ is a submanifold with boundary of $\R^m$. 
\end{lm}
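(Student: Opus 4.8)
The plan is to check the defining local property of a submanifold with boundary: near every point of $A\cup B$ one should exhibit a chart of $\R^m$ in which $A\cup B$ becomes a relatively open subset of $\R^m$ (for interior points) or of the closed half-space $\{x_m\ge 0\}$ (for boundary points). For $p\in A$ there is nothing to do: $A$ is open in $\{x_m>0\}$, hence open in $\R^m$, so a small Euclidean ball around $p$ lies in $A\subset A\cup B$ and the identity chart presents $p$ as an interior point. All the content is therefore concentrated at points $p\in B$, where the goal is to find a Euclidean ball $W\ni p$ with $(A\cup B)\cap W=\{x_m\ge 0\}\cap W$; the identity chart on such a $W$ then presents $p$ as a boundary point and finishes the proof.

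I would reduce this to one inclusion. Since $B$ is open in $\{x_m=0\}$, pick a ball $W_0\ni p$ with $W_0\cap\{x_m=0\}\subset B$; since $A\cup B\subset\{x_m\ge 0\}$ automatically and the flat slice $W_0\cap\{x_m=0\}$ already lies in $B$, it suffices to find a smaller ball $W\subset W_0$ with
\[
W\cap\{x_m>0\}\ \subset\ A .
\]
That is, $A$ should contain a genuine one-sided Euclidean neighbourhood of $p$, not merely meet every vertical segment issuing upward from a point of $B$. The natural attack is by contradiction: if no such $W$ existed there would be points $q_j=(y_j,s_j)$ with $s_j>0$, $q_j\notin A$ and $q_j\to p$, hence $s_j\to 0$ and, for large $j$, $(y_j,0)\in B$; the hypothesis then gives $\varepsilon_j:=\varepsilon\big((y_j,0)\big)>0$ with $(y_j,t)\in A$ for $t\in(0,\varepsilon_j)$, and since $q_j\notin A$ necessarily $s_j\ge\varepsilon_j$, so $\varepsilon_j\to 0$; one must turn this degeneration into a contradiction, i.e.\ exclude that $A$ ``pinches'' along $B$.

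I expect that exclusion to be the main obstacle: the purely pointwise hypothesis does not obviously rule out such pinching, so one needs additional control on the shape of $A$. In the situations where the lemma is applied, $A$ is $\tilde{\mc P}_-^w$ or $\tilde{\mc P}_-^s$, for which two such controls are available and can be fed into the contradiction argument above: the convex pieces of $\tilde\Omega_\rho$ vary continuously in the Hausdorff sense (Lemma~\ref{convcompl}, Corollary~\ref{convdom2-}), and Lemmas~\ref{weaktransition}--\ref{transition} hold not only for the radial geodesics $t\mapsto\mc E(tx)$ but for \emph{every} $C^1$ curve issuing from the blow-up boundary in a direction of the relevant open cone, which is exactly the assertion that $A$ contains a full one-sided neighbourhood of $B$. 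Either ingredient closes the gap, and the identity-chart construction of the first two paragraphs then completes the proof.
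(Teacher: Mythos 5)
Your reduction — interior points are trivial, and at each $p\in B$ one must exhibit a full one-sided Euclidean neighbourhood $W\cap\{x_m>0\}\subset A$ — is exactly the paper's starting point, but you then stop: you assert that the pointwise ray hypothesis cannot exclude pinching and defer to unproven, application-specific properties of $\tilde{\mc P}_-^w$. As a proof of the lemma as stated, that is a gap. The paper's argument is self-contained and purely point-set: after normalizing $A\subset\{0<x_m<1\}$, it introduces the first-exit-height function $f(x):=\inf\{y_m:\ y\in r_x,\ y\neq x,\ y\notin A\}$, notes $f>0$ by hypothesis, argues that $f$ is \emph{lower semi-continuous} because a limit of exit points $y^i\in C\setminus A$ over $x^i\to x$ is again a point of $C\setminus A$ on $r_x$, and then uses lower semi-continuity plus positivity to get a locally uniform bound $f\geq a>0$ near $p$; the box $U\times[0,a)$ is the required half-space chart. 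That lower semi-continuity step is the idea missing from your proposal.

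That said, your suspicion is substantively justified, and you should be aware that the paper's lower semi-continuity argument is fragile at precisely the degeneration you describe: if $\liminf f(x^i)=0$, the limiting exit point $y$ is the base point $x$ itself, which is excluded from the set defining $f(x)$, so one cannot conclude $f(x)\leq 0$, and lower semi-continuity does not follow. In fact the bare statement fails: in $\R^2$ take $B=(-1,1)\times\{0\}$ and $A=\{0<y<1\}\setminus\{(x,|x|):x\neq 0\}$. This $A$ is open, every vertical ray enters $A$ immediately (with $\e(x)=|x|$ for $x\neq 0$), yet every neighbourhood of the origin contains slit points $(x,|x|)$ that are limits of points of $A$ but lie outside $A\cup B$, so $A\cup B$ is not locally closed at the origin (equivalently, small punctured neighbourhoods of the origin in $A\cup B$ have three components) and is not a submanifold with boundary. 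So the hypothesis genuinely has to be upgraded to a locally uniform $\e$ before the box-chart conclusion holds; in the applications this uniformity is what the Hausdorff convergence of the domains behind Lemmas~\ref{weaktransition} and~\ref{transition} should supply. Your second proposed ingredient — that those lemmas hold for every $C^1$ curve — is not by itself equivalent to containing a one-sided neighbourhood, since a bad sequence may approach $B$ tangentially to the boundary of the admissible cone of directions, where those lemmas do not apply; so it, too, needs the uniform statement rather than the curvewise one.
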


\begin{proof}
Without loss of generality, we suppose that $A \subset \{x: 0<x_m<1\}$. Define $C:=\{x: 0\leq x_m \leq 1\}$. Define a function $f: B \ra [0,1]$ by
\[f(x):=\inf\{y_m: y\in r_x, y \notin A, y\neq x\}.\]
We have $0<f(x)\leq 1$. It is easy to see that $f$ is lower semi-continuous. Indeed, if $x^i \ra x$ and $a=\liminf f(x^i)$, then, up to subsequence, there exists $y^i$ such that $y^i \in C\backslash A$, $y^i_m \ra a$, $y^i \in r_{x^i}$. Then $y^i \ra y$ such that $y \in C\backslash A$, $y_m=a$ and $y \in r_x$. Hence, $f(x)\leq a$.

This means that for every $x \in B$ there exist a neighborhood $U$ of $x$ in $B$ and $a>0$ such that for all $x' \in U$ we have $f(x')\geq a$. Hence, the subset 
\[\{y: y \in r_{x'}\textrm{~for~some~}x'\in U, 0\leq y_m< a\}\]
belongs to $A \cup B$. Thus, $A \cup B$ is a submanifold with boundary.
\end{proof}

We also define $\tilde{\mc P}_{\vee, sp}^s:= \tilde{\mc P}_{-, sp}^s \cup \S(\tilde{\mc P}_0^s) \subset \tilde{\mc P}_\vee^s$. We can specify Lemma~\ref{subman}.

\begin{lm}
\label{submansp}
$\tilde{\mc P}_{\vee, sp}^s$ is a submanifold with boundary of $\tilde{\mc P}_\vee$.
\end{lm}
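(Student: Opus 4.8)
The plan is to run the proof of Lemma~\ref{subman} (for the $\tilde{\mc P}^s_-$-component) essentially verbatim, only replacing the ``weak'' conclusion of Lemma~\ref{transition} by its sharper statement that $x_t$ lands in $\tilde{\mc P}_{-,sp}^s$, not merely in $\tilde{\mc P}_-^s$. Thus the proof reduces to two points: (a) $\tilde{\mc P}_{-,sp}^s$ is open in $\tilde{\mc P}_-$, so that it is an open subset of $\inter(\tilde{\mc P}_\vee)$ and $\tilde{\mc P}_{\vee,sp}^s$ is already a submanifold near each of its interior points; and (b) near each point of $\pt\tilde{\mc P}_{\vee,sp}^s=\S(\tilde{\mc P}_0^s)$ one can invoke Lemma~\ref{techn5}.

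For (a), I would recall from Section~\ref{sec:bentsurf} that, by the analogue of~\cite[Lemma 2.13]{FP}, every $(\rho,f)\in\mc P_{-,sp}^c$ has a neighborhood all of whose elements are again strictly polyhedral; since $\tilde{\mc P}_-^s$ is open in $\tilde{\mc P}_-^w$ (which is open in $\tilde{\mc P}_-$ by Lemma~\ref{convcompl}) and $\tilde{\mc P}_-^s\subset\tilde{\mc P}_-^c$, it follows that $\tilde{\mc P}_{-,sp}^s=\tilde{\mc P}_{-,sp}^c\cap\tilde{\mc P}_-^s$ is open in $\tilde{\mc P}_-$. Alternatively one can argue directly: strict polyhedrality of $(\rho,\tilde f)$ is equivalent to a positive timelike distance between $\Sigma(\tilde f)$ and $\pt^+\tilde C_\rho$, and this is an open condition because $\cl(\Sigma(\tilde f_i))\ra\cl(\Sigma(\tilde f))$ by Lemma~\ref{convsurf} and $\cl(\pt^+\tilde C_{\rho_i})\ra\cl(\pt^+\tilde C_\rho)$ by Lemma~\ref{convtemp1-}.

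For (b), fix $x_0\in\S(\tilde{\mc P}_0^s)$. By the construction of the blow-up, using the exponential map $\mc E:\tilde{\mc P}_0\ra\tilde{\mc P}_-$ and the lower-cone structure it induces around $o_-$, a neighborhood of $x_0$ in $\tilde{\mc P}_\vee$ is identified with a set $\{(x,t):x\in W,\ t\in[0,\e)\}$, where $x$ ranges over an open neighborhood $W$ of a representative of $x_0$ inside the image of a smooth section of the ray projection $\sigma:\tilde{\mc P}_0\backslash\{0\}\ra\S(\tilde{\mc P}_0)$, the slice $t=0$ being the boundary and $(x,t)$ with $t>0$ corresponding to $\mc E(tx)\in\tilde{\mc P}_-$. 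Since $\tilde{\mc P}_0^s$ is an open cone, after shrinking $W$ we may assume $W\subset\tilde{\mc P}_0^s$, so the boundary slice becomes an open subset of $\S(\tilde{\mc P}_0^s)$. For each such $x$ the curve $t\mapsto\mc E(tx)$ is $C^1$ with derivative $x\in\tilde{\mc P}_0^s$ at $t=0$, hence Lemma~\ref{transition} yields $\mc E(tx)\in\tilde{\mc P}_{-,sp}^s$ for all sufficiently small $t>0$. Therefore, in these coordinates the set $A:=\tilde{\mc P}_{-,sp}^s$ (open by (a)) and the open subset $B:=\S(\tilde{\mc P}_0^s)$ of the boundary slice satisfy the hypotheses of Lemma~\ref{techn5}, which shows that $A\cup B$, i.e.\ $\tilde{\mc P}_{\vee,sp}^s$, is a submanifold with boundary near $x_0$ — in fact relatively open in $\tilde{\mc P}_\vee$ there. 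Combined with (a), this proves the lemma.

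I do not expect a serious obstacle. The geometric content is entirely contained in Lemma~\ref{transition} (already proven), whose only new feature compared with what was used for Lemma~\ref{subman} is that it certifies strict polyhedrality, and the gluing is exactly Lemma~\ref{techn5} as before. The one genuinely new ingredient is the elementary openness of $\tilde{\mc P}_{-,sp}^s$ in step (a); the only mild technical care needed is matching the vertical rays of the blow-up chart with the exponential curves $t\mapsto\mc E(tx)$, which is immediate from the defining property of $\mc E$.
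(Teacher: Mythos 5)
Your proposal is correct and is essentially the argument the paper intends: Lemma~\ref{submansp} is stated as a "specification" of Lemma~\ref{subman}, whose proof combines the transition lemma with Lemma~\ref{techn5}, and the only upgrade needed is that Lemma~\ref{transition} already certifies landing in $\tilde{\mc P}_{-,sp}^s$ rather than merely $\tilde{\mc P}_-^s$. Your step (a), the openness of $\tilde{\mc P}_{-,sp}^s$ (needed for the hypothesis of Lemma~\ref{techn5}), is left implicit in the paper but follows exactly as you argue from Lemmas~\ref{convsurf} and~\ref{convtemp1-} (or the anti-de Sitter analogue of~\cite[Lemma~2.13]{FP}).
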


The group $G$ acts on $\tilde{\mc P}_-$ smoothly, freely and properly, preserving the bundle structure $\tilde{\mc P}_-\cong \mc R \times (\A^3)^V$. Denote the quotient by $\tilde{\mc P}_-' \cong \mc T \times (\A^3)^V$, and denote the image of $o_-$ by $o_-'$. The subspace $B$ of coboundaries in $T_{\rho_\circ}\mc R$ is naturally isomorphic to $\R^{2,1}$ and is tangent to the $G$-orbits in $\mc R$. Denote by $\tilde{\mc P}'_0$ the quotient of $\tilde{\mc P}_0$ by $B$. Using the exponential map $\mc E':\tilde{\mc P}_0' \ra \tilde{\mc P}_-'$ we define the blow-up $\tilde{\mc P}'_\vee$ with $\pt \tilde{\mc P}'_\vee \cong \S(\tilde{\mc P}_0')$. It is easy to show that the projection $\tilde{\mc P}_\vee \ra \tilde{\mc P}'_\vee$ is a submersion. (Note that $G$ does not act on $\tilde{\mc P}_\vee$, thereby this statement is not immediate, but it is easy to see that the submersions $\tilde{\mc P}_- \ra \tilde{\mc P}_-'$ and $\S(\tilde{\mc P}_0) \ra \S(\tilde{\mc P}_0')$ glue together to a submersion.) The subset $\tilde{\mc P}_-^w \subset \tilde{\mc P}_-$ is $G$-invariant and projects to $\tilde{\mc P}_-^{w'} \subset \tilde{\mc P}_-'$. Similarly, the subset $\tilde{\mc P}_0^w \subset \tilde{\mc P}_0$ is $B$-invariant and projects to $\tilde{\mc P}_0^{w'} \subset \tilde{\mc P}_0'$. Define $\tilde{\mc P}_\vee^{w'}:= \tilde{\mc P}_-^{w'} \cup \S(\tilde{\mc P}_0^{w'}) \subset \tilde{\mc P}_\vee'$, which is a submanifold with boundary.


The group $\pi_1S$ acts on $\tilde{\mc P}_0^{w'}$ by coned maps, freely and properly discontinuously. Hence, the quotient $\mc P_0^w$ is a coned manifold. Next, $\pi_1S$ acts on both $\tilde{\mc P}_-^{w'}$, $\S(\tilde{\mc P}_0^{w'})$ by diffeomorphisms, freely and properly discontinuously. We claim that the action is smooth as the action on $\tilde{\mc P}_\vee^{w'}$. Indeed, let $\phi: \tilde{\mc P}_-' \ra \tilde{\mc P}_-'$ be an action by an element $\gamma\in \pi_1S$. Note that it is smooth, and is a self-diffeomorphism on $\tilde{\mc P}_-^{w'}$. Due to Lemma~\ref{weaktransition}, the exponential map $\mc E'$ sends diffeomorphically some lower cone in $\tilde{\mc P}_0^{w'}$ spanning $\tilde{\mc P}_0^{w'}$ onto a set $X \subset \tilde{\mc P}_-^{w'}$, which then inherits a lower-cone structure based at $o'_-$. We have $\pt X_\vee \cong \S(\tilde{\mc P}_0^{w'})$. Pick $x \in \pt X_\vee$. Since $\phi$ is smooth and fixes $o_-'$, there exists a lower cone $Y \subset X$ such that $\pt Y_\vee \ni x$ and $\phi(Y) \subset X$. Since $\phi$ is smooth on $\tilde{\mc P}_-'$, the map $\grave \phi$ on $Y \times [0, +\infty)$ is smooth. Lemma~\ref{smooth} yields that $\phi$ has a smooth blow-up $\phi_\vee$ on $Y_\vee$. It is easy to check that the restriction of $\phi_\vee$ on $\pt Y_\vee$ coincides with the restriction of the action of $\gamma$ on $\S(\tilde{\mc P}_0^{w'})$. Hence, the action of $\pi_1S$ on $\tilde{\mc P}_\vee^{w'}$ is smooth, and thereby it is a free properly discontinuous action by diffeomorphisms. Denote the quotient by ${\mc P}_\vee^w$. Its interior is identified with $\mc P_-^w$ and its boundary is identified with $\S(\mc P_0^w)$. The map $\tilde{\mc P}_\vee^w \ra \mc P_\vee^w$ is a submersion. We define the subsets $\mc P_\vee^s \subset \mc P_\vee^c \subset \mc P_\vee^w$ in the obvious manner.

\subsection{The intrinsic metric map}
\label{sec:map}

Recall the notion of \emph{intrinsic metric} of spacelike convex surfaces in $\A^3$ from Appendix~\ref{sec:intmet}.

\begin{lm}
\label{intrmetrcone}
Consider $(\rho, f) \in \mc P_-^c$. The intrinsic metric $d$ of $\Sigma(f)$ is a concave hyperbolic cone-metric and $V(d)=V(f)$.
\end{lm}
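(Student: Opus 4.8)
The plan is to study $\Sigma(\tilde f)$ locally, distinguishing its behaviour at the vertices from its behaviour elsewhere, and then to descend to the quotient $\Sigma(f)=\Sigma(\tilde f)/\theta_\rho(\pi_1S)$. Recall that $\Sigma(\tilde f)$ is a future-convex bent spacelike surface in $\A^3$, whose set of vertices is the $\theta_\rho$-orbit of $\tilde f(V(f))$, and that it carries a well-defined intrinsic metric by Appendix~\ref{sec:intmet}. Since $\theta_\rho$ acts by isometries of $\A^3$ and all statements below are local, it suffices to establish them upstairs.

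Away from the vertices, I claim the intrinsic metric is locally isometric to $\H^2$. Indeed, a non-vertex point $p\in\Sigma(\tilde f)$ either lies in the relative interior of a face, which is a convex subset of a spacelike totally geodesic plane and hence isometric to a convex subset of $\H^2$; or it lies in the relative interior of a maximal geodesic segment of $\Sigma(\tilde f)$, in which case a neighbourhood of $p$ in $\Sigma(\tilde f)$ is obtained by bending totally geodesic pieces of $\H^2$ along geodesics, which does not change the intrinsic metric (the same mechanism as for pleated surfaces, cf.~\cite{CEG}). The only delicate case is $p\in\Sigma(\tilde f)\cap\pt^+\tilde C_\rho$, where $\Sigma(\tilde f)$ may fail to be polyhedral; there $\Sigma(\tilde f)$ agrees near $p$ with $\pt^+\tilde C_\rho$, and by the construction recalled in Section~\ref{sec:bentsurf} there is an isometry $\H^2\ra\pt^+\tilde C_\rho$, so again the metric near $p$ is hyperbolic. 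Hence $d$ restricts to a hyperbolic metric on $S\setminus V(f)$.

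Next I would treat a vertex $v\in V(f)$ with a lift $\tilde v\in\tilde V$. Since $\tilde f(\tilde v)$ is an isolated point of the vertex set, in a small neighbourhood $\Sigma(\tilde f)$ is a strictly polyhedral cone over its link: intersecting with a small totally geodesic spacelike plane transverse to $\Sigma(\tilde f)$ exhibits the link as a convex polygonal curve, and a punctured neighbourhood of $\tilde f(\tilde v)$ is foliated by the geodesic arcs from $\tilde f(\tilde v)$ to this link. It follows that near $v$ the metric $d$ is a hyperbolic cone-metric with some cone angle $\omega(v)$, which one computes by developing the finitely many faces incident to $\tilde f(\tilde v)$ into $\H^2$. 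Future-convexity forces $\omega(v)>2\pi$: the incident spacelike totally geodesic faces bend away from the future tangent cone at the apex, and a comparison of a small geodesic circle about $v$ with the hyperbolic circle of the same radius shows the former to be longer; this is the $\A^3$-analogue of the Minkowski computation of~\cite{FP} and the hyperbolic computation of~\cite{Pro2}. In particular every $v\in V(f)$ is a genuine cone point, so $\kappa_d(v)<0$; and conversely if some $v\in V$ had $\omega(v)=2\pi$ then $\Sigma(f)$ would be totally geodesic near $v$ and $v$ would not be a vertex of $\Sigma(f)$. Therefore $d$ is a concave hyperbolic cone-metric on $(S,V)$ with $V(d)=V(f)$.

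The step I expect to be the main obstacle is the sign of the cone angle at a vertex: one has to show that future-convexity of a spacelike polyhedral cone in $\A^3$ yields total angle \emph{strictly larger} than $2\pi$, the opposite of the Euclidean or hyperbolic Alexandrov situation, and this Lorentzian phenomenon is precisely what makes the realized metrics concave. I would isolate it as a statement about a single convex spacelike polyhedral cone at a point of $\A^3$ and prove it either by an explicit development of its link against the totally geodesic model, or via the Gauss equation $K_{\mathrm{int}}=-1-\det B$ applied to convex smoothings of the cone together with Gauss--Bonnet. The non-polyhedral locus $\Sigma(\tilde f)\cap\pt^+\tilde C_\rho$ poses no further difficulty once its identification with a subset of $\pt^+\tilde C_\rho$ and the results of Appendix~\ref{sec:intmet} are invoked.
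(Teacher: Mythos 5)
Your decomposition of the non-vertex locus into face points, edge points, and points of $\Sigma(\tilde f)\cap\pt^+\tilde C_\rho$ has a genuine gap in the third case, and it is exactly the case the lemma is really about. From $p\in\Sigma(\tilde f)\cap\pt^+\tilde C_\rho$ you cannot conclude that $\Sigma(\tilde f)$ \emph{coincides} with $\pt^+\tilde C_\rho$ near $p$: the two convex surfaces merely share a supporting plane there and touch along some closed subset. The statement in Section~\ref{sec:bentsurf} is only that all non-polyhedrality of $\Sigma(\tilde f)$ is located on this intersection, not that the surfaces agree on a neighborhood of it. At a boundary point of the contact set, $\Sigma(\tilde f)$ can be bent along a non-discrete family of lines inherited from the bending lamination of $\pt^+\tilde C_\rho$ on one side while spanning flat faces towards vertices on the other; such a point is neither in the interior of a face, nor on an isolated edge where the elementary unfolding applies, nor in a region where $\Sigma(\tilde f)=\pt^+\tilde C_\rho$. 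Your appeal to the pleated-surface mechanism for edge points only covers bending along a single geodesic; extending it to bending along a lamination is precisely the non-trivial content here, and asserting it is circular. (A smaller issue of the same kind occurs at a vertex: you develop ``the finitely many faces incident to $\tilde f(\tilde v)$'', but finiteness of the faces at a vertex lying on or near $\pt^+\tilde C_\rho$ is not justified; the cited neighborhood statement only excludes edges not ending at the vertex.)

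The paper avoids the case analysis altogether. At a non-vertex point it writes a small cap of $\Sigma(\tilde f)$ as a graph over a compact convex set $K$ in a supporting plane, inscribes polygonal curves $\psi_i\ra\pt K$, and takes the future-convex hull surfaces $\Sigma_i$ of their lifts; these are honestly polyhedral with no interior vertices, so their intrinsic metrics $d_i$ are locally isometric to $\H^2$. Lemmas~\ref{localize} and~\ref{uniformconv} localize shortest paths and give $d_i\ra d$ uniformly, and Arzel\`a--Ascoli applied to the normalized developing maps $\phi_i:(U_p,d_i)\ra\H^2$ produces a local isometry for $d$ in the limit. This handles the laminated locus uniformly. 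At a vertex the paper only needs that some punctured neighborhood carries no edges except those ending at the vertex, so that $\Sigma(\tilde f)$ contains a piece of a convex spacelike cone of non-zero curvature; your plan to pin down the sign of the angle excess (via development of the link or the Gauss equation on smoothings) is a reasonable way to make the concavity explicit, but the essential missing ingredient in your argument is the approximation step above, without which the hyperbolicity of $d$ away from $V(f)$ is not established.
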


\begin{proof}
Pick a lift $(\rho, \tilde f) \in \tilde{\mc P}_-^c$. Let $p\in \Sigma(\tilde f)$, $p \notin V(\tilde f)$. We will prove that the intrinsic metric of $\Sigma(\tilde f)$ is locally isometric to $\H^2$ at $p$. Pick a supporting plane $\Pi$ to $\Sigma(\tilde f)$ at $p$ and a compact convex set $K \subset \Pi$ such that $\Sigma(\tilde f)$ is locally a graph over $K$ via the exponential normal map $\mc E_\Pi$ from $\Pi$, that this graph over $K$ does not contain points of $V(\tilde f)$ and that $p \in \inter(K)$. Consider a sequence $\psi_i \subset K$ of closed convex polygonal curves converging to $\pt K$ uniformly so that they bound compact convex sets $K_i \subset K$ with $p \in \inter(K_i)$ for all $i$. There exists a compact convex set $K'$ such that for all $i$ we have $K' \subset K_i$ and $p \in \inter(K')$. 

Let $\psi'_i$ be the polygonal curves in $\A^3$ obtained from connecting the vertices of $\psi_i$ lifted to $\Sigma(\tilde f)$. Let $\Sigma_i$ be the future-convex parts of $\conv(\psi'_i)$. We consider the pull-backs to $K'$ of the intrinsic metrics of the parts of $\Sigma_i$ and $\Sigma(\tilde f)$ that are cut out by $\mc E_\Pi(K')$. Denote the obtained metrics on $K'$ by $d_i$, $d'$. We note that a priori $d'$ might be not the same as the pull-back of $d$, the intrinsic metric of $\Sigma(\tilde f)$, as some of the shortest paths for $d$ with endpoints in $K'$ can escape $K'$. However, due to Lemma~\ref{localize}, there exists a neighborhood $U_p \ni p$ in $K'$ such that for all large enough $i$ the shortest paths for $d_i$ and $d'$ with endpoints in $U_p$ belong to $K'$. Hence, over $U_p$ we have $d'=d$. Furthermore, by construction, there are local isometries $\phi_i: (U_p, d_i) \ra \H^2$. We normalize them so that $\phi_i(p)=q\in \H^2$ for all $i$. Due to Lemma~\ref{uniformconv}, $d_i \ra d$ uniformly on $K'$. 
We apply the Arzel\`a--Ascoli theorem and obtain that, up to subsequence, $\phi_i$ converge uniformly to a local isometry $\phi: (U_p, d) \ra \H^2$. 

Now suppose that $p \in V(\tilde f)$. There exists its neighborhood $U$ in $\Sigma(\tilde f)$ such that no edges pass through $U$ except those that have an end in $p$. A proof is the same as in the hyperbolic case, see~\cite[Lemma 3.10]{Pro2}. We can assume that $U$ is the intersection of $\Sigma(\tilde f)$ with a convex body. Then every point $q \in U$ is connected to $p$ by a segment that belongs to $U$. This means that $\Sigma(\tilde f)$ contains a piece of the boundary of a convex cone based at $p$. Since $p$ does not belong to the relative interior of a segment that belongs to $\Sigma(\tilde f)$, the curvature of this cone is non-zero. Hence, $d$ is a concave hyperbolic cone-metric and $V(d)=V(f)$. 
\end{proof}

The discussion from Section~\ref{sec:bentsurf} allows to consider $d$ as a hyperbolic cone-metric on $(S, V)$, defined up to an element of $\mc H$. Hence, we have the \emph{intrinsic metric map}
\[\mc I_-: \mc P_-^c \ra \mc D_-^c.\]
Moreover, the restriction of $\mc I_-$ to $\mc P_-^s$ has values in $\mc D_-^s$. We will denote it by $\mc I_-^s$ as a map $\mc P_-^s \ra \mc D_-^s$.

\begin{lm}
\label{contin}
The map $\mc I_-$ is continuous.
\end{lm}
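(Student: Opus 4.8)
The plan is to reduce the claim to a local statement and then invoke the convergence results already established for convex surfaces in $\A^3$. Fix $(\rho,f) \in \mc P_-^c$ and a sequence $(\rho_i,f_i) \to (\rho,f)$ in $\mc P_-^c$; I want to show $\mc I_-(\rho_i,f_i) \to \mc I_-(\rho,f)$ in $\mc D_-^c$. Lift to $(\rho,\tilde f), (\rho_i,\tilde f_i) \in \tilde{\mc P}_-^c$, which we may assume converge (after the usual normalization by $G_-$, using that the $G_-$-action is free and proper, and noting $\rho_i \to \rho$, $\tilde f_i(\tilde V) \to \tilde f(\tilde V)$ pointwise and equivariantly). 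By Lemma \ref{convsurf}, $\cl(\Sigma(\tilde f_i)) \to \cl(\Sigma(\tilde f))$ in the Hausdorff sense in $\RP^3$, and by Lemma \ref{convconv}, $\clconv(\tilde f_i) \to \clconv(\tilde f)$. Since $\mc D_-^c$ carries the manifold topology of Section \ref{sec:metrdef} — equivalently, via the Troyanov parameterization $\mc U_-$, convergence means convergence of the underlying conformal structures in $\mc C$ together with convergence of the curvature tuples $\kappa_d \in \R^V$ — it suffices to show that the intrinsic metrics $d_i$ of $\Sigma(\tilde f_i)$ converge to the intrinsic metric $d$ of $\Sigma(\tilde f)$ in a sense strong enough to control both; concretely, it is enough to establish uniform convergence of the metric tensors (or of the distance functions) on compact subsets of $S \setminus V$ together with convergence of the cone angles at the points of $V$.

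The local structure of the argument mirrors the proof of Lemma \ref{intrmetrcone}. Away from $V$: fix $p \in S \setminus V$ and work in a chart; choose a point $\tilde p \in \Sigma(\tilde f)$ over $p$, a supporting plane $\Pi$ and a small compact convex $K \subset \Pi$ over which $\Sigma(\tilde f)$ is a graph (via the normal exponential map) avoiding $V(\tilde f)$, with $\tilde p$ interior. Because $\cl(\Sigma(\tilde f_i)) \to \cl(\Sigma(\tilde f))$, for large $i$ the surfaces $\Sigma(\tilde f_i)$ are also graphs over $K$ (or a slightly shrunk $K' \subset K$ with $\tilde p$ interior), and these graphs converge $C^0$. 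Here I invoke the convergence machinery of Appendix \ref{sec:intmet}: the analogue of Lemma \ref{uniformconv}, applied to Hausdorff-converging convex spacelike surfaces, gives that the pulled-back intrinsic metrics $d_i$ converge uniformly to $d$ on $K'$ (using Lemma \ref{localize} to guarantee that the relevant shortest paths stay inside $K'$, so that the pulled-back metrics genuinely agree with the intrinsic metrics locally). Covering a compact piece of $S \setminus V$ by finitely many such $K'$'s gives uniform convergence of $d_i \to d$ there. At a cone point $v \in V(f)$: as in Lemma \ref{intrmetrcone}, near $v$ the surface $\Sigma(\tilde f)$ is a piece of the boundary of a convex cone, and for large $i$ so is $\Sigma(\tilde f_i)$ near the corresponding vertex (since $(\rho_i,f_i) \in \mc P_-^c$ and, by Hausdorff convergence of the convex hulls, the vertex and its link converge); the cone angle is a continuous function of the link, hence $\kappa_{d_i}(v) \to \kappa_d(v)$. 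For $v \in V \setminus V(f)$ one has $\kappa_d(v) = 0$ and $\kappa_{d_i}(v) \to 0$ by the same link-convergence argument. Uniform convergence of the metrics on compacta of $S \setminus V$ forces convergence of the conformal structures in $\mc C$, and together with convergence of $\kappa_{d_i} \to \kappa_d$, the Troyanov diffeomorphism $\mc U_-$ yields $d_i \to d$ in $\mc D_-^\sharp$, hence in $\mc D_-^c$ after passing to the $\mc B$-quotient (the identification $\zeta_i \colon \Sigma(f_i) \to S$ with $\zeta_i \circ f_i = \mathrm{id}_V$ is canonical up to $\mc H$, so the classes behave continuously).

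The main obstacle I anticipate is the passage from Hausdorff convergence of the surfaces in $\RP^3$ to uniform convergence of their \emph{intrinsic} distance functions on compacta of $S \setminus V$, i.e. establishing and applying the right lemma from Appendix \ref{sec:intmet}. Hausdorff convergence of convex surfaces is weak — a priori a small fold could persist — and one must use convexity plus the spacelike condition to rule out length distortion: shortest paths on nearby convex spacelike surfaces stay near shortest paths on the limit, and their lengths converge. This is exactly the content that Lemmas \ref{localize} and \ref{uniformconv} (in the appendix) are designed to supply, so the proof really consists of carefully reducing to those statements; once they are in hand the rest is the routine chart-and-cover argument sketched above. A secondary, purely bookkeeping subtlety is making the marking maps $\zeta_i$ vary continuously so that the limiting metric is identified with the correct class in $\mc D_-^c$ rather than merely isometric to it, but this follows from the fact that $\zeta_i$ is pinned down by the finitely many marked points $V$ up to $\mc H$, on which $\mc D_-^c$ is already quotiented.
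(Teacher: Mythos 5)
Your proposal shares the analytic core with the paper's proof (Lemmas \ref{convconv}, \ref{convsurf} and the Appendix \ref{sec:intmet} machinery), but the final reduction is genuinely different, and it is precisely there that the argument has gaps. The paper's route is shorter and avoids them entirely: fix a geodesic triangulation $\ms T$ of the limit metric $d$ (whose edges are unique shortest paths since $d$ is CAT(0)), note that $\clconv(\tilde f_i)\ra\clconv(\tilde f)$, $\cl(\Sigma(\tilde f_i))\ra\cl(\Sigma(\tilde f))$ and $\pt\clconv(\tilde f_i)\backslash\Sigma(\tilde f_i)\ra\pt\clconv(\tilde f)\backslash\Sigma(\tilde f)$ put one exactly in the setting of Lemma \ref{distconverge0}, and conclude that the shortest paths between the marked points converge together with their lengths. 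This shows at once that the $d_i$ are weakly $\ms T$-triangulable for large $i$ and that their images under the chart $\phi^{\ms T}_-$ converge — which \emph{is} convergence in $\mc D_-^c$ by definition of the topology. No conformal structures, cone angles, or comparison maps are needed.

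Your route through the Troyanov parameterization leaves two steps unjustified. First, the assertion that the cone angle "is a continuous function of the link": Hausdorff convergence of convex surfaces does not naively give pointwise convergence of cone angles at vertices — what one gets for free is only weak convergence of curvature measures, and turning that into $\kappa_{d_i}(v)\ra\kappa_d(v)$ (including $\kappa_{d_i}(v)\ra 0$ when $f(v)$ is not a vertex of $\Sigma(f)$, where $f_i(v)$ may well be a genuine vertex of $\Sigma(f_i)$ with small but nonzero curvature) requires an argument that neither you nor the appendix supplies. Second, the passage from uniform convergence of the distance functions on compacta of $S\setminus V$ to convergence of the conformal structures in $\mc C$: this needs globally defined comparison maps $S\ra S$ assembled from your local graph identifications, compatible with the markings and nearly conformal, and that assembly is exactly the kind of bookkeeping the edge-length charts are designed to make unnecessary. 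Both gaps are fillable, but as written the proof is incomplete at the point where the local estimates must be converted into convergence in $\mc D_-^c$; I would recommend replacing the Troyanov step by the triangulation argument via Lemma \ref{distconverge0}.
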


\begin{proof}
Pick $x \in \mc P_-^c$, define $d:=\mc I_-(x) \in \mc D_-^c$. Take a geodesic triangulation $\ms T$ of $(S, V, d)$. Since $d$ is CAT(0), all edges are unique shortest paths between the vertices. Let $x_i \ra x$ in $\mc P_-^c$. Pick their lifts $(\rho_i, \tilde f_i)$ converging to a lift $(\rho, \tilde f)$ of $x$ in $\tilde{\mc P}_-^c$. Lemmas~\ref{convconv} and~\ref{convsurf} imply that $\clconv(\tilde f_i) \ra \clconv(\tilde f)$ and $\cl(\Sigma(\tilde f_i))\ra \cl(\Sigma(\tilde f))$. Furthermore, due to Lemma~\ref{convtemp1-}, $\pt\clconv(\tilde f_i)\backslash \Sigma(\tilde f_i) \ra \pt\clconv(\tilde f)\backslash \Sigma(\tilde f)$. Hence, we are in the setting of Lemma~\ref{distconverge0}. Let $\tilde{\ms T}$ be the preimage of $\ms T$ on $\tilde S$. We pick a fundamental domain for $\tilde{\ms T}$. Due to Lemma~\ref{distconverge0}, up to subsequence, for every edge in the fundamental domain, the respective shortest paths on $\Sigma(\tilde f_i)$ converge in the Hausdorff sense to the respective shortest paths on $\Sigma(\tilde f_i)$, and also their lengths converge. This means that $d_i$ are weakly $\ms T$-triangulable and converge to $d$ in $\mc D_-^c$. Due to Lemma~\ref{subconvtoconv}, this holds then for the initial sequence.
\end{proof}

\begin{lm}
\label{c1}
The map $\mc I_-$ is $C^1$ on $\mc P_{-, sp}^s$. 
\end{lm}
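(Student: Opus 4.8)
The plan is to reduce the statement to the classical smooth dependence of hyperbolic triangle geometry on edge-lengths, by using that strictly polyhedral bent surfaces are, locally in $\mc P_{-, sp}^s$, triangulated by a \emph{fixed} combinatorial triangulation. First I would fix $x = (\rho, f) \in \mc P_{-, sp}^s$, let $d = \mc I_-(x)$, and choose a geodesic triangulation $\ms T$ of $(S, V, d)$; because $\Sigma(f)$ is strictly polyhedral, a (possibly refined) choice of $\ms T$ is realized by the edges of $\Sigma(f)$ together with a chosen diagonal subdivision of each face. By the stability statement quoted at the end of Section~\ref{sec:bentsurf} (the analogue of \cite[Lemma 2.13]{FP} for $\mc P_{-, sp}^c$), there is a neighborhood $U \ni x$ in $\mc P_{-, sp}^s$ such that for every $x' \in U$ the face celluation of $f'$ is subdivided by $\ms T$, i.e. $\ms T$ remains geodesic on $\Sigma(f')$. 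Pass to lifts $(\rho', \tilde f') \in \tilde{\mc P}_{-, sp}^s$ depending smoothly on $x'$; the triangulation lifts to a $\pi_1 S$-invariant geodesic triangulation $\tilde{\ms T}$ of $\Sigma(\tilde f')$ with vertices at $\tilde f'(\tilde V)$.

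Next I would express the edge-length function in coordinates. Each edge $e \in E(\tilde{\ms T})$ joins two points $\tilde f'(\tilde v_1), \tilde f'(\tilde v_2)$ of $\A^3$ lying on the spacelike surface $\Sigma(\tilde f')$; since the two points and the geodesic segment between them lie in $\Sigma(\tilde f')$, the $\A^3$-length of that segment is the hyperbolic distance $\dist_{\A^3}(\tilde f'(\tilde v_1), \tilde f'(\tilde v_2))$, which is a real-analytic function of the two points as long as they are spacelike separated (which holds on $\Sigma(\tilde f')$). Composed with the smooth dependence of $\tilde f'(\tilde v_i)$ on $x' \in \mc P_{-, sp}^s$ — recall $\tilde{\mc P}_{-, sp}^s$ is a manifold and the quotient map is a submersion — we get that the edge-lengths $\ell_e(x')$ are smooth (indeed $C^\infty$) functions on $U$. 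Finitely many edges suffice by $\pi_1 S$-equivariance. Then $\mc I_-(x')$, read in the edge-length chart $\phi_-^{\ms T}$ of $\mc D_-^\sharp(\ms T)$ from Section~\ref{sec:metrdef}, is exactly $x' \mapsto (\ell_e(x'))_{e \in E(\ms T)}$ (after projecting to $\mc D_-$, which is a local diffeomorphism onto its image since $\mc D_-^\sharp \to \mc D_-$ is a covering). Hence $\mc I_-$ is $C^\infty$, in particular $C^1$, near $x$.

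The one point requiring care — and the main obstacle — is making sure the edge $e$ of $\Sigma(\tilde f')$ is genuinely realized by the \emph{intrinsic} shortest path and that its intrinsic length equals the ambient $\A^3$-distance between its endpoints, uniformly for $x'$ in a neighborhood. This is where the hypothesis $d$ CAT(0) (concave hyperbolic cone-metric, hence locally CAT(0), with unique shortest paths between cone-points, as already used in the proof of Lemma~\ref{contin}) together with the openness of the triangulability condition enters: on a strictly convex spacelike surface a geodesic segment of $\Sigma(\tilde f')$ between two vertices that bounds a face realized in $\ms T$ is length-minimizing in the intrinsic metric, because in the developed picture it is a straight hyperbolic segment and the surface metric is CAT(0). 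One should also note that $\ms T$ being geodesic for $\Sigma(f')$ is stable under the perturbation precisely by the quoted subdivision lemma; if $\ms T$ is refined so that each face of $\Sigma(f)$ is subdivided into triangles, then for nearby $f'$ each face of $\Sigma(f')$ is still subdivided by $\ms T$ (a face may acquire extra edges, but we took $\ms T$ fine enough, or, as in \cite{FP}, pass to a common refinement). Once this is in place, the rest is the elementary smoothness of $\dist_{\A^3}$ on spacelike pairs and the smoothness of the quotient submersion, so $\mc I_-$ is $C^1$ (in fact $C^\infty$) on $\mc P_{-, sp}^s$.
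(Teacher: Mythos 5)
There is a genuine gap, and it sits exactly at the point you flag as "the one point requiring care" — but the real problem is earlier, in your use of the stability lemma. The statement quoted at the end of Section~\ref{sec:bentsurf} says that for $x'$ near $x$ the face celluation of $f'$ is a \emph{subdivision of} the face celluation $\ms C$ of $f$; it does \emph{not} say that the face celluation of $f'$ is \emph{subdivided by} your chosen triangulation $\ms T$. These are opposite refinement relations, and the second one is false in general. Concretely: if $\ms C$ has a quadrilateral face $Q$ with vertices $v_1v_2v_3v_4$ and you put the diagonal $v_1v_3$ into $\ms T$, a nearby $f'$ may bend $Q$ along the other diagonal $v_2v_4$. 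Then the edge $v_1v_3$ of $\ms T$ is not a straight segment of $\Sigma(\tilde f')$; its geodesic representative in the intrinsic metric crosses the ridge $v_2v_4$ and is strictly longer than $\dist_{\A^3}(\tilde f'(\tilde v_1),\tilde f'(\tilde v_3))$. So the coordinate expression of $\mc I_-$ in the chart $\phi^{\ms T}_-$ is \emph{not} the single smooth function "ambient distance between endpoints" on all of $U$; it is that function only on the sector of $U$ where $\Sigma(f')$ happens to be bent compatibly with $\ms T$, and a different (also smooth, "unfold-across-the-ridge") function on the other sectors. Your argument therefore only proves local smoothness at points whose face celluation is already a triangulation — the generic case — and says nothing at the non-generic points, which are precisely where the $C^1$ assertion has content. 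Relatedly, the conclusion "$C^\infty$" is too strong: across the walls where the combinatorics changes the map is in general not $C^2$.

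The missing idea, which is the actual content of the cited proofs ([FP, Lemma 2.14] and its analogues, reproduced in the proof of Lemma~\ref{gluedsmooth} in this paper), is a gluing step: enumerate \emph{all} triangulations $\ms T_1,\dots,\ms T_r$ subdividing $\ms C$, define for each $j$ the smooth candidate map $\mc I_{-,j}$ sending $x'$ to the intrinsic metric of the simplicial surface $\Sigma_j(f')$ built on $\ms T_j$ (this is where your ambient-distance computation is correct and useful), observe that $\mc I_-$ agrees with $\mc I_{-,j}$ on the sector where $\Sigma_j(f')$ is convex, and then verify that the \emph{values and the differentials} of the $\mc I_{-,j}$ agree wherever two sectors meet — this is the nontrivial first-order matching (Claim 2 in the proof of [FP, Lemma 2.13]) — so that Lemma~\ref{techn3} yields a $C^1$ (but only $C^1$) glued map. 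Your CAT(0) discussion correctly handles why a straight-realized edge computes the chart coordinate, but it cannot substitute for this gluing argument.
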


The proof is just the same as the proofs of similar statements in other settings~\cite[Lemma 3.33]{Pro2}, \cite[Lemma 2.18]{Pro3},~\cite[Lemma 2.14]{FP}. 

We also have the intrinsic metric map
\[\mc I_0: \mc P_0^c \ra \mc D_0^c.\]
It is clear that $\mc I_0$ is coned. Since the elements of $\mc P_0^c$ are strictly polyhedral and for $(\tau, f) \in \mc P_0^c$ the face celluation of nearby elements of $\mc P_0^c$ is a subdivision of the face celluation of $f$, it is easy to see that it is continuous. We similarly consider its restriction $\mc I_0^s: \mc P_0^s \ra \mc D_0^s$. Then~\cite[Theorem 1.4]{FP} states that

\begin{repthmM}
{minkowski}
The map $\mc I_0^s$ is a $C^1$-diffeomorphism. 
\end{repthmM}

We glue together the maps $\mc I_-$ and $\S(\mc I_0)$ into the map
\[{\mc I}_\vee: \mc P_\vee^c \ra \mc D_\vee^c.\]

Now we show

\begin{lm}
\label{gluedsmooth}
The map ${\mc I}_\vee$ is continuous and is $C^1$ on $\mc P_{\vee, sp}^s$.
\end{lm}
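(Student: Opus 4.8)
The plan is to verify continuity and the $C^1$ property separately, using that away from the boundary $\pt\mc P_\vee^c \cong \S(\mc P_0^c)$ the map $\mc I_\vee$ is just $\mc I_-$, which is continuous by Lemma~\ref{contin} and $C^1$ on $\mc P_{-,sp}^s$ by Lemma~\ref{c1}, while on the boundary it is $\S(\mc I_0)$, which is continuous (as noted right before the statement) and, by the repeated Theorem~\ref{minkowski}, the spherization of a $C^1$-diffeomorphism and hence $C^1$. So the entire content is a statement about the behavior of $\mc I_\vee$ near the glued boundary stratum. First I would fix a boundary point $x_\infty \in \S(\mc P_0^c)$, represented by $(\tau, \tilde f) \in \tilde{\mc P}_0^c$ up to scaling, and work in the lower-cone chart around $o_-$ provided by the exponential map $\mc E$ of Section~\ref{sec:blowupconstr}, so that a pierced neighborhood of $o_-$ in $\tilde{\mc P}_-$ is identified with a lower cone in $\tilde{\mc P}_0$ and the blow-up $\tilde{\mc P}_\vee$ is obtained by the construction of Section~\ref{sec:cone}. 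Concretely, a $C^1$-curve $x_t = (\rho_t, \tilde f_t) = \mc E(t(\tau, \tilde f'))$ approaching the boundary along the direction $(\tau, \tilde f')$ near $(\tau, \tilde f)$ is, for small $t > 0$, in $\tilde{\mc P}_{-,sp}^s$ by Lemma~\ref{transition}, with face celluation subdividing that of $\tilde f$.

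The key step is the computation of the limit of $\mc I_-(x_t)$ as $t \to 0$ in the blown-up metric space $\mc D_\vee^\sharp$, using the edge-length charts $\phi_\vee^{\ms T}$ from Lemma~\ref{metrics}. The point is that the intrinsic metric of $\Sigma(\tilde f_t)$, restricted to a face celluation subdividing $\ms T$ (a triangulation refining the face celluation of $\tilde f$), has edge-lengths that are hyperbolic distances in $\A^3$ between the points $\tilde f_t(\tilde v)$. Applying the rescaling $g_t$ — under which $g_t \tilde f_t(\tilde v) \to \tilde f'(\tilde v) \in \R^{2,1}$ as $t \to 0$ by the transition discussion of Section~\ref{sec:geometries}, and under which hyperbolic lengths rescale to Euclidean lengths to first order — one sees that $\ell_{\ms T}(\mc I_-(x_t))/t$ converges to the Euclidean edge-lengths $\ell_{\ms T}$ of the flat cone-metric $\mc I_0(\tau, \tilde f')$; this is precisely the statement that in the chart $\phi_\vee^{\ms T}$, $\mc I_\vee$ is continuous at $x_\infty$ and agrees there with $\S(\mc I_0)$. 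Here one also needs that the intrinsic metric computation is local and compatible along the subdivision, which is exactly what the stability of face celluations from Section~\ref{sec:bentsurf} (following \cite[Lemma 2.13]{FP}) gives, together with Lemma~\ref{subconvtoconv} to pass from convergence along curves and subsequences to genuine continuity.

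For the $C^1$ assertion on $\mc P_{\vee,sp}^s$, I would invoke the smoothness criterion of Lemma~\ref{smooth}: it suffices to show that the map $\grave{\mc I}: (x, t) \mapsto \mc I_-(tx)$, expressed in the edge-length charts, extends to a smooth map on a cone times $[0, +\infty)$. By the cosine-law manipulations of the kind used in the proof of Lemma~\ref{metrics} (and by Lemma~\ref{techn2}, applied to the hyperbolic edge-length functions composed with the $C^1$ dependence of $\tilde f_t(\tilde v)$ on $t$ coming from the exponential map and the smoothness of $\mc I_-$ on $\mc P_{-,sp}^s$), the relevant functions vanish to the appropriate order at $t = 0$ with nonvanishing next derivative — the Euclidean edge-lengths — so the quotient extends smoothly. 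Combined with the $C^1$-diffeomorphism statement of Theorem~\ref{minkowski} identifying the boundary restriction with $\S(\mc I_0^s)$, this yields that $\mc I_\vee$ is $C^1$ up to and including $\pt\mc P_{\vee,sp}^s$. The main obstacle I anticipate is bookkeeping: making the rescaling-by-$g_t$ argument precise at the level of edge-length coordinates across a triangulation whose combinatorics may jump with $t$ (only subdividing the limit celluation), so that one genuinely lands in a single fixed chart $\phi_\vee^{\ms T}$ of $\mc D_\vee^\sharp$ near the boundary and can apply Lemmas~\ref{techn2} and~\ref{smooth}; this is where the subdivision-stability of face celluations and a careful choice of $\ms T$ refining the celluation of $\tilde f$ do the essential work.
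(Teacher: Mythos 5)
Your reduction to the boundary behavior, the use of the lower-cone chart via $\mc E$, the rescaling by $g_t$ to identify the boundary value with $\S(\mc I_0)$ (this is Lemma~\ref{techn4} in the paper), and the intent to apply Lemma~\ref{smooth} are all in line with the paper's argument. However, the step you flag as ``the main obstacle I anticipate'' is not a bookkeeping issue that a careful choice of $\ms T$ resolves --- it is the actual mathematical content of the proof, and your proposal does not supply it. The problem is that Lemma~\ref{smooth} requires a \emph{single} smooth map $\phi$ on a pierced cone, whereas $\mc I_-$ near $o_-$ is not given by one edge-length formula: by Lemma~\ref{transition} the face celluation of $\tilde f_t$ is a subdivision of the limit celluation $\ms C$, but \emph{which} subdivision can jump with $t$ and with the direction of approach, so $(x,t)\mapsto\mc I_-(tx)$ is only piecewise defined by the cosine-law formulas and you cannot feed it to Lemma~\ref{smooth} directly. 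The paper's resolution is to enumerate representatives $\ms T_1,\dots,\ms T_r$ of \emph{all} weak equivalence classes of triangulations subdividing $\ms C$, and for each $j$ to define an auxiliary map $\mc I_{-,j}$ sending a configuration to the intrinsic metric of the (possibly non-convex) simplicial surface $\Sigma_j(f')$ built from $\ms T_j$. Each $\mc I_{-,j}$ \emph{is} smooth on the whole neighborhood because it ignores convexity, so Lemma~\ref{smooth} applies to each and produces smooth blow-ups $\mc I_{\vee,j}$ valued in a common chart $\Phi^{\ms T}_\vee$ (a separate argument, via the angle condition, shows a single target chart works). One then records, via a partition function, on which closed subsets $\Sigma_j(f')$ is actually convex and hence equals $\Sigma(f')$, checks that values \emph{and differentials} of the $\mc I_{\vee,j}$ agree on overlaps (this is Claim 2 of the proof of \cite[Lemma 2.13]{FP}), and glues them into a single $C^1$ map by Lemma~\ref{techn3}. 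Without this device your argument does not establish differentiability across the loci where the combinatorics changes.

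A second, smaller gap: the lemma asserts continuity on all of $\mc P_\vee^c$, including boundary points in $\S(\mc P_0^c\backslash\mc P_0^s)$, but your approach through Lemma~\ref{transition} only applies to directions in $\tilde{\mc P}_0^s$ (strictly convex position), so it says nothing about continuity at non-strict boundary points. The auxiliary maps $\mc I_{\vee,j}$ handle this case as well, since they are defined and continuous regardless of strictness; only the $C^1$ conclusion is restricted to $\mc P^s_{\vee,sp}$.
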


We will need the following lemmas.

\begin{lm}
\label{techn4}
Let $p_t, q_t: [0,1] \ra \A^3$ be two $C^1$-curves with $p_0=q_0=o$ such that $\dot p_0,\dot q_0 \in \R^{2,1}$ are in a spacelike position. Then for all small enough $t$, $p_t$ and $q_t$ are in a spacelike position, and $\dot d_A(p_t,q_t)$ is the Minkowski distance between $\dot p_0$ and $\dot q_0$.
\end{lm}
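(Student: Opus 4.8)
The plan is to compute the anti-de Sitter distance $d_A(p_t,q_t)$ explicitly via the form $b$, expand it to second order in $t$, and read off the linear term.

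First I would pass to the Minkowski chart~(\ref{chart}), in which $\A^3$ is the region $\{y\in\R^3:y_1^2+y_2^2-y_3^2<1\}$, the point $o$ is the origin, and the canonical identification $T_o\A^3\cong\R^{2,1}$ turns the Minkowski metric into the restriction $\langle w,w'\rangle:=w_1w_1'+w_2w_2'-w_3w_3'$ of $b$. Writing $\bar p_t,\bar q_t\in\R^3$ for the coordinates of $p_t,q_t$, one has $\bar p_0=\bar q_0=0$ and $\dot{\bar p}_0=\dot p_0=:u$, $\dot{\bar q}_0=\dot q_0=:v$. Then $\hat p_t:=(1-\langle\bar p_t,\bar p_t\rangle)^{-1/2}(\bar p_t,1)$ and the analogously defined $\hat q_t$ are representatives in $\R^4$ satisfying $b(\hat p_t,\hat p_t)=b(\hat q_t,\hat q_t)=-1$ and lying on a common sheet of the quadric; recall the standard fact that such a pair of points of $\A^3$ is in spacelike position exactly when $-b(\hat p_t,\hat q_t)>1$, in which case $\cosh d_A(p_t,q_t)=-b(\hat p_t,\hat q_t)$.

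Since $b((\bar p_t,1),(\bar q_t,1))=\langle\bar p_t,\bar q_t\rangle-1$, this gives
\[-b(\hat p_t,\hat q_t)=\frac{1-\langle\bar p_t,\bar q_t\rangle}{\sqrt{(1-\langle\bar p_t,\bar p_t\rangle)\,(1-\langle\bar q_t,\bar q_t\rangle)}}.\]
Using $\bar p_t=tu+o(t)$, $\bar q_t=tv+o(t)$ together with the bound $|\langle w,w'\rangle|\le|w|\,|w'|$ (Euclidean norm) to control cross terms, I would expand numerator and denominator to order $t^2$ and obtain $-b(\hat p_t,\hat q_t)=1+\tfrac{t^2}{2}\langle u-v,u-v\rangle+o(t^2)$. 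As $\dot p_0,\dot q_0$ are in spacelike position, $\langle u-v,u-v\rangle>0$, so this quantity exceeds $1$ for all small $t>0$; hence $p_t,q_t$ are in spacelike position, and inverting $\cosh$ near $0$ (using $d_A(p_t,q_t)\to0$ and $\cosh s=1+\tfrac{s^2}{2}+o(s^2)$) yields $d_A(p_t,q_t)=t\sqrt{\langle u-v,u-v\rangle}+o(t)$. The coefficient $\sqrt{\langle u-v,u-v\rangle}$ is precisely the Minkowski distance between $\dot p_0$ and $\dot q_0$, so the one-sided derivative $\dot d_A(p_t,q_t)$ at $t=0$ exists and has this value.

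I expect the main (modest) obstacle to be the error bookkeeping: with only a $C^1$ hypothesis the remainders in $\bar p_t,\bar q_t$ are $o(t)$ rather than $O(t^2)$, and one must check they still feed only $o(t^2)$ into $-b(\hat p_t,\hat q_t)$ — which is where the Euclidean bound absorbing the indefiniteness of $\langle\cdot,\cdot\rangle$ is used. An essentially equivalent alternative avoids the explicit formula by passing to geodesic normal coordinates at $o$, where the anti-de Sitter metric equals $\langle\cdot,\cdot\rangle+O(|x|^2)$, so that distances between nearby points differ from the Minkowski ones by a quadratic error; the same first-order expansion results.
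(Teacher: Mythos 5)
Your proposal is correct and matches the paper's approach: the paper dismisses this as ``a routine computation using the Taylor expression of the anti-de Sitter metric tensor in the normal coordinates with respect to $o$,'' and your explicit expansion of $-b(\hat p_t,\hat q_t)$ in the Minkowski chart (with the normal-coordinates variant you mention at the end) is precisely that computation carried out in detail.
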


This is a routine computation using the Taylor expression of the anti-de Sitter metric tensor in the normal coordinates with respect to $o$.

\begin{lm}
\label{techn3}
Let $U \subset \R^{m_1}$ be a domain, $f_1, \ldots, f_r: U \ra \R^{m_2}$ be $C^1$-maps, and $\xi: U \ra 2^{[r]}$ be a function with the following properties\\
(1) if for $x \in U$ we have $\xi(x)=\{i_1, \ldots, i_p\}$, then $f_{i_1}(x)=\ldots=f_{i_p}(x)$ and $df_{i_1, x}=\ldots=df_{i_p,x}$ (the latter means that the differentials coincide as maps);\\
(2) if a sequence $x_i \ra x$ and $j \in \xi(x_i)$ for all $i$, then $j \in \xi(x)$.\\
Define a map $f: U \ra \R^{m_2}$ such that for every $x \in U$ we have $f(x)=f_j(x)$ for $j \in \xi(x)$. (Due to condition (1), this is well-defined.) Then $f$ is a $C^1$-function on $U$.
\end{lm}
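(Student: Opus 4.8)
The plan is to argue locally around an arbitrary point $x_0 \in U$. Set $J := \xi(x_0)$ (nonempty, since $f$ is defined on all of $U$), pick $j_0 \in J$, and let $L := df_{j_0, x_0}$; by hypothesis (1) this $L$ is the common value of $df_{j, x_0}$ over all $j \in J$, and $f(x_0) = f_{j_0}(x_0)$ is the common value of $f_j(x_0)$ over $j \in J$. The goal is to show that $f$ is differentiable at $x_0$ with $df_{x_0} = L$, and that $x \mapsto df_x$ is continuous at $x_0$; since $x_0$ is arbitrary, this gives $f \in C^1(U)$.

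The first step is a \emph{local finiteness} observation: there is a neighborhood $W \ni x_0$ with $\xi(x) \subseteq J$ for all $x \in W$. If this failed, there would be a sequence $x_i \to x_0$ and indices $j_i \in \xi(x_i) \setminus J$; since $[r]$ is finite, a subsequence has $j_i \equiv j$ for a fixed $j \notin J$, and then condition (2) forces $j \in \xi(x_0) = J$, a contradiction. Consequently, for $x \in W$ every value $f(x)$ equals $f_{j(x)}(x)$ for some choice $j(x) \in J$, and — once differentiability is known — $df_x = df_{j(x), x}$ by (1), so $df_x$ lies in the finite family $\{ df_{j,x} : j \in J \}$.

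Differentiability at $x_0$ follows from a short estimate: for $x \in W$ write $f(x) = f_{j(x)}(x)$ with $j(x) \in J$; then $f(x_0) = f_{j(x)}(x_0)$ and $L = df_{j(x), x_0}$ by (1), hence
\begin{align*}
\| f(x) - f(x_0) - L(x - x_0)\| &= \| f_{j(x)}(x) - f_{j(x)}(x_0) - df_{j(x),x_0}(x-x_0)\| \\
&\le \max_{j \in J} \| f_j(x) - f_j(x_0) - df_{j,x_0}(x-x_0)\|.
\end{align*}
The right-hand side is $o(\|x - x_0\|)$ because $J$ is finite and each $f_j$ is differentiable at $x_0$; thus $f$ is differentiable at $x_0$ with $df_{x_0} = L$. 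For continuity of $df$, fix $\e > 0$: since each $f_j$ ($j \in J$) is $C^1$ and $df_{j, x_0} = L$, there is $W' \subseteq W$ with $\|df_{j,x} - L\| < \e$ for all $x \in W'$ and all $j \in J$ simultaneously; as $df_x \in \{df_{j,x} : j \in J\}$ for $x \in W'$, we conclude $\|df_x - L\| < \e$ there, so $df$ is continuous at $x_0$.

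I expect the only delicate point to be the local finiteness step, where one must combine the finiteness of the index set $[r]$ with the closedness condition (2) to rule out ``extra'' branches $j \notin \xi(x_0)$ remaining active along sequences approaching $x_0$. Everything afterwards is a standard patching argument: finiteness of $J$ upgrades the pointwise $o$-estimates and the pointwise convergence $df_{j,x} \to L$ into the uniform statements that give differentiability and continuity of the differential.
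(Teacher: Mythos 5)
Your proof is correct, and the crucial observation is the same one the paper uses: condition (2) together with the finiteness of $[r]$ forces every active index near $x_0$ to already lie in $\xi(x_0)$ (in the paper this appears as the claim that, for $t_i$ small enough, the sequence splits into finitely many subsequences indexed by $j\in\xi(x)$). Where you diverge is in how differentiability is then extracted. The paper first reduces to $m_2=1$, proves the case $m_1=1$ by the subsequence-splitting argument on difference quotients, and handles general $m_1$ by differentiating $f(x_t)$ along arbitrary differentiable curves; strictly speaking that last step only yields directional (Hadamard-type) differentiability, and the upgrade to Fr\'echet differentiability, as well as the continuity of $df$, are left as ``immediate.'' Your route is more direct and more complete on exactly these points: the bound $\| f(x)-f(x_0)-L(x-x_0)\| \le \max_{j\in J}\| f_j(x)-f_j(x_0)-df_{j,x_0}(x-x_0)\|$ gives Fr\'echet differentiability in one line because $J$ is finite, and the continuity of $df$ follows from the same finiteness since $df_x$ always lies in the finite family $\{df_{j,x}: j\in J\}$ on the neighborhood $W$. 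The only cosmetic remark is that the identity $df_x=df_{j(x),x}$ for $x\in W$ follows from running your differentiability argument at the point $x$ (whose $\xi(x)$ is contained in $J$), not from condition (1) alone; you effectively say this, but it is worth stating explicitly since the continuity step relies on it.
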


\begin{proof}
By considering the coordinate functions, it is enough to verify this for $m_2=1$.
First, we check the case $m_1=1$. Let us see that for every $x$, $f$ is differentiable at $x$ with the derivative $f'_j(x)$, $j \in \xi(x)$. Indeed, for every sequence $t_i \ra 0$, the sequence $x+t_i$ can be divided into finitely many subsequences such that $f(x+t_i)=f_j(x+t_i)$, $j \in \xi(x)$, provided that $t_i$ are small enough. For every such subsequence the limit 
\[\lim_{i \ra +\infty} \frac{f(x+t_i)-f(x)}{t_i}\]
exists and is equal to $f'_j(x)$, which are equal for all $j \in \xi(x)$. Hence, $f$ is differentiable at $x$. The continuity of the derivative is immediate.

For general $m_1$ we show that $f$ is differentiable at $x$ with differential $df_{j, x}$, $j \in \xi(x)$. Indeed, for every differentiable curve $x_t$, $x_0=x$, the function $f(x_t)$ is differentiable at $t=0$ with the derivative given by $df_{j,x}(\dot x_0)$, $j \in \xi(x)$. The continuity of the differentials is immediate. 
\end{proof}

\begin{proof}[Proof of Lemma~\ref{gluedsmooth}.]
Pick $x \in \pt \mc P_\vee^c \cong \S(\mc P_0^c)$ and let $(\tau, f) \in \mc P_0^c$ be a representative of $x$. Let $\ms C$ be the face celluation of $f$. Let $\ms T_1, \ldots, \ms T_r$ be representatives of all weak equivalence classes of triangulations of $(S, V)$ subdividing $\ms C$. For every $(\tau', f')\in \mc P_0^w$ sufficiently close to $(\tau, f)$ and every $j=1,\ldots, r$ the position of $f'(V)$ in $\Omega_{\tau'}$ and the triangulation $\ms T_j$ determine a (possibly non-convex) simplicial Cauchy surface $\Sigma_j(f')\subset \Omega_{\tau'}$. Let $Y$ be a small neighborhood of $(\tau, f)$ so that for all $(\tau', f') \in Y$ we have (1) all the triangles of all $\Sigma_j(f')$ are spacelike, and (2) all the angles of the cells of $\ms C$ in the intrinsic metric of $\Sigma_j(f')$ are smaller than $\pi$. These properties continue to hold for the lower cone spanned by $Y$ in $\mc P_0^w$, which we will denote by $Y$ from now on. Let $\mc I_{0, j}: Y \ra \mc D_0$ be the map sending $(\tau', f') \in Y$ to the intrinsic metric of $\Sigma_j(f')$.

Let $(\tau, \tilde f) \in \tilde{\mc P}_0^c$ be a lift of $(\tau, f)$ and $\tilde Y \subset \tilde{\mc P}_0^w$ be a lift of $Y$ containing $(\tau, \tilde f)$. We possibly reduce $\tilde Y$ to a smaller one, which we will still denote by $\tilde Y$, the restriction of the exponential map $\mc E$ to $\tilde Y$ is a diffeomorphism onto the image $\tilde X \subset \tilde{\mc P}_-^w$ with the following property. For every $(\rho', \tilde f') \in \tilde X$ projecting to $(\rho', f') \in \mc P_-^w$ and for every $j=1,\ldots, r$ let $\Sigma_j(f') \subset \Omega_{\rho'}$ be the simplicial surface determined by $f'(V)$ and $\ms T_j$. We require $\tilde Y$ to be small enough so that all such $\Sigma_j(f')$ have only spacelike triangles. The subset $\tilde X$ inherits a lower-cone structure based at $o_-$, hence it has a blow-up $\tilde X_\vee$. The maps $\mc I_{0, j}$ lift to the maps $\tilde{\mc I}_{0, j}: \tilde Y \ra \mc D_0$. Let $\tilde {\mc I}_{-, j}: \tilde X \ra \mc D_-$ be the map sending $(\rho', \tilde f') \in \tilde X$ to the intrinsic metric of $\Sigma_j(f')$. The map $\mc D^\sharp_\vee \ra \mc D_\vee$ is a covering. We can assume that $\tilde Y$ is so small that the images of all $\tilde I_{-,j}$, $\S(\tilde I_{0,j})$ belong to a simply-connected subset of $\mc D_\vee$. We fix a lift of this subset to $\mc D^\sharp_\vee$ and using this lift we consider $\tilde I_{-,j}$, $\tilde I_{0,j}$ valued in $\mc D_-^\sharp$, $\mc D_0^\sharp$. Identify $\mc D_-^\sharp(\ms T_j)$ with the cone $\Phi^{\ms T_j} \subset \R^{E(\ms T_j)}$, and consider $\tilde{\mc I}_{-, j}$ valued there. The corresponding map $\grave{\tilde{\mc I}}_{-, j}$ on $\tilde X \times [0,1)$ is smooth, thereby by Lemma~\ref{smooth} the map $\tilde{\mc I}_{-, j}$ has a smooth blow-up $\tilde{\mc I}_{\vee, j}: \tilde X_\vee \ra \Phi^{\ms T_j}_\vee$. From Lemma~\ref{techn4}, its restriction to $\pt \tilde X_\vee \cong \S(\tilde Y)$ coincides with $\S(\tilde{\mc I}_{0, j})$, where $\pt\Phi^{\ms T_j}_\vee$ is identified with $\S(\mc D_0^\sharp(\ms T_j))$.

In particular, this means that the intrinsic angles of the triangles of the surfaces $\Sigma_j(f_i)$, $(\rho, f_i) \in X$, where $X$ is the projection of $\tilde X$ to $\mc P_-^w$, converge to those of $\Sigma_j(f)$ as $(\rho, f_i) \ra x$. This implies that, possibly after again reducing $\tilde Y$ and $\tilde X$, the condition (2) on the angles of $\ms C$ is also true for all $\Sigma_j(f')$, $(\rho', f') \in X$. 
Pick an arbitrary triangulation $\ms T$ from $\ms T_1, \ldots, \ms T_r$. Condition (2) means that the intrinsic metrics of all $\Sigma_j(f')$, $(\rho', f') \in X$, are weakly $\ms T$-triangulable. From now on we consider $\tilde{\mc I}_{-, j}$, $\tilde{\mc I}_{0, j}$ valued in $\mc D_-^\sharp(\ms T)$, $\mc D_0^\sharp(\ms T)$, which we identify with the cone $\Phi^{\ms T} \subset \R^{E(\ms T)}$. Then we can consider the blow-ups $\tilde{\mc I}_{\vee, j}$ valued in $\Phi^{\ms T}_\vee$. Now we define $X_\vee:=X \cup \S(Y) \subset \mc P_\vee^w$ and define maps ${\mc I}_{\vee, j}:  X_\vee \ra \Phi^{\ms T}_\vee$ by means of the commutative diagram
\begin{center}
\begin{tikzcd}
\tilde X_\vee \arrow[r,"\tilde{\mc I}_{\vee,j}"] \arrow[rd]
& \Phi^{\ms T}_\vee  \\
& X_\vee \arrow[u,"{\mc I}_{\vee,j}"]
\end{tikzcd}
\end{center}

The maps ${\mc I}_{\vee,j}$ are smooth. Define $X_{\vee,j} \subset X_\vee$ as the subset of those $(\rho', f') \in X$ and the classes of those $(\tau', f') \in Y$ that $\Sigma_j(f')$ is convex. In such case $\Sigma_j(f')$ coincides with $\Sigma(f')$. Let $\xi: X_\vee \ra 2^{[r]}$ be the corresponding partition function of $X_\vee$. At the common points, the values of ${\mc I}_{\vee,j}$ coincide, and coincide with ${\mc I}_{\vee}$. The differentials of ${\mc I}_{\vee,j}$ also coincide at the common points. For the Minkowski case this is Claim 2 in the proof of~\cite[Lemma 2.13]{FP}, for the anti-de Sitter case the proof is just the same. Clearly, the partition function satisfies condition (2) from Lemma~\ref{techn3}. Thus, if $x \in \S(\mc P_0^s)\cong \pt\mc P_{\vee, sp}$, then Lemma~\ref{techn3} implies that ${\mc I}_\vee$ is $C^1$ on $X_\vee$, particularly it is $C^1$ at $x$. Otherwise, ${\mc I}_\vee$ is continuous at $x$. This finishes the proof.
\end{proof}

Now we obtain

\begin{replmA}
{ml}
The differential of $\mc I_\vee$ is non-degenerate on $\pt\mc P_\vee^s$.
\end{replmA}

\begin{proof}
Due to Theorem~\ref{minkowski}, the vectors tangent to $\pt\mc P_\vee^s$ do not belong to the kernel of $d\mc I_\vee$. For every $x \in \pt\mc P_\vee^s$ we now need to check any tangent vector at $x$ transversal to $\pt\mc P_\vee^s$. 

For $\tilde y \in \tilde{\mc P}_0^s$ projecting to $y\in\mc P_0^s$ let $\tilde y_t$, $t \in [0,1]$, be the curve $t\tilde y$ in $\tilde{\mc P}_0$. Let $\tilde x_t:=\mc E(\tilde y_t)$. From Lemma~\ref{transition}, for all small enough $t>0$ we have $\tilde x_t \in \tilde{\mc P}_-^s$. Let $x_t \in \mc P_\vee^s$ be its projection. This is a curve emanating from $x_0=x \in \pt \mc P_\vee$ with a non-zero tangent vector transversal to $\pt\mc P_\vee^s$. Suppose that the face decomposition of $y$ is a triangulation $\ms T$. From Lemma~\ref{transition} $\ms T$ is the face decomposition of $x_t$ for all small enough $t$. Consider the map $\phi_\vee^\ms T \circ \mc I_\vee$ on $x_t$ with values in the cone $\Phi^{\ms T} \subset \R^{E(\ms T)}$. This gives a curve $d_t$, $t>0$. From Lemma~\ref{techn4}, it extends at $t=0$ to a curve $d_t$ with $\dot d_0=\phi_{0}^\ms T\circ \mc I_0(y)$. Thereby, $d{\mc I}_\vee(\dot x_0) \neq 0$.

When the face decomposition of $y$ is not a triangulation, one has to use the maps $\mc I_{-, j}$ from the proof of Lemma~\ref{gluedsmooth}. The argument from the paragraph above shows that $d{\mc I}_{\vee,j}(\dot x_0)\neq 0$ for all $j$. By using Lemma~\ref{techn3}, we deduce that this is true for ${\mc I}_\vee$ restricted to the curve $x_t$.
It follows that $d {\mc I}_\vee$ is injective at $x$. 
\end{proof}

\section{Proof}
\label{sec:proof}

Denote by $\mc I_\vee^s:\mc P_\vee^s \ra \mc D_\vee^s$ the restriction of $\mc I_\vee$. Note that $\mc I_\vee^s$ is the main object of our study, but for several steps of the proof we will need to employ $\mc I_\vee$.
In the next section we will prove

\begin{replmA}
{ml2}
The map $\mc I_\vee^s$ is proper.
\end{replmA}

From this we can establish Theorem~\ref{main}.

\begin{proof}[Proof of Theorem~\ref{main}]
From Lemma~\ref{contin} and Lemma~\ref{ml2}, $\mc I_\vee^s$ is a proper continuous map between manifolds with boundary $\mc P_\vee^s$ and $\mc D_\vee^s$, which have the same dimension. By construction, $\mc I_\vee^s$ sends the boundary to the boundary. Hence $\mc I_\vee^s$ has a well-defined degree. From Lemma~\ref{gluedsmooth} and Lemma~\ref{ml}, $\mc I_\vee^s$ is $C^1$ at $\pt\mc P_\vee^s$ and has a non-degenerate differential. Furthermore, from Theorem~\ref{minkowski}, when restricted to $\pt\mc P_\vee^s$, it is a diffeomorphism onto $\pt\mc D_\vee^s$. Thereby, it has degree one, which implies the surjectivity.

Next, every $y \in \pt\mc D_\vee^s$ has a neighborhood $U_\vee(y)$ in $\mc D_\vee^c$ such that every element of $U_\vee(y)$ has only one $\mc I_\vee^s$-preimage. Indeed, otherwise there exists a sequence $y_i$ converging to $y$ such that every $y_i$ has at least 2 preimages. Pick two of them for each $y_i$, denote them by $x_i$ and $x'_i$. From Lemma~\ref{ml2}, up to subsequence, they converge to $x, x' \in \mc P_\vee^s$. Due to continuity of $\mc I_\vee^s$ and Theorem~\ref{minkowski}, we have $x=x' \in \pt\mc P_\vee^s$. But Lemma~\ref{ml} and the inverse function theorem imply that $\mc I_\vee^s$ is locally injective around $x=x'$, which is a contradiction.

The union of all $U_\vee(y)$ for $y \in \pt\mc D_\vee^s$ is an open set $U_\vee$ such that every element of $U_\vee$ has only one $\mc I_\vee^s$-preimage. It follows that $U:=U_\vee \cap \mc D_-^s$ is the desired set.
\end{proof}

\begin{rmk}
We note that it is natural to call $U$ a ``neighborhood of zero'' in $\mc D_-^s$. It is helpful, however, to distinguish between ``strong'' and ``weak'' neighborhoods of zero. Consider the space $\mc D_-^\bullet$, obtained from $\mc D_\vee^s$ by contracting the boundary to a single point $\bullet$, ``the origin'', endowed with the quotient topology. We call this topology \emph{strong}. In the sense of this topology, the set $U \cup \{\bullet\}$ is indeed a neighborhood of $\bullet$ in $\mc D_-^\bullet$.

There is, however, a natural weaker topology on $\mc D_-^\bullet$. Recall the atlas $\{\phi^\ms T_-\}$ on $\mc D_-^s$ from Section~\ref{sec:metr}, given by all triangulations $\ms T$ of $(S, V)$. Every map $\phi^\ms T_-$ can be naturally extended to $\bullet$ by sending it to the origin of $\R^{E(\ms T)}$. We now call $Y \subset \mc D_-^\bullet$ open if and only if for every $\ms T$ the intersection of $Y$ with the domain of $\phi^\ms T_-$ (extended to $\bullet$) is open as a subset of the image of $\phi^\ms T_-$ in the induced topology from $\R^{E(\ms T)}$. We call the obtained topology on $\mc D_-^\bullet$ \emph{weak}. One can observe that the weak topology is indeed strictly weaker than the strong topology. To this purpose one may consider a single cone $C$ in a vector space $X$ with the origin $o$. The topology on $C \cup \{o\}$ induced from $X$ is strictly weaker than the topology obtained from contracting the boundary of $C_\vee$.

It would be tempting to interpret the uniqueness part of Theorem~\ref{main}, e.g., as that there exists $r=r(h)$ such that if for $d \in \mc D_-^s$ its diameter is $<r$, then the realization of $d$ is unique. However, this is not true, because the sets of metrics with diameter less than given $r$ form a base of the neighborhoods of $\bullet$ in the weak topology, but not in the strong topology.
\end{rmk}

\section{Properness}
\label{sec:properness}

The goal of this section is to prove Lemma~\ref{ml2}. Let us reformulate it as

\begin{lm}
\label{ml2'}
Let $x_i \in \mc P_\vee^s$ be a sequence such that $y_i:={\mc I}_\vee(x_i)$ converge to $y \in \mc D_\vee^s$. Then, up to subsequence, $x_i$ converge to $x \in \mc P_\vee^s$. 
\end{lm}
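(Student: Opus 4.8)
The plan is to split the argument according to whether the limit metric $y$ lies in the interior $\mc D_-^s$ or on the boundary $\S(\mc D_0^s)$, and in each case to show that the sequence $x_i$ cannot ``run off to infinity'' in $\mc P_\vee^s$. The space $\mc P_\vee^s$ has essentially two types of ends: the end corresponding to $\rho \to \partial\mc T$ or $\rho \to \rho_\circ$ (escape in the Teichm\"uller direction), and the end where the marked points $f(V)$ either collide, escape toward $\pt^+ C_\rho$ (losing strict convexity / strict polyhedrality), or run off toward $\pt^+\Omega_\rho$. I would first handle the easier interior case: if $y \in \mc D_-^s$, then in particular the $x_i$ eventually avoid $\pt\mc P_\vee^s$ (by continuity of $\mc I_\vee$ and the fact that it sends boundary to boundary, using that $y$ is not a spherized Euclidean metric), so the problem reduces to properness of $\mc I_-^s : \mc P_-^s \to \mc D_-^s$. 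For that, I would argue by the standard Alexandrov-style compactness: the intrinsic metrics $d_i = \mc I_-^s(x_i)$ converge, so their diameters, systoles and cone-angles stay bounded away from the degenerate regime; this controls the geometry of $\Sigma(\tilde f_i)$ and, via the convergence results of Appendix~\ref{sec:intmet} together with Lemmas~\ref{convconv}, \ref{convsurf}, \ref{convtemp1-}, forces the convex bodies $\clconv(\tilde f_i)$ to converge (after subsequence and normalization by $G_-$), which pins down both $\rho$ (the limit set $\Lambda_{\rho_i}$ cannot degenerate, as that would force the intrinsic metric to degenerate) and the limiting positions of the vertices $f(V)$. One must also rule out two vertices colliding or a vertex sliding onto $\pt^+ C_\rho$: collision would create a cone-point of angle $\geq 2\pi$ in a wrong way or reduce $|V(d)|$, contradicting $V(d_i) \equiv V$ and convergence in $\mc D_-^s$; a vertex reaching $\pt^+ C_\rho$ would likewise drop a cone point. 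This yields $x_i \to x \in \mc P_-^s$.

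For the boundary case $y \in \S(\mc D_0^s)$, the key is to transport the Minkowski properness, namely Theorem~\ref{minkowski} (which gives in particular that $\mc I_0^s$, hence $\S(\mc I_0^s)$, is a homeomorphism and so proper), across the blow-up. If infinitely many $x_i$ lie in $\pt\mc P_\vee^s = \S(\mc P_0^s)$, this is immediate from properness of $\S(\mc I_0^s)$. So assume $x_i \in \mc P_-^s$ with $y_i \to y \in \S(\mc D_0^s)$. Here I would use a geometric-transition rescaling argument: the fact that $\mc I_\vee$ is $C^1$ near $\pt\mc P_\vee^s$ (Lemma~\ref{gluedsmooth}) together with the non-degeneracy of its differential there (Lemma~\ref{ml}) gives, via the inverse function theorem, a neighborhood $W$ of each boundary point of $\mc D_\vee^s$ over which $\mc I_\vee$ is a diffeomorphism onto its image from a neighborhood in $\mc P_\vee^s$. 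Since the boundary restriction of $\mc I_\vee$ is $\S(\mc I_0^s)$, which is onto $\pt\mc D_\vee^s$, the image of this local diffeomorphism is a full neighborhood of the relevant boundary point of $\mc D_\vee^s$. Hence once $y_i$ is close enough to $y$, its $\mc I_\vee^s$-preimage is forced into this neighborhood $W$, and $x_i$ converges to the unique preimage of $y$ there. The only thing to check is that \emph{all} preimages of $y_i$ (not just one) are captured; this is exactly the argument used in the proof of Theorem~\ref{main}: if some $x_i$ escaped $W$, then after subsequence either it would converge to an interior point $x'$ with $\mc I_-^s(x') = y$ — impossible since $y \in \pt\mc D_\vee^s$ and $\mc I_\vee$ respects boundaries — or it would converge to another boundary point $x'' \neq x$ with $\S(\mc I_0^s)(x'') = y$, contradicting injectivity from Theorem~\ref{minkowski}.

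The main obstacle I expect is the interior case, specifically ruling out degeneration of $\rho_i$ in $\mc T$ and controlling vertex collisions using only convergence of the intrinsic metrics in $\mc D_-^s$. The point is that a priori a sequence of convex bent surfaces with convergent intrinsic metrics could live in spacetimes $\Omega_{\rho_i}$ whose convex cores degenerate (e.g.\ $\rho_i \to \partial\mc T$, or $\rho_i \to \rho_\circ$), and one must show this cannot happen. I would handle the $\rho_i \to \rho_\circ$ possibility by observing that as $\rho_i \to \rho_\circ$ the domain $\Omega^+_{\rho_i}$ collapses and, after rescaling by $g_t$, the surface converges to a Minkowski polyhedral surface, so by Theorem~\ref{minkowski} the rescaled intrinsic metric converges to a Euclidean cone metric — meaning the unrescaled hyperbolic cone metric $d_i$ degenerates to $0$, contradicting $d_i \to y \in \mc D_-^s$. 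The case $\rho_i \to \partial\mc T$ is excluded because the intrinsic metric of $\Sigma(\tilde f_i)$ dominates (in a suitable sense, via the nearest-point retraction onto the convex core, cf.\ the Busemann–Feller lemma of Appendix~\ref{sec:busfel}) the intrinsic metric of $\pt^+ C_{\rho_i}$, whose systole would go to $0$, again contradicting convergence of $d_i$. The remaining compactness — no vertex collisions, no vertex escaping to $\pt^+\Omega_{\rho_i}$ or onto $\pt^+ C_{\rho_i}$ — then follows from the boundedness of diameters and cone-angles of the $d_i$ together with the convergence package for intrinsic metrics in Appendix~\ref{sec:intmet}, exactly as in the polyhedral Alexandrov compactness arguments of \cite{Pro2, FP}.
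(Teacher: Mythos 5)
Your argument for the boundary case $y \in \S(\mc D_0^s)$ is circular. The inverse function theorem applied at a boundary point $x_0 \in \pt\mc P_\vee^s$ with $\S(\mc I_0^s)(x_0)=y$ only gives a diffeomorphism from a neighborhood $W$ of $x_0$ onto a neighborhood of $y$; it says nothing about preimages of $y_i$ lying \emph{outside} $W$. To "capture all preimages" you invoke the dichotomy from the proof of Theorem~\ref{main} ("after subsequence it would converge to an interior point $x'$ or to another boundary point $x''$"), but extracting that convergent subsequence is exactly the properness statement you are trying to prove — in the paper that dichotomy argument is a \emph{consequence} of Lemma~\ref{ml2}, not an ingredient of its proof. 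What is actually needed here is the content of Sections~\ref{sec:convholblow} and~\ref{sec:convmp2}: one first shows $\rho_i \to \rho_\circ$ via the comparison $l_{\Psi^+(\rho_i)} \le l_{\Sigma}$ with dual metric trees and Skora duality (Lemmas~\ref{treecomparison},~\ref{skora}, Theorem~\ref{adsearthq}); one then builds the compactification $\mc P_\vee^\bdia$ using the cosmological time, shows $x_i$ subconverges there (Lemma~\ref{ctdown}), and excludes the two degenerate boundary strata (colliding vertices, vertices landing on $\pt^+_s\tilde\Omega_\tau$) by comparing rescaled edge lengths and systoles against the Lipschitz convergence $d_i/t_i \to d_0$ (Corollary~\ref{lipschmetr}), using persistence of edges, the blow-up of the bending laminations, and the area/systole estimate for level surfaces of the cosmological time. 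None of this is replaced by a local inverse function theorem.

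The interior case also has a gap in the holonomy step. The correct inequality is $l_{\Sigma(f_i)}(\gamma) \le l_{\rho_i^+}(\gamma)$ — the bent surface lies in the \emph{past} of $\pt^+C_{\rho_i}$, so by the Busemann–Feller lemma its length spectrum is dominated \emph{by} that of $\pt^+C_{\rho_i}$, not the other way around as you write; and "the systole of $\pt^+C_{\rho_i}$ would go to $0$" does not follow from $\rho_i \to \pt\mc T$ (a twisting degeneration keeps the systole bounded below). The paper instead converts the uniform lower bound $l_{\rho_i^+} \ge l_{d_i} \ge c\, l_\rho$ into a bound on the asymmetric Thurston distance and uses the Papadopoulos–Théret compactness of Thurston balls to get precompactness of $\rho_i^+$, and then — a step you omit entirely — passes from $\rho_i^+$ to $\rho_i$ via the Kerckhoff–Thurston earthquake theorem and Mess's description $E^+_{\lambda_i^+}(\rho_\circ)=\rho_i^+$, $E^+_{\lambda_i^+}(\rho_i^+)=\rho_i$, which is where the fixed left holonomy $\rho_\circ$ enters. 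Finally, your claim that $\rho_i\to\rho_\circ$ would force $d_i\to 0$ is false (and unnecessary): if the vertices stay at cosmological time bounded below, the surfaces converge to a bent surface in $\Omega^+_{\rho_\circ}$ with a nondegenerate hyperbolic cone-metric; the quantitative input that prevents vertices from escaping is the uniform lower bound on the cosmological time in terms of the systole (Lemma~\ref{ctinf}), which is a substantive estimate rather than "standard Alexandrov compactness".
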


The proof is quite different depending on whether $y \in \mc D_-^s$ or $y \in \pt\mc D_\vee^s$ with the latter case being more difficult. We note also that since the behavior of ${\mc I}_\vee$ is completely understood on $\pt\mc P_\vee^s$ due to Theorem~\ref{minkowski}, it is enough to assume that all $x_i \in \mc P_-^s$, hence we can denote them by $(\rho_i, f_i)$. Also then $y_i \in \mc D_-^s$, and we can denote them by $d_i$.


\subsection{Convergence of holonomies away from blow-up}
\label{sec:holonaway}

The goal of this subsection is to obtain

\begin{lm}
\label{ml2.1}
Under the conditions of Lemma~\ref{ml2'}, let $y$ be in $\mc D_-^s$ (and we denote it by $d$). Then, up to subsequence, $\rho_i$ converge to $\rho \in \mc T$.
\end{lm}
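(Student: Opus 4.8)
\textbf{Proof plan for Lemma~\ref{ml2.1}.}

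The plan is to argue that if the left representation is fixed at $\rho_\circ$ and the intrinsic metrics $d_i = \mc I_-(\rho_i, f_i)$ converge in $\mc D_-^s$, then the right representations $\rho_i$ cannot escape to infinity in $\mc T$. The key structural fact I would exploit is that the intrinsic metric of a future-convex bent surface in $\Omega_{\rho_i}$ has a ``lower bound'' coming from the intrinsic metric of the future boundary of the convex core $\pt^+ C_{\rho_i}$: since $\cl(\tilde C_{\rho_i}) \subset \clconv(\tilde f_i)$ by Lemma~\ref{limset}, the surface $\Sigma(\tilde f_i)$ lies in the future of $\pt^+\tilde C_{\rho_i}$, and by the Busemann--Feller-type monotonicity (Appendix~\ref{sec:busfel}) the nearest-point projection $\Sigma(\tilde f_i) \to \pt^+\tilde C_{\rho_i}$ is $1$-Lipschitz. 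Hence $\pt^+ C_{\rho_i}$, with its intrinsic hyperbolic cone-metric, is a metric quotient-image of $(S, d_i)$, so in particular $\diam(\pt^+ C_{\rho_i})$ and the systole of $\pt^+ C_{\rho_i}$ are controlled from above and below by those of $d_i$, which converge.

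First I would recall the classical dictionary, due to Mess and to Kerckhoff--Thurston (see Section~\ref{sec:holonaway}), between the pair $(\rho_\circ, \rho)$ and earthquakes: the Fuchsian representation $\rho_\circ^+ \in \mc R$ together with the bending lamination $\lambda^+$ of $\pt^+ C_\rho$ determine $\rho$ (and $\rho_\circ$) via left/right earthquakes along $\lambda^+$. So controlling $\rho_i$ in $\mc T$ amounts to controlling the pair $(\rho_{\circ,i}^+, \lambda_i^+)$ where $\rho_{\circ,i}^+$ is the hyperbolic structure underlying $\pt^+ C_{\rho_i}$ (which is $(S, d_i^{\mathrm{core}})$ after forgetting the bending). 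Then I would: (i) use the diameter/systole control above to show the metrics of $\pt^+ C_{\rho_i}$ stay in a compact part of the space of hyperbolic cone-metrics on $(S, V')$ for a bounded number of cone points, equivalently $\rho_{\circ,i}^+$ stays in a compact subset of $\mc T$; (ii) bound the bending laminations: the total bending $\int_S d\lambda_i^+$ (sum of exterior dihedral angles) is controlled because a large total bending forces a collar whose width, combined with the fixed topology, would inflate the diameter of $\pt^+ C_{\rho_i}$ beyond the convergent bound on $d_i$ — this is the standard compactness mechanism for small/bounded laminations, analogous to Series~\cite{Ser} and Bonsante--Schlenker~\cite{BS3}; (iii) conclude $\lambda_i^+$ stays in a compact subset of $\mc{ML}(S)$. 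Since the earthquake map $(\rho_\circ^+, \lambda^+) \mapsto \rho$ is continuous and proper on compacta, $\rho_i$ subconverges in $\mc T$.

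An alternative, perhaps cleaner route avoids earthquakes entirely: pick a geodesic triangulation $\ms T$ of $(S, V, d)$; by the continuity argument already used in Lemma~\ref{contin} (Hausdorff convergence of $\Sigma(\tilde f_i)$ and Lemma~\ref{distconverge0}) the edge-lengths on $\Sigma(\tilde f_i)$ stay bounded, so the images $\tilde f_i(\tilde V)$ stay a bounded distance from $\cl(\tilde C_{\rho_i})$ in the intrinsic metric of $\Sigma(\tilde f_i)$, hence within bounded anti-de Sitter timelike distance of $\pt^+\tilde C_{\rho_i}$. Combined with the Busemann--Feller estimate this pins the geometry of $\pt^+ C_{\rho_i}$, which one then translates back to $\rho_i \in \mc T$ via Mess's parameterization and the properness of the convex-core intrinsic-metric map for GHMC AdS spacetimes with fixed left holonomy (this last properness being exactly the $V=\emptyset$ case of Theorem~\ref{main}, established by Kerckhoff--Thurston plus Mess as recalled in the introduction).

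\textbf{Main obstacle.} The delicate point is step (ii)/the second half of the alternative route: ruling out that $\rho_i \to \infty$ in $\mc T$ while the intrinsic metrics $d_i$ of the \emph{bent} surfaces stay bounded. A priori the surface $\Sigma(\tilde f_i)$ could be very far in the future from $\pt^+\tilde C_{\rho_i}$, where it might ``see'' much less of the degenerating core geometry; one must show the bent realization cannot hide an unbounded amount of bending or an unbounded core diameter behind a bounded intrinsic metric. I expect this to require a genuine geometric estimate — either a lower bound on $d_i$ in terms of the core data (via the $1$-Lipschitz nearest-point retraction together with the fact that the retraction is surjective onto $\pt^+ C_{\rho_i}$, so $\diam(S, d_i) \ge \diam(\pt^+ C_{\rho_i})$), or the compactness result for laminations with bounded total mass that the author proves for Theorem~\ref{main2} in Section~\ref{sec:prop2} and presumably invokes here as well.
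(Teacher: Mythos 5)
Your skeleton matches the paper's proof — compare $d_i$ with the intrinsic metric $\rho_i^+$ of $\pt^+C_{\rho_i}$ via the Busemann--Feller-type lemma, deduce precompactness of $\rho_i^+$, then recover $\rho_i$ through earthquakes — but the central comparison is stated in the wrong direction, and that direction is the whole point. In Lorentzian signature the "nearest-point projection onto the convex side" is \emph{expanding}, not $1$-Lipschitz: Lemma~\ref{comparison} (via Lemma~\ref{expand}) says that for $\Sigma_0$ in the past of $\Sigma$ one has $l_{\Sigma_0}\leq l_{\Sigma}$, and since $\Sigma(f_i)$ lies in the past of $\pt^+C_{\rho_i}$ this yields $l_{d_i}(\gamma)\leq l_{\rho_i^+}(\gamma)$ (Lemma~\ref{syst}), i.e.\ a \emph{lower} bound on the length spectrum of the core boundary. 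Your claim that $\pt^+C_{\rho_i}$ is a $1$-Lipschitz image of $(S,d_i)$, so that its diameter and systole are bounded \emph{above} (let alone "from above and below", which you assert without any argument for the lower bound), is the Riemannian intuition and is false here; and upper bounds on the core's lengths would be useless, since precompactness requires preventing curves from getting short. With the correct inequality, precompactness of $\rho_i^+$ still does not follow from diameter/systole alone (that only controls the point in moduli space, not the marking); the paper converts the full lower bound $l_{\rho_i^+}(\gamma)\geq l_{d_i}(\gamma)\geq c\,l_{\rho}(\gamma)$ into a bound on the asymmetric Thurston distance and invokes~\cite{PT} (Lemma~\ref{thurstmetric}). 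Also, $\pt^+C_{\rho_i}$ carries a smooth hyperbolic metric (it is pleated), not a cone-metric.

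Your step (ii) is likewise off target: no separate "total bending / collar" compactness argument for $\lambda_i^+$ is needed, and the one you sketch is not obviously valid (bounded total mass does not by itself give compactness in $\mc{ML}$). Since the left holonomy is fixed at $\rho_\circ$ and $E^+_{\lambda_i^+}(\rho_\circ)=\rho_i^+$, the Kerckhoff--Thurston theorem (Theorem~\ref{earthquake}) plus continuity of the inverse earthquake map based at $\rho_\circ$ gives $\lambda_i^+\to\lambda^+$ directly from $\rho_i^+\to\rho^+$; then $\rho_i=E^+_{\lambda_i^+}(\rho_i^+)\to\rho$ by Theorem~\ref{adsearthq}. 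Your "alternative route" (bounded intrinsic distance of the vertices to the core, then "properness of the convex-core intrinsic-metric map") is too vague to assess and borders on circular, so it does not rescue the argument.
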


For a measured geodesic lamination $\lambda$ on $S$ we denote by $E^+_\lambda: \mc T \ra \mc T$ the right earthquake map and by $E^-_\lambda: \mc T \ra \mc T$ the left earthquake map, see, e.g.,~\cite[Section 7.2]{Mar} for a definition. Recall that the earthquake maps are continuous both with respect to the laminations and the metrics. We will need to employ the Kerckhoff--Thurston earthquake theorem~\cite{Ker}:

\begin{thm}
\label{earthquake}
For every $\rho_0, \rho_1 \in \mc T$ there exist unique $\lambda^+$, $\lambda^-$ such that $E^+_{\lambda^+}(\rho_0)=\rho_1$ and $E^-_{\lambda^-}(\rho_0)=\rho_1$.
\end{thm}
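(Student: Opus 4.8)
\emph{Proof strategy.} The plan is to reduce this (Thurston's earthquake theorem) to the corresponding statement for circle homeomorphisms. Realizing $\rho_0$ and $\rho_1$ by Fuchsian groups, a marked isomorphism $\rho_0(\pi_1S)\to\rho_1(\pi_1S)$ extends uniquely to an orientation-preserving homeomorphism $h$ of $\partial\H^2$ that is equivariant for the two actions. It then suffices to prove that every orientation-preserving homeomorphism $h$ of $\partial\H^2$ is the boundary extension of a unique left earthquake of $\H^2$ — a (generally discontinuous) self-map that is an isometry on each plate of a geodesic lamination and shears it to the left — because equivariance of $h$ forces the underlying measured geodesic lamination to be $\pi_1S$-invariant and hence to descend to $\lambda^-\in\mc{ML}(S)$ with $E^-_{\lambda^-}(\rho_0)=\rho_1$, uniqueness descending along the way. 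The statement for right earthquakes follows by applying the left-earthquake statement to the pair $(\rho_1,\rho_0)$, using that $E^-_\lambda$ and $E^+_\lambda$ are mutually inverse earthquakes (so $E^-_\lambda(\rho_1)=\rho_0$ iff $E^+_\lambda(\rho_0)=\rho_1$).

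\emph{Existence.} I would approximate $h$ by boundary extensions of finite-lamination earthquakes. Fix a nested sequence of finite $\pi_1S$-invariant geodesic laminations $\Lambda_1\subset\Lambda_2\subset\cdots$ on $\H^2$ (lifts of multicurves on $S$) whose leaves have endpoints forming a dense subset of $\partial\H^2$, and for each $n$ choose a transverse measure $\mu_n$ on $\Lambda_n$ so that the resulting left earthquake $E_n$ agrees with $h$ at the endpoints of all leaves of $\Lambda_n$, the shear across a leaf being read off from how $h$ distorts the relevant endpoints. The key point is that, because $h$ is a single fixed homeomorphism, the total $\mu_n$-mass on any fixed geodesic transversal is bounded uniformly in $n$ by a quantity controlling how far $h$ displaces points. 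This confines the $(\Lambda_n,\mu_n)$ to a compact subset of $\mc{ML}(S)$; passing to a subsequence, $(\Lambda_n,\mu_n)\to(\Lambda,\mu)$, and by a continuity property of earthquakes (in the equivariant case, the continuity of the earthquake maps recalled before Theorem~\ref{earthquake}) the limit earthquake $E$ satisfies $\partial E=h$ on the dense set $\bigcup_n\{\text{endpoints of leaves of }\Lambda_n\}$, hence everywhere. Descending, $\lambda^-:=[(\Lambda,\mu)]$ works. Alternatively, one can package existence with surjectivity of $\lambda\mapsto E^-_\lambda(\rho_0)$ conceptually: this map is continuous and proper — if $\lambda_n\to\infty$, write $\lambda_n=L_n\mu_n$ with $L_n\to\infty$ and $[\mu_n]$ subconvergent, and the collar lemma forces $\ell_\gamma(E^-_{\lambda_n}(\rho_0))\to\infty$ for $\gamma$ with $i(\gamma,\mu)>0$ — so, once injectivity is available, invariance of domain makes it a homeomorphism of $\mc{ML}(S)\cong\R^{6k-6}$ onto $\mc T\cong\R^{6k-6}$.

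\emph{Uniqueness.} Here I would use that the interior shearing data of an earthquake is detected on the boundary. If two left earthquakes $E_1,E_2$ had the same boundary extension $h$ but $(\Lambda_1,\mu_1)\neq(\Lambda_2,\mu_2)$, there is a geodesic $g$ across which $E_1$ and $E_2$ shear by different amounts; translating one component of $\H^2\setminus g$ relative to the other by a nonzero amount along $g$ moves the corresponding arc of $\partial\H^2$, so comparing near the endpoints of $g$ gives $\partial E_1\neq\partial E_2$, a contradiction (equivalently, a left earthquake with trivial boundary extension carries the zero measure). The same conclusion can be reached from Kerckhoff's strict convexity of geodesic length functions along earthquake paths. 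Pushing down to $S$, $\lambda^\pm$ are unique.

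\emph{Main obstacle.} I expect the hard part to be the uniform mass bound on the approximating transverse measures $\mu_n$ in the existence step: one must rule out runaway accumulation of shear and show that a fixed homeomorphism forces only finite total shear on compact transversals, which is the delicate estimate in Thurston's original argument. The uniqueness step is also genuinely nontrivial, resting either on a careful analysis of how shears register on $\partial\H^2$ or on the second-variation (Wolpert-type) behaviour of length along shear deformations.
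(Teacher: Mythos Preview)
The paper does not give a proof of this statement: it is quoted as the Kerckhoff--Thurston earthquake theorem with a citation to~\cite{Ker}, and is used as a black box. So there is nothing in the paper to compare your argument against.

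Your sketch is essentially the classical Thurston argument (existence) together with a boundary-detection/convexity argument (uniqueness), and the alternative ``continuous $+$ proper $+$ injective $\Rightarrow$ homeomorphism by invariance of domain'' is a clean way to package things at the level of the closed surface. One point in the existence step as written does not work, however. You ask for nested $\pi_1S$-invariant finite laminations $\Lambda_n$ (lifts of multicurves on $S$) and transverse measures $\mu_n$ such that the resulting earthquake agrees with $h$ at \emph{all} endpoints of leaves of $\Lambda_n$. The lift of a $k$-component multicurve has a $k$-dimensional space of invariant transverse measures, while its set of leaf-endpoints is an infinite (in fact dense) subset of $\partial\H^2$; matching $h$ on that dense set with only $k$ parameters is impossible in general. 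In Thurston's actual argument the approximating earthquakes are along \emph{non-invariant} finite collections of geodesics in $\H^2$, chosen so that the finite earthquake matches $h$ on an arbitrary finite subset of $\partial\H^2$; $\pi_1S$-invariance appears only in the limit, forced by the equivariance of $h$. This is a fixable detail --- and your invariance-of-domain alternative avoids it entirely --- but the approximation step as you wrote it does not go through.
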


Fix $\rho \in \mc T$. Let $\rho^+=\rho^+(\rho), \rho^-=\rho^-(\rho)$ be the points in $\mc T$ corresponding to the intrinsic metrics of $\pt^+C_\rho$ and $\pt^-C_\rho$. Let $\lambda^+=\lambda^+(\rho)$ and $\lambda^-=\lambda^-(\rho)$ be the bending laminations of $\pt^+C_\rho$ and $\pt^-C_\rho$, introduced in Section~\ref{sec:bentsurf}. Recall the fundamental result of Mess~\cite{Mes}:

\begin{thm}
\label{adsearthq}
We have
$$E^+_{\lambda^+}(\rho_\circ)=\rho^+,~~~~~E^+_{\lambda^+}(\rho^+)=\rho,$$
$$E^-_{\lambda^-}(\rho)=\rho^-,~~~~~E^-_{\lambda^-}(\rho^-)=\rho_\circ.$$
\end{thm}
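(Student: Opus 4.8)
The plan is to reproduce Mess's construction, which produces the earthquake maps directly from the product structure $\pt\A^3\cong\RP^1\times\RP^1$. Fix $\rho\in\mc T$ and write $\theta_\rho=(\rho_\circ,\rho)$. The first ingredient is a dictionary between spacelike planes in $\A^3$ and elements of $G$: a spacelike plane $P$ is a totally geodesic copy of $\H^2$, and under $\pt\A^3\cong\RP^1\times\RP^1$ its boundary circle $\pt P$ is the graph of a unique $\phi_P\in G$; projection of $\pt P$ to the first (resp.\ second) factor then gives a canonical ``left'' (resp.\ ``right'') isometry $\iota_P^l\colon P\to\H^2$ (resp.\ $\iota_P^r\colon P\to\H^2$), the two differing by $\phi_P$. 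The key local lemma, which I would prove first, concerns two spacelike planes $P_1,P_2$ meeting along a spacelike geodesic $\ell$ with endpoints $(p_1,q_1),(p_2,q_2)\in\pt\A^3$: then $\phi_{P_2}^{-1}\phi_{P_1}$ fixes $p_1,p_2$ and is the hyperbolic translation along the $\H^2$-geodesic with endpoints $p_1,p_2$ whose translation length equals the exterior dihedral angle $\theta$ between $P_1$ and $P_2$, and symmetrically $\phi_{P_1}\phi_{P_2}^{-1}$ is the hyperbolic translation of length $\theta$ along the geodesic with endpoints $q_1,q_2$. Concretely one passes to the totally geodesic timelike plane transverse to $\ell$, which is a copy of anti-de Sitter $2$-space, and computes there.

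Next I would use the description of $\pt^+\tilde C_\rho$ from Section~\ref{sec:bentsurf}: it is a $\theta_\rho$-invariant bent spacelike surface, its intrinsic metric has holonomy $\rho^+$, it is bent along the $\rho^+$-invariant geodesic lamination pushing down to $\lambda^+$, and the transverse measure is given by the exterior dihedral angles. Let $D\colon\pt^+\tilde C_\rho\to\H^2$ be the intrinsic developing map, i.e.\ the inverse of the isometry of Section~\ref{sec:bentsurf} (so $D\circ\theta_\rho(\gamma)=\rho^+(\gamma)\circ D$); it is continuous, and on each face $F\subset P_F$ it is an isometric embedding of $F$ onto a plaque $D(F)$. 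Setting $E_F:=\iota_{P_F}^l\circ(D|_F)^{-1}$, one gets for each plaque an isometry of $\H^2$. Using the local lemma and the continuity of $D$ across leaves, one checks that the collection $\{E_F\}$ is exactly the plaquewise data of an earthquake along $\tilde\lambda^+$ whose shear across each leaf is the transverse mass of that leaf, and that it is a \emph{right} earthquake --- this is where the future-convexity of $\pt^+\tilde C_\rho$ and the future-orientation of $\pt\A^3$ enter. Tracking equivariance --- using $P_{\gamma F}=(\rho_\circ(\gamma),\rho(\gamma))P_F$ and that the first projection intertwines the left $G$-action with $\rho_\circ(\gamma)$ --- yields $E\circ\rho^+(\gamma)=\rho_\circ(\gamma)\circ E$, i.e.\ $E^+_{\lambda^+}(\rho_\circ)=\rho^+$. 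The same construction with $\iota_{P_F}^r$, for which the second projection intertwines with $\rho(\gamma)$, gives $E^+_{\lambda^+}(\rho^+)=\rho$; and running everything on the past-convex boundary $\pt^-\tilde C_\rho$, where the reversed time-orientation exchanges right and left earthquakes, produces the remaining two identities $E^-_{\lambda^-}(\rho)=\rho^-$ and $E^-_{\lambda^-}(\rho^-)=\rho_\circ$.

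The main obstacle is twofold. First, passing from the formal plaquewise isometries $\{E_F\}$ to a bona fide earthquake map is transparent only when $\lambda^+$ is a finite lamination (isolated leaves), where the comparison is a finite product of the hyperbolic translations of the local lemma; for a general bending lamination with non-isolated leaves one must first establish the four identities for those $\rho$ whose convex core boundary is strictly polyhedral, and then take limits, invoking the continuity of earthquake maps in both the lamination and the metric (recalled before Theorem~\ref{earthquake}) together with the convergence results of Section~\ref{sec:convblowup} and Appendix~\ref{sec:intmet} for $\rho\mapsto(\rho^\pm,\lambda^\pm)$. Second --- and this is the more error-prone point --- one must keep every left/right and every future/past choice consistent with the orientation and time-orientation conventions of Section~\ref{sec:geometries}, so that the shears come out with the exact normalization making the four displayed equations hold as written (all four with the bending lamination $\lambda^\pm$ itself rather than a multiple, and with $E^+$ attached to $\pt^+$ and $E^-$ to $\pt^-$); this is routine trigonometry in anti-de Sitter $2$-space but must be done carefully. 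As an alternative one could combine Mess's parametrization of GHMC anti-de Sitter spacetimes by $\mc T\times\mc T$ with the Kerckhoff--Thurston theorem to pin down $\rho^\pm,\lambda^\pm$ by uniqueness, but verifying the identities still needs the local computation above, so I would prefer the direct route.
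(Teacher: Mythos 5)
The paper does not prove this statement: it is recalled as a theorem of Mess with a citation to~\cite{Mes}, so there is no internal proof to compare against. Your outline is a reconstruction of the standard argument (Mess's original one, also presented in detail in~\cite{BS2}), and its skeleton is right: the dictionary between spacelike planes and graphs of elements of $G$ under $\pt\A^3\cong\RP^1\times\RP^1$, the local computation that the comparison map $\phi_{P_2}^{-1}\phi_{P_1}$ of two planes meeting along a spacelike geodesic is a hyperbolic translation of length equal to the exterior dihedral angle, the left/right projections producing the two pairs of identities, and time-reversal exchanging $E^+$ and $E^-$ on $\pt^\pm\tilde C_\rho$.

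One step as you have set it up is circular. You propose to prove the identities first for those $\rho$ whose convex core boundary is strictly polyhedral and then pass to the limit over $\rho$. But the density of such $\rho$ in $\mc T$ (for fixed left representation $\rho_\circ$) is most naturally obtained from the statement that $\rho\mapsto(\rho^+,\lambda^+)$ is a bijection onto $\mc T\times\mc{ML}$, which is essentially the theorem you are proving combined with Theorem~\ref{earthquake}; without it you do not know that weighted multicurves in $\mc{ML}$ are hit by a dense set of $\rho$. The standard fix is to approximate inside a single spacetime rather than varying $\rho$: for the fixed bent surface $\pt^+\tilde C_\rho$, choose finite subsets of leaves of the bending lamination, form the finite compositions of the translations given by your local lemma along a transverse arc, and show that these converge simultaneously to the earthquake comparison map $E^+_{\lambda^+}$ and to the holonomy comparison between $\rho^+$ and $\rho_\circ$ (resp.\ $\rho$). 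This is how the general-lamination case is handled in the literature, and it removes the dependence on any density statement in $\mc T$. With that correction, and the careful bookkeeping of orientation conventions that you already flag, the argument goes through.
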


Recall from Appendix~\ref{sec:intmet} that for a convex Cauchy surface $\Sigma \subset \Omega_\rho$ and $\gamma \in \pi_1S$, $l_\Sigma(\gamma)$ is the infimum of lengths of curves in the free homotopy class of $\gamma$ on $\Sigma$. Similarly, for $\rho \in \mc T$ and $\gamma \in \pi_1S$ define $l_\rho(\gamma)$ to be the length of the closed geodesic in the class of $\gamma$ in the hyperbolic metric on $S$ determined by $\rho$. We can show

\begin{lm}
\label{syst}
For every future-convex Cauchy surface $\Sigma \subset \Omega_\rho$ and every $\gamma \in \pi_1S$ we have $l_\Sigma(\gamma) \leq l_{\rho^+}(\gamma)$.
\end{lm}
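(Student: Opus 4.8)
The statement compares the length spectrum of an arbitrary future-convex Cauchy surface $\Sigma \subset \Omega_\rho$ with the length spectrum of the hyperbolic metric $\rho^+$, which is the intrinsic metric of the pleated surface $\pt^+C_\rho$. The key geometric fact is that in a GHMC anti-de Sitter $(2+1)$-spacetime, among all future-convex Cauchy surfaces, the pleated boundary $\pt^+C_\rho$ of the convex core is the ``innermost'' one: every future-convex Cauchy surface $\Sigma$ lies weakly in the future of $\pt^+C_\rho$ (equivalently, $\Sigma$ is contained in the closure of $\tilde\Omega^+_\rho \cup \pt^+\tilde C_\rho$, with its convex side containing $\tilde C_\rho$). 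This is exactly the content built into the definition of $\tilde{\mc P}_-^c$ via Lemma~\ref{limset} ($\cl(\tilde C_\rho) \subset \clconv(\tilde f)$) and extends to all convex Cauchy surfaces by the same argument. So first I would record this: lifting to $\tilde\Omega_\rho$, the surface $\tilde\Sigma$ bounds (together with $\Lambda_\rho$) a convex region containing $\cl(\tilde C_\rho)$, hence $\tilde\Sigma$ is ``above'' $\pt^+\tilde C_\rho$.

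Next I would use the nearest-point retraction from $\tilde\Sigma$ onto $\pt^+\tilde C_\rho$, but crucially in the Lorentzian (timelike) sense. Given the above, for each point $p \in \tilde\Sigma$ there is a well-defined point $r(p) \in \pt^+\tilde C_\rho$ realizing the maximal timelike distance (or, more robustly, the ``cosmological time''-type projection onto the boundary of the convex core from the future side). The essential analytic input is that this projection $r: \tilde\Sigma \to \pt^+\tilde C_\rho$ is $1$-Lipschitz \emph{for the intrinsic metrics} — this is the anti-de Sitter analogue of the classical fact that the nearest-point retraction onto a convex set in hyperbolic space contracts distances, and it holds here because $\pt^+\tilde C_\rho$ is a past-convex pleated surface seen from the future: moving along $\tilde\Sigma$, the projected path on $\pt^+\tilde C_\rho$ cannot be longer, as the geometry between the two surfaces is that of a domain bounded by two convex surfaces in AdS with the inner one pleated. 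I would then pass to the quotient: $r$ is $\pi_1S$-equivariant (it is canonical), so it descends to a $1$-Lipschitz map $\bar r: \Sigma \to \pt^+C_\rho$. Since $\pt^+C_\rho$ with its intrinsic metric represents $\rho^+$, for any $\gamma \in \pi_1S$, applying $\bar r$ to a curve in the class of $\gamma$ on $\Sigma$ whose length approximates $l_\Sigma(\gamma)$ produces a curve in the class of $\gamma$ on $\pt^+C_\rho$ of no greater length; hence the geodesic representative for $\rho^+$ is shorter still, giving $l_{\rho^+}(\gamma) \le l_\Sigma(\gamma)$ — but this is the opposite inequality. So I must be careful about the direction, and this is exactly where the subtlety lies.

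The correct direction comes from reversing the roles: the point is that $\pt^+C_\rho$ is obtained from $\rho$ (the ``upper'' Cauchy-surface-at-infinity data) by a \emph{left} earthquake, or from $\rho_\circ$ by a right earthquake (Theorem~\ref{adsearthq}), and earthquakes, combined with the comparison between the surface $\Sigma$ and the totally geodesic ``future boundary at infinity'', give the bound in the stated direction. Concretely: any future-convex Cauchy surface $\Sigma$ lies in $\tilde\Omega^+_\rho$, hence in the future of $\pt^+\tilde C_\rho$; but the right comparison surface for an \emph{upper} bound on lengths is not $\pt^+C_\rho$ directly — rather one uses that the width between $\pt^+\tilde C_\rho$ and $\pt^+\tilde\Omega_\rho$ is bounded, and $\pt^+\tilde\Omega_\rho$ is a lightlike-at-infinity graph whose ``intrinsic length'' of any class is controlled by $\rho^+$ via the fact that the Gauss map / dual picture identifies the asymptotic geometry with the hyperbolic metric $\rho^+$. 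I would therefore run the argument through the dual: the dual surface $\tilde\Sigma^*$ lies past-convex in $\tilde\Omega^-_\rho$, its supporting planes at points of $\Lambda_\rho$ limit onto tangent planes of $\pt\A^3$ (Lemma~\ref{duallimset}), and a curve on $\tilde\Sigma$ projects along timelike geodesics onto $\pt^+\tilde C_\rho$ \emph{expanding} lengths by at most a factor that degenerates — giving the inequality $l_\Sigma(\gamma) \le l_{\pt^+\tilde C_\rho\text{-metric}}(\gamma) = l_{\rho^+}(\gamma)$ once one checks that the projection from the future onto a past-convex-from-below pleated surface is length-\emph{non-decreasing} on the target, equivalently length-non-increasing from target to source — which is the statement that $\pt^+C_\rho$, being the innermost future-convex surface, has the \emph{largest} length spectrum among future-convex Cauchy surfaces in $\Omega_\rho$.

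\textbf{The main obstacle.} The crux — and the step I expect to be hardest to get exactly right — is pinning down the correct monotonicity: that the intrinsic length spectrum of a future-convex Cauchy surface is bounded above by that of $\pt^+C_\rho$. The naive nearest-point retraction gives the wrong sign, so the argument must exploit that $\pt^+C_\rho$ is not just any convex surface but the pleated boundary of the convex core, together with the causal structure (every future-convex Cauchy surface is in the future of it). I would establish this via: (i) any future-convex Cauchy surface $\Sigma$ in $\Omega_\rho$ has its convex side containing $C_\rho$, so $\Sigma \subset \cl(\Omega^+_\rho)$; (ii) the map sending $p \in \tilde\Sigma$ to the first point of $\pt^+\tilde C_\rho$ hit by the past-directed timelike geodesic normal to a support plane at $p$ is well-defined, equivariant, and surjective; (iii) a length comparison showing that, parametrizing a closed geodesic $c$ of $\rho^+$ on $\pt^+C_\rho$ and lifting it ``upward'' to a closed curve $\hat c$ on $\Sigma$ in the same homotopy class, one has $\mathrm{len}_\Sigma(\hat c) \le \mathrm{len}_{\rho^+}(c)$ — because the AdS metric restricted to the spacelike slice ``shrinks'' as one moves toward the future in the region over the convex core, a consequence of convexity of $\Sigma$ and the fact that $\pt^+\tilde C_\rho$ is spacelike and achronal. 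Step (iii) is the technical heart; I would prove it by a first-variation / support-plane argument, comparing $\Sigma$ locally with a spacelike plane tangent to it and using that such a plane, pushed to pass through $\pt^+\tilde C_\rho$, only increases spacelike distances (the AdS analogue of the fact that geodesic hyperplanes in $\H^n$ realize minimal distance). Once (iii) holds, $l_\Sigma(\gamma) \le \mathrm{len}_{\rho^+}(c) = l_{\rho^+}(\gamma)$ follows by taking $c$ to be the $\rho^+$-geodesic in the class of $\gamma$, completing the proof.
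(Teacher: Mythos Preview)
Your proposal rests on a reversed picture of the causal geometry, and this is what sends you down the convoluted path. In $\Omega_\rho$ the region $\Omega^+_\rho$ lies between the past initial boundary $\pt^+\Omega_\rho$ (where the cosmological time vanishes) and $\pt^+C_\rho$; a \emph{strictly} future-convex Cauchy surface necessarily lies in the \emph{strict past} of $\pt^+C_\rho$, not in its future. Once you have this right, the monotonicity goes the correct way from the start: along the normal flow of a foliation by future-convex leaves, lengths are non-decreasing toward the future (Lemma~\ref{expand}), so projecting a curve from $\pt^+C_\rho$ back to $\Sigma$ \emph{contracts} its length, which yields $l_\Sigma(\gamma)\le l_{\rho^+}(\gamma)$ immediately. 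Your ``nearest-point retraction is $1$-Lipschitz'' intuition imported from Riemannian convexity is exactly wrong in sign here, and your attempted repair via duals and earthquakes is unnecessary; your step~(iii) happens to state the right inequality, but the justification you offer (``the AdS metric shrinks as one moves toward the future over the convex core'') is again backwards.

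The paper's proof is short and avoids all of this. First approximate $\Sigma$ by a smooth strictly future-convex $\Sigma'$ (Lemma~\ref{smoothing}) with $l_{\Sigma'}(\gamma)\ge l_\Sigma(\gamma)-\varepsilon$ (Lemma~\ref{lensystconv}); such a $\Sigma'$ lies in the strict past of $\pt^+C_\rho$. Then apply the Busemann--Feller-type Lemma~\ref{comparison} (Appendix~\ref{sec:busfel}) with the future-convex surface $\pt^+C_\rho$ and the $C^1$ surface $\Sigma'$ in its past to get $l_{\Sigma'}(\gamma)\le l_{\rho^+}(\gamma)$. The substance is in Lemma~\ref{comparison}, which is proved by constructing a $C^2$ foliation of the region between the two surfaces by strictly future-convex leaves (combining the $K$-surface foliation of Theorem~\ref{BBZ} with a Gerhardt-type curvature flow, Lemma~\ref{flow}) and then invoking the elementary expansion Lemma~\ref{expand}. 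What you are missing is precisely this foliation step: a bare support-plane comparison does not give the global length inequality, whereas the foliation turns the local first-variation into the needed monotonicity.
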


\begin{proof}
Pick $\gamma\in\pi_1S$.
By Lemma~\ref{smoothing}, $\Sigma$ can be approximated by smooth strictly future-convex surfaces. Due to Lemma~\ref{lensystconv}, for arbitrary $\e$ there exists such a surface $\Sigma'$ with $l_{\Sigma'}(\gamma)\geq l_\Sigma-\e$. A strictly future-convex surface necessarily belongs to the strict past of $\pt^+C_\rho$. Hence, Lemma~\ref{comparison} implies that $l_{\rho^+}(\gamma)\geq l_{\Sigma'}(\gamma)\geq l_\Sigma-\e$. Since $\e$ is arbitrary, this finishes the proof.
\end{proof}

Now for a metric $d$ on $S$ we define its length function $l_d: \pi_1S \ra \R$ in the obvious way.

\begin{lm}
\label{thurstmetric}
Let $d_i \in \mc D_-$ be a precompact sequence and $\rho_i \in \mc T$ be such that for every $i$ and every $\gamma\in \pi_1S$ we have 
\[l_{d_i}(\gamma)\leq l_{\rho_i}(\gamma).\]
Then the sequence $\rho_i$ is precompact in $\mc T$. 
\end{lm}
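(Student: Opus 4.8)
The plan is to reduce the assertion to a purely Teichmüller-theoretic domination statement for honest hyperbolic metrics, and then to rule out escape to infinity in $\mc T$ by combining Mumford compactness with the finiteness of short geodesics on a fixed hyperbolic surface. First I would pass from each $d_i$ to the hyperbolic metric $m_i \in \mc T$ uniformizing the underlying conformal structure of $d_i$ on the closed surface $S$; this uses $k\geq 2$ and the fact that, by Troyanov's description of $\mc D_-$ (see~\cite{Tro2}), the map $d\mapsto m_d$ is continuous on $\mc D_-$. Since $d_i$ is a concave hyperbolic cone-metric, its curvature measure is $\leq -\,dA$, so the Schwarz--Ahlfors lemma (in the form valid for cone-metrics of curvature $\leq -1$) gives $m_i\leq d_i$ as conformal metrics, hence
\[ l_{m_i}(\gamma)\leq l_{d_i}(\gamma)\leq l_{\rho_i}(\gamma)\qquad\text{for all }\gamma\in\pi_1S. \]
Precompactness of $\{d_i\}$ in $\mc D_-$ then makes $\{m_i\}$ precompact in $\mc T$, and after passing to a subsequence $m_i\to m_\infty$ in $\mc T$. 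So it is enough to prove: if $m_i\to m_\infty$ in $\mc T$ and $l_{m_i}\leq l_{\rho_i}$ pointwise on $\pi_1S$, then $\{\rho_i\}$ is precompact in $\mc T$.

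To prove this I would argue by contradiction: if $\{\rho_i\}$ is not precompact, then after a subsequence $\rho_i$ leaves every compact subset of $\mc T$. From $l_{\rho_i}\geq l_{m_i}$ one gets $\operatorname{sys}(\rho_i)\geq\operatorname{sys}(m_i)\to\operatorname{sys}(m_\infty)>0$, so by Mumford's compactness criterion the images $[\rho_i]$ remain in a compact part of moduli space $\mathcal M=\mc T/\Gamma$, where $\Gamma$ is the mapping class group of $S$; after a further subsequence $[\rho_i]\to[\sigma_\infty]$, so there are $\psi_i\in\Gamma$ with $\sigma_i:=\psi_i\rho_i\to\sigma_\infty$ in $\mc T$. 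Since $\Gamma$ acts properly discontinuously on $\mc T$, $\sigma_i$ converges and $\rho_i\to\infty$, the $\psi_i$ must leave every finite subset of $\Gamma$. Now fix once and for all a finite collection $\delta_1,\dots,\delta_N$ of simple closed curves on $S$ with finite simultaneous stabilizer in $\Gamma$ (e.g.\ a filling collection, by the Alexander method), set $C:=1+\max_l l_{\sigma_\infty}(\delta_l)$ and $\gamma_{i,l}:=\psi_i^{-1}\delta_l$. Then $l_{\rho_i}(\gamma_{i,l})=l_{\sigma_i}(\delta_l)\to l_{\sigma_\infty}(\delta_l)$, so $l_{\rho_i}(\gamma_{i,l})\leq C$, hence $l_{m_i}(\gamma_{i,l})\leq C$, for all $l$ and all large $i$. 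Since $m_i\to m_\infty$ in $\mc T$ there are constants $K_i$, bounded in $i$, with $l_{m_\infty}(\gamma)\leq K_i\,l_{m_i}(\gamma)$ for every $\gamma$, so the lengths $l_{m_\infty}(\gamma_{i,l})$ are bounded uniformly in $i$ and $l$. But on the fixed hyperbolic surface $(S,m_\infty)$ only finitely many free homotopy classes have bounded length, so the $N$-tuples $(\gamma_{i,1},\dots,\gamma_{i,N})$ take only finitely many values; on the other hand $\psi_i\to\infty$ in $\Gamma$ together with the finiteness of the simultaneous stabilizer of $\{\delta_l\}$ forces these tuples to take infinitely many distinct values. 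This contradiction finishes the proof.

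I expect the main obstacle to be exactly the step distinguishing precompactness in $\mc T$ from precompactness in $\mathcal M$: the hypothesis $l_{d_i}\leq l_{\rho_i}$ is insensitive to markings — it is preserved under replacing $(\rho_i,d_i)$ by $(\psi_i\rho_i,\psi_i d_i)$ — so comparison of length functions alone cannot prevent $\rho_i$ from running off along an unbounded sequence of mapping classes, and indeed this is why one needs the counting argument. The counting step breaks this symmetry precisely because, after uniformization, the $d_i$ approach a \emph{fixed} metric $m_\infty$, in which only finitely many homotopy classes are short, whereas applying an unbounded sequence of mapping classes to a fixed filling system would manufacture infinitely many short classes. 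A secondary point that needs care is the reduction in the first step, i.e.\ the continuity of $d\mapsto m_d$ on $\mc D_-$ and the use of the Schwarz--Ahlfors lemma for cone-metrics, which is where concavity of the $d_i$ is used.
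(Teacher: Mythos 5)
Your reduction to honest hyperbolic metrics fails at its very first step: the Ahlfors--Schwarz comparison goes in the opposite direction to the one you use. For a conformal metric whose curvature measure is $\leq -1$ times the area element (which is indeed the case for a concave cone-metric, since cone angles $>2\pi$ contribute \emph{negative} atoms), the conclusion of the Ahlfors--Schwarz lemma is that the metric is \emph{dominated by} the uniformizing hyperbolic metric, i.e.\ $d_i\leq m_i$, not $m_i\leq d_i$. A sanity check via Gauss--Bonnet: a concave cone-metric has total area $-2\pi\chi(S)+\sum_v\kappa_{d_i}(v)\leq-2\pi\chi(S)=\area(m_i)$; alternatively, writing $d_i=e^{2u}|dz|^2$, $m_i=e^{2v}|dz|^2$, one has $\Delta(u-v)\geq e^{2u}-e^{2v}$ distributionally and $u-v\ra-\infty$ at the cone points, so the maximum principle gives $u\leq v$. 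Consequently $l_{d_i}\leq l_{m_i}$, and the chain $l_{m_i}\leq l_{d_i}\leq l_{\rho_i}$ on which your whole argument rests is false at its first link; nothing then relates $l_{m_i}$ to $l_{\rho_i}$.

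The uniformization detour is, however, unnecessary, and the rest of your argument survives once you work with the cone-metrics directly: precompactness of $\{d_i\}$ in $\mc D_-$ already yields (along a subsequence, which suffices for precompactness of $\{\rho_i\}$) a fixed metric $d_\infty$ and $c>0$ with $l_{d_i}(\gamma)\geq c\,l_{d_\infty}(\gamma)$ for all $\gamma$, and this is the only input your Mumford-plus-counting scheme needs --- $\sys(\rho_i)\geq\sys(d_i)\geq c\,\sys(d_\infty)>0$ for the Mumford step, and the fact that a compact cone-surface, like a smooth hyperbolic one, carries only finitely many free homotopy classes of length $\leq C$ for the counting step. With that repair your route is correct and genuinely different from the paper's: the paper converts the same uniform lower bound $l_{\rho_i}\geq c\,l_{h}$ (for a fixed hyperbolic $h$) into a bound on the asymmetric Thurston distance $d_{\rm Th}(h,\rho_i)$ and cites Papadopoulos--Th\'eret for compactness of such sets, whereas you rule out escape to infinity by hand, via Mumford compactness in moduli space plus the observation that an unbounded sequence of mapping classes applied to a filling system with finite stabilizer must produce infinitely many distinct, hence eventually long, homotopy classes. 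Your version is more elementary and self-contained; the paper's is shorter at the cost of an external citation.
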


\begin{proof}
Since $\{d_i\}$ is precompact, for any $\rho \in \mc T$ we can choose representatives of the metrics $d_i$ on $S$ and a hyperbolic metric $h$ representing $\rho$ so that there exists $C>0$ and the identity map $(S, h)\ra (S, d_i)$ is $C$-Lipschitz. Hence, for every $\gamma\in \pi_1S$ we have 
\[l_{d_i}(\gamma)\leq Cl_{\rho}(\gamma).\]
We can assume that $C>1$. Recall that for $\rho, \rho' \in \mc T$ the asymmetric Thurston distance is defined as
\[d_{{\rm Th}}(\rho',\rho):=\sup_{\gamma \in \pi_1S}\ln\frac{l_{\rho'}(\gamma)}{l_{\rho}(\gamma)},\] see~\cite{Thu2}. Hence, $d_{{\rm Th}}(\rho,\rho_i)\leq \ln C$. Thus, by~\cite{PT}, $\{\rho_i\}$ is precompact.
\end{proof}

Now we have all in hands to prove Lemma~\ref{ml2.1}.

\begin{proof}[Proof of Lemma~\ref{ml2.1}.]
By Lemma~\ref{syst}, for every $i$ and every $\gamma \in \pi_1S$ we have $l_{d_i}(\gamma)\leq l_{\rho^+_i}(\gamma)$, where $\rho^+_i=\rho^+(\rho_i)$. Hence, Lemma~\ref{thurstmetric} implies that $\rho^+_i$ belong to a compact subset of $\mc T$. Thereby, up to subsequence, $\rho^+_i$ converge to $\rho^+ \in \mc T$. Due to Theorem~\ref{earthquake} and the continuity of the earthquake map, the bending laminations $\lambda^+_i=\lambda^+ (\rho_i)$ converge to a measured lamination $\lambda^+$. Due to Theorem~\ref{adsearthq}, $\rho_i$ converge to $\rho \in \mc T$.	
\end{proof}

\subsection{Convergence of holonomies at the blow-up}
\label{sec:convholblow}

Here we show

\begin{lm}
\label{ml2.2}
Under the conditions of Lemma~\ref{ml2'}, let $y$ be in $\pt \mc D_\vee^s$. Then $\rho_i$ converge to $\rho_\circ$.
\end{lm}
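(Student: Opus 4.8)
The plan is a proof by contradiction exploiting the fact that a point of $\pt\mc D_\vee^s$ is reached only through a collapse of the cone-metrics. First I would unwind the hypothesis. Since $y\in\pt\mc D_\vee^s\cong\S(\mc D_0^s)$, picking a chart $\phi^\ms T_\vee$ around $y$ (Lemma~\ref{metrics}) shows that the edge-lengths $\phi^\ms T_-(d_i)\in\R^{E(\ms T)}$ all tend to $0$; hence $\diam(S,d_i)\to 0$, so $l_{d_i}(\gamma)\to 0$ for every $\gamma\in\pi_1S$, and the metrics $d_i$, rescaled to unit diameter, converge to the concave Euclidean cone-metric $d_0\in\mc D_0^s$ with $[d_0]=y$ --- an honest cone-metric on the closed surface $S$, with $V(d_0)=V$ and $\sum_v\kappa_{d_0}(v)=2\pi(2-2k)<0$. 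Now assume $\rho_i\not\to\rho_\circ$. Passing to a subsequence, the achronal sets $\Lambda_{\rho_i}\subset\pt\A^3$ converge in the Hausdorff sense to a closed set $\Lambda_\infty$, and $\Lambda_\infty$ is not the diagonal $\Lambda_{\rho_\circ}$: otherwise $\Lambda_{\rho_i}\to\Lambda_{\rho_\circ}$ would force $h_{\rho_i}\to\mathrm{id}$ uniformly, hence $\rho_i(\gamma)=h_{\rho_i}\rho_\circ(\gamma)h_{\rho_i}^{-1}\to\rho_\circ(\gamma)$ for all $\gamma$, i.e. $\rho_i\to\rho_\circ$, contrary to assumption. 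Passing to a further subsequence, $\clconv(\tilde f_i)$ converges to a convex set $C_\infty$ which, by Lemma~\ref{convconv} and $\tilde C_{\rho_i}\subset\clconv(\tilde f_i)$, contains $\conv(\Lambda_\infty)\cap\A^3=:\tilde C_\infty$; and since $\Lambda_\infty\neq\Lambda_{\rho_\circ}$ one checks $\tilde C_\infty\supsetneq\H^2\subset o^*$, so $\tilde C_\infty$ is non-degenerate.

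Next I would locate the collapsing surfaces. The future-convex boundaries satisfy $\cl(\Sigma(\tilde f_i))\to\cl(\Sigma_\infty)$, where $\Sigma_\infty$ is the future-convex part of $\pt C_\infty$ (same proof as Lemma~\ref{convsurf}). If the vertices $\tilde f_i(V)$ stayed in a compact part of the interior of the future component over $\tilde C_\infty$, then $(\rho_i,f_i)$ would converge in $\mc P_-^c$ and the intrinsic metrics would converge, by Lemma~\ref{contin} and the convergence-of-intrinsic-metrics results of the Appendix, to the intrinsic metric of a genuine spacelike convex surface --- a metric of positive diameter, contradicting $\diam(S,d_i)\to 0$; likewise they cannot accumulate on a future convex-core boundary $\pt^+\tilde C_\infty$, whose intrinsic metric is hyperbolic of positive area. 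Hence $\tilde f_i(V)$ escapes, $\clconv(\tilde f_i)$ exhausts the closure of the future component over $\tilde C_\infty$, and $\Sigma(\tilde f_i)$ converges to the future singularity $\pt^+\tilde\Omega_\infty$, which touches $\Lambda_\infty$. The decisive point is that, since $\Lambda_\infty\neq\Lambda_{\rho_\circ}$, this future singularity is \emph{not} the apex lightcone over the single point $o$ that occurs in the Fuchsian case $\rho_\circ$ --- the case in which the geometric transition $g_t$ to $\R^{2,1}$ lives. Running the $K$-surface argument from the proof of Lemma~\ref{transition} (Theorem~\ref{BBZ}, Corollary~\ref{ksurfunion}, together with the control of supporting planes near $\Lambda_{\rho_i}$ from Lemma~\ref{convlimsetbase}) one then shows that no $G_-$-translated projective rescaling $g_t$ can carry $\Sigma(\tilde f_i)$ to a compact spacelike convex surface in $\R^{2,1}$: the surfaces are spreading out over a genuinely singular limit set rather than collapsing onto a point, so the rescaled intrinsic metrics cannot converge to an element of $\mc D_0^s$. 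This contradicts the first paragraph, and therefore $\rho_i\to\rho_\circ$.

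The hard part is precisely this last step --- making rigorous that a collapsing future-convex Cauchy surface in $\Omega_{\rho_i}$ can have a Euclidean cone-metric as a rescaled intrinsic limit \emph{only} when $\rho_i\to\rho_\circ$, i.e. when the future singularity degenerates to a single apex. I expect to handle it by combining the uniform estimate on tangent planes along $\Lambda_{\rho_i}$ (Corollary~\ref{ksurfunion}) with the characterization of $\rho_\circ$ as the unique $\rho$ for which $\pt^+\tilde\Omega_\rho$ reduces to a lightcone over $o$ (equivalently, for which the convex core has zero width), and with the explicit description, from Lemma~\ref{transition}, of what a genuine transition to a Minkowski polyhedral surface looks like. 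A more computational alternative, bypassing the structure of $\pt^+\Omega_\rho$ altogether, would be to compare $d_i$ with the convex-core boundary metric $\rho_i^+$ via Lemma~\ref{syst} and then use the earthquake relations of Theorem~\ref{adsearthq} together with Theorem~\ref{earthquake}: if one could establish a priori that $\{\rho_i^+\}$ is precompact in $\mc T$ (equivalently, that the bending laminations $\lambda_i^+$ stay bounded in $\mc{ML}$), then $\lambda_i^+\to\lambda^+_\infty$, $\rho_i=E^+_{2\lambda_i^+}(\rho_\circ)\to E^+_{2\lambda^+_\infty}(\rho_\circ)$, and the collapse analysis of the second paragraph at the now-fixed limit spacetime would force $\lambda^+_\infty=0$, i.e. $\rho_i\to\rho_\circ$. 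The boundedness of $\{\lambda_i^+\}$ is the remaining gap in that approach, since Lemma~\ref{syst} only bounds $l_{\rho_i^+}$ from below; this is why I would carry out the argument along the geometric-transition route above.
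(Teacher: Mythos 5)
Your argument has a genuine gap, and it sits exactly where you say it does. The conclusion you need --- that a family of future-convex Cauchy surfaces whose intrinsic metrics collapse to a point of $\pt\mc D_\vee^s$ forces $\rho_i\to\rho_\circ$ --- is not something you can extract from Hausdorff limits of $\Lambda_{\rho_i}$ together with a vague appeal to the $K$-surface machinery. Two concrete problems. First, your compactness setup is too weak: $\mc T$ is not compact, and if $\rho_i$ leaves every compact set the Hausdorff limit $\Lambda_\infty$ of the limit sets need not be the graph of a homeomorphism at all, so the objects $\tilde C_\infty$, $\tilde\Omega_\infty$, $\pt^+\tilde\Omega_\infty$ on which your second paragraph relies may simply not exist in any usable form; you never rule out this divergence. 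Second, and more fundamentally, the ``decisive point'' is asserted, not proved: showing that collapsing surfaces over a non-degenerate $\Lambda_\infty$ cannot have a rescaled Euclidean cone-metric limit requires a quantitative lower bound on the intrinsic length functions of $\Sigma(\tilde f_i)$ in terms of the geometry of $\Omega_{\rho_i}$, and neither Corollary~\ref{ksurfunion} nor the proof of Lemma~\ref{transition} supplies one. Your ``computational alternative'' has the same hole, as you note: Lemma~\ref{syst} bounds $l_{d_i}$ from \emph{above} by $l_{\rho_i^+}$, which is the wrong direction for forcing anything to degenerate.

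The missing ingredient is Lemma~\ref{treecomparison}: for every future-convex Cauchy surface $\Sigma\subset\Omega_\rho$ and every $\gamma$ one has $l_{\Psi^+(\rho)}(\gamma)\le l_\Sigma(\gamma)$, where $\Psi^+(\rho)$ is the real tree given by the spacelike part of $\pt^+\tilde\Omega_\rho$ (this comes from Belaraouti's convergence of $K$-surface length functions to the tree length function, combined with the monotonicity Lemma~\ref{comparison}). This is a \emph{lower} bound on $l_{d_i}$, so $l_{d_i}(\gamma)\to 0$ immediately gives $l_{\Psi^+(\rho_i)}\to 0$, i.e. $\Psi^+(\rho_i)\to\Psi_0$ in $\mc{MT}$. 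Skora duality (Theorem~\ref{skora}) then says the dual laminations $\lambda^-(\rho_i)$ converge to the empty lamination, and the Mess earthquake relations (Theorem~\ref{adsearthq}) express $\rho_i$ as $E^+_{\lambda^-(\rho_i)}\circ E^+_{\lambda^-(\rho_i)}(\rho_\circ)$, which converges to $\rho_\circ$ by continuity of earthquakes --- for the whole sequence, no subsequence or contradiction needed, and with no case analysis on whether $\rho_i$ stays in a compact set. I would abandon the geometric-transition route for this lemma and argue via the tree.
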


First we make a quick excursion into group actions on (metric) trees. Set $\Gamma:=\pi_1S$. A pair of a metric tree $\Psi$ and of a $\Gamma$-action on $\Psi$ by isometries is called a $\Gamma$-tree. We will denote the pair by $\Psi$, assuming implicitly some $\Gamma$-action. A $\Gamma$-tree is \emph{minimal} if it does not contain a proper $\Gamma$-invariant subtree. It is called \emph{small} if the stabilizer of each arc is cyclic. For a $\Gamma$-tree $\Psi$ we denote by $l_\Psi: \Gamma \ra \R$ the \emph{length function} of $\Psi$, i.e., for $\gamma \in \Gamma$, $l_\Psi(\gamma)$ is equal to 
\[\inf_{p \in \Psi} d(p, \gamma p).\]
The $\Gamma$-equivariant isometry class of a small minimal $\Gamma$-tree $\Psi$ is determined by $l_\Psi$~\cite{Sko} (furthermore, finitely many $\gamma$ are enough to distinguish it). We topologize the space of such classes by its embedding into $\R^\Gamma$ via the length functions. Denote the resulting space by $\mc{MT}$. Note that there is a natural $\R_{>0}$-action on $\mc{MT}$ by multiplication. There is a distinguished degenerate tree $\Psi_0 \in \mc{MT}$ consisting of a single point, which serves as an origin. Consequently, $\Psi_0$ is distinguished by that $l_{\Psi_0}=0$.

Given a measured geodesic lamination $\lambda$ on $S$, there is a natural construction of a dual $\Gamma$-tree to $\lambda$, see, e.g.,~\cite[Chapter 11]{Kap}. In particular, the degenerate tree $\Psi_0$ corresponds to the empty lamination. Recall that measured geodesic laminations form a space $\mc{ML}$ with a natural $\R_{>0}$-action. Skora showed in~\cite{Sko}

\begin{thm}
\label{skora}
This construction provides a $\R_{>0}$-equivariant homeomorphism $\mc{ML}\cong\mc{MT}$.
\end{thm}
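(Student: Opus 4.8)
The plan is to separate the statement into the elementary directions (well-definedness, $\R_{>0}$-equivariance, injectivity, continuity) and the single substantial one, surjectivity, and then to note that once the dual-tree map is a continuous bijection the homeomorphism property is automatic from the way the two spaces are topologized. First I would check well-definedness: lifting $\lambda$ to a measured geodesic lamination $\tilde\lambda$ on $\H^2 = \tilde S$ and collapsing each leaf and each complementary region to a point, with the transverse measure as the length element, produces a metric space $\Psi_\lambda$ which one verifies to be $0$-hyperbolic, hence a metric tree; the $\Gamma$-action descends to an action by isometries. This action is small, because the stabilizer of any non-degenerate arc is contained in the stabilizer of a leaf of $\tilde\lambda$ and hence is cyclic, and it is minimal (a standard fact for laminations on closed surfaces of genus $\geq 2$). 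The empty lamination manifestly gives $\Psi_0$, and multiplying the transverse measure by $t \in \R_{>0}$ rescales the metric of $\Psi_\lambda$ by $t$, so the map is $\R_{>0}$-equivariant.

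The key elementary computation is that the length function of $\Psi_\lambda$ is the intersection pairing: $l_{\Psi_\lambda}(\gamma) = i(\lambda, \gamma)$ for every $\gamma \in \Gamma$, where $i(\lambda,\gamma)$ is the total transverse measure of the closed geodesic in the class of $\gamma$. Granting this, the dual-tree map is nothing but a restriction of the identity of $\R^\Gamma$ under the two tautological embeddings $\mc{ML} \hookrightarrow \R^\Gamma$ (by intersection numbers, which induces the standard topology of $\mc{ML}$) and $\mc{MT} \hookrightarrow \R^\Gamma$ (by length functions). In particular it is continuous, and it is injective because a measured geodesic lamination on a closed hyperbolic surface is determined by its intersection numbers with simple closed curves. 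Consequently, once surjectivity is established, the dual-tree map is a continuous bijection whose source and target are, by construction, the \emph{same} subset of $\R^\Gamma$ with the subspace topology; hence its inverse is also continuous and the map is a homeomorphism, $\R_{>0}$-equivariant by the above.

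The substantial point, and the main obstacle, is surjectivity: every small minimal $\Gamma$-tree $\Psi$ is dual to some $\lambda$. I would build a $\Gamma$-equivariant continuous map $f \colon \H^2 \to \Psi$ that is a \emph{morphism}, i.e.\ isometric on the edges of the lift of some triangulation of $S$; such an $f$ exists by prescribing the images of vertices and extending. If $f$ exhibits no \emph{folding} — no point of $\H^2$ where two distinct germs of edges are sent to a common germ of segment in $\Psi$ — then the decomposition of $\H^2$ into point-preimages of $f$, equipped with the pullback of the length element, is a $\Gamma$-invariant measured foliation; straightening its leaves to geodesics yields $\lambda \in \mc{ML}$, and one checks $l_{\Psi_\lambda} = l_\Psi$ on $\Gamma$, so $\Psi_\lambda \cong \Psi$ since small minimal $\Gamma$-trees are determined by their length functions. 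To produce a fold-free $f$ one minimizes a complexity — the Euler characteristic, or the number of singular leaves, of the induced foliation — over $\Gamma$-equivariant resolutions of $\Psi$; a persistent fold then either lets one strictly lower this complexity, a contradiction with minimality of the resolution, or forces the fold to be absorbed into an arc stabilizer, which the smallness hypothesis (cyclic arc stabilizers) rules out. This dichotomy is the heart of the argument; it rests on the resolution techniques for $\R$-trees developed by Morgan--Shalen and Morgan--Otal, and carrying it out in full equivariant generality is exactly the content of Skora's theorem.
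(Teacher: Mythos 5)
This is not a statement the paper proves: Theorem~\ref{skora} is quoted verbatim from Skora's paper \cite{Sko} and used as a black box, so there is no in-paper argument to compare yours against. Judged on its own terms, your sketch correctly organizes the elementary directions. The dual-tree construction, the identity $l_{\Psi_\lambda}(\gamma)=i(\lambda,\gamma)$, smallness and minimality of the action, $\R_{>0}$-equivariance, and injectivity via intersection numbers are all standard and correctly stated; and your observation that, once bijectivity is known, the homeomorphism property is automatic because both $\mc{ML}$ and $\mc{MT}$ are topologized by their embeddings into $\R^\Gamma$ and the map is the restriction of the identity, is the right way to dispose of continuity of the inverse (modulo the routine check that the weak topology on $\mc{ML}$ from simple closed curves agrees with the one from all of $\Gamma$).

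The gap is in surjectivity, and you have in effect acknowledged it yourself. The dichotomy you invoke --- that a persistent fold in a resolution $f\colon\H^2\to\Psi$ either permits a strict decrease of the chosen complexity or is absorbed into an arc stabilizer that smallness forbids --- is asserted, not established, and it is precisely where the entire content of Skora's theorem is concentrated. Making it precise requires specifying a complexity that is well-ordered on equivariant resolutions, showing that a fold can always be ``unfolded'' in a way that strictly decreases it unless an obstruction of a specific algebraic type appears, and ruling out that obstruction using smallness; this is the Morgan--Shalen/Morgan--Otal/Rips machinery, and none of it is reproduced in your sketch. So what you have written is an accurate roadmap of the literature proof rather than a proof. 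Since the paper itself only cites \cite{Sko}, the honest course here is to do the same: state the easy directions if they are needed elsewhere, and attribute surjectivity (and hence the theorem) to Skora rather than presenting the fold dichotomy as if it were self-contained.
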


Consider now $\rho \in \mc R$ and the domain $\tilde \Omega_\rho \subset \A^3$. Recall that we divide its anti-de Sitter boundary into two components, the future-convex one $\pt^+\tilde\Omega_\rho$ and the past-convex one $\pt^-\tilde\Omega_\rho$. They are, however, not spacelike. Let us say that the set of points of $\pt^\pm\tilde\Omega_\rho$ that admit a spacelike supporting plane is the \emph{spacelike part} of $\pt^\pm\tilde\Omega_\rho$. The spacelike part of each component is a $\Gamma$-tree. We denote the one on $\pt^+\tilde\Omega_\rho$ by $\Psi^+(\rho)$. A geometric observation shows that it is dual to $\lambda^-(\rho)$ via the Skora duality, see~\cite{Bel}.


\begin{lm}
\label{treecomparison}
Let $\rho \in \mc T$, $\Sigma \subset \Omega_\rho$ be a future-convex Cauchy surface. Then for every $\gamma \in \pi_1S$, we have $l_{\Psi^+(\rho)}(\gamma) \leq l_\Sigma(\gamma)$.
\end{lm}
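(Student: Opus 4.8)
The plan is to bound $l_\Sigma(\gamma)$ from below by the length functions of the $K$-surfaces approaching $\pt^+\tilde\Omega_\rho$, and then to identify the limit of those length functions with the translation length of $\gamma$ on the tree $\Psi^+(\rho)$. First I would dispose of the trivial cases and fix the setup: if $i(\gamma,\lambda^-(\rho))=0$ (in particular if $\rho=\rho_\circ$, where $\lambda^-(\rho_\circ)=\emptyset$), then $\gamma$ fixes a point of $\Psi^+(\rho)$, so $l_{\Psi^+(\rho)}(\gamma)=0$ and there is nothing to prove; hence I assume $i(\gamma,\lambda^-(\rho))>0$, so that $\gamma$ acts on $\Psi^+(\rho)$ with an axis and $l_{\Psi^+(\rho)}(\gamma)=i(\gamma,\lambda^-(\rho))$ by Theorem~\ref{skora} and~\cite{Bel}. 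Lifting $\Sigma$ to $\tilde\Sigma\subset\tilde\Omega_\rho$, the future of $\tilde\Sigma$ in $\tilde\Omega_\rho$ is convex and its closure contains $\Lambda_\rho$, hence contains $\conv(\Lambda_\rho)=\cl(\tilde C_\rho)$; therefore $\tilde\Sigma\subset\cl(\tilde\Omega^+_\rho)$, i.e. $\tilde\Sigma$ lies weakly in the future of $\pt^+\tilde\Omega_\rho$. By Lemma~\ref{smoothing} and Lemma~\ref{lensystconv} I may, for any $\e>0$, replace $\Sigma$ by a smooth strictly future-convex Cauchy surface $\Sigma'\subset\Omega_\rho$ with $l_{\Sigma'}(\gamma)\leq l_\Sigma(\gamma)+\e$ and $\tilde\Sigma'\subset\tilde\Omega^+_\rho$.

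Next I would run the comparison. By Theorem~\ref{BBZ} the region $\Omega^+_\rho$ is foliated by strictly future-convex $K$-surfaces; their lifts $\tilde L^+_K\subset\tilde\Omega^+_\rho$ approach $\pt^+\tilde C_\rho$ as $K\to+\infty$ and $\pt^+\tilde\Omega_\rho$ as $K\to 0^+$ (cf.\ Corollary~\ref{ksurfunion}). For all small enough $K$, $\tilde L^+_K$ lies strictly in the past of $\tilde\Sigma'$, so Lemma~\ref{comparison} gives $l_{L^+_K}(\gamma)\leq l_{\Sigma'}(\gamma)\leq l_\Sigma(\gamma)+\e$; letting $\e\to 0$ yields $\limsup_{K\to 0^+} l_{L^+_K}(\gamma)\leq l_\Sigma(\gamma)$ (and, applying Lemma~\ref{comparison} to the nested leaves, this $\limsup$ is in fact a limit). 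The proof would then be complete once one shows $\liminf_{K\to 0^+} l_{L^+_K}(\gamma)\geq l_{\Psi^+(\rho)}(\gamma)$.

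This last inequality is the heart of the matter and the step I expect to be the main obstacle. By Section~\ref{sec:ksurf} and Corollary~\ref{ksurfunion}, $\cl(\tilde L^+_K)$ converges to $\cl(\pt^+\tilde\Omega_\rho)$ as $K\to 0^+$, touching $\pt\A^3$ along $\Lambda_\rho$. Picking closed $\gamma$-geodesics on $L^+_{K_n}/\theta_\rho(\pi_1S)$ along a sequence $K_n\to 0^+$ realizing the $\liminf$, lifting them to arcs $\tilde c_n$ from $p_n$ to $\gamma p_n$ on $\tilde L^+_{K_n}$, normalizing by the $\langle\gamma\rangle$-action on $\Psi^+(\rho)$, and using the convergence results of Appendix~\ref{sec:intmet} (compactness of shortest paths and lower semicontinuity of length under Hausdorff convergence of the ambient surfaces, in the spirit of Lemma~\ref{distconverge0}), I would extract a limiting rectifiable path in $\pt^+\tilde\Omega_\rho$ joining a point $p$ to $\gamma p$ of length at most $\liminf_{K\to 0^+} l_{L^+_K}(\gamma)$. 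The delicate point is to rule out that this limiting path slips into the null part of $\pt^+\tilde\Omega_\rho$ accumulating on $\Lambda_\rho$, where it would acquire vanishing length: here one must use that the $\tilde c_n$ are geodesic loops of a class with $i(\gamma,\lambda^-(\rho))>0$ lying on surfaces inside $\cl(\tilde\Omega^+_\rho)$, which prevents such a collapse, so that the limit path lies in the spacelike part $\Psi^+(\rho)$ and, being a path in the tree from $p$ to $\gamma p$, has length at least $l_{\Psi^+(\rho)}(\gamma)$. Combining this with the previous paragraph gives $l_\Sigma(\gamma)\geq l_{\Psi^+(\rho)}(\gamma)$, as desired.

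A possible alternative, which circumvents the null degeneration at the cost of its own technicalities, is to pass to the dual surfaces $(\tilde L^+_K)^*$: these are constant-curvature surfaces in $\tilde\Omega^-_\rho$ converging to the genuine pleated surface $\pt^-\tilde C_\rho$, the intrinsic metric of $\tilde L^+_K$ coincides with the third fundamental form of $(\tilde L^+_K)^*$, and the third fundamental form of $\pt^-\tilde C_\rho$ measures curve lengths by their total intersection with the bending lamination $\lambda^-(\rho)$. Going through this route one would need to control the measured convergence of the third fundamental forms near the bending; whichever route is chosen, the conceptual content is the same, namely that as the $K$-surfaces collapse onto $\pt^+\tilde\Omega_\rho$ their metric length functions do not drop below the transverse measure $i(\,\cdot\,,\lambda^-(\rho))$ carried by the limiting tree.
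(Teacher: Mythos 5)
Your overall strategy is the one the paper uses: compare $\Sigma$ with the leaves of the $K$-surface foliation of $\Omega^+_\rho$ via the Busemann--Feller-type Lemma~\ref{comparison}, then identify the limit of the length functions $l_{L_K}$ with $l_{\Psi^+(\rho)}$. The paper's proof is exactly this and is three lines long, because the crucial identification $l_{L_K}\ra l_{\Psi^+(\rho)}$ (as the $K$-surfaces collapse onto the initial singularity) is not proved but quoted from Belaraouti~\cite[Theorem 2.5]{Bel}. (A side remark on conventions: in this paper larger $K$ means further in the past, so the leaves approach $\pt^+\tilde\Omega_\rho$ as $K\ra+\infty$, not as $K\ra 0^+$; your parameterization is reversed, though this does not affect the logic. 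Also note the paper only needs the one-sided bound, which it gets from monotonicity of $l_{L_K}$ in $K$ -- itself a consequence of Lemma~\ref{comparison} -- together with the quoted limit.)

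The genuine gap is in your third paragraph, and you correctly identify where it sits: the lower bound $\liminf l_{L_K}(\gamma)\geq l_{\Psi^+(\rho)}(\gamma)$. The compactness machinery of Appendix~\ref{sec:intmet} cannot be invoked here, because Lemma~\ref{distconverge0} and its corollaries require the limiting convex body to be \emph{spacelike} (every supporting plane at a boundary point in $\A^3$ spacelike), and $\cl(\tilde\Omega^+_\rho)$ fails this: its boundary component $\pt^+\tilde\Omega_\rho$ has a large null part, and the uniform bi-Lipschitz comparison with a fixed reference metric (Corollary~\ref{labeni3}, Lemma~\ref{labeni2}) degenerates precisely there. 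Consequently lower semicontinuity of length along the collapsing sequence is not available, and the statement that the limit arcs "cannot slip into the null part because $i(\gamma,\lambda^-(\rho))>0$" is an assertion of the conclusion rather than an argument: the whole content of the lemma is that the geodesic representatives of $\gamma$ do not lose length in the limit beyond the transverse measure $i(\gamma,\lambda^-(\rho))$, and this is exactly what Belaraouti's theorem (or an equivalent analysis of the rescaled geometry near the singularity) establishes. As written, your proof therefore contains a circular step; it becomes complete if you replace that paragraph by the citation of~\cite[Theorem 2.5]{Bel}, as the paper does.
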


\begin{proof}
By Theorem~\ref{BBZ}, there exists a $K$-surface $L_K$ in the strict past of $\Sigma$. From Lemma~\ref{comparison}, $l_{L_K}(\gamma)\leq l_\Sigma(\gamma)$. Also from Lemma~\ref{comparison}, the function $l_{L_K}(\gamma)$ is decreasing in $K$. It follows from the results of Belaraouti~\cite[Theorem 2.5]{Bel} that $l_{\Sigma_K} \ra l_{\Psi^+(\rho)}$ as $K \ra \infty$. This implies that $l_{\Psi^+(\rho)}(\gamma) \leq l_\Sigma(\gamma)$.
\end{proof}

\begin{proof}[Proof of Lemma~\ref{ml2.2}.]

Since $d_i \ra y \in \pt \mc D_\vee^s$, for all $\gamma \in \pi_1S$ we have $l_{d_i}(\gamma) \ra 0$. Thereby, Lemma~\ref{treecomparison} implies that $l_{\Psi^+(\rho_i)}\ra 0$. Hence, $\Psi^+(\rho_i) \ra \Psi_0$ and, due to Theorem~\ref{skora}, $\lambda^-(\rho_i)$ converge to the empty lamination. It follows from Theorem~\ref{adsearthq} that $\rho_i$ converge to $\rho_\circ$.
\end{proof}

\subsection{Cosmological time and canonical decomposition}
\label{sec:ct}

In order to finish the proof of Lemma~\ref{ml2'}, we need an important tool, namely \emph{the cosmological time}. It was introduced in~\cite{AGH}, its significance for the study of Minkowski spacetimes was demonstrated in~\cite{BG, Bon}. For a simultaneous treatment of the cosmological time in Minkowski and anti-de Sitter geometries we refer to~\cite{BB}.

Pick $\rho \in \mc R$. As we discussed in Section~\ref{sec:convholblow}, the spacelike part of $\pt^+\tilde\Omega_\rho$ is a metric tree $\Psi^+$ dual to $\lambda^-$. Denote this spacelike part by $\pt^+_s \tilde\Omega_\rho$. The cosmological time of $\tilde\Omega_\rho$ is a function $\ct_\rho: \tilde\Omega_\rho \ra \R_{>0}$ that assigns to $p \in \tilde\Omega_\rho$ the supremum of lengths of the timelike segments $pq$ with $q \in \pt\tilde\Omega_\rho$ where $\vec{pq}$ is directed to the past of $p$. Note that there exists a unique $q$ such that $pq$ realizes $\ct_\rho(p)$. We denote such $q$ by $\eta_\rho(p)$. By construction of $\tilde\Omega_\rho$, note that $\eta_\rho(p)\in \pt^+\tilde\Omega_\rho$. Furthermore, $\eta_\rho(p) \in \pt^+_s\tilde \Omega_\rho$. See~\cite[Proposition 6.3.7]{BB}. The function $\ct_\rho$ is $\pi_1S$-invariant and projects to a function on $\Omega_\rho$, which we continue to denote by $\ct_\rho$. Similarly for $\tau\in T_{\rho_\circ}\mc R$ one defines $\pt^+_s\tilde\Omega_\tau$, $\ct_\tau$ and $\eta_\tau$ for $\tilde\Omega^+_\tau$. We will denote the $r$-level surface of $\ct_\rho$ or $\ct_\tau$ by $L_r(\rho)$, $L_r(\tau)$ respectively. 
When $\rho$ or $\tau$ is clear from the context, we sometimes write just $\ct$, $\eta$, $L_r$.

Recall that $\pt^-C_\rho$ is the past-convex boundary component of $C_\rho$. It turns out that $\pt^-C_\rho=L_{\pi/2}(\rho)$. The function $\ct_\rho$ is $C^{1,1}$ on the past of $\pt^-C_\rho$, see, e.g.,~\cite[Lemma 4.3]{BS4}. On the other hand, $\ct_\tau$ is $C^{1,1}$ everywhere on $\Omega^+_\tau$, see, e.g.,~\cite[Proposition 3.3.3]{BB}.
We will need the following result of Bonsante~\cite[Theorem 6.7]{Bon}:

\begin{lm}
\label{varylevel0}
Consider $\tau \in T_{\rho_\circ}\mc R$. There exists a neighborhood $U \ni \tau$ in $T_{\rho_\circ}\mc R$ and a continuous map $\Phi: U \times \tilde S \times \R_{>0} \ra \R^{2,1}$ such that\\
(1) for every $\tau' \in U$, $r \in \R_{>0}$, the map $\Phi(\tau',.,r): \tilde S \ra \R^{2,1}$ is a $\theta_{\tau'}$-equivariant map onto $L_r(\tau')$;\\
(2) for every $\tau' \in U$, $p \in \tilde S$, the set $\Phi(\tau', p, \R_{>0})$ is a gradient line of $\ct_{\tau'}$.
\end{lm}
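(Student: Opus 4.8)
The plan is to produce $\Phi$ by parameterizing a single cosmological-time level surface and then dragging it along the gradient flow of the cosmological time. Fix $\tau \in T_{\rho_\circ}\mc R$. For $\tau'$ near $\tau$, since $\ct_{\tau'}$ is $C^{1,1}$ on $\tilde\Omega^+_{\tau'}$ (as cited above), its Lorentzian gradient $\nabla\ct_{\tau'}$ is a Lipschitz, future-directed, unit timelike vector field; as $\ct_{\tau'}$ is the Lorentzian distance in the past to $\pt^+_s\tilde\Omega_{\tau'}$, the integral curves of $\nabla\ct_{\tau'}$ are exactly the timelike geodesic segments issued orthogonally from points $q\in\pt^+_s\tilde\Omega_{\tau'}$ to a supporting plane of $\tilde\Omega^+_{\tau'}$ at $q$, and $\ct_{\tau'}$ increases with unit speed along them. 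Let $\Xi^s_{\tau'}$ be the resulting flow, so $\ct_{\tau'}(\Xi^s_{\tau'}(p)) = \ct_{\tau'}(p)+s$ whenever defined, $\eta_{\tau'}\circ\Xi^s_{\tau'}=\eta_{\tau'}$, and the gradient direction is constant along each orbit. It then suffices to build, continuously in $\tau'$, an $\iota_{\tau'}$-equivariant homeomorphism $\phi_{\tau'}\colon \tilde S \ra L_1(\tau')$ (here $\iota_{\tau'}$ is the holonomy of $\Omega^+_{\tau'}$): indeed
\[\Phi(\tau', p, r) := \Xi^{\,r-1}_{\tau'}\bigl(\phi_{\tau'}(p)\bigr)\]
has image in $L_r(\tau')$ and is $\iota_{\tau'}$-equivariant, giving property (1), while for fixed $(\tau',p)$ the curve $r\mapsto\Phi(\tau',p,r)$ is a reparameterized maximal integral curve of $\nabla\ct_{\tau'}$, i.e.\ a gradient line, giving property (2).

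To parameterize $L_1(\tau')$ uniformly in $\tau'$ I would use the ``cosmological Gauss data'' $p \mapsto (\eta_{\tau'}(p), n_{\tau'}(p))$, where $n_{\tau'}(p)\in\H^2\subset\R^{2,1}$ is the future unit normal of the supporting plane of $\tilde\Omega^+_{\tau'}$ at $\eta_{\tau'}(p)$ through $p$. On each level surface this map is injective, and its image is the set $\hat\Sigma_{\tau'}$ of pairs (point of the initial singularity, compatible future unit normal); this $\hat\Sigma_{\tau'}$ is an $\iota_{\tau'}$-invariant topological surface equivariantly homeomorphic to $\tilde S$ — it is exactly the ``blow-up'' turning the tree-like initial singularity into a surface, a branch point of the $\R$-tree dual to $\lambda^-(\tau')$ contributing a whole family of admissible normals and a weighted leaf a band. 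Since $\tilde\Omega^+_{\tau'}$ is a future-complete regular domain, it is the intersection of the future half-spaces of the null planes dual to the points of $\Lambda_{\tau'}$, equivalently it carries a support function on $\H^2$ with boundary values $a_{\tau'}$; by \cite[Corollary 3.14]{BF} this support function, hence $\cl(\tilde\Omega_{\tau'})$, depends continuously on $\tau'$. From the support function one reads off $\hat\Sigma_{\tau'}$, the retraction $\eta_{\tau'}$, the normal $n_{\tau'}$, and the level surfaces $L_r(\tau') = \{\, q + r n : (q,n)\in\hat\Sigma_{\tau'}\,\}$. Fixing once and for all an $\iota_\tau$-equivariant identification $j\colon \tilde S \ra \hat\Sigma_\tau$ and carrying it to nearby $\tau'$, one defines $\phi_{\tau'}(p)$ to be the point of $L_1(\tau')$ whose cosmological Gauss data is the image of $j(p)$ in $\hat\Sigma_{\tau'}$.

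The key analytic input, and the step I expect to be the main obstacle, is the continuity in $\tau'$ of the passage from the support function of $\tilde\Omega^+_{\tau'}$ to the surface $\hat\Sigma_{\tau'}$ (with its identification with $\tilde S$) and to the level surfaces $L_r(\tau')$. The difficulty is that this support function is only $C^{1,1}$, not $C^2$, and that the stratification of its singular locus, which encodes the measured lamination $\lambda^-(\tau')$, is genuinely discontinuous in $\tau'$ (a high-valence branch point can split, a band can appear or collapse), so one cannot argue by differentiating. I would instead argue by compactness: for $\tau'$ in a small neighborhood of $\tau$ the sets $\cl(\tilde\Omega_{\tau'})$ Hausdorff-converge inside a fixed affine chart, yielding uniform spacelikeness and convexity control on the surfaces $L_r(\tau')$ over a fundamental domain, hence equicontinuity of the would-be parameterizations; Arzelà–Ascoli then promotes the pointwise convergence read off from the support functions to local uniform convergence of $\phi_{\tau'}$, and the limit is forced to equal $\phi_\tau$ by uniqueness of the regular domain \cite[Theorem 5.1]{Bon} and of the associated cosmological time. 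This is essentially the content of \cite[Theorem 6.7]{Bon}; granting it, equivariance and the two asserted properties of $\Phi$ are formal, and restricting the base point to a fundamental domain makes the conclusion a genuine joint-continuity statement on $U\times\tilde S\times\R_{>0}$.
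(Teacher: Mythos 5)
Your overall architecture — integrate the gradient flow of $\ct_{\tau'}$ and seed it with an equivariant parameterization of a single surface — is the right one, and it is what the paper does for the anti-de Sitter analogue (Lemma~\ref{varylevelbase}); for the Minkowski statement itself the paper offers no proof at all but cites \cite[Theorem 6.7]{Bon}. The problem is in your seeding step. You define $\phi_{\tau'}(p)$ as the point of $L_1(\tau')$ whose cosmological Gauss data is ``the image of $j(p)$ in $\hat\Sigma_{\tau'}$'', but the surfaces $\hat\Sigma_{\tau'}$ for distinct $\tau'$ are different spaces with no canonical map between them: producing a continuous, equivariant family of identifications $\tilde S \ra \hat\Sigma_{\tau'}$ is exactly equivalent to the statement being proved (it is the case $r=1$ of property (1)). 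Your compactness argument cannot repair this: Arzel\`a--Ascoli and the Hausdorff convergence of $\cl(\tilde\Omega_{\tau'})$ can show that an \emph{already defined} family of parameterizations subconverges, and uniqueness of the regular domain and of its cosmological time identifies the limiting \emph{surface}, not a limiting \emph{parameterization} — precomposition with any equivariant self-homeomorphism of $\tilde S$ is equally consistent with everything you invoke. So the ``would-be parameterizations'' whose equicontinuity you assert are never constructed, and the argument is circular at its key step. (Deferring entirely to \cite[Theorem 6.7]{Bon} would be legitimate — that is what the paper does — but then the preceding construction is not a proof, only a gloss.)

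The clean way to close the gap, which is how the paper proves Lemma~\ref{varylevelbase}, is to seed the flow with something that does \emph{not} see the singularity: fix one smooth spacelike $\iota_\tau$-equivariant embedding $\phi(\tau,\cdot):\tilde S \ra \tilde\Omega^+_\tau$ and deform it continuously and equivariantly with $\tau'$ (Ehresmann--Thurston, or an explicit affine formula in the Minkowski setting), then set $\Phi(\tau',p,r)$ to be the intersection of the gradient line of $\ct_{\tau'}$ through $\phi(\tau',p)$ with $L_r(\tau')$, i.e.\ $\Phi(\tau',p,r)=\eta_{\tau'}(\phi(\tau',p))+\tfrac{r}{\ct_{\tau'}(\phi(\tau',p))}\bigl(\phi(\tau',p)-\eta_{\tau'}(\phi(\tau',p))\bigr)$. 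Continuity of $\Phi$ then reduces to joint continuity of $(\tau',q)\mapsto(\eta_{\tau'}(q),\ct_{\tau'}(q))$, which does follow from the Hausdorff convergence of the domains (\cite[Propositions 6.2 and 6.5]{Bon}; compare Lemma~\ref{ctconvbase}), and properties (1) and (2) are immediate since the map $L_{r_1}(\tau')\ra L_{r_2}(\tau')$ along gradient lines is a bijection. This route avoids parameterizing $\hat\Sigma_{\tau'}$ altogether and so avoids the discontinuity of the lamination structure that you correctly flag as the obstruction to your approach.
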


Note that Bonsante does not formulate explicitly these properties of his map, but they follow from his proof. We will also need an anti-de Sitter version of this. The proof of Bonsante mildly uses some special features of Minkowski geometry, thus we give our account of the proof, following the ideas of Bonsante.

\begin{lm}
\label{ctconvbase}
Let $\rho_i \ra \rho$ in $\mc R$ and $p_i \ra p$ in $\A^3$, where $p \in \tilde \Omega_\rho$. Define $\eta_i:=\eta_{\rho_i}$, $\ct_i:=\ct_{\rho_i}$. Then $\eta_i(p_i) \ra \eta(p)$ and $\ct_i(p_i) \ra \ct(p)$. (Note that due to Corollary~\ref{convdombase}, $\eta_i(p_i)$ and $\ct_i(p_i)$ are well-defined for all large enough $i$.)
\end{lm}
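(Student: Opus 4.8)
The plan is to prove continuity of the cosmological time and of the foot map $\eta_\rho$ under convergence of the holonomy, mimicking Bonsante's Minkowski argument but adapting it to anti-de Sitter geometry. First I would establish the geometric picture: by Corollary~\ref{convdombase} the domains $\cl(\tilde\Omega_{\rho_i})$ converge in the Hausdorff sense to $\cl(\tilde\Omega_\rho)$, and by Lemma~\ref{convtemp2-} the closures of the spacelike boundary parts $\cl(\pt^+\tilde\Omega_{\rho_i})$ converge to $\cl(\pt^+\tilde\Omega_\rho)$. This guarantees that for all large $i$ the point $p_i$ lies in $\tilde\Omega_{\rho_i}$, so $\eta_i(p_i)$ and $\ct_i(p_i)$ are defined, and that any accumulation point of $\eta_i(p_i)$ lies in $\cl(\pt^+\tilde\Omega_\rho)$.

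Next I would prove the upper semicontinuity $\limsup_i \ct_i(p_i) \le \ct_\rho(p)$: if $q_i:=\eta_i(p_i)$ subconverges to some $q\in\cl(\pt^+\tilde\Omega_\rho)$, then the timelike segment $p_iq_i$ (directed to the past) converges to the timelike segment $pq$, whose length is at most $\ct_\rho(p)$ by definition of the cosmological time, so $\ct_i(p_i)=d_A(p_i,q_i)\to d_A(p,q)\le\ct_\rho(p)$. For lower semicontinuity I would argue by contradiction in the opposite direction: let $q=\eta_\rho(p)\in\pt^+_s\tilde\Omega_\rho$ and let $\Pi$ be a spacelike supporting plane to $\tilde\Omega_\rho$ at $q$ (this exists because $q$ lies in the spacelike part). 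Pushing $\Pi$ slightly into the interior, one gets for each $i$ large a nearby spacelike plane $\Pi_i$ that is disjoint from $\tilde\Omega_{\rho_i}$ and lies just past $\pt^+\tilde\Omega_{\rho_i}$ near $q$; the maximal past timelike distance from $p_i$ to $\pt\tilde\Omega_{\rho_i}$ is then at least the distance from $p_i$ to $\Pi_i$, which converges to $d_A(p,\Pi)=d_A(p,q)=\ct_\rho(p)$ minus an arbitrarily small error. Hence $\liminf_i\ct_i(p_i)\ge\ct_\rho(p)$, and combined with the previous paragraph $\ct_i(p_i)\to\ct_\rho(p)$.

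Finally, for the convergence $\eta_i(p_i)\to\eta_\rho(p)$ I would use uniqueness: any subsequential limit $q$ of $q_i=\eta_i(p_i)$ satisfies, by the computation above, $d_A(p,q)=\ct_\rho(p)$ with $q\in\cl(\pt^+\tilde\Omega_\rho)$ and $\vec{pq}$ past-directed; since $\eta_\rho(p)$ is the \emph{unique} point of $\pt\tilde\Omega_\rho$ realizing $\ct_\rho(p)$ from the past of $p$, we get $q=\eta_\rho(p)$. Then Lemma~\ref{subconvtoconv} upgrades subsequential convergence to convergence of the full sequence. The main obstacle is the lower semicontinuity step: one must be careful that the supporting plane $\Pi$ at $q$ is genuinely spacelike and can be perturbed to stay disjoint from the nearby domains $\tilde\Omega_{\rho_i}$ while still passing close enough to $q$; this is where the achronality of $\Lambda_{\rho_i}$ and the convergence $\cl(\pt^+\tilde\Omega_{\rho_i})\to\cl(\pt^+\tilde\Omega_\rho)$ from Lemma~\ref{convtemp2-}, together with the fact that $q\in\pt^+_s\tilde\Omega_\rho$ admits a spacelike support, do the essential work. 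One subtlety worth checking is that the limit point $q$ does not escape to $\Lambda_\rho$; this is ruled out because $\ct_\rho(p)>0$ forces the realizing point to be at positive timelike distance, hence in $\pt^+_s\tilde\Omega_\rho$, not on the null boundary.
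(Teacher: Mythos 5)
Your overall architecture (upper semicontinuity via a subsequential limit of the feet $\eta_i(p_i)$, lower semicontinuity by producing competitors near $\eta(p)$, then uniqueness of $\eta(p)$ plus Lemma~\ref{subconvtoconv}) matches the paper, and the upper bound $\limsup\ct_i(p_i)\leq\ct(p)$ is argued the same way. But your lower semicontinuity step is wrong as written: if $\Pi_i$ is a spacelike plane \emph{disjoint from} $\tilde\Omega_{\rho_i}$ and lying in the past of $\pt^+\tilde\Omega_{\rho_i}$, then every past-directed timelike geodesic from $p_i$ meets $\pt^+\tilde\Omega_{\rho_i}$ \emph{before} it meets $\Pi_i$, so the maximal past timelike distance from $p_i$ to $\Pi_i$ is an \emph{upper} bound for $\ct_i(p_i)$, not a lower bound. (Also, ``pushing $\Pi$ slightly into the interior'' and ``$\Pi_i$ disjoint from $\tilde\Omega_{\rho_i}$'' are incompatible demands.) To get a lower bound from a plane you would have to push $\Pi$ \emph{towards $p$}, so that the orthogonal foot of $p_i$ on the pushed plane lies inside $\tilde\Omega_{\rho_i}$ and the boundary is therefore even further along that geodesic. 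The paper avoids all of this: by Lemma~\ref{convtemp2-} there are points $q_i\in\cl(\pt^+\tilde\Omega_{\rho_i})$ with $q_i\ra\eta(p)$, and then $\ct_i(p_i)\geq\len(p_iq_i)\ra\len(p\,\eta(p))=\ct(p)$ directly. You should replace your plane argument by this.

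A second, smaller issue: you flag that the subsequential limit $q$ of $\eta_i(p_i)$ must not escape to $\Lambda_\rho$, but your justification (``$\ct_\rho(p)>0$ forces the realizing point to be at positive timelike distance'') concerns $\eta_\rho(p)$, not the limit of the $\eta_i(p_i)$; a priori the feet could still drift off to $\pt\A^3$ while each $\ct_i(p_i)$ stays positive. The paper rules this out by noting that $\eta_i(p_i)$ lies in the causal past $C_i$ of $p_i$, that $\cl(C_i)\ra\cl(C)$, and that $\pt^+\tilde\Omega_\rho\cap C$ is compact in $\A^3$, so the feet stay in a fixed compact subset of $\A^3$. You need some version of this compactness argument for both the upper bound and the identification $q=\eta(p)$.
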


\begin{proof} 
Let $C_i$ and $C$ be the sets of points in $\A^3$ that are in causal relation to $p_i$, $p$ respectively. Then $\cl(C_i)$ converge to $\cl(C)$ as subsets of $\RP^3$. Note that $\pt^+\tilde\Omega_\rho\cap C$ is compact. Let $K$ be its compact neighborhood in $\A^3$.
Pass to a subsequence realizing $\limsup\ct_i(p_i)$.
By Lemma~\ref{convtemp2-}, $\cl(\pt^+\tilde \Omega_{\rho_i})$ converge to $\cl(\pt^+\Omega_\rho)$. Hence for all large enough $i$ the points $\eta_i(p_i)$ belong to $K$. Thereby, up to subsequence, they converge to a point $q \in \pt^+\tilde\Omega_\rho$. We get 
\[\ct(p) \geq \limsup \ct_i(p_i).\]

On the other hand, since $\cl(\pt^+\tilde \Omega_{\rho_i})$ converge in the Hausdorff sense to $\cl(\pt^+\tilde \Omega_\rho)$, there exists a sequence $q_i \in \cl(\pt^+\tilde \Omega_{\rho_i})$ such that $q_i$ converge to $\eta(p)$. Thereby,
\[\ct(p) \leq \liminf \ct_i(p_i).\]
Thus, $\lim \ct_i(p_i)=\ct(p)$ and $q=\eta(p)$. The latter means that $\eta_i(p_i) \ra \eta(p)$.
\end{proof}

\begin{lm}
\label{varylevelbase}
Consider $\rho \in \mc R$. There exists a neighborhood $U \ni \rho$ in $\mc R$ and a continuous map $\Phi: U \times \tilde S \times (0, \frac{\pi}{2}) \ra \A^3$ such that\\
(1) for every $\rho' \in U$, $r \in (0, \frac{\pi}{2})$, the map $\Phi(\rho',.,r): \tilde S \ra \A^3$ is a $\theta_{\rho'}$-equivariant map onto $L_r(\rho')$;\\
(2) for every $\rho' \in U$, $p \in \tilde S$, the set $\Phi(\rho', p, (0, \frac{\pi}{2}))$ is a gradient line of $\ct_{\rho'}$.
\end{lm}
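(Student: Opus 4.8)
\emph{The plan} is to adapt Bonsante's proof of the Minkowski statement Lemma~\ref{varylevel0} to anti-de Sitter geometry, the only structural change being that the AdS cosmological time $\ct_\rho$ is $C^{1,1}$ only on the past $\mc N_\rho:=I^-(\pt^-\tilde C_\rho)=\ct_\rho^{-1}\big((0,\tfrac\pi2)\big)$ of the past boundary of the convex core (recall $\pt^-\tilde C_\rho=L_{\pi/2}(\rho)$ and \cite[Lemma 4.3]{BS4}); this is precisely why $r$ is restricted to $(0,\tfrac\pi2)$. On $\mc N_\rho$ the Lorentzian gradient $\nabla\ct_\rho$ is locally Lipschitz, timelike, of unit length, with $\ct_\rho$ strictly increasing along its orbits (the cosmological time solves the eikonal equation), so its flow $\Psi^{\bullet}_\rho$ is a continuous local one-parameter group with $\ct_\rho\circ\Psi^s_\rho=\ct_\rho+s$, and its orbits are exactly the timelike segments from $\eta_\rho(p)$ through $p$ (intersected with $\mc N_\rho$); see \cite[Proposition 6.3.7]{BB}. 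Since $\ct_\rho$ is strictly monotone along each orbit and ranges over all of $(0,\tfrac\pi2)$ there, for any level surface $\tilde L=L_{r_0}(\rho)$ the map $(p,r)\mapsto\Psi^{\,r-\ct_\rho(p)}_\rho(p)$ is a homeomorphism $\tilde L\times(0,\tfrac\pi2)\to\mc N_\rho$ carrying $\tilde L\times\{r\}$ onto $L_r(\rho)$.

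The remaining ingredient is a base surface parametrized continuously and equivariantly in $\rho'$. Fix $r_0=\tfrac\pi4$ and seek a continuous $\psi\colon U\times\tilde S\to\A^3$ with $\psi(\rho',\cdot)$ a $\theta_{\rho'}$-equivariant homeomorphism onto $L_{\pi/4}(\rho')$. I would obtain it from a continuous family of equivariant developing maps $\mathrm{dev}\colon U\times\tilde S\times\R\to\A^3$, each $\mathrm{dev}(\rho',\cdot,\cdot)$ an equivariant diffeomorphism onto $\tilde\Omega_{\rho'}$ (standard for $(G_-,\A^3)$-structures over the parameter space $U$ once a normalization is fixed), arranged so that inside $\mc N_{\rho'}$ the time lines $s\mapsto\mathrm{dev}(\rho',q,s)$ are the gradient lines of $\ct_{\rho'}$. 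This can be done by parametrizing $\mc N_{\rho'}$ by the gradient flow over a fixed $K_0$-surface of $\Omega^+_{\rho'}$ with $K_0$ large, which lies in $\mc N_{\rho'}$ for all $\rho'$ near $\rho$ by continuity of the $K$-surface foliation (Theorem~\ref{BBZ}) together with Lemma~\ref{ctconvbase}. Then $s\mapsto\ct_{\rho'}(\mathrm{dev}(\rho',q,s))$ is strictly increasing, the equation $\ct_{\rho'}(\mathrm{dev}(\rho',q,s))=\tfrac\pi4$ has a unique, continuously varying solution $s(\rho',q)$, and $\psi(\rho',q):=\mathrm{dev}(\rho',q,s(\rho',q))$ is the desired parametrization. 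Finally set $\Phi(\rho',p,r):=\Psi^{\,r-\pi/4}_{\rho'}\big(\psi(\rho',p)\big)$; properties (1) and (2) then follow immediately from the two previous steps, so the entire content is reduced to the joint continuity of $(\rho',p,s)\mapsto\Psi^s_{\rho'}(p)$ and of $\psi$.

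\emph{The hard part} will be the joint continuity of the flows $\Psi^s_{\rho'}$ in $\rho'$: that $\ct_{\rho'}\to\ct_\rho$ not merely in $C^0$ (which is Lemma~\ref{ctconvbase}) but together with first derivatives, locally uniformly on $\mc N_\rho$; equivalently, local uniform $C^{1,1}$-bounds on $\ct_{\rho'}$ near $\rho$. Rather than re-proving \cite[Lemma 4.3]{BS4} with parameters, I would use the geometric description of the gradient: along the gradient line through $p$, the direction of $\nabla\ct_\rho$ is that of $p-\eta_\rho(p)$, so convergence of the gradient directions follows from the convergence of the foot-point maps $\eta_{\rho'}\to\eta_\rho$ (Lemma~\ref{ctconvbase}) together with the convergence of the boundary data $\cl(\pt^\pm\tilde\Omega_{\rho'})\to\cl(\pt^\pm\tilde\Omega_\rho)$ and $\cl(\pt^-\tilde C_{\rho'})\to\cl(\pt^-\tilde C_\rho)$ (Lemmas~\ref{convtemp1-} and~\ref{convtemp2-}). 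This yields Hausdorff convergence of the gradient lines, and since these are honest timelike geodesics along which $\ct_{\rho'}$ is arc length, it upgrades to continuity of $\Psi^s_{\rho'}$ in $\rho'$, hence of $\Phi$. This is exactly what Bonsante extracts from his construction in the flat case; the ``special features of Minkowski geometry'' used there amount to using flat totally geodesic surfaces as comparison objects, whose role is here played by the $K$-surfaces of Theorem~\ref{BBZ}.
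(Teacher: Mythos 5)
Your proof takes essentially the same route as the paper's: obtain a continuously varying family of $\theta_{\rho'}$-equivariant spacelike Cauchy surfaces (the paper invokes the Ehresmann--Thurston theorem, which is exactly your ``continuous family of equivariant developing maps over $U$''), then slide along the gradient lines of $\ct_{\rho'}$ --- the timelike geodesics through the foot points $\eta_{\rho'}$ --- to the $r$-level surface, with joint continuity coming from Lemma~\ref{ctconvbase}. The one inessential difference is your detour through a $K_0$-surface to place the base surface inside $\ct_{\rho'}^{-1}\bigl((0,\tfrac{\pi}{2})\bigr)$: continuity of the $K$-surface foliation in the parameter $\rho'$ is not actually available in the paper, but the step is unnecessary, since any smooth spacelike equivariant Cauchy surface meets each (extended) gradient line exactly once.
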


\begin{proof}
Pick a smooth $\theta_{\rho}$-invariant Cauchy surface in $\tilde\Omega_\rho$ and parameterize it as the image of a $\theta_{\rho}$-equivariant embedding $\phi(\rho,.): \tilde S \ra \A^3$.
By the Ehresmann--Thurston theorem~\cite[Theorem 1.7.1]{CEG}, there exists a neighborhood $U$ of $\rho$ in $\mc R$ and a smooth map $\phi: U \times \tilde S \ra \A^3$ such that for every $\rho' \in U$ the map $\phi(\rho', .)$ is a $\theta_{\rho'}$-equivariant embedding. Provided that $U$ is small enough, the resulting surfaces are spacelike. Thus, their images in $\Omega_{\rho'}$ are embedded compact spacelike surfaces, hence they are Cauchy. Now for every $\rho' \in U$, $p \in \tilde S$ and $r \in (0, \frac{\pi}{2})$ we define $\Phi(\rho', p, r)$ to be the point on the $r$-level surface of $\ct_{\rho'}$ that is on the same gradient line of $\ct_{\rho'}$ as $\phi(\rho', p)$.  One can check that due to Lemma~\ref{ctconvbase}, the map $\Phi$ is continuous. By construction, it satisfies the desired properties.
\end{proof}

We will need a compactification of some ends of $\mc P_-^c$. Pick a compact neighborhood $U_-$ of $\rho_\circ$ in $\mc T$, let $\tilde U_-$ be its lift to a compact neighborhood of $\rho_\circ$ in $\mc R$. Let $\mc P_-^c(U_-)$ be the subset of $(\rho, f)\in\mc P_-^c$ where $\rho \in U_-$. Let $\tilde{\mc P}_-^c(\tilde U_-)$ be the similarly defined subset of $\tilde{\mc P}_-$. We need to describe a compactification of $\mc P_-^c(U_-)$. First, notice a natural compactification coming from the closure of $\tilde{\mc P}_-^c(\tilde U_-)$ in $\mc R \times (\RP^3)^V$: it consists in adding to $\tilde{\mc P}_-^c(\tilde U_-)$ the configurations $(\rho, \tilde f)$ such that (1) $\tilde f$ is not necessarily injective and values in $\cl(\tilde \Omega^+_\rho)$, where $\cl(.)$ is the closure in $\RP^3$, and (2) $\tilde f$ is in a convex position. This is a compactification of $\tilde{\mc P}_-^c(\tilde U_-)$ and projects to a compactification of $\mc P_-^c(U)$. However, we will need a rougher one. Namely, let $\pt_s^+ \tilde \Omega_\rho$ be the spacelike part of $\pt^+\tilde\Omega_\rho$. To obtain a compactification of $\mc P_-^c(U)$, it is enough to replace (1) by the condition that $\tilde f$ values in $\tilde \Omega^+_\rho \cup \pt^+\tilde C_\rho\cup \pt^+_s\tilde \Omega_\rho$. Heuristically, this is because the action of $\pi_1S$ on $\pt^+\tilde\Omega_\rho$ is not proper and the quotient space is not Hausdorff. Let us add such configurations to $\tilde{\mc P}_-^c(\tilde U_-)$ and denote the obtained topological space by $\tilde{\mc P}_-^\bdia(\tilde U_-)$. Denote the quotient space by ${\mc P}_-^\bdia(U_-)$.

\begin{lm}
\label{compact-}
The space ${\mc P}_-^\bdia(U_-)$ is compact.
\end{lm}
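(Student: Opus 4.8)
The plan is to show sequential compactness. Take a sequence $(\rho_i, \tilde f_i)$ in $\tilde{\mc P}_-^\bdia(\tilde U_-)$; I need to extract a subsequence converging in $\tilde{\mc P}_-^\bdia(\tilde U_-)$ modulo the $G_-$- and $\pi_1 S$-actions, or rather argue directly at the level of ${\mc P}_-^\bdia(U_-)$. First, since $\tilde U_-$ is compact, up to subsequence $\rho_i \ra \rho \in \tilde U_-$ in $\mc R$. Then Corollaries~\ref{convdombase},~\ref{convccorebase}, Lemma~\ref{convtemp1-}, Lemma~\ref{convtemp2-} give that $\cl(\tilde\Omega^+_{\rho_i})$, $\cl(\pt^+\tilde C_{\rho_i})$, $\cl(\pt^+_s\tilde\Omega_{\rho_i})$ converge (in the Hausdorff sense on closed subsets of $\RP^3$) to the corresponding sets for $\rho$. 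The key point is to control the positions of the vertices $\tilde f_i(v)$ so that, after acting by a suitable element of $\pi_1 S$ (and using the $G_-$-action to fix $\rho_i \ra \rho$), one vertex — say $\tilde f_i(v_0)$ — stays in a compact subset of $\tilde\Omega^+_\rho \cup \pt^+\tilde C_\rho \cup \pt^+_s\tilde\Omega_\rho$, and then deduce the same for all other vertices from the convex position condition.

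The mechanism for the anchoring is the cosmological time together with its retraction $\eta_\rho$ onto $\pt^+_s\tilde\Omega_\rho \cong \Psi^+(\rho)$, combined with Lemma~\ref{varylevelbase}. Concretely: for each $i$ pick a vertex $\tilde v_i \in \tilde V$ of $\tilde f_i$; since $\pi_1 S$ acts cocompactly on $\tilde S$ we may act by $\pi_1 S$ so that the projection of $\eta_{\rho_i}(\tilde f_i(\tilde v_i))$ to the quotient tree $\Psi^+(\rho_i)/\pi_1 S$ lies in a fixed compact fundamental domain; equivalently, $\eta_{\rho_i}(\tilde f_i(\tilde v_i))$ lies in a fixed compact subset $K$ of $\A^3$ (here I use that $\Psi^+(\rho_i)$ converge, via the Skora duality Theorem~\ref{skora} and convergence of bending laminations $\lambda^-(\rho_i)$, which follows from convergence of $\rho_i$). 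Since the cosmological time $\ct_{\rho_i}(\tilde f_i(\tilde v_i))$ is bounded above — it is at most $\pi/2$ on the past of $\pt^-C_{\rho_i}$, and the relevant vertices lie in $\tilde\Omega^+_{\rho_i}$ where $\ct < \pi/2$, with the limiting values also controlled by Lemma~\ref{ctconvbase} — the point $\tilde f_i(\tilde v_i)$ lies on the gradient line through $\eta_{\rho_i}(\tilde f_i(\tilde v_i)) \in K$ at parameter at most $\pi/2$, hence in a compact subset of $\RP^3$. By Lemma~\ref{varylevelbase} this compact set depends continuously on $\rho_i$, so it can be taken uniform for all large $i$. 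Up to subsequence $\tilde f_i(\tilde v_i) \ra p \in \cl(\tilde\Omega^+_\rho)$; I must then check $p \in \tilde\Omega^+_\rho \cup \pt^+\tilde C_\rho \cup \pt^+_s\tilde\Omega_\rho$ — this is exactly where the cosmological-time bound is used: a limit point with bounded cosmological time and retraction in the spacelike part $\pt^+_s\tilde\Omega_\rho$ cannot escape to the lightlike part of the boundary, by Lemma~\ref{ctconvbase}.

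Next I propagate this to all vertices. Enumerate a fundamental domain $v_0, \dots, v_N$ of $\tilde V$ under $\pi_1 S$ such that consecutive vertices $\tilde f_i(v_j)$ are "linked" through the convex position condition: since $\tilde f_i$ is in a convex position, each $\tilde f_i(v)$ lies on $\Sigma(\tilde f_i) \subset \cl(\conv(\tilde f_i(\tilde V)))$, and $\cl(\tilde C_{\rho_i}) \subset \conv(\tilde f_i)$ by Lemma~\ref{limset}; the convex hull $\conv(\tilde f_i(\tilde V))$ is trapped between $\pt^+\tilde C_{\rho_i}$ and the spacelike part of $\pt^+\tilde\Omega_{\rho_i}$, both of which converge to compact limits. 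Hence once one vertex is anchored in a compact set, the whole future-convex surface $\Sigma(\tilde f_i)$ — and in particular every $\tilde f_i(v_j)$ for $j = 0, \dots, N$ — lies in a compact region (using that the $\pi_1 S$-action identifies the surface, so boundedness of one fundamental piece gives boundedness of the rest of the fundamental domain of vertices). Passing to a further subsequence, all $\tilde f_i(v_j) \ra \tilde f(v_j) \in \tilde\Omega^+_\rho \cup \pt^+\tilde C_\rho \cup \pt^+_s\tilde\Omega_\rho$. Finally, Lemma~\ref{convconv} and Hausdorff-continuity of convex hulls give that $\tilde f$ is in a convex position (the closed conditions $\tilde f(v) \notin \inter(\clconv(\tilde f(\tilde{V\backslash v})))$ and $\tilde f(\tilde V) \subset \cl(\tilde\Omega^+_\rho)$ pass to the limit). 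Thus $(\rho, \tilde f) \in \tilde{\mc P}_-^\bdia(\tilde U_-)$ and its class is the limit of the classes of $(\rho_i, \tilde f_i)$ in ${\mc P}_-^\bdia(U_-)$.

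The main obstacle I expect is the anchoring step — making precise that, after the $\pi_1 S$-action, the cosmological time and the tree-valued retraction $\eta_{\rho_i}$ together confine a single vertex to a compact set \emph{uniformly in $i$}. This requires knowing that the quotient trees $\Psi^+(\rho_i)$ do not degenerate or blow up, which is where Theorem~\ref{skora} and the convergence $\lambda^-(\rho_i) \ra \lambda^-(\rho)$ (a consequence of $\rho_i \ra \rho$ via Theorem~\ref{adsearthq} and continuity of earthquakes) are essential, together with Lemmas~\ref{varylevelbase} and~\ref{ctconvbase} to get the continuous dependence of the relevant compact regions on $\rho$. The rest — propagation to all vertices and closedness of the convex-position conditions under Hausdorff limits — is routine given the convergence lemmas of Section~\ref{sec:blowups}.
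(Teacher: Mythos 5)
Your core mechanism — anchoring a vertex by pushing it into a compact fundamental domain built from gradient lines of the cosmological time, using Lemmas~\ref{varylevelbase} and~\ref{ctconvbase} for uniformity in $i$, and then observing that limit points on $\pt\tilde\Omega_\rho$ are forced into the spacelike part — is exactly the paper's strategy. However, your execution of the anchoring has a gap precisely at the point you flag as the main obstacle. You normalize so that $\eta_{\rho_i}(\tilde f_i(\tilde v_i))$ lies in ``a fixed compact fundamental domain'' of the quotient tree $\Psi^+(\rho_i)/\pi_1S$ and assert this is ``equivalently'' containment in a fixed compact $K\subset\A^3$; that equivalence is not justified (the trees vary with $i$, need not be simplicial, and compactness of the quotient in the tree metric does not by itself give a uniformly compact fundamental domain in the ambient $\RP^3$). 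The detour through Skora duality and convergence of bending laminations is not what closes this gap. The paper instead takes $D_{\rho'}:=\Phi(\rho',D,(0,\tfrac{\pi}{2}))$ for a compact fundamental domain $D\subset\tilde S$, which by Lemma~\ref{varylevelbase} is a fundamental domain for the $\theta_{\rho'}$-action on the past of $\pt^-\tilde C_{\rho'}$; Lemma~\ref{ctconvbase} gives $\cl(D_{\rho_i})\ra\cl(D_\rho)$ in $\RP^3$, and $\cl(D_\rho)\setminus\tilde\Omega_\rho\subset\pt^+_s\tilde\Omega_\rho$ because the gradient lines terminate at points of the image of $\eta_\rho$. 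This single construction does all the anchoring; no laminations are needed.

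Your propagation step to the remaining vertices is also problematic. The claim that once one vertex is anchored ``the whole future-convex surface $\Sigma(\tilde f_i)$ lies in a compact region'' is false: $\Sigma(\tilde f_i)$ is a $\pi_1S$-invariant Cauchy surface whose closure contains $\Lambda_{\rho_i}$, so it is never contained in a compact subset of $\A^3$, and anchoring one vertex does not bound the others. Fortunately this step is unnecessary: in the quotient $\mc P_-^w$ the data of $(\rho,f)$ is a map $f:V\ra\Omega_\rho$, i.e., each marked point is recorded independently in the quotient spacetime, so one may normalize each vertex separately — this is why the paper reduces at the outset to $|V|=1$. Your final step (closedness of the convex-position condition under Hausdorff limits) is fine and matches what is implicitly used.
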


\begin{proof}
It is enough to consider the case of $|V|=1$, so $V=\{v\}.$ 
Consider a sequence $(\rho_i, f_i) \in {\mc P}_-(U_-)$. Lift $\rho_i$ to $\tilde U_-$,  assume that, up to subsequence, they converge to $\rho \in \tilde U_-$. Pick a neighborhood $U$ of $\rho$ and a map $\Phi:U \times \tilde S \times (0, \frac{\pi}{2})$ from Lemma~\ref{varylevelbase}. Pick a compact fundamental domain $D \subset \tilde S$ for the $\pi_1S$-action on $\tilde S$. Then for every $\rho' \in U$, $\Phi(\rho', D,  (0, \frac{\pi}{2}))$ is a fundamental domain for the $\theta_{\rho'}$-action on the past of $\pt^-\tilde C_{\rho'}$ in $\tilde\Omega_{\rho'}$. Denote this domain by $D_{\rho'}$. Using Lemma~\ref{ctconvbase}, one sees that $\cl(D_{\rho_i})$ converge in the Hausdorff sense to $\cl(D_\rho)$ as subsets of $\RP^3$. Pick a representative $\tilde f_i$ of $f_i$ such that $\tilde f_i(v) \in D_{\rho_i}$. Then, up to subsequence, $\tilde f_i(v)$ converge to $\tilde f(v)\in \cl(D_\rho)$. But $(\cl(D_\rho)\backslash \tilde\Omega_\rho)\subset \pt^+_s\tilde \Omega_\rho$, which finishes the proof. 
\end{proof}

Now for $\rho \in \mc R$ we describe the the canonical decomposition of $\tilde\Omega_\rho$, following~\cite[Section 5.4]{BBZ}. More exactly, it is the decomposition of the strict past of $\pt^-\tilde C_\rho$. Every $p \in \pt^+_s\tilde\Omega_\rho$ determines a block defined as the intersection of $\eta_\rho^{-1}(p)$ with the strict past of $\pt^-\tilde C_\rho$. If $p$ corresponds to a vertex of the respective metric tree, the block is called \emph{solid}, otherwise it is called \emph{thin}. For every edge of the metric tree  the union of the respective thin blocks 
is called a \emph{Misner block}. In turn, this projects to a decomposition of $\Omega_\rho$. 

Let $\lambda^-$ be the bending lamination of $\pt^- C_\rho$. From the viewpoint of the geometry of $\pt^- C_\rho$, the isolated components of $\lambda^-$ correspond to the Misner blocks, every non-isolated component of $\lambda^-$ determines a thin block, which does not belong to any Misner block, and the components of $\pt^- C_\rho\backslash \lambda^-$ correspond to the thick blocks.

\subsection{Convergence of marked points away from blow-up}
\label{sec:convmp1}

In this subsection we prove Lemma~\ref{ml2'}, provided that $y \in \mc D_-$ (and hence we denote it by $d$).
The main technical tool is the following result.


\begin{lm}
\label{ctinf}
For every $\e>0$ and every compact set $U \subset \mc T$ there exists $\alpha>0$ such that for every future-convex Cauchy surface $\Sigma \subset \Omega_\rho$, $\rho \in U$, if $\sys(\Sigma)\geq \e$, then 
\[\inf_{p \in \Sigma} \ct_\rho(p) \geq \alpha.\]
\end{lm}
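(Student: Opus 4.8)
The plan is to argue by contradiction, combining the compactness statements of Section~\ref{sec:ct} with the structure of the canonical decomposition of $\tilde\Omega_\rho$ into solid, thin and Misner blocks. Suppose the claim fails: there are $\e>0$, a compact $U\subset\mc T$, representations $\rho_i\in U$ and future-convex Cauchy surfaces $\Sigma_i\subset\Omega_{\rho_i}$ with $\sys(\Sigma_i)\geq\e$ but $r_i:=\inf_{\Sigma_i}\ct_{\rho_i}\to 0$. Since $\Omega_{\rho_i}$ is Cauchy compact, $\Sigma_i$ is compact and the infimum is attained. Lift $\rho_i$ to a compact subset of $\mc R$, pass to a subsequence with $\rho_i\to\rho$, and choose a lift $\tilde p_i\in\tilde\Sigma_i$ of a minimum point lying in the compact point-fundamental-domain $D_{\rho_i}$ provided by Lemma~\ref{varylevelbase}. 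As in the proof of Lemma~\ref{compact-}, $\cl(D_{\rho_i})\to\cl(D_\rho)$ in $\RP^3$, so up to a further subsequence $\tilde p_i\to\tilde p$; by the continuity statement in Lemma~\ref{ctconvbase}, $\ct_\rho(\tilde p)=\lim\ct_{\rho_i}(\tilde p_i)=0$, hence $\tilde p\in\pt^+_s\tilde\Omega_\rho$, a point of the dual $\Gamma$-tree $\Psi^+(\rho)$.

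Since $\sys(\Sigma_i)\geq\e$, for every $\gamma\in\pi_1S\setminus\{1\}$ we have $d_{\tilde\Sigma_i}\bigl(\tilde p_i,\theta_{\rho_i}(\gamma)\tilde p_i\bigr)\geq\e$, and I aim to contradict this by producing $\gamma_i\neq 1$ with $d_{\tilde\Sigma_i}\bigl(\tilde p_i,\theta_{\rho_i}(\gamma_i)\tilde p_i\bigr)\to 0$. The passage from intrinsic to ambient distance is soft: fix a small ball $B\ni\tilde p$ inside a chart where the ambient metric is comparable to a Euclidean one, and a spacelike plane $\Pi$; for $i$ large $\tilde\Sigma_i\cap B$ is a convex spacelike graph over $\Pi$, and since the Riemannian metric induced on a spacelike graph is dominated by the Euclidean metric of its base, the graph-lift of the $\Pi$-segment between the projections of $\tilde p_i$ and $\theta_{\rho_i}(\gamma_i)\tilde p_i$ has intrinsic length at most $C\,d_{\A^3}\bigl(\tilde p_i,\theta_{\rho_i}(\gamma_i)\tilde p_i\bigr)$, with $C$ depending only on $B$ and $U$. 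It therefore suffices to find $\gamma_i\neq 1$ whose action returns $\tilde p_i$ to $B$ with ambient displacement tending to $0$. Here one uses that $\Gamma$ does not act freely on $\Psi^+(\rho)$, because $\pi_1S$ is not free: if $\tilde p$ lies on a thin or Misner part of the canonical decomposition, i.e.\ has non-trivial stabiliser (or is an accumulation point of its orbit), then $\gamma_i$ may be chosen constant and the displacement is $O(r_i)$ by Lemma~\ref{ctconvbase} and the pinching geometry of Misner blocks from~\cite{BBZ, Bon}; the remaining case — $\tilde p$ an isolated orbit point of a solid block with trivial stabiliser — must be excluded by a global argument, showing that a future-convex Cauchy surface whose $\ct$-minimum lies arbitrarily deep in a fixed solid block is forced, by convexity together with the causal structure, to develop a short waist upon leaving it, quantified through the level-set maps of Lemma~\ref{varylevelbase} and the asymptotics of $l_{L_r}$ à la Belaraouti already invoked in Lemma~\ref{treecomparison}. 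Reaching a contradiction for every subsequential limit $\rho$ and extracting once more yields the uniform $\alpha=\alpha(\e,U)$.

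I expect the main obstacle to be precisely this last point: ruling out, uniformly in $\rho\in U$, that the cosmological-time minimum of a systolically bounded future-convex Cauchy surface escapes into a non-pinching region of the canonical decomposition, and making the resulting lower bound $\alpha$ independent of $\rho$. The continuous dependence of the decomposition on $\rho$ encoded in Lemmas~\ref{ctconvbase} and~\ref{varylevelbase}, and the compactification $\mc P_-^\bdia(U_-)$ of Lemma~\ref{compact-}, are exactly the tools that should make this book-keeping work; the genuinely geometric content is the local comparison between cosmological time and injectivity radius near $\pt^+\tilde\Omega_\rho$.
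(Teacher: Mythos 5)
Your overall strategy (contradiction plus compactness in $\rho$) is reasonable, but the proposal has a genuine gap exactly where you flag it, and that gap is the entire content of the lemma. Your plan reduces to producing, for each $i$, a nontrivial $\gamma_i$ whose displacement of the deep point $\tilde p_i$ on $\Sigma_i$ tends to $0$. The dichotomy you set up via stabilizers in the tree $\Psi^+(\rho)$ does not cover the cases that matter: the retraction $\eta(\tilde p)$ can perfectly well land at a vertex of a solid block, or on an edge whose (cyclic, possibly trivial) stabilizer gives no short return, and the fact that $\pi_1S$ cannot act freely on a tree says nothing about the stabilizer of the particular point you obtained. You explicitly defer this case to ``a global argument'' that you do not supply; that deferred step is precisely where a short essential loop through $\tilde p_i$ has to be constructed, and it is the hard part. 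Even in the Misner/thin case your claim that the displacement is $O(r_i)$ is not automatic --- it requires the block-by-block length estimates, and for a thin block coming from a non-closed leaf the arc stabilizer can be trivial.

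The paper closes this gap quantitatively rather than by a case analysis on the tree. The key input is Lemma~\ref{sysct}: for a future-convex Cauchy surface in the past of a level surface $L_r$ one has the explicit bound
\[\inf_{p \in \Sigma} \ct(p) \geq \frac{C\exp(-\delta)\sys^2(\Sigma)\cos(r)}{A_r\exp\left(\tfrac{2A_r}{\sys(\Sigma)\cos(r)}\right)},\]
whose proof (imported from~\cite{FP} with the Lipschitz correction of Lemma~\ref{ctcompare}) uses an area/Gauss--Bonnet argument on $L_r$ to find a short essential loop through the projection of the deep point and then lifts it to $\Sigma$ via Lemma~\ref{timeestimate}; this works uniformly over all blocks of the canonical decomposition, with no need to know where $\eta(\tilde p)$ sits. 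The remaining work in the paper is exactly the book-keeping you anticipate but do not carry out: one must place $\Sigma$ in the past of a level surface $L_r$ with $r$ and $A_r$ controlled locally uniformly in $\rho$. This splits into $\rho\neq\rho_\circ$ (the width of the convex core is positive and continuous, so $L_r\subset C_{\rho'}$ for a fixed $r$ on a neighborhood, and $A_r$ is continuous by Lemma~\ref{areaexpr}) and $\rho=\rho_\circ$ (a separate compactness argument, using Lemma~\ref{compact-} and the spacelikeness of chords of Cauchy surfaces, showing that a small infimum of $\ct$ forces a small supremum). Without Lemma~\ref{sysct} or an equivalent construction of the short loop, your argument does not go through.
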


Note that when $U$ is a point, this is the anti-de Sitter part of~\cite[Theorem 3.5]{BBZ}. There seem to be small inaccuracies in their proof in the anti-de Sitter case, which, however, are easy to fix. Namely,~\cite[Proposition 6.1]{BBZ} relies on the fact that, for a given $\rho\in \mc R$, the maps $\zeta_{r_1,r_2}:L_{r_1} \ra L_{r_2}$, $r_1>r_2$, along the gradient flow of $\ct$ are 1-Lipschitz. The paper~\cite{Bon} is cited, which, however, tackles only the Minkowski situation. This claim is actually wrong in anti-de Sitter geometry, as one can see by considering a Misner block, for which the metric can be written explicitly and the tangent vectors transverse to the foliation expand in the wrong direction. Nevertheless, the correction is

\begin{lm}
\label{lipscontrol}
The map $\zeta_{r_1,r_2}$ is $\cos^{-1}(r_1)$-Lipschitz.
\end{lm}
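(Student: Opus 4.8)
The plan is to reduce the claim to a pointwise estimate on the differential of $\zeta_{r_1,r_2}$ and then to evaluate that differential via the Jacobi equation of $\A^3$.

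First I would note that it suffices to prove $\|d_p\zeta_{r_1,r_2}\|_{\mathrm{op}}\le\cos^{-1}(r_1)$ at almost every $p\in L_{r_1}$, where the operator norm is taken with respect to the induced Riemannian metrics of $L_{r_1}$ and $L_{r_2}$: since $\zeta_{r_1,r_2}$ is globally continuous, such a pointwise bound integrates along paths to the desired Lipschitz estimate. As $\ct_\rho$ is $C^{1,1}$ on the strict past of $\pt^-\tilde C_\rho$, for a.e.\ $p$ the gradient flow line $\gamma$ of $\ct_\rho$ through $p$ is a unit-speed timelike geodesic of $\A^3$ along which the level sets $L_r$, $r\in(0,\tfrac{\pi}{2})$, are smooth nondegenerate spacelike surfaces; I parametrise $\gamma$ by the value $r$ of $\ct_\rho$. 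Parallel transport along $\gamma$ identifies all the tangent planes $T_{\gamma(r)}L_r$ isometrically with the fixed plane $W:=T_pL_{r_1}$, and in these flow-line coordinates $\zeta_{r_1,r_2}$ is the identity of $W$; hence $\|d_p\zeta_{r_1,r_2}\|_{\mathrm{op}}$ is exactly the largest $h_{r_1}$-versus-$h_{r_2}$ stretch on $W$, where $h_r$ is the induced metric of $L_r$ read on $W$.

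Because $\A^3$ has constant sectional curvature $-1$, the Jacobi fields along $\gamma$ tangent to the $L_r$ are combinations of $\cos r$ and $\sin r$, which makes the computation explicit: $d_p\zeta_{r_1,r_2}=\cos\delta\cdot\mathrm{Id}-\sin\delta\cdot B_{r_1}$ as an $h_{r_1}$-self-adjoint endomorphism of $W$, where $\delta:=r_1-r_2\in(0,r_1)$ and $B_{r_1}$ is the (a.e.\ defined) shape operator of $L_{r_1}$ at $p$ with respect to the future unit normal. So the Lipschitz constant of $\zeta_{r_1,r_2}$ equals $\sup_p\max_\mu|\cos\delta-\sin\delta\cdot\mu|$, the maximum over the eigenvalues $\mu$ of $B_{r_1}$, and the task becomes to bound those eigenvalues. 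Here the essential input is that $\{L_r\}_{r\in(0,\pi/2)}$ is a genuine foliation of the strict past of $\pt^-\tilde C_\rho$ by smooth nondegenerate spacelike surfaces --- precisely what it means for $\ct_\rho$ to be $C^{1,1}$ there with $\pt^-\tilde C_\rho=L_{\pi/2}(\rho)$ --- so that $\gamma$ meets no focal point of $L_{r_1}$ for parameter values in $(0,\tfrac{\pi}{2})$. This non-vanishing of $\cos(r-r_1)\,\mathrm{Id}+\sin(r-r_1)\,B_{r_1}$ (equivalently, of its inverse-type counterpart) on $(0,\tfrac{\pi}{2})$ confines the eigenvalues of $B_{r_1}$ to the interval $[-\tan r_1,\cot r_1]$, and the decreasing affine map $\mu\mapsto\cos\delta-\sin\delta\cdot\mu$ then carries this interval onto $[\sin r_2/\sin r_1,\ \cos r_2/\cos r_1]$; both endpoints are positive, the first being $\le1$ and the second $\le\cos^{-1}(r_1)$ since $\cos r_2\le1$. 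This yields $\|d_p\zeta_{r_1,r_2}\|_{\mathrm{op}}\le\cos^{-1}(r_1)$ and hence the lemma.

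The step I expect to be the main obstacle is the sign and regularity bookkeeping: fixing the normal orientation so that the no-focal-point condition produces exactly the interval $[-\tan r_1,\cot r_1]$, and making the a.e.\ argument rigorous (only pointwise differentials enter, and $\zeta_{r_1,r_2}$ is Lipschitz on each block of the canonical decomposition and glued continuously). A useful cross-check, and an alternative route bypassing the a.e.\ discussion, is the canonical decomposition of~\cite{BBZ}: on a thick block $h_r=\sin^2 r\cdot g$ for a fixed hyperbolic metric $g$, so $\zeta_{r_1,r_2}$ contracts by $\sin r_2/\sin r_1\le1$; on a Misner block $h_r=\sin^2 r\,d\phi^2+\cos^2 r\,dx^2$, so $\zeta_{r_1,r_2}$ contracts the $d\phi^2$-direction by $\sin r_2/\sin r_1\le1$ but \emph{expands} the $dx^2$-direction along the bending lines by exactly $\cos r_2/\cos r_1\le\cos^{-1}(r_1)$. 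This expansion in the ``wrong'' direction is precisely what invalidates the $1$-Lipschitz claim of~\cite{BBZ}, and it is in a Misner block, letting $r_2\to0$, that the constant $\cos^{-1}(r_1)$ is sharp.
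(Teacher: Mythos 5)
The paper does not actually prove this lemma: it is imported verbatim from Belraouti~\cite[Proposition 6.13]{Bel}. So you are supplying an argument where the text supplies none, and your overall strategy is the right one: reduce to an a.e.\ operator-norm bound on $d\zeta_{r_1,r_2}$, express that differential through Jacobi fields along the gradient lines of $\ct_\rho$ (which are unit timelike geodesics), and bound the eigenvalues of the shape operator of $L_{r_1}$. Your formula $d_p\zeta_{r_1,r_2}=\cos\delta\cdot\mathrm{Id}-\sin\delta\cdot B_{r_1}$, the image interval $[\sin r_2/\sin r_1,\ \cos r_2/\cos r_1]$, and the model computations on solid and Misner blocks are all correct, and the constant $\cos r_2/\cos r_1\le\cos^{-1}(r_1)$ is indeed sharp on a Misner block.

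There is, however, a genuine gap at the pivotal step. You confine the eigenvalues of $B_{r_1}$ to $[-\tan r_1,\cot r_1]$ by asserting that $\gamma$ meets no focal point of $L_{r_1}$ on $(0,\tfrac{\pi}{2})$, and you justify this by saying it is ``precisely what it means for $\ct_\rho$ to be $C^{1,1}$''. It is not: $C^{1,1}$ regularity gives level sets that are only $C^{1,1}$ and a gradient flow that is only locally bi-Lipschitz; it yields no quantitative two-sided Hessian bound, and the no-focal-point statement is in fact \emph{equivalent} to the eigenvalue bound you are trying to establish, so as written the argument is circular. The bound is true and can be obtained by a two-barrier comparison. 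Since $\ct_\rho$ is the supremum of timelike distances to points of the initial singularity, the distance sphere of radius $r_1$ about $\eta_\rho(p)$ (shape operator $\cot(r_1)\,\mathrm{Id}$) touches $L_{r_1}$ at $p$ with $L_{r_1}$ on its past side, giving $\mu\le\cot r_1$. For the other side, note that the gradient line through $p$ reaches a point $y\in\pt^-\tilde C_\rho$ at parameter $\pi/2$ and that the reverse triangle inequality forces $L_{r_1}$ to lie in the causal future of the distance sphere of radius $\pi/2-r_1$ about $y$ (shape operator $-\tan(r_1)\,\mathrm{Id}$), with contact at $p$; this gives $\mu\ge-\tan r_1$. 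Two further points: the passage from an a.e.\ differential bound to a Lipschitz bound must be done carefully, since a shortest path in $L_{r_1}$ may lie entirely in the null set where $B_{r_1}$ is undefined (e.g.\ along a bending line); and your ``alternative route'' through the canonical decomposition is only a heuristic cross-check, because, as recalled in Section~\ref{sec:ct}, a lamination with non-isolated leaves produces two-dimensional thin blocks belonging to no Misner block, where neither model formula applies.
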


This is shown in~\cite[Proposition 6.13]{Bel}. This helps us with a corrected version of~\cite[Proposition 6.1]{BBZ}.

\begin{lm}
\label{ctcompare}
Let $0<r<\pi/2$, $\chi: [0,1] \ra \Omega_\rho$ be a spacelike rectifiable curve in the past of $L_r$ and $\chi'$ be its projection to $L_r$ along the gradient flow of $\ct$. Then \[\len(\chi) \leq \len(\chi')\cos^{-1}(r);\] 
\[|\ct(\chi(0))-\ct(\chi(1))|\leq \len(\chi')\cos^{-1}(r).\]
\end{lm}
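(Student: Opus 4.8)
The plan is to reduce both inequalities to the Lipschitz control from Lemma~\ref{lipscontrol} by decomposing $\chi$ along the level surfaces of $\ct$. First I would observe that the second inequality is an immediate consequence of the first: since $\chi$ is spacelike, its two endpoints are connected by a spacelike curve, and in a time-oriented Lorentzian setting the cosmological time of two causally unrelated points differs by at most the length of any spacelike path joining them (more precisely, one compares the reverse triangle inequality along the timelike gradient lines to $\pt^+\tilde\Omega_\rho$ through $\chi(0)$ and $\chi(1)$). Concretely, if $q_j=\eta_\rho(\chi(j))$ realizes $\ct(\chi(j))$ for $j=0,1$, then the timelike segments from $q_0$ and the spacelike curve $\chi$ form a broken path, and maximality of $\ct(\chi(1))$ together with the reverse triangle inequality yields $|\ct(\chi(0))-\ct(\chi(1))|\leq\len(\chi)$; combined with the first inequality this gives the second. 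So the whole content is the first inequality $\len(\chi)\leq\len(\chi')\cos^{-1}(r)$.

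For the first inequality, the key point is that $\chi$ lies in the past of $L_r$, so every point $\chi(s)$ has $\ct(\chi(s))\leq r<\pi/2$, and hence lies on a level surface $L_{r(s)}$ with $r(s)\leq r$. The projection along the gradient flow of $\ct$ sends $\chi$ to a curve $\chi'$ on $L_r$. I would argue pointwise on the tangent level: at a point where $\chi$ is differentiable, the differential of the flow map $L_{r(s)}\to L_r$ is, by Lemma~\ref{lipscontrol} (applied with $r_1=r$, $r_2=r(s)\leq r$, noting the map runs from the \emph{higher} level down, so its inverse $\zeta_{r,r(s)}$ is what Lemma~\ref{lipscontrol} controls — I should be careful about which direction is contracting), a map whose inverse is $\cos^{-1}(r)$-Lipschitz, equivalently the forward map from $L_{r(s)}$ to $L_r$ expands lengths by a factor at most $\cos^{-1}(r)$. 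Wait — I need to get the direction right: Lemma~\ref{lipscontrol} says $\zeta_{r_1,r_2}$ for $r_1>r_2$ is $\cos^{-1}(r_1)$-Lipschitz, i.e. flowing \emph{down} from level $r_1$ to a lower level $r_2$ contracts by $\cos^{-1}(r_1)$; hence flowing \emph{up} from $L_{r(s)}$ to $L_r$ multiplies lengths by at most $\cos^{-1}(r(s))\leq\cos^{-1}(r)$ (using that $\cos^{-1}$ is increasing on $[0,\pi/2)$ and $r(s)\le r$). Then the speed of $\chi$ at $s$ is bounded by $\cos^{-1}(r)$ times the speed of $\chi'$ at the corresponding parameter. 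Integrating over $[0,1]$ gives $\len(\chi)\leq\cos^{-1}(r)\,\len(\chi')$, as desired. Since $\chi$ is only assumed rectifiable, rather than differentiating I would instead work directly with the metric-space Lipschitz estimate: the flow map $L_r\to L_{r'}$ for $r'\le r$ is globally $\cos^{-1}(r)$-Lipschitz by Lemma~\ref{lipscontrol} (taking a limit/composition over intermediate levels if needed), hence the reverse map is $\cos^{-1}(r)$-expanding in the length-of-image sense for curves, and apply this to $\chi$ viewed as the image of $\chi'$ under successive downward flows.

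The main obstacle I anticipate is bookkeeping the direction of the Lipschitz inequality and handling the fact that $\chi$ need not stay on a single level surface: the projection map along gradient lines is defined globally on the past of $L_r$, but to invoke Lemma~\ref{lipscontrol} cleanly one wants a composition of flows between fixed levels, whereas $\chi$ wanders through a range of levels $r(s)\in(0,r]$. The clean way around this is to note that $\chi'\mapsto\chi$ factors as: first flow $\chi'$ down from $L_r$ along gradient lines, recording at each gradient line the level $r(s)$ at which it meets $\chi$. Since each such downward flow segment is $\cos^{-1}(r)$-Lipschitz (the Lipschitz constant $\cos^{-1}(r_1)$ in Lemma~\ref{lipscontrol} depends only on the \emph{top} level $r_1=r$, not on how far down one flows), the composite map carrying $\chi'$ to $\chi$ decreases the length by a factor at least $\cos(r)$ in the worst case, i.e. $\len(\chi)\le\cos^{-1}(r)\len(\chi')$. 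One should also check the degenerate possibility $\len(\chi')=0$, which forces $\chi$ to lie on a single gradient line and hence (being spacelike) to be a point, making both sides zero. This completes the argument.
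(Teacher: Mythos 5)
There is a genuine gap, and it sits in your reduction of the second inequality to the first. You claim that ``the cosmological time of two causally unrelated points differs by at most the length of any spacelike path joining them,'' i.e.\ that $|\ct(\chi(0))-\ct(\chi(1))|\leq\len(\chi)$. This is false. The gradient of $\ct$ is a unit timelike vector, and the Lorentzian pairing of a unit timelike vector with a unit spacelike vector is unbounded, so $\ct$ is not Lipschitz along spacelike curves. Concretely, in the model case of the future cone of the origin in $\R^{2,1}$ with $\ct(p)=\sqrt{-\langle p,p\rangle}$, the spacelike curve $u\mapsto(u,0,1+\lambda u)$, $0<\lambda<1$, has speed $\sqrt{1-\lambda^2}$ while $\tfrac{d}{du}\ct=\lambda$ at $u=0$; the ratio blows up as $\lambda\ra 1$. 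Your appeal to the reverse triangle inequality cannot rescue this, because $\chi(0)$ and $\chi(1)$ are spacelike related, so there is no broken causal path through both of them to compare with the maximizing timelike segment. The bound in the lemma is genuinely in terms of $\len(\chi')$, which in the example above stays of order $u_0$ even as $\len(\chi)\ra 0$; the second inequality therefore requires its own argument through the projection (a pointwise estimate of $|\ct(p)-\ct(q)|$ for causally unrelated $p,q$ in terms of the distance between their projections on $L_r$), and does not follow from the first.

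For the first inequality your plan is the right shape but the key step is asserted rather than proved. The curve $\chi$ is not the image of $\chi'$ under any single map $\zeta_{r,r'}$: each point $\chi(s)$ sits at its own level $r(s)$, and the Lipschitz property of the fixed-level maps $\zeta_{r_1,r_2}$ (between level surfaces with their intrinsic metrics) does not by itself control the spacelike length of a curve of variable height. One must still compare, for a fine partition, $d_A(\chi(t_i),\chi(t_{i+1}))$ with the distance between the projections onto a common level, which uses that the timelike component of the displacement only shortens the spacelike distance; this is precisely the content of the argument of Barbot--B\'eguin--Zeghib that the paper is importing. Indeed, the paper gives no self-contained proof of this lemma: it presents it as the corrected version of \cite[Proposition 6.1]{BBZ}, obtained by running that proof with the $1$-Lipschitz claim replaced by the $\cos^{-1}(r_1)$-Lipschitz bound of Lemma~\ref{lipscontrol}. (Minor point: your phrasing about the flow ``contracting'' and about the up-flow ``multiplying lengths by at most $\cos^{-1}(r(s))$'' has the directions garbled --- $\cos^{-1}(r)>1$, and the usable statement is that the downward map from $L_r$ applied to $\chi'$ expands lengths by at most $\cos^{-1}(r)$ --- though you do land on the correct final inequality.)
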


The next lemma is~\cite[Proposition 6.2]{BBZ}. Despite its proof in~\cite{BBZ} relies on~\cite[Proposition 6.1]{BBZ}, which, as we mentioned, should be corrected, the proof of~\cite[Proposition 6.2]{BBZ} is correct as it is because it uses~\cite[Proposition 6.1]{BBZ} only inside solid and thin blocks, where the claim of~\cite[Proposition 6.1]{BBZ} actually holds as stated.

\begin{lm}
\label{timeestimate}
Let $0<r<\pi/2$, $\chi: [0,1] \ra \Omega_\rho$ be a spacelike rectifiable curve in the past of $L_r$ and $\chi'$ be its projection to $L_r$ along the gradient flow of $\ct$. Assume that $\chi$ belongs to a single block of the canonical decomposition. Then for an absolute constant $C>1$ we have
\[ C^{-1}\exp^{-1}(\len(\chi')) \leq \frac{\ct(\chi(0))}{\ct(\chi(1))} \leq C\exp(\len(\chi')),\]
\[\len(\chi)\leq C\len(\chi')\exp(\len(\chi'))\ct(\chi(0)).\]
\end{lm}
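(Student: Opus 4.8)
The plan is to prove Lemma~\ref{timeestimate} exactly as \cite[Proposition 6.2]{BBZ}: reduce to a single block of the canonical decomposition and run the estimates in the explicit coordinates available there. The one point that needs attention — and the reason this is worth stating separately — is that the proof invokes the $1$-Lipschitz property of the projection maps $L_{r_1}\to L_{r_2}$ along the gradient flow of $\ct$ (that is, \cite[Proposition 6.1]{BBZ}), which is \emph{false} in general on a Misner block and only survives there in the corrected form of Lemma~\ref{lipscontrol}; however, inside a single solid or thin block it holds as stated, so no correction is needed for this lemma.

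First I would set up the models. Recall from Section~\ref{sec:ct} that $\eta_\rho$ is constant on each block, so on a block $B$ one has $\ct_\rho(q)=d_{\A^3}(q,p_0)$ with $p_0:=\eta_\rho(B)\in\pt^+_s\tilde\Omega_\rho$, and the gradient lines of $\ct_\rho$ inside $B$ are the timelike geodesics issuing from $p_0$. If $B$ is solid, then after a motion in $G_-$ it is carved out of the future timelike cone of $p_0$ by finitely many timelike totally geodesic planes, and in geodesic polar coordinates based at $p_0$ the metric is $-ds^2+\sin^2 s\,\bar g$ with $\bar g$ a hyperbolic metric on a convex polygon, $\ct_\rho=s\in(0,\tfrac{\pi}{2})$, and the gradient flow is the $s$-flow. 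If $B$ is thin, it lies in a Misner block which, after a motion in $G_-$, is the future of a spacelike geodesic $\ell$; in adapted coordinates that Misner block carries the metric $-dt^2+\cos^2 t\,d\sigma^2+\sin^2 t\,d\beta^2$, with $\ct_\rho=t$ and $\eta_\rho$ recording the $\sigma$-coordinate, so a single thin block is a slice $\{\sigma=\mathrm{const}\}$ with metric $-dt^2+\sin^2 t\,d\beta^2$ and the gradient flow again the $t$-flow. The essential point is that in both cases the warping in the directions transverse to the gradient flow is $\varphi(s)=\sin s$, increasing on $(0,\tfrac{\pi}{2})$; hence the projection $L_{r_1}\to L_{r_2}$ along the gradient flow contracts when $r_1>r_2$ \emph{within one block}. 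It is precisely the expanding $\cos t\,d\sigma$ direction of a \emph{full} Misner block — absent in a single thin block — that forces the correction of Lemma~\ref{lipscontrol}, which is why here the unmodified bound suffices.

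Then the two displayed inequalities are a direct integration in these coordinates. Writing $\chi(\tau)=(s(\tau),u(\tau))$ in the radial/transverse coordinates of $B$, the spacelike condition reads $\dot s^2<\varphi(s)^2|\dot u|^2$, while by construction $\chi'(\tau)=(r,u(\tau))$, so $\len(\chi')=\varphi(r)\int_0^1|\dot u|\,d\tau$ and $\ct_\rho(\chi(\tau))=s(\tau)\le r$. For the cosmological-time ratio, integrate $\bigl|\tfrac{d}{d\tau}\log\ct_\rho(\chi(\tau))\bigr|=|\dot s|/s\le(\varphi(s)/s)\,|\dot u|$, using that $\varphi(s)/s$ is bounded on $(0,\tfrac{\pi}{2})$, and compare $\int|\dot u|$ with $\len(\chi')$; this yields the claimed exponential two-sided bound on $\ct_\rho(\chi(0))/\ct_\rho(\chi(1))$ and, applied to each subarc, the control of $\ct_\rho(\chi(\tau))$ by $\ct_\rho(\chi(0))$ at every $\tau$. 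For the length, $\len(\chi)=\int\sqrt{-\dot s^2+\varphi(s)^2|\dot u|^2}\,d\tau\le\int\varphi(s)|\dot u|\,d\tau\le\bigl(\max_\tau s(\tau)\bigr)\int|\dot u|\,d\tau$, and substituting the bound on $\max_\tau\ct_\rho(\chi(\tau))$ in terms of $\ct_\rho(\chi(0))$ gives the second inequality. The main obstacle is bookkeeping rather than anything conceptual: one has to track how the comparison between $\int|\dot u|$ and $\len(\chi')$ and the passage from $\ct_\rho$ along $\chi$ to its value at $\chi(0)$ degrade near the initial singularity (small $\ct_\rho$) — this is exactly what produces the factors $\exp(\len(\chi'))$ and $\ct_\rho(\chi(0))$ — and to check that every use of the contracting projection stays strictly inside a single block, so that Lemma~\ref{lipscontrol} is never needed in its place.
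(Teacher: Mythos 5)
Your proposal matches the paper exactly: the paper gives no independent proof of this lemma but simply cites \cite[Proposition 6.2]{BBZ} and records precisely your key observation, namely that the flawed $1$-Lipschitz claim of \cite[Proposition 6.1]{BBZ} is invoked there only inside single solid and thin blocks, where it does hold as stated (the transverse warping being $\sin s$, with the expanding $\cos t$-direction arising only across a full Misner block). Your explicit block-coordinate computation supplies details the paper delegates entirely to \cite{BBZ}, and is consistent with how the lemma is used later (e.g.\ in Lemma~\ref{ctbound} with $r=\pi/2$).
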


From these we deduce

\begin{lm}
\label{sysct}
Let $\rho \in \mc T$, $0<r<\pi/2$ and let $\Sigma \subset \Omega_\rho$ be a future-convex Cauchy surface in the past of $L_r$. Denote by $A_r$ the area of $L_r$ and denote by $\sys(\Sigma)$ the systole of $\Sigma$. Let $\delta$ be the diameter of the hyperbolic metric on $S$ given by $\rho^-$. Then for an absolute constant $C>0$ we have
\[\inf_{p \in \Sigma} \ct(p) \geq \frac{C\exp(-\delta)\sys^2(\Sigma)\cos(r)}{A_r\exp\left(\frac{2A_r}{\sys(\Sigma)\cos(r)}\right)}.\]
\end{lm}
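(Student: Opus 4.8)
The plan is to adapt to the anti-de Sitter setting the argument Barbot--Beguin--Zeghib use for their Theorem~3.5 in~\cite{BBZ}, replacing their (false in AdS) $1$-Lipschitz claim by Lemma~\ref{lipscontrol} and using the block estimates Lemmas~\ref{ctcompare}--\ref{timeestimate}. Write $m:=\inf_{p\in\Sigma}\ct_\rho(p)$; since $\Sigma$ is a closed Cauchy surface, $m$ is attained at some $p^*\in\Sigma$, and it suffices to bound $m$ from below. I will produce a non-contractible loop on $\Sigma$ through $p^*$ whose length is controlled by $m$, $A_r$, $\cos r$ and $\delta$; since such a loop has length at least $\sys(\Sigma)$, isolating $m$ in the resulting inequality will give the bound.

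First I set up the gradient-flow projection. Lift to $\tilde\Sigma\subset\tilde\Omega_\rho$, fix a lift $\tilde p^*$, and let $\pi\colon\tilde\Sigma\to\tilde L_r(\rho)$ be the projection along the gradient flow of $\ct_\rho$; it descends to a homeomorphism $\Sigma\to L_r$ preserving homotopy classes of loops. By Lemma~\ref{ctcompare}, $\pi^{-1}$ is $\cos^{-1}(r)$-Lipschitz, hence $\pi$ expands $\Sigma$-distances by at least the factor $\cos r$, and along any spacelike curve $\ct_\rho$ varies by at most $\cos^{-1}(r)$ times the length of its $\pi$-image. Within a single block of the canonical decomposition of the past of $\pt^-\tilde C_\rho$, Lemma~\ref{timeestimate} gives the sharper estimate $\len(\chi)\le C\len(\pi(\chi))\exp(\len(\pi(\chi)))\,\ct_\rho(\chi(0))$, so that lengths on $\Sigma$ shrink proportionally to $m$ once $\ct_\rho$ is comparable to $m$.

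Next I bound the based systole of $L_r$ at $q^*:=\pi(p^*)$. A systolic loop of $\Sigma$ carries an embedded collar of width $\tfrac12\sys(\Sigma)$; its $\pi$-image is an embedded annulus in $L_r$ of $L_r$-width at least $\tfrac12\sys(\Sigma)\cos r$ by the co-Lipschitz bound, so the coarea formula applied to the distance-to-core function inside it gives $A_r\ge\tfrac12\sys(\Sigma)\cos r\cdot\sys(L_r)$, i.e. $\sys(L_r)\le 2A_r/(\sys(\Sigma)\cos r)$. Adding twice $\diam(L_r)$ bounds the based systole of $L_r$ at $q^*$; here $\diam(L_r)$ is estimated in terms of $\delta=\diam(\pt^-C_\rho)=\diam(L_{\pi/2})$ by projecting along the gradient flow between $L_r$ and $\pt^-C_\rho$ and invoking Lemma~\ref{lipscontrol} together with the explicit metric on Misner blocks (this is exactly where the correction over~\cite{Bon} is needed, the transverse direction of a Misner block expanding the ``wrong'' way). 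This yields a non-contractible loop $c'$ on $L_r$ through $q^*$ with $\ell':=\len(c')$ bounded by a controlled function of $A_r$, $\cos r$ and $\delta$. Pulling $c'$ back along the gradient flow gives a non-contractible loop $\hat c$ on $\Sigma$ through $p^*$; cutting $\hat c$ at block boundaries, applying Lemma~\ref{timeestimate} on each piece, using that $\ct_\rho$ stays within $\cos^{-1}(r)\ell'$ of $m$ along $\hat c$, and summing with $\sum_j\ell'_j\exp(\ell'_j)\le\ell'\exp(\ell')$, one obtains $\sys(\Sigma)\le\len(\hat c)\le C\,\ell'\exp(\ell')\bigl(m+\cos^{-1}(r)\ell'\bigr)$. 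Isolating $m$ and substituting the bound $\ell'\lesssim A_r/(\sys(\Sigma)\cos r)$ together with the $\delta$-dependent contribution produces the displayed inequality after absorbing all numerical and genus-dependent constants into $C$.

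The main obstacle is the final chaining step combined with the diameter estimate for $L_r$: a non-contractible loop generally meets many blocks, and when $\lambda^-(\rho)$ has non-isolated leaves there are infinitely many thin blocks (a Misner block being an infinite union of them), so one must verify that a finite-length loop meets only finitely many and that the products of the factors $\exp(\ell'_j)$ telescope into the single exponential $\exp\bigl(2A_r/(\sys(\Sigma)\cos r)\bigr)$, the area bound on $L_r$ controlling both the number of blocks crossed and the lengths $\ell'_j$. The secondary delicate point is bounding $\diam(L_r)$ in terms of $\delta$, which forces a separate treatment of thick, thin/Misner and solid blocks and is precisely where Lemma~\ref{lipscontrol} must replace the false AdS $1$-Lipschitz assertion; controlling the non-isolated leaves of $\lambda^-(\rho)$ (limits of Misner blocks) is the chief source of technical care there.
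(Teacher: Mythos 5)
For context: the paper does not actually write this proof out --- it says the argument is identical to the Minkowski one in \cite[Section 3.4.2]{FP} with Lemma~\ref{ctcompare} substituted for \cite[Lemma 3.21]{FP} --- so your overall strategy (project along the gradient flow to $L_r$, find a short based non-contractible loop there, pull it back, apply the block estimates) is indeed the intended one. The genuine gap is in your final step. From $\sys(\Sigma)\le\len(\hat c)\le C\,\ell'\exp(\ell')\bigl(m+\cos^{-1}(r)\ell'\bigr)$ you cannot ``isolate $m$'' to obtain the displayed bound: solving gives $m\ge \sys(\Sigma)/(C\ell'\exp(\ell'))-\cos^{-1}(r)\ell'$, and since $\ell'\approx 2A_r/(\sys(\Sigma)\cos r)$ blows up exactly when $\sys(\Sigma)$ is small, the subtracted term dominates and the inequality becomes vacuous (it is consistent with $m=0$). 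The additive control of $\ct$ from Lemma~\ref{ctcompare} is the wrong tool here: every factor in the length estimate must carry a factor of $m$, which is what the first (multiplicative) inequality of Lemma~\ref{timeestimate} provides, chaining $\ct$ along $\hat c$ as $\ct\le C^{j}\exp(\ell'_1+\dots+\ell'_j)\,m$ block by block starting from $\ct(p^*)=m$. That in turn re-exposes the chaining problem you flag but do not resolve: the per-block constant $C>1$ accumulates as $C^N$, and your telescoping $\sum_j\ell'_j\exp(\ell'_j)\le\ell'\exp(\ell')$ controls only the lengths, not $C^N$ nor the blockwise growth of $\ct$. One must either bound the number of blocks the loop meets or construct the loop so that it crosses a bounded number of blocks, as the paper does in the analogous Lemma~\ref{ctbound} of Part~2.

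Two secondary points. Your collar/coarea derivation of $\sys(L_r)\le 2A_r/(\sys(\Sigma)\cos r)$ is shaky as written (the $\pi$-image of a collar need not be a metric collar in $L_r$), though the conclusion is fine: it follows from the systolic inequality $\sys(L_r)^2\le 2A_r$ on the nonpositively curved surface $L_r$ combined with $\sys(L_r)\ge\cos(r)\sys(\Sigma)$, the latter being exactly your co-Lipschitz bound. And the bound on $\diam(L_r)$ in terms of $\delta$ is asserted rather than proved: $L_r$ is $\sin(r)$ times the hyperbolic metric $\rho^-$ grafted along $\lambda^-$ with width proportional to $\cos(r)$, so the grafted annuli genuinely enlarge the diameter and must be controlled (e.g.\ via Lemma~\ref{areaexpr}); this is precisely where the factor $e^{-\delta}$ and the constant $2$ in the exponent have to be accounted for, and your sketch does not do that bookkeeping.
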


A proof is identical to the proof of the counterpart in Minkowski geometry given in~\cite[Section 3.4.2]{FP}, provided that we use Lemma~\ref{ctcompare} instead of~\cite[Lemma 3.21]{FP}, which is different only in multiplication by $\cos^{-1}(r)$. We will also need an expression for $A_r$, see~\cite[p. 188]{BB}. To state it, recall that the length functions of hyperbolic metrics extend from $\pi_1S$ to the length functions over $\mc{ML}$.

\begin{lm}
\label{areaexpr}
We have
\[A_r=-2\pi\sin^2(r)\chi(S)+l_{\rho^-}(\lambda^-)\sin(r)\cos(r).\]
\end{lm}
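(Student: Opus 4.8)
The plan is to compute $A_r$, the area of $L_r:=L_r(\rho)$, block by block using the canonical decomposition of the strict past of $\pt^-\tilde C_\rho$ recalled in Section~\ref{sec:ct}, establishing the identity first when $\lambda^-=\lambda^-(\rho)$ is rational and then in general by continuity. So suppose first that $\lambda^-$ is a finite weighted sum $\sum_i w_i\gamma_i$ of disjoint simple closed geodesics. Then the canonical decomposition consists only of solid blocks, one over each component of $\pt^-C_\rho\setminus\lambda^-$, and Misner blocks, one over each $\gamma_i$ (whose dual tree edge $e_i$ has length $w_i$, the exterior dihedral angle).

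For a solid block, the corresponding point $q\in\pt^+_s\tilde\Omega_\rho$ is a vertex of the dual tree, its dual plane $q^*$ is the totally geodesic plane carrying the corresponding geodesic component $P$ of $\pt^-\tilde C_\rho\setminus\lambda^-$, and on the block one has $\eta_\rho\equiv q$, hence $\ct_\rho=d_{\A^3}(\cdot,q)$. Normalizing $q$ to the base point $o$ (so $q^*=o^*$ carries our copy of $\H^2$) and parametrizing the timelike geodesics from $o$ as $t\mapsto\cos(t)\,o+\sin(t)\,v$, a direct computation gives that the slice $\{\ct_\rho=r\}$ of the block is totally umbilic with induced metric $\sin^2(r)\,g_{\H^2}$; its area is therefore $\sin^2(r)$ times the area of $P$ in the metric $\rho^-$. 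Summing over all solid blocks — whose tops exhaust $\pt^-C_\rho$ up to the null set $\lambda^-$ — the solid contribution to $A_r$ is $\sin^2(r)\cdot\area_{\rho^-}(\pt^-C_\rho)=-2\pi\sin^2(r)\chi(S)$ by the Gauss--Bonnet theorem.

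For the Misner block $M$ above a bending line $\gamma_i$ of weight $w_i$, $M$ is the union, over $e_i$, of the thin blocks; each thin block joins a tree point $q\in e_i$ to a geodesic copy $\Gamma$ of $\gamma_i$ in $q^*$ via the timelike geodesics from $q$ to the points of $\Gamma$, and on it $\ct_\rho=d_{\A^3}(\cdot,q)$ again. The same parametrization shows that such a thin block meets $L_r$ in a spacelike segment of length $\sin(r)\,l_{\rho^-}(\gamma_i)$; letting $q$ run over $e_i$, these segments sweep a ruled spacelike piece of $L_r$ of transverse width $\cos(r)\,w_i$ — the factor being forced by the degenerations of $L_r\cap M$ to $e_i$ (width $w_i$) as $r\to0$ and to $\gamma_i$ (width $0$) as $r\to\tfrac{\pi}{2}$, and computed precisely from the explicit anti-de Sitter metric on a Misner block. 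Hence $\area(L_r\cap M)=\sin(r)\cos(r)\,w_i\,l_{\rho^-}(\gamma_i)$, and summing over $i$ gives $\sin(r)\cos(r)\,l_{\rho^-}(\lambda^-)$. Adding the two contributions proves the formula when $\lambda^-$ is rational.

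For an arbitrary $\rho\in\mc T$ I would pass to the limit. By Theorem~\ref{adsearthq} and the earthquake theorem (Theorem~\ref{earthquake}), $\rho\mapsto\lambda^-(\rho)$ is a homeomorphism $\mc T\to\mc{ML}$, so there is a sequence $\rho_n\to\rho$ with $\lambda^-(\rho_n)$ rational, for which the formula holds. As $n\to\infty$: $A_r(\rho_n)\to A_r(\rho)$ because the level surfaces $L_r(\rho')$, hence their intrinsic metrics and areas, depend continuously on $\rho'$ (Lemma~\ref{varylevelbase}, with the Lipschitz estimate of Lemma~\ref{lipscontrol} controlling the intrinsic metrics); and on the right-hand side $\chi(S)$ is constant, $\rho_n^-\to\rho^-$ by continuity of $\rho'\mapsto\rho^-$ (e.g. through $\rho^-=E^-_{\lambda^-}(\rho)$ and continuity of earthquakes), and $l_{\rho_n^-}(\lambda^-(\rho_n))\to l_{\rho^-}(\lambda^-)$ by joint continuity of the length function on $\mc T\times\mc{ML}$. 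This gives the identity for all $\rho$ and all $r\in(0,\tfrac{\pi}{2})$, and it extends to the endpoints by continuity. This is essentially the argument behind~\cite[p.~188]{BB}; the one genuinely delicate point is the exact form of the Misner-block contribution, i.e. pinning down the factor $\sin(r)\cos(r)$, which really does require writing out the anti-de Sitter metric on a Misner block rather than arguing by soft limits.
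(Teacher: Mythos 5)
The paper offers no proof of this lemma at all --- it is quoted from~\cite[p.~188]{BB} --- so you are supplying an argument where the paper supplies a citation. Your argument is essentially the right one, and in the rational case it is confirmed by the paper itself: the description in Section~\ref{sec:prop2} of the intrinsic metric of $L_{i,r}$ as the $\sin r$-rescaling of $m_i^-$ grafted along $\mu_i$ by a Euclidean cylinder of width $s_i\cos r$ gives exactly your two contributions, $\sin^2(r)\cdot(-2\pi\chi(S))$ from the solid blocks (Gauss--Bonnet, plus the fact that the lamination has measure zero) and $\sin(r)\cos(r)\,w_i\,l_{\rho^-}(\gamma_i)$ from each Misner block; the explicit Misner metric $-dt^2+\cos^2(t)\,d\theta^2+\sin^2(t)\,ds^2$ pins down the $\cos(r)$ factor as you say.

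The one step I would not accept as written is the passage to general $\lambda^-$. Lemma~\ref{varylevelbase} gives a continuously varying equivariant \emph{parametrization} of the level surfaces, and Lemma~\ref{lipscontrol} controls Lipschitz constants of the gradient-flow maps between levels of a \emph{fixed} spacetime; neither directly yields $A_r(\rho_n)\to A_r(\rho)$, since convergence of parametrizations does not by itself give convergence of the induced area forms of surfaces that are only $C^{1,1}$. To close this you should either invoke the appendix results on convergence of intrinsic metrics of convex spacelike surfaces (Lemmas~\ref{uniformconv} and~\ref{distconverge0}, which give locally uniform convergence of the intrinsic metrics of the convex level surfaces, whence convergence of areas by the standard Alexandrov-type argument for convex surfaces), or --- cleaner, and closer to what \cite{BB} actually does --- dispense with the approximation entirely and compute the thin-block contribution for an arbitrary lamination directly as the integral $\sin(r)\cos(r)\int d\ell\,d\lambda^-=\sin(r)\cos(r)\,l_{\rho^-}(\lambda^-)$ against the transverse measure, the solid part contributing $\sin^2(r)$ times the full area since $\lambda^-$ is Lebesgue-null. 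Either repair is routine, but one of them is needed.
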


We can now prove Lemma~\ref{ctinf}.

\begin{proof}[Proof of Lemma~\ref{ctinf}.]
We show that every $\rho \in \mc T$ has a neighborhood $Z$, for which the statement of the lemma holds. First suppose that $\rho \neq \rho_\circ$. Define the width of the convex core $C_\rho$ as the supremum of the lengths of timelike segments inside $C_\rho$. The width is positive if and only if $\rho \neq \rho_\circ$. One can see that by Corollary~\ref{convccorebase}, the width is continuous in $\rho$. Hence, there exists a compact neighborhood $Z$ of $\rho$ over which the width is at least $w_0>0$. Thereby, for $r=\frac{\pi-w_0}{2}$ and for all $\rho' \in Z$ the level surface $L_r(\rho')\subset \Omega_{\rho'}$ belongs to $C_{\rho'}$. Hence, for every $\rho' \in Z$ every future-convex Cauchy surface in $\Omega_{\rho'}$ is in the past of $L_r(\rho')$. Furthermore, note that from Lemma~\ref{areaexpr} and Theorem~\ref{adsearthq} the area of $L_r(\rho')$ is continuous in $\rho'$. Hence, there exists an upper bound on the area of $L_r(\rho')$ over $Z$, and we get the desired result from applying Lemma~\ref{sysct}.

Now we treat $\rho=\rho_\circ$. We claim that for every $r>0$ there exist a compact neighborhood $Z$ of $\rho_\circ$ in $\mc T$ and $\alpha>0$ such that for a future-convex Cauchy surface $\Sigma \subset \Omega_{\rho'}$, $\rho' \in Z$, if the infimum of $\ct_{\rho'}$ over $\Sigma$ is $\leq \alpha$, then the supremum is $\leq r$. Indeed, otherwise for some $r$ there exists a sequence $\rho_i$ converging to $\rho_\circ$ and $\Sigma_i \subset \Omega_{\rho_i}$ such that the infima of $\ct_{\rho_i}$ over $\Sigma_i$ go to zero, but the suprema are at least $r$. Lift the universal covers of $\Sigma_i$ to $\A^3$. From Lemma~\ref{compact-}, one can choose the lifts so that there exist $p_i, q_i \in \tilde\Sigma_i$ such that $p_i \ra o$, and $q_i \ra q \in \tilde \Omega_{\rho_\circ}$. The segment $oq$ is timelike, but is the limit of segments $p_iq_i$, which are spacelike since $\tilde \Sigma_i$ are Cauchy surfaces in $\tilde\Omega_{\rho_i}$. This is a contradiction. 

Thereby, for some neighborhood $Z$ of $\rho_\circ$ and some $r>0$, for every $\rho' \in Z$ every future-convex Cauchy surface in $\Omega_{\rho'}$ with the infimum of $\ct_{\rho'}$ at most $\alpha$ belongs to the past of $L_r(\rho')$ and we can apply Lemma~\ref{sysct} to it, as well as the bound on $A_r$.
\end{proof}


\begin{proof}[Proof of Lemma~\ref{ml2'} for $y \in \mc D_-$.]
By Lemma~\ref{ml2.1}, up to subsequence, $\rho_i$ converge to $\rho$. By Lemma~\ref{ctinf}, there exists $\alpha>0$ such that for every $v \in V$ we have $\ct_{\rho_i}(f_i(v))\geq \alpha$. Hence, from Lemma~\ref{varylevelbase}, up to subsequence, for every $v \in V$ the sequence $f_i(v)$ converges to some $f(v) \in \Omega^+(\rho)\cup\pt^+C_\rho$. We need to see that $f$ is injective. Suppose that for $v\neq w \in V$ we have $f(v)=f(w)$. Lift all to $\tilde{\mc P}_-^w$, suppose that $\tilde v, \tilde w \in \tilde W$ are lifts of $v,w$ such that $\tilde f(\tilde v)=\tilde f(\tilde w)$. For every $i$, the segment $\tilde f_i(\tilde v)\tilde f_i(\tilde w)$ is spacelike. We can pick an arbitrary timelike plane containing $\tilde f_i(\tilde v)\tilde f_i(\tilde w)$. Using the reverse triangle inequality in the timelike plane we see that $d_i(v, w)\leq d_A(\tilde f_i(\tilde v), \tilde f_i(\tilde w))$. Thus $d(v, w) \leq \liminf d_A(\tilde f_i(\tilde v), \tilde f_i(\tilde w))=0$, which is a contradiction, so $\tilde f(\tilde v)\neq \tilde f(\tilde w)$. Thus, $(\rho_i, f_i)$ converge to $(\rho, f) \in \mc P_-^c$. But if $(\rho, f) \notin \mc P_-^s$, then Lemma~\ref{intrmetrcone} implies that $d \notin \mc D_-^s$, which is a contradiction.
\end{proof}

\subsection{Convergence of marked points at the blow-up}
\label{sec:convmp2}

\subsubsection{Compactification at the blow-up}
\label{sec:compact}

We will require a compactification of some ends of $\mc P_\vee^c$, similar to the one for $\mc P_-^c$ in Section~\ref{sec:ct}. First we need to describe a compactification of $\S(\mc P_0^c)$. Pick a compact neighborhood $U_0$ of zero in $T_{\rho_\circ}\mc T$, its compact lift to a neighborhood $\tilde U_0$ of zero in $T_{\rho_\circ}\mc R$ and pick $\alpha>0$. We define $\tilde{\mc P}_0^c(\tilde U_0, \alpha)$ as the subset of $(\tau, \tilde f)\in \tilde{\mc P}_0^c$ where $\tau \in \tilde U_0$ and for all $v \in V$ we have $\ct_\tau(\tilde f(v)) \leq \alpha$. Now to conditions (1) and (2) used in the definition of $\tilde{\mc P}_-^\bdia(\tilde U_-)$ in Section~\ref{sec:ct} we also add (3): if for $v \in V$ we have $\tilde f(v) \in \tilde\Omega^+_\tau$, then $\ct_\tau(\tilde f(v)) \leq \alpha$. This produces $\tilde{\mc P}_0^\bdia(\tilde U_0, \alpha)$. We define $\mc P_0^\bdia(U_0, \alpha)$ to be its quotient.
Then the space $\mc P_0^\bdia(U_0, \alpha)$ is a compactification of ${\mc P}_0^c(U_0, \alpha)$. A proof goes the same way as the proof of Lemma~\ref{compact-}, just instead of Lemma~\ref{varylevelbase} we use Lemma~\ref{varylevel0} and instead of Lemma~\ref{ctconvbase} we use~\cite[Propositions 6.2 and 6.5]{Bon}. 
By applying scaling to $\mc P_0^\bdia(U_0, \alpha)$, we obtain the space $\mc P_0^\bdia$, independent on the choices of $U_0$ and $\alpha$, which is not a compactification of $\mc P_0^c$. However, it is easy to see that $\S(\mc P_0^\bdia)$ is a compactification of $\S(\mc P_0^c)$.  

As in Section~\ref{sec:ct}, let $U_-$ be a compact neighborhood of $\rho_\circ$ in $\mc T$ and $\tilde U_-$ be its lift to $\mc R$. Define $\tilde{\mc P}_\vee^\bdia(\tilde U_-):=\tilde{\mc P}_-^\bdia(\tilde U_-)\cup \S(\tilde{\mc P}_0^\bdia) \subset \tilde{\mc P}_\vee$, ${\mc P}_\vee^\bdia(U_-):={\mc P}_-^\bdia(U_-)\cup \S({\mc P}_0^\bdia)$. We plan to show that ${\mc P}_\vee^\bdia(U_-)$ is compact. To this purpose we need a ``blown-up'' analogue of the argument from Section~\ref{sec:ct}. We will rely on the following elementary fact.

\begin{lm}
\label{rosental}
In $\R^m$, $m \geq 2$, let $x^i$ be a sequence converging to the origin $o$. Then there exists a $C^1$-curve $\chi:[0,1]\ra \R^m$ with $\chi(0)=o$ containing infinitely many of $x^i$. Furthermore, if for a chosen coordinate system the coordinates $x^i_m$ are monotonously decreasing, then we can choose $\chi$ so that the projection $\chi_m$ to the $m$-th axis is monotonously increasing.
\end{lm}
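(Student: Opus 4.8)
The plan is to construct $\chi$ by interpolating between consecutive points of a suitably extracted subsequence with short straight segments, then smoothing the corners, while controlling derivatives so that the limit at $t=0$ is $C^1$. First I would pass to a subsequence, still denoted $x^i$, so that $|x^{i+1}| \leq \tfrac14 |x^i|$ for all $i$; in particular $x^i \to o$ geometrically. Choose a strictly decreasing sequence $t_i \to 0$ in $(0,1]$, say $t_i = 2^{-i}$, and prescribe $\chi(t_i) = x^i$, $\chi(0) = o$. On each interval $[t_{i+1}, t_i]$ first take the affine path from $x^{i+1}$ to $x^i$; this gives a piecewise-linear curve $\chi_0$ with $\chi_0(0)=o$. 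The length of the $i$-th segment is $|x^i - x^{i+1}| \leq 2|x^i|$, so the total length of the tail from $t_{i+1}$ is $O(|x^i|)$, hence $\chi_0$ is continuous at $0$, and its speed on $[t_{i+1},t_i]$ is $|x^i - x^{i+1}|/(t_i - t_{i+1}) \leq 2|x^i|/2^{-i-1} = 2^{i+2}|x^i|$. To make this tend to $0$ as $t\to 0$ I would further thin the subsequence at the start so that $|x^i| \leq 4^{-i}$ (possible since $x^i \to o$; just drop early terms and relabel), which forces the segment speeds to be $\leq 2^{i+2}4^{-i} \to 0$. Thus $\chi_0$ is Lipschitz with Lipschitz constant $\to 0$ near $0$, so it is differentiable at $0$ with $\chi_0'(0)=0$.

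Next I would round the corners. At each breakpoint $t_i$ replace $\chi_0$ on a tiny interval $[t_i - \delta_i, t_i + \delta_i]$ (with $\delta_i$ small enough that these intervals are disjoint and contained in the respective linear pieces, and $\delta_i \to 0$ fast) by a $C^\infty$ arc matching position and one-sided velocities of the two adjacent segments; standard bump-function interpolation does this, and one can keep the modification $C^1$-close to $\chi_0$ in sup norm on $[t_i - \delta_i, t_i + \delta_i]$ while keeping the speed bounded by twice the larger of the two adjacent segment speeds. The resulting curve $\chi$ is $C^1$ on $(0,1]$, passes through every $x^i = \chi(t_i)$ (the breakpoints themselves are not moved if we only modify strictly between them, or we simply accept that the modified curve still contains infinitely many $x^i$ — in fact all of them, if the smoothing is done on sub-intervals strictly interior to each segment and not at the nodes). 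Since the modified speeds still go to $0$ as $t\to 0$, $\chi$ extends to a $C^1$ map on $[0,1]$ with $\chi(0)=o$, $\chi'(0)=0$.

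For the ``furthermore'' clause, suppose the coordinates $x^i_m$ are monotonically decreasing in $i$; since $x^i \to o$ we have $x^i_m \to 0$, so the sequence $x^i_m$ is monotone decreasing with limit $0$, hence all $x^i_m \geq 0$ and the sequence is decreasing toward $0$. I would then reparametrize: after extracting the subsequence as above, choose the parameter values so that $t \mapsto \chi_m(t)$ is forced to be increasing. Concretely, on the affine piece from $x^{i+1}$ to $x^i$ the $m$-th coordinate runs monotonically from $x^{i+1}_m$ up to $x^i_m$ (an increase, by hypothesis), so $\chi_{0,m}$ is already non-decreasing on each segment and across nodes; hence $\chi_{0,m}$ is non-decreasing on $(0,1]$ with $\chi_{0,m}(t)\to 0$ as $t\to 0$. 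The only thing to check is that the corner-smoothing can be done without destroying monotonicity of the $m$-th coordinate: this is automatic if at each node both incoming and outgoing segments have non-negative $m$-th velocity, which holds here, because a $C^1$ interpolation between two non-negative slopes can be taken with non-negative slope throughout (e.g. convex combination of the two one-sided derivative vectors with non-negative weights summing to one). To get strict monotonicity one may add an arbitrarily small increasing perturbation supported away from $0$, or simply observe that strictness is not needed — but if it is, perturb $\chi_m$ by $+\varepsilon_i(t)$ on each segment with $\varepsilon_i$ a tiny increasing $C^1$ bump vanishing to high order at the endpoints.

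The main obstacle is purely bookkeeping: arranging the three competing smallness requirements — that $\chi(0)=o$ (needs $\sum |x^i - x^{i+1}| < \infty$, i.e.\ $|x^i|$ summable along the subsequence), that $\chi'(0)=0$ and $\chi$ is $C^1$ at $0$ (needs the segment speeds $|x^i-x^{i+1}|/(t_i-t_{i+1}) \to 0$, which is why I choose $|x^i|\leq 4^{-i}$ against $t_i = 2^{-i}$), and that the corner-smoothing radii $\delta_i$ shrink fast enough not to interfere with these estimates — to hold simultaneously. All of this is arranged by a single extraction of a rapidly decreasing subsequence, after which every estimate is a geometric-series bound; I would present it by first fixing the subsequence and the $t_i$, then writing down $\chi_0$, then the smoothing, then verifying $C^1$-ness at $0$ and the monotonicity addendum.
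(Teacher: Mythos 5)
Your construction is correct in outline but is a genuinely different argument from the one in the paper. The paper does not build the curve at all: it invokes Rosenthal's theorem \cite[Theorem 3]{Ros} for the existence of a $C^1$ curve through infinitely many terms of any convergent sequence, and then proves the monotonicity addendum by induction on $m$, using the inductive ``primitive'' (local convexity) structure of the curve that Rosenthal's construction produces. Your route --- extract a rapidly decreasing subsequence, interpolate piecewise-linearly against dyadic parameter values, check that the segment speeds tend to zero so that the curve is $C^1$ at the origin with vanishing derivative, and observe that monotonicity of $x^i_m$ makes the $m$-th coordinate non-decreasing on each segment --- is self-contained, and arguably preferable in that it does not rely on unstated features of Rosenthal's proof. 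The geometric-series bookkeeping ($|x^i|\le 4^{-i}$ against $t_i=2^{-i}$) is exactly what is needed for continuity, differentiability at $0$, and continuity of the derivative at $0$.

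The one step that does not work as written is the corner-smoothing. The corners of $\chi_0$ sit \emph{at} the nodes $t_i$, where $\chi_0(t_i)=x^i$; so your fallback of smoothing ``on sub-intervals strictly interior to each segment and not at the nodes'' accomplishes nothing (the curve is already smooth there), while any genuine rounding near $t_i$ by the naive convex-combination-of-velocities arc cuts the corner and misses $x^i$ --- for every $i$, leaving no $x^i$ on the curve. The repair is routine but must be made explicit: on $[t_i-\delta_i,t_i+\delta_i]$ use a two-piece $C^1$ interpolant pinned to the point $x^i$ at parameter $t_i$ with a prescribed tangent $w_i$ there (e.g.\ with $m$-th component $\min(s_a,s_b)$, where $s_a,s_b\ge 0$ are the $m$-th slopes of the two adjacent segments), choosing on each half-interval a continuous, coordinatewise controlled derivative profile with the correct endpoint values and the correct integral. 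One then checks that such a profile can be taken with non-negative $m$-th component precisely because $s_a,s_b\ge 0$, and that the resulting speeds are bounded by a universal constant times the larger adjacent segment speed, so the $C^1$ extension to $t=0$ and the monotonicity of $\chi_m$ both survive. With that fix your argument is complete.
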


\begin{proof}
A proof basically follows from~\cite[Theorem 3]{Ros} of Rosenthal. We only need to check the second claim. In~\cite{Ros} the author constructs a $C^1$-curve $\chi$ that he calls \emph{primitive}. The definition is inductive. For $m=2$ a curve is primitive at $o$ if it is locally convex. Suppose that $m>2$. Assume that a Euclidean metric is chosen so that the coordinate system is orthogonal. Consider the orthogonal projection $\chi'$ of $\chi$ to the orthogonal plane to the tangent direction at $o$. Then $\chi$ is primitive at $o$ if $\chi'$ is either locally constant or is primitive at $o$. 

Now we pass to our claim, which we prove by induction on $m$. If $m=2$ the claim follows from convexity. Suppose that $m>2$. If $\dot\chi_m(0) \neq 0$, the claim is obvious. If $\dot\chi_m(0)=0$, we pass to the orthogonal projection $\chi'$ to the orthogonal plane to the tangent direction at zero. Clearly, $\chi'$ is not locally constant at zero. By induction, the claim is true for the projection $\chi'_m$ at the $m$-th axis. But $\chi'_m=\chi_m$. 
\end{proof}

Recall that in Section~\ref{sec:blowupconstr} we chose a $G$-invariant affine connection on $\mc R$. Consider the associated exponential map $\mc E_{\mc R}$ from $\rho_\circ$. Assume that it sends homeomorphically a compact neighborhood $\tilde U_0$ of zero in $T_{\rho_\circ}\mc R$ onto $\tilde U_-$.
Let $t_i \in \R_{>0}$ be a sequence converging to zero, $\tau_i$ be a sequence converging to $\tau$ in $T_{\rho_\circ} \mc R$, define $\rho_i:=\mc E_\mc R(t_i\tau_i)$, $g_i:=g_{t_i}$, $\tilde\Omega_i:=\tilde\Omega_{\rho_i}$.

\begin{lm}
\label{domainsec}
The sets $g_{i}\cl(\tilde\Omega_{i})$ converge to $\cl(\tilde\Omega_\tau)$ as subsets of $\RP^3$.
\end{lm}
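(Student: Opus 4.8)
The plan is to reduce the statement to a limit-set convergence and then propagate it through closed convex hulls and projective duality, exactly as in the proof of Lemma~\ref{convlimset} and Corollaries~\ref{convccore2},~\ref{convdom}. The only point requiring care compared to Lemma~\ref{convlimset} is that the direction $\tau_i$ is now allowed to vary rather than being fixed.

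First I would prove that $g_i^*\Lambda_{\rho_i}\ra\Lambda_\tau$ in the Hausdorff sense on $\RP^3$. Write $h_i:=h_{\rho_i}$ for the quasisymmetric homeomorphism of $\RP^1$ conjugating $\rho_i$ to $\rho_\circ$ as in~(\ref{conj}); note $h_{\rho_\circ}=\mathrm{id}$. Since both the embedding $\mc R\hookrightarrow\mc{QS}$ and the exponential map $\mc E_\mc R$ are smooth, the composite $F\colon s\mapsto h_{\mc E_\mc R(s)}$ is a $C^1$ map from a neighborhood of $0$ in $T_{\rho_\circ}\mc R$ into the Banach space $\mc{QS}$, with $F(0)=\mathrm{id}$; differentiating the identity $\rho_\circ=h_t^{-1}\rho_t h_t$ along $t\mapsto\mc E_\mc R(t\tau')$, exactly as in the proof of Lemma~\ref{convlimset}, identifies $dF_0(\tau')$ with the vector field $\xi_{\tau'}$ on $\RP^1$ characterized by~(\ref{infconj}). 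Since $t_i\tau_i\ra 0$ and $\{\tau_i\}$ is bounded, Taylor's theorem for the $C^1$ Banach-valued map $F$ gives
\[\frac{h_i-\mathrm{id}}{t_i}=dF_0(\tau_i)+\frac{R(t_i\tau_i)}{t_i},\qquad \|R(t_i\tau_i)\|_{\mc{QS}}=o(t_i),\]
whence $(h_i-\mathrm{id})/t_i\ra dF_0(\tau)=\xi_\tau$ in $\mc{QS}$ (using continuity of $dF_0$ for $dF_0(\tau_i)\ra dF_0(\tau)$), and in particular uniformly on $\RP^1$ in any $\R$-chart. Passing to the co-Minkowski chart, in which $g_i^*$ fixes $z_1,z_2$ and multiplies $z_3$ by $1/t_i$, and using the identifications $\pt\A^3\cong\RP^1\times\RP^1$ and $\pt^*\R^{2,1}\cong T\RP^1$, this uniform convergence shows precisely that the $g_i^*$-images of the graphs of $h_i$ converge to the graph of $\xi_\tau$, that is $g_i^*\Lambda_{\rho_i}\ra\Lambda_\tau$, just as in the last lines of the proof of Lemma~\ref{convlimset}.

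Once this is in place, I would take closed convex hulls to obtain $g_i^*\cl(\tilde C_{\rho_i})=\conv(g_i^*\Lambda_{\rho_i})\ra\conv(\Lambda_\tau)=\cl(\tilde C_\tau)$ — legitimate because, past some index, all the sets $g_i^*\Lambda_{\rho_i}$ lie in a fixed bounded region of the co-Minkowski chart together with $\Lambda_\tau$ — and then pass to projective duals, using $(g_i^*C)^*=g_iC^*$ (with $(g_i^*)^*=g_i$), $\cl(\tilde\Omega_{\rho_i})=\cl(\tilde C_{\rho_i})^*$, $\cl(\tilde\Omega_\tau)=\cl(\tilde C_\tau)^*$, and the continuity of duality under Hausdorff convergence of closely convex sets already invoked for Corollaries~\ref{convdombase} and~\ref{convdom}. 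This yields $g_i\cl(\tilde\Omega_{\rho_i})\ra\cl(\tilde\Omega_\tau)$, as desired. The only genuinely new step relative to Lemma~\ref{convlimset} is the first one, and the main obstacle there is the uniform-in-$i$ control of the remainder when the direction $\tau_i$ varies; this is resolved by the Fréchet $C^1$-differentiability of $F$ together with the boundedness of $\{\tau_i\}$, after which the rest is a transcription of the cited arguments.
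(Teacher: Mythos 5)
Your proof is correct, but it handles the one genuinely new difficulty here --- the varying direction $\tau_i$ --- by a different device than the paper. The paper passes to a subsequence along which $t_i$ decreases, uses the Rosenthal interpolation result (Lemma~\ref{rosental}) to thread a $C^1$-curve $(\tau_t,t)$ through the points $(\tau_i,t_i)$ with $(\tau_0,t_0)=(\tau,0)$, sets $\rho_t:=\mc E_{\mc R}(t\tau_t)$, and then simply quotes Corollary~\ref{convdom} for this curve, removing the subsequence via Lemma~\ref{subconvtoconv}. You instead re-run the limit-set computation of Lemma~\ref{convlimset} directly for the sequence, replacing the single directional derivative by the Fr\'echet first-order expansion of $s\mapsto h_{\mc E_{\mc R}(s)}$ at $0$ with remainder $o(\|s\|)$ uniformly over directions; boundedness of $\{\tau_i\}$ then gives $(h_i-\mathrm{id})/t_i\ra\xi_\tau$ in $\mc{QS}$, hence uniformly, and the hull-and-duality steps are transcriptions of the corollaries following Lemma~\ref{convlimset}. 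Both routes are sound. Yours avoids subsequences and the curve construction entirely and yields convergence of the full sequence at once, but it consumes slightly more regularity: you need the embedding $\mc R\hookrightarrow\mc{QS}$ to be Fr\'echet $C^1$ with a direction-uniform remainder (which the asserted smoothness of the embedding does supply), whereas the Rosenthal reduction only needs the already-established statement for individual $C^1$-curves and is for that reason reused verbatim in the paper's later sequence-to-curve reductions (Lemma~\ref{convtemp2+}, Lemma~\ref{lamblowup}, Corollary~\ref{lipschmetr}).
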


\begin{proof}
Pass to a subsequence such that $t_i$ decreases monotonously. Pick $\e>0$, define $X:=(\frac{1}{\e}\tilde U_0) \times [0, \e)$. By Lemma~\ref{rosental}, there exists a $C^1$-curve $(\tau_s, t_s): [0,1] \ra X$ with $(\tau_0, t_0)=(\tau, 0)$ such that it contains a subsequence of $(\tau_i, t_i)$ and $t_s$ is an increasing function. Thus, we can reparameterize it as $(\tau_t, t)$. 
Define $\rho_t:=\mc E_\mc R(t\tau_t)$. This is a continuous curve, differentiable at $t=0$ with $\dot\rho_0=\tau$. Now the claim follows from Corollary~\ref{convdom}. However, we proved it up to subsequence, which is not a problem because of Lemma~\ref{subconvtoconv} applied to the space of closed subsets of $\RP^3$ endowed with the topology of Hausdorff convergence.
\end{proof}

By the same argument, using Corollary~\ref{convdom2} instead of Lemma~\ref{convdom}, we show


\begin{lm}
\label{convtemp2+}
The sets $g_i\cl(\pt^+\tilde \Omega_{i})$ converge to $\cl(\pt^+\tilde \Omega_\tau)$.
\end{lm}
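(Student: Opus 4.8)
The plan is to mirror verbatim the proof of Lemma~\ref{domainsec}: the only role of the sequence $\rho_i=\mc E_{\mc R}(t_i\tau_i)$ in that argument is to be threaded through a genuine $C^1$-curve $\rho_t$ with $\rho_0=\rho_\circ$ and $\dot\rho_0=\tau$, after which one appeals to the corresponding ``blow-up convergence'' statement valid along such curves. For the convex-core domain $\tilde\Omega$ that statement was Corollary~\ref{convdom}; here, where we track the outer future boundary $\pt^+\tilde\Omega$, the relevant $C^1$-curve statement is Lemma~\ref{convtemp2} (one could also route through Corollary~\ref{convdom2} together with Lemmas~\ref{convlimset2} and~\ref{convtemp1}, which is presumably what is meant by the remark preceding the lemma). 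So no new geometric input is needed beyond the results of Section~\ref{sec:convblowup}.

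Concretely I would proceed as follows. First, by Lemma~\ref{subconvtoconv} applied to the space of closed subsets of $\RP^3$ with the Hausdorff topology, it suffices to show that an arbitrary subsequence of $\{g_i\cl(\pt^+\tilde\Omega_i)\}$ has a further subsequence converging to $\cl(\pt^+\tilde\Omega_\tau)$. Along such a subsequence, pass to a further one on which $t_i$ decreases monotonically. Apply Lemma~\ref{rosental} in the vector space $T_{\rho_\circ}\mc R\times\R$, with the last coordinate being $t$: this produces a $C^1$-curve $s\mapsto(\tau_s,t_s)$ with $(\tau_0,t_0)=(\tau,0)$ that passes through a subsequence of the points $(\tau_i,t_i)$ and along which $t_s$ is strictly increasing, so it can be reparameterized as $t\mapsto(\tau_t,t)$. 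Set $\rho_t:=\mc E_{\mc R}(t\tau_t)$. Since $\mc E_{\mc R}$ is smooth and the curve $t\mapsto(\tau_t,t)$ is $C^1$ through the origin, $\rho_t$ is a continuous curve with $\rho_0=\rho_\circ$, differentiable at $t=0$ with $\dot\rho_0=\tau$. Lemma~\ref{convtemp2} then gives $g_t\cl(\pt^+\tilde\Omega_{\rho_t})\to\cl(\pt^+\tilde\Omega_\tau)$ as $t\to0$; restricting to the parameter values corresponding to the chosen sub-subsequence of the $\rho_i$ yields the required convergence, and undoing the two subsequence reductions finishes the proof.

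The only point that needs any care is the same one as in Lemma~\ref{domainsec}: checking that the curve furnished by Rosenthal's lemma, once composed with the exponential map $\mc E_{\mc R}$, is genuinely $C^1$ at $t=0$ with derivative exactly $\tau$ (so that the hypotheses of Lemma~\ref{convtemp2} are met), and setting up the reduction via Lemma~\ref{subconvtoconv} in its ``every subsequence has a convergent sub-subsequence with the common limit'' form. Neither of these is substantive — it is pure bookkeeping — so I do not expect any real obstacle; the statement is genuinely a corollary of the constructions already in place.
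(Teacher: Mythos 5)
Your proof is correct and follows the paper's intended argument exactly: pass to a monotone subsequence, thread it through a $C^1$-curve via Rosenthal's lemma, reparameterize and compose with $\mc E_{\mc R}$ to get a curve $\rho_t$ with $\dot\rho_0=\tau$, apply the corresponding $C^1$-curve convergence result, and undo the subsequence reduction via Lemma~\ref{subconvtoconv}. You also correctly identify Lemma~\ref{convtemp2} (rather than Corollary~\ref{convdom2}, which the paper's one-line remark cites but whose conclusion concerns $\RP^3\backslash\tilde\Omega^+_{\rho_t}$ rather than $\cl(\pt^+\tilde\Omega_{\rho_t})$) as the precise ingredient that makes the substitution go through.
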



Any point $p \in \R^{2,1}$ belongs to $g_t\A^3$ for all small enough $t$. We need now to consider simultaneously the Minkowski metric and the rescaled anti-de Sitter metrics. Let $\xi_0: \R^{2,1} \times \R^{2,1} \ra \R_{\geq 0}$ be the \emph{absolute Minkowski distance function}. It is equal to the spacelike distance on the pairs of points in spacelike Minkowski relation, the timelike distance on the pairs in timelike Minkowski relation and zero on the pairs in lightlike relation and on the diagonal. Similarly, let $\xi_t$ be the absolute distance function of the rescaled anti-de Sitter metric: for $p,q \in (g_t\A^3 \cap \R^{2,1})$, $\xi_t(p,q)$ is equal to the respective length of the segment between $p$ and $q$ that belongs to $\R^{2,1}$, provided that this segment belong to $g_t\A^3 \cap \R^{2,1}$. Hence, $\xi_t$ is defined on the respective subset of $(g_t\A^3 \cap \R^{2,1}) \times (g_t\A^3 \cap \R^{2,1})$ of the pairs of points that can be connected by such segment. We consider all $\xi_t$ as a single function $\xi$ defined on the respective subset $Z \subset [0,1] \times \R^{2,1} \times \R^{2,1}$.

\begin{lm}
\label{timeconv}
The function $\xi$ is continuous on $Z$.
\end{lm}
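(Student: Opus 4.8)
The plan is to prove continuity of $\xi$ on $Z$ by an explicit uniform computation. First I would fix an affine coordinate chart (the Minkowski chart) and write down the Lorentzian metric tensor of the rescaled anti-de Sitter space $g_t\A^3$ as a function of the point and of $t$. Concretely, $g_t\A^3$ in the Minkowski chart is the image under the homothety of ratio $1/t$ of the standard hyperboloid model of $\A^3$; its metric tensor, pulled back to a fixed region of $\R^{2,1}$, depends smoothly (indeed real-analytically) on $(t, y) \in [0,1] \times \R^{2,1}$ on the domain where it is defined, and at $t = 0$ it specializes to the flat Minkowski metric. This is the statement recorded earlier in Section~\ref{sec:geometries} that $(\R^{2,1}, G_0)$ is the limit of $g_t(\A^3, G_-)$; I would make it quantitative by writing the metric in the form $b + t^2(\text{smooth tensor in } t, y)$, so that on compact subsets of the domain the metric tensors converge in $C^\infty$ to the Minkowski metric as $t \to 0$, uniformly.

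The second step is to pass from convergence of metric tensors to convergence of the segment-lengths $\xi_t(p,q)$. For a fixed pair $(p,q)$ whose connecting segment lies in the domain, $\xi_t(p,q)$ is the integral $\int_0^1 \sqrt{|g_t(\dot\gamma, \dot\gamma)|}\,dt'$ along the affine segment $\gamma$ from $p$ to $q$ (with the absolute value inside to account for both causal types, and noting that whether the segment is spacelike/timelike/lightlike is detected by the sign of $g_t(\dot\gamma,\dot\gamma)$, which varies continuously). Since geodesics in these projective geometries are projective lines, the \emph{same} affine segment works for every $t$, so there is no need to track how geodesics move. Given a point $(t_0, p_0, q_0) \in Z$ and a sequence $(t_i, p_i, q_i) \to (t_0, p_0, q_0)$ in $Z$, I would note that the affine segments $\gamma_i$ converge uniformly to $\gamma_0$, stay in a fixed compact subset of the common domain for $i$ large, and then $\xi_{t_i}(p_i,q_i) = \int_0^1 \sqrt{|g_{t_i}(\dot\gamma_i,\dot\gamma_i)|}\,ds \to \int_0^1 \sqrt{|g_{t_0}(\dot\gamma_0,\dot\gamma_0)|}\,ds = \xi_{t_0}(p_0,q_0)$ by dominated convergence, using the uniform $C^0$-convergence of $g_{t_i}$ to $g_{t_0}$ on the compact set and the continuity of the square-root of the absolute value. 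The case $t_0 = 0$ is included with no change, since $g_0 = b$ and $\xi_0$ is exactly the absolute Minkowski distance function.

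The main obstacle, and the only point requiring genuine care, is the domain issue: $\xi_t$ is only defined on the subset of pairs $(p,q)$ whose connecting Minkowski segment lies inside $g_t\A^3 \cap \R^{2,1}$, and $Z$ is precisely the set of triples $(t,p,q)$ for which this holds. One must check that $Z$ is such that, for $(t_i,p_i,q_i) \in Z$ converging to $(t_0,p_0,q_0) \in Z$, the segments $\gamma_i$ eventually lie in a \emph{fixed} compact subset of the open set where all the nearby metrics $g_{t_i}$ are simultaneously defined and nondegenerate. This follows because the domains $g_t\A^3 \cap \R^{2,1}$ vary continuously (monotonically, in a suitable sense, as in the Hausdorff-convergence statements of Section~\ref{sec:convblowup}, e.g.\ Corollary~\ref{convdom}): the segment from $p_0$ to $q_0$ lies in the open set $g_{t_0}\A^3 \cap \R^{2,1}$, hence in a slightly smaller compact subset, and for $i$ large $g_{t_i}\A^3 \cap \R^{2,1}$ contains that compact subset and $\gamma_i$ lies in it. Once the segments are trapped in a common compact domain, the uniform $C^0$-convergence of the metric tensors finishes the argument. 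I would present this as a short paragraph invoking the continuity of $t \mapsto g_t\A^3$ already used repeatedly above, rather than re-deriving it.

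\begin{proof}
We work in the Minkowski chart~(\ref{chart}), identified with $\R^3$ and equipped with the flat metric $b$. For $t \in (0,1]$ the set $g_t\A^3 \cap \R^{2,1}$ is an open subset of $\R^3$, namely the image under the homothety of ratio $1/t$ centered at $o$ of the standard region $\A^3 \cap \R^{2,1}$, and on it the rescaled anti-de Sitter metric is a Lorentzian metric tensor $g_t$. Writing things out in coordinates, one has $g_t = b + t^2\, h(t, y)$, where $h$ is a smooth symmetric $2$-tensor defined for $(t, y)$ in an open neighborhood of $\{0\} \times (\A^3 \cap \R^{2,1})$ in $\R \times \R^3$; in particular $g_0 = b$, consistently with the fact recalled in Section~\ref{sec:geometries} that $(\R^{2,1}, G_0)$ is the limit of $g_t(\A^3, G_-)$ as $t \to 0$. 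Consequently, on any compact subset $N$ contained in the common domain of $g_{t}$ for all $t$ in a neighborhood of a given $t_0$, the tensors $g_t$ converge to $g_{t_0}$ in the $C^0$-topology as $t \to t_0$, uniformly on $N$.

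Fix $(t_0, p_0, q_0) \in Z$ and a sequence $(t_i, p_i, q_i) \in Z$ converging to it. Since geodesics of all the geometries $g_t(\A^3,G_-)$ and of $(\R^{2,1},G_0)$ are segments of projective lines, the affine segment $\gamma_i$ from $p_i$ to $q_i$ is, for every $t$ at which it lies in the domain, the relevant geodesic; in particular $\gamma_i$ lies in $g_{t_i}\A^3 \cap \R^{2,1}$ and $\gamma_0$ lies in $g_{t_0}\A^3 \cap \R^{2,1}$. The segments $\gamma_i$ converge uniformly to $\gamma_0$. Because the open sets $g_t\A^3 \cap \R^{2,1}$ depend continuously on $t$ (as in Corollary~\ref{convdom} and the discussion preceding it), there is a compact set $N$ with $\gamma_0 \subset \inter(N)$ such that $N \subset g_{t_i}\A^3 \cap \R^{2,1}$ and $\gamma_i \subset N$ for all large $i$; on $N$ all the $g_{t_i}$ and $g_{t_0}$ are defined and nondegenerate.

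Parameterize $\gamma_i: [0,1] \ra \R^3$ affinely, so $\dot\gamma_i$ is constant and $\dot\gamma_i \to \dot\gamma_0$. By definition,
\[
\xi_{t_i}(p_i, q_i) = \int_0^1 \sqrt{\,\bigl|\, g_{t_i}(\dot\gamma_i, \dot\gamma_i)\,\bigr|\,}\; ds .
\]
On $N$ the integrand converges uniformly to $\sqrt{|g_{t_0}(\dot\gamma_0,\dot\gamma_0)|}$, using the uniform $C^0$-convergence $g_{t_i} \to g_{t_0}$ on $N$, the convergence $\dot\gamma_i \to \dot\gamma_0$, and the continuity of $u \mapsto \sqrt{|u|}$. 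Hence
\[
\xi_{t_i}(p_i, q_i) \;\longrightarrow\; \int_0^1 \sqrt{\,\bigl|\, g_{t_0}(\dot\gamma_0, \dot\gamma_0)\,\bigr|\,}\; ds \;=\; \xi_{t_0}(p_0, q_0).
\]
When $t_0 = 0$ this is the same computation with $g_0 = b$, so that $\xi_0$ is the absolute Minkowski distance function, as required. This proves that $\xi$ is continuous at $(t_0, p_0, q_0)$, and since the point was arbitrary, $\xi$ is continuous on $Z$.
\end{proof}
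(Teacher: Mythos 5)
Your proof is correct and takes essentially the same route as the paper: both rest on the Taylor expansion of the anti-de Sitter metric in normal coordinates at $o$ (your $g_t = b + t^2 h(t,y)$). The paper packages the reduction to $t\to 0$ via the scaling identity $\xi_t(p,q)=\xi_1(g_t^{-1}p,g_t^{-1}q)/t$ and leaves the computation implicit, while you spell it out through the rescaled metric tensor and the length integral along the (common, projective-line) segment — more explicit, but the same idea.
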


\begin{proof}
It is enough to check the continuity as $t \ra 0$. For $(t,p,q) \in Z$,
\[\xi_t(p,q)=\frac{\xi_1(g_t^{-1}p, g_t^{-1}q)}{t}.\]
Now the Taylor expression of the anti-de Sitter metric tensor in the normal coordinates with respect to $o$ implies that the right-hand expression converges to $\xi_0(p,q)$ as $(t_i, p_i,q_i) \in Z$ converge to $(0,p,q)$.
\end{proof}

Note that if $p,q \in \R^{2,1}$ are in spacelike (resp. timelike) relation for the Minkowski metric, then for all small enough $t$ they are in spacelike (resp. timelike) relation for the rescaled anti-de Sitter metrics.

Consider $p \in \tilde\Omega^+_\tau$. Due to Lemma~\ref{domainsec}, $p \in g_i\tilde\Omega_{i}$ for all large enough $i$. Define $\ct(p):=\ct_\tau(p)$, $\eta(p):=\eta_\tau(p)$, $\ct^\vee_i(p):=\frac{1}{t_i}\ct_{\rho_i}(p)$, $\eta^\vee_i(p):=g_i\eta_{\rho_i}(p)$. We can now establish a ``blown-up'' analogue of Lemma~\ref{ctconvbase}.

\begin{lm}
\label{ctconv}
Let $p_i \in \R^{2,1}$ be a sequence of points converging to $p\in \tilde\Omega^+_\tau$.
Then $\eta^\vee_i(p_i) \ra \eta(p)$ and $\ct^\vee_i(p_i) \ra \ct(p)$. (Note that $\eta^\vee_i(p_i)$ and $\ct^\vee_i(p_i)$ are well-defined for all large enough $i$.)
\end{lm}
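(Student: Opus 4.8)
The plan is to mimic the proof of Lemma~\ref{ctconvbase}, but now in the rescaled setting where the rescaled anti-de Sitter geometries degenerate to Minkowski geometry. The key replacements are: Lemma~\ref{domainsec} and Lemma~\ref{convtemp2+} in place of Corollaries~\ref{convdombase} and~\ref{convtemp2-}, and Lemma~\ref{timeconv} (continuity of the joint distance function $\xi$) in place of the naive use of the anti-de Sitter distance. First I would set up the causal diamonds: let $C_i \subset g_i\A^3 \cap \R^{2,1}$ be the set of points in causal relation to $p_i$ with respect to the rescaled anti-de Sitter metric, and let $C \subset \R^{2,1}$ be the set of points in causal (Minkowski) relation to $p$. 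Using Lemma~\ref{timeconv} together with $p_i \ra p$, one sees that $\cl(C_i)$ converge in the Hausdorff sense to $\cl(C)$ as subsets of $\RP^3$ (a point $q$ is in causal relation to $p$ iff $\xi(0,p,q)$ is realized by a non-spacelike segment, and the joint continuity of $\xi$ promotes this to a Hausdorff convergence statement; achronality of $\Lambda_\tau$ and properness ensure nothing escapes).

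Next I would localize the feet of the cosmological time. Since $\eta(p) = \eta_\tau(p) \in \pt^+_s\tilde\Omega_\tau$ and $\pt^+\tilde\Omega_\tau \cap C$ is compact, pick a compact neighborhood $K$ of it in $\R^{2,1}$. By Lemma~\ref{convtemp2+}, $g_i\cl(\pt^+\tilde\Omega_i)$ converge to $\cl(\pt^+\tilde\Omega_\tau)$, so for all large $i$ the point $\eta^\vee_i(p_i) = g_i\eta_{\rho_i}(p_i)$ lies in $K$ (here I use that $\eta^\vee_i(p_i)$ is, by definition of rescaling, the past foot on $g_i\pt^+\tilde\Omega_i$ of the rescaled-geodesic from $p_i$, and $\ct^\vee_i(p_i) = \xi_{t_i}(p_i, \eta^\vee_i(p_i))$). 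Pass to a subsequence realizing $\limsup \ct^\vee_i(p_i)$; up to a further subsequence $\eta^\vee_i(p_i) \ra q \in \cl(\pt^+\tilde\Omega_\tau)$, and by continuity of $\xi$ we get $\ct(p) \geq \xi(0,p,q) = \lim \ct^\vee_i(p_i)$. Conversely, because $g_i\cl(\pt^+\tilde\Omega_i) \ra \cl(\pt^+\tilde\Omega_\tau)$ there is a sequence $q_i \in g_i\cl(\pt^+\tilde\Omega_i)$ with $q_i \ra \eta(p)$; the rescaled-geodesic segment from $p_i$ to $q_i$ is (eventually) timelike and realizes a time $\xi_{t_i}(p_i,q_i) \ra \ct(p)$ by Lemma~\ref{timeconv}, giving $\ct(p) \leq \liminf \ct^\vee_i(p_i)$. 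Hence $\ct^\vee_i(p_i) \ra \ct(p)$ and $q = \eta(p)$, which is exactly $\eta^\vee_i(p_i) \ra \eta(p)$; applying Lemma~\ref{subconvtoconv} upgrades both from "up to subsequence" to full convergence.

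The one point requiring a little care — the main obstacle — is the well-definedness claim, namely that $\eta^\vee_i(p_i)$ and $\ct^\vee_i(p_i)$ make sense for all large $i$, and that the past foot of the rescaled cosmological time actually lands on the spacelike part of $\pt^+\tilde\Omega_i$ rather than drifting onto $\pt\A^3 \cap \R^{2,1}$ or onto $\Lambda_{\rho_i}$. For this I would invoke Lemma~\ref{domainsec} to guarantee $p_i \in g_i\tilde\Omega_i$ eventually, and the structural fact (as in the unblown-up case, cf.~\cite[Proposition 6.3.7]{BB}) that $\eta_{\rho_i}(p) \in \pt^+_s\tilde\Omega_{\rho_i}$; rescaling by $g_i$ preserves this. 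I would also note that, because $p \in \tilde\Omega^+_\tau$ sits in the domain where $\ct_\tau$ is $C^{1,1}$ and bounded away from $0$ near $p$, there is no degeneration of the foot toward the limit set in the limit, so the compactness argument with $K$ closes. Everything else is a routine repetition of the Minkowski-degeneration bookkeeping already established in Section~\ref{sec:compact}.
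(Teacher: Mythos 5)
Your proposal follows essentially the same route as the paper's proof: the upper bound on $\limsup \ct^\vee_i(p_i)$ is obtained by trapping the feet $\eta^\vee_i(p_i)$ in a compact set $K$ using Lemma~\ref{convtemp2+} together with convergence of the rescaled causal pasts, the lower bound comes from a sequence $q_i \in g_i\cl(\pt^+\tilde\Omega_{\rho_i})$ converging to $\eta(p)$ combined with Lemma~\ref{timeconv}, and Lemma~\ref{subconvtoconv} removes the subsequence. The one place your justification is looser than the paper's is the Hausdorff convergence of the causal pasts: continuity of the absolute distance $\xi$ does not by itself encode causality (it is blind to whether a pair is causal or spacelike), and the paper instead works with the causal past of a fixed compact neighborhood $K_p \ni p$ and observes that any rescaled anti-de Sitter causal segment contained in $\R^{2,1}$ is Minkowski-timelike, so the rescaled causal pasts are contained in the Minkowski one, while interior points of the latter are eventually interior points of the former.
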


\begin{proof} 
Let $K_p$ be a compact neighborhood of $p$ in $\tilde\Omega^+_\tau$. For all large enough $i$, we have $K_p \subset g_i\A^3$. Denote by $C$ the causal past of $K_p$ in $\R^{2,1}$ with respect to the Minkowski metric and denote by $C_i$ the intersections of the causal past of $K_p$ with respect to the rescaled anti-de Sitter metrics with $\R^{2,1}$. We claim that $\cl(C_i)$ converge to $\cl(C)$ as subsets of $\RP^3$. Indeed, observe that any causal segment with respect to any rescaled anti-de Sitter metric that belongs to $\R^{2,1}$ is timelike for the Minkowski metric. Hence, all $C_i \subset C$. On the other hand, any interior point of $C$ is an interior point of $C_i$ for all large enough $i$. Since $\cl(C)$ is the closure of its interior, we derive the desired claim. Let $K$ be a compact neighborhood of $C \cap \pt^+\tilde \Omega_\tau$.

Pass to a subsequence realizing $\limsup\ct^\vee_i(p_i)$.
For all large enough $i,$ the points $p_i$ belong to $K_p$. 
Thus, up to subsequence, the closures of the intersections of the causal past of $p_i$ with respect to the rescaled anti-de Sitter metrics with $\R^{2,1}$ converge to a subset of $C$. 
From Lemma~\ref{convtemp2+}, $g_i\cl(\pt^+\tilde \Omega_{\rho_i})$ converge to $\cl(\pt^+\Omega_\tau)$. Hence, for all large enough $i$ the points $\eta_i^\vee(p_i)$ belong to $K$. Thereby, up to subsequence, they converge to a point $q \in \pt^+\tilde\Omega_\tau$. From Lemma~\ref{timeconv}, we get 
\[\ct(p) \geq \limsup \ct^\vee_i(p_i).\]

On the other hand, since $g_i\cl(\pt^+\tilde \Omega_{\rho_i})$ converge in the Hausdorff sense to $\cl(\pt^+\tilde \Omega_\tau)$, there exists a sequence $q_i \in g_i\cl(\pt^+\tilde \Omega_{\rho_i})$ such that $q_i$ converge to $\eta(p)$. Thereby, from Lemma~\ref{timeconv},
\[\ct(p) \leq \liminf \ct^\vee_i(p_i).\]
Thus, $\lim \ct^\vee_i(p_i)=\ct(p)$ and $q=\eta(p)$. The latter means that $\eta^\vee_i(p_i) \ra \eta(p)$.
\end{proof}

Now we can establish the ``blown-up'' analogue of Lemma~\ref{varylevelbase}.

\begin{lm}
\label{varylevel}
Consider $\tau \in T_{\rho_\circ}\mc R$. There exists a neighborhood $U \ni \tau$ in $T_{\rho_\circ}\mc R$, $\e>0$ and a continuous map $\Phi: U \times [0, \e) \times \tilde S \times (0, \frac{\pi}{2}) \ra \RP^3$ such that\\
(1.1) for every $\tau \in U$, $t\in(0,\e)$, $r \in (0, \frac{\pi}{2})$, the map $\Phi(\tau,t,.,r): \tilde S \ra \RP^3$ is a $g_{t}\theta_{\mc E_\mc R(t\tau)} g_{t}^{-1}$-equivariant map onto $g_{t}L_{tr}(\mc E_\mc R(t\tau))$;\\
(1.2) for every $\tau \in U$, $t\in(0,\e)$, $p \in \tilde S$, the set $\Phi(\tau, t, p, (0, \frac{\pi}{2}))$ is the $g_t$-image of a gradient line of $\ct_{\mc E_\mc R(t\tau)}$;\\
(2.1) for every $\tau \in U$, $r \in (0, \frac{\pi}{2})$, the map $\Phi(\tau, 0,.,r): \tilde S \ra \RP^3$ is a $\theta_{\tau}$-equivariant map onto $L_r(\tau)$;\\
(1.2) for every $\tau \in U$, $p \in \tilde S$, the set $\Phi(\tau, 0, p, (0, \frac{\pi}{2}))$ is a gradient line of $\ct_{\tau}$.
\end{lm}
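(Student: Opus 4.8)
The plan is to patch together the family of parametrizations $\Phi(\tau,0,\cdot,\cdot)$ from Lemma~\ref{varylevelbase} (applied to $\rho=\rho_\circ$ and the corresponding level surfaces in $\A^3$) with the family of parametrizations $\Phi(\tau,\cdot,\cdot)$ from Lemma~\ref{varylevel0} (applied in $\R^{2,1}$), after suitably rescaling the anti-de Sitter side by $g_t$ and the cosmological time by $1/t$. First I would fix a compact neighborhood $\tilde U_0$ of $\tau$ in $T_{\rho_\circ}\mc R$ on which the exponential map $\mc E_\mc R$ is a diffeomorphism onto its image, small enough that Lemma~\ref{varylevelbase} provides a continuous map $\Psi: \mc E_\mc R(\tilde U_0)\times \tilde S\times(0,\pi/2)\ra \A^3$ parametrizing the level surfaces $L_r(\rho')$ of $\ct_{\rho'}$, and Lemma~\ref{varylevel0} provides a continuous map $\Psi_0: \tilde U_0\times\tilde S\times\R_{>0}\ra\R^{2,1}$ parametrizing the level surfaces of $\ct_{\tau'}$ in $\tilde\Omega_{\tau'}^+$. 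Then for $t>0$ define
\[\Phi(\tau',t,p,r):=g_t\,\Psi\bigl(\mc E_\mc R(t\tau'),\,p,\,tr\bigr),\]
and for $t=0$ define $\Phi(\tau',0,p,r):=\Psi_0(\tau',p,r)$, possibly after composing the $\tilde S$-parametrizations of the two families so that they agree in the limit (see below). Properties (1.1)--(1.2) and (2.1)--(2.2) are then built into the construction: the equivariance in (1.1) is just the image under $g_t$ of the $\theta_{\mc E_\mc R(t\tau')}$-equivariance of $\Psi$, and the gradient-line property in (1.2) follows since $g_t$ is a projective transformation carrying $\ct_{\rho'}$-gradient lines to gradient lines of the rescaled cosmological time; (2.1)--(2.2) are immediate from Lemma~\ref{varylevel0}.

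The only real content is continuity of $\Phi$ across $t=0$, and this is where Lemma~\ref{ctconv} enters. The strategy mirrors the proof of Lemma~\ref{varylevelbase}: I would start from a single $C^1$-family of spacelike Cauchy surfaces, built via the Ehresmann--Thurston theorem~\cite[Theorem 1.7.1]{CEG} applied to the representations $g_t\theta_{\mc E_\mc R(t\tau')}g_t^{-1}$, which — crucially — converge to $\iota_\tau$ as $t\ra 0$ by the Chabauty-limit description in Section~\ref{sec:geometries} (and converge with $\tau'$ as well since $\mc E_\mc R$ is smooth). This yields a smooth map $\phi: U\times[0,\e)\times\tilde S\ra\RP^3$ whose slices at $t>0$ are $g_t$-rescaled spacelike Cauchy surfaces in $g_t\tilde\Omega_{\mc E_\mc R(t\tau')}$ and whose slice at $t=0$ is a spacelike Cauchy surface in $\tilde\Omega_\tau^+$; I would use this common section to fix, once and for all, the $\tilde S$-parametrization of every level surface, by flowing $\phi(\tau',t,p)$ along the (rescaled) cosmological-time gradient flow to the level $\{\ct^\vee=r\}$, exactly as in Lemma~\ref{varylevelbase}. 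The point is then that $\Phi(\tau',t,p,r)$ is obtained from $\phi(\tau',t,p)$ by a flow whose convergence as $t\ra 0$ is precisely the content of Lemma~\ref{ctconv}: if $(\tau'_i,t_i,p_i,r)\ra(\tau',0,p,r)$, then $\phi(\tau'_i,t_i,p_i)\ra\phi(\tau',0,p)\in\tilde\Omega_\tau^+$, hence $\ct^\vee_i$ and $\eta^\vee_i$ at these points converge to $\ct_\tau$ and $\eta_\tau$ at the limit point, which together with continuity of the gradient flow of $\ct_\tau$ on $\tilde\Omega_\tau^+$ (it is $C^{1,1}$ there by~\cite[Proposition 3.3.3]{BB}) forces $\Phi(\tau'_i,t_i,p_i,r)\ra\Phi(\tau',0,p,r)$.

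One technical point needs care: Lemma~\ref{ctconv} as stated applies to a fixed sequence $\rho_i=\mc E_\mc R(t_i\tau_i)$ with $\tau_i\ra\tau$ and $t_i\ra 0$, and a sequence $p_i\ra p$; this is exactly what a sequence along which $(\tau'_i,t_i,p_i)\ra(\tau',0,p)$ produces after applying $\phi$, so no strengthening is needed — but I should make sure that the flow is applied to points that actually lie in the rescaled domain, which holds for large $i$ by Lemma~\ref{domainsec} and Lemma~\ref{convtemp2+}. I would also need continuity of $\Phi$ at interior points $t>0$, which is simply smoothness of $\phi$ together with the fact that the cosmological time $\ct_{\rho'}$ on the past of $\pt^-\tilde C_{\rho'}$ is $C^{1,1}$ (\cite[Lemma 4.3]{BS4}) and depends continuously on $\rho'$ via Lemma~\ref{ctconvbase}; this is entirely parallel to the $t>0$ part of Lemma~\ref{varylevelbase}. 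The main obstacle is thus bookkeeping rather than a new idea: one must be disciplined about which metric (Minkowski versus rescaled anti-de Sitter) governs the cosmological time at each parameter value, and invoke Lemma~\ref{ctconv} (the ``blown-up'' replacement of Lemma~\ref{ctconvbase}) at exactly the right moment to glue the $t>0$ behavior to the $t=0$ slice. Once this is in place, the listed properties (1.1)--(2.2) hold by construction, completing the proof.
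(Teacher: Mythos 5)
Your final argument—applying the Ehresmann--Thurston theorem to the family of representations $g_t\theta_{\mc E_\mc R(t\tau')}g_t^{-1}$ into ${\rm PGL}(4,\R)$, with $\iota_{\tau'}$ as the $t=0$ limit, to produce a single $C^1$ family of spacelike equivariant Cauchy surfaces, then flowing along gradient lines of the (rescaled) cosmological time and invoking Lemma~\ref{ctconv} for continuity at $t=0$—is precisely the paper's proof, which is stated as a modification of Lemma~\ref{varylevelbase}. The initial ``patching'' sketch you open with would require additional work to match the two $\tilde S$-parametrizations, but you correctly discard it in favor of the unified construction, so the proposal matches the paper.
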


With Lemma~\ref{ctconv} established, the proof of this Lemma is basically a repetition of the proof of Lemma~\ref{varylevelbase}. The necessary changes are the following. We apply the Ehresmann--Thurston theorem to the representation variety of $\pi_1S$ into ${\rm PGL}(4, \R)$ to get a neighborhood of $\theta_{\tau}$ and a varying equivariant surface. We also observe that if a surface is spacelike for the Minkowski metric then it is spacelike for all the rescaled anti-de Sitter metrics. (Note that any plane in $\RP^3$ intersects any $g_t\A^3$, which allows to define the notion of spacelikeness even when a surface is not fully contained in $g_t\A^3$.) Modulo these details, the rest of the proof is the same.

Finally, the proof of the next lemma now follows the same way as Lemma~\ref{compact-}, using Lemmas~\ref{ctconv} and~\ref{varylevel} in the appropriate places.

\begin{lm}
\label{compact}
The space $\mc P_\vee^\bdia(U_-)$ is compact.
\end{lm}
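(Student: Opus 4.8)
## Proof plan for Lemma~\ref{compact}

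The plan is to follow the pattern of the proof of Lemma~\ref{compact-}, now carried out at the blow-up, i.e. along a sequence that degenerates towards $\pt\mc P_\vee^\bdia$. Since $\mc P_\vee^\bdia(U_-) = \mc P_-^\bdia(U_-) \cup \S(\mc P_0^\bdia)$, a sequence $x_i \in \mc P_\vee^\bdia(U_-)$ either (a) eventually stays in a compact part of $\mc P_-^\bdia(U_-)$, in which case compactness of that set (Lemma~\ref{compact-}) gives a convergent subsequence, or (b) has $\rho$-components tending to $\rho_\circ$. Only case (b) needs work, and there the natural coordinates are the rescaled ones: write the holonomies as $\rho_i = \mc E_{\mc R}(t_i\tau_i)$ with $t_i \to 0$ and $\tau_i \to \tau$ in the compact set $\tilde U_0$ (using the exponential map from Section~\ref{sec:blowupconstr}), and rescale space by $g_i := g_{t_i}$. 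As in the reduction for Lemma~\ref{compact-}, it suffices to treat $|V|=1$, say $V=\{v\}$.

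First I would fix $\tau$ (the limit of $\tau_i$) and invoke Lemma~\ref{varylevel}: choose a neighborhood $U \ni \tau$, an $\e>0$, and the continuous map $\Phi: U \times [0,\e) \times \tilde S \times (0,\tfrac{\pi}{2}) \to \RP^3$. Fix a compact fundamental domain $D \subset \tilde S$ for the $\pi_1S$-action. For each large $i$, properties (1.1)--(1.2) of Lemma~\ref{varylevel} say that $\Phi(\tau_i, t_i, D, (0,\tfrac{\pi}{2}))$ is a fundamental domain $D_i$ for the $g_i\theta_{\rho_i}g_i^{-1}$-action on the $g_i$-image of the past of $\pt^-\tilde C_{\rho_i}$ inside $g_i\tilde\Omega_{\rho_i}$; and property (2.1)--(2.2) identifies the ``limit domain'' $D_0 := \Phi(\tau, 0, D, (0,\tfrac{\pi}{2}))$ as the corresponding fundamental domain for the $\iota_\tau$-action on $\Omega^+_\tau$. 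The continuity of $\Phi$, together with Lemma~\ref{ctconv}, gives that $\cl(D_i) \to \cl(D_0)$ in the Hausdorff topology on closed subsets of $\RP^3$ (here one also uses Lemma~\ref{domainsec} and Lemma~\ref{convtemp2+} to control the closures). Now pick a representative $\tilde f_i$ of $f_i$ with $g_i\tilde f_i(v) \in D_i$ — this uses the constraint $\ct_{\rho_i}(\tilde f_i(v)) \le \alpha t_i$, equivalently $\ct^\vee_i(g_i\tilde f_i(v)) \le \alpha$, which keeps the rescaled point inside the compact part of the rescaled spacetime cut out by the cosmological-time bound. Up to subsequence $g_i\tilde f_i(v) \to p \in \cl(D_0)$.

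To finish, I would split on whether $p \in \tilde\Omega^+_\tau$ or $p \in \pt(\cl(D_0))$. If $p \in \tilde\Omega^+_\tau$, then by Lemma~\ref{ctconv} the limit satisfies $\ct_\tau(p) \le \alpha$, and together with the convex-position conditions — which pass to the limit because $\clconv$ is continuous under Hausdorff convergence (Lemmas~\ref{convconv}, \ref{convsurf}) — the data assembles into a boundary point of $\S(\mc P_0^\bdia)$, i.e. an element of $\S(\mc P_0^c(U_0,\alpha))$ or of its compactification; this is exactly the statement that $\S(\mc P_0^\bdia)$ compactifies $\S(\mc P_0^c)$, already recorded in Section~\ref{sec:compact}. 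If instead $p$ escapes the spacelike part — i.e. $p \in \cl(D_0) \setminus \tilde\Omega^+_\tau$ — then, exactly as in Lemma~\ref{compact-}, one checks $p \in \pt^+_s\tilde\Omega_\tau$ (the spacelike part of the boundary), so $p$ is one of the extra configurations added in the definition of $\tilde{\mc P}_0^\bdia$ via condition analogous to (2). Either way $x_i$ (sub)converges in $\mc P_\vee^\bdia(U_-)$, which is compactness. I would also need the elementary observation, as in Lemma~\ref{compact-}, that the only way injectivity of $f_i$ can fail in the limit is for two lifted marked points to collide, and the reverse triangle inequality in a timelike plane (now in the rescaled metric, using Lemma~\ref{timeconv} and Lemma~\ref{techn4}) shows this cannot happen when $|V|>1$ — but by the reduction to $|V|=1$ this is not even an issue here.

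The main obstacle is the interchange of two limits: the degeneration $t_i \to 0$ of the anti-de Sitter spacetimes towards the (blown-up) Minkowski picture, and the geometric control of the fundamental domains $D_i$ as both $i \to \infty$ and $\rho_i \to \rho_\circ$. This is precisely what Lemma~\ref{varylevel} and Lemma~\ref{ctconv} are built to handle, so in practice the work is in checking that the proof of Lemma~\ref{compact-} is robust under replacing $\tilde\Omega_{\rho_i}$ by $g_i\tilde\Omega_{\rho_i}$, the cosmological time by its rescaled version $\ct^\vee_i$, and the convergence inputs (Lemma~\ref{ctconvbase}, Lemma~\ref{varylevelbase}) by their blown-up counterparts (Lemma~\ref{ctconv}, Lemma~\ref{varylevel}); the Hausdorff-convergence bookkeeping for $D_i \to D_0$ and for $\cl(\pt^+\tilde\Omega) $ is the part most likely to require care, but it is genuinely routine given the lemmas already established in Section~\ref{sec:compact}.
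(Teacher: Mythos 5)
Your proposal is correct and matches the paper's own (one-sentence) proof, which simply declares that Lemma~\ref{compact} ``follows the same way as Lemma~\ref{compact-}, using Lemmas~\ref{ctconv} and~\ref{varylevel} in the appropriate places,'' and your expansion is a faithful realization of that. The one point worth making explicit, since your phrasing ``write $\rho_i=\mc E_\mc R(t_i\tau_i)$'' leaves it ambiguous, is that the scaling $t_i$ must be taken from the blow-up decomposition of the \emph{full} configuration $(\rho_i,\tilde f_i)$ (that is, $t_i=t_{x_i}$ using $\mc E:\tilde{\mc P}_0\to\tilde{\mc P}_-$ and the section $\kappa$ of Section~\ref{sec:blowupconstr}), not from the holonomy alone, so that \emph{both} $\tau_i$ and the rescaled marked points $g_{t_i}\tilde f_i(v)$ stay bounded; taking $t_i$ from the holonomy only would let $g_{t_i}\tilde f_i(v)$ escape to infinity in the regime where the holonomy degenerates faster than the marked points, and the fundamental-domain bookkeeping via $\Phi$ from Lemma~\ref{varylevel} would then break down.
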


\subsubsection{End of the proof}
\label{sec:end1}

In this section we prove Lemma~\ref{ml2'} for $y \in \pt\mc D_\vee^s$, which finishes the proof of Lemma~\ref{ml2'}. We consider a sequence $(\rho_i, f_i) \in \mc P_-^s$ from Lemma~\ref{ml2'}. Pick a compact neighborhood $\tilde U_-$ of $\rho_\circ$ in $\mc R$ projecting to $U_-$ in $\mc T$ as in Section~\ref{sec:compact}. Due to Lemma~\ref{ml2.2}, up to subsequence, $\rho_i$ converge to $\rho_\circ$ in $\mc R$. Hence, we may assume that $\rho_i \in \tilde U_-$. Define $\ct_i:=\ct_{\rho_i}$.

\begin{lm}
\label{ctdown}
Up to subsequence, $(\rho_i, f_i)$ converge in $\mc P_\vee^\bdia(U_-)$ to $x \in \S({\mc P}_0^\bdia)$.
\end{lm}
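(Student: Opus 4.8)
\textbf{Plan for the proof of Lemma~\ref{ctdown}.}

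The goal is to show that the sequence $(\rho_i, f_i) \in \mc P_-^s$, which has $\rho_i \to \rho_\circ$ and for which the intrinsic metrics $d_i = \mc I_\vee(\rho_i, f_i)$ converge to a boundary point $y \in \pt\mc D_\vee^s$, subconverges inside the compactification $\mc P_\vee^\bdia(U_-)$ to a point of its boundary $\S(\mc P_0^\bdia)$. The key geometric input is that approaching $\pt\mc D_\vee^s$ means the metrics are ``degenerating to zero'': all lengths $l_{d_i}(\gamma) \to 0$, and more precisely, in edge-length coordinates for a fixed triangulation $\ms T$ weakly triangulating all $d_i$, the edge-length vectors $\phi^\ms T_-(d_i) \to 0$ while their projective classes $\S(\phi^\ms T_-(d_i))$ converge in $\pt\Phi^\ms T_\vee \cong \S(\mc D_0^\sharp(\ms T))$ to $\S(\phi^\ms T_0(y))$. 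The plan is to transfer this collapsing behavior from the metric side (the target) to the configuration side (the source), using the cosmological time as the bridge.

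First I would set up the rescaling. Since $\rho_i \to \rho_\circ$, write $\rho_i = \mc E_{\mc R}(t_i\tau_i)$ via the exponential map from Section~\ref{sec:compact}, where $t_i \to 0$ and, after passing to a subsequence, $\tau_i \to \tau \in T_{\rho_\circ}\mc R$ with $\|\tau\| = 1$ (normalizing by the norm). Set $g_i := g_{t_i}$. The heuristic is that $g_i(\rho_i, f_i)$ should converge to an element of $\mc P_0^\bdia$ representing a ray in $\S(\mc P_0^c)$, i.e., to a Minkowski configuration $(\tau, f)$. To make this precise I would: (a) bound the cosmological time $\ct_i(f_i(v))$ from above. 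This is where the degeneration of $d_i$ is used: by Lemma~\ref{treecomparison} combined with Lemma~\ref{areaexpr} and the estimates, or more directly by observing that $\sys(\Sigma(f_i)) = \sys(d_i) \to 0$ while the bending data is controlled via Theorem~\ref{adsearthq}, one controls how high $\Sigma(f_i)$ sits; the natural statement is that $\ct_i(f_i(v))/t_i$ stays bounded above by some $\alpha$ after rescaling. (b) bound $\ct_i(f_i(v))/t_i$ from below away from zero, which is the rescaled analogue of Lemma~\ref{ctinf}: a version of Lemma~\ref{sysct} applied to the rescaled anti-de Sitter metric, using that $\sys(d_i)/t_i$ — the systole measured in rescaled units — is comparable to the Euclidean (Minkowski) systole of the limiting configuration and hence bounded below. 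Then with $\alpha > 0$ and an upper bound in hand, I apply Lemma~\ref{varylevel}: choosing a fundamental domain $D$ for the $\pi_1S$-action, the rescaled level surfaces $g_i L_{t_i r}(\rho_i)$ for $r$ in the relevant range sweep out fundamental domains $g_i D_{\rho_i}$ that converge in the Hausdorff sense to the corresponding fundamental domain for $\tilde\Omega_\tau^+$. Picking the lift $\tilde f_i$ with $\tilde f_i(v) \in g_i^{-1}(\text{bounded region})$, the points $g_i\tilde f_i(v)$ subconverge to a point $\tilde f(v) \in \tilde\Omega_\tau^+ \cup \pt^+\tilde C_\tau \cup \pt^+_s\tilde\Omega_\tau$, which is exactly a point of $\tilde{\mc P}_0^\bdia(\tilde U_0, \alpha)$. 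Its projective class gives $x \in \S(\mc P_0^\bdia)$. Finally, I would verify that the convergence $(\rho_i, f_i) \to x$ holds in the topology of $\mc P_\vee^\bdia(U_-)$ (not merely up to subsequence at the level of individual marked points), invoking Lemma~\ref{subconvtoconv} and the compactness of $\mc P_\vee^\bdia(U_-)$ from Lemma~\ref{compact}.

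The main obstacle is step (b): establishing the rescaled lower bound on $\ct_i(f_i(v))/t_i$. On the nose, Lemma~\ref{ctinf} gives a bound depending on $\sys(\Sigma(f_i)) = \sys(d_i)$, which tends to $0$, so a naive application gives $\alpha_i \to 0$ and fails to keep the marked points from escaping to $\pt^+\tilde\Omega_\rho$ uncollapsed. The fix is that when everything is rescaled by $1/t_i$, the relevant systole is $\sys(d_i)/t_i$, and because $\S(\phi^\ms T_-(d_i))$ converges to a nondegenerate Euclidean cone-metric class in $\S(\mc D_0^s)$, this rescaled systole is bounded below by a positive constant. I would make this quantitative via Lemma~\ref{sysct} applied to the rescaled anti-de Sitter metric (using $\cos^{-1}(r)$-Lipschitz control from Lemma~\ref{lipscontrol} and the area formula from Lemma~\ref{areaexpr}), together with Lemma~\ref{timeconv} to ensure the rescaled anti-de Sitter distances and Minkowski distances match in the limit. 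A secondary subtlety, as in the proof of Lemma~\ref{compact-}, is the failure of properness of the $\pi_1S$-action on $\pt^+\tilde\Omega_\rho$, which is precisely why the coarser compactification $\mc P_\vee^\bdia$ (allowing limits in $\pt^+_s\tilde\Omega_\tau$) is the right target; this is handled by the same device used there.
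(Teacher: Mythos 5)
There is a genuine gap, and it sits exactly where you flagged ``the main obstacle.'' The actual content of Lemma~\ref{ctdown} is twofold: (i) the configurations collapse to $o_-$ in $\tilde{\mc P}_-$ (so that the limit lands on $\pt\mc P_\vee^\bdia(U_-)=\S(\mc P_0^\bdia)$ rather than in the interior), and (ii) a subsequential limit exists at all, which is Lemma~\ref{compact}. Your proposal never really establishes (i). The paper's argument for it is soft: if some marked point had $\ct_i(f_i(v))$ bounded away from zero, then by Lemmas~\ref{varylevelbase} and~\ref{compact-} the configurations would subconverge to a genuine $(\rho_\circ,f)\in\mc P_-^w$, whose \emph{based} length functions $l_{f,v}$ are positive and are limits of $l_{f_i,v}$ by Lemma~\ref{notalldegen}; but $d_i\ra\pt\mc D_\vee^s$ forces $l_{f_i,v}\ra 0$, a contradiction. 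Collapse then propagates from one marked point to all of them because a chord joining a point tending to $o$ and a point tending to $p\neq o$ would be timelike in the limit, while all chords of $\Sigma(\tilde f_i)$ are spacelike. Your substitute --- ``$\sys(d_i)\ra 0$ while the bending data is controlled, so one controls how high $\Sigma(f_i)$ sits'' --- is not an argument: $\sys(d_i)\ra 0$ by itself does not rule out, say, all marked points converging to a single point $p\neq o$ of $\tilde\Omega_{\rho_\circ}$; one needs the based-length-function device (or something equivalent) to exclude that. Also note your normalization $\rho_i=\mc E_{\mc R}(t_i\tau_i)$ with $\|\tau_i\|=1$ breaks down when $\rho_i\equiv\rho_\circ$; the paper's scale $t_{x_i}$ is extracted from the whole configuration via the section $\kappa$, not from the holonomy alone.

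Your step (b) is both unnecessary and unobtainable at this stage. Unnecessary, because $\S(\mc P_0^\bdia)$ is built precisely to absorb limits with $\tilde f(\tilde v)\in\pt^+_s\tilde\Omega_\tau$, i.e.\ with $\ct_i(\tilde f_i(\tilde v))/t_i\ra 0$ (this is~(\ref{ctdownblow})); that scenario is only excluded several pages later, via the metric-tree and lamination arguments, not in Lemma~\ref{ctdown}. Unobtainable, because the positive lower bound on the rescaled systole that you invoke comes from Corollary~\ref{lipschmetr} and is relative to the \emph{metric} blow-up rate $t_i'$, not the configuration rate $t_i$; showing these two rates are comparable is essentially the whole remaining content of Section~\ref{sec:end1} (the final contradictions there are exactly comparisons of $t_i'/t_i$). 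Moreover, applying Lemma~\ref{sysct} in rescaled form requires confining $\Sigma(f_i)$ below a level surface of controlled rescaled area, i.e.\ Lemmas~\ref{supblowup} and~\ref{areabound}, whose proofs already use $g_i\tilde f_i\ra\tilde f$ --- the conclusion of Lemma~\ref{ctdown}. So the quantitative part of your plan is circular where it is not superfluous. The fix is to drop the two-sided $\ct$-estimate entirely and replace it with the soft collapse argument above, then let Lemma~\ref{compact} finish the job.
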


\begin{proof}
Note first that, up to subsequence, there exists $v \in V$ such that $\ct_{i}(f_i(v)) \ra 0$. Indeed, otherwise from Lemma~\ref{varylevelbase}, up to subsequence, $(\rho_i, f_i)$ converge to $(\rho_\circ, f) \in \mc P_-^w$. Recall from Appendix~\ref{sec:intmet} that $l_{f, v}:\pi_1S \ra \R$ sends $\gamma$ to the infimum of lengths of closed curves in $\Sigma(f)$ based at $f(v)$ in the class of $\gamma$. Lemma~\ref{notalldegen} shows that $l_{f_i,v}\ra l_{f,v}$. Clearly, $l_{f,v}$ values in $\R_{>0}$. On the other hand, if $d_i$ converge to $\pt\mc D_\vee$, $l_{f_i,v}$ must converge to zero pointwise. This is a contradiction.

Let $v \in V$ be such that $\ct_{i}(f_i(v)) \ra 0$. Suppose that there exists $w \in V$ such that $\ct_{i}(f_i(v))$ does not converge to zero. Due to Lemma~\ref{compact-}, up to subsequence, we can pick lifts $(\rho_i, \tilde f_i) \in \tilde{\mc P}_-$ such that $\rho_i \ra \rho_\circ$, $\tilde f_i(v) \ra o$ and $\tilde f_i(w) \ra p \in \tilde \Omega_{\rho_\circ}$. Thereby, the segments $\tilde f_i(v)\tilde f_i(w)$ converge to a timelike segment $op$. However, the segments $\tilde f_i(v)\tilde f_i(w)$ are spacelike. This is a contradiction. 

Hence, for every $v \in V$ we have $\ct_{i}(f_i(v)) \ra 0$. Due to Lemma~\ref{compact-}, this means that, up to subsequence, $(\rho_i, f_i)$ lift to $(\rho_i, \tilde f_i)$ that converge to $o_-$ in $\tilde{\mc P}_-$. Due to Lemma~\ref{compact}, up to subsequence, $(\rho_i, f_i)$ converge to $x \in \mc P_\vee^\bdia(U_-)$. Altogether, this means that $x \in \S({\mc P}_0^\bdia)$.
\end{proof}

Consider $x \in \S({\mc P}_0^\bdia)$ from Lemma~\ref{ctdown}. If $x \in \S(\mc P_0^s)$, we are done. If $x \in \S(\mc P_0^c\backslash \mc P_0^s)$, then due to Lemma~\ref{gluedsmooth}, $y$ must be in $\S(\mc D_0^c\backslash\mc D_0^s)$ and we get a contradiction. Otherwise, pick a representative $(\tau, \tilde f) \in \tilde{\mc P}_0$ of a lift of $x$ to $\S(\tilde{\mc P}_0^\bdia)$. We have a dichotomy. The first option is that $(\tau, \tilde f) \in \tilde{\mc P}_0^w\backslash \tilde{\mc P}_0^c$. Then $\tilde f(V) \subset \Sigma(\tilde f)$, but, since $(\tau, \tilde f) \notin \tilde{\mc P}_0^c$, $\tilde f$ is not injective. The second option is that $(\tau, \tilde f) \notin \tilde{\mc P}_0^w$. By construction, we have $\tilde f(\tilde V) \subset (\tilde\Omega^+_\tau\cup \pt^+_s\tilde\Omega_\tau)$. Since $(\tau, \tilde f) \notin \tilde{\mc P}_0^w$, we get $\tilde f(\tilde V) \cap \pt^+_s\tilde\Omega_\tau \neq \emptyset$.

Recall that in Section~\ref{sec:blowupconstr} we chose an affine connection on $\tilde{\mc P}_-$. Consider the associated exponential map $\mc E: \tilde{\mc P}_0 \ra \tilde{\mc P}_-$. It is a homeomorphism from a neighborhood $\tilde X_0$ of $o_0$ in $\tilde{\mc P}_0$ onto a neighborhood $\tilde X_-$ of $o_-$ in $\tilde{\mc P}_-$. Pick a section $\kappa: \S(\tilde{\mc P}_0) \ra \tilde{\mc P}_0$. Then every $x \in \tilde X_-$ is uniquely represented by a pair $(z_x, t_x)$ so that $z_x \in \tilde{\mc P}_0$ is in the image of $\kappa$, $t_x \in \R_{>0}$ and $\mc E(t_xz_x)=x$. We may assume that our $(\tau, \tilde f)$ is in the image of $\kappa$ and that $(\rho_i, \tilde f_i)=:x_i$ are in $\tilde X_-$. We define $t_i:=t_{x_i}$, $g_i:=g_{t_{i}}$, $\tilde\Omega_i:=\tilde\Omega_{\rho_i}$. Also let $\tau_i$ come from $z_{x_i}$. By construction, $g_i\tilde f_i \ra \tilde f$ as elements of $(\RP^3)^V$.

\begin{lm}
\label{hausdconv}
Let $P_i \subset \tilde \Omega^+_{i}$ and $P \subset \tilde\Omega^+_\tau$ be finite sets such that $g_iP_i$ converge to $P$ in $\RP^3$. Let $\tilde P_i$ and $\tilde P$ be their $\theta_{\rho_i}$- and $\iota_\tau$-orbits respectively. Let $\Sigma_i$ and $\Sigma$ be the future-convex boundary components of $\clconv(\tilde P_i)$ and $\clconv(\tilde P)$. Then $g_i\cl(\Sigma_i)$ converge to $\cl(\Sigma)$ as subsets of $\RP^3$.
\end{lm}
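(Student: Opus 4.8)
The statement concerns Hausdorff convergence, in $\RP^3$, of the rescaled future-convex boundaries $g_i\cl(\Sigma_i)$ of convex hulls of equivariant point sets to $\cl(\Sigma)$, where $g_iP_i\to P$ and the holonomies degenerate from $\theta_{\rho_i}$ to $\iota_\tau$. The plan is to imitate the proof of Lemma~\ref{convtemp1-}/Lemma~\ref{convsurf}, using the blown-up convergence results of Section~\ref{sec:convblowup} in place of the non-blown-up ones. First I would reduce the problem to convergence of convex hulls and their limit sets: by Lemma~\ref{convlimset2} we have $g_i\Lambda_{\rho_i}\to\Lambda_{\rho_\circ}$ and by Corollary~\ref{convdom} (and the discussion around it) $g_i\cl(\tilde\Omega_i)\to\cl(\tilde\Omega_\tau)$. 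Since $\clconv(\tilde P_i)=\clconv(g_i^{-1}\!\cdot\! g_i\tilde P_i)$ and $g_i\tilde P_i$ is the $g_i\theta_{\rho_i}g_i^{-1}$-orbit of $g_iP_i$, and $g_i\theta_{\rho_i}g_i^{-1}$ converges (Chabauty) to $\iota_\tau$ while $g_iP_i\to P$, the orbits $g_i\tilde P_i$ accumulate, as sets, onto $\tilde P$ together with their limit set; more precisely one argues as in the proof of Lemma~\ref{convconv} that $g_i\clconv(\tilde P_i)\to\clconv(\tilde P)$ as closely convex subsets of $\RP^3$, where one must check that no mass escapes to infinity, which follows because all these convex hulls are squeezed between $g_i\cl(\tilde C_{\rho_i})$ (converging to $\cl(\tilde C_\tau)$, hence bounded in the relevant chart) and $g_i\cl(\tilde\Omega_i)$ (converging to $\cl(\tilde\Omega_\tau)$).

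Once $g_i\clconv(\tilde P_i)\to\clconv(\tilde P)$ is known, the next step is to split off the future-convex boundary component. I would run the argument of Lemma~\ref{convtemp1-} verbatim in the co-Minkowski/perturbed-Minkowski chart: pick an interior point $p$ of $\clconv(\tilde P)$, which lies (for large $i$, after applying $g_i$) also in the interior of $g_i\clconv(\tilde P_i)$; project the boundaries of these convex sets to the sphere of directions $S^2$ at $p$; observe that the limit set $\Lambda_{\rho_\circ}$, resp. $g_i\Lambda_{\rho_i}$, projects homeomorphically onto a Jordan curve, and that this convergence is \emph{oriented} in the sense made precise in the proof of Lemma~\ref{convtemp1-} (using that $g_i$ are the explicit diagonal projective transformations and the convergence of graphs under $\pt\A^3\cong\RP^1\times\RP^1$ degenerating to $\pt{}^*\R^{2,1}\cong T\RP^1$); conclude, via the elementary lemma on oriented convergence of Jordan curves and the domains they bound, that the future-convex side converges to the future-convex side. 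This gives $g_i\cl(\Sigma_i)\to\cl(\Sigma)$ after passing to the initial sequence using Lemma~\ref{subconvtoconv}.

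One subtlety I would be careful about: $\tilde\Omega_\tau$ is only \emph{convex}, not properly convex, when $\tau$ is a coboundary, and in the degenerate case the point-set picture of $\pt^+$ is handled by the conventions fixed in Section~\ref{sec:domain}; but since here $P\subset\tilde\Omega^+_\tau$ (strictly in the future component) and the hypotheses of Lemma~\ref{ctdown}'s setup place us outside $\tilde{\mc P}_0^w$ only via the spacelike-part condition, the convex hulls $\clconv(\tilde P)$ are genuinely full-dimensional and the Jordan-curve argument applies. I would also need the boundary components $g_i\cl(\Sigma_i)$ to touch $\pt\A^3$, rescaled, precisely along $g_i\Lambda_{\rho_i}$; this is exactly the content of $\pt\clconv(\tilde P_i)\cap\pt\A^3=\Lambda_{\rho_i}$ (as in Section~\ref{sec:bentsurf}, using $\cl(\tilde C_{\rho_i})\subset\clconv(\tilde f_i)$ from Lemma~\ref{limset}), preserved under the projective maps $g_i$ up to the rescaling.

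\textbf{Main obstacle.} The delicate point is the \emph{oriented} convergence of the projected Jordan curves in the degenerating setting: in Lemma~\ref{convtemp1-} the ambient geometry is fixed, whereas here one rescales by $g_i\to\infty$ and the target boundary quadric $\pt\A^3$ collapses onto $\pt{}^*\R^{2,1}$. One must check that the orientation induced on $g_i\Lambda_{\rho_i}$ by "direction along the curve, future direction on $\pt\A^3$, outward normal'' converges to the corresponding orientation on $\Lambda_\tau$ under the collapse — i.e.\ that the future-orientation is compatible with the geometric transition, which is precisely the normalization fixed in Section~\ref{sec:geometries} ("anti-de Sitter future directions are future for the Minkowski metric''). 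Granting that, the rest is a routine adaptation of the already-established convergence lemmas.
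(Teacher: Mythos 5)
Your plan matches the paper's proof in all essentials: first establish $g_i\Lambda_{\rho_i}\to\Lambda_{\rho_\circ}$ via Lemma~\ref{convlimset2}, combine this with the identification of $\Lambda_{\rho_i}$ and $\Lambda_{\rho_\circ}$ as the accumulation sets of $\tilde P_i$ and $\tilde P$ to deduce $g_i\clconv(\tilde P_i)\to\clconv(\tilde P)$, and then rerun the Jordan-curve splitting argument from the proof of Lemma~\ref{convtemp1-} for $\rho\neq\rho_\circ$. The paper reaches the middle step more economically: instead of your Chabauty-convergence plus squeezing heuristic, it simply invokes Lemma~\ref{limset} for the statement that $\Lambda_{\rho_i}$ is the accumulation set of $\tilde P_i$, and cites~\cite[Lemma 2.4]{FP} directly for the Minkowski fact that $\Lambda_{\rho_\circ}$ is the accumulation set of $\tilde P$; that one citation replaces all the bookkeeping in your second paragraph about the orbits "accumulating onto $\tilde P$ together with their limit set", and it is the cleanest way to avoid worrying about mass escaping to projective infinity.

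One small slip in your "main obstacle" paragraph: the Jordan curve you project is $\Lambda_{\rho_\circ}$, not $\Lambda_\tau$. Under the rescaling $g_t$ (as opposed to the dual rescaling $g_t^*$, which is the one that transports $\Lambda_{\rho_t}$ into $\pt\,{}^*\R^{2,1}$ and produces $\Lambda_\tau$), Lemma~\ref{convlimset2} gives $g_i\Lambda_{\rho_i}\to\Lambda_{\rho_\circ}$, the diagonal of $\RP^1\times\RP^1$, and this is exactly the curve along which $\cl(\Sigma)$ touches $\pt\A^3$ in the limiting picture; $\Lambda_\tau$ plays no role here. With that corrected, the oriented-convergence worry you raise is legitimate and is handled exactly as you say: the orientation is induced by the fixed ambient orientation of $\RP^3$ together with the fixed future-orientation, both of which are compatible across the transition by the normalization in Section~\ref{sec:geometries}, and $g_i\Lambda_{\rho_i}\to\Lambda_{\rho_\circ}$ remains a convergence of graphs under $\pt\A^3\cong\RP^1\times\RP^1$ so orientedness persists. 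Modulo that, your route and the paper's coincide.
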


\begin{proof}
From Lemma~\ref{convlimset2}, $g_i\Lambda_{\rho_i}$ converge to $\Lambda_{\rho_\circ}$. The set $\Lambda_{\rho_i}$ is the limit set for $\tilde P_i$. Due to~\cite[Lemma 2.4]{FP}, $\Lambda_{\rho_\circ}$ is the limit set for $\tilde P$. Hence, $\clconv(\tilde P_i)$ converges to $\clconv(\tilde P)$. The rest of the proof goes the same way as the proof of Lemma~\ref{convtemp1-} for $\rho\neq\rho_\circ$.
\end{proof}

In particular, in the both cases $g_i\cl(\Sigma(\tilde f_i))\ra \cl(\Sigma(\tilde f))$. We now deal with the first case of the dichotomy. Then there are $\tilde v\neq \tilde w\in \tilde V$ such that $\tilde f(\tilde v)=\tilde f(\tilde w)$. Let $v, w \in V$ be their projections and $a_i:=d_i(v,w)$. Note that $v\neq w$. Recall that $\xi_{t_i}$ is the rescaled anti-de Sitter absolute distance function. From Lemma~\ref{timeconv}, we have $\xi_{t_i}(g_i\tilde f_i(\tilde v), g_i\tilde f_i(\tilde w)) \ra 0$. By applying the reversed triangle inequality to timelike anti-de Sitter planes containing the segments $\tilde f_i(\tilde v)\tilde f_i(\tilde w)$, we obtain 
\begin{equation}
\label{collapseblow}
\frac{a_i}{t_i} \leq \xi_{t_i}(g_i\tilde f_i(\tilde v),g_i\tilde f_i(\tilde w))\ra 0.
\end{equation}
We need two more ingredients to obtain a contradiction.

\begin{lm}
\label{edgeremains}
Let $e$ be an edge of $\Sigma(\tilde f)$ between $\tilde f(\tilde v_1)$ and $\tilde f(\tilde v_2)$. Let $\tilde V_1, \tilde V_2 \subset \tilde V$ be defined as maximal subsets of $\tilde V$ such that $\tilde f(\tilde V_1)=\tilde f(\tilde v_1)$, $\tilde f(\tilde V_2)=\tilde f(\tilde v_2)$. Then for all large enough $i$ there exist $\tilde v_1' \in \tilde V_1$, $\tilde v_2' \in \tilde V_2$, depending on $i$, such that the segment $\tilde f_i(\tilde v_1') \tilde f_i(\tilde v_2')$ belongs to $\Sigma(\tilde f_i)$. 
\end{lm}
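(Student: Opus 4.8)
The plan is to argue by a convergence/stability argument for faces of the convex hull under the geometric transition, combined with the Hausdorff convergence $g_i\cl(\Sigma(\tilde f_i)) \to \cl(\Sigma(\tilde f))$ established just above via Lemma~\ref{hausdconv}. First I would set up the picture in the (perturbed) Minkowski chart, exactly as in the proof of Lemma~\ref{transition}: since $g_i\tilde f_i \to \tilde f$ as elements of $(\RP^3)^V$ and all the relevant domains $g_i\cl(\tilde\Omega^+_i)$ converge to $\cl(\tilde\Omega^+_\tau)$ (Lemma~\ref{domainsec}, Lemma~\ref{convtemp2+}), for large $i$ everything takes place in a fixed affine chart equipped with a Euclidean metric, and it suffices to work with the finite point-configurations near a fundamental domain for the $\pi_1S$-action, using $\theta_{\rho_i}$- and $\iota_\tau$-equivariance to propagate the conclusion.

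The key step is the following local claim: if $\Pi$ is the (spacelike, for $\A^3_\tau=\R^{2,1}$) plane through $\tilde f(\tilde v_1),\tilde f(\tilde v_2)$ supporting $\Sigma(\tilde f)$ along the edge $e$, then on the $\Sigma(\tilde f)$-side $e$ is a genuine edge, meaning the two faces of $\Sigma(\tilde f)$ adjacent to $e$ span distinct planes and the dihedral angle along $e$ is $<\pi$; equivalently, among the points of $\tilde f(\tilde V)$ lying on $\Pi$ (namely $\tilde f(\tilde V_1)\cup\tilde f(\tilde V_2)=\{\tilde f(\tilde v_1),\tilde f(\tilde v_2)\}$ together possibly with other vertices collapsing onto these two points), the two points $\tilde f(\tilde v_1),\tilde f(\tilde v_2)$ are exposed vertices of $\Pi\cap\clconv(\tilde f)$, and the segment between them is an edge of that two-dimensional section. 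Now for each large $i$, pick supporting planes $\Pi_i$ to $\Sigma(\tilde f_i)$; because $\Sigma(\tilde f_i)$ is strictly polyhedral (by Lemma~\ref{transition}, applied to the curve $t\mapsto \mc E(tz_{x_i})$ — or directly since $(\rho_i,\tilde f_i)\in\tilde{\mc P}_{-,sp}^s$), its faces are compact polygons with vertices among $\tilde f_i(\tilde V)$, and the face celluation is locally a subdivision of the face celluation of $\tilde f$ once $g_i\tilde f_i$ is close enough to $\tilde f$ — this is the statement that for $(\rho,f)\in\mc P_{-,sp}^c$ nearby configurations have subdividing face celluations, proven in Section~\ref{sec:bentsurf} following \cite[Lemma 2.13]{FP}. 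Hence the combinatorial edge $e$, being an edge of the coarser celluation, must appear (possibly subdivided) in the face celluation of $\tilde f_i$, and its endpoints are among $\tilde f_i(\tilde V_1\cup\tilde V_2)$; that is, there are $\tilde v_1'\in\tilde V_1$, $\tilde v_2'\in\tilde V_2$ with $\tilde f_i(\tilde v_1')\tilde f_i(\tilde v_2')$ a subsegment of an edge of $\Sigma(\tilde f_i)$, and after possibly enlarging the pair along the collapsing vertex cluster (again using that all of $\tilde f_i(\tilde V_1)$ converge to the single point $\tilde f(\tilde v_1)$ and similarly for $\tilde v_2$, so the extreme pair in each cluster realizes the full edge) we get that $\tilde f_i(\tilde v_1')\tilde f_i(\tilde v_2')$ lies on $\Sigma(\tilde f_i)$.

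To make the ``subdivision'' argument legitimate across the blow-up rather than only within $\mc P_{-,sp}^c$, I would instead run the Gauss-map / Stoker-type argument of Lemma~\ref{transition} directly: the plane $\Pi_i$ through $g_i\tilde f_i(\tilde v_1'),g_i\tilde f_i(\tilde v_2')$ converges to $\Pi$, so for large $i$ it is spacelike for $\A^3_{\rho_i}$ (rescaled) and disjoint from $\tilde C_{\rho_i}$; combined with the convexity of $\Sigma(\tilde f_i)$ and the fact, coming from Lemma~\ref{transition}'s analysis of locally-vs-globally supporting planes for spacelike surfaces, that a plane locally supporting $\Sigma(\tilde f_i)$ and spacelike is globally supporting, one concludes the segment $\tilde f_i(\tilde v_1')\tilde f_i(\tilde v_2')$ appears on the boundary of $\clconv(\tilde f_i)$ on the future-convex side, i.e.\ on $\Sigma(\tilde f_i)$. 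The main obstacle I anticipate is the bookkeeping around the collapsing vertices: several distinct $\tilde v\in\tilde V$ may have $\tilde f(\tilde v)$ coinciding at $\tilde f(\tilde v_1)$, so one must show that among these the images under $\tilde f_i$ do not, for instance, all pile up on one side of $\Pi_i$ and thereby eliminate the edge — this is handled by the first-order transversality observation from Lemma~\ref{transition} (the function $\zeta(t)$ with $\zeta(0)=\zeta'(0)=\zeta''(0)=0$, $\zeta'''(0)\neq0$), applied to quadruples whose $\tilde f$-images are affinely independent, together with the fact that $e$ is genuinely an edge of $\Sigma(\tilde f)$ so the relevant $\zeta'''(0)$ is nonzero; this pins down which vertex of each collapsing cluster is the ``extreme'' one realizing the edge for each large $i$ and shows the pair $(\tilde v_1',\tilde v_2')$ can indeed be chosen as claimed (depending on $i$).
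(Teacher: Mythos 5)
Your proposal does not match the paper's proof, and it has a genuine gap. The paper argues by a short contradiction using only Hausdorff convergence: let $p$ be the midpoint of $e$. Because $\Sigma(\tilde f)$ is strictly polyhedral and $e$ is an edge between the two collapsed image points, $p$ has a neighborhood $X$ disjoint from $\clconv(\tilde f(\tilde V\backslash\tilde V_1))\cup\clconv(\tilde f(\tilde V\backslash\tilde V_2))$. By the established convergence of convex hulls, for large $i$ the set $X$ is still disjoint from $g_i(\clconv(\tilde f_i(\tilde V\backslash\tilde V_1))\cup\clconv(\tilde f_i(\tilde V\backslash\tilde V_2)))$; but $g_i\cl(\Sigma(\tilde f_i))\ra\cl(\Sigma(\tilde f))$ forces $X\cap g_i\Sigma(\tilde f_i)\neq\emptyset$. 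Any such point lies (Carath\'eodory) in the convex hull of three extreme points of $\clconv(\tilde f_i)$, and if \emph{no} segment between $\tilde V_1$ and $\tilde V_2$ lies on $\Sigma(\tilde f_i)$, those three extreme points must all come from $\tilde f_i(\tilde V\backslash\tilde V_1)\cup\Lambda_i$ or all from $\tilde f_i(\tilde V\backslash\tilde V_2)\cup\Lambda_i$, contradicting the disjointness. No facial or combinatorial subdivision analysis is needed, and the argument never has to identify \emph{which} vertex of the collapsing cluster is extreme.

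The gap in your plan is precisely the point you flag yourself and then wave away: resolving which of the several $\tilde v\in\tilde V_1$ is the ``extreme'' vertex for a given $i$. The $\zeta(t)$ first-order argument from the proof of Lemma~\ref{transition} is applicable only to quadruples whose $\tilde f$-images are affinely independent --- you say as much --- and the problematic quadruples here are exactly those containing two elements of $\tilde V_1$ (or of $\tilde V_2$), whose $\tilde f$-images coincide, so for them $\zeta'''(0)=0$ and the sign of $\zeta(t)$ is determined by higher-order data invisible at the blow-up. The whole difficulty of the collapsing configuration is that the linearization $\tilde f$ cannot separate the vertices in a cluster, so no first-order criterion can ``pin down the extreme pair.'' Further, the subdivision statement you cite from~\cite[Lemma 2.13]{FP} and the whole of Lemma~\ref{transition} hypothesize the limit to lie in $\tilde{\mc P}_0^c$ resp.\ $\tilde{\mc P}_0^s$, whereas here $(\tau,\tilde f)\in\tilde{\mc P}_0^w\backslash\tilde{\mc P}_0^c$ by construction (non-injectivity is the case being treated), so neither result applies, and they also concern $C^1$-curves rather than arbitrary sequences converging to the blow-up. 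The paper's midpoint-and-convergence argument is the device that avoids all of this; I'd recommend abandoning the celluation/Gauss-map route and arguing by contradiction as above.
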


\begin{proof}
Suppose the converse, pass to a subsequence for which the claim does not hold. 
Let $p$ be the midpoint of $e$. Since $e$ is an edge of $\Sigma(\tilde f)$ and $\Sigma(\tilde f)$ is strictly polyhedral, there exists a neighborhood $X$ of $p$ in $\RP^3$ such that $X$ is disjoint from $\clconv(\tilde f(\tilde V \backslash \tilde V_1))\cup\clconv(\tilde f(\tilde V \backslash \tilde V_2))$. Clearly, $g_i(\clconv(\tilde f_i(\tilde V \backslash \tilde V_1))\cup\clconv(\tilde f_i(\tilde V \backslash \tilde V_2)))$ converge to $\clconv(\tilde f(\tilde V \backslash \tilde V_1))\cup\clconv(\tilde f(\tilde V \backslash \tilde V_2))$. Hence $X$ is also disjoint from $g_i(\clconv(\tilde f_i(\tilde V \backslash \tilde V_1))\cup\clconv(\tilde f_i(\tilde V \backslash \tilde V_2)))$ for all large enough $i$. On the other hand, $g_i\cl(\Sigma(\tilde f_i)) \ra \cl(\Sigma(\tilde f))$. Hence for all large enough $i$ there exists $p_i \in X \cap g_i\Sigma(\tilde f_i)$. Then $p_i$ belongs to the convex hull of three points from $g_i(\tilde f_i(\tilde V) \cup \Lambda_i)$. Since the claim does not hold, these three points are either from $g_i(\tilde f_i(\tilde V \backslash \tilde V_1) \cup \Lambda_i)$ or from $g_i(\tilde f_i(\tilde V \backslash \tilde V_2) \cup \Lambda_i)$. In any case, $p_i \in g_i(\clconv(\tilde f_i(\tilde V \backslash \tilde V_1))\cup\clconv(\tilde f_i(\tilde V \backslash \tilde V_2)))$, which is a contradiction. 
\end{proof}

Note that the subsets $\tilde V_1, \tilde V_2$ are finite. By passing to a subsequence, we assume that $\tilde v_1', \tilde v_2'$ from Lemma~\ref{edgeremains} are fixed and denote them from now by $\tilde v_1, \tilde v_2$. Note that the segment $\tilde f_i(\tilde v_1)\tilde f_i(\tilde v_2)$ is geodesic in the intrinsic metric of $\Sigma(\tilde f_i)$. Hence, it projects to a geodesic arc $\chi_i$ in $(S, d_i)$ between some $v_1, v_2 \in V$, which are possibly coinciding. Then $\chi_i$ are representatives of the same class $\gamma$ of arcs on $S \backslash V$ up to isotopy. Let $b_i$ be its length in $d_i$. We now apply Lemma~\ref{timeconv} to $g_i\tilde f_i(\tilde v_1), g_i\tilde f_i(\tilde v_2)$ and see that there is $b>0$ such that

\begin{equation}
\label{edgerembound}
\frac{b_i}{t_i} \geq b.
\end{equation}

We claim that equations~(\ref{collapseblow}) and~(\ref{edgerembound}) are in contradiction with the fact that $y \in \pt \mc D_\vee^s$. The idea is that the latter fact means that for $d_i$ all the metric quantities (lengths of shortest curves in free homotopy classes, distances between marked points, diametes, etc) must go to zero with the same speed. To see this, we need

\begin{lm}
\label{metricblow}
Let $y_t:[0,1] \ra\mc D_\vee^s $ be a $C^1$-curve with $y_0 \in \pt \mc D_\vee^s$ and $y_t=d_t \in \mc D^s_-$ for $t>0$. Then there exists a representative metric $d_0 \in \mc D_0^s$ of $y_0$ such that $\frac{d_t}{t}\ra d_0$ in the Lipschitz sense as $t \ra 0$.
\end{lm}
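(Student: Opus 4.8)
The statement claims that along a $C^1$-curve $y_t$ in $\mc D_\vee^s$ hitting the boundary at $t=0$, the hyperbolic cone-metrics $d_t$, rescaled by $1/t$, converge in the Lipschitz sense to a Euclidean cone-metric $d_0$ representing the boundary point $y_0$. The natural approach is to work in the charts $\phi_\vee^{\ms T}$ of the blow-up $\mc D_\vee^\sharp$ constructed in Section~\ref{sec:metr}, where the blow-up structure was built precisely so that the transition to the Euclidean spherization $\S(\mc D_0^\sharp)$ is smooth. First I would lift $y_t$ to a $C^1$-curve in $\mc D_\vee^\sharp$ and pick a triangulation $\ms T$ of $(S,V)$ that is geodesic for the boundary metric $y_0$ (as an element of $\mc D_0^\sharp$); by openness of $\ms T$-triangulability in the spherization chart (Lemma~\ref{techn1} and the proof of Lemma~\ref{metrics}), for all small $t>0$ the metric $d_t$ is weakly $\ms T$-triangulable, so $d_t$ is recorded by its edge-length vector $\ell(t) = \phi_-^{\ms T}(d_t) \in \Phi^{\ms T}$. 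The chart $\phi_\vee^{\ms T}$ being $C^1$ across the boundary means $\ell(t)$ extends to a $C^1$-curve in $\Phi^{\ms T}_\vee$; since $y_0 \in \pt\mc D_\vee^s$ corresponds to $\ell(0)=0$ (the cone vertex) together with a direction $\dot\ell(0) \in \R^{E(\ms T)}$, we get $\ell(t)/t \to \dot\ell(0)$, and $\dot\ell(0)$ is exactly the edge-length vector (in the coned chart $\phi_0^{\ms T}$) of the Euclidean cone-metric $d_0 := \S(\phi_0^{\ms T})^{-1}(\dot\ell(0))$. This identifies the candidate $d_0$ and gives convergence of edge lengths.

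The second step is to upgrade edge-length convergence to Lipschitz convergence. Fix the triangulation $\ms T$; each geodesic triangle of $(S,V,d_t)$ has side lengths $t$-scaled copies of the corresponding triangle of $d_0$ up to first order, so I would compare the hyperbolic triangle with sides $\ell(t)_e$ to the Euclidean triangle with sides $t\,\dot\ell(0)_e$. The key analytic input is the comparison between small hyperbolic triangles and Euclidean triangles: a hyperbolic triangle with sides $O(t)$ differs from the Euclidean triangle with the same sides by $O(t^3)$ corrections (this is visible, e.g., from the hyperbolic cosine law expanded as in the proofs of Lemmas~\ref{techn2} and~\ref{smooth}, and is implicit in the smoothness of $\phi_\vee^{\ms T}$). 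Hence on each triangle the identity map between the $d_t$-triangle (rescaled by $1/t$) and the corresponding $d_0$-triangle has differential converging uniformly to the identity; since there are finitely many triangles in $\ms T$ and the metric $d_0$ is a genuine Euclidean cone-metric on $(S,V)$, gluing these gives that $d_t/t \to d_0$ uniformly together with first derivatives away from $V$, which is precisely Lipschitz (bi-Lipschitz with constant $\to 1$) convergence on $S$. One must be slightly careful near the cone points of $V$, but since $V(d_0)=V(d_t)=V$ and the cone angles vary continuously (the angles extend smoothly to $\pt\Phi^{\ms T}_\vee$ as shown in the proof of Lemma~\ref{metrics}), the convergence is uniform on all of $S$, including a neighborhood of each cone point.

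The main obstacle, I expect, is not the edge-length convergence — that is essentially built into the blow-up construction — but rather making the passage from ``edge lengths of a fixed triangulation converge'' to ``the metrics converge in the Lipschitz sense'' fully rigorous, i.e., controlling the comparison map triangle-by-triangle and checking the estimates are uniform up to the boundary $t=0$ and uniform over the (compact) surface. The delicate points are: (i) one needs the geodesic triangulation $\ms T$ of $d_0$ to remain geodesic (at least weakly) for $d_t$ for all small $t$, which follows from the openness argument in Lemma~\ref{metrics} but should be invoked explicitly; and (ii) one needs the first-order comparison ``small hyperbolic triangle $\approx$ scaled Euclidean triangle'' with uniform error, which is a direct Taylor-expansion computation but must be stated carefully enough to yield $C^1$-convergence of the identity maps on each cell. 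Once these are in place, the Lipschitz convergence follows by a standard compactness/Arzel\`a--Ascoli argument (as in the proof of Lemma~\ref{intrmetrcone}), and the fact that the limit metric $d_0$ lies in $\mc D_0^s$ and represents $y_0$ is read off from $\S(\phi_0^{\ms T})$.
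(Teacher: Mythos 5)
Your proposal takes essentially the same approach as the paper: lift to $\mc D_\vee^\sharp$, fix a triangulation $\ms T$ so that all $y_t$ (and $y_0$) live in the edge-length chart $\Phi^{\ms T}_\vee$, read off $d_0$ from the derivatives of the edge lengths at $t=0$, and then build per-triangle comparison maps whose bi-Lipschitz constants tend to $1$. The paper's version of the last step is more explicit than your Taylor-expansion sketch: it realizes each hyperbolic triangle of $d_t$ on the hyperboloid $\H^2\subset\R^{2,1}$, projects it radially onto the spacelike plane through its three vertices to get a comparison Euclidean triangle with sides $\ell(t)+O(\ell(t)^3)$, and then composes with the affine map onto the realization of that cell in $d_0$; this factors your ``small hyperbolic $\approx$ Euclidean with same sides'' and ``Euclidean with sides $\ell(t)/t\to\dot\ell(0)$'' estimates into two transparent pieces. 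One small terminological slip: after lifting to $\mc D_\vee^\sharp$ the relevant notion is $\ms T$-triangulability (not weak $\ms T$-triangulability, which only makes sense downstairs in $\mc D_\vee$).
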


Indeed, we lift $y_t$ to $\mc D_\vee^\sharp$. We can assume that there is a triangulation $\ms T$ such that $y_t \in \mc D_\vee^\sharp(\ms T)$. Now Lemma~\ref{metricblow} follows from the construction of $\mc D_\vee^\sharp(\ms T)$ in Section~\ref{sec:metr}. The derivatives of the triangle lengths at $t=0$ define $d_0 \in \mc D_0^\sharp(\ms T)$. To construct Lipschitz maps from $\frac{d_i}{t_i}$ to $d_0$, pick a triangle $T$ of $\ms T$. It is a hyperbolic triangle in $d_t$, we realize it on the hyperboloid $\H^2 \subset \R^{2,1}$ and send it radially to the Euclidean triangle in the spacelike plane that subtends the vertices in the realization. Then we send it to the Euclidean triangle of the realization of $T$ in $d_0$ by the respective affine map. This defines Lipschitz maps from $\frac{d_i}{t_i}$ to $d_0$ with Lipschitz constants converging to 1. Lemma~\ref{rosental} yields

\begin{crl}
\label{lipschmetr}
Let $d_i \in \mc D^c_-$ be a sequence converging to $y \in \pt \mc D_\vee^s$ in $\mc D_\vee^s$. Then there exist a representative metric $d \in \mc D_0^s$ of $y$ and a sequence $t_i \ra 0$ such that $\frac{d_i}{t_i}\ra d$ in the Lipschitz sense.
\end{crl}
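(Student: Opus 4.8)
The plan is to deduce the corollary from the curve statement of Lemma~\ref{metricblow} by using Lemma~\ref{rosental} to interpolate a $C^1$-curve through (a subsequence of) the $d_i$, and then to recover the statement for the full sequence by a subsequence argument. First I would fix a triangulation $\ms T$ of $(S,V)$ such that $y$ lies in the domain of the chart $\phi^\ms T_\vee$; this is possible because $y$ is a concave cone-metric and the domains $\mc D_\vee^\sharp(\ms T)$ cover $\mc D_\vee^s$ (after passing to the covering $\mc D_\vee^\sharp \ra \mc D_\vee$, which is harmless since $d_i \ra y$, so a tail of the sequence together with the limit can be lifted compatibly). In the chart, $\phi^\ms T_\vee$ identifies a neighbourhood of $y$ with a neighbourhood of a boundary point of the model $\Phi^\ms T_\vee$, which by construction (Section~\ref{sec:metr}) is the blow-up of the open cone $\Phi^\ms T \subset \R^{E(\ms T)}$ at the origin; equivalently, in the edge-length chart $\phi^\ms T_-$ of $\mc D_-^\sharp(\ms T)$ the points $\ell_i := \phi^\ms T_-(d_i)$ lie in $\Phi^\ms T$ and converge to $0$, and the blow-up coordinate $t_i$ of $\phi^\ms T_\vee(d_i)$ is, up to the fixed section used to define the blow-up, the norm of $\ell_i$. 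I take this $t_i$ as the sequence in the statement and take $d$ to be the element of $\mc D_0^s$ whose $\phi^\ms T_0$-image is the ray-representative $\lim_i \ell_i/|\ell_i|$; equivalently, $d$ is the canonical representative of $y \in \pt\mc D_\vee^s \cong \S(\mc D_0^s)$ determined by the chart.

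Next I would apply Lemma~\ref{rosental} in $\R^{E(\ms T)}$ to the sequence $\ell_i \ra 0$: after passing to a subsequence for which $|\ell_i|$ (the blow-up coordinate $t_i$) is monotone, the lemma produces a $C^1$-curve through the origin containing infinitely many of the $\ell_i$, and the monotonicity clause lets us arrange it to reach the origin nondegenerately, so that reparameterizing by the blow-up coordinate yields a $C^1$-curve $y_s : [0,1) \ra \mc D_\vee^\sharp(\ms T)$ with $y_0 = y$, with $y_s \in \mc D_-^\sharp(\ms T)$ for $s>0$, and with $y_{t_i} = d_i$ along the chosen subsequence. Shrinking the parameter interval, $y_s$ stays inside the submanifold-with-boundary $\mc D_\vee^s$ (which is open in $\Phi^\ms T_\vee$ near $y$ by Lemma~\ref{techn1}). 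Now Lemma~\ref{metricblow} applies to $y_s$ and produces a representative $d' \in \mc D_0^s$ of $y$ with $d_s/s \ra d'$ in the Lipschitz sense; chasing the proof of Lemma~\ref{metricblow} (the edge lengths of $d'$ are the $s$-derivatives at $0$ of the edge lengths of $d_s$, i.e.\ of the curve built from the $\ell_i$) identifies $d'$ with the metric $d$ fixed above, so $d_i/t_i \ra d$ along this subsequence.

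Finally, since $d$ was pinned down by $y$ and the chart alone, the same argument applied to any subsequence of $\{d_i\}$ yields a further subsequence along which $d_i/t_i \ra d$ in the Lipschitz sense; as the Lipschitz topology on cone-metrics is Hausdorff, Lemma~\ref{subconvtoconv} upgrades this to convergence of the full sequence $d_i/t_i \ra d$. The step I expect to be the main obstacle is the middle one: guaranteeing that the interpolating curve from Lemma~\ref{rosental} approaches $y$ transversally to the boundary (radially), so that it genuinely defines a $C^1$-curve into the manifold-with-boundary $\mc D_\vee^\sharp$ to which Lemma~\ref{metricblow} can be applied, and matching the curve's parameter with the blow-up coordinate; the monotonicity clause of Lemma~\ref{rosental} is exactly what is designed to handle this, but the verification requires some care, as does the bookkeeping with the covering $\mc D_\vee^\sharp \ra \mc D_\vee$ and the section defining the blow-up so that $d$ and the $t_i$ are unambiguously defined.
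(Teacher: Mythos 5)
Your proposal is correct and matches the paper's approach: the paper's own proof of this corollary is literally the one-line "Lemma~\ref{rosental} yields," and you are filling in exactly the intended chart-and-interpolation argument, applying Lemma~\ref{rosental} to the edge-length coordinates $\ell_i\to 0$ and feeding the resulting curve into Lemma~\ref{metricblow}, then upgrading via Lemma~\ref{subconvtoconv}. Concerning the transversality worry you flag at the end: it is resolved not so much by the monotonicity clause as by the fact that Rosenthal's $C^1$-arc is a genuine one-dimensional submanifold, so it has a nonzero tangent at the origin which (since the rays $\ell_i/|\ell_i|$ converge) must be parallel to $\lim_i\ell_i/|\ell_i|\in\Phi^{\ms T}$; this forces the derivative of the blow-up coordinate at $0$ to be positive, making the reparameterization by that coordinate $C^1$, while the monotonicity clause is what lets one carry the reparameterization along the whole curve rather than only near the endpoint.
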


Corollary~\ref{lipschmetr} implies that there is a sequence $t'_i \ra 0$ such that
\[\frac{a_i}{t_i'} \ra a'>0,~~~~~\frac{b_i}{t_i'} \ra b'>0.\]
The second inequality and~(\ref{edgerembound}) imply that $t'_i/t_i \ra b/b' >0$, while the first and~(\ref{collapseblow}) imply that $t'_i/t_i \ra 0$, which is a contradiction.

Hence, it remains to consider $(\tau, \tilde f) \notin \tilde{\mc P}_0^w$. Then there is $\tilde v \in \tilde V$ such that $\tilde f(\tilde v) \in \pt^+_s\tilde\Omega_\tau$. Note that due to Lemma~\ref{ctconv}, this means that 
\begin{equation}
\label{ctdownblow}
\frac{\ct_i(\tilde f_i(\tilde v))}{t_i} \ra 0.
\end{equation}

We claim that this implies that $\tau \neq 0$. Indeed, otherwise we get a similar contradiction as in the proof of Lemma~\ref{ctdown}. Namely, then we have $\tilde f (\tilde v)=o$. Since $(\tau, \tilde f)$ is a representative of $x \in \S({\mc P}_0)$, we have $(\tau, \tilde f) \neq o_0$. Thereby, there exists $\tilde w \in \tilde V$ such that $\tilde f(\tilde w) \neq o$, i.e., $\tilde f(\tilde w) \in \tilde \Omega^+_\tau$. This means that the segment $\tilde f(\tilde v)\tilde f(\tilde w)$ is timelike for the Minkowski metric. Then for all large enough $i$, $\tilde f_i(\tilde v)\tilde f_i(\tilde w)$ are timelike for the anti-de Sitter metric. This is a contradiction, since they must be spacelike. Hence, $\tau \neq 0$.

Once again we invoke the theory of metric graphs, which we already employed in Section~\ref{sec:convholblow}. Recall that $\pt^+_s\tilde\Omega_\tau$ is the spacelike part of $\pt^+\tilde\Omega_\tau$ and is a metric $\Gamma$-tree for $\Gamma=\pi_1S$. We denote it by $\Psi^+(\tau)$. Denote by $\lambda^-(\tau)$ the measured lamination dual to $\Psi^+(\tau)$ via the Skora duality. See~\cite{BB, BG} for more details. In contrast to the anti-de Sitter case, $\lambda^-(\tau)$ does not have a direct geometric interpretation in $\tilde\Omega_\tau$.

Recall that since $\mc{ML}$ admits a natural PL-structure, but no natural differentiable structure, one can define its tangent spaces, which are not vector spaces, but cones. See details in~\cite{Bon2}.
Let $\lambda_0 \in \mc{ML}$ be the empty measured lamination. Since $\mc{ML}$ also has the structure of a cone based at $\lambda_0$, the tangent space $T_{\lambda_0}\mc{ML}$ can be identified with $\mc{ML}$ itself. We have

\begin{lm}
\label{convlam}
Let $\rho_t: [0,1] \ra \mc R$ be a continuous curve with $\rho_0=\rho_\circ$, differentiable at $t=0$ with $\dot\rho_0=\tau \in T_{\rho_\circ}\mc T$. Then $\dot\lambda^-_0:=\frac{d}{dt}\lambda^-(\rho_t)|_{t=0}=\lambda^-(\tau)$.
\end{lm}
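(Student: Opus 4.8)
The plan is to identify the two measured laminations via their dual $\Gamma$-trees and use the convergence results from Section~\ref{sec:convblowup}, upgraded to the level of metric trees. Recall that $\lambda^-(\rho_t)$ is dual (via the Skora homeomorphism of Theorem~\ref{skora}) to $\Psi^+(\rho_t)$, the spacelike part of $\pt^+\tilde\Omega_{\rho_t}$, while $\lambda^-(\tau)$ is dual to $\Psi^+(\tau)$, the spacelike part of $\pt^+\tilde\Omega_\tau$. Since Skora duality is an $\R_{>0}$-equivariant homeomorphism $\mc{ML}\cong\mc{MT}$, and the scaling on $\mc{ML}$ corresponds to scaling of length functions on trees, it suffices to prove the following statement about length functions: for each $\gamma\in\pi_1S$,
\[
\frac{1}{t}\,l_{\Psi^+(\rho_t)}(\gamma)\ \longrightarrow\ l_{\Psi^+(\tau)}(\gamma)\qquad\text{as }t\to 0,
\]
where the identification $T_{\lambda_0}\mc{ML}\cong\mc{ML}$ sends $\frac{d}{dt}\lambda^-(\rho_t)|_{t=0}$ to the lamination whose dual tree has these rescaled length functions. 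Because the $\Gamma$-equivariant isometry type of a small minimal $\Gamma$-tree is determined by finitely many $l_\Psi(\gamma)$ (the statement of~\cite{Sko} recalled before Theorem~\ref{skora}), convergence of length functions for those finitely many $\gamma$ is enough, and then $\dot\lambda^-_0=\lambda^-(\tau)$ follows.

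So the real work is the rescaled convergence of the spacelike trees. First I would fix $\gamma\in\pi_1S$ and recall that $l_{\Psi^+(\rho_t)}(\gamma)=\inf_{p\in\pt^+_s\tilde\Omega_{\rho_t}}d_{\A}(p,\theta_{\rho_t}(\gamma)p)$, the translation length of $\theta_{\rho_t}(\gamma)$ acting on the metric tree $\pt^+_s\tilde\Omega_{\rho_t}$ with its intrinsic path metric; similarly $l_{\Psi^+(\tau)}(\gamma)$ is the translation length on $\pt^+_s\tilde\Omega_\tau$ with the Minkowski intrinsic metric. Now apply $g_t$: by Lemma~\ref{convtemp2} (resp.\ the argument giving Corollary~\ref{convdom2}) the sets $g_t\cl(\pt^+\tilde\Omega_{\rho_t})$ converge in the Hausdorff sense to $\cl(\pt^+\tilde\Omega_\tau)$, and the conjugated group elements $g_t\theta_{\rho_t}(\gamma)g_t^{-1}$ converge in $\mathrm{PGL}(4,\R)$ to $\iota_\tau(\gamma)$ (this is exactly the Chabauty/pointwise statement underlying the transition $g_t(\A^3,G_-)\to(\R^{2,1},G_0)$ recorded in Section~\ref{sec:geometries}, applied to the specific sequence $\rho_t$ as in Lemma~\ref{convlimset}). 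The rescaled anti-de Sitter absolute distance functions $\xi_{t}$ converge to the Minkowski one $\xi_0$ uniformly on compacta by Lemma~\ref{timeconv}. Combining these: pick a point $p_\tau\in\pt^+_s\tilde\Omega_\tau$ nearly realizing $l_{\Psi^+(\tau)}(\gamma)$ on a segment of $\pt^+_s\tilde\Omega_\tau$; approximate it by $p_t\in g_t\pt^+_s\tilde\Omega_{\rho_t}$ with $p_t\to p_\tau$; then $\xi_{t}$-distance along the rescaled tree from $p_t$ to $(g_t\theta_{\rho_t}(\gamma)g_t^{-1})p_t$ converges to the Minkowski distance from $p_\tau$ to $\iota_\tau(\gamma)p_\tau$, giving $\limsup \frac1t l_{\Psi^+(\rho_t)}(\gamma)\le l_{\Psi^+(\tau)}(\gamma)$. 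Conversely, any sequence of near-minimizers $q_t$ on $g_t\pt^+_s\tilde\Omega_{\rho_t}$ stays in a compact region (using that translation lengths are bounded and that the axis projects into a fundamental domain, together with the Hausdorff convergence of the domains), so subsequential limits $q\in\pt^+_s\tilde\Omega_\tau$ give $\liminf\frac1t l_{\Psi^+(\rho_t)}(\gamma)\ge l_{\Psi^+(\tau)}(\gamma)$; by Lemma~\ref{subconvtoconv} the full sequence converges.

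The main obstacle I anticipate is the lower bound, i.e.\ the compactness/non-escape of the near-minimizers $q_t$: one must rule out the minimizing points on the rescaled spacelike trees drifting off toward the limit set or toward the non-spacelike part of the boundary, where the metric degenerates. Here the natural tool is the cosmological time machinery of Section~\ref{sec:compact}: Lemmas~\ref{ctconv} and~\ref{varylevel} provide, near $\tau$, continuous families of level surfaces $g_tL_{tr}(\mc E_{\mc R}(t\tau))$ converging to $L_r(\tau)$ and fundamental domains that behave well under $g_t$; using these one confines $q_t$ to a region whose closure meets $\pt^+\tilde\Omega_\tau$ only along its spacelike part, exactly as in the proof of Lemma~\ref{ctdown}. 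The other point requiring care is that the intrinsic (path) metric of the spacelike part of $\pt^+\tilde\Omega$, rather than the ambient chordal distance, is what enters $l_\Psi$; but since translation length on a tree can be computed from displacement of a single well-chosen point (on the axis), and such a point can be taken in the interior of a spacelike face where ambient and intrinsic geometry agree infinitesimally, the $\xi_t$-convergence of Lemma~\ref{timeconv} does control it. Modulo these localization arguments, the proof is a routine limiting argument, and I would write it in that order: reduce to finitely many $\gamma$ via Skora; establish the $\limsup$ bound by explicit approximation; establish the $\liminf$ bound via cosmological-time confinement; conclude with Lemma~\ref{subconvtoconv} and Theorem~\ref{skora}.
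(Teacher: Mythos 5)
Your route is genuinely different from the paper's, and it is worth spelling out the contrast. The paper does not attempt any direct geometric limit of the trees. Instead it makes a short algebraic argument: it introduces the infinitesimal earthquake vector fields $e^\pm_\lambda$, uses Mess's earthquake theorem (Theorem~\ref{adsearthq}) to conclude $\tau=e^-_{\dot\lambda^-_0}(\rho_\circ)$, invokes a result of Bonsante--Schlenker \cite[Proposition~B.3]{BS3} to obtain $\tau=e^-_{\lambda^-(\tau)}(\rho_\circ)$ independently, and then uses the fact that the earthquake map based at $\rho_\circ$ is a PL-homeomorphism to deduce $\dot\lambda^-_0=\lambda^-(\tau)$. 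The whole content of the geometric transition at the level of laminations is thus outsourced to the earthquake machinery and to \cite{BS3}. By contrast, you propose to verify $\frac1t\Psi^+(\rho_t)\ra\Psi^+(\tau)$ in $\mc{MT}$ directly, by rescaling with $g_t$, tracking the spacelike boundary tree, and passing to the limit of translation lengths.

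Your plan is structurally sound in its reduction (Skora duality plus the cone identification of $T_{\lambda_0}\mc{ML}$ with $\mc{ML}$ does reduce the claim to $\frac1t\,l_{\Psi^+(\rho_t)}(\gamma)\ra l_{\Psi^+(\tau)}(\gamma)$), but the two steps you defer are exactly the hard ones, and I don't think they are ``routine limiting arguments.'' First, Lemma~\ref{convtemp2} only gives Hausdorff convergence of the boundary surfaces $g_t\cl(\pt^+\tilde\Omega_{\rho_t})$; it does not by itself give Hausdorff convergence of the spacelike subsets $g_t\pt^+_s\tilde\Omega_{\rho_t}$, and more importantly it does not give equivariant Gromov--Hausdorff convergence of these sets as metric trees: the tree metric is an intrinsic path metric, and Hausdorff convergence of ambient sets is compatible with collapse or combinatorial change in the intrinsic structure. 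Your remark that ``such a point can be taken in the interior of a spacelike face where ambient and intrinsic geometry agree infinitesimally'' sweeps the real issue under the rug, because translation length is a global quantity and the axis combinatorics can change along the family. Second, the $\liminf$ direction requires confining near-minimizers $q_t$, and your appeal to the cosmological-time machinery of Section~\ref{sec:compact} is circular in spirit: those lemmas (\ref{ctconv}, \ref{varylevel}) control the interior of $\tilde\Omega^+$, while the points you need to control live on the degenerate boundary $\pt^+_s\tilde\Omega$ where $\ct$ is identically zero. In short, carrying out your plan would amount to re-proving \cite[Proposition~B.3]{BS3} by direct geometry, which is a meaningful but nontrivial project; the paper's proof is short precisely because it delegates that work to the earthquake theory.
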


This is basically shown by Bonsante--Schlenker in~\cite[Appendix B]{BS3}. Recall first the notion of infinitesimal earthquake. For every nonzero $\lambda \in \mc{ML}$ the curves $E^\pm_{t\lambda}(\rho_\circ)$ are $C^1$ (in fact, analytic, see~\cite{Ker2}). We denote by $e^\pm_\lambda$ the resulting vector fields on $\mc T$. Due to Theorem~\ref{adsearthq}, we have $\tau=e^-_{\dot\lambda^-_0}(\rho_\circ)$. On the other hand, it is shown in the proof of~\cite[Proposition B.3]{BS3} that $\tau=e^-_{\lambda^-(\tau)}(\rho_\circ)$. Since the earthquake map from $\rho_\circ$ is a PL-homeomorphism, we get $\dot\lambda^-_0=\lambda^-(\tau)$. 

In the same way as in the proof of Lemma~\ref{domainsec} we deduce from Lemma~\ref{rosental}

\begin{lm}
\label{lamblowup}
In our setting, $\frac{1}{t_i}\lambda^-(\rho_i)$ converge to $\lambda^-(\tau)$ in $\mc{ML}$. 
\end{lm}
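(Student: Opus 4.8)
Lemma~\ref{lamblowup} asserts that in the blow-up setting (where $t_i \to 0$, $\rho_i = \mc E_\mc R(t_i\tau_i)$, $\tau_i \to \tau$), the rescaled bending laminations $\frac{1}{t_i}\lambda^-(\rho_i)$ converge in $\mc{ML}$ to $\lambda^-(\tau)$. The key comparison result, Lemma~\ref{convlam}, handles the case where the path $\rho_t$ is a genuine $C^1$-curve emanating from $\rho_\circ$ with derivative $\tau$: in that case $\frac{1}{t}\lambda^-(\rho_t) \to \lambda^-(\tau)$. The difficulty here is that we only have a \emph{sequence} $(\rho_i)$, not a curve, and the parameters $(\tau_i, t_i)$ move simultaneously — this is exactly the situation already met in Lemma~\ref{domainsec}.

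\begin{proof}
We argue as in the proof of Lemma~\ref{domainsec}, reducing the sequential statement to the curve case via Lemmas~\ref{rosental} and~\ref{subconvtoconv}. Recall that $\mc{ML}$ is a metrizable space; by Lemma~\ref{subconvtoconv} it suffices to show that every subsequence of $\frac{1}{t_i}\lambda^-(\rho_i)$ admits a further subsequence converging to $\lambda^-(\tau)$. So pass to an arbitrary subsequence, and further pass to a subsequence along which $t_i$ is monotonically decreasing. Applying Lemma~\ref{rosental} in $T_{\rho_\circ}\mc R$ to the sequence $t_i\tau_i \to 0$, with the last coordinate chosen to record $t_i$: there exists a $C^1$-curve in $T_{\rho_\circ}\mc R$ through the origin containing infinitely many of the points $t_i\tau_i$, which we may reparameterize as $t \mapsto t\tau_t$ with $\tau_0 = \tau$, so that $t \mapsto t\tau_t$ is $C^1$ with derivative $\tau$ at $t=0$. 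Setting $\rho_t := \mc E_\mc R(t\tau_t)$, this is a continuous curve with $\rho_0 = \rho_\circ$, differentiable at $t=0$ with $\dot\rho_0 = \tau$, and containing infinitely many of the $\rho_i$ (with parameter value $t_i$).

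Now Lemma~\ref{convlam} applies to the curve $\rho_t$: we have $\frac{1}{t}\lambda^-(\rho_t) \to \lambda^-(\tau)$ as $t \to 0$. Restricting to the values $t = t_i$ appearing in our subsequence gives that the corresponding sub-subsequence of $\frac{1}{t_i}\lambda^-(\rho_i)$ converges to $\lambda^-(\tau)$. Since the original subsequence was arbitrary, Lemma~\ref{subconvtoconv} yields that the full sequence $\frac{1}{t_i}\lambda^-(\rho_i)$ converges to $\lambda^-(\tau)$ in $\mc{ML}$.
\end{proof}

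The one point requiring mild care — and the only place where the argument could conceivably go wrong — is the applicability of Lemma~\ref{rosental}: we need the $C^1$-curve it produces to pass through the origin \emph{tangent to the direction $\tau$}, so that $\mc E_\mc R$ composed with it is a curve differentiable at $0$ with derivative exactly $\tau$. This is automatic here because $t_i\tau_i / |t_i\tau_i| \to \tau/|\tau|$ (assuming $\tau \neq 0$; if $\tau = 0$ the statement is trivial since $\lambda^-(0)$ is the empty lamination by Lemma~\ref{convlam} and one checks $\frac{1}{t_i}\lambda^-(\rho_i) \to \lambda_0$ directly from $l_{\Psi^+(\rho_i)}$ bounds), so the interpolating curve has a well-defined tangent at the origin in the direction $\tau$, and the second (monotonicity) clause of Lemma~\ref{rosental} guarantees $t_i$ remains a legitimate monotone parameter along it. Everything else is a verbatim transcription of the scheme used for Lemma~\ref{domainsec}.
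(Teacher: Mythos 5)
Your overall scheme is the one the paper intends: the paper gives no proof of Lemma~\ref{lamblowup} beyond the one-line reference ``In the same way as in the proof of Lemma~\ref{domainsec} we deduce from Lemma~\ref{rosental}'', and your write-up correctly unfolds that: reduce via Lemma~\ref{subconvtoconv} to showing every subsequence has a further subsequence converging to $\lambda^-(\tau)$, use Rosenthal's lemma to find a $C^1$-curve through infinitely many of the data points, and apply Lemma~\ref{convlam} to that curve.

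There is, however, one imprecision worth flagging, and it is exactly the point your parenthetical flirts with but does not resolve. The paper's proof of Lemma~\ref{domainsec} applies Lemma~\ref{rosental} to the \emph{pairs} $(\tau_i,t_i)$ in the product space $X := (\tfrac{1}{\e}\tilde U_0)\times[0,\e)$, not to the scalar multiples $t_i\tau_i$ in $T_{\rho_\circ}\mc R$. The advantage is that in $X$ the last coordinate is literally $t_i$, so the monotonicity clause of Lemma~\ref{rosental} makes $t_s$ a legitimate monotone parameter, reparameterizing by $t$ gives a curve $(\tau_t,t)$ that passes through $(\tau_i,t_i)$ at the parameter value $t=t_i$, and hence $\rho_t:=\mc E_{\mc R}(t\tau_t)$ passes through $\rho_i$ exactly at $t=t_i$. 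Your version instead applies Rosenthal to the sequence $t_i\tau_i$ inside $T_{\rho_\circ}\mc R$: the resulting $C^1$-curve $\chi$ contains $t_i\tau_i$ at some parameter values $s_i$, and even after normalizing so $\dot\chi(0)=\tau$, one only gets $s_i/t_i\to 1$, not $s_i=t_i$. So the parenthetical ``(with parameter value $t_i$)'' is not justified as written, and ``the last coordinate chosen to record $t_i$'' does not quite make sense in $T_{\rho_\circ}\mc R$ (the last coordinate of $t_i\tau_i$ is $t_i\tau_{i,m}$, not $t_i$). The gap is fixable — either switch to the product-space formulation, which is what the paper does and what the reference to Lemma~\ref{domainsec} indicates, or add a short sentence using $s_i/t_i\to 1$ and the $\R_{>0}$-continuity on $\mc{ML}$ to transfer $\frac{1}{s_i}\lambda^-(\rho_i)\to\lambda^-(\tau)$ to $\frac{1}{t_i}\lambda^-(\rho_i)\to\lambda^-(\tau)$. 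As it stands, this step is the one place in your write-up that needs to be made precise.
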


Since $\tau \neq 0$, the Skora duality, Theorem~\ref{skora}, implies

\begin{crl}
\label{treeblowup}
The trees $\frac{1}{t_i}\Psi^+(\rho_i)$ converge to non-trivial $\Psi^+(\tau)$ in $\mc{MT}$.
\end{crl}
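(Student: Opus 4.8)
The plan is to obtain this directly from Lemma~\ref{lamblowup} by transporting the convergence through the Skora duality, so the argument will be short. First I would recall the two structural facts already in place. By the discussion in Section~\ref{sec:convholblow}, for each $\rho \in \mc T$ the tree $\Psi^+(\rho)$ is precisely the $\Gamma$-tree dual to the measured lamination $\lambda^-(\rho)$ under the Skora correspondence, and by definition $\Psi^+(\tau)$ is the tree dual to $\lambda^-(\tau)$. By Theorem~\ref{skora}, this correspondence is an $\R_{>0}$-equivariant homeomorphism $\mc{ML} \cong \mc{MT}$; in particular it is continuous and intertwines the two scaling actions.

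Next I would apply this homeomorphism to the convergence $\tfrac{1}{t_i}\lambda^-(\rho_i) \to \lambda^-(\tau)$ in $\mc{ML}$ supplied by Lemma~\ref{lamblowup}. By $\R_{>0}$-equivariance, the image of $\tfrac{1}{t_i}\lambda^-(\rho_i)$ is $\tfrac{1}{t_i}\Psi^+(\rho_i)$ and the image of $\lambda^-(\tau)$ is $\Psi^+(\tau)$, and continuity of the Skora map then gives $\tfrac{1}{t_i}\Psi^+(\rho_i) \to \Psi^+(\tau)$ in $\mc{MT}$. It remains to verify non-triviality. Since $\tau \neq 0$ was established just above, Lemma~\ref{convlam} identifies $\lambda^-(\tau)$ with $\dot\lambda^-_0$; if this were the empty lamination $\lambda_0$, then the identity $\tau = e^-_{\lambda^-(\tau)}(\rho_\circ)$ recalled in the proof of Lemma~\ref{convlam} would force $\tau = e^-_{\lambda_0}(\rho_\circ) = 0$, a contradiction. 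Hence $\lambda^-(\tau) \neq \lambda_0$, and therefore its dual tree $\Psi^+(\tau)$ is not the degenerate one-point tree $\Psi_0$.

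There is no genuine obstacle here: the only point requiring care is purely bookkeeping, namely checking that the scaling on $\mc{MT}$ used in the statement is exactly the push-forward under Skora of the scaling on $\mc{ML}$, and that ``convergence in $\mc{MT}$'' means convergence of length functions in $\R^\Gamma$ as fixed in Section~\ref{sec:convholblow}, so that continuity of the homeomorphism applies verbatim. Both are built into Theorem~\ref{skora} and the chosen topology on $\mc{MT}$, so the corollary follows immediately.
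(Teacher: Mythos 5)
Your proof is correct and follows essentially the same route as the paper: it unpacks the brief "Since $\tau \neq 0$, the Skora duality, Theorem~\ref{skora}, implies..." by explicitly transporting Lemma~\ref{lamblowup} through the $\R_{>0}$-equivariant homeomorphism, and spells out why $\lambda^-(\tau)\neq\lambda_0$ via the identity $\tau = e^-_{\lambda^-(\tau)}(\rho_\circ)$ from the proof of Lemma~\ref{convlam}. The only notational caveat is that $e^-_\lambda$ was introduced in the paper only for nonzero $\lambda$, but your use of $e^-_{\lambda_0}(\rho_\circ)=0$ is harmless and is really just the injectivity (PL-homeomorphism) of the infinitesimal earthquake map from $\rho_\circ$, which the paper already invokes.
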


Define $\Sigma_i:=\Sigma(\tilde f_i)$, $\alpha_i:=\inf_{\Sigma_i}\ct_i(p)$, $\beta_i:=\sup_{\Sigma_i}\ct_i(p)$.

\begin{lm}
\label{supblowup}
The sequence $\frac{\beta_i}{t_i}$ is bounded.
\end{lm}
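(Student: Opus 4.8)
The plan is as follows. We have $\Sigma_i = \Sigma(\tilde f_i)$, the future-convex boundary component of $\clconv(\tilde f_i)$, and $\beta_i = \sup_{\Sigma_i}\ct_i(p)$ is the maximal cosmological time attained on $\Sigma_i$. Since $\Sigma_i$ is the future-convex hull boundary of the finitely many points $\tilde f_i(\tilde V)$ together with $\Lambda_{\rho_i}$, every point of $\Sigma_i$ lies in the closed convex hull of at most three points of $g_i^{-1}(\{g_i\tilde f_i(\tilde v): \tilde v\in\tilde V\}) \cup \Lambda_{\rho_i}$; moreover $\ct_i$ restricted to $\Sigma_i$ is a concave function along the gradient-flow parametrization (more precisely, on convex Cauchy surfaces the cosmological time is controlled by its values at the vertices). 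Hence $\beta_i = \max_{\tilde v\in\tilde V}\ct_i(\tilde f_i(\tilde v))$ up to a bounded multiplicative error coming from the $K$-surface/level-set geometry. So it suffices to bound $\ct_i(\tilde f_i(\tilde v))/t_i$ from above for each $\tilde v \in \tilde V$, where we use the $\theta_{\rho_i}$-invariance to restrict to finitely many $\tilde v$.

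The key point is to rescale. By construction $g_i\tilde f_i(\tilde v) \to \tilde f(\tilde v)$ in $\RP^3$, and $\tilde f(\tilde v) \in \tilde\Omega^+_\tau \cup \pt^+_s\tilde\Omega_\tau$ (not in $\pt^+_s\tilde\Omega_\tau$ precisely for those $\tilde v$ with $\tilde f(\tilde v) \notin \pt^+_s\tilde\Omega_\tau$). First I would handle the case $\tilde f(\tilde v) \in \tilde\Omega^+_\tau$: then by Lemma~\ref{ctconv}, $\ct^\vee_i(\tilde f_i(\tilde v)) = \frac{1}{t_i}\ct_{\rho_i}(\tilde f_i(\tilde v)) \to \ct_\tau(\tilde f(\tilde v)) < +\infty$, so the sequence $\ct_i(\tilde f_i(\tilde v))/t_i$ is bounded. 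For the remaining $\tilde v$ with $\tilde f(\tilde v) \in \pt^+_s\tilde\Omega_\tau$, equation~(\ref{ctdownblow}) tells us $\ct_i(\tilde f_i(\tilde v))/t_i \to 0$, so these terms are bounded as well (in fact they go to zero). Since $\tilde V$ modulo $\pi_1S$ is finite, combining these across a fundamental domain and using equivariance gives a uniform bound $\ct_i(\tilde f_i(\tilde v))/t_i \le M$ for all $\tilde v \in \tilde V$.

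It remains to convert the bound on the cosmological time at the vertices into a bound on $\beta_i$. Here I would argue that on the future-convex surface $\Sigma_i = \Sigma(\tilde f_i)$, the cosmological time $\ct_{\rho_i}$ attains its supremum, and by convexity of $\clconv(\tilde f_i)$ any point $p \in \Sigma_i$ lies below (to the past of) a timelike segment issuing from some vertex $\tilde f_i(\tilde v)$, or below the boundary at infinity; in either situation $\ct_{\rho_i}(p) \le \max_{\tilde v}\ct_{\rho_i}(\tilde f_i(\tilde v))$, simply because enlarging the convex hull past $\Sigma_i$ can only decrease cosmological time and the vertices are the extremal points of $\Sigma_i$ in the timelike direction. (Alternatively one uses that $\Sigma_i \subset \clconv(\tilde f_i(\tilde V) \cup \Lambda_{\rho_i})$, and on this convex set $\ct_{\rho_i}$ is bounded by its boundary values, which are $0$ on $\Lambda_{\rho_i}$ and $\ct_{\rho_i}(\tilde f_i(\tilde v))$ at the vertices.) Hence $\beta_i \le \max_{\tilde v}\ct_{\rho_i}(\tilde f_i(\tilde v)) \le M t_i$, so $\beta_i/t_i \le M$ and the sequence is bounded.

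The main obstacle I anticipate is the step of passing from the vertex bound to the global bound on $\beta_i$: one needs to make precise that on a future-convex Cauchy surface the cosmological time is dominated by its values at the vertices of the convex hull, uniformly in $i$, despite the surfaces degenerating and the ambient domains $\tilde\Omega_{\rho_i}$ collapsing toward $^*\R^{2,1}$. This should follow from the observation that $\ct_{\rho_i}$ is the restriction of the cosmological time of $\tilde\Omega_{\rho_i}$, which is concave along gradient lines (the past of $\pt^-\tilde C_{\rho_i}$ where $\ct_{\rho_i}$ is $C^{1,1}$, see~\cite[Lemma 4.3]{BS4}), together with the fact that the vertices $\tilde f_i(\tilde v)$ are the extreme points of $\Sigma_i$; but one should double-check that $\Sigma_i$ stays in the region where $\ct_{\rho_i}$ is well-behaved, or else absorb this into a bounded correction via Lemma~\ref{ctcompare} and the $\cos^{-1}(r)$-Lipschitz control of the level-set projections.
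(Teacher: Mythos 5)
There is a genuine gap, and it sits exactly at the step you flagged as the "main obstacle": the reduction $\beta_i\leq\max_{\tilde v}\ct_{\rho_i}(\tilde f_i(\tilde v))$ is false, and the inequality in fact tends to go the other way. The level surfaces $L_r$ of the cosmological time are \emph{future-convex}, so their convex side is the future; consequently the superlevel sets $\{\ct_{\rho_i}\geq r\}$ are convex and $\ct_{\rho_i}$ is quasi-\emph{concave}, not quasi-convex. A planar face of $\Sigma_i$ with vertices on $L_\alpha$ therefore lies in the \emph{future} of $L_\alpha$, and its interior can have cosmological time far exceeding $\alpha$. Concretely, in the Fuchsian model ($\ct=$ timelike distance to $o$), two points $p,q\in\tan(\alpha)\H^2$ in the Minkowski chart at hyperbolic parameter distance $2s$ have midpoint at height $\approx\tan(\alpha)\cosh(s)$, i.e.\ at cosmological time $\approx\arctan(\tan(\alpha)\cosh(s))$, which is unbounded relative to $\alpha$ as $s\to\infty$. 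This is not a removable "bounded multiplicative error": by~(\ref{ctdownblow}) some vertices satisfy $\ct_i(\tilde f_i(\tilde v))/t_i\to 0$ while the adjacent edge lengths are of order $t_i$, which is precisely the regime where the ratio between the cosmological time of a face interior and that of its vertices blows up. So your claimed maximum principle ("$\ct$ on the convex set is bounded by its boundary values, the vertices being the timelike-extremal points of $\Sigma_i$") has the geometry backwards: the vertices are the points of $\Sigma_i$ closest to $\pt^+\tilde\Omega_{\rho_i}$, and the faces bulge toward the future. The first two steps of your argument (bounding $\ct_i(\tilde f_i(\tilde v))/t_i$ at the vertices via Lemma~\ref{ctconv} and~(\ref{ctdownblow})) are fine but do not suffice.

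The paper's proof circumvents this by producing an upper barrier rather than reading off vertex values. Arguing by contradiction, it uses $\alpha_i/t_i\to 0$ to find a point of $\Sigma_i$ on the level surface $\tilde L_{\alpha t_i}(\rho_i)$, takes the $\theta_{\rho_i}$-orbit of that single point, and lets $\Sigma_i'$ be the future-convex boundary of its convex hull. Since the orbit lies on $\Sigma_i$, the surface $\Sigma_i'$ sits in the future of $\Sigma_i$, so $\beta_i\leq\sup_{\Sigma_i'}\ct_i$; and because the chosen point is tracked by the continuous family $\Phi$ of Lemma~\ref{varylevel}, the rescaled surfaces $g_i\cl(\Sigma_i')$ converge (Lemma~\ref{hausdconv}) to a genuine convex Cauchy surface $\Sigma'$ in the Minkowski domain $\tilde\Omega_\tau^+$, whose supremum $\beta$ of $\ct_\tau$ is finite by cocompactness. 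This global comparison, not the vertex data, is what bounds $\beta_i/t_i$. If you want to salvage your approach you would need a quantitative estimate on how far a spacelike face can bulge in terms of both the vertex cosmological times and the edge lengths, uniformly as $\rho_i\to\rho_\circ$, and the discussion above shows such an estimate degenerates exactly in the relevant regime.
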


\begin{proof}
Suppose the converse. Then, up to subsequence, $\frac{\beta_i}{t_i}$ increases to infinity. Due to~(\ref{ctdownblow}), $\frac{\alpha_t}{t_i} \ra 0$. Hence for all large enough $i$ and some $\alpha>0$, $\Sigma_i$ intersects $\tilde L_{\alpha t_i}(\rho_i)$, where the latter is a level surface of $\ct_i$. We apply now Lemma~\ref{varylevel} and get a neighborhood $U \ni \tau$ in $T_{\rho_\circ}\mc T$, $\e>0$ and the map \[\Phi: U \times [0,\e) \times \tilde S \times \left(0, \frac{\pi}{2}\right) \ra \RP^3.\]
We define $\phi: U \times [0,\e) \times \tilde S \ra \RP^3$ by $\phi(\tau,t, p):=\Phi(\tau, t, p, \alpha)$. Pick an intersection point of $g_i\tilde L_{\alpha t_i}(\rho_i)$ and $g_i\tilde \Sigma_i$. These points determine a sequence $p_i \in S$ via the map $\phi$. Up to subsequence, they converge to $p \in S$. Pick lifts $\tilde p_i \in \tilde S$ and $\tilde p \in \tilde S$ so that $\tilde p_i$ converge to $\tilde p$.
Then $\phi(\tau_i, t_i, \tilde p_i)$ converge to $\phi(\tau, 0, \tilde p)$.

Denote by $\tilde P_i$ the $\theta_{\rho_i}$-orbit of $g_i^{-1}\phi(\tau_i, t_i, \tilde p_i)$ and by $\tilde P$ the $\iota_\tau$-orbit of $\phi(\tau, 0, \tilde p)$. Denote by $\Sigma'_i$ and $\Sigma'$ the future-convex boundaries of the closed convex hull of $\tilde P_i$ and of $\tilde P$ respectively. Due to Lemma~\ref{hausdconv}, $g_i\cl(\Sigma_i')$ converge to $\cl(\Sigma')$. Let $\beta$ be the supremum of $\ct_\tau$ over $\Sigma'$ and $\beta'_i$ be the suprema of $\ct_i$ over $\Sigma_i'$. We first claim that $\limsup \beta'_i \leq \beta.$

Pass to a subsequence realizing $\limsup\beta_i'$. Pick a point $q_i$ on $\Sigma_i'$ that realizes the supremum of $\ct_i$. We project $q_i$ along the $g_i$-images of gradient lines of $\ct_i$ to $g_i\tilde L_{\alpha t_i}(\rho_i)$ and then to $S$ via the map $\phi$. This gives us a sequence $s_i \in S$. Up to subsequence, it converges to $s \in S$. Let $\tilde s_i \in \tilde S$, $\tilde s\in \tilde S$ be their lifts such that $\tilde s_i$ converge to $\tilde s$. We may assume that $q_i$ project to $\phi(\tau_i, t_i, \tilde s_i)$.

Because $\cl(\Sigma'_i)$ converges to $\cl(\Sigma)$, up to subsequence, $q_i$ converges to $q \in \cl(\Sigma)$. Because the projections of $q_i$ converge to $\phi(\tau, 0, \tilde s)$, we have $q \in \Sigma$. Thus $\limsup \beta'_i \leq \beta$. Note, however, that every $\Sigma'_i$ is in the future of $\Sigma_i$. Hence, $\limsup \frac{\beta_i}{t_i}\leq\limsup \beta'_i\leq\beta$.
\end{proof}

Let $A_i$ be the area of $L_{\beta_i}(\rho_i)$.

\begin{lm}
\label{areabound}
The sequence $\frac{A_i}{t_i^2}$ is bounded.
\end{lm}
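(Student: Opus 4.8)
The plan is to use the area formula from Lemma~\ref{areaexpr} together with the blow-up convergence results established above. Recall that Lemma~\ref{areaexpr} gives
\[A_i=-2\pi\sin^2(\beta_i)\chi(S)+l_{\rho_i^-}(\lambda^-(\rho_i))\sin(\beta_i)\cos(\beta_i),\]
where $\beta_i$ is the supremum of $\ct_i$ over $\Sigma_i$ and $\rho_i^-=\rho^-(\rho_i)$. Since $\rho_i\ra\rho_\circ$ by Lemma~\ref{ml2.2}, and $\ct_{\rho_i}$ only reaches up to $\pi/2$ on $\pt^-\tilde C_{\rho_i}$, which collapses to $\tilde C_{\rho_\circ}$, we have $\beta_i\ra 0$. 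More precisely, Lemma~\ref{supblowup} tells us $\beta_i/t_i$ is bounded, say $\beta_i/t_i\leq M$. Hence $\sin(\beta_i)\leq\beta_i\leq Mt_i$, so $\sin^2(\beta_i)/t_i^2\leq M^2$ is bounded, and the first term $-2\pi\sin^2(\beta_i)\chi(S)$ divided by $t_i^2$ is bounded.

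First I would handle the second term. We have $\sin(\beta_i)\cos(\beta_i)/t_i^2\leq \sin(\beta_i)/t_i^2$, which is \emph{not} obviously bounded — it behaves like $1/t_i$ — so we genuinely need the length factor $l_{\rho_i^-}(\lambda^-(\rho_i))$ to go to zero at rate $t_i$. The key input is Lemma~\ref{lamblowup}, which says $\frac{1}{t_i}\lambda^-(\rho_i)$ converges in $\mc{ML}$ to $\lambda^-(\tau)$, together with the fact that $\rho_i^-=\rho^-(\rho_i)$ converges to $\rho_\circ$ (this follows from Corollary~\ref{convccorebase} and Theorem~\ref{adsearthq}, since $\pt^-\tilde C_{\rho_i}$ converges to $\tilde C_{\rho_\circ}$ by Lemma~\ref{convtemp1-}-type arguments). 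The length function $l:\mc T\times\mc{ML}\ra\R$ is continuous and positively homogeneous in the lamination variable, so
\[\frac{l_{\rho_i^-}(\lambda^-(\rho_i))}{t_i}=l_{\rho_i^-}\!\left(\tfrac{1}{t_i}\lambda^-(\rho_i)\right)\ra l_{\rho_\circ}(\lambda^-(\tau)),\]
which is finite. Therefore $\sin(\beta_i)\cos(\beta_i)l_{\rho_i^-}(\lambda^-(\rho_i))/t_i^2=\big(\sin(\beta_i)\cos(\beta_i)/t_i\big)\cdot\big(l_{\rho_i^-}(\lambda^-(\rho_i))/t_i\big)$, and the first factor is bounded by $\beta_i/t_i\leq M$ while the second converges, so the product is bounded.

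Combining, $A_i/t_i^2$ is a bounded sum of two bounded sequences, which proves the claim. The main obstacle here is making sure the ingredients are legitimately available: specifically that $l_{\rho_i^-}(\lambda^-(\rho_i))/t_i$ stays bounded, which reduces to $\rho_i^-\ra\rho_\circ$ in $\mc T$ (a compactness statement) plus Lemma~\ref{lamblowup} for the rescaled lamination. The convergence $\rho_i^-\ra\rho_\circ$ should be extracted from the Hausdorff convergence $g_i\cl(\pt^-\tilde C_{\rho_i})\ra$ something, or more simply from $\rho_i\ra\rho_\circ$ via continuity of $\rho\mapsto\rho^-(\rho)$ at $\rho_\circ$ (where $\rho^-(\rho_\circ)=\rho_\circ$); and the homogeneity of the length function over $\mc{ML}$ — a standard fact, since rescaling a measured lamination rescales all intersection numbers and hence all lengths linearly — closes the argument. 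So I do not expect any serious difficulty beyond carefully invoking the right earlier lemmas.
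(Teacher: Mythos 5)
Your proof is correct and follows exactly the argument the paper intends: bound the first term of Lemma~\ref{areaexpr} by Lemma~\ref{supblowup} via $\sin^2(\beta_i)\leq\beta_i^2$, and bound the second by combining Lemma~\ref{supblowup} with Lemma~\ref{lamblowup} and continuity plus homogeneity of the length function. The paper simply states ``the claim follows from Lemmas~\ref{lamblowup} and~\ref{supblowup}'' and you have filled in precisely the details that justify that one-line conclusion.
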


\begin{proof}
From Lemma~\ref{areaexpr}, we have
\[A_i=-2\pi\sin^2(\beta_i)\chi(S)+l_{\rho^-(\rho_i)}(\lambda^-(\rho_i))\sin(\beta_i)\cos(\beta_i).\]
The claim follows from Lemma~\ref{lamblowup} and~\ref{supblowup}.
\end{proof}

Now we denote the systole of $d_i$ by $a_i$. 

\begin{lm}
\label{sysblowup}
We have $\frac{a_i}{t_i} \ra 0$.
\end{lm}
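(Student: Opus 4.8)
The plan is to derive a contradiction from Lemma~\ref{sysct}, which bounds $\inf_{p\in\Sigma}\ct_\rho(p)$ from below for a future-convex Cauchy surface $\Sigma$ in terms of its systole, the diameter of $\rho^-$, and the area of the level set of $\ct_\rho$ lying above $\Sigma$. Suppose the assertion fails; after passing to a subsequence fix $c>0$ with $a_i\geq c\,t_i$ for all $i$. Recall that $d_i$ is the intrinsic metric of $\Sigma_i=\Sigma(f_i)$, so $a_i=\sys(\Sigma_i)$, and that $\Sigma_i$ lies in the past of the level set $L_{\beta_i}(\rho_i)$ of $\ct_i$ since $\beta_i=\sup_{\Sigma_i}\ct_i$. (Strictly, $\Sigma_i$ touches $L_{\beta_i}(\rho_i)$; one applies Lemma~\ref{sysct} with $r>\beta_i$ and lets $r\downarrow\beta_i$, both sides depending continuously on $r$ via Lemma~\ref{areaexpr}.)

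Next I would assemble the relevant rescaled estimates, all prepared above. Since $\beta_i/t_i$ is bounded (Lemma~\ref{supblowup}) and $t_i\to0$, we get $\beta_i\to0$; in particular $0<\beta_i<\tfrac{\pi}{2}$ and $\cos\beta_i\geq\tfrac12$ for all large $i$. By Lemma~\ref{areabound}, $A_i/t_i^2\leq M$ for some $M>0$, where $A_i$ is the area of $L_{\beta_i}(\rho_i)$. Since $\rho_i\to\rho_\circ$ and, by Lemma~\ref{lamblowup}, the laminations $\lambda^-(\rho_i)$ converge to the empty lamination, Theorem~\ref{adsearthq} together with the continuity of the earthquake map gives $\rho^-(\rho_i)\to\rho_\circ$; hence $\delta_i:=\diam(\rho^-(\rho_i))$ is bounded, say $\delta_i\leq\delta_0$. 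Finally $\alpha_i/t_i\to0$: indeed $\tilde f_i(\tilde v)\in\Sigma_i$, so $\alpha_i\leq\ct_i(\tilde f_i(\tilde v))$, and the latter divided by $t_i$ tends to $0$ by~(\ref{ctdownblow}).

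Now I would apply Lemma~\ref{sysct} to $\Sigma=\Sigma_i$ with $r=\beta_i$ and substitute $a_i=\hat a_i\,t_i$ (with $\hat a_i\geq c$) and $A_i=\hat A_i\,t_i^2$ (with $\hat A_i\leq M$), which cancels all powers of $t_i$ except one factor in the denominator:
\[\frac{\alpha_i}{t_i}\ \geq\ \frac{C\exp(-\delta_i)\,\hat a_i^2\,\cos\beta_i}{t_i\,\hat A_i\,\exp\!\left(\frac{2\hat A_i t_i}{\hat a_i\cos\beta_i}\right)}.\]
For large $i$ the exponential in the denominator equals $\exp(O(t_i))\leq 2$ (because $\hat A_i\leq M$, $\hat a_i\geq c$, $\cos\beta_i\geq\tfrac12$), the numerator is $\geq C e^{-\delta_0}c^2\cdot\tfrac12>0$, and $\hat A_i\leq M$; hence the right-hand side is at least $\frac{C e^{-\delta_0}c^2}{4M\,t_i}\to+\infty$, contradicting $\alpha_i/t_i\to0$. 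Therefore $a_i/t_i\to0$.

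The argument is essentially mechanical once the ingredients are in place; the only points requiring care are the legitimacy of applying Lemma~\ref{sysct} at the boundary value $r=\beta_i$ and the verification that every rescaled quantity entering the estimate ($\beta_i/t_i$, $A_i/t_i^2$, $\delta_i$, $\ct_i(\tilde f_i(\tilde v))/t_i$) has already been controlled — which is precisely what Lemmas~\ref{supblowup}, \ref{areabound}, \ref{lamblowup} and equation~(\ref{ctdownblow}) provide.
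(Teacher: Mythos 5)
Your proof is correct and follows the same route as the paper's: apply Lemma~\ref{sysct}, feed in the boundedness of $A_i/t_i^2$ from Lemma~\ref{areabound} and (in effect) $\beta_i\to 0$, then contradict $\alpha_i/t_i\to 0$ from~(\ref{ctdownblow}). The only differences are cosmetic: you normalize both sides by $t_i$ and track the $\cos\beta_i$ and $\exp(-\delta_i)$ factors explicitly (and note the $r\downarrow\beta_i$ limiting step), while the paper silently absorbs those bounded factors into the constant $C$ and concludes that $\alpha_i$ is bounded below by a positive constant, whence $\alpha_i/t_i\to\infty$.
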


\begin{proof}
Suppose the converse, that $\limsup \frac{a_i}{t_i} \geq a>0$. From Lemma~\ref{sysct} we get
\[\alpha_i\geq \frac{Ca_i^2}{A_i\exp\left(\frac{2A_i}{a_i}\right)}.\]
From Lemma~\ref{areabound}, there exists $A>0$ such that for all $i$ we have $\frac{A_i}{t_i^2} \leq A$. Hence, up to subsequence, we get
\[\alpha_i \geq \frac{Ca^2}{A\exp\left(\frac{2At_i}{a}\right)}\geq \frac{Ca^2}{2A}.\]

It follows that $\frac{\alpha_i}{t_i}$ does not converge to zero, which contradicts~(\ref{ctdownblow}).
\end{proof}

%


Now we are ready to define $b_i$ in this case and finish the proof.
Due to Corollary~\ref{treeblowup}, the trees $\frac{1}{t_i}\Psi^+(\rho_i)$ converge to $\Psi^+(\tau)$, which is non-trivial. Thus there exists $\gamma \in \Gamma$ such that $l_{\Psi^+(\tau)}(\gamma)>0$. Define $b_i:=l_{d_i}(\gamma)$. From Lemma~\ref{treecomparison}, it follows that $\frac{1}{t_i}\liminf b_i>0$. Up to subsequence, $\frac{b_i}{t_i} \ra b>0$. On the other hand, from Corollary~\ref{sysblowup}, $\frac{a_i}{t_i} \ra 0$.

Now from Corollary~\ref{lipschmetr}, there exists a sequence $t'_i \ra 0$ such that 
\[\frac{a_i}{t_i'} \ra a'>0,~~~~~\frac{b_i}{t_i'} \ra b'>0.\]
As in the previous case, the second inequality implies that $t'_i/t_i \ra b/b' >0$, while the first implies $t'_i/t_i \ra 0$, which is a final contradiction.

\vskip+0.8cm
\begin{center}
{\LARGE \textit{Part 2}}
\end{center}

The goal of this part is to prove Theorem~\ref{main2}.

\section{Changes in the setup}
\label{sec:changes}

Our proof of Theorem~\ref{main2} follows the same path as the proof of Theorem~\ref{main}. Let us describe the necessary changes in the setup. 

We now have two finite non-empty sets $V^\pm \subset S$. 
As for the spaces of cone-metrics, we first define the spaces $\mc D\mc D_-^\sharp:=\mc D_-^\sharp(V^+) \times \mc D_-^\sharp(V^-)$ and $\mc D\mc D_0^\sharp:=\mc D_0^\sharp(V^+) \times \mc D_0^\sharp(V^-)$. These manifolds have natural atlases with charts valued in cones in $\R^{E(\ms T^+)}\times \R^{E(\ms T^-)}$ for pairs of triangulations $(\ms T^+,\ms T^-)$ of $(S, V^\pm)$ respectively. By blowing-up these cones as in Section~\ref{sec:metr}, we obtain the blown-up space $\mc D\mc D_\vee^\sharp$, which is the union of $\mc D\mc D_-^\sharp\cup\S(\mc D\mc D_0^\sharp)$. The pure braid group $\mc B$ acts on $\mc D\mc D_-^\sharp$, $\mc D\mc D_0^\sharp$ and $\mc D\mc D_\vee^\sharp$. The quotients are denoted by $\mc D\mc D_-$, $\mc D\mc D_0$ and $\mc D\mc D_\vee$. We denote the elements of the first two by $(d^+, d^-)$. As before, we denote the respective subsets corresponding to the concave cone-metrics and to the concave cone-metrics with $V(d^\pm)=V^\pm$ by the superscripts $c$ and $s$ respectively.

As for the spaces of polyhedral surfaces, first we need the concept of \emph{coned bundle} over a manifold. It is defined the same as a vector bundle, but the fiber is isomorphic to a cone in some vector space. A vector bundle minus the zero section is an example of a coned bundle (as our standard convention is to delete the origin from a cone). When we say that we consider a vector bundle as a coned bundle, we mean that we delete the zero section. One can define the spherization and the blow-up of a coned bundle in the same way how it was done for cones in Section~\ref{sec:cone}. Furthermore, one can define the blow-ups of pierced tubular neighborhoods of the zero section, similarly as we defined it for lower-cones.

We mostly denote the elements of $\mc R \times \mc R$ by $\theta$, considered as representations of $\pi_1S$ to $G_-$, though sometimes we write them as $(\rho^l, \rho^r)$. Following our standard convention, we denote the elements of $\mc T \times \mc T$ in the same way. Similarly, we denote the elements of $T\mc R$ or $T\mc T$ by $\iota$, considered as representations of $\pi_1S$ to $G_0$, but sometimes we write them as $(\rho, \tau)$. 

Consider the space
\[\tilde{\mc P}\tilde{\mc P}_0:=T\mc R  \times (\R^{2,1})^{V^+}\times (\R^{2,1})^{V^-}.\]
This is a vector bundle over $\mc R$. (Actually, a trivial one, since $\mc R$ is homeomorphic to a ball.) 
We consider it as a coned bundle. Denote its elements by $(\iota, \tilde f^+, \tilde f^-)$, where $\tilde f^\pm: V^\pm \ra \R^{2,1}$. 

Consider also the space
\[\tilde{\mc P}\tilde{\mc P}_-:=\mc R \times \mc R \times (\A^3)^{V^+}\times (\A^3)^{V^-}.\] We denote its elements by $(\theta, \tilde f^+, \tilde f^-)$, where $\tilde f^\pm: V^\pm \ra \A^3$.
Let $O_-$ be the submanifold of $\tilde{\mc P}\tilde{\mc P}_-$ consisting of all $(\theta, \tilde f^+, \tilde f^-)$ with $\theta=(\rho, \rho)$ for $\rho \in \mc R$ and $\tilde f^+(\tilde V^+)=\tilde f^-(\tilde V^-)=o$. Let $N$ be the subbundle of $T\tilde{\mc P}\tilde{\mc P}_-$ defined over $O_-$ that is tangent to the product of all but the first factor of $\tilde{\mc P}\tilde{\mc P}_-$. Then $N$ complements $TO_-$ in $T\tilde{\mc P}\tilde{\mc P}_-$. There is a natural isomorphism $N \cong \tilde{\mc P}\tilde{\mc P}_0$ as of vector bundles over $\mc R$. We have chosen an affine connection on $\mc R$, which, together with the Levi--Civita connection on $\R^{2,1}$, gives a connection on $\tilde{\mc P}\tilde{\mc P}_-$. Consider the exponential map along $N$. It produces a diffeomorphism between a pierced tubular neighborhood of the zero section in $N$ onto a pierced tubular neighborhood of $O_-$ in $\tilde{\mc P}\tilde{\mc P}_-$. We use it to construct the blow-up 
\[\tilde{\mc P}\tilde{\mc P}_\vee:= \tilde{\mc P}\tilde{\mc P}_- \cup \S(\tilde{\mc P}\tilde{\mc P}_0).\]

We consider domains of discontinuity for $\theta \in \mc R \times \mc R$ or $\iota \in T\mc R$ described in Section~\ref{sec:domain}, but now we use the subscripts $\theta$ or $\iota$ instead of $\rho$ or $\tau$, because now we vary all the holonomy. When $\theta \in \mc R \times \mc R$, we use the respective notation $\tilde C_\theta$ for the convex core of $\tilde\Omega_\theta$, $\tilde\Omega_\theta^\pm$ for the respective connected components of $\tilde\Omega_\theta^\pm \backslash \tilde C_\theta$, etc., following the conventions of Section~\ref{sec:domain}. 

Similarly to Section~\ref{sec:bentsurf} we denote the subspaces of $\tilde{\mc P}\tilde{\mc P}_-$ and $\tilde{\mc P}\tilde{\mc P}_0$ with $\tilde f^\pm(V^\pm) \subset \tilde\Omega_\theta^\pm$ and $\tilde f^\pm(V^\pm) \subset \tilde\Omega_\iota^\pm$ by $\tilde{\mc P}\tilde{\mc P}_-^w$ and $\tilde{\mc P}\tilde{\mc P}_0^w$ respectively. The space $\tilde{\mc P}\tilde{\mc P}_0^w$ is a coned subbundle of $\tilde{\mc P}\tilde{\mc P}_0$. The notions of convex and strictly convex position for maps $\tilde f^\pm$ are defined the same as in Section~\ref{sec:bentsurf} with the only difference that now we need to distinguish between future-convex and past-convex positions. The future-convex position was defined in Section~\ref{sec:bentsurf}. The past convex position for a map $\tilde f^-: V^- \ra \tilde \Omega_\theta$ or $\tilde f^-: V^- \ra \tilde \Omega_\iota$ is defined the same, but we require $\tilde f^-(\tilde V^-) \subset \tilde \Omega_\theta^-$ or $\tilde f^-(\tilde V^-) \subset \tilde \Omega_\iota^-$ respectively. Then a triple $(\theta, \tilde f^+, \tilde f^-)$ is said to be in a (strictly) convex position if $\tilde f^+$ is in (strictly) future-convex and $\tilde f^-$ is in (strictly) past-convex positions. The respective subsets of $\tilde{\mc P}\tilde{\mc P}_-$ and $\tilde{\mc P}\tilde{\mc P}_0$ are denoted by $\tilde{\mc P}\tilde{\mc P}_-^c$, $\tilde{\mc P}\tilde{\mc P}_0^c$, $\tilde{\mc P}\tilde{\mc P}_-^s$ and $\tilde{\mc P}\tilde{\mc P}_0^s$. The space $\tilde{\mc P}\tilde{\mc P}_0^s$ is a coned subbundle. We define the spaces $\tilde{\mc P}\tilde{\mc P}_\vee^w$, $\tilde{\mc P}\tilde{\mc P}_\vee^c$, $\tilde{\mc P}\tilde{\mc P}_\vee^s$ in an obvious way. The first and the third are submanifolds of $\tilde{\mc P}\tilde{\mc P}_\vee$. 

The same argument as in Section~\ref{sec:blowupconstr} shows that after taking the quotients by the action of $G_-$ on $\tilde{\mc P}\tilde{\mc P}_-^w$, of $G_0$ on $\tilde{\mc P}\tilde{\mc P}_0^w$ and of $\pi_1S$ on both of them, we obtain manifolds $\mc P\mc P_-^w$, $\mc P\mc P_0^w$ and $\mc P\mc P_\vee^w=\mc P\mc P_-^w \cup \S(\mc P\mc P_0^w)$, where the latter is a manifold with boundary. The quotients of the subsets with the superscripts $c$ and $s$ are denoted by the same superscripts. The elements of these spaces are triples $(\theta, f^+, f^-)$ or $(\iota, f^+, f^-)$. We denote the respective surfaces defined by $f^\pm$ by $\Sigma(f^\pm)$. We have the respective induced metric maps $\mc I\mc I_-$, $\mc I\mc I_0$, $\mc I\mc I_\vee$ defined on $\mc P\mc P_-^c$, $\mc P\mc P_0^c$ and $\mc P\mc P_\vee^c$. Here, e.g., $\mc I\mc I_\vee: \mc P\mc P_\vee^c \ra \mc D\mc D_\vee^c$. If we consider its restriction to $\mc P\mc P_\vee^s$, then it values in $\mc D\mc D_\vee^s$, and we write $\mc I\mc I_\vee^s$ for the respective map $\mc P\mc P_\vee^s \ra \mc D\mc D_\vee^s$, the same convention holds for the other maps.
We have

\begin{thmM}
\label{minkowski2}
The map
\[\mc I\mc I_0^s: \mc P\mc P_0^s \ra \mc D\mc D_0^s\]
is a $C^1$-diffeomorphism.
\end{thmM}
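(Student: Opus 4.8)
The plan is to deduce Theorem~\ref{minkowski2} from Theorem~\ref{minkowski} (equivalently, from~\cite[Theorem 1.4]{FP}) essentially by a product construction, combined with the classification of future-complete and past-complete GHMC Minkowski spacetimes with the same holonomy. First I would recall the basic structural feature exploited throughout Part~1: a future-complete GHMC Minkowski spacetime is determined by $(\rho,\tau)\in T\mc T$, and a past-complete one by the same data; fixing $\iota=(\rho,\tau)$ simultaneously fixes the future-complete domain $\tilde\Omega_\iota^+$ and the past-complete domain $\tilde\Omega_\iota^-$. Thus a triple $(\iota,f^+,f^-)\in\mc P\mc P_0^s$ is exactly a pair: a strictly future-convex polyhedral surface for $\iota$ with vertex set $V^+$, and a strictly past-convex polyhedral surface for the \emph{same} $\iota$ with vertex set $V^-$. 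The key point is that, because the linear holonomy $\rho$ is shared, one cannot simply take a product of two copies of the space from Theorem~\ref{minkowski}; instead one fibers over $\mc T$ and takes a fiber product.

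Concretely, I would set up the forgetful maps. Let $\mc P_0^s(V^+)$ and $\mc P_0^s(V^-)$ denote the Minkowski polyhedral-surface spaces for the two marked sets, each fibered over $T\mc T$ via $(\iota,f^\pm)\mapsto\iota$; note that a past-convex surface for $\iota$ is the same as a future-convex surface for $-\iota$ (reversing time orientation), so the past-convex space is canonically identified with $\mc P_0^s(V^-)$ after this sign change, and Theorem~\ref{minkowski} applies to it verbatim. Then $\mc P\mc P_0^s$ is the fiber product $\mc P_0^s(V^+)\times_{T\mc T}\mc P_0^s(V^-)$, and similarly $\mc D\mc D_0^s=\mc D_0^s(V^+)\times\mc D_0^s(V^-)$ is an honest product (no shared base on the metric side). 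The intrinsic metric map $\mc I\mc I_0^s$ is the map induced on the fiber product by the pair $(\mc I_0^s\times\mc I_0^s)$, followed by projecting away the base: $(\iota,f^+,f^-)\mapsto(\mc I_0^s(\iota,f^+),\mc I_0^s(\iota,f^-))$. Here one must check that this composite is genuinely a bijection: surjectivity and injectivity both reduce to the statement that for \emph{each} fixed pair of cone-metrics $(d^+,d^-)$ there is a unique $\iota$ (and unique surfaces above it) realizing them. This is where Theorem~\ref{minkowski} does the real work: $\mc I_0^s$ restricted to a single fiber over $\iota$ is not a bijection, but the total map $\mc I_0^s:\mc P_0^s\to\mc D_0^s$ \emph{is}, and the composition with the forgetful map $\mc D_0^s\to(\text{linear holonomy data})$ lets one recover $\iota$ from $d^+$ alone; then $d^-$ determines the past-convex surface over that same $\iota$ uniquely, again by Theorem~\ref{minkowski}.

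So the argument has three steps. Step one: identify $\mc P\mc P_0^s$ with the fiber product and $\mc I\mc I_0^s$ with the induced map, checking that the coned/vector-bundle structures match those declared in Section~\ref{sec:changes}. Step two: bijectivity. Given $(d^+,d^-)\in\mc D\mc D_0^s$, apply Theorem~\ref{minkowski} to $d^+$ to get a unique $(\iota,f^+)\in\mc P_0^s(V^+)$; this pins down $\iota$, hence the past-complete domain $\tilde\Omega_\iota^-$; now apply Theorem~\ref{minkowski} (in the time-reversed form) to $d^-$ — but one must verify that the $\iota$ it produces is the \emph{same} $\iota$. This is the compatibility subtlety: the Minkowski theorem assigns to each concave Euclidean cone-metric a unique linear holonomy, and one needs that $d^-$'s assigned linear holonomy agrees with $d^+$'s. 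Actually it need not a priori — so the correct formulation is that $\mc I\mc I_0^s$ is the map to the \emph{full} product $\mc D_0^s\times\mc D_0^s$, and one shows it is a diffeomorphism onto its image $\mc D\mc D_0^s$, but $\mc D\mc D_0^s$ as defined is just the product, so in fact $\mc I\mc I_0^s$ is surjective iff every $(d^+,d^-)$ is realized, which holds because $\iota$ ranges over \emph{all} of $T\mc T$ as $d^+$ ranges over $\mc D_0^s(V^+)$ with the systole/linear-holonomy constraints matching — this is precisely the content of $\mc I_0^s$ being onto. Step three: the $C^1$-diffeomorphism property. Since $\mc I_0^s$ is a $C^1$-diffeomorphism on each of the two factor spaces, and the fiber product of $C^1$-submersions along a common $C^1$-base is again a $C^1$-manifold, $\mc I\mc I_0^s$ is $C^1$ with $C^1$ inverse; alternatively, compute the differential: $d\mc I\mc I_0^s$ is block-triangular with respect to the splitting (base direction, $V^+$-fiber, $V^-$-fiber), and its invertibility follows from the invertibility of $d\mc I_0^s$ on each factor plus the fact that $d\mc I_0^s$ restricted to the fiber directions surjects onto the "curvature-fixing" directions in $\mc D_0^s$.

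The main obstacle I expect is the fiber-product/compatibility bookkeeping in step two: one has to be careful that the linear-holonomy datum extracted from $d^+$ via Theorem~\ref{minkowski} coincides with the one extracted from $d^-$, and the cleanest way around this is to phrase the whole theorem fiberwise over $T\mc T$ and invoke that $\mc I_0^s$ is a diffeomorphism \emph{as a map over $T\mc T$ of the appropriate fibered spaces} — i.e., unpack from~\cite{FP} not just that $\mc I_0^s$ is a $C^1$-diffeomorphism but that it respects (or, more precisely, determines) the fibration structure, so that $\mc P\mc P_0^s\to T\mc T$ and the two projections fit into a commuting diagram with $\mc D\mc D_0^s$. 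Everything else — the bundle structures, the $C^1$ regularity, the matching of conventions with Section~\ref{sec:changes} — is routine once this fibered reformulation is in place, and indeed the authors flag exactly this by saying the result "was also basically established by Fillastre--Prosanov in~\cite{FP}."
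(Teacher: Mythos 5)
Your fiber-product set-up in step one is essentially what the paper does, though the paper phrases it via the loci $\mc P_0^\pm(d^\pm)=(\mc I_0^{s,\pm})^{-1}(d^\pm)$ and the holonomy maps $\phi^\pm_{d^\pm}:\mc P_0^\pm(d^\pm)\to T\mc T$. But steps two and three have a genuine gap, and it traces back to a misreading of Theorem~\ref{minkowski}. In that theorem the linear holonomy $\rho_\circ$ is \emph{fixed}: $\mc P_0^s=\mc P_0^s(\rho_\circ,V)$ fibers over $T_{\rho_\circ}\mc T$, not over all of $T\mc T$, so $\mc I_0^s$ being a diffeomorphism says only that for \emph{each} $\rho$ and each $d^+$ there is a unique $(\tau,f^+)$ — hence a whole $\dim\mc T$-dimensional family of $\iota$'s realizing $d^+$. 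It does not "pin down $\iota$". When one instead works over all of $T\mc T$ (the paper's $\mc P_0^{s,+}$), the induced metric map $\mc I_0^{s,+}$ is merely a $C^1$-submersion. Your step two, "apply Theorem~\ref{minkowski} to $d^+$ to get a unique $(\iota,f^+)$", therefore does not follow; and the compatibility subtlety you flag in your last paragraph is precisely the whole problem, not bookkeeping: one must show that the $\dim\mc T$-dimensional loci in $T\mc T$ realizing $d^+$ future-convexly and $d^-$ past-convexly meet in exactly one point, and that is not a formal consequence of the one-surface theorem. If it were, the two-metric prescription problem would be a trivial corollary of the one-metric problem, which it is not. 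Likewise your step three silently needs that intersection to be \emph{transverse} before any block-structured differential argument can close, and Theorem~\ref{minkowski} alone gives no such thing.

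The paper resolves both points by invoking genuinely two-surface results of Fillastre--Prosanov, not the one-surface Theorem~\ref{minkowski}: bijectivity of $\mc I\mc I_0^s$ is a reformulation of~\cite[Theorem II']{FP}, and the unique \emph{transverse} intersection of the images of $\phi^+_{d^+}$ and $\phi^-_{d^-}$ in $T\mc T$ is~\cite[Theorem II]{FP}. Theorem~\ref{minkowski} is used only for the softer facts that $\mc I_0^{s,\pm}$ are $C^1$-submersions (so each $\mc P_0^\pm(d^\pm)$ is a $C^1$-submanifold) and that the compositions of $\phi^\pm_{d^\pm}$ with the projection to $\mc T$ are diffeomorphisms (so $\phi^\pm_{d^\pm}$ are $C^1$-immersions). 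With transversality in hand, a kernel computation of the kind you sketch in step three does show that $d\mc I\mc I_0^s$ is nowhere degenerate. So your outline becomes correct once "apply Theorem~\ref{minkowski}" in steps two and three is replaced by these stronger two-surface inputs from~\cite{FP}.
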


This is a direct corollary of the results of~\cite{FP}, though it is not formulated there explicitly. Hence, we now derive it from the results of~\cite{FP}. Some ideas were used in~\cite[Section 5]{PS}.

\begin{proof}
The fact that $\mc I\mc I_0^s$ is injective and surjective is a direct reformulation of~\cite[Theorem II']{FP}. The fact that it is $C^1$ can be proven the same as such fact for the map~$\mc I_0^s$, which is~\cite[Lemma 2.14]{FP}. The only fact that requires a bit of work is that its differential is nowhere degenerate. 

Define the space
\[\tilde{\mc P}_0^{+}:= T\mc T \times (\R^{2,1})^{V^+}.\]
There is its subset $\tilde{\mc P}_0^{s,+}$ consisting of the configurations in the strictly future-convex positions. By taking its $G_0$- and $\pi_1S$-quotients we obtain the space $\mc P_0^{s,+}$, which is a manifold. Denote also $\mc D_0^{s,+}:=\mc D_0^{s}(V^+)$. We have an induced metric map
\[\mc I_0^{s, +}:\mc P_0^{s,+} \ra \mc D_0^{s,+}.\]
Theorem~\ref{minkowski} implies that it is a $C^1$-submersion. We similarly define the space $\mc P_0^{s,-}$, however we consider for it the configurations in the strictly \textbf{past-convex} positions. Define $\mc D_0^{s,-}:=\mc D_0^{s}(V^-)$ and an induced metric map 
\[ \mc I_0^{s, -}:\mc P_0^{s,-} \ra \mc D_0^{s,-}.\]
The theory of past-convex embeddings is obtained from the theory of future-convex embeddings by a change of the time-orientation. Hence, Theorem~\ref{minkowski} implies that $\mc I_0^{s, -}$ is also a $C^1$-submersion.

For $d \in \mc D_0^{s,\pm}$ we define $\mc P_0^\pm(d):=(\mc I_0^{s,\pm})^{-1}(d) \subset \mc P_0^{s,\pm}(d)$. Since $\mc I_0^{s,\pm}$ are $C^1$-submersions, these sets are $C^1$-submanifolds. Consider the maps $\phi_d^\pm: \mc P_0^\pm(d)\ra T\mc T$ sending an element of $\mc P_0^\pm(d)$ to its holonomy. These are $C^1$-maps, see~\cite[Section 4.1]{FP}. In turn, Theorem~\ref{minkowski} also implies that the compositions of these maps with the projections to $\mc T$ are $C^1$-diffeomorphisms. In particular, it follows that $\mc \phi_d^\pm$ are $C^1$-immersions. Now pick $(d^+, d^-) \in \mc D\mc D_0^s$. Then \cite[Theorem II]{FP} implies that the intersection of the images of $\phi_{d^+}^+$ and $\phi_{d^-}^-$ is unique and transverse.

Suppose that for some $x \in \mc P\mc P_0^s$ there exists $y \in T_x\mc P\mc P_0^s$ such that $d\mc I\mc I_0^s(y)=0$. Let $\mc I\mc I_0^s(x)=(d^+, d^-)$. There are natural smooth submersions $\mc P\mc P_0^s \ra \mc P\mc P_0^{s, \pm}$. Let $y^\pm$ be the images of $y$ by these submersions. Since $d\mc I\mc I_0^s(y)=0$ and since the maps $\mc I_0^{s,\pm}$ are $C^1$-submersions, the vectors $y^\pm$ are tangent to $\mc P_0^\pm(d^\pm)$ respectively. There is also the natural projection $\mc P\mc P_0^s \ra T\mc T$. Let $y_0$ be the image of $y$ under this projection. Then \[d\phi_{d^+}^+(y^+)=d\phi_{d^-}^-(y^-)=y_0.\] Hence, the transversality of the images implies that $y_0=0$. But $\phi_{d^\pm}^\pm$ are $C^1$-immersions, thus $y^+=y^-=0$. From this it follows that $y=0$, which finishes the proof.
\end{proof}

Similarly as in Section~\ref{sec:map} one shows that $\mc I\mc I_\vee^s$ is continuous and is $C^1$ around $\pt\mc P\mc P_\vee^s$. Thus, if we show that it is proper, we can finish the proof of Theorem~\ref{main2} exactly the same as we proved Theorem~\ref{main}. 
The proof of the properness, which occupies the next section, is the main part, where we require new ideas, compared to the proof of Theorem~\ref{main}.

\section{Properness for pairs of metrics}
\label{sec:prop2}

We need to show

\begin{lmA}
\label{ml3}
Let $x_i \in \mc P\mc P_\vee^s$ be a sequence such that $y_i:=\mc I\mc I_\vee(x_i)$ converge to $y \in \mc D\mc D_\vee^s$. Then, up to subsequence, $x_i$ converge to $x \in \mc P\mc P_\vee^s$.
\end{lmA}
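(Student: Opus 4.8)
The plan is to mirror the proof of Lemma~\ref{ml2'} from Part~1, splitting into the two cases $y\in\mc D\mc D_-^s$ and $y\in\pt\mc D\mc D_\vee^s$, and to deal with each of the two surfaces $\Sigma(f_i^+)$, $\Sigma(f_i^-)$ simultaneously. As in Part~1, since the behaviour of $\mc I\mc I_\vee$ on $\pt\mc P\mc P_\vee^s$ is completely understood via Theorem~\ref{minkowski2}, it suffices to assume all $x_i=(\theta_i,f_i^+,f_i^-)$ lie in $\mc P\mc P_-^s$ and correspondingly $y_i=(d_i^+,d_i^-)\in\mc D\mc D_-^s$. The first task is to control the holonomies $\theta_i=(\rho_i^l,\rho_i^r)$. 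Here one needs a version of Lemma~\ref{ml2.1} and Lemma~\ref{ml2.2}: since $\Sigma(f_i^+)$ is a future-convex Cauchy surface in $\Omega_{\theta_i}$ and $\Sigma(f_i^-)$ is a past-convex one, Lemma~\ref{syst} (applied as stated to $\Sigma(f_i^+)$ and in its time-reversed form to $\Sigma(f_i^-)$) gives $l_{d_i^+}(\gamma)\le l_{\rho_i^+(\theta_i)}(\gamma)$ and $l_{d_i^-}(\gamma)\le l_{\rho_i^-(\theta_i)}(\gamma)$ for all $\gamma$, where $\rho^\pm(\theta)\in\mc T$ are the intrinsic metrics of $\pt^\pm C_\theta$. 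When $y\in\mc D\mc D_-^s$, precompactness of $\{d_i^\pm\}$ and Lemma~\ref{thurstmetric} give precompactness of $\{\rho_i^+(\theta_i)\}$ and $\{\rho_i^-(\theta_i)\}$ in $\mc T$; then Theorem~\ref{earthquake}, Theorem~\ref{adsearthq} and continuity of earthquakes pin down both $\rho_i^l$ and $\rho_i^r$ up to subsequence. When $y\in\pt\mc D\mc D_\vee^s$, the length functions $l_{d_i^\pm}(\gamma)\to0$, so via Lemma~\ref{treecomparison} (and its time-reversal) $l_{\Psi^\pm(\theta_i)}\to0$, hence $\lambda^\mp(\theta_i)\to\varnothing$ by Skora duality (Theorem~\ref{skora}), and Theorem~\ref{adsearthq} forces $\rho_i^l,\rho_i^r\to\rho_\star$ for a common Fuchsian $\rho_\star$, i.e.\ $\theta_i\to(\rho_\star,\rho_\star)$, the ``fully degenerate'' end around which the blow-up is performed.

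With the holonomies controlled, the next task is to control the marked points. In the case $y\in\mc D\mc D_-^s$, once $\theta_i\to\theta$ we apply Lemma~\ref{ctinf} to $\Sigma(f_i^+)$ and its time-reversed analogue to $\Sigma(f_i^-)$: the systoles $\sys(\Sigma(f_i^\pm))\ge\sys(d_i^\pm)$ stay bounded below (precompactness of $y$), so the cosmological-time values $\ct_{\theta_i}(f_i^+(v))$ (resp.\ the past cosmological time of $f_i^-(w)$) stay bounded below; Lemma~\ref{varylevelbase} then gives, up to subsequence, convergence $f_i^\pm\to f^\pm$ with $f^+(V^+)\subset\Omega^+_\theta\cup\pt^+C_\theta$ and $f^-(V^-)\subset\Omega^-_\theta\cup\pt^-C_\theta$. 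Injectivity of each $f^\pm$ follows from the reverse triangle inequality in timelike planes exactly as in the proof for $y\in\mc D_-$, and Lemma~\ref{intrmetrcone} (together with $y\in\mc D\mc D_-^s$) rules out $x\notin\mc P\mc P_-^s$. In the case $y\in\pt\mc D\mc D_\vee^s$ one repeats the longer argument of Section~\ref{sec:convmp2}: using the compactness statement $\mc P\mc P_\vee^\bdia(U_-)$ obtained as in Lemma~\ref{compact} (a two-surface version of Lemmas~\ref{compact-} and~\ref{compact}, with the blown-up $K$-surface/cosmological-time machinery of Section~\ref{sec:compact} applied separately on the future and past sides), one first shows, as in Lemma~\ref{ctdown}, that up to subsequence $x_i$ converges in this compactification to some $x\in\S(\mc P\mc P_0^\bdia)$. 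Then one excludes the bad boundary strata: if the limit lies in $\S(\mc P\mc P_0^c\setminus\mc P\mc P_0^s)$ continuity of $\mc I\mc I_\vee$ forces $y\in\S(\mc D\mc D_0^c\setminus\mc D\mc D_0^s)$, a contradiction; the remaining cases are the collapse of an edge between coinciding marked points (handled by the estimate~\eqref{collapseblow} versus~\eqref{edgerembound} together with Corollary~\ref{lipschmetr} applied to the relevant component $d_i^\pm$) and a marked point escaping to the spacelike part $\pt^+_s\tilde\Omega$ or $\pt^-_s\tilde\Omega$ (handled by showing $\tau\ne0$ as in Section~\ref{sec:end1}, then running the cosmological-time/area/systole estimates of Lemmas~\ref{supblowup}, \ref{areabound}, \ref{sysblowup} and comparing rescalings via Corollary~\ref{lipschmetr}).

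The one genuinely new point, as the authors flag in Section~\ref{sec:changes}, is the properness at the blow-up, and within that I expect the main obstacle to be the simultaneous treatment of the two bending laminations when establishing that the limiting $\tau\in T_{\rho_\circ}\mc T$ (the common infinitesimal twist of $\rho_i^l$ and $\rho_i^r$) is non-zero. Both surfaces may be collapsing, so one cannot argue surface-by-surface; instead the correct speed of collapse is read off from the metrics $d_i^\pm$ via Corollary~\ref{lipschmetr}, and one must match this against both the future side (the tree $\Psi^+(\tau)$, dual to $\lambda^-(\tau)$, controlling lengths on $\Sigma(f_i^+)$) and the past side. The key inputs are Lemma~\ref{convlam}/Lemma~\ref{lamblowup} (convergence $\tfrac1{t_i}\lambda^\mp(\rho_i)\to\lambda^\mp(\tau)$) applied on both sides, and the observation that the infinitesimal earthquake vector fields $e^+_{\lambda^+}$ and $e^-_{\lambda^-}$ based at $\rho_\circ$ relating to $\tau$ cannot both be trivial unless $\tau=0$; once $\tau\ne0$, at least one of $\Psi^\pm(\tau)$ is a non-trivial tree, producing a class $\gamma$ with $l_{\Psi}(\gamma)>0$, hence $b_i:=l_{d_i^\pm}(\gamma)$ with $\tfrac{b_i}{t_i}\to b>0$, contradicting $\tfrac{a_i}{t_i}\to0$ after comparing with the rescaling $t_i'$ from Corollary~\ref{lipschmetr}. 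Everything else is a faithful, if lengthy, transcription of Part~1 with the time-orientation reversed on the minus side, so once Lemma~\ref{ml3} is in place Theorem~\ref{main2} follows from Theorem~\ref{minkowski2}, the non-degeneracy of $d\mc I\mc I_\vee$ on $\pt\mc P\mc P_\vee^s$ (proved as Lemma~\ref{ml}), and a degree argument verbatim as in the proof of Theorem~\ref{main}.
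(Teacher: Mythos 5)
There is a genuine gap, and it is precisely at the step the authors flag as the ``main difference'' from Part~1: the convergence of holonomies. Your argument for the case $y\in\mc D\mc D_-^s$ claims that once $\rho_i^\pm(\theta_i)$ are precompact, ``Theorem~\ref{earthquake}, Theorem~\ref{adsearthq} and continuity of earthquakes pin down both $\rho_i^l$ and $\rho_i^r$.'' That works in Part~1 only because $\rho^l=\rho_\circ$ is fixed, so $E^+_{\lambda^+_i}(\rho_\circ)=\rho^+_i$ together with the earthquake homeomorphism forces $\lambda^+_i$, and hence $\rho_i$, to converge. In Part~2 both factors of the holonomy vary, and knowing $\rho^+_i$ and $\rho^-_i$ does not determine $(\rho^l_i,\rho^r_i)$ by a simple chase around Theorem~\ref{adsearthq} (you would have two unknown laminations and no fixed base point in the chain of earthquakes). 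The paper instead invokes a result of Diallo (from~\cite{BDMS}, also Tamburelli) exactly to supply this two-sided compactness, and that reference does real work.

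The gap is worse at the blow-up. You argue: $l_{d_i^\pm}\to 0\Rightarrow l_{\Psi^\pm(\theta_i)}\to 0\Rightarrow \lambda_i^\pm\to\varnothing\Rightarrow\rho_i^l,\rho_i^r\to\rho_\star$. The last implication is false without further input: the bending laminations degenerating to zero only says that $\rho_i^l$ and $\rho_i^r$ become close to each other, not that either is precompact in $\mc T$. The paper's Lemma~\ref{ml3.2} needs the Bonsante--Schlenker intersection-number bound (Lemma~\ref{lengthbound}) to control $l_{\rho_i^-}(\mu_i)$, then the canonical-decomposition/grafted-metric description of the cosmological-time level surfaces to get a lower bound on $\ct_i$ over $\Sigma(f_i^+)$ (Lemma~\ref{ctbound}), and from that a length-function estimate $l_{\rho_i^-}(\gamma)\le C_0\,l_{d^+}(\gamma)$, which via the standard Teichm\"uller compactness criterion gives precompactness of $\rho_i^-$. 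None of this appears in your outline, and without it the argument simply does not close. Moreover, the paper proves this step under the additional assumption that $\lambda_i^-$ are supported on simple closed curves and then removes that assumption by a density argument in $\mc{ML}$ combined with a diagonal choice of approximating sequence $\hat x_i$; that final density trick is also absent from your proposal. The marked-point part (your second half, paralleling Sections~\ref{sec:convmp1}--\ref{sec:convmp2}) is roughly in line with the paper's Lemma~\ref{ml3.3}, but it only becomes applicable once the holonomy convergence is established correctly.
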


Again, since the behavior of $\mc I\mc I_\vee$ is completely understood on $\pt\mc D\mc D_\vee^s$, thanks to Theorem~\ref{minkowski2}, we can assume that $x_i \in \mc P\mc P_-^s$, so they can be denoted by $(\theta_i, f_i^+, f_i^-)$. Then $y_i \in \mc D\mc D_-^s$ and can be denoted by $(d_i^+, d_i^-)$. Define $\Omega_i:=\Omega_{\theta_i}$, $C_i:=C_{\theta_i}$. Let $\ct_i$ be the cosmological time of $\Omega_i$, defined on the past of $\pt^- C_i$ in $\Omega_i$.


%
%

\subsection{Convergence of holonomies}
\label{sec:convhol3}

First we show

\begin{lm}
\label{ml3.1}
Under the conditions of Lemma~\ref{ml3}, let $y$ be in $\mc D\mc D_-^s$. Then, up to subsequence, $\theta_i$ converge to $\theta \in \mc T \times \mc T$.
\end{lm}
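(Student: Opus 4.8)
The strategy mirrors the proof of Lemma~\ref{ml2.1} in Part~1, where we controlled the holonomy $\rho_i$ via the intrinsic metrics of the convex cores and the Kerckhoff--Thurston earthquake theorem. Here we have a pair of surfaces convex in opposite directions, so we must control the full holonomy $\theta_i = (\rho_i^l, \rho_i^r) \in \mc T \times \mc T$ rather than just half of it, but we have twice as much metric data at our disposal: both $d_i^+$ and $d_i^-$ are precompact in $\mc D_-^s$, since $(d_i^+, d_i^-) \to (d^+, d^-) \in \mc D\mc D_-^s$. First I would record the appropriate analogue of Lemma~\ref{syst}: the surface $\Sigma(f_i^+)$ is a future-convex Cauchy surface in $\Omega_i$, hence for every $\gamma \in \pi_1 S$ we have $l_{\Sigma(f_i^+)}(\gamma) \leq l_{(\rho^+_i)^+}(\gamma)$, where $(\rho_i)^+$ denotes the point of $\mc T$ given by the intrinsic metric of $\pt^+ C_{\theta_i}$, and symmetrically $l_{\Sigma(f_i^-)}(\gamma) \leq l_{(\rho_i)^-}(\gamma)$ for the past-convex boundary of the convex core. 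Since $l_{d_i^+} = l_{\Sigma(f_i^+)}$ and $l_{d_i^-} = l_{\Sigma(f_i^-)}$, and both $\{d_i^+\}$, $\{d_i^-\}$ are precompact, Lemma~\ref{thurstmetric} gives that $\{(\rho_i)^+\}$ and $\{(\rho_i)^-\}$ are each precompact in $\mc T$.

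Next I would pass to a subsequence so that $(\rho_i)^+ \to \rho^+ \in \mc T$ and $(\rho_i)^- \to \rho^- \in \mc T$. Now I would invoke the Mess parameterization (Theorem~\ref{adsearthq}, suitably restated for varying holonomy): for a GHMC anti-de Sitter spacetime with holonomy $\theta = (\rho^l, \rho^r)$, the intrinsic metric $\rho^+$ of $\pt^+ C_\theta$ and the bending lamination $\lambda^+$ satisfy $E^+_{\lambda^+}(\rho^l) = \rho^+$ and $E^+_{\lambda^+}(\rho^+) = \rho^r$ (and symmetrically with $\rho^-$, $\lambda^-$ on the other side). Applying Theorem~\ref{earthquake} (Kerckhoff--Thurston) twice, the pair $(\rho^+_i, (\rho_i)^-)$ determines unique bending data, but the cleaner route is this: the intrinsic metric $\rho^+_i$ together with the requirement that $E^+_{\lambda^+_i}\big((\rho^+_i\text{-earthquake preimage})\big)$ works out forces $\lambda^+_i$ and hence $\rho^l_i$ and $\rho^r_i$ to be determined continuously. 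More precisely, by Theorem~\ref{earthquake} there is a unique $\lambda^+_i$ with $E^+_{\lambda^+_i}(\rho^+_i) = $ (something), and continuity of the earthquake flow in both the metric and the lamination shows $\lambda^+_i$ converges; then $\rho^l_i = E^-_{\lambda^+_i}(\rho^+_i)$ and $\rho^r_i = E^+_{\lambda^+_i}(\rho^+_i)$ converge. This yields $\theta_i \to \theta = (\rho^l, \rho^r) \in \mc T \times \mc T$, as desired.

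The step I expect to be the main obstacle is making the earthquake argument genuinely pin down \emph{both} coordinates of $\theta_i$ from the convergence of a single intrinsic metric $\rho^+_i$: a priori $\rho^+_i$ alone, via $E^+_{\lambda^+_i}(\rho^+_i) = \rho^r_i$, only constrains $\rho^r_i$ once $\lambda^+_i$ is known, and $\lambda^+_i$ is not directly seen to converge from $\rho^+_i \to \rho^+$ alone. The resolution is to use \emph{both} boundary components of the convex core: the data $\big((\rho_i)^+, (\rho_i)^-\big)$ together with the Mess relations $E^+_{\lambda^+_i}((\rho_i)^+) = \rho^r_i$, $E^-_{\lambda^+_i}((\rho_i)^+) = \rho^l_i$, $E^-_{\lambda^-_i}((\rho_i)^-) = \rho^-_i$ hmm --- actually the cleanest fix is to note that the map sending $\theta \in \mc T\times\mc T$ to the pair $\big(\rho^+(\theta), \rho^-(\theta)\big) \in \mc T \times \mc T$ of convex-core boundary metrics is a homeomorphism (this is exactly the content combining Mess's description with the earthquake theorem, and is precisely the smooth analogue established in~\cite{BMS} at the level of constant-curvature data; for the bent setting it follows from Theorem~\ref{adsearthq} applied on both sides). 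Granting this, $\big((\rho_i)^+, (\rho_i)^-\big) \to (\rho^+, \rho^-)$ immediately gives $\theta_i \to \theta$. Verifying that this assignment is a homeomorphism --- properness plus injectivity plus the local diffeomorphism property via the earthquake flow --- is the part that will require the most care, and I would isolate it as a separate lemma before deducing Lemma~\ref{ml3.1}.
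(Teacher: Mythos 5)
Your opening paragraph reproduces exactly the paper's argument: Lemma~\ref{syst} (and its time-reversed analogue for the past-convex surface $\Sigma(f_i^-)$) bounds the length functions of $d_i^\pm$ by those of $(\rho_i)^\pm$, the holonomies of the intrinsic metrics of $\pt^\pm C_{\theta_i}$; precompactness of $(d_i^\pm)$ plus Lemma~\ref{thurstmetric} then gives precompactness of $(\rho_i)^\pm$ in $\mc T$. So far so good.

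\textbf{The second step overreaches.} After extracting $(\rho_i)^\pm\to\rho^\pm$, the paper invokes a \emph{properness} result due to Diallo (\cite[Lemma A.2]{BDMS}, see also Tamburelli \cite[Proposition 6.13]{Tam}): convergence of the pair of convex-core boundary metrics forces subsequential convergence of $\theta_i$ in $\mc T\times\mc T$. Your proposal instead asserts that the map $\theta\mapsto(\rho^+(\theta),\rho^-(\theta))$ is a \emph{homeomorphism}, and that this follows ``from Theorem~\ref{adsearthq} applied on both sides.'' This is a genuine gap. Injectivity of that map is precisely the uniqueness statement for the convex core in the Alexandrov--Weyl problem for GHMC anti-de Sitter spacetimes, which the paper explicitly flags as open (see the remark after Theorem~\ref{main2}: ``The uniqueness is fully open''). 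Tracing through the Mess relations from Theorem~\ref{adsearthq}, one finds that $(\rho^+,\rho^-)$ determines $\theta$ only once the bending laminations $\lambda^\pm$ are known, and the system relating $(\rho^+,\rho^-)$ to $(\lambda^+,\lambda^-)$, namely $E^-_{\lambda^+}(\rho^+)=E^-_{\lambda^-}(\rho^-)$ and $E^+_{\lambda^+}(\rho^+)=E^+_{\lambda^-}(\rho^-)$, is not known to have a unique solution. So the ``cleanest fix'' you describe would require solving an open problem. Properness of the assignment is what Diallo proves and is all that Lemma~\ref{ml3.1} requires; your plan should be recalibrated to aim for that weaker statement and to cite it rather than the homeomorphism.
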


Similarly to the proof of Lemma~\ref{ml2.1}, it follows from Lemma~\ref{syst} and~\ref{thurstmetric} that, up to subsequence, $\rho_i^\pm \ra \rho^\pm \in \mc T$, where $\rho_i^\pm$ are the holonomies of the intrinsic metrics of $\pt^\pm C_i$. It is a result of Diallo that in such case, up to subsequence, $\theta_i \ra \theta$. See~\cite[Lemma A.2]{BDMS} or the proof of~\cite[Proposition 6.13]{Tam} by Tamburelli.

Now suppose that under the conditions of Lemma~\ref{ml3}, $y$ is in $\pt \mc D\mc D_\vee^s$. We would like to prove that then $\theta_i$ converge to $\theta \in \mc T \times \mc T$ and $\theta$ belongs to the diagonal, so $\theta=(\rho, \rho)$ for some $\rho \in \mc T$. However, we will do it here only under a stronger assumption. Later we will see how we can overcome this assumption by a density argument. To introduce the stronger assumption we first need few preliminaries.

Due to our construction of the blow-up, analogously to Lemma~\ref{metricblow}, we have

\begin{lm}
\label{metricblow2}
Let $(d_i^+, d_i^-) \in \mc D\mc D^c_-$ be a sequence converging to $y \in \pt \mc D\mc D_\vee^s$ in $\mc D\mc D_\vee^s$. Then there exist representative metrics $(d^+, d^-) \in \mc D\mc D_0^s$ of $y$ and a sequence $t_i \ra 0$ such that $\frac{d_i^\pm}{t_i}\ra d^\pm$ in the Lipschitz sense.
\end{lm}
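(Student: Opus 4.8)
The statement to prove, Lemma~\ref{metricblow2}, is the two-metric analogue of Corollary~\ref{lipschmetr} (which was itself deduced from Lemma~\ref{metricblow} via Lemma~\ref{rosental}). So the plan is to mimic exactly that chain of reasoning, but in the product space $\mc D\mc D_\vee^s$. First I would establish the curve version: given a $C^1$-curve $y_t:[0,1]\ra \mc D\mc D_\vee^s$ with $y_0\in\pt\mc D\mc D_\vee^s$ and $y_t=(d_t^+,d_t^-)\in\mc D\mc D_-^s$ for $t>0$, there exist representatives $(d_0^+,d_0^-)\in\mc D\mc D_0^s$ of $y_0$ such that $\frac{d_t^\pm}{t}\ra d_0^\pm$ in the Lipschitz sense as $t\ra 0$. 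Since $\mc D\mc D_\vee^\sharp$ was built (Section~\ref{sec:changes}) by blowing up cones in $\R^{E(\ms T^+)}\times\R^{E(\ms T^-)}$ for pairs of triangulations $(\ms T^+,\ms T^-)$, exactly as in Section~\ref{sec:metr}, I can lift $y_t$ to $\mc D\mc D_\vee^\sharp$ and assume, after passing to the interior of a single chart, that $y_t\in\mc D\mc D_\vee^\sharp(\ms T^+,\ms T^-)$ for a fixed pair. Then the edge-length functions of $\ms T^\pm$ are smooth coordinates, their values at $t=0$ vanish, and their derivatives at $t=0$ define $d_0^\pm\in\mc D_0^\sharp(\ms T^\pm)$ — this is literally Lemma~\ref{metricblow} applied to each factor separately.

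Second, I would construct the Lipschitz maps realizing the convergence, again factorwise and triangle by triangle, exactly as in the proof sketch after Lemma~\ref{metricblow}: for each triangle $T$ of $\ms T^+$ (and symmetrically $\ms T^-$), realize the hyperbolic triangle of side-lengths $\phi^{\ms T^+}_-(d_t^+)$ on the hyperboloid $\H^2\subset\R^{2,1}$, project it radially to the Euclidean triangle in the spacelike plane through its vertices, and then send that to the Euclidean triangle of the $d_0^+$-realization of $T$ by the affine map matching vertices. Because the side-lengths, divided by $t$, converge to those of $d_0^+$, these affine maps have Lipschitz constants tending to $1$ in both directions; gluing over the triangulation gives Lipschitz homeomorphisms $(S,\tfrac{d_t^+}{t})\ra(S,d_0^+)$ with bi-Lipschitz constants $\ra 1$, which is what Lipschitz convergence means. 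The same works for $d_t^-$, and the two constructions are completely independent, so the pair converges.

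Finally, to pass from the curve statement to the sequence statement I would apply Lemma~\ref{rosental} verbatim, as in the deduction of Corollary~\ref{lipschmetr} from Lemma~\ref{metricblow}. Given $(d_i^+,d_i^-)\ra y\in\pt\mc D\mc D_\vee^s$, lift to $\mc D\mc D_\vee^\sharp$ and, after passing to a subsequence, arrange all $d_i^\pm$ to lie in one chart $\mc D\mc D_\vee^\sharp(\ms T^+,\ms T^-)$ with the boundary-distance coordinate monotone; Lemma~\ref{rosental} then produces a $C^1$-curve $y_t$ through $y_0$ containing a further subsequence of the $y_i$ and with the last coordinate monotone, hence of the form $y_t=(d_t^+,d_t^-)$ with a well-defined parameter $t_i$ for the chosen points. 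Apply the curve version to this $y_t$: along the subsequence $t=t_i$ we get $\frac{d_i^\pm}{t_i}\ra d^\pm$ in the Lipschitz sense for the representatives $d^\pm$ determined by $\dot y_0$. Passing to subsequences is harmless here since we only claim the existence of \emph{some} sequence $t_i\ra 0$ with the stated convergence.

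I do not expect any genuine obstacle: every ingredient — the chart structure of $\mc D\mc D_\vee^\sharp$, Lemma~\ref{metricblow}, the triangle-wise affine Lipschitz maps, and Lemma~\ref{rosental} — is already in place and the two metric factors decouple completely, so the only thing to check is that the bookkeeping of "in the Lipschitz sense" (bi-Lipschitz constants $\ra 1$) transfers through the product, which it does trivially. The mildest point of care is ensuring the chosen fixed chart genuinely contains a subsequence, i.e.\ that some pair $(\ms T^+,\ms T^-)$ works infinitely often; this follows because there are only finitely many weak-equivalence classes of triangulations relevant near a fixed point of $\mc D\mc D_\vee^\sharp$, exactly as in the one-metric case.
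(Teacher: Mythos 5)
Your proposal is correct and fills in exactly the details that the paper leaves to the reader (the paper states Lemma~\ref{metricblow2} with only the one-line justification ``analogously to Lemma~\ref{metricblow}''): lift to a single chart $\mc D\mc D_\vee^\sharp(\ms T^+,\ms T^-)$, differentiate the edge-length coordinates at $t=0$ to read off $(d_0^+,d_0^-)$, build the triangle-wise affine Lipschitz maps with constants $\to 1$, and invoke Lemma~\ref{rosental} to pass from $C^1$-curves to sequences. The only phrasing to be careful with is ``the two metric factors decouple completely'': the nontrivial content of the statement is that a \emph{single} rescaling sequence $t_i$ serves both $d_i^+$ and $d_i^-$, and this holds precisely because $\mc D\mc D_\vee^\sharp$ is the blow-up of the product cone under the diagonal $\R_{>0}$-action (and not the product of the two blow-ups) --- a coupling you do in fact use correctly, by working in the joint chart with a single curve parameter $t$, even though the phrase suggests otherwise.
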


Let $\lambda_i^\pm$, $\Psi_i^\pm$ be the bending laminations and the metric trees associated to $\Omega_i$.
Lemma~\ref{treecomparison}, which bounds the length functions of the trees $\Psi_i^\pm$ by the length functions of Cauchy surfaces, implies that in our setting $\Psi^\pm_i \ra 0$ in $\mc{MT}$. Due to the Skora duality, Theorem~\ref{skora}, this means that $\lambda^\pm_i \ra 0$. The first option is that, up to subsequence, $\lambda_i^\pm=0$. Suppose the other case. Then there exists a sequence $t_i' \ra 0$ such that, up to subsequence, at least one of the sequences $\frac{1}{t_i'}\lambda_i^\pm$ has a nonzero limit in $\mc{ML}$ and the second sequence has a limit, which is possibly zero. We assume that $\frac{1}{t_i'}\lambda_i^- \ra \lambda^- \neq 0$. 

In this case we will additionally assume that all $\lambda^-_i$ are supported on simple closed curves. Afterwards we will show how to overcome it. In other words, we will now prove

\begin{lm}
\label{ml3.2}
Under the conditions of Lemma~\ref{ml3}, let $y$ be in $\pt \mc D\mc D_\vee^s$. Assume that either $\lambda_i^\pm=0$ or that for a sequence $t_i' \ra 0$ we have $\frac{1}{t_i'}\lambda^\pm_i \ra \lambda^\pm \in \mc{ML}$ with $\lambda^- \neq 0$ and all $\lambda^-_i$ are supported on simple closed curves. Then, up to subsequence, $\theta_i$ converge to $\theta=(\rho, \rho) \in \mc T \times \mc T$ for some $\rho \in \mc T$.
\end{lm}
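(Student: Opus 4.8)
The plan is to reduce Lemma~\ref{ml3.2} to the comparison theorems already established, plus a rescaling argument at the blow-up that parallels what was done in Section~\ref{sec:end1} for a single metric. I would split the proof along the stated dichotomy. In the case $\lambda^\pm_i = 0$ (up to subsequence) the surfaces $\pt^\pm C_i$ are totally geodesic, so $\rho_i^\pm$, the holonomies of the intrinsic metrics of $\pt^\pm C_i$, satisfy $\rho_i^+ = \rho^r_i$ and $\rho_i^- = \rho^l_i$ (or the reverse, depending on conventions), i.e. $\theta_i$ itself is what controls these boundary metrics. Since $d_i^\pm \to y \in \pt\mc D\mc D_\vee^s$, the length functions $l_{d_i^\pm}(\gamma) \to 0$ for all $\gamma$, so Lemma~\ref{treecomparison} gives $l_{\Psi_i^\pm}(\gamma) \to 0$; but with $\lambda^\pm_i = 0$ the trees $\Psi_i^\pm$ are already trivial, and what we actually need is a bound on $\theta_i$ itself. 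Here I would invoke Lemma~\ref{syst}: $l_{d_i^\pm}(\gamma) \le l_{\rho_i^{\mp}}(\gamma)$ is the wrong direction, so instead I would use that $\Sigma(f_i^+)$ is a future-convex Cauchy surface in the past of $\pt^+C_i$ to get, via Lemma~\ref{treecomparison} and the fact that $\Psi^+_i$ is dual to $\lambda^-_i = 0$, no information — so in the $\lambda = 0$ case the correct route is: $\pt^+C_i$ is totally geodesic with holonomy $\rho_i^+ \in \mc T$, and since $l_{d_i^+}$ collapses and $l_{d_i^+} \le l_{\rho_i^+}$ fails but $\Sigma(f_i^+)$ lies between $\pt^-C_i$ and $\pt^+C_i$, one still has $l_{\Sigma(f_i^+)}(\gamma) \ge l_{\Psi^+(\theta_i)}(\gamma)$; when $\lambda^-_i = 0$ the tree $\Psi^+(\theta_i)$ degenerates and one must argue directly that $\pt^+ C_i$ and $\pt^-C_i$ coincide (the convex core is a totally geodesic surface), forcing $\rho_i^l = \rho_i^r$, i.e. $\theta_i$ lies on the diagonal already, and then precompactness of $\theta_i$ follows from Lemma~\ref{thurstmetric} applied to $\rho_i^\pm$ together with Diallo's lemma quoted after Lemma~\ref{ml3.1}.

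For the second case — $\frac{1}{t_i'}\lambda^-_i \to \lambda^- \neq 0$ with all $\lambda^-_i$ supported on simple closed curves — I would first establish precompactness of $\theta_i$. Since $d_i^\pm$ collapse with speed $t_i$ (Lemma~\ref{metricblow2}) and, by Skora duality, $\Psi^\pm_i \to 0$, one gets $\lambda^\pm_i \to 0$; rescaling, $\frac{1}{t_i'}\lambda^-_i$ converges to a nonzero simple-closed-curve-supported lamination (the space of such laminations scaled is PL-closed, so the limit is again carried on a weighted multicurve or a limit of such — here the simple-closed-curve hypothesis is exactly what lets us stay in a controlled subspace). The holonomies $\rho_i^\pm$ of the intrinsic metrics of $\pt^\pm C_i$: by Theorem~\ref{adsearthq} these are earthquakes of $\rho_i^l, \rho_i^r$ along $\lambda^\pm_i$, and since the laminations shrink to zero we get $\rho_i^+ \to \rho^l_\infty$ and $\rho_i^- \to \rho^r_\infty$ are comparable to $\rho_i^l, \rho_i^r$ up to a collapsing earthquake; combined with Lemma~\ref{syst} (bounding $l_{d_i^\pm}$ by $l_{\rho_i^\mp}$) and Lemma~\ref{thurstmetric}, the $\rho_i^\pm$ stay in a compact set, hence so do $\rho_i^l, \rho_i^r$, hence $\theta_i \to \theta \in \mc T \times \mc T$.

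It remains to show $\theta$ lies on the diagonal. I would argue by contradiction: if $\theta = (\rho^l, \rho^r)$ with $\rho^l \neq \rho^r$, then by Theorem~\ref{adsearthq} the bending laminations $\lambda^\pm(\theta)$ are nonzero (the convex core has nonzero width), so $l_{\Psi^+(\theta)}(\gamma) > 0$ for some $\gamma$. By Lemma~\ref{treecomparison}, $l_{\Sigma(f_i^+)}(\gamma) \ge l_{\Psi^+(\theta_i)}(\gamma) \to l_{\Psi^+(\theta)}(\gamma) > 0$ (using continuity of the tree/lamination data under $\theta_i \to \theta$, i.e. the continuity in Theorem~\ref{adsearthq} and Skora's homeomorphism, Theorem~\ref{skora}). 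But $l_{\Sigma(f_i^+)}(\gamma) = l_{d_i^+}(\gamma) \to 0$ since $d_i^+ \to y \in \pt\mc D\mc D_\vee^s$. This contradiction forces $\rho^l = \rho^r$, i.e. $\theta = (\rho, \rho)$.

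The main obstacle I anticipate is the precompactness step in the second case: controlling $\theta_i$ when the bending laminations are collapsing but nontrivial requires carefully tracking the earthquake relations of Theorem~\ref{adsearthq} under the simultaneous degeneration of both the boundary metrics $\rho_i^\pm$ and the laminations $\lambda^\pm_i$, and showing that the collapse of $d_i^\pm$ plus the Lemma~\ref{syst}/Lemma~\ref{thurstmetric} bounds genuinely pin down $\rho_i^l, \rho_i^r$ rather than allowing them to escape to infinity. The simple-closed-curve hypothesis on $\lambda^-_i$ is precisely the technical device that keeps the rescaled laminations in a region where the earthquake flow and its dependence on the data are well enough controlled; the general case (removed later by a density argument) is where this would genuinely break down without extra work.
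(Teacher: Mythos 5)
Your concluding step (once precompactness of $\theta_i$ is granted, the collapse of $l_{d_i^+}$ together with Lemma~\ref{treecomparison} and Skora duality forces $\lambda_i^\pm\to 0$ and hence, via the Mess earthquake relations, a diagonal limit) is sound and agrees in spirit with the paper. But the heart of the lemma is the precompactness of $\theta_i$ at the blow-up, and there your proposal has a genuine gap. You propose to get it from Lemma~\ref{syst} plus Lemma~\ref{thurstmetric}, and you yourself flag this as the ``main obstacle''; in fact this route cannot work. Lemma~\ref{syst} gives $l_{d_i^\pm}(\gamma)\leq l_{\rho_i^\pm}(\gamma)$, which is vacuous once $l_{d_i^\pm}\to 0$, and Lemma~\ref{thurstmetric} requires the sequence of cone-metrics to be \emph{precompact in} $\mc D_-$ (equivalently, non-collapsing: its proof needs a uniform Lipschitz comparison with a fixed hyperbolic metric), which is exactly what fails when $y\in\pt\mc D\mc D_\vee^s$. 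The same defect sinks your treatment of the case $\lambda_i^\pm=0$: knowing $\theta_i$ is Fuchsian does not make it precompact, and no bound of the form ``collapsing $\leq l_{\rho_i}$'' can prevent $\rho_i$ from escaping to infinity.

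What is actually needed is an \emph{upper} bound on $l_{\rho_i^-}$ by a quantity collapsing at the same rate $t_i$ as $d_i^+$, so that the rates cancel. The paper obtains this from the cosmological time: (a) the Bonsante--Schlenker intersection bound (Lemma~\ref{lengthbound}) together with the rescaled convergence $\frac{1}{t_i'}\lambda_i^-\to\lambda^-\neq 0$ yields a uniform bound $l_{\rho_i^-}(\mu_i)\leq C$ on the core curve of the Misner block; (b) this, plus a Gauss--Bonnet argument in $\pt^-C_i\setminus\mu_i$ and the blockwise time estimate (Lemma~\ref{timeestimate}), gives $\inf_{\Sigma(f_i^+)}\ct_i\geq Ct_i$ (Lemma~\ref{ctbound}) --- this is where the simple-closed-curve hypothesis is really used, namely to make the canonical decomposition consist of explicit thick and Misner blocks; (c) the level surface $L_{r_i}$ with $r_i=Ct_i/2$ then lies in the past of $\Sigma(f_i^+)$, its intrinsic metric is a grafting of $\sin(r_i)\cdot m_i^-$, so $\sin(r_i)\,l_{\rho_i^-}(\gamma)\leq l_{L_{r_i}}(\gamma)\leq l_{d_i^+}(\gamma)$ by Lemma~\ref{comparison}; and (d) since $l_{d_i^+}(\gamma)/t_i\to l_{d^+}(\gamma)$ by Lemma~\ref{metricblow2}, the factors of $t_i$ cancel and $l_{\rho_i^-}(\gamma)\leq C_0\,l_{d^+}(\gamma)$, which gives precompactness by the standard finite-curve criterion. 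None of this machinery (cosmological time, the canonical block decomposition, the grafted structure of the level surfaces) appears in your proposal, and without some substitute for it the precompactness claim is unproved.
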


The first proof ingredient is a result of Bonsante--Schlenker~\cite[Corollary 4.10]{BS3}:

\begin{lm}
\label{lengthbound}
There are constants $C, h_0>0$, depending only on the genus of $S$, such that for every $\theta \in \mc R \times \mc R$, in $\Omega_\theta$ we have
\[i(\lambda^+, \lambda^-)\geq C l_{\rho^-}(\lambda^-)\min\{h_0, l_{\rho^-}(\lambda^-)\}.\]
\end{lm}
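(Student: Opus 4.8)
The statement is precisely~\cite[Corollary~4.10]{BS3}, so the shortest honest proof is to invoke it; what follows is the shape of the Bonsante--Schlenker argument, which I would reproduce with the present conventions. The starting point is the earthquake description of the convex core due to Mess (the general-$\theta$ version of Theorem~\ref{adsearthq}): writing $m^\pm$ for the induced hyperbolic metrics on $\pt^\pm C_\theta$, with holonomies $\rho^\pm$, and $\lambda^\pm$ for the bending laminations, the left and right holonomies $\rho^l,\rho^r$ are obtained from $(m^-,\lambda^-)$ by a left and a right earthquake along $\lambda^-$, and also from $(m^+,\lambda^+)$ by a left and a right earthquake along $\lambda^+$. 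Consequently $m^+$ is obtained from $m^-$ by a composition of two earthquakes, one along $\lambda^-$ and one along $\lambda^+$. Using Kerckhoff's first-derivative formula for the length function under an earthquake, together with the invariance of $l_m(\lambda)$ under the earthquake along $\lambda$ itself, one obtains the two-sided comparisons (up to a universal multiplicative constant)
\[|l_{m^+}(\lambda^-)-l_{m^-}(\lambda^-)|\leq i(\lambda^+,\lambda^-),\qquad |l_{m^-}(\lambda^+)-l_{m^+}(\lambda^+)|\leq i(\lambda^+,\lambda^-).\]

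Next I would reduce to the case where $\lambda^-$ is a weighted simple closed curve $a\gamma$: both sides of the claimed inequality are continuous for the weak topology on $\mc{ML}$ (for $i(\cdot,\cdot)$ this is classical; for $l_{\rho^-}(\lambda^-)$ it follows from continuity of the convex core data in $\theta$, e.g.\ Corollary~\ref{convccorebase} together with Theorem~\ref{earthquake} and Theorem~\ref{adsearthq}), and weighted simple closed curves are dense. For $\lambda^-=a\gamma$ the corresponding region of $C_\theta$ is modelled on an explicit annular ("Misner-type") block bent along $\gamma$ with total exterior angle $a$, whose geometry is completely computable; tracing the holonomies around $\gamma$ on the left and on the right and requiring that they match the two earthquake images of $m^-$ forces a definite amount of transverse bending of the upper boundary across $\gamma$, and this translates into $i(\lambda^+,a\gamma)\geq C\,a\,l_{m^-}(\gamma)\min\{h_0,a\,l_{m^-}(\gamma)\}$. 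Here the two regimes are natural: when $\gamma$ is short in $m^-$ the thin tube around it forces the quadratic term, with $h_0$ playing the role of a Margulis-type constant, while when $\gamma$ is long a collar/area estimate (consistent with the area formula of Lemma~\ref{areaexpr}) forces the linear term.

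The step I expect to be the main obstacle is the \emph{uniformity}: the constants $C$ and $h_0$ must depend only on the genus of $S$, not on $\theta$ nor on the combinatorics of $\lambda^\pm$. Passing from weighted simple closed curves back to arbitrary measured laminations is harmless once the bound is continuous, but keeping the annular computation effective when $\gamma$ lies in an arbitrarily thin part of $m^-$ and interacts with a complicated $\lambda^+$ requires the quantitative collar lemma and a compactness argument over moduli space. This is exactly the content of~\cite[Section~4]{BS3}, and for the purposes of the present paper I would simply cite it rather than reprove it.
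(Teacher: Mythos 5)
Your proposal matches the paper exactly: the paper gives no proof of this lemma and simply cites it as \cite[Corollary 4.10]{BS3}, which is precisely what you do (your sketch of the Bonsante--Schlenker argument is accurate but not required here).
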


Here $i: \mc{ML\times ML}\ra \R_{\geq 0}$ is the geometric intersection number, see~\cite{Bon2}.
Our second assumption in Lemma~\ref{ml3.2} implies that ${i(\lambda_i^+, \lambda_i^-)}/{t_i'^2}$ is bounded. Let $\mu_i$ be the simple closed curve supporting $\lambda_i$, so $\lambda_i^-=s_i\mu_i$ for some $s_i>0$. Since no sequence of simple closed curves in $\mc{ML}$ (with weights one) converges to zero and $\frac{s_i}{t_i'}\mu_i$ is bounded in $\mc{ML}$, then $s_i/t_i'$ is bounded. Hence, Lemma~\ref{lengthbound} implies that there exists a constant $C>0$, depending only on $\theta_i$, such that
\begin{equation}
\label{lengthbound2}
l_{\rho_i^-}(\mu_i) \leq C.
\end{equation}
In the case of the first assumption of Lemma~\ref{ml3.2}, we just consider $\mu_i$ empty.
We can now obtain the key ingredient for a proof of Lemma~\ref{ml3.2}.

\begin{lm}
\label{ctbound}
There exists a constant $C>0$, depending only on $\theta_i$ and $y$, such that 
\[\inf_{p \in \Sigma(f^+_i)} \ct_i(p)\geq Ct_i.\]
\end{lm}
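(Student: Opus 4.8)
The goal is to bound below the cosmological time $\ct_i$ on the future-convex surface $\Sigma(f_i^+)$ by a multiple of $t_i$, under the assumption of Lemma~\ref{ml3.2}, i.e., either $\lambda_i^\pm=0$ or $\frac{1}{t_i'}\lambda_i^\pm\ra\lambda^\pm$ with $\lambda^-\neq0$ supported on simple closed curves. I would follow the same scheme as the proof of Lemma~\ref{ctinf} in Part~1 (via Lemma~\ref{sysct}), but keeping track of the scaling in $t_i$. The key quantities appearing in Lemma~\ref{sysct} are the systole $\sys(\Sigma(f_i^+))$, the area $A_{r_i}$ of the relevant level surface $L_{r_i}(\theta_i)$, the diameter $\delta_i$ of the hyperbolic metric $\rho_i^-$, and the parameter $r_i$, where $\Sigma(f_i^+)$ lies in the past of $L_{r_i}$.

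First I would observe, using Lemma~\ref{metricblow2} and Corollary~\ref{lipschmetr} (applied to $d_i^+$), that $\frac{1}{t_i}d_i^+\ra d^+\in\mc D_0^s$ in the Lipschitz sense for an appropriate sequence $t_i\ra0$; hence $\frac{\sys(d_i^+)}{t_i}$ is bounded below by a positive constant and $\frac{\diam(d_i^+)}{t_i}$ is bounded above. Since the intrinsic metric of $\Sigma(f_i^+)$ is $d_i^+$, this controls $\sys(\Sigma(f_i^+))$ from below by $c\,t_i$. Next I need to control the area $A_{r_i}$ and the location parameter $r_i$. Here the point is that, by Lemma~\ref{ml3.2}'s proof scheme, $\theta_i$ converges to the diagonal $(\rho,\rho)$, so the convex core $C_i$ degenerates, and $r_i$ (the sup of $\ct_i$ on $\Sigma(f_i^+)$, which lies below $\pt^-C_i=L_{\pi/2}$) is of order $t_i$: this follows from the same argument as in Lemma~\ref{ctdown} together with Lemma~\ref{supblowup} (the ``blown-up'' analogue), which shows $\frac{\beta_i}{t_i}$ is bounded. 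Then Lemma~\ref{areaexpr} gives $A_{r_i}=-2\pi\sin^2(r_i)\chi(S)+l_{\rho_i^-}(\lambda_i^-)\sin(r_i)\cos(r_i)$; using $r_i=O(t_i)$ and the bound~(\ref{lengthbound2}) on $l_{\rho_i^-}(\mu_i)$ (so that $l_{\rho_i^-}(\lambda_i^-)=s_i l_{\rho_i^-}(\mu_i)=O(t_i)$ since $s_i=O(t_i')$ and $t_i'$ comparable to $t_i$ after passing to a subsequence), one gets $\frac{A_{r_i}}{t_i^2}$ bounded above. This is the analogue of Lemma~\ref{areabound} in the present setting.

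Finally I would feed these estimates into Lemma~\ref{sysct}. Writing $\sys(\Sigma(f_i^+))\geq c\,t_i$, $A_{r_i}\leq A\,t_i^2$, $\cos(r_i)\geq\cos(\pi/2^-)$ bounded below away from zero (since $r_i\ra0$), and $\exp(-\delta_i)$ bounded below (since $\rho_i^-$ stays in a compact part of $\mc T$ by Lemma~\ref{ml3.1}'s argument — $\rho_i^-\ra\rho^-$), the bound from Lemma~\ref{sysct} reads
\[
\inf_{p\in\Sigma(f_i^+)}\ct_i(p)\ \geq\ \frac{C'\exp(-\delta_i)\,\sys^2(\Sigma(f_i^+))\cos(r_i)}{A_{r_i}\exp\!\left(\frac{2A_{r_i}}{\sys(\Sigma(f_i^+))\cos(r_i)}\right)}\ \geq\ \frac{C''\,c^2 t_i^2}{A\,t_i^2\,\exp\!\left(\frac{2A\,t_i^2}{c\,t_i\cdot\mathrm{const}}\right)}\ \geq\ C\,t_i,
\]
since the exponential factor tends to $1$. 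The constant $C$ depends only on $\theta_i$ (through the compact set containing $\rho_i^-$ and through the bound in~(\ref{lengthbound2})) and on $y$ (through $d^+$). Wait — I should double-check the power count: Lemma~\ref{sysct} has $\sys^2$ in the numerator and $A_{r_i}$ in the denominator, giving $t_i^2/t_i^2=1$, not $t_i$; so in fact one gets a bound of order a constant, which is even stronger than $Ct_i$. That is fine — the statement only claims $\geq Ct_i$, so I would just keep the weaker conclusion (or note the stronger one and weaken it), whichever is cleaner to state.

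\textbf{Main obstacle.} The delicate point is establishing that $r_i=O(t_i)$, i.e., that $\Sigma(f_i^+)$ lies in the past of a level surface $L_{r_i}$ with $r_i$ of the right order. This requires the ``blown-up'' cosmological-time machinery: Lemma~\ref{varylevel}, the compactness Lemma~\ref{compact}, and the blown-up analogues of Lemmas~\ref{ctconvbase}, \ref{supblowup}. One must be careful that $\sup_{\Sigma(f_i^+)}\ct_i$ is indeed $O(t_i)$ and not merely $o(1)$; this is where the present argument genuinely uses the structure of the blow-up and cannot just quote Part~1 verbatim — in particular one needs the control on $l_{\rho_i^-}(\lambda_i^-)$ coming from Bonsante--Schlenker's inequality (Lemma~\ref{lengthbound}) and from the assumption that $\lambda_i^-$ is carried by a simple closed curve (which bounds $i(\lambda_i^+,\lambda_i^-)$ and hence $s_i$). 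A secondary technical nuisance is making sure the various subsequences extracted (for $t_i$, $t_i'$, $\theta_i$, $\rho_i^\pm$, $\lambda_i^\pm$) are mutually compatible, which is handled by the usual diagonal-subsequence argument together with Lemma~\ref{subconvtoconv}.
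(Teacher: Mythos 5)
The paper's proof of Lemma~\ref{ctbound} takes a fundamentally different route from yours: it never invokes the global area estimate of Lemma~\ref{sysct}, but instead applies the single-block estimate of Lemma~\ref{timeestimate} to a purpose-built curve $\chi$ on $\Sigma(f_i^+)$ whose projection $\psi$ to $\pt^-C_i$ has length bounded by a constant $C_1+2C_2$, where $C_1$ comes from~(\ref{lengthbound2}) (a bound on $l_{\rho_i^-}(\mu_i)$) and $C_2$ is a Gauss--Bonnet bound on the injectivity radius depending only on the genus of $S$. The only other ingredient is $\sys(d_i^+)\geq t_i\sys(d^+)/2$ from Lemma~\ref{metricblow2}. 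Crucially, none of these inputs requires knowing where the holonomies $\theta_i$ converge, nor that $\rho_i^-$ stays precompact.

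Your approach has a genuine circularity that cannot be repaired by the tools you cite. To apply Lemma~\ref{sysct} you need three things that are not available at this stage of the argument: (i) $\sup_{\Sigma(f_i^+)}\ct_i=O(t_i)$, which you plan to get from a Part 2 analogue of Lemma~\ref{supblowup}; (ii) $\rho_i^-$ precompact in $\mc T$, to control $\exp(-\delta_i)$; and (iii) $t_i'\asymp t_i$, to convert $l_{\rho_i^-}(\lambda_i^-)=O(t_i')$ into $O(t_i)$. But (ii) is precisely one of the \emph{conclusions} of Lemma~\ref{ml3.2}, whose proof \emph{uses} Lemma~\ref{ctbound} — this is flatly circular. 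And the blown-up compactification machinery underlying (i) (Lemma~\ref{varylevel}, Lemma~\ref{compact} and their Part~2 analogues in Section~\ref{sec:end2}) is set up only after fixing a limit $\theta=(\rho,\rho)$ and a compact neighborhood of it; you therefore cannot invoke $\mc P\mc P_\vee^\bdia(U)$ or a blown-up $\sup\ct$ estimate before knowing where $\theta_i$ is heading, which is exactly what Lemma~\ref{ml3.2} is supposed to establish. You explicitly say ``$\theta_i$ converges to the diagonal $(\rho,\rho)$'' by ``Lemma~\ref{ml3.2}'s proof scheme'' — but the proof of Lemma~\ref{ml3.2} begins by citing Lemma~\ref{ctbound}. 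Finally, (iii) is simply asserted: $t_i$ is set by the rate of collapse of the cone-metrics (Lemma~\ref{metricblow2}), while $t_i'$ is set by the rate of collapse of the bending laminations; nothing in the hypotheses forces these scales to be comparable, and the paper is careful not to assume so.

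The red flag you noticed yourself — that the dimensional count in Lemma~\ref{sysct} appears to give an $O(1)$ lower bound rather than the claimed $O(t_i)$ — is a symptom of these issues, not a bonus. The correct conclusion in the blow-up regime really is $O(t_i)$: after rescaling by $g_{t_i}$, the surfaces converge to a fixed configuration in Minkowski space, where the cosmological time has a fixed positive infimum, and unwinding the rescaling gives $\inf\ct_i\approx t_i\cdot\mathrm{const}$. A bound that is ``too strong by a factor of $t_i^{-1}$'' should have prompted a re-examination of the hypotheses rather than a remark that it is ``cleaner to state'' the weaker conclusion. The fix, as in the paper, is to abandon the global area estimate and work locally block-by-block.
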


\begin{proof}
We first claim that there exists $C_0>0$ with the following property.
Pick $p \in \Sigma(f^+_i)$. 
Then there exists a homotopically non-trivial curve $\chi: [0,1] \ra \Sigma(f^+_i)$ such that it belongs to a single block of the canonical decomposition, $\chi(0)=\chi(1)=p$ and for the projections $\psi$ of $\chi$ to $\pt^- C_i$ along the gradient of the cosmological time we have $\len(\psi)\leq C_0$.

Actually, we first construct $\psi$ and then lift it to $\chi$. Suppose that $p$ is projected to $q \in \pt^- C_i$ along the gradient of $\ct_i$. 
If $q \in \mu_i$, then we can take $\mu_i$ as $\psi$ and consider as $\chi$ the intersection of $\Sigma(f^+_i)$ with any thin block associated to $\psi$. Then~(\ref{lengthbound2}) shows that there exists $C_1>0$ such that $\len(\psi)\leq C_1$. 

If $q \in \pt^-C_\theta \backslash \mu_i$, then we consider a small embedded disk in $\pt^-C_\theta \backslash \mu_i$ around $q$ of radius $r$ in the intrinsic metric of $\pt^-C_\theta$. We start to increase $r$ until either the disk bumps into itself or bumps into $\mu_i$. Since $\pt^-C_\theta$ intrinsically is a hyperbolic surface, due to the Gauss--Bonnet theorem, there exists a constant $C_2>0$, depending only on the genus, such that this happens for $r \leq C_2$. In the first case we obtain $\psi$ as a homotopically non-trivial curve based at $p$ and contained in the relative interior of a component of $\pt^-C_\theta \backslash \mu_i$. In the second case we follow the shortest path from $p$ to $\mu_i$, do one turn around $\mu_i$ and take the shortest path back. The length of the obtained curve is $\leq C_1+2C_2$. We then lift $\psi$ to $\chi$ along the gradient of the cosmological time in a straightforward way. 

We have $\len(\chi)\geq \sys(d_i^+)$. By Lemma~\ref{metricblow2}, for all large enough $i$ we have $\sys(d_i^+)\geq t_i\sys(d^+)/2$. Now we apply Lemma~\ref{timeestimate} and get
\[\ct_i(p) \geq \frac{\len(\chi)}{\len(\psi)\exp(\len(\psi)) } \geq \frac{t_i\sys(d^+)}{2(C_1+2C_2)\exp(C_1+2C_2)},\]
which finishes the proof.
\end{proof}

We need an insight into the intrinsic geometry of the level surfaces of the cosmological time. They are instances of what is called a \emph{grafted metric}. The initial interest to the grafted metrics comes from the study of $\mathbb{CP}^1$-structures on $S$, see, e.g.,~\cite{Dum, KT} for details.

The canonical decomposition of $\Omega_i$ in our case is just one or two thick blocks and zero or one Misner blocks. Looking at the metric inside the blocks (see, e.g.,~\cite[Section 5.4]{BBZ}), we observe that the intrinsic metric of the $r$-level surface $L_{i,r}$ of $\ct_i$, $0<r<\pi/2$, is obtained as follows. Let $m_i^-$ be the intrinsic metric of $\pt^- C_i$, considered as a hyperbolic metric on $S$. Scale it by $\sin r$, cut along $\mu_i$ and glue there the Euclidean cylinder of length equal to the length of $\mu_i$ in the scaled metric and of width equal to $s_i \cos r$. The Euclidean cylinder corresponds to the intersection of $L_{i,r}$ with the Misner block and the rest arises from the intersection with the thick blocks. Now we can prove Lemma~\ref{ml3.2}.

\begin{proof}[Proof of Lemma~\ref{ml3.2}]
Set $r_i:=Ct_i/2$, where $C$ is the constant from Lemma~\ref{ctbound}. Let $L_i$ be the $r_i$-level surface of $\ct_i$ in $\Omega_i$. By Lemma~\ref{ctbound}, $L_i$ belongs to the strict past of $\Sigma(f^+_i)$. By Lemma~\ref{comparison}, for every $\gamma \in \pi_1S$ we have $\len_{L_i}(\gamma)\leq\len_{\Sigma(f^+_i)}(\gamma)=\len_{d_i^+}(\gamma)$. 

Since the intrinsic metric of $L_i$ is obtained by the grafting process from the intrinsic metric of $\pt^- C_i$ scaled by $\sin r_i$, for every $\gamma \in \pi_1 S$ we have $l_{L_i}(\gamma) \geq \sin r_i l_{\rho_i^-}(\gamma)$. Altogether, this and Lemma~\ref{metricblow2} imply that for some constant $C_0>0$ and all large enough $i$ we have 
\[l_{\rho_i^-}(\gamma) \leq C_0l_{d^+}(\gamma).\]
It is a standard compactness criterion for the Teichm\"uller space that there are finitely many classes $\gamma$ in $\pi_1S$ such that if $l_{\rho_i^-}(\gamma)$ are uniformly bounded, then the sequence is precompact in $\mc T$. See, e.g.,~\cite[Lemmas 7.10-7.11]{FLP}. Thus, up to subsequence, $\rho_i^-$ converge to $\rho \in \mc T$. 

We also have $\lambda_i^\pm \ra 0$. By the Mess Theorem, Theorem~\ref{adsearthq}, we get $\rho^l_i, \rho^r_i \ra \rho$. Hence, $\theta_i \ra (\rho, \rho)$. 
\end{proof}

\subsection{End of the proof}
\label{sec:end2}

Convergence of the marked points follows from 

\begin{lm}
\label{ml3.3}
Under the conditions of Lemma~\ref{ml3}, assume that $\theta_i$ converge to $\theta \in \mc T \times \mc T$. Furthermore, if $y \in \pt \mc D\mc D_\vee^s$, assume additionally that $\theta$ is on the diagonal. Then, up to subsequence, $x_i$ converge to $x \in \mc P\mc P_\vee^s$.
\end{lm}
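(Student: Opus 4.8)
The plan is to handle the two cases $y \in \mc D\mc D_-^s$ and $y \in \pt\mc D\mc D_\vee^s$ in parallel with the corresponding arguments from Part~1, namely the proofs of Lemma~\ref{ml2'} in Sections~\ref{sec:convmp1} and~\ref{sec:end1}, applying each of them separately to $f_i^+$ (future-convex) and to $f_i^-$ (past-convex). In the first case, $\theta_i \to \theta$ in $\mc T \times \mc T$, so the spacetimes $\Omega_i$ converge (via Corollaries~\ref{convccorebase}, \ref{convdombase} and their companions, which now hold with $\rho$ replaced by $\theta$ since only the standard input $\Lambda_{\theta_i} \to \Lambda_\theta$ is needed). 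Then I would invoke the $\ct$-lower-bound machinery — the analogue of Lemma~\ref{ctinf}, which bounds $\inf_\Sigma \ct_\theta$ below in terms of the systole over a compact family of holonomies — once for the future boundary and once for the past (the past version being obtained by reversing the time-orientation). Since $d_i^\pm \to d^\pm \in \mc D_-^s$ have systoles bounded away from zero, the marked points $f_i^\pm(v)$ stay in a compact region of $\Omega^\pm_\theta$; extracting a subsequence and using the reverse triangle inequality in a timelike plane (exactly as at the end of Section~\ref{sec:convmp1}) shows the limiting maps are injective and in strictly convex position, so $x_i \to x \in \mc P\mc P_-^s$. One must also check $f^+$ and $f^-$ remain on the correct (opposite) sides of the convex core, which is automatic from $\tilde f_i^\pm(V^\pm) \subset \tilde\Omega^\pm_{\theta_i}$ and the convergence of $\tilde\Omega^\pm_{\theta_i}$.

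For the case $y \in \pt\mc D\mc D_\vee^s$, by hypothesis $\theta_i \to (\rho_\circ,\rho_\circ)$ after conjugation (the diagonal assumption plus $d_i^\pm \to \pt\mc D\mc D_\vee^s$ forces the limit to be the Fuchsian locus, by Theorem~\ref{adsearthq} as in Lemma~\ref{ml2.2}), so the situation degenerates to the blow-up. I would then run the argument of Section~\ref{sec:end1} symmetrically: first the compactification step — build the analogue $\mc P\mc P_\vee^\bdia$ of $\mc P_\vee^\bdia(U_-)$, using the rescaled cosmological-time estimates (Lemmas~\ref{ctconv}, \ref{varylevel}, which go through verbatim since the rescaling $g_{t_i}$ acts the same way regardless of which factor of $G_-$ we are in) — to conclude that, up to subsequence, $x_i$ converges to some $x \in \S(\mc P\mc P_0^\bdia)$. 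Next comes the dichotomy: either a lift $(\iota, \tilde f^+, \tilde f^-)$ of $x$ has some $\tilde f^\pm$ non-injective (collapsing two marked points), or some $\tilde f^\pm(\tilde v)$ escapes to the spacelike part $\pt^+_s\tilde\Omega_\iota$ (equivalently, lands on the corresponding metric tree). Both sub-cases are excluded exactly as in Section~\ref{sec:end1}: in the first, equation~(\ref{collapseblow}) (reverse triangle inequality, rescaled) competes against~(\ref{edgerembound}) (an edge of $\Sigma(\tilde f^\pm)$ survives, Lemma~\ref{edgeremains}), and Corollary~\ref{lipschmetr}, now in its two-metric form Lemma~\ref{metricblow2}, forces two incompatible scaling rates; in the second, one uses the systole/area/$\ct$ chain (Lemmas~\ref{sysct}, \ref{areaexpr}, \ref{supblowup}, \ref{areabound}, \ref{sysblowup}) applied to whichever of $\Sigma(f_i^+)$ or $\Sigma(f_i^-)$ faces the degenerating tree, together with the fact that $\iota \neq 0$ gives a non-trivial limiting tree $\Psi^\pm(\iota)$ with a closed curve of positive length, again producing a scaling contradiction via Corollary~\ref{lipschmetr}.

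The main obstacle I anticipate is not any single step but the bookkeeping of which factor controls which surface at the blow-up: unlike in Part~1, where $\rho^l=\rho_\circ$ is fixed and only $\rho^r$ varies, here both components of $\theta_i$ degenerate, so one must verify that the rescaled domains $g_{t_i}\tilde\Omega^+_{\theta_i}$ and $g_{t_i}\tilde\Omega^-_{\theta_i}$ converge to the two Minkowski domains $\tilde\Omega^+_\iota$ and $\tilde\Omega^-_\iota$ attached to the \emph{same} limiting cocycle $\iota \in T_{\rho_\circ}\mc T$ — i.e.\ that the future side and the past side "see" a common infinitesimal holonomy. This is where the diagonal hypothesis on $\theta$ is essential: it guarantees that the single rescaling parameter $t_i$ (extracted once, via Lemma~\ref{rosental} applied in $T\mc R$) simultaneously resolves both sides, so that Lemmas~\ref{metricblow2} and~\ref{ctbound} can be played off against each other. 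Once that compatibility is in place, every remaining estimate is a transcription of its Part~1 counterpart with "$+$" or "$-$" decorations, and the two surfaces are decoupled from the metric side by the transversality already exploited in the proof of Theorem~\ref{minkowski2}.
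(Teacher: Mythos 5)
Your proposal follows the same route as the paper: for $y\in\mc D\mc D_-^s$, generalize the argument of Section~\ref{sec:convmp1} from constant $\rho_i^l$ to $\rho_i^l$ varying in a compact set, using the $\ct$-lower-bound machinery on both sides and the reverse triangle inequality for injectivity; for $y\in\pt\mc D\mc D_\vee^s$, build a compactification of the type $\mc P\mc P_\vee^\bdia(U)$ over a compact neighborhood of the diagonal limit, extract a limit in $\S(\mc P\mc P_0^\bdia)$, and run the dichotomy/scaling argument from Section~\ref{sec:end1}. That is exactly what the paper does, so the proof is correct and takes essentially the same approach.

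One small point worth correcting: you write that the diagonal limit is $(\rho_\circ,\rho_\circ)$, but in Part~2 there is no distinguished $\rho_\circ$ — both components of $\theta_i$ vary, the paper's blow-up base $O_-$ is the whole diagonal copy of $\mc R$, and the limit is some $(\rho,\rho)$ determined by the sequence (this is the $\rho$ supplied by Lemma~\ref{ml3.2}). The role that $\rho_\circ$ plays in Part~1 as fixed center of the rescaling $g_t$ (via the fixed point $o$) is here played by any diagonal representation, all of which fix $o$; so the $g_t$-machinery does transfer, but the compact neighborhood $U$ has to be taken around the actual limit $\rho$, not around a pre-chosen base point. A second minor quibble: the closing appeal to the transversality from the proof of Theorem~\ref{minkowski2} is not used for properness — that ingredient only enters the non-degeneracy of the differential, and the compactness/dichotomy argument does not need it.
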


We claim that this, again, follows basically from the same arguments as the respective proofs in Part 1, in Sections~\ref{sec:convmp1} and~\ref{sec:convmp2}. First, consider the case $y \in \mc D\mc D_-^s$. Our results from Section~\ref{sec:convmp1} were given for constant $\rho_i^l$. However, all the conclusions remain valid when instead it belongs to a compact set. This implies the convergence to $x \in \mc P\mc P_\vee^s$ in this case. 

In the case when $y \in \pt \mc D\mc D_\vee^s$, one just needs to modify our construction of the compactification from Section~\ref{sec:compact}. The spaces $\tilde{\mc P}\tilde{\mc P}_0^\bdia$ and $\mc P\mc P_0^\bdia$ are constructed exactly the same as $\tilde{\mc P}_0^\bdia$ and $\mc P_0^\bdia$. Now fix $\rho \in \mc R$, set $\theta=(\rho, \rho)$, and consider a compact neighborhood $\tilde U$ of $\rho$ in $\mc R$ projecting to a compact neighborhood $U$ of $\rho$ in $\mc T$. Then $\tilde{\mc P}\tilde{\mc P}_-^\bdia(\tilde U)$ is defined as a subset consisting of all $(\theta', \tilde f^+, \tilde f^-) \in \tilde{\mc P}\tilde{\mc P}_-$ such that (0) $\theta' \in \tilde U \times \tilde U$; (1) $\tilde f^\pm$ have values in $\tilde \Omega^\pm_{\theta'}\cup \pt^\pm \tilde C_{\theta'}\cup \pt^\pm_s\tilde \Omega^\pm_{\theta'}$ respectively; and (2) $\tilde f^\pm$ are in convex positions. Then $\mc P\mc P_-^\bdia(U)$ is defined as the quotient of $\tilde{\mc P}\tilde{\mc P}_-^\bdia(\tilde U)$ by the $G_-$- and $\pi_1 S$-actions. We define $\tilde{\mc P}\tilde{\mc P}_\vee^\bdia(\tilde U):=\tilde{\mc P}\tilde{\mc P}_-^\bdia(\tilde U)\cup\S(\tilde{\mc P}\tilde{\mc P}_0^\bdia)$, $\mc P\mc P_\vee^\bdia(U):=\mc P\mc P_-^\bdia(U)\cup \S(\mc P\mc P_0^\bdia)$. The first has topology from its inclusion to $\tilde{\mc P}\tilde{\mc P}_\vee$ and the topology on the second is induced as on a quotient of the first. Similarly as in Section~\ref{sec:compact} one can show that $\mc P\mc P_\vee^\bdia(U)$ is compact.

In the context of Lemma~\ref{ml3.3}, when $y \in \pt \mc D\mc D_\vee^s$, we have $\theta=(\rho, \rho)$. Then we pick a compact neighborhood $U$ of this $\rho$ to construct $\mc P\mc P_\vee^\bdia(U)$. The rest of the proof of this case continues exactly the same as in Section~\ref{sec:end1}.

To prove Lemma~\ref{ml3}, it remains only to overcome the additional assumption in Lemma~\ref{ml3.2} that $\lambda_i^-$ are supported on simple closed curves.

\begin{proof}[Proof of Lemma~\ref{ml3}.]
Due to Lemmas~\ref{ml3.1} and~\ref{ml3.3}, it remains to consider the case when $y \in \pt\mc D\mc D_\vee^s$. As it is explained in Section~\ref{sec:convhol3}, there exists a sequence $t_i' \ra 0$ such that, up to subsequence, $\frac{1}{t_i'}\lambda_i^\pm\ra \lambda^\pm$ in $\mc{ML}$ and one of $\lambda^\pm$ is nonzero. We assume that it is $\lambda^-$. The space $\mc P\mc P_\vee^s$ is metrizable, we pick a metric $D$ on it. 

The laminations that are supported on simple closed curves are dense in $\mc{ML}$. Due to this fact and the Mess Theorem, Theorem~\ref{adsearthq}, one can choose a sequence $\hat x_i$ such that for every $i$ we have $D(x_i, \hat x_i) \leq 2^{-i}$, $\mc I\mc I_\vee^s(\hat x_i) \ra y$, $\frac{1}{t_i'}\hat\lambda_i^\pm \ra \lambda^\pm$ and $\hat\lambda_i^-$ are all supported on simple closed curves. Here $\hat\lambda_i^\pm$ are the respective bending laminations of $\Omega_{\hat \theta_i}$, where $\hat \theta_i$ are the holonomies of $\hat x_i$. Hence, Lemma~\ref{ml3.2} implies that, up to subsequence, $\hat \theta_i$ converge to $\theta \in \mc T\times \mc T$. Furthermore, $\theta$ is on the diagonal of $\mc T \times \mc T$. Then Lemma~\ref{ml3.3} implies that, up to subsequence, $\hat x_i$ converge to $x \in \mc P\mc P_\vee^s$. However, the condition $D(x_i, \hat x_i)<2^{-i}$ then shows that $x_i$ also converge to $x$. The proof is finished.
\end{proof}

\begin{appendices}
\renewcommand\thesection{\Alph{section}.}
\section{Intrinsic metrics of convex surfaces}
\label{sec:intmet}
\renewcommand\thesection{\Alph{section}}

Denote by $d_A$ the spacelike distance on $\A^3$, which is defined on the pairs of points in spacelike relation. Let $\chi: [a,b] \ra \A^3$ be a $C^0$-curve. We call it \emph{spacelike} if for every $x \in [a,b]$ there exists its neighborhood $X\subset [a,b]$ such that every two points in $\chi(X)$ are in spacelike relation. Let $t_0=a < t_1 < \ldots < t_n=b$ be a partition of $[a,b]$. The diameter of a partition is $\sup|t_{i+1}-t_i|$. Since $\chi$ is spacelike, when the diameter is small enough, all pairs $\chi(t_i)$, $\chi(t_{i+1})$ are in spacelike relation. We call it a \emph{spacelike partition}. We say that it is \emph{spacelike rectifiable} if there exists
\[\len_A(\chi):=\limsup \sum d_A(\chi(t_i), \chi(t_{i+1})),\]
where the $\limsup$ is taken over spacelike partitions as their diameters tend to zero. Then $\len_A(\chi)$ is the \emph{(spacelike) length} of $\chi$. If $\chi$ is differentiable almost everywhere, then its tangent vectors are non-timelike and
\begin{equation}
\label{spacelikerect}
\len_A(\chi)=\int_a^b\|\dot\chi\|.
\end{equation}
Vice versa, a differentiable almost everywhere curve with non-timelike tangent vectors is spacelike rectifiable.

We say that a surface $\Sigma \subset \A^3$ is \emph{entirely convex} if it is a boundary component of the intersection of a convex subset of $\RP^3$ with $\A^3$. Let $\Sigma$ be an entirely convex spacelike surface. The \emph{intrinsic distance} between two points of $\Sigma$ is the infimum of lengths of all spacelike rectifiable curves in $\Sigma$ connecting the points. Clearly, at least one such curve exists between any pair of points. What is not immediate, however, that for distinct points the defined distance is positive, and, more generally, that the topology induced by the obtained intrinsic pseudo-metric is the same is the initial topology of $\Sigma$ as of a submanifold of $\A^3$.

%
%

\begin{lm}
\label{sametopology}
The intrinsic pseudo-metric $d$ is a metric and does not alter the topology $\Sigma$.
\end{lm}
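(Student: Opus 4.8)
The plan is to establish two things: first, that the intrinsic pseudo-metric $d$ separates points, and second, that the topology it induces coincides with the submanifold topology on $\Sigma$. Since $d(p,q) \geq 0$ always and $d$ satisfies the triangle inequality by the infimum construction, the only issue is positivity and the comparison of topologies. The key local tool is the normal exponential map from a supporting plane. Given $p \in \Sigma$, pick a spacelike supporting plane $\Pi$ to the closely convex set bounded by $\Sigma$, and let $\mc E_\Pi$ be the exponential normal map sending a neighborhood of the foot point in $\Pi$ onto a neighborhood $U_p$ of $p$ in $\Sigma$. Because $\Pi$ is spacelike and $\Sigma$ is convex and spacelike, $\mc E_\Pi$ is a homeomorphism onto its image, and the convexity forces the graph function to be convex, hence Lipschitz on compact subsets. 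The plan is to use this chart to compare intrinsic lengths on $\Sigma$ with lengths in $\Pi \cong \H^2$.

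First I would fix $p \in \Sigma$ and such a chart $\mc E_\Pi: K \to U_p$ with $K \subset \Pi$ a small compact convex neighborhood of the foot point $\pi(p)$ and $p \in \inter(U_p)$. The crucial estimate is that for any spacelike rectifiable curve $\chi$ in $\Sigma$ staying inside $U_p$, one has $\len_A(\chi) \geq \len_\Pi(\pi \circ \chi)$, where $\pi$ is the projection back to $\Pi$ along the normal direction; this is because the convex graph over a spacelike plane has the property that the induced metric dominates the projected metric (the normal-direction contribution is non-negative for a spacelike graph over a spacelike plane, as follows from the Lorentzian Pythagorean relation). Conversely, since the graph function is locally Lipschitz, one gets an upper bound $\len_A(\chi) \leq L \cdot \len_\Pi(\pi\circ\chi)$ for curves in a slightly smaller neighborhood, with $L$ depending only on the Lipschitz constant. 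These two inequalities, combined with the fact that the intrinsic metric of $\Pi$ restricted to $K$ is bi-Lipschitz to the Euclidean metric of any affine chart, show that $d$ restricted to a neighborhood of $p$ is bi-Lipschitz equivalent to the submanifold metric there. In particular $d(p,q) > 0$ for $q \neq p$ close to $p$, and the two topologies agree locally near $p$.

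To upgrade the local statement to a global one, I would note that if $q$ is not in the fixed neighborhood $U_p$, then any spacelike rectifiable curve from $p$ to $q$ must cross the ``collar'' $U_p \setminus U_p'$ for a slightly smaller neighborhood $U_p'$, and hence has length at least the $d$-distance from $\pt U_p'$ to $\pt U_p$ measured inside $U_p$, which is a positive constant by the local bi-Lipschitz estimate. Therefore $d(p,q)$ is bounded below by a positive constant uniformly over all $q \notin U_p$, giving positivity of $d$ in general. For the topology comparison, one inclusion (continuity of the identity from the submanifold topology to the $d$-topology) is immediate since intrinsic distance dominates a local bound; the other inclusion follows from the local bi-Lipschitz estimate, because a $d$-ball of small radius around $p$ is contained in $U_p$ (by the collar argument) and there agrees up to bi-Lipschitz constants with a submanifold ball.

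The main obstacle I anticipate is making the local comparison $\len_A(\chi) \geq \len_\Pi(\pi\circ\chi)$ rigorous for merely $C^0$ spacelike rectifiable curves rather than smooth ones, and ensuring that the supporting plane can genuinely be taken spacelike at every point of $\Sigma$ — this uses that $\Sigma$ is a spacelike convex surface, so every point has a spacelike supporting plane, but one must check the construction is uniform enough on compact pieces. One clean way around the regularity issue is to work at the level of partitions: for a spacelike partition of $\chi$, each chord $\chi(t_i)\chi(t_{i+1})$ in $\A^3$ has spacelike length at least the length of its projection to $\Pi$ (a direct computation with the bilinear form $b$ in the chart, using that $\Pi$ is spacelike), and summing and passing to the limsup gives the inequality for $\len_A$; the Lipschitz upper bound is handled the same way using that chords of a Lipschitz graph are controlled by chords of the base. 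Once these partition-level estimates are in place, the rest is routine point-set topology, and the argument parallels the corresponding statements in the hyperbolic and Minkowski settings already cited in the paper.
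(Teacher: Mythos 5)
Your overall plan---compare the intrinsic length on $\Sigma$ with lengths in a spacelike chart near each point via the graph structure, then globalize---is the same as the paper's. The paper works in the Minkowski affine chart and gets a chord-level bi-Lipschitz estimate $A_1 d_A(p,q)\leq d_E(p,q)\leq A_2 d_A(p,q)$, which at once gives positivity of $d$ and the agreement of topologies; your normal exponential chart over the supporting plane $\Pi$ would do the same thing.

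However, the key local inequality in your proposal is stated with the wrong sign, and that is not a cosmetic slip. You assert that for a chord of $\Sigma$ the anti-de Sitter length is at least the length of its projection to $\Pi$, ``because the normal-direction contribution is non-negative.'' That is the Riemannian intuition, and in Lorentzian signature it is backwards: decomposing a chord vector as $v+w$ with $v\in\Pi$ spacelike and $w$ in the timelike normal direction gives $\|v+w\|^2=\|v\|^2+\|w\|^2\le\|v\|^2$ since $\|w\|^2\le0$, so orthogonal projection to a spacelike plane \emph{increases} the spacelike length of a chord. Consequently the easy estimate is $\len_A(\chi)\leq\len_\Pi(\pi\circ\chi)$, which is the upper bound. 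The non-trivial direction---the lower bound that actually gives positivity of $d$ and the inclusion of Euclidean neighborhoods into $d$-balls---is not ``free'' and is not obtained from convexity plus Lipschitzness alone. What you need is that the graph function over a compact $K\subset\Pi$ has Lipschitz constant $L<1$, so that $\|v\|^2-\|w\|^2\geq(1-L^2)\|v\|^2$; this is precisely the statement that the chords remain uniformly spacelike. That uniform bound does hold on compact pieces because $\Sigma$ is spacelike and entirely convex, so all supporting planes at points of a compact piece are spacelike with slopes bounded away from $1$; but this is the crucial step and your argument never invokes it. Once you replace the false chord inequality by this uniform-spacelikeness bound and note that the upper bound is the trivial projection inequality, the rest of your argument (the collar/globalization step and the topology comparison) goes through and parallels the paper's proof.
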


\begin{proof}
Pick $p \in \Sigma$. Consider the Minkowski chart and the standard Euclidean metric $d_E$ on it. We may assume that $p=o$ and the horizontal plane $\Pi$ is supporting for $\Sigma$ at $p$. For a small enough neighborhood $U$ of $p$ on $\Sigma$ in the standard topology there exist $A_1, A_2>0$ such that for any $p,q \in U$ we have 
\[A_1d_A(p,q)\leq d_E(p,q)\leq A_2d_A(p,q).\]
Thus, spacelike rectifiable curves inside $U$ are Euclidean rectifiable and vice versa. It implies that $d$ is a metric and that every neighborhood of $p$ with respect to the standard topology contains a neighborhood with respect to $d$ and vice versa. Since both topologies are metric, it follows that $d$ does not alter the topology of $\Sigma$.
\end{proof}

We note that it can happen that the intrinsic metric is incomplete. Since $\Sigma$ is locally compact, when $d$ is complete, a standard application of the Arzel\`a--Ascoli theorem implies the existence of a shortest path between any pair of points on $\Sigma$.

%
%

A \emph{convex body} is a closed convex set $C \subset \RP^3$ with non-empty interior. We call it \emph{spacelike} if every plane supporting it at a point in $\pt C \cap \A^3$ is spacelike. Let $C_i$ be a sequence of spacelike convex bodies converging to a spacelike convex body $C$. Let $\Sigma_i$, $\Sigma$ be connected components of $\pt C_i \cap \A^3$, $\pt C \cap \A^3$. We assume that $\cl(\Sigma_i)$ converge to $\cl(\Sigma)$ and $\pt C_i \backslash \Sigma_i$ converge to $\pt C \backslash \Sigma$. Pick a point $p \in \inter(C)$, we assume that $p \in \inter(C_i)$ for all $i$. Let $S^2$ be the space of directions from $p$, $D_i$ and $D$ be the projections of $\Sigma_i$, $\Sigma$ to $S^2$. Then $\cl(D_i)$ converge to $\cl(D)$ and $S^2 \backslash D_i$ converge to $S^2 \backslash D$. We consider the intrinsic metrics $d_i$, $d$ of $\Sigma_i$, $\Sigma$ pushed forward to $D_i$, $D$. We assume that they are complete. Pick $p,q \in D$, let $p_i$, $q_i$ be two sequences converging to $p$ and $q$ respectively in $D$. We have $p_i,q_i \in D_i$ for all large enough $i$. We want to prove

\begin{lm}
\label{distconverge0}
We have $d_i(p_i,q_i) \ra d(p,q)$. Furthermore, up to subsequence, there are shortest paths between $p_i$ and $q_i$ for $d_i$ whose images converge in the Hausdorff sense to the image of a shortest path between $p$ and $q$ for $d$. 
\end{lm}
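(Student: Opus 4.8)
The plan is to prove Lemma~\ref{distconverge0} by establishing the two inequalities $\limsup_i d_i(p_i,q_i) \leq d(p,q)$ and $\liminf_i d_i(p_i,q_i) \geq d(p,q)$ separately, and to extract the convergent shortest paths along the way via Arzel\`a--Ascoli. Throughout I would work in the projections $D_i, D \subset S^2$ and use the bi-Lipschitz comparison from the proof of Lemma~\ref{sametopology}: near any point of $\Sigma$, the intrinsic metric and the ambient Euclidean metric of a Minkowski chart are comparable, with constants that can be taken uniform for all large $i$ because $\cl(\Sigma_i) \to \cl(\Sigma)$ and the surfaces stay spacelike and uniformly away from the null cone on compact subsets. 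This gives local uniform control that upgrades Hausdorff convergence of the surfaces to metric statements.

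For the upper bound, I would fix a shortest path $\gamma$ from $p$ to $q$ for $d$ (it exists since $d$ is complete and $\Sigma$ is locally compact), cover its image by finitely many charts in which the bi-Lipschitz comparison holds, and approximate $\gamma$ by a polygonal curve $\gamma$ with vertices on $\Sigma$ whose $d$-length is within $\e$ of $d(p,q)$. Since $\cl(\Sigma_i) \to \cl(\Sigma)$, each vertex can be shadowed by a nearby point of $\Sigma_i$; connecting consecutive shadow points by $d_i$-geodesic segments (or short rectifiable arcs guaranteed by the comparison) produces a curve on $\Sigma_i$ from $p_i$ to $q_i$ whose $d_i$-length is within $2\e$ of $d(p,q)$ for all large $i$. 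This yields $\limsup_i d_i(p_i,q_i) \leq d(p,q)$.

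For the lower bound, I would first show the $d_i$-lengths of the candidate shortest paths are uniformly bounded (by the previous step), so by completeness of each $d_i$ there is an actual $d_i$-shortest path $\gamma_i$ from $p_i$ to $q_i$, and $\len_{d_i}(\gamma_i) = d_i(p_i,q_i)$ is bounded. The images $\gamma_i$ lie in a fixed compact subset of $S^2$; parameterizing each $\gamma_i$ proportionally to $d_i$-arclength makes them uniformly Lipschitz with respect to a background Euclidean metric (again via the comparison), so Arzel\`a--Ascoli gives a subsequence converging uniformly to a curve $\gamma_\infty$ from $p$ to $q$. The key point is lower semicontinuity of length under this convergence: one partitions $\gamma_\infty$ finely, applies the bi-Lipschitz comparison in a chart around each partition point to pass from the limiting Euclidean estimates to $d$-estimates and from the $\gamma_i$ to $d_i$-estimates, and sums up to get $\len_d(\gamma_\infty) \leq \liminf_i \len_{d_i}(\gamma_i) = \liminf_i d_i(p_i,q_i)$. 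Since $\len_d(\gamma_\infty) \geq d(p,q)$, this closes the argument; combined with the upper bound it forces $d_i(p_i,q_i) \to d(p,q)$, $\gamma_\infty$ to be a $d$-shortest path, and the Hausdorff convergence of images follows from the uniform convergence of the parameterizations.

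The main obstacle is the uniformity of the bi-Lipschitz constants: a priori the comparison between $d_i$ and the Euclidean metric in Lemma~\ref{sametopology} is pointwise and depends on the surface, and one must argue that on compact sets it can be made uniform in $i$. This rests on the hypotheses $\cl(\Sigma_i) \to \cl(\Sigma)$, $\pt C_i \setminus \Sigma_i \to \pt C \setminus \Sigma$ and spacelikeness, which keep the surfaces $\Sigma_i$ from developing near-null tangent planes or escaping to the boundary of the chart; I would make this precise by noting that supporting planes vary continuously under Hausdorff convergence of the convex bodies, so the angle between tangent planes of $\Sigma_i$ and a fixed spacelike reference plane is bounded below uniformly on compacta. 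A secondary technical point is ensuring the limit curve $\gamma_\infty$ stays in $\Sigma$ (not merely $\cl(\Sigma)$), which uses $\pt C_i \setminus \Sigma_i \to \pt C \setminus \Sigma$ to prevent the $\gamma_i$ from accumulating on the complementary boundary components, exactly as in the hypotheses' phrasing; this is the same mechanism used in the proof of Lemma~\ref{convtemp1-}.
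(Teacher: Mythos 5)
Your architecture matches the paper's: upper bound by approximating a fixed $d$-shortest path on $\Sigma$, lower bound via Arzel\`a--Ascoli plus lower semicontinuity of length, with escape to the boundary of $D$ handled by completeness and the hypothesis on $\pt C_i \backslash \Sigma_i$. However, there is a genuine gap in the estimates. Uniform bi-Lipschitz comparison between $d_i$ and a background Euclidean chart metric, even if established uniformly in $i$ as you propose, only delivers $\limsup_i d_i(p_i,q_i) \leq (A_2/A_1)\,d(p,q)$ and $\liminf_i d_i(p_i,q_i) \geq (A_1/A_2)\,d(p,q)$, not the exact limit. The constants $A_1 \leq A_2$ do not collapse to a common value as you refine the partition: already the restriction of the $\A^3$ metric to a single spacelike plane differs from the Euclidean metric of a Minkowski chart by a non-conformal linear factor, so the comparison is genuinely two-sided at every scale, with distortion depending on the tilt of the plane, and this persists infinitesimally. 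Refining the polygonal approximation therefore cannot drive the error to zero.

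What is missing — and what the paper's proof turns on — is an actual convergence statement for lengths of a fixed curve under the varying metrics: for a rectifiable $\chi$ in the common parameter domain, $\len_{d_i}(\chi) \ra \len_d(\chi)$. This is Corollary~\ref{lengthsconv}, which descends from Lemma~\ref{labeni4} on $C$-convex graphs (Labeni~\cite{Lab2}, following Fillastre--Slutskiy~\cite{FS3}); it encodes precisely the Busemann--Feller-type uniformity (intrinsic over ambient distance $\ra 1$ on short scales, uniformly in $i$) that a Euclidean comparison cannot give. Once that is in place, the upper bound is a concatenation through $p$ and $q$, and the lower bound follows from Lemma~\ref{lengthsdiagbound}, which itself uses only the one-sided local comparison $d_i \leq A\, d$ of Corollary~\ref{metricbilip} together with $\len_{d_i}(\chi) \ra \len_d(\chi)$. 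Without the length-convergence input, your argument establishes comparability of $d_i(p_i,q_i)$ and $d(p,q)$, not equality in the limit.
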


Note that one could show that $d_i$ and $d$ are CAT(0), hence the shortest paths are unique. We will need it only in particular, rather evident cases, so we will not prove it in full generality.

We need to make another digression first. Let $\Pi \subset \A^3$ be a spacelike plane. We denote its metric by $d_\Pi$ and its length function by $\len_\Pi$. The past-directed normal exponential map from $\Pi$ is a diffeomorphism $\mc E_\Pi$ from $\Pi \times [0, \pi_2)$ onto the image. Let $K \subset \Pi$ be a compact convex subset with nonempty interior. We call a function $s: K \ra [0, \pi/2)$ \emph{C-convex} if it is continuous and its graph with respect to $\mc E_\Pi$ is spacelike and future-convex. Here we say that a convex surface with boundary in $\A^3$ is spacelike if each supporting plane at the interior points is spacelike and those supporting planes at the boundary points that are the limits of supporting planes at intrinsic points are spacelike. We will follow the paper~\cite{Lab2} of Labeni, who treated the intrinsic geometry of graphs of C-convex functions. We note that Labeni works with the functions defined over $\Pi$, which does not matter for our context. Following the prior work~\cite{FS3} of Fillastre--Slutskiy on the Minkowski case, Labeni makes few technical assumptions on the functions he works with. To apply his work, we will need now to show that they are actually unnecessary, i.e., they hold for all C-convex functions.

Let $\chi: [a,b] \ra K$ be a Lipschitz curve for $d_\Pi$. Then $s\circ \chi$ is a Lipschitz function, see~\cite[Section 2.2]{Lab2}. In particular, it is differentiable almost everywhere and so is the respective curve $\chi_s:[a,b] \ra \A^3$ in the graph of $s$ obtained via $\mc E_\Pi$. The tangent vectors to $\chi_s$ are spacelike, so it is spacelike rectifiable. Labeni defines $L_s(\chi):=\int_a^b\|\dot \chi_s\|$. Due to~(\ref{spacelikerect}), we have $L_s(\chi)=\len_A(\chi_s)$. Labeni defines an intrinsic metric $d_s$ on $K$ from the length structure $L_s$.

On the other hand, let $\chi_s: [a,b] \ra \A^3$ be a spacelike rectifiable curve in the graph of $s$ and $\chi: [a,b] \ra K$ be its projection. We assume that $\Pi$ is the horizontal plane of the Minkowski chart and pick the standard Euclidean metric there. It is easy to see from compactness that there exists a constant $A>0$ such that the Euclidean length of every chord of the graph of $s$ is at most $A$ times the spacelike length. Hence, $\chi_s$ is rectifiable for the Euclidean metric. Hence, $\chi$ is rectifiable for the Euclidean metric on $\Pi$. But then, again due to compactness, $\chi$ is rectifiable for $d_\Pi$. 

Now we consider a curve $\chi: [a,b] \ra K$ that is rectifiable for $d_s$. Since the projections to $\Pi$ of spacelike segments in the image of $\mc E_\Pi$ are rectifiable for $d_\Pi$ and thus for $d_s$, one sees that the respective $\chi_s$ is spacelike rectifiable and $\len_A(\chi_s) \leq \len_{d_s}(\chi)$, where $\len_{d_s}$ is the length structure induced by $d_s$. By the argument above, $\chi$ is rectifiable for $d_\Pi$. 

Furthermore, if $\chi: [a,b] \ra K$ is rectifiable for $d_\Pi$, then trivially $\len_{d_s}(\chi)\leq L_s(\chi)=\len_A(\chi_s)$. Since $\len_A(\chi_s) \leq \len_{d_s}(\chi)$, we get 
\[\len_{d_s}(\chi)= L_s(\chi)=\len_A(\chi_s).\]

Let us sum it up. The rectifiable curves for $d_s$ are rectifiable for $d_\Pi$ and vice versa. Furthermore their $d_s$-lengths coincide with their $L_s$-lengths and with $\len_A$ of their images in the graph of $s$. We denote this length structure now by $\len_s$ on $U$. Just the same proof as of Lemma~\ref{sametopology} shows that $d_s$ does not alter the topology of $U$. Our conclusions allow us to apply the results of Labeni. In~\cite[Lemma 2.11]{Lab2}, Labeni showed

\begin{lm}
\label{labeni1}
Over $K$ we have $d_s \leq d_\Pi$.
\end{lm}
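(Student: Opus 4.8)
The statement to prove is Lemma~\ref{labeni1}: over a compact convex $K \subset \Pi$, the intrinsic metric $d_s$ of the graph of a C-convex function satisfies $d_s \leq d_\Pi$.

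The plan is to use the past-directed normal projection from the graph of $s$ back onto the plane $\Pi$, and show it is distance non-increasing. First I would set up the comparison: for two points $x, y \in K$, the segment $[x,y] \subset \Pi$ lifts via $\mc E_\Pi$ to a curve $\chi_s$ in the graph of $s$, and by the length identification established just above the statement, $d_s(x,y) \leq \len_s([x,y]) = \len_A(\chi_s)$. So it suffices to show $\len_A(\chi_s) \leq d_\Pi(x,y) = \len_\Pi([x,y])$, i.e. that lifting the Euclidean-type segment to the convex graph does not increase spacelike length. Equivalently, I would argue pointwise: at almost every point of $[x,y]$, the tangent vector $\dot\chi_s$ to the lifted curve has spacelike norm at most the norm of $\dot\chi$ in $\Pi$.

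The key computation is the behavior of the anti-de Sitter metric under the normal exponential map $\mc E_\Pi: \Pi \times [0,\pi/2) \to \A^3$. In Fermi (normal) coordinates based on a spacelike totally geodesic plane $\Pi$ in $\A^3$, the metric takes the warped-product form $g = -dr^2 + \cos^2(r)\, g_\Pi$, where $g_\Pi$ is the hyperbolic metric on $\Pi$ and $r$ is the timelike distance from $\Pi$ (going to the past). A tangent vector to the graph of $s$ over a curve $\chi(t)$ in $\Pi$ with $s\circ\chi$ differentiable has the form $\dot\chi_s = \dot\chi + \frac{d}{dt}(s\circ\chi)\,\partial_r$, so its squared norm is $\cos^2(s(\chi(t)))\,\|\dot\chi\|_{g_\Pi}^2 - \left(\tfrac{d}{dt}(s\circ\chi)\right)^2 \leq \cos^2(s)\,\|\dot\chi\|_{g_\Pi}^2 \leq \|\dot\chi\|_{g_\Pi}^2$, since $\cos^2 \leq 1$ and the $\partial_r$-term contributes negatively (the curve is spacelike, so the norm is non-negative, but the warping factor and the negative timelike contribution both only decrease it). Integrating over $[a,b]$ gives $\len_A(\chi_s) = \int \|\dot\chi_s\| \leq \int \|\dot\chi\|_{g_\Pi} = \len_\Pi(\chi)$. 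Taking the infimum over rectifiable curves $\chi$ from $x$ to $y$ in $K$ (which all lift to rectifiable curves in the graph by the discussion preceding the statement) yields $d_s(x,y) \leq d_\Pi(x,y)$.

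The main obstacle, and the point requiring care, is justifying the computation almost everywhere rather than everywhere: $s$ is only Lipschitz along $\chi$, hence $s\circ\chi$ is differentiable only almost everywhere, and one must check that the lifted curve $\chi_s$ is genuinely spacelike rectifiable with $\len_A(\chi_s) = \int\|\dot\chi_s\|$ — but this is exactly the content of formula~(\ref{spacelikerect}) applied to a curve that is differentiable almost everywhere with non-timelike tangent vectors, which holds because the graph of $s$ is spacelike and future-convex. One should also confirm that the warped-product normalization of the anti-de Sitter metric is the correct one (curvature $-1$ forces $g = -dr^2 + \cos^2 r\, g_\Pi$ for a spacelike geodesic hypersurface), and that the coordinate range $[0,\pi/2)$ matches the domain where $\mc E_\Pi$ is a diffeomorphism, both of which are standard. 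Since Labeni~\cite{Lab2} already proves this as his Lemma 2.11 and the surrounding text has reduced all the technical hypotheses to ones automatically satisfied by C-convex functions, the cleanest write-up simply cites~\cite[Lemma 2.11]{Lab2} after noting that the above length identification makes his hypotheses applicable.
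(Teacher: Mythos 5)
Your concluding sentence — cite \cite[Lemma 2.11]{Lab2} after noting that the surrounding discussion has verified Labeni's hypotheses for all C-convex functions — is precisely what the paper does: there is no separate proof, only that citation. Your bonus Fermi-coordinate computation is a correct self-contained alternative; the warped-product form $g = -dr^2 + \cos^2(r)\,g_\Pi$ for the past-directed normal exponential map off a totally geodesic spacelike $\Pi \subset \A^3$ is right, and the pointwise comparison $\|\dot\chi_s\|^2 = \cos^2(s)\|\dot\chi\|^2_{g_\Pi} - (\tfrac{d}{dt}(s\circ\chi))^2 \leq \|\dot\chi\|^2_{g_\Pi}$, valid almost everywhere since $s\circ\chi$ is Lipschitz and the graph is spacelike, integrates via~(\ref{spacelikerect}) to the desired length bound. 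One wording slip worth fixing: your opening sentence announces that you will show the \emph{projection} from the graph to $\Pi$ is distance non-increasing, but that would yield the reverse inequality $d_\Pi \leq d_s$. What your computation actually establishes — and what is needed — is that the \emph{lift} $\Pi \ra \text{graph}(s)$ is 1-Lipschitz for the spacelike lengths; this is the signature-reversed version of the Euclidean Busemann--Feller phenomenon, where the projection onto a plane is the contracting map. The rest of the argument is sound.
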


Let $s_i$ be a sequence of C-convex functions on $K$ converging uniformly to a C-convex function $s$. Define $d_i:=d_{s_i}$, $\len_i:=\len_{s_i}$. It follows from~\cite[Lemma 3.4]{Lab2} that

\begin{lm}
\label{labeni2}
There exists $A>0$ such that for all $i$ we have $d_i \geq A\cdot d_\Pi$ as well as $d_s \geq A\cdot d_\Pi$ over $K$.
\end{lm}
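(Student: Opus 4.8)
The final statement to prove is Lemma~\ref{labeni2}: that there is a uniform constant $A > 0$ such that $d_i \geq A \cdot d_\Pi$ for all $i$, and also $d_s \geq A \cdot d_\Pi$, over the compact set $K$.

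\medskip

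The plan is to reduce the statement to a pointwise bound on the intrinsic length element. First I would work in the Minkowski chart with $\Pi$ the horizontal plane and the standard Euclidean metric $d_E$ there, so that all the functions $s_i$, $s$ are genuine functions $K \to [0, \pi/2)$ whose graphs sit inside the image of $\mc E_\Pi$. The key point is that, by the work already quoted from Labeni~\cite{Lab2} (as recalled just before the statement), for a curve $\chi$ rectifiable for $d_\Pi$ the length $\len_{s_i}(\chi)$ equals $\len_A$ of the lifted curve $\chi_{s_i}$ in the graph of $s_i$, and the latter equals $\int \|\dot\chi_{s_i}\|$. So it suffices to bound $\|\dot\chi_{s_i}\|$ from below by a uniform multiple of the $d_\Pi$-speed of $\chi$ (equivalently, of the Euclidean speed, since $d_\Pi$ and $d_E$ are uniformly bi-Lipschitz over the compact $K$). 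Since $\chi_{s_i}$ is the image under the exponential normal map $\mc E_\Pi$ of the curve $(\chi(t), s_i(t))$, and $s_i \circ \chi$ is Lipschitz, this is a pointwise linear-algebra estimate: the tangent vector to $\chi_{s_i}$ decomposes into the image of $\dot\chi$ plus a vertical (timelike) component coming from $\dot{(s_i\circ\chi)}$, and the anti-de Sitter norm of a spacelike vector of this form is at least a fixed fraction of the horizontal part provided $s_i$ stays in a compact subinterval of $[0,\pi/2)$.

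\medskip

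The heart of the argument is therefore to show the functions $s_i$ are uniformly bounded away from $\pi/2$ on $K$, i.e.\ that there exists $r_0 < \pi/2$ with $s_i(K) \subset [0, r_0]$ for all $i$. This follows from uniform convergence: since $s_i \to s$ uniformly and $s$ is continuous on the compact $K$ with values in $[0, \pi/2)$, we have $\max_K s < \pi/2$, hence $\max_K s_i \le \max_K s + \e < \pi/2$ for all large $i$, and the finitely many remaining $s_i$ are each bounded away from $\pi/2$ individually. On the region where the vertical coordinate is in $[0, r_0]$, the anti-de Sitter metric tensor, written in the normal coordinates $(x, u)$ adapted to $\Pi$ via $\mc E_\Pi$, has the form $\cos^2(u)\, g_\Pi - du^2$ where $g_\Pi$ is the (spacelike, positive-definite) induced metric on $\Pi$; so for a spacelike tangent vector $(\xi, \dot u)$ we get $\|(\xi,\dot u)\|^2 = \cos^2(u)\|\xi\|^2_\Pi - \dot u^2 \le \cos^2(u)\|\xi\|^2_\Pi \le \|\xi\|^2_\Pi$ but also (using spacelikeness, which forces $\dot u^2 < \cos^2(u)\|\xi\|^2_\Pi$ only at differentiability points and does not directly give a lower bound) — and here is the subtle point — one cannot bound $\|(\xi,\dot u)\|$ below by $\|\xi\|_\Pi$ alone without more. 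The correct route is to use the convexity: Labeni's Lemma~\ref{labeni1} gives $d_{s_i} \le d_\Pi$ which is the wrong direction, so instead the lower bound must come from the fact that the graph of a C-convex function, being a convex spacelike surface, has its intrinsic distance controlled below by that of the supporting plane at the far endpoint, or from a direct estimate on the Euclidean Lipschitz constant of $s_i$. Concretely: a C-convex function on $K$ that takes values in $[0, r_0]$ has Euclidean gradient bounded by a constant depending only on $r_0$ and $K$ (the slope of a future-convex spacelike graph is controlled by the lightcone condition), so the lifted speed $\|\dot\chi_{s_i}\|$ satisfies $\|\dot\chi_{s_i}\|^2 = \cos^2(s_i)\|\dot\chi\|^2_\Pi - (\dot{s_i\circ\chi})^2 \ge (\cos^2(r_0) - L^2)\|\dot\chi\|^2_\Pi$ where $L$ is that uniform Lipschitz constant — and spacelikeness of the graph is exactly what guarantees $\cos^2(r_0) - L^2 > 0$ can be arranged, or more carefully that the bracketed quantity is bounded below by a positive constant uniform in $i$.

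\medskip

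So the steps, in order, are: (1) pass to the Minkowski chart and reduce to estimating lifted lengths; (2) prove the uniform bound $s_i(K) \subset [0, r_0]$, $r_0 < \pi/2$, from uniform convergence and compactness; (3) prove a uniform bound on the ($d_\Pi$- or Euclidean-) Lipschitz constant of the C-convex functions $s_i$ in terms of $r_0$ and $K$, using that their graphs are spacelike future-convex surfaces (the lightcone/gradient estimate); (4) combine (2) and (3) in the pointwise metric-tensor computation to get $\|\dot\chi_{s_i}\| \ge A \|\dot\chi\|_\Pi$ almost everywhere along any $d_\Pi$-rectifiable curve, hence $\len_{s_i}(\chi) \ge A \len_\Pi(\chi)$, hence $d_i \ge A \cdot d_\Pi$; (5) run the identical argument for the limit function $s$ to get $d_s \ge A \cdot d_\Pi$, possibly shrinking $A$. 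The main obstacle I expect is step (3): establishing the \emph{uniform} slope bound for the family of C-convex functions. The bound for a single C-convex function with values in $[0, r_0]$ is standard (it is essentially the statement that a spacelike surface staying a bounded time-distance from a spacelike plane has bounded slope), but one must check the constant depends only on $r_0$ and the geometry of $K$, not on the individual function — which it does, precisely because the spacelike condition is an open condition on the graph of the exponential map that degenerates only as $u \to \pi/2$. Once that uniformity is in hand the rest is routine.
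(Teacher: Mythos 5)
The paper's own ``proof'' of this lemma is a bare citation to Labeni's Lemma 3.4, so your direct Fermi-coordinate argument is by construction a different route, and in spirit it is the right one: write the AdS metric in the normal coordinates $\mc E_\Pi$ as $-du^2+\cos^2(u)\,g_\Pi$, get a uniform upper bound $r_0<\pi/2$ on the values of the $s_i$ from uniform convergence, and then separate the slopes of the graphs uniformly from the light cone. However, two of the concrete steps are wrong as written.

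First, the displayed pointwise bound
\[
\|\dot\chi_{s_i}\|^2=\cos^2(s_i)\,\|\dot\chi\|^2_\Pi-(\dot{s_i\circ\chi})^2\;\geq\;\bigl(\cos^2(r_0)-L^2\bigr)\,\|\dot\chi\|^2_\Pi
\]
does not give a useful constant. Spacelikeness controls the slope only against $\cos(s_i)$ \emph{at the same point}, i.e.\ $|\dot{(s_i\circ\chi)}|<\cos(s_i)\,\|\dot\chi\|_\Pi$; it does not bound the global Lipschitz constant $L$ by $\cos(r_0)$. If the graph dips close to $\Pi$ somewhere (so $\cos(s_i)\approx 1$ there) and has steep slope at that point, then $L$ can be close to $1>\cos(r_0)$ and your bracket is negative. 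You have, in effect, combined the two estimates $\cos^2(s_i)\geq\cos^2(r_0)$ and $(\dot{s_i\circ\chi})^2\leq L^2\|\dot\chi\|^2_\Pi$ in the wrong order. The correct form of the estimate keeps the light-cone separation pointwise: one needs a uniform $\e>0$ such that $(\dot{s_i\circ\chi})^2\leq(1-\e)\cos^2(s_i)\|\dot\chi\|^2_\Pi$ for all $i$, from which $\|\dot\chi_{s_i}\|^2\geq\e\cos^2(s_i)\|\dot\chi\|^2_\Pi\geq\e\cos^2(r_0)\|\dot\chi\|^2_\Pi$ and $A=\sqrt{\e}\cos(r_0)$ works.

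Second, the existence of such a uniform $\e$ is precisely the ``main obstacle'' you identify, and the justification you offer --- that ``the spacelike condition is an open condition \ldots\ that degenerates only as $u\to\pi/2$'' --- is not a proof. Openness of a condition never by itself produces a bound that is uniform across a family; a sequence of spacelike surfaces can perfectly well degenerate toward a lightlike one while remaining spacelike (your own model case $g_\e(x)=\sqrt{x^2+\e^2}-\e$ does exactly this), and in our case the light-cone is approached by letting the slope tend to $\cos(s_i)$, not by letting $u\to\pi/2$. The uniformity really must come from the \emph{uniform convergence} $s_i\to s$ together with the C-convexity of the limit: if the claimed $\e$ did not exist, one would have points $p_i\in K$ and supporting planes of the graphs of $s_i$ whose slopes tend to the light cone; passing to a subsequence, the graphs converge in the Hausdorff sense, the supporting planes converge to a lightlike supporting plane of the graph of $s$ at a point over $\lim p_i\in K$, and this contradicts that $s$ is C-convex (which by definition forbids lightlike supporting planes, including the limits at boundary points). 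This compactness/contradiction step is the missing content; without it your proof, though heading in the right direction, has a genuine gap in the one place where something real has to be proved.
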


From Lemma~\ref{labeni1} and~\ref{labeni2} it follows

\begin{crl}
\label{labeni3}
There exist $A_1, A_2>0$ such that over $K$ for all $i$ we have 
\[A_1 d_s \leq d_i \leq A_2 d_s,\]
\[A_1 \len_s \leq \len_i \leq A_2 \len_s.\]
\end{crl}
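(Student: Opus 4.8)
The final statement is Corollary~\ref{labeni3}, which asserts the existence of uniform constants $A_1, A_2 > 0$ with $A_1 d_s \leq d_i \leq A_2 d_s$ and $A_1 \len_s \leq \len_i \leq A_2 \len_s$ over $K$, where $s_i$ is a sequence of C-convex functions converging uniformly to $s$.

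\begin{proof}[Proof of Corollary~\ref{labeni3}.]
The plan is to combine Lemma~\ref{labeni1} and Lemma~\ref{labeni2} in a purely formal way. First I would establish the chain of inequalities for the distances. By Lemma~\ref{labeni1} applied to each $s_i$, we have $d_i \leq d_\Pi$ over $K$, and likewise $d_s \leq d_\Pi$. By Lemma~\ref{labeni2}, there is $A>0$ with $d_i \geq A \cdot d_\Pi$ for all $i$ and $d_s \geq A \cdot d_\Pi$ over $K$. Combining, for all $i$ we get
\[
d_i \leq d_\Pi \leq \frac{1}{A} d_s \quad\text{and}\quad d_i \geq A \cdot d_\Pi \geq A \cdot d_s,
\]
so we may take $A_1 := A$ and $A_2 := 1/A$ (noting $A \leq 1$ since $d_s \leq d_\Pi$ forces $A \leq 1$ in Lemma~\ref{labeni2}, but in any case one simply takes $A_1 = \min(A, 1/A)$ and $A_2 = \max(A, 1/A)$ to be safe).

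Next I would deduce the inequalities for the length structures. The point is that $\len_s$, $\len_i$ are length structures on $K$ whose induced intrinsic metrics are exactly $d_s$, $d_i$ (this was the content of the discussion preceding Lemma~\ref{labeni1}: the $d_s$-length of a curve coincides with $\len_s$ of it, i.e.\ with $\len_A$ of the lifted curve in the graph of $s$, and $d_s$ is the intrinsic metric of $\len_s$; likewise for each $s_i$). Now if $0 < c \leq 1 \leq C$ are constants with $c\, d_s \leq d_i \leq C\, d_s$ on $K$, I would like to conclude $c\, \len_s \leq \len_i \leq C\, \len_s$ on the same curves. This is not automatic for arbitrary length structures, so the cleanest route is to go back to the definitions: for a rectifiable curve $\chi \colon [a,b] \to K$, both $\len_s(\chi)$ and $\len_i(\chi)$ equal $\len_A$ of the respective lifts $\chi_s$, $\chi_{s_i}$ via the normal exponential map $\mc E_\Pi$. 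On a sufficiently fine partition $t_0 < \cdots < t_n$, $\len_A(\chi_{s_i})$ is approximated by $\sum d_A(\chi_{s_i}(t_j), \chi_{s_i}(t_{j+1}))$, and $d_A$ of two points in the graph of $s_i$ that project to nearby points of $K$ is comparable to the $d_i$-distance between their projections (indeed, on the graph the chord length and the intrinsic distance are comparable with constants uniform in $i$, by the compactness argument already used before Lemma~\ref{labeni1} and by Corollary~\ref{labeni3}'s distance half, which we have just proved). Passing to the limit over partitions and using $c\, d_s \leq d_i \leq C\, d_s$, together with the analogous comparison for $\len_s$, yields the stated two-sided bound on lengths with the same constants $A_1$, $A_2$ (up to enlarging them by a universal factor absorbing the chord-vs-intrinsic comparison, which is harmless).

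The only genuinely non-routine point is the passage from the metric comparison to the length-structure comparison, since a two-sided bound between two intrinsic metrics does not in general bilipschitz-bound the underlying length structures. Here, however, this is circumvented by the explicit common description of $\len_s$ and $\len_i$ as $\len_A$ of lifts to the respective convex graphs, which reduces everything to the uniform bilipschitz comparison of the ambient spacelike distance $d_A$ restricted to these graphs with the projected metric on $K$ — and that comparison is uniform in $i$ because the graphs of $s_i$ converge uniformly (hence stay in a fixed compact region of $\A^3$ on which $d_A$ is uniformly comparable to the Euclidean distance of the Minkowski chart). Everything else is bookkeeping with the already-established Lemmas~\ref{labeni1} and~\ref{labeni2}.
\end{proof}
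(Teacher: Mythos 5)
Your treatment of the metric inequality is correct and is exactly what the paper intends: $d_i \leq d_\Pi \leq A^{-1} d_s$ from Lemma~\ref{labeni1} applied to $s_i$ and Lemma~\ref{labeni2} applied to $s$, and $d_i \geq A d_\Pi \geq A d_s$ from Lemma~\ref{labeni2} applied to $s_i$ (uniform $A$) and Lemma~\ref{labeni1} applied to $s$, giving $A_1 = A$, $A_2 = 1/A$.

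For the length inequality, however, you take a long and unnecessary detour. You correctly record in your parenthetical that $\len_s$ coincides with the $d_s$-length and $\len_i$ with the $d_i$-length (this is precisely what the paragraph preceding Lemma~\ref{labeni1} sums up, applied to $s$ and to each $s_i$). But then you go on to claim that passing from a bilipschitz bound between metrics to one between length structures is ``not automatic,'' and embark on a chord-approximation argument involving $d_A$, $\mc E_\Pi$-lifts, and partitions. That worry is misplaced here: once $\len_s = \len_{d_s}$ and $\len_i = \len_{d_i}$, the length inequality is an immediate formal consequence of the metric inequality, because the $d$-length functional
\[
\len_d(\chi) = \sup_{t_0<\dots<t_n} \sum_j d\bigl(\chi(t_j),\chi(t_{j+1})\bigr)
\]
is monotone in $d$: if $c\,d_s \leq d_i \leq C\,d_s$ holds on $K$, then for any curve $\chi$ with image in $K$ and any partition one has $c\sum_j d_s(\chi(t_j),\chi(t_{j+1})) \leq \sum_j d_i(\chi(t_j),\chi(t_{j+1})) \leq C\sum_j d_s(\chi(t_j),\chi(t_{j+1}))$, and taking the supremum over partitions yields $c\,\len_s(\chi) \leq \len_i(\chi) \leq C\,\len_s(\chi)$ with the \emph{same} constants. (The caveat ``not automatic for arbitrary length structures'' is about length structures not arising as $d$-lengths of the intrinsic metric; that is not the situation here.) Your $d_A$ argument ultimately has to re-establish that $\len_A$ of the lift equals the $d_i$-length of the base curve, which was already proven, and the uniform-in-$i$ comparison of chord and intrinsic distance that you invoke in passing is itself not immediate from uniform convergence of the graphs and would require a uniform spacelikeness estimate --- exactly the sort of thing Lemma~\ref{labeni2} encapsulates via $d_\Pi$. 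So while nothing in your route is outright false, it is circular and more delicate than the intended one-line monotonicity observation, and the paper does not need (nor use) the auxiliary chord comparison at all.
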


Furthermore,~\cite[Proposition 2.9]{Lab2} gives us

\begin{lm}
\label{labeni4}
Let $\chi: [0,1] \ra K$ be a rectifiable curve. Then $\len_i(\chi) \ra \len_s(\chi)$.
\end{lm}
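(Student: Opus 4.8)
The plan is to pass to the integral formula for the spacelike length and then prove two one-sided estimates. First I would reparametrize $\chi$ by $d_\Pi$-arclength, so that $\chi$ becomes $1$-Lipschitz for $d_\Pi$; this alters none of the lengths involved. Working in the Fermi coordinates $(x,u) \in \Pi \times [0,\tfrac{\pi}{2})$ normal to $\Pi$, in which the anti-de Sitter metric reads $\cos^2(u)\,g_\Pi - du^2$, one has $\chi_{s_i}(t) = \mc E_\Pi(\chi(t), \sigma_i(t))$ with $\sigma_i := s_i \circ \chi$, and for a.e.\ $t$
\[
\|\dot\chi_{s_i}(t)\|^2 = \cos^2(\sigma_i(t))\,\|\dot\chi(t)\|_{g_\Pi}^2 - \dot\sigma_i(t)^2 ,
\]
so by (\ref{spacelikerect}) $\len_i(\chi) = \int_0^1 \sqrt{\cos^2(\sigma_i)\,\|\dot\chi\|_{g_\Pi}^2 - \dot\sigma_i^2}\,dt$, and the fact that the graph of $s_i$ is spacelike forces $|\dot\sigma_i| \le \cos(\sigma_i)\|\dot\chi\|_{g_\Pi} \le 1$ a.e. Hence the $\sigma_i$ are equi-$1$-Lipschitz; since $s_i \to s$ uniformly on $K$ they converge uniformly (Arzel\`a--Ascoli together with uniqueness of the pointwise limit $\sigma := s\circ\chi$), and along a subsequence $\dot\sigma_i \rightharpoonup \dot\sigma$ weakly in $L^2(0,1)$.

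For the bound $\limsup_i \len_i(\chi) \le \len_s(\chi)$ I would first replace $\cos(\sigma_i)$ by $\cos(\sigma)$ inside the integrand, which costs at most $\|\cos^2\sigma_i - \cos^2\sigma\|_\infty^{1/2}\,\len_\Pi(\chi) \to 0$, and then use that $w \mapsto \int_0^1\sqrt{\cos^2(\sigma)\|\dot\chi\|_{g_\Pi}^2 - w^2}\,dt$ is a concave functional of $w$: being finite on the equibounded sequence $\dot\sigma_i$, its negative is convex and strongly lower semicontinuous, hence weakly lower semicontinuous, which yields exactly $\limsup_i \len_i(\chi) \le \len_s(\chi)$. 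The same soft argument, applied to vertical projections of a $d_s$-shortest path onto the graphs of $s_i$, gives $\limsup_i d_{s_i}(a,b) \le d_s(a,b)$ for fixed $a,b\in K$.

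The hard part is the reverse inequality $\liminf_i\len_i(\chi)\ge\len_s(\chi)$. The direct analogue fails: on a general spacelike surface the length functional is only weakly \emph{upper} semicontinuous (the norm of a spacelike vector is concave in the vector), so a near-lightlike zigzag uniformly close to $\chi_s$ could have much smaller length; this is where the convexity of the graphs is indispensable. The route I would take is to recall that the length of $\chi$ for a metric is the supremum over partitions $0=t_0<\dots<t_n=1$ of $\sum_j d_{s_i}(\chi(t_j),\chi(t_{j+1}))$, so it suffices to prove the pointwise convergence $d_{s_i}\to d_s$ on $K\times K$; then $\len_s(\chi)=\sup_{\mathrm{part}}\lim_i\sum_j d_{s_i}(\chi(t_j),\chi(t_{j+1}))\le\liminf_i\len_i(\chi)$. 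Combined with the previous paragraph, everything reduces to $\liminf_i d_{s_i}(a,b)\ge d_s(a,b)$, which is the Lorentzian counterpart of Alexandrov's theorem on convergence of intrinsic metrics of converging convex surfaces: by Corollary~\ref{labeni3} the $s_i$-shortest paths between $a$ and $b$ are equibounded and equi-Lipschitz for $d_\Pi$, so Arzel\`a--Ascoli extracts a uniform limit lying on the graph of $s$, and convexity of the graphs pins these shortest paths away from the lightcone, giving the required lower bound. This convergence of induced metrics of converging convex spacelike surfaces is precisely \cite[Proposition 2.9]{Lab2} (and, in the generality needed here, is essentially Lemma~\ref{distconverge0}); invoking it closes the argument, and it is the only step that genuinely requires convexity rather than soft functional analysis.
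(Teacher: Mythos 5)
Your proof has a gap in the hard direction, and also a circularity problem. The paper's proof of this lemma is simply a citation: $\len_i(\chi)\ra\len_s(\chi)$ along a subsequence is exactly Labeni's \cite[Proposition~2.9]{Lab2}, and Lemma~\ref{subconvtoconv} upgrades subsequential convergence to actual convergence. You choose instead to reconstruct the statement from scratch, which is a legitimate thing to attempt, but your reconstruction stalls exactly where the work is.

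The upper bound $\limsup_i\len_i(\chi)\le\len_s(\chi)$ is done carefully and is valid: the Fermi-coordinate formula, the equi-Lipschitz bound on $\sigma_i$, and the concavity/weak-upper-semicontinuity argument all go through. But for the lower bound you reduce to pointwise convergence of distances $d_{s_i}(a,b)\to d_s(a,b)$, sketch an Arzel\`a--Ascoli extraction of limit geodesics, and assert that ``convexity of the graphs pins these shortest paths away from the lightcone, giving the required lower bound.'' That last clause is precisely the content of the lemma: lower semicontinuity of length along uniformly converging curves on \emph{varying} spacelike surfaces is exactly as delicate as the statement you are proving, and the near-lightlike zigzag pathology you yourself raise is not resolved by the one-line appeal to convexity. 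You then close the gap by citing \cite[Proposition~2.9]{Lab2} anyway, which makes the entire $\limsup$ calculation superfluous — that proposition already gives the full lemma (up to subsequence, which you do not address). And your parenthetical invocation of Lemma~\ref{distconverge0} is circular: $\mathrm{distconverge0}$ is proven in the paper via Corollaries~\ref{metricbilip}, \ref{lengthsconv}, \ref{lengthsblilp} and Lemma~\ref{lengthsdiagbound}, all of which are downstream of the present Lemma~\ref{labeni4}. So the only legitimate route available to you at this stage of the paper is the one the paper takes: cite Labeni directly, then apply Lemma~\ref{subconvtoconv} to remove the ``up to subsequence'' caveat — a caveat your write-up omits.
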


Note that Labeni states his result ``up to subsequence'', which one overcomes by applying Lemma~\ref{subconvtoconv}. Using~\cite[Lemma 2.5 and Lemma 2.1]{FS3}, we deduce

\begin{lm}
\label{uniformconv}
We have $d_i \ra d_s$ uniformly on $K$.
\end{lm}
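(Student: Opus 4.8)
### Plan for the proof of Lemma~\ref{uniformconv}

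The goal is to upgrade the pointwise convergence of lengths (Lemma~\ref{labeni4}) and the two-sided comparison with a fixed metric (Corollary~\ref{labeni3}) to uniform convergence of the intrinsic metrics $d_i \to d_s$ on the compact set $K$. The plan is to invoke the cited results \cite[Lemma 2.5 and Lemma 2.1]{FS3} of Fillastre--Slutskiy, which give exactly this kind of implication in an abstract length-space setting, and to check that our situation satisfies their hypotheses. So the first thing I would do is recall the precise statements: Lemma~2.1 of \cite{FS3} typically says that if a sequence of length metrics on a fixed compact set is uniformly bi-Lipschitz to a fixed background metric and the associated length functionals converge on every fixed rectifiable curve, then the induced distance functions converge uniformly; Lemma~2.5 provides the technical bridge (equicontinuity / uniform control of shortest paths) needed to promote curve-by-curve length convergence to uniform distance convergence.

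The key steps, in order, are as follows. First, use Corollary~\ref{labeni3}: there exist constants $A_1, A_2 > 0$ with $A_1 d_s \le d_i \le A_2 d_s$ over $K$ for all $i$, and the analogous two-sided bound on the length structures. This gives the uniform bi-Lipschitz control that is the standing hypothesis of the abstract lemmas. Second, observe that all the metrics $d_i$ and $d_s$ are complete length metrics on the compact set $K$ (completeness holds because $K$ is compact and the metrics do not alter the topology, as established just before via the argument of Lemma~\ref{sametopology}), so shortest paths exist between any two points for each metric, by the Arzel\`a--Ascoli argument mentioned earlier. Third, feed Lemma~\ref{labeni4} (pointwise convergence $\len_i(\chi) \to \len_s(\chi)$ on each fixed rectifiable curve) together with the uniform bi-Lipschitz bounds into \cite[Lemma 2.5]{FS3} to get the equicontinuity needed, and then into \cite[Lemma 2.1]{FS3} to conclude $d_i \to d_s$ uniformly on $K \times K$.

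The only real point requiring care is the direction of the inequality one needs to combine the pieces. For $\limsup_i d_i(p,q) \le d_s(p,q)$ one takes a near-shortest path $\chi$ for $d_s$ between $p$ and $q$, notes it is rectifiable, and applies $d_i(p,q) \le \len_i(\chi) \to \len_s(\chi)$, which is close to $d_s(p,q)$; the uniformity in $(p,q)$ comes from the uniform bi-Lipschitz bound controlling the modulus of continuity of all the $d_i$ simultaneously (so a finite $\e$-net argument on the compact $K$ suffices). For $\liminf_i d_i(p,q) \ge d_s(p,q)$ one takes shortest paths $\chi_i$ for $d_i$, uses the lower bound $\len_i \ge A_1 \len_s$ (equivalently $d_i \ge A_1 d_s$) to get a uniform bound on their $d_\Pi$-lengths, extracts by Arzel\`a--Ascoli a uniformly convergent subsequence $\chi_i \to \chi$, and uses lower semicontinuity of $\len_s$ under uniform convergence together with Lemma~\ref{labeni4}; Lemma~\ref{subconvtoconv} then removes the ``up to subsequence''. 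All of this is packaged inside \cite[Lemmas 2.1 and 2.5]{FS3}, so in the write-up I would simply verify the hypotheses and cite them, giving at most a sentence recalling the two-sided argument for the reader's convenience. There is no serious obstacle; the work is entirely in matching our setup to the hypotheses of the abstract lemmas.

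\begin{proof}
Over $K$, Corollary~\ref{labeni3} gives constants $A_1, A_2 > 0$ with
\[A_1 d_s \le d_i \le A_2 d_s, \qquad A_1 \len_s \le \len_i \le A_2 \len_s\]
for all $i$. Since $K$ is compact and, as shown above, none of the metrics $d_i$, $d_s$ alters the topology of $K$ (by the argument of Lemma~\ref{sametopology}), all of them are complete length metrics and shortest paths exist for each of them between any pair of points. Together with the pointwise length convergence of Lemma~\ref{labeni4}, these are precisely the hypotheses of \cite[Lemma~2.5]{FS3}, which yields a uniform modulus of continuity for the family $\{d_i\}$ and for $d_s$, and of \cite[Lemma~2.1]{FS3}, which then promotes the convergence $\len_i(\chi) \to \len_s(\chi)$ on fixed rectifiable curves to the uniform convergence $d_i \to d_s$ on $K \times K$.

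For completeness we recall the two inequalities involved. Fix $p, q \in K$ and $\e > 0$. Choosing a $d_s$-rectifiable curve $\chi$ from $p$ to $q$ with $\len_s(\chi) \le d_s(p,q) + \e$, Lemma~\ref{labeni4} gives $d_i(p,q) \le \len_i(\chi) \to \len_s(\chi) \le d_s(p,q) + \e$, so $\limsup_i d_i(p,q) \le d_s(p,q)$. Conversely, let $\chi_i$ be a $d_i$-shortest path from $p$ to $q$; by the lower bound $\len_i \ge A_1 \len_s \ge A_1 A d_\Pi$ of Lemma~\ref{labeni2} the curves $\chi_i$ have uniformly bounded $d_\Pi$-length, so by Arzel\`a--Ascoli a subsequence converges uniformly to a curve $\chi$ from $p$ to $q$; lower semicontinuity of $\len_s$ together with Lemma~\ref{labeni4} gives $d_s(p,q) \le \len_s(\chi) \le \liminf_i \len_s(\chi_i) \le \liminf_i A_1^{-1}\len_i(\chi_i) $, and a more careful comparison (using again Lemma~\ref{labeni4} along $\chi$) in fact yields $d_s(p,q) \le \liminf_i d_i(p,q)$. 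By Lemma~\ref{subconvtoconv} the subsequence extraction causes no loss. The uniformity in $(p,q)$ follows from the equicontinuity supplied by \cite[Lemma~2.5]{FS3} via a finite net on the compact set $K$.
\end{proof}
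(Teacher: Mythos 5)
Your proof takes essentially the same route as the paper, which simply cites \cite[Lemma 2.5 and Lemma 2.1]{FS3} after establishing Corollary~\ref{labeni3} and Lemma~\ref{labeni4}; you correctly identify the bi-Lipschitz control and pointwise length convergence as the hypotheses to verify, and your expanded recollection of the two inequalities is consistent with the intended argument.
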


Now we show

\begin{lm}
\label{localize}
For every $p \in \inter(K)$ there exists its neighborhood $U_p \subset \inter(K)$ such that for all large enough $i$ and all $q, q'  \in U_p$ every shortest path between $q$ and $q'$ for $d_i$ is contained in $\inter(K)$. The same claim holds for $d_s$.
\end{lm}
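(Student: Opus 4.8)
The plan is to exploit the local comparison between the intrinsic metric $d_s$ (or $d_i$) on a C-convex graph and the flat metric $d_\Pi$ on the supporting plane, together with the fact that a shortest path cannot be much longer than a straight segment. Concretely, fix $p \in \inter(K)$ and choose $r>0$ small enough that the closed Euclidean ball $\bar B_{d_\Pi}(p, 4r)$ lies inside $\inter(K)$; set $U_p := B_{d_\Pi}(p, r)$. The point is to show that for $q, q' \in U_p$, any $d_s$-shortest path $\gamma$ between them stays inside $\bar B_{d_\Pi}(p, 4r) \subset \inter(K)$. Suppose not: then $\gamma$ contains a sub-arc leaving $B_{d_\Pi}(p, 4r)$ and returning, which therefore has $d_\Pi$-length at least $2(4r - 2r) = 4r$ (since $q, q'$ are within $d_\Pi$-distance $2r$ of each other and within $r$ of $p$). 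By Lemma~\ref{labeni2}, this forces $\len_s(\gamma) \geq 4Ar$. On the other hand, the straight $d_\Pi$-segment $\sigma$ from $q$ to $q'$ lies in $U_p \subset K$, so by Lemma~\ref{labeni1} its $\len_s$-length is at most its $d_\Pi$-length, which is at most $2r$. Choosing $r$ small — or rather, noting that $4Ar > 2r$ is automatic once $A$ is the constant of Lemma~\ref{labeni2} and we only need $4A > 2$, which may fail — one must instead argue more carefully: the correct inequality is that $\gamma$, being $d_s$-shortest, satisfies $\len_s(\gamma) = d_s(q, q') \leq \len_s(\sigma) \leq d_\Pi(q, q') < 2r$, while the excursion forces $\len_s(\gamma) \geq A \cdot \len_\Pi(\gamma) \geq A \cdot 4r$. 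So we need $4Ar > 2r$ — but $A \leq 1$ in general. The fix is to shrink the scale on the outgoing side: choose $r$ so that $U_p = B_{d_\Pi}(p, r)$ with the ball of radius $R := r/A + 2r$ (or any $R$ with $AR > 2r$, e.g. $R > 2r/A$) still inside $\inter(K)$; then an excursion beyond $B_{d_\Pi}(p, R)$ has $d_\Pi$-length $\geq 2(R - 2r) > 2r \cdot (\text{something})$, and one picks $R$ large enough relative to $r$ that $A \cdot 2(R-2r) > 2r$, i.e. $R > 2r + r/A$. Since $A$ is fixed and $\inter(K)$ is open, such $r$ exists.

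The key steps in order: (i) fix $p \in \inter(K)$ and invoke Lemma~\ref{labeni2} to get the constant $A>0$ with $d_s \geq A \cdot d_\Pi$ (and $d_i \geq A \cdot d_\Pi$ for all $i$) over $K$; (ii) choose $r>0$ and $R := 2r + 2r/A$ such that $\bar B_{d_\Pi}(p, R) \subset \inter(K)$, using that $\inter(K)$ is open; set $U_p := B_{d_\Pi}(p,r)$; (iii) for $q, q' \in U_p$, bound $d_s(q,q') \leq \len_s(\sigma) \leq \len_\Pi(\sigma) = d_\Pi(q,q') < 2r$ using Lemma~\ref{labeni1} applied to the straight $d_\Pi$-segment $\sigma$, which lies in $U_p \subset K$; (iv) if a $d_s$-shortest path $\gamma$ from $q$ to $q'$ left $B_{d_\Pi}(p,R)$, then $\len_\Pi(\gamma) \geq 2(R-r) > 2r/A$, so $\len_s(\gamma) \geq A \len_\Pi(\gamma) > 2r$, contradicting $\len_s(\gamma) = d_s(q,q') < 2r$; hence $\gamma \subset \bar B_{d_\Pi}(p,R) \subset \inter(K)$; (v) for the statement about $d_i$ with $i$ large: the same argument works verbatim using the uniform lower bound $d_i \geq A \cdot d_\Pi$ from Lemma~\ref{labeni2} and the upper bound $d_i(q,q') \leq \len_i(\sigma) \leq A_2 \len_s(\sigma) \leq A_2 d_\Pi(q,q')$ from Corollary~\ref{labeni3}; one simply enlarges $R$ by the fixed factor $A_2/A$ so that $A \cdot 2(R-r) > A_2 \cdot 2r$, again possible by openness of $\inter(K)$. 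Note the constants $A, A_2$ are independent of $i$, so a single $U_p$ works for all large $i$ simultaneously.

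The main obstacle is essentially bookkeeping the constants: one must be careful that the comparison constants $A$ (lower) and $A_2$ (upper, from Corollary~\ref{labeni3}) are uniform in $i$ — which they are, by Lemma~\ref{labeni2} and Corollary~\ref{labeni3} — and that the excursion argument is set up with the right radii so that a path escaping a slightly larger ball is genuinely longer (in the $d_s$- or $d_i$-length) than the straight chord. There is no real analytic difficulty: everything reduces to the bi-Lipschitz equivalence of $d_s$ (resp. $d_i$) with $d_\Pi$ on compact subsets of $\inter(K)$, which is already established, plus the trivial observation that a shortest path realizes the distance and the distance is small for nearby points. Once the radii are chosen, the proof is a two-line contradiction. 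I would also remark that the argument shows a quantitative statement: the diameter (in $d_s$, resp. $d_i$) of $\bar B_{d_\Pi}(p,R)$ is bounded, and any path realizing the distance between points of the much smaller $U_p$ has length well below that diameter, forcing containment — this is the standard ``shortest paths between nearby points stay local'' phenomenon, valid for any length metric inducing the ambient topology on a manifold, here made effective by the Lipschitz bounds of Labeni.
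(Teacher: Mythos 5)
Your argument is correct, but it takes a genuinely different route from the paper's. The paper argues by contradiction and compactness: assuming the claim fails, it extracts a sequence of $d_i$-shortest paths with endpoints converging to $p$ and each touching $\partial K$, shows via Corollary~\ref{labeni3} that their $\len_s$-lengths tend to zero, reparameterizes and applies Arzel\`a--Ascoli to extract a limiting curve joining $p$ to a boundary point with zero $\len_s$-length, contradicting the fact (established in Lemma~\ref{sametopology} and its graph analogue) that $d_s$ induces the standard topology on $K$. Your approach is a direct, quantitative estimate: you fix the uniform-in-$i$ constant $A$ from Lemma~\ref{labeni2} (and $A_2$ from Corollary~\ref{labeni3}), set $U_p = B_{d_\Pi}(p,r)$ with a concentric ball $\bar B_{d_\Pi}(p,R) \subset \inter(K)$ for $R$ depending linearly on $r$ through $A$ and $A_2$, use $d_i(q,q')\leq A_2\,d_\Pi(q,q')$ (via Lemma~\ref{labeni1} and Corollary~\ref{labeni3}) to bound the length of the shortest path from above, and observe that escaping the shell $B(p,R)\setminus B(p,r)$ would force $\len_i(\gamma)\geq A\cdot 2(R-r)$, which exceeds the upper bound once $R$ is chosen. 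Both are valid; the paper's argument is shorter to state and reuses the Arzel\`a--Ascoli machinery already employed in neighboring lemmas, while yours is more elementary (no compactness extraction needed), explicitly uniform in $i$, and yields a quantitative size for $U_p$ in terms of the Lipschitz constants. One caution: your opening paragraph initially miscalibrates the radii ($4Ar>2r$ need not hold for $A\leq 1$) before self-correcting; the summarized key steps with $R>r(1+A_2/A)$ are the clean version and are what should be retained. Also note that $\len_s(\sigma)\leq\len_\Pi(\sigma)$ does hold (pointwise $d_s\leq d_\Pi$ passes to the induced length structures), but it is slightly cleaner to skip $\sigma$ entirely and use $d_i(q,q')\leq A_2\,d_s(q,q')\leq A_2\,d_\Pi(q,q')$ directly.
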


\begin{proof}
Consider the first claim.
Suppose the converse. Then, up to subsequence, there are $q_i \ra p$, $q_i' \ra p$ such that there is a shortest path $\chi_i: [0,1] \ra K$ for $d_i$ between $q_i$ and $q_i'$ that contains a point of $\pt K$. Because of Corollary~\ref{labeni3}, we have $\len_i(\chi_i)=d_i(q_i, q'_i)\ra 0$. By applying Corollary~\ref{labeni3} again, we get $\len_s(\chi_i)\ra 0$. After reparameterizing $\chi_i$ proportional to $\len_s$, by applying the Arzel\`a--Ascoli theorem, up to subsequence, $\chi_i$ converge to a curve $\chi: [0,1] \ra K$ that passes through $p$ as well as through a point of $\pt K$ and $\len_s(\chi)\leq \liminf \len_s(\chi_i)=0$. This is a contradiction to that $d_s$ induces the standard topology on $K$. The second claim is proven the same.
\end{proof}

Now we return to our previous setting. We denote the lengths structures of $d$, $d_i$ by $\len$, $\len_i$. Since spacelike surfaces are locally graphs over spacelike planes, Lemma~\ref{localize} together with Corollary~\ref{labeni3} and Lemma~\ref{labeni4} yield

\begin{crl}
\label{metricbilip}
For every $p \in D$ there exists its neighborhood $U_p \subset D$ and $A_1, A_2>0$, depending on $U_p$, such that over $U_p$ for all $i$ we have 
\[A_1 d \leq d_i \leq A_2 d,\]
\[A_1 \len \leq \len_i \leq A_2 \len.\]
\end{crl}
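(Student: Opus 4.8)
Looking at this, the final statement to prove is Corollary~\ref{metricbilip}: for every $p \in D$ there is a neighborhood $U_p \subset D$ and constants $A_1, A_2 > 0$ (depending on $U_p$) such that over $U_p$, for all $i$, one has $A_1 d \leq d_i \leq A_2 d$ and $A_1 \len \leq \len_i \leq A_2 \len$.

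The plan is to reduce the statement to the local graph setting already analyzed via Labeni's results, using that a spacelike convex surface is, near any of its points, the graph of a C-convex function over a spacelike plane.

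First I would fix $p \in D$ and let $\hat p \in \Sigma$ be the point projecting to $p$ from the center of projection. Choose a spacelike supporting plane $\Pi$ to $\Sigma$ at $\hat p$; such a plane exists because $C$ is a spacelike convex body and $\hat p \in \pt C \cap \A^3$. Using the past-directed normal exponential map $\mc E_\Pi$, a neighborhood of $\hat p$ in $\Sigma$ is the graph of a C-convex function $s$ over a compact convex set $K \subset \Pi$ with $\hat p$ lying over $\inter(K)$. Since $C_i$ converge to $C$ as spacelike convex bodies and $\cl(\Sigma_i) \ra \cl(\Sigma)$, for all large $i$ the corresponding piece of $\Sigma_i$ over a fixed compact convex $K' \subset \inter(K)$ (still containing the projection of $\hat p$ in its interior) is the graph of a C-convex function $s_i$, and the Hausdorff convergence of the graphs together with convexity forces $s_i \ra s$ uniformly on $K'$.

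Next I would invoke the discussion preceding Lemma~\ref{labeni1}: over $K'$ the length structures $\len_s$, $\len_{s_i}$ of the graphs coincide with $\len_A$ of the lifted curves, hence --- length structures being local --- with the restrictions of $\len$, $\len_i$ to the image of $\inter(K')$ in $D$. Letting $U_p$ be that image, Corollary~\ref{labeni3} applied to $s_i \ra s$ on $K'$ produces $A_1, A_2 > 0$ with $A_1 \len_s \le \len_{s_i} \le A_2 \len_s$, which gives the length inequalities over $U_p$ for all large $i$. For the distance inequalities the point is that $d$ and $d_i$, pushed to $D$, need not agree locally with the graph metrics $d_s$, $d_{s_i}$, since shortest paths could leave the graph region; here Lemma~\ref{localize} is exactly what is needed. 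Shrinking $U_p$ if necessary, every shortest path for $d_{s_i}$ (resp. for $d_s$) between two points of $U_p$ stays inside $\inter(K')$ for all large $i$, so over $U_p$ one has $d_i = d_{s_i}$ and $d = d_s$. Then the distance part of Corollary~\ref{labeni3} on $K'$ yields $A_1 d \le d_i \le A_2 d$ over $U_p$ for all large $i$ (after possibly relabeling $A_1, A_2$). Finally, since $D$ is open in $S^2$ and $S^2 \backslash D_i$ converge to $S^2 \backslash D$, we have $U_p \subset D_i$ for all large $i$; the finitely many remaining indices are irrelevant for the applications, or are absorbed by passing to a subsequence, as is done in Lemma~\ref{distconverge0} and Lemma~\ref{contin}.

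The main obstacle, and the only non-formal point, is keeping careful track that the locally defined metrics $d$, $d_i$ genuinely coincide with the C-convex graph metrics $d_s$, $d_{s_i}$ on one common neighborhood $U_p$ valid for the whole tail of the sequence; this is where Lemma~\ref{localize} --- the stability, under the uniform convergence $s_i \ra s$, of the property that shortest paths stay local --- does the real work, and it must be applied to the sequence as a whole rather than to each $s_i$ separately. Everything else is a direct application of Labeni's estimates (Corollary~\ref{labeni3}, Lemma~\ref{labeni4}) together with the locality of length structures.
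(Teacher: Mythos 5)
Your proposal follows the same route as the paper's one-sentence sketch: write $\Sigma_i$, $\Sigma$ locally as graphs of C-convex functions $s_i$, $s$ over a spacelike supporting plane $\Pi$, deduce $s_i \ra s$ uniformly from the Hausdorff convergence of the convex bodies, and combine the Labeni estimates (Corollary~\ref{labeni3}) with the localization of shortest paths (Lemma~\ref{localize}). That is the intended argument, and the use of the locality of length structures to identify $\len$, $\len_i$ with $\len_s$, $\len_{s_i}$ on curves in the graph region is correct and takes care of the length inequalities.

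There is one inferential step that, as written, is too quick. You conclude from Lemma~\ref{localize} that ``every shortest path for $d_{s_i}$ (resp.\ for $d_s$) between two points of $U_p$ stays inside $\inter(K')$\ldots, so over $U_p$ one has $d_i = d_{s_i}$ and $d = d_s$.'' But Lemma~\ref{localize} controls shortest paths for the \emph{graph} metrics $d_{s_i}$, $d_s$, whereas $d_i$, $d$ are the intrinsic metrics of the whole surfaces $\Sigma_i$, $\Sigma$; the danger is not that a $d_{s_i}$-shortest path might leave the graph region, but that a curve in $\Sigma_i$ between two points of $U_p$ could take a shortcut through $\Sigma_i \setminus \mc E_\Pi(\text{graph of }s_i)$. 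One always has $d_i \leq d_{s_i}$ over $U_p$; the reverse inequality requires a lower bound, uniform in $i$, on the length of any curve in $\Sigma_i$ that starts in $U_p$ and exits the graph region. This does follow, but from Lemma~\ref{labeni1} and Lemma~\ref{labeni2} rather than from Lemma~\ref{localize}: the initial segment of such a curve inside the graph region has length $\geq d_{s_i}(q,\pt K') \geq A\, d_\Pi(q, \pt K')$, which is bounded away from zero for $q$ in a compact $U_p \subset \inter(K')$, while $d_{s_i}(q,q') \leq d_\Pi(q,q')$ can be made smaller than that bound by shrinking $U_p$. With this extra paragraph the equalities $d_i = d_{s_i}$, $d = d_s$ over $U_p$ hold, and your conclusion goes through. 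The paper compresses all of this into a single appeal to Lemma~\ref{localize}, Corollary~\ref{labeni3} and Lemma~\ref{labeni4}, so you have the right picture; just note that Lemma~\ref{localize} alone only constrains the graph metrics, and the identification with the global metrics needs the uniform two-sided comparison to $d_\Pi$ as well.
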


\begin{crl}
For every $p \in D$ there exists its neighborhood $U_p \subset D$ such that if $\chi: [0,1] \ra U_p$ is rectifiable curve, then $\len_i(\chi) \ra \len(\chi)$.
\end{crl}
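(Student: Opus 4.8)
The plan is to reduce the statement to Lemma~\ref{labeni4} by realizing a small piece of $\Sigma$ around $p$ as the graph of a C-convex function over a spacelike plane, and the corresponding pieces of the $\Sigma_i$ as graphs of C-convex functions converging to it uniformly. This is precisely the local model already used to derive Corollary~\ref{metricbilip}; the present corollary is its ``length'' companion, and once the model is in place it follows by a direct translation of Labeni's convergence lemma through the chart.

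First I would fix $p \in D$ and let $\hat p \in \Sigma$ be the point projecting to it from the base point in $\inter(C)$. Pick a plane $\Pi$ supporting $C$ at $\hat p$; since $C$ is a spacelike convex body and $\hat p \in \Sigma \subset \pt C \cap \A^3$, the plane $\Pi$ is spacelike. Up to reversing the time-orientation of $\A^3$ (which changes none of the spacelike distances or length structures involved), $\Sigma$ is, near $\hat p$, the graph over a compact convex neighborhood $K \subset \Pi$ of a C-convex function $s$ via the normal exponential map $\mc E_\Pi$, with $\hat p$ lying over $\inter(K)$. Because $\cl(\Sigma_i) \ra \cl(\Sigma)$ and $\pt C_i \backslash \Sigma_i \ra \pt C \backslash \Sigma$ in the Hausdorff sense and each $\Sigma_i$ is a boundary component of a spacelike convex body, the standard fact that Hausdorff convergence of graphs of convex functions over a fixed compact convex domain forces local uniform convergence shows: after shrinking $K$ once, for all large $i$ the set $\Sigma_i$ is the graph over $K$ of a C-convex function $s_i$ via $\mc E_\Pi$, and $s_i \ra s$ uniformly on $K$ (invoking Lemma~\ref{subconvtoconv} to pass from subsequences back to the full sequence where needed). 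I would then let $U_p$ be the projection to $D$ of the graph of $s$ over a compact convex $K' \subset \inter(K)$, chosen small enough to also lie inside the neighborhood furnished by Corollary~\ref{metricbilip}.

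Next I would transfer lengths through the chart. Under the homeomorphisms $U_p \cong (\text{graph of } s \text{ over } K') \cong K'$, a curve $\chi: [0,1] \ra U_p$ rectifiable for $d$ corresponds to a curve $\bar\chi: [0,1] \ra K'$ rectifiable for $d_\Pi$, equivalently for $d_s$ and for each $d_{s_i}$, by the equivalences of rectifiability established just before Lemma~\ref{labeni1}; moreover the identity $\len_{d_s}(\bar\chi) = L_s(\bar\chi) = \len_A(\bar\chi_s)$ proved there, together with its analogue for each $s_i$, gives $\len(\chi) = \len_s(\bar\chi)$ and $\len_i(\chi) = \len_{s_i}(\bar\chi)$ for all large $i$ (here on the left $\len_i$ is the length structure of the intrinsic metric $d_i$ of $\Sigma_i$, and the point is that it agrees with Labeni's $\len_{s_i}$ on curves contained in $U_p$). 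Finally, applying Lemma~\ref{labeni4} to the uniformly convergent sequence $s_i \ra s$ and the rectifiable curve $\bar\chi$ yields $\len_{s_i}(\bar\chi) \ra \len_s(\bar\chi)$, that is, $\len_i(\chi) \ra \len(\chi)$, which is the assertion.

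The only point requiring care is to choose $U_p$ small enough that all of it, hence every curve in it, sits inside a single graph chart over $\Pi$ on which $s$ and all $s_i$ (for $i$ large) are simultaneously defined with $s_i \ra s$ uniformly; this is ensured by combining the local graph description of $\Sigma$ near $\hat p$ with the Hausdorff convergence of the $\cl(\Sigma_i)$ and the separation of $\pt C_i \backslash \Sigma_i$ from $\Sigma$. Beyond this bookkeeping there is no substantial analytic obstacle: the corollary is essentially Lemma~\ref{labeni4} read in the ambient surface rather than in the chart.
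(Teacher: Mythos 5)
Your proposal is correct and follows essentially the route the paper intends: since spacelike convex surfaces are locally graphs of C-convex functions over spacelike planes, you realize a piece of $\Sigma$ around $\hat p$ as the graph of $s$ over a compact convex $K\subset\Pi$, use Hausdorff convergence to get the $\Sigma_i$ as graphs of $s_i\to s$ uniformly on $K$, transfer the length structures through the chart via the identity $\len_{d_s}=L_s=\len_A(\cdot_s)$ established just before Lemma~\ref{labeni1}, and conclude by Lemma~\ref{labeni4}. This is exactly what the paper's remark ``Since spacelike surfaces are locally graphs over spacelike planes, Lemma~\ref{localize} together with Corollary~\ref{labeni3} and Lemma~\ref{labeni4} yield\ldots'' compresses.

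One small point worth spelling out: the equality $\len(\chi)=\len_s(\bar\chi)$ (and its analogue for $\len_i$), which you assert via ``the identity \ldots proved there,'' is not literally the chart identity but its transfer to the surface; it uses that the length structure of the intrinsic metric of $\Sigma$ restricted to curves lying over $K'$ agrees with $\len_A$ of the curve, which is the same argument the paper runs for the chart. This is where Lemma~\ref{localize} is invoked in the paper's grouping of ingredients (to ensure that, up to refining partitions, distances between nearby points of $\chi$ are realized inside the chart), and your proof would be a bit tighter if you made this reduction explicit rather than implicit. It is not a gap, only an unstated bridging step between the chart identities and the surface length structures.
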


In turn, these imply

\begin{crl}
\label{lengthsblilp}
For every compact $K \subset D$ there exist $A_1, A_2>0$, depending on $K$, such that for all $i$ over rectifiable curves in $K$ we have 
\[A_1 \len \leq \len_i \leq A_2 \len.\]
\end{crl}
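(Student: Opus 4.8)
The plan is to upgrade the pointwise estimate of Corollary~\ref{metricbilip} to one that is uniform over $K$ by a routine compactness argument, using that the length functionals $\len$ and $\len_i$ are additive under concatenation of curves. No new geometric input is needed beyond Corollary~\ref{metricbilip}.

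First I would, for each $p \in K$, take the neighborhood $U_p \subset D$ and the constants $A_1(p), A_2(p) > 0$ furnished by Corollary~\ref{metricbilip}, with each $U_p$ chosen open; on $U_p$ one has $A_1(p)\len \leq \len_i \leq A_2(p)\len$ for all $i$, over rectifiable curves whose image stays inside $U_p$. Since $K$ is compact and $\{U_p : p \in K\}$ is an open cover of $K$, I extract a finite subcover $U_{p_1}, \ldots, U_{p_m}$ and put $A_1 := \min_j A_1(p_j)$ and $A_2 := \max_j A_2(p_j)$, which depend only on $K$.

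Next, fix a rectifiable curve $\chi \colon [a,b] \to K$. Recall that rectifiable curves are continuous and that $d$ induces the standard topology on $\Sigma$ (as in Lemma~\ref{sametopology}), so $\chi$ is continuous into the compact metric space $(\chi([a,b]), d)$. Applying the Lebesgue number lemma to this space with the open cover $\{U_{p_j} \cap \chi([a,b])\}_j$ yields $\delta > 0$ such that every subset of $\chi([a,b])$ of $d$-diameter less than $\delta$ lies in some $U_{p_j}$. By uniform continuity of $\chi$ I choose a partition $a = t_0 < t_1 < \cdots < t_N = b$ so that each $\chi([t_{k-1}, t_k])$ has $d$-diameter $< \delta$, hence is contained in some $U_{p_{j(k)}}$. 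Summing the local estimates over the restrictions $\chi|_{[t_{k-1}, t_k]}$ and using the additivity $\len(\chi) = \sum_k \len(\chi|_{[t_{k-1}, t_k]})$ and $\len_i(\chi) = \sum_k \len_i(\chi|_{[t_{k-1}, t_k]})$ gives
\[
A_1 \len(\chi) \leq \len_i(\chi) \leq A_2 \len(\chi)
\]
for all $i$, which is the assertion.

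The only point requiring a little care — and the main (mild) obstacle — is the additivity of $\len$ and $\len_i$ under subdivision together with the fact that each piece can be taken inside a single chart neighborhood; both follow from the description of $\len_i$, $\len$ as spacelike lengths of the curves lifted to the graphs of C-convex functions over spacelike planes, and from the local graph structure of spacelike convex surfaces, exactly as in the discussion preceding Corollary~\ref{metricbilip}.
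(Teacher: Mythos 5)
Your proof is correct and fills in exactly the argument the paper leaves implicit when it says Corollary~\ref{lengthsblilp} follows from Corollary~\ref{metricbilip}: cover $K$ by finitely many $U_{p_j}$, take the extremal constants, subdivide a given rectifiable curve via the Lebesgue number lemma so each piece lies in a single $U_{p_j}$, and conclude by additivity of the length functionals. This is the standard compactness upgrade and matches the paper's intent.
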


\begin{crl}
\label{lengthsconv}
Let $\chi: [0,1] \ra D$ be a rectifiable curve. Then $\len_i(\chi) \ra \len(\chi)$.
\end{crl}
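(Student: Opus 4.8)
\textbf{Proof proposal for Corollary~\ref{lengthsconv}.}

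The plan is to upgrade the local convergence statement (the corollary immediately preceding, obtained by combining Corollary~\ref{metricbilip}, Lemma~\ref{labeni4} and Lemma~\ref{localize}) to a global statement along the entire curve $\chi: [0,1] \ra D$ by a compactness-and-partition argument. First I would note that the image $K:=\chi([0,1])$ is a compact subset of $D$. Cover $K$ by finitely many neighborhoods $U_{p_1}, \ldots, U_{p_N}$ of the type furnished by Corollary~\ref{metricbilip} (on each of which $\len_i$ and $\len$ are uniformly bi-Lipschitz equivalent with constants independent of $i$, and on each of which $\len_i(\cdot) \ra \len(\cdot)$ for rectifiable curves by the intermediate corollary). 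By a Lebesgue number argument for the pull-back cover $\{\chi^{-1}(U_{p_j})\}$ of $[0,1]$, choose a partition $0=t_0<t_1<\cdots<t_m=1$ fine enough that each subarc $\chi|_{[t_{l-1},t_l]}$ lies entirely inside a single $U_{p_{j(l)}}$.

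Next I would use additivity of the length functionals: $\len(\chi)=\sum_{l=1}^m \len(\chi|_{[t_{l-1},t_l]})$ and likewise $\len_i(\chi)=\sum_{l=1}^m \len_i(\chi|_{[t_{l-1},t_l]})$ for every $i$ (here one uses that $\chi$ is rectifiable for $\len$, hence, by Corollary~\ref{lengthsblilp} applied to $K$, also for every $\len_i$ with uniformly comparable total lengths, so all the sums are finite). For each fixed $l$, the subarc $\chi|_{[t_{l-1},t_l]}$ is a rectifiable curve contained in the single neighborhood $U_{p_{j(l)}}$, so the intermediate corollary (local convergence of lengths on rectifiable curves inside such a neighborhood) gives $\len_i(\chi|_{[t_{l-1},t_l]}) \ra \len(\chi|_{[t_{l-1},t_l]})$ as $i \ra \infty$. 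Summing these finitely many convergences yields $\len_i(\chi) \ra \len(\chi)$, which is the claim.

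As with the other statements borrowed from Labeni's work, the local convergence input is stated only up to subsequence, so one should phrase the argument so that the conclusion for the full sequence follows from Lemma~\ref{subconvtoconv}: any subsequence of $\{\len_i(\chi)\}$ has, by the above reasoning applied subarc by subarc, a further subsequence converging to $\len(\chi)$, hence the whole sequence converges to $\len(\chi)$. I do not expect any genuine obstacle here; the only point requiring mild care is making sure that the bi-Lipschitz constants in Corollary~\ref{metricbilip} are uniform in $i$ on each $U_{p_j}$ (they are, by construction) so that rectifiability of $\chi$ for $\len$ transfers to rectifiability for all $\len_i$ with finite, comparable lengths, legitimizing the term-by-term passage to the limit in the finite sum. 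This is exactly the role of Corollary~\ref{lengthsblilp}.
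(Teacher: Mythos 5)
Your argument is correct and fleshes out exactly what the paper leaves implicit when it writes ``In turn, these imply'': cover the compact image $\chi([0,1])$ by finitely many neighborhoods $U_{p_j}$ from Corollary~\ref{metricbilip} and its successor, take a Lebesgue-number partition, use additivity of the length functionals, and pass to the limit term by term. One small remark: your closing invocation of Lemma~\ref{subconvtoconv} is not needed here, because the paper has already used it (right after Lemma~\ref{labeni4}) to upgrade Labeni's subsequence statement to full-sequence convergence, so the local corollary you feed into the partition argument already concerns the whole sequence.
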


Furthermore, we will need the following technical results.

\begin{lm}
\label{lengthsdiagbound}
Let $\chi_i: [0,1] \ra D$ be a sequence of rectifiable curves converging uniformly for $d$ to a rectifiable curve $\chi: [0,1] \ra D$. Then $\len(\chi)\leq \liminf \len_i(\chi_i)$.
\end{lm}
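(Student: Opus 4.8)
The statement to prove is Lemma~\ref{lengthsdiagbound}: if rectifiable curves $\chi_i:[0,1]\to D$ converge uniformly for $d$ to a rectifiable curve $\chi$, then $\len(\chi)\leq\liminf\len_i(\chi_i)$. The natural approach is the standard lower-semicontinuity argument for length functionals, but localized through the bi-Lipschitz comparisons we have already established. First I would pass to a subsequence realizing $\liminf\len_i(\chi_i)=:\ell$, which we may assume is finite (otherwise there is nothing to prove). Fix a partition $0=t_0<t_1<\cdots<t_N=1$ of $[0,1]$. The image $\chi([0,1])$ is compact in $D$, so it is covered by finitely many neighborhoods $U_p$ as in Corollary~\ref{metricbilip}; choosing the partition fine enough (using uniform continuity of $\chi$ for $d$) we may assume each pair $\chi(t_{j-1}),\chi(t_j)$, and for large $i$ also each pair $\chi_i(t_{j-1}),\chi_i(t_j)$, lies in a single such neighborhood where $A_1 d\leq d_i\leq A_2 d$.

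Next I would use the triangle inequality and uniform convergence. For each $j$,
\[
d(\chi(t_{j-1}),\chi(t_j))\leq d(\chi(t_{j-1}),\chi_i(t_{j-1}))+d(\chi_i(t_{j-1}),\chi_i(t_j))+d(\chi_i(t_j),\chi(t_j)),
\]
and the first and third terms tend to $0$ as $i\to\infty$ by uniform convergence. The middle term satisfies $d(\chi_i(t_{j-1}),\chi_i(t_j))\leq A_1^{-1}d_i(\chi_i(t_{j-1}),\chi_i(t_j))\leq A_1^{-1}\len_i(\chi_i|_{[t_{j-1},t_j]})$ on the relevant neighborhood. Here one must be a little careful: the constant $A_1$ depends on the neighborhood, so I would take $A_1$ to be the minimum of the finitely many constants arising from the partition, giving a uniform lower bound. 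Summing over $j$ and letting $i\to\infty$ along the chosen subsequence gives $\sum_{j=1}^N d(\chi(t_{j-1}),\chi(t_j))\leq A_1^{-1}\ell$. Then taking the supremum over all partitions shows $\len(\chi)\leq A_1^{-1}\ell$, which is not quite the sharp inequality; so the bi-Lipschitz estimate alone is too lossy.

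To get the sharp constant I would instead argue more carefully, using that the $\len_i$-length and $\len$-length of a \emph{fixed} rectifiable curve in a small neighborhood $U_p$ are comparable with constants tending to $1$ as the neighborhood shrinks — more precisely, combine Corollary~\ref{lengthsconv} (pointwise convergence $\len_i(\chi')\to\len(\chi')$ for a fixed curve $\chi'$) with the following refinement. For the partition as above, on each piece replace $d(\chi_i(t_{j-1}),\chi_i(t_j))$ by a near-geodesic comparison: since $\chi_i\to\chi$ uniformly for $d$ and all curves eventually lie in $U_p$, we have $d_i(\chi_i(t_{j-1}),\chi_i(t_j))\geq d(\chi(t_{j-1}),\chi(t_j))-\varepsilon_{i,j}$ where $\varepsilon_{i,j}\to 0$; this uses Corollary~\ref{metricbilip} with the observation that on $U_p$ the metrics $d_i$ converge uniformly to $d$ (Lemma~\ref{uniformconv} transported through the chart, which is exactly what Corollary~\ref{metricbilip} encodes together with $d_i\to d$ uniformly). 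Since $\len_i(\chi_i)\geq\sum_j d_i(\chi_i(t_{j-1}),\chi_i(t_j))$, summing and passing to the limit yields $\ell\geq\sum_j d(\chi(t_{j-1}),\chi(t_j))$, and taking the supremum over partitions gives $\ell\geq\len(\chi)$, as desired. The main obstacle is precisely this last point: ensuring the comparison between $d_i(\chi_i(t_{j-1}),\chi_i(t_j))$ and $d(\chi(t_{j-1}),\chi(t_j))$ has error tending to zero \emph{uniformly} in $j$, which requires invoking the uniform convergence $d_i\to d$ on a fixed compact neighborhood of $\chi([0,1])$ rather than merely the bi-Lipschitz bounds; once one has uniform convergence of the metrics on a compact set containing all the curves (which follows from Lemma~\ref{uniformconv} applied in local spacelike-graph charts, together with the hypothesis that $\cl(\Sigma_i)\to\cl(\Sigma)$), the argument closes cleanly.
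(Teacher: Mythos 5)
Your second-pass argument is correct and follows the same underlying strategy as the paper: fix a finite partition, compare the partition sums term by term, pass to the limit in $i$, then take the supremum over partitions. You also correctly identify that the pure bi-Lipschitz comparison of your first pass only yields a lossy factor $A_1^{-1}$ and cannot give the sharp inequality. The paper sidesteps this with slightly different bookkeeping: it applies the bi-Lipschitz bound of Corollary~\ref{metricbilip} only to the error terms $d_i(\chi(t_j),\chi_i(t_j))$, which vanish anyway so the lossy constant $A$ is harmless, feeds them into the triangle inequality in $d_i$ to obtain $\sum_j d_i(\chi(t_j),\chi(t_{j+1}))\leq\len_i(\chi_i)+2\e$, and then invokes Corollary~\ref{lengthsconv} (the convergence $\len_i(\chi)\to\len(\chi)$ for the fixed curve $\chi$) to close the argument, whereas you instead invoke locally uniform convergence of the metrics $d_i\to d$ directly. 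Both routes reach the same place.

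Two small points. Your parenthetical that the locally uniform convergence of $d_i$ to $d$ ``is exactly what Corollary~\ref{metricbilip} encodes together with $d_i\to d$ uniformly'' is imprecise and slightly circular: Corollary~\ref{metricbilip} gives only bi-Lipschitz bounds, with constants not shown to tend to $1$; the locally uniform convergence comes instead from Lemma~\ref{uniformconv} (which is about the graph metrics $d_{s_i}$ over a compact convex $K$) combined with Lemma~\ref{localize}, which guarantees that for nearby points the $d_i$-shortest paths stay inside the chart so that $d_i$ there agrees with $d_{s_i}$. Also, your order of limits --- fix a finite partition $P$, let $i\to\infty$ so that the finitely many error terms go to zero, then take $\sup_P$ --- is the careful ordering; the paper's phrasing ``taking the suprema over partitions, we get $\len_i(\chi)\leq\len_i(\chi_i)+2\e$'' compresses that interchange, and read literally would require the threshold on $i$ to be uniform over all partitions, which the per-term bound $\e/(n+1)$ does not supply for arbitrarily fine partitions. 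Your formulation makes this step explicit.
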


\begin{proof}
We use some ideas from the proof of~\cite[Proposition 3.12]{FS3}. Pick a partition $t_0=0<t_1<\ldots<t_n=1$ and $\e>0$. By Corollary~\ref{metricbilip}, there exists $A>0$ and for every $j=0,\ldots, n$ there exists a neighborhood $U_j$ of $\chi(t_j)$ such that if $p\in U_j$, then \[d_i(\chi(t_j), p)< A\cdot d(\chi(t_j),p).\] Then for all large enough $i$ we have
\[d_i(\chi(t_j),\chi_i(t_j))\leq A\cdot d(\chi(t_j),\chi_i(t_{j}))\leq\frac{\e}{n+1}.\]
Hence $\sum_jd_i(\chi(t_j),\chi_i(t_j))\leq \e$. By the triangle inequality,
\[\sum_j d_i(\chi(t_j),\chi(t_{j+1}))\leq \sum_j d_i(\chi_i(t_j),\chi_i(t_{j+1}))+2\e.\]
By taking the suprema over partitions, we get $\len_i(\chi)\leq \len_i(\chi_i)+2\e.$ By Corollary~\ref{lengthsconv},  $\len_i(\chi)\ra\len(\chi)$. Since $\e>0$ is arbitrary, it follows that $\len(\chi)\leq \liminf\len_i(\chi_i)$. 
\end{proof}

\begin{lm}
\label{catchgeodesics}
For $p \in D_i$ and $r \in \R_{>0}$ denote by $B_i(p,r)$ the closed $r$-ball for $d_i$ around $p$. For every $p\in D$ and every $r \in \R_{>0}$ there exists compact $K \subset D$ such that for all large enough $i$ we have $B_i(p,r) \subset K$. 
\end{lm}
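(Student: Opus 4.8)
The statement to establish is Lemma~\ref{catchgeodesics}: for every $p \in D$ and every $r \in \R_{>0}$ there is a compact $K \subset D$ containing $B_i(p,r)$ for all large enough $i$. The obstruction to make this genuinely content-bearing is that the metrics $d_i$ are not uniformly bi-Lipschitz to $d$ on all of $D$ — they could degenerate near $\pt D$ — so one cannot simply invoke a global comparison. The idea is to argue by contradiction and use the local comparison from Corollary~\ref{metricbilip} together with the fact that $D$ is relatively compact in $S^2$ and $\pt D_i \to \pt D$.

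\textbf{Main steps.} First I would suppose the conclusion fails: then, up to subsequence, there are points $q_i \in B_i(p,r)$ escaping every compact subset of $D$, i.e., $q_i \to q \in \pt D$ (using that $\cl(D) = D \cup \pt D$ and $S^2 \backslash D_i \to S^2 \backslash D$, so $q_i$ cannot accumulate on a point of $D$). Each $q_i$ is joined to $p$ by a shortest path $\chi_i$ for $d_i$ with $\len_i(\chi_i) = d_i(p, q_i) \le r$. Next, the heart of the matter: I cover the closed set $\cl(D)$ — or rather a suitable neighborhood of the compact ``core'' that the $\chi_i$ must traverse — by finitely many of the neighborhoods $U_p$ from Corollary~\ref{metricbilip}, on each of which $d_i$ and $d$ are $(A_1, A_2)$-bi-Lipschitz uniformly in $i$; the constants can be taken uniform over the finite cover. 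Because $\chi_i$ leaves the compact set $K_0 := \bigcup_{\text{finite}} \cl(U_p)$ (as $q_i \notin K_0$ eventually while $p \in K_0$), it crosses the compact ``collar'' frontier of $K_0$ inside $D$; restricting $\chi_i$ to the sub-arc joining $p$ to that first crossing point gives a curve inside $K_0$ of $d_i$-length $\le r$, hence, by the uniform bi-Lipschitz bound on $K_0$, of $d$-length $\le r/A_1$ and of $d$-diameter bounded. Reparameterizing proportionally to $d$-length and applying Arzelà--Ascoli, these sub-arcs converge (up to subsequence) to a rectifiable curve in $\cl(D)$ from $p$ to a point on the frontier of $K_0$; but by construction that frontier point is in $\inter(D)$ and at $d$-distance bounded below by a fixed positive constant from $p$, so this is consistent — the contradiction instead comes from iterating: the escaping point $q_i$ forces $\chi_i$ to cross infinitely many disjoint such collars, so $\len_i(\chi_i) \to \infty$, contradicting $\len_i(\chi_i) \le r$.

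\textbf{Cleaner route.} In fact the cleanest implementation avoids the iteration: take any exhaustion $K_1 \subset K_2 \subset \cdots$ of $D$ by compacts with $K_m \subset \inter(K_{m+1})$, and for each $m$ apply Corollary~\ref{lengthsblilp} to $K_{m+1}$ to get constants $A_1^{(m)}$ with $\len_i \ge A_1^{(m)} \len$ over rectifiable curves in $K_{m+1}$, uniformly in $i$. If $q_i \to q \in \pt D$, then for each fixed $m$ the path $\chi_i$ must exit $K_{m+1}$ for all large $i$; taking the initial sub-arc $\chi_i^{(m)}$ from $p$ until the first exit of $K_{m+1}$, its image lies in $K_{m+1}$, it connects $p$ to $\pt K_{m+1}$, so its $d$-length is at least $\dist_d(p, \pt K_{m+1}) =: \delta_m$, whence $\len_i(\chi_i) \ge \len_i(\chi_i^{(m)}) \ge A_1^{(m)} \delta_m$. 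Since $\delta_m \to \infty$ as $m \to \infty$ (because $K_m$ exhausts $D$ and $d$ is a genuine metric inducing the topology, by Lemma~\ref{sametopology}), choosing $m$ large makes $A_1^{(m)} \delta_m > r$, contradicting $\len_i(\chi_i) = d_i(p,q_i) \le r$. This yields the lemma. The only subtlety I expect to need care with is confirming $\delta_m \to \infty$: this is where one uses that $d$ is complete on $D$ (standing assumption in this part of the appendix) so that closed balls for $d$ are compact, hence eventually contained in some $\inter(K_m)$, forcing $\dist_d(p, \pt K_{m+1}) \to \infty$.
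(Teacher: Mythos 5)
The overall plan — argue by contradiction, assume $q_i \in B_i(p,r)$ escapes every compact, and use the local bi-Lipschitz control of Corollary~\ref{metricbilip}/\ref{lengthsblilp} — is the right framework, and your observation that completeness of $(D,d)$ forces $\delta_m := \dist_d(p, \pt K_{m+1}) \to \infty$ is correct. However, the decisive inequality in your ``cleaner route'' has a genuine gap: you conclude from $\len_i(\chi_i^{(m)}) \geq A_1^{(m)} \delta_m$ that choosing $m$ large gives $A_1^{(m)} \delta_m > r$. But the constants $A_1^{(m)}$ from Corollary~\ref{lengthsblilp} depend on $K_{m+1}$ and can go to zero as $K_{m+1}$ exhausts $D$ (the bi-Lipschitz comparison between $d_i$ and $d$ is only uniform on fixed compacts, and nothing prevents the local constants from degenerating toward $\pt D$). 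So $\delta_m \to \infty$ does not imply $A_1^{(m)} \delta_m \to \infty$; the product could stay bounded. In fact, the assertion that $\inf_i \dist_{d_i}(p, \pt K_{m+1}) \to \infty$ as $m \to \infty$ is essentially the content of the lemma itself, so trying to extract it from $A_1^{(m)} \delta_m$ is circular in spirit. Your ``iterating across collars'' variant in the main steps suffers from the same problem: the $d_i$-widths of successive collars are controlled only by constants that may decay toward the boundary, so infinitely many crossings need not force $\len_i(\chi_i) \to \infty$.

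The paper avoids this difficulty by a bootstrap on $r$ together with Lemma~\ref{lengthsdiagbound} (lower semicontinuity of $\len$ under uniform convergence of curves). In the key step (their Case 2), the bi-Lipschitz constants enter only through the Arzelà--Ascoli compactness argument, to get a convergent subsequence of the truncated paths $\chi_i^j$; the actual bound on the $d$-length of the limit curve comes from $\len(\chi^j) \leq \liminf_i \len_i(\chi_i^j) \leq r_j$, which does not involve the constants at all. Your argument could be salvaged in a similar way: instead of trying to bound $\len_i(\chi_i^{(m)})$ from below by $A_1^{(m)}\len(\chi_i^{(m)})$, use $\len_i(\chi_i^{(m)}) \leq r$ together with Corollary~\ref{lengthsblilp} (for Arzelà--Ascoli only) and Lemma~\ref{lengthsdiagbound} to produce a limiting curve $\chi^{(m)}$ in $K_{m+1}$ from $p$ to a point of $\pt K_{m+1}$ with $\len(\chi^{(m)}) \leq r$; since $\len(\chi^{(m)}) \geq \delta_m$, this yields $\delta_m \leq r$, contradicting $\delta_m \to \infty$. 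This corrected version is quite close in spirit to the paper's Case 2, though the paper organizes the argument as a bootstrap on the supremal $r$ for which the conclusion holds rather than by an exhaustion of $D$.
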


\begin{proof}
Suppose the converse. 
We fix $p$ and vary $r$. If the claim is true for some value of $r$, then trivially it is true for all smaller values. For all small enough $r$ the claim is true by Corollary~\ref{metricbilip}. Let $r_0>0$ be the supremum of those $r$ for which the claim is true. Suppose that the claim is true for $r_0$. Let $K\subset D$ be the respective compact set. Pick a strictly decreasing sequence $r_i$ converging to $r_0$. Then, up to subsequence, there exist a sequence $\chi_i:[0,r_i] \ra D_i$ of shortest paths for $d_i$ parameterized by lengths such that $\chi(0)=p$ and $q_i:=\chi_i(r_i)$ leave every compact subset of $D$. Define $x_i:=\chi_i(r_0)$. Then $x_i \in K$. Up to subsequence, $x_i$ converge to $x \in K$. Pick a compact neighborhood $U_x \ni x$ in $D$ from Corollary~\ref{metricbilip}. Pick a simple closed curve $Y$ around $x$ in $U_x$. Since $q_i$ leave every compact set, for all large enough $i$ we have $Y \cap \chi_i((r_0, r_i]) \neq \emptyset$. Pick $y_i$ in this intersection. Up to subsequence, $y_i$ converge to $y \in Y$. Thus $y\neq x$, so $d(x, y)>0$. On the other hand, we have $d_i(x_i, y_i)\leq r_i-r_0\ra 0$. By Corollary~\ref{metricbilip}, there exists $A>0$ such that for all large enough $i$ we have $d(x_i,y_i)\leq A\cdot d_i(x_i, y_i)$. Then $d(x_i, y_i)\ra 0$. This is a contradiction.

Now suppose that the claim is not true for $r_0$. Pick a strictly increasing sequence $r_j$ converging to $r_0$. Let $\chi_i: [0, r_0] \ra D_i$ be a sequence of shortest paths for $d_i$ parameterized by lengths such that $q_i=\chi_i(r_0)$ leave every compact subset of $D$. Denote by $\chi_i^j$ the restriction of $\chi_i$ to $[0, r_j]$. For every $j$ and all large enough $i$, by assumption, $\chi_i^j$ belong to compact $K_j \subset D$. By Corollary~\ref{lengthsblilp}, there exists $A_j>0$ such that for all $i$ we have $\len(\chi_i^j)\leq A_j\len_i(\chi_i^j)\leq A_jr_0$. By the Arzel\`a--Ascoli theorem, after a reparameterization, up to subsequence, $\chi_i^j$ converge in $i$ to $\chi^j: [0, r_j] \ra K_j$ uniformly for $d$. By Lemma~\ref{lengthsdiagbound}, we have $\len(\chi^j)\leq \liminf_i \len_i(\chi_i^j)\leq r_j$. We do this subsequently, passing to further subsequences, and construct a curve $\chi: [0, r_0) \ra D$ such that for every $r_j$ from the sequence we have $\len(\chi|_{[0, r_j]})\leq r_j$. On the other hand, $\chi$ leaves every compact subset of $D$. This contradicts to completness of $d$.
\end{proof}

\begin{proof}[Proof of Lemma~\ref{distconverge0}.]
We prove the first claim also up to subsequence, then we can get rid of it by Lemma~\ref{subconvtoconv}. Let $\chi$ be a shortest path between $p$ and $q$ for $d$, $\chi'_i$ be a shortest path between $p_i$ and $p$ for $d_i$ and $\chi''_i$ be a shortest path between $q$ and $q_i$ for $d_i$. Corollary~\ref{metricbilip} shows that $\len_i(\chi'_i)\ra 0$, $\len_i(\chi''_i)\ra 0$. Corollary~\ref{lengthsconv} says that $\len_i(\chi) \ra \len(\chi)$. By considering the concatenation of paths $\chi'_i$, $\chi$ and $\chi''_i$, we see that $\limsup d_i(p_i,q_i) \leq d(p,q)$. 

Due to Corollary~\ref{metricbilip} and Lemma~\ref{catchgeodesics}, there exists a compact $K\subset D$ such that for all large enough $i$ all shortest paths between $p_i$ and $q_i$ for $d_i$ belong to $K$.
Let $\chi_i: [0,1] \ra D_i$ be shortest paths between $p_i$ and $q_i$ for $d_i$. Then, for all large enough $i$ the images of $\chi_i$ are in $K$. Due to Lemma~\ref{lengthsblilp}, there exists $A>0$ such that $\len(\chi_i)\leq A\len_i(\chi_i)$ for all $i$. Since $\limsup d_i(p_i,q_i) \leq d(p,q)$, we have $\len(\chi_i)$ bounded. Hence, after a reparameterization, we can apply the Arzel\`a--Ascoli theorem and see that, up to subsequence, $\chi_i$ converge to a curve $\chi: [0,1] \ra K$ uniformly in $d$. By Lemma~\ref{lengthsdiagbound}, $\len(\chi) \leq \liminf \len_i(\chi_i)$. Thus $d(p,q)\leq \liminf d_i(p_i,q_i).$ This also means that $\chi$ is a shortest path between $p$ and $q$ for $d$, whose image is the Hausdorff limit of the images of $\chi_i$.
\end{proof}

Among others, we will need the following two applications of Lemma~\ref{distconverge0}. For $\rho \in \mc T$ and a convex Cauchy surface $\Sigma \subset \Omega_\rho$ let $l_\Sigma: \pi_1S \ra \R_{>0}$ be the \emph{length function} of $\Sigma$ sending $\gamma \in \pi_1S$ to the infimum of lengths of closed curves in the free homotopy class of $\gamma$. 

\begin{lm}
\label{lensystconv}
Let $\Sigma_i \subset \Omega_\rho$ be a sequence of future-convex Cauchy surfaces converging in the Hausdorff sense to a Cauchy surface $\Sigma$. Then $l_{\Sigma_i} \ra l_\Sigma$ pointwise.
\end{lm}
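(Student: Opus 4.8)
The plan is to deduce Lemma~\ref{lensystconv} as a direct application of Lemma~\ref{distconverge0}, together with the standard fact that in a CAT(0) geodesic surface the length of the geodesic representative of a free homotopy class can be computed by a minimax/limiting procedure over finitely many ``anchor'' points, or more directly by approximating closed curves by piecewise-shortest-path polygonal curves. First I would fix $\gamma \in \pi_1S$ and lift everything to $\tilde\Omega_\rho \subset \A^3$: the universal covers $\tilde\Sigma_i$, $\tilde\Sigma$ of the surfaces, and a deck transformation $T := \theta_\rho(\gamma)$. Since the $\Sigma_i$ are future-convex Cauchy surfaces in the fixed spacetime $\Omega_\rho$ and converge in the Hausdorff sense to $\Sigma$, the closed convex bodies they bound converge, and by Lemmas~\ref{convconv}, \ref{convsurf} (applied in this fixed-holonomy situation, or directly by the hypotheses of Lemma~\ref{distconverge0}) we have $\cl(\tilde\Sigma_i) \to \cl(\tilde\Sigma)$ and the complementary boundary pieces converge as well. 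Projecting from a fixed interior point $p \in \inter(\tilde C_\rho)$ onto the sphere of directions $S^2$, we are exactly in the setting of Lemma~\ref{distconverge0}: the pushed-forward intrinsic metrics $d_i$ on $D_i$ converge, locally bi-Lipschitz and uniformly on compacta, to the intrinsic metric $d$ on $D$, and moreover (since these intrinsic metrics are complete hyperbolic cone-metrics, hence CAT(0)) shortest paths between converging points converge.

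The key observation is that $l_{\Sigma_i}(\gamma)$ equals the translation length of $T$ acting on $(\tilde\Sigma_i, d_i)$, i.e.\ $l_{\Sigma_i}(\gamma) = \inf_{x \in \tilde\Sigma_i} d_i(x, Tx)$, and similarly $l_\Sigma(\gamma) = \inf_{x \in \tilde\Sigma} d(x, Tx)$; this is the standard identification of the length of the geodesic representative with the minimal displacement, valid because the metrics are CAT(0) (so the infimum is attained along an axis when $\gamma$ is hyperbolic, which it is, $\rho$ being Fuchsian). For the upper bound $\limsup l_{\Sigma_i}(\gamma) \le l_\Sigma(\gamma)$: pick $x \in \tilde\Sigma$ nearly realizing the infimum, pick $x_i \in \tilde\Sigma_i$ with $x_i \to x$ (possible by $\cl(\tilde\Sigma_i)\to\cl(\tilde\Sigma)$, and one can keep $x_i$ in a fixed compact fundamental domain); then $Tx_i \to Tx$ since $T$ is a fixed isometry of $\RP^3$ preserving all the surfaces, and by Lemma~\ref{distconverge0}, $d_i(x_i, Tx_i) \to d(x, Tx)$, so $l_{\Sigma_i}(\gamma) \le d_i(x_i, Tx_i)$ gives the bound. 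For the lower bound $\liminf l_{\Sigma_i}(\gamma) \ge l_\Sigma(\gamma)$: let $x_i \in \tilde\Sigma_i$ (in a fixed compact fundamental domain, using that minimal displacement is attained on the $\rho$-cocompact axis modulo the group) realize or nearly realize $\inf_x d_i(x, Tx)$; up to subsequence $x_i \to x \in \tilde\Sigma$; then by Lemma~\ref{distconverge0} again $d_i(x_i, Tx_i) \to d(x, Tx) \ge l_\Sigma(\gamma)$. Combining, $l_{\Sigma_i}(\gamma) \to l_\Sigma(\gamma)$, and since $\gamma$ was arbitrary this is pointwise convergence.

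The main technical obstacle is confining the near-minimizers $x_i$ to a fixed compact subset of the surfaces so that the Arzel\`a--Ascoli/subsequence extraction and the local statements of Lemma~\ref{distconverge0} and Corollary~\ref{metricbilip} apply. This is handled by cocompactness: the axis of $T$ on each $\tilde\Sigma_i$ projects to the closed geodesic in the class $\gamma$ on the compact quotient $\Sigma_i = \tilde\Sigma_i / \theta_\rho(\pi_1S)$, so after translating by a suitable deck transformation we may assume $x_i$ lies in the image under the direction-projection of a fixed compact fundamental domain $D_0 \subset \tilde S$ for the $\pi_1S$-action, which maps into a fixed compact $K \subset S^2$ for all large $i$ by the convergence of the surfaces; here one uses Lemma~\ref{catchgeodesics} (or the local bi-Lipschitz bounds of Corollary~\ref{metricbilip}) to guarantee that the relevant shortest paths and displacement-minimizing points do not escape $K$. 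A secondary, minor point is the case analysis for whether $\gamma$ is represented by a simple curve or a self-intersecting one, but this does not affect the argument since $l_{\Sigma}(\gamma)$ is defined as an infimum over the whole free homotopy class and the minimal-displacement formula holds regardless. Everything else is routine given Lemma~\ref{distconverge0}, so I expect the proof to be short.
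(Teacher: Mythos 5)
Your overall strategy—lift to $\tilde\Omega_\rho$, bring in the convex bodies bounded by $\tilde\Sigma_i\cup\Lambda_\rho\cup\pt^-\tilde\Omega_\rho$, pass to the spherical projections $D_i$, $D$ and apply Lemma~\ref{distconverge0} for the upper bound and Lemma~\ref{catchgeodesics} for the lower—is the same as the paper's, and your upper-bound argument via an almost-minimizing point on $\tilde\Sigma$ is correct and essentially identical to the paper's ``consider a representative curve in $D$.''

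There is, however, a concrete gap in your lower-bound step. You claim we may take $x_i$ to be a near-minimizer of $\inf_x d_i(x,Tx)$ for the \emph{fixed} $T=\theta_\rho(\gamma)$ \emph{and} simultaneously lying in (the projection of) a fixed compact fundamental domain for $\pi_1S$, justifying this by the cocompactness of the axis of $T$ modulo the group. These two requirements are incompatible in general: the near-minimizers of $x\mapsto d_i(x,Tx)$ live on (or near) the axis $A_i$ of $T$ in $D_i$, and the position of $A_i$ relative to the fixed fundamental domain $F_i$ drifts with $i$. When you translate a point of $A_i$ into $F_i$ by some $\gamma_i\in\pi_1S$, the quantity $d_i(\cdot,T\cdot)$ evaluated at the translated point is no longer the translation length of $T$; it equals the translation length only if you simultaneously replace $T$ by the conjugate $T_i:=\theta_\rho(\gamma_i^{-1}\gamma\gamma_i)$. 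After that replacement, $T_i$ depends on $i$, so the conclusion $d_i(x_i,T_ix_i)\to d(x,Tx)$ with fixed $T$ does not follow from Lemma~\ref{distconverge0} until you prove that, up to a subsequence, $T_i$ stabilizes. This is where Lemma~\ref{catchgeodesics} and the already-established upper bound enter in the paper's proof: they confine both endpoints of the lifted minimizing curve to a compact $K\subset D$, and then proper discontinuity of the $\theta_\rho$-action forces only finitely many candidate group elements, hence stabilization; only then does Lemma~\ref{distconverge0} yield $l_\Sigma(\gamma)\le\liminf l_{\Sigma_i}(\gamma)$. The paper sidesteps the issue at the level of notation by phrasing everything in terms of representative curves $\chi_i$ (with one endpoint placed in $F_i$ and the other controlled by the length bound), rather than pinning a fixed deck transformation in advance; your translation-length formulation makes the hidden dependence on the conjugate visible, and the proposal as written asserts something false. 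You should either switch to the curve formulation, or explicitly carry the varying conjugate $T_i$ and insert the proper-discontinuity argument for its stabilization.

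A minor inaccuracy, unrelated to the gap: you describe the intrinsic metrics as ``complete hyperbolic cone-metrics, hence CAT(0).'' For a general future-convex Cauchy surface the intrinsic metric need not be a cone-metric (Lemma~\ref{intrmetrcone} applies to bent surfaces $\Sigma(f)$, not to arbitrary convex Cauchy surfaces), though the CAT(0) conclusion is still available from convexity and the Gauss equation. In any case the paper's proof does not invoke CAT(0) at all; completeness and local compactness suffice to produce shortest representatives.
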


\begin{proof}
Pick $\tilde \Omega_\rho$ and let $\tilde \Sigma_i$, $\tilde \Sigma$ be the preimages of $\Sigma_i$, $\Sigma$. Then $\cl(\Sigma_i) \ra \cl(\Sigma)$. Using the convex bodies bounded by $\tilde\Sigma_i \cup \Lambda_\rho \cup \pt^-\tilde\Omega_\rho$ we get to the setting of Lemma~\ref{distconverge0}. Pick $\gamma \in \pi_1S$. From considering a representative curve in $D$ for $l_\Sigma(\gamma)$ and using Lemma~\ref{distconverge0}, we see that $\limsup l_{\Sigma_i}(\gamma)\leq l_\Sigma(\gamma)$. One can pick a compact fundamental domain $F \subset D$ for the action of $\pi_1S$ on $D$ coming from the action on $\tilde\Sigma$ and pick compact fundamental domains $F_i$ for the actions on $D_i$ coming from $\tilde\Sigma_i$ so that $F_i \ra F$ in the Hausdorff sense. Pick representative curves $\chi_i$ for $l_{\Sigma_i}(\gamma)$ in $D_i$ starting in $F_i$. Since $F_i \ra F$ and since $\limsup l_{\Sigma_i}(\gamma)\leq l_\Sigma(\gamma)$, for an arbitrary point $p \in F$ there exists $r>0$ such that the endpoints of $\chi_i$ are contained in $B_i(p,r)$, where $B_i(p,r)$ is the closed $r$-ball for $d_i$ around $p$. Lemma~\ref{catchgeodesics} states that then these endpoints belong to a compact set $K \subset D$. Then it follows from Lemma~\ref{distconverge0} that $l_\Sigma(\gamma) \leq \liminf l_{\Sigma_i}(\gamma)$.
\end{proof}

Next, recall the notation from Section~\ref{sec:bentsurf}. For $(\rho, f) \in \mc P_-^w$ and $v \in V$ such that $f(v) \in \Sigma(f)$, define $l_{f,v}: \pi_1S \ra \R_{>0}$ to send $\gamma \in \pi_1S$ to the infimum of lengths of closed curves on $\Sigma$ based at $f(v)$ in the homotopy class of $\gamma$.

\begin{lm}
\label{notalldegen}
Let $(\rho_i, f) \ra (\rho, f)$ in $\mc P_-^w$ and for $v \in V$ we have $f_i(v) \in \Sigma(f_i)$. Then also $f(v) \in \Sigma(f)$ and $l_{f_i,v}\ra l_{f,v}$ pointwise.
\end{lm}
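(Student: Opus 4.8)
The plan is to argue, as in the proof of Lemma~\ref{lensystconv}, via the convergence result Lemma~\ref{distconverge0}, but keeping track of a base point. First I would fix lifts: pick $\tilde\Omega_{\rho_i}$ and $\tilde\Omega_\rho$, let $\tilde\Sigma_i:=\Sigma(\tilde f_i)$ and $\tilde\Sigma:=\Sigma(\tilde f)$ be the future-convex boundary components of $\clconv(\tilde f_i)$ and $\clconv(\tilde f)$, and choose lifts $(\rho_i,\tilde f_i)\to(\rho,\tilde f)$ in $\tilde{\mc P}_-^w$. By Lemma~\ref{convsurf}, $\cl(\tilde\Sigma_i)\to\cl(\tilde\Sigma)$, and by Lemma~\ref{convtemp1-} the complementary boundary pieces $\pt\clconv(\tilde f_i)\setminus\tilde\Sigma_i$ converge to $\pt\clconv(\tilde f)\setminus\tilde\Sigma$; this puts us in the setting of Lemma~\ref{distconverge0} (after projecting from an interior point to the sphere of directions, with the intrinsic metrics $d_i$, $d$ complete because these are closed convex Cauchy surfaces). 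The lifted base points $\tilde f_i(\tilde v)\to \tilde f(\tilde v)$, and since $\tilde f_i(\tilde v)\in\tilde\Sigma_i$, passing to the limit and using the Hausdorff convergence $\cl(\tilde\Sigma_i)\to\cl(\tilde\Sigma)$ together with $\tilde f(\tilde v)\notin\Lambda_\rho$ (it lies in $\tilde\Omega^+_\rho\cup\pt^+\tilde C_\rho$) gives $\tilde f(\tilde v)\in\tilde\Sigma$, hence $f(v)\in\Sigma(f)$.

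For the convergence of length functions I would mimic the proof of Lemma~\ref{lensystconv} verbatim, but now with closed curves \emph{based at} $\tilde f_i(\tilde v)$ rather than free homotopy classes. Fix $\gamma\in\pi_1S$. For the $\limsup$ bound: take a representative curve in the quotient surface $\Sigma(f)$, i.e.\ an arc in the sphere-of-directions model $D$ joining the projection of $\tilde f(\tilde v)$ to the projection of $\theta_\rho(\gamma)\tilde f(\tilde v)$ whose length is close to $l_{f,v}(\gamma)$; since $\tilde f_i(\tilde v)\to\tilde f(\tilde v)$ and $\theta_{\rho_i}(\gamma)\tilde f_i(\tilde v)\to\theta_\rho(\gamma)\tilde f(\tilde v)$ (continuity of the holonomy action), Lemma~\ref{distconverge0} applied to these two sequences of endpoints yields $\limsup_i l_{f_i,v}(\gamma)\le l_{f,v}(\gamma)$. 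For the $\liminf$ bound: pick shortest paths $\chi_i$ in $D_i$ realizing $l_{f_i,v}(\gamma)$, starting at the projection of $\tilde f_i(\tilde v)$. Their starting points converge, and by the already established $\limsup$ bound their $d_i$-lengths are uniformly bounded, so their endpoints lie in some $d_i$-ball $B_i(p,r)$ around a fixed point $p$; Lemma~\ref{catchgeodesics} then confines these endpoints to a compact $K\subset D$, and Lemma~\ref{distconverge0} (the ``shortest paths subconverge'' part, or just the distance-convergence part applied to the endpoint sequences) gives $l_{f,v}(\gamma)\le\liminf_i l_{f_i,v}(\gamma)$. Combining, $l_{f_i,v}(\gamma)\to l_{f,v}(\gamma)$, and by Lemma~\ref{subconvtoconv} the convergence holds for the full sequence.

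The one genuinely delicate point — the main obstacle — is handling the base point when $f(v)\in\pt^+C_\rho$, i.e.\ when $\Sigma(\tilde f)$ touches the past-boundary of the convex core and may fail to be polyhedral there. In that case the intrinsic metric near $\tilde f(\tilde v)$ is still a genuine (complete) length metric by Lemma~\ref{sametopology} and Lemma~\ref{intrmetrcone}, and Lemma~\ref{distconverge0} was proven precisely for sequences of spacelike convex bodies with the relevant Hausdorff convergences, so it does apply; but one must be careful that the base points $\tilde f_i(\tilde v)$ (which live on $\tilde\Sigma_i$, possibly far from $\pt^+\tilde C_{\rho_i}$) converge to a point of $\tilde\Sigma$ \emph{in the intrinsic metric}, not merely in $\RP^3$. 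This follows because on a neighborhood in $D$ the intrinsic and ambient topologies agree (Corollary~\ref{metricbilip} / Lemma~\ref{sametopology}), so ambient convergence of the base points plus membership in $\cl(\tilde\Sigma_i)$ upgrades to intrinsic convergence once we know $\tilde f(\tilde v)\in\tilde\Sigma$. I would write this out carefully but expect no new ideas beyond those already used for Lemmas~\ref{lensystconv} and~\ref{distconverge0}.
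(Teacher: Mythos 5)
Your proof is correct and rests on the same foundation as the paper's — Lemma~\ref{distconverge0} applied to the configuration of converging convex bodies — but you make it considerably harder than it needs to be by importing wholesale the $\limsup$/$\liminf$ structure of the proof of Lemma~\ref{lensystconv}. That structure (together with Lemma~\ref{catchgeodesics} to control the wandering endpoints of the closed geodesics) is needed there because free homotopy classes allow the basepoint to move. Here the basepoint is pinned, and the crucial observation is that the based length function is literally an intrinsic distance in the universal cover: $l_{f,v}(\gamma)=d(\tilde f(v),\tilde f(\gamma v))$ and $l_{f_i,v}(\gamma)=d_i(\tilde f_i(v),\tilde f_i(\gamma v))$. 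Since $\tilde f_i(v)\to\tilde f(v)$ and $\tilde f_i(\gamma v)=\theta_{\rho_i}(\gamma)\tilde f_i(v)\to\theta_\rho(\gamma)\tilde f(v)=\tilde f(\gamma v)$, the full convergence $l_{f_i,v}(\gamma)\to l_{f,v}(\gamma)$ is a single application of the distance-convergence conclusion of Lemma~\ref{distconverge0}, with no need for shortest-path compactness or separate upper and lower bounds. You flag this shortcut parenthetically (``or just the distance-convergence part'') but don't commit to it; the paper does, and that is the whole proof. Your worry about the basepoint landing on $\pt^+\tilde C_\rho$ is likewise already absorbed by the hypotheses of Lemma~\ref{distconverge0}, which does not require polyhedrality.
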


\begin{proof}
The claim that $f(v) \in \Sigma(f)$ is clear from the topology of $\mc P_-^w$, we only need to show the second claim. We lift all to $\tilde{\mc P}_-^w$ so that we are in the setting of Lemma~\ref{distconverge0}. Clearly, $l_{f_i, v}(\gamma)=d_i(\tilde f_i(v), \tilde f_i(\gamma v))$, $l_{f,v}(\gamma)=d(\tilde f(v), \tilde f(\gamma v))$. Now the claim follows from Lemma~\ref{distconverge0}.
\end{proof}

%
%
%
%
\renewcommand\thesection{\Alph{section}.}
\section{A Busemann--Feller-type lemma}
\label{sec:busfel}
\renewcommand\thesection{\Alph{section}}

Here we prove a fact that seems important on its own. Pick arbitrary $\rho \in \mc T$. The goal of this section is to show

\begin{lm}
\label{comparison}
Let $\Sigma \subset \Omega_\rho$ be a future-convex Cauchy surface and $\Sigma_0 \subset \Omega_\rho$ be a $C^1$ Cauchy surface in the strict past of $\Sigma$. 
Then for any $\gamma \in \pi_1S$ we have $l_{\Sigma_0}(\gamma)\leq l_\Sigma(\gamma)$. 
\end{lm}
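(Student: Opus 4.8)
I would prove the inequality by constructing, in the universal cover, a $\theta_\rho$-equivariant ``past-normal projection'' $\pi\colon\tilde\Sigma\to\tilde\Sigma_0$ which is $1$-Lipschitz for the intrinsic metrics, and then reading off the bound on translation lengths. Two preliminary reductions: (i) Lift $\Sigma,\Sigma_0$ to $\theta_\rho$-invariant surfaces $\tilde\Sigma,\tilde\Sigma_0\subset\tilde\Omega_\rho$; since both surfaces are compact in the quotient, their intrinsic metrics are complete, and $l_\Sigma(\gamma)=\inf_{p\in\tilde\Sigma}d_{\tilde\Sigma}(p,\theta_\rho(\gamma)p)$, and likewise for $\Sigma_0$. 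Thus it suffices to find a $\theta_\rho$-equivariant $1$-Lipschitz map $\pi\colon\tilde\Sigma\to\tilde\Sigma_0$. (ii) By Lemma~\ref{smoothing} one may approximate $\Sigma$ in the Hausdorff sense by smooth strictly future-convex Cauchy surfaces $\Sigma^j$; for large $j$ the surface $\Sigma_0$ still lies strictly to the past of $\Sigma^j$ (compactness), and Lemma~\ref{lensystconv} gives $l_{\Sigma^j}(\gamma)\to l_\Sigma(\gamma)$, so it is enough to treat $\Sigma$ smooth and strictly future-convex.

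\textbf{The projection and its differential.} For $p\in\tilde\Sigma$ let $n_p$ be the past-directed unit normal and $\gamma_p(\tau)=\exp_p(\tau n_p)$ the past-normal geodesic; since $\tilde\Sigma_0$ is Cauchy and $p$ lies to its future, $\gamma_p$ meets $\tilde\Sigma_0$ in a single point at some parameter $\tau_p>0$, and I set $\pi(p):=\gamma_p(\tau_p)$. As $\Sigma$ is smooth strictly convex and $\Sigma_0$ is $C^1$, $\pi$ is smooth, and it is $\theta_\rho$-equivariant by construction. Varying $p$ with speed $v\in T_p\tilde\Sigma$ yields $d\pi_p(v)=J(\tau_p)+\dot\tau_0\,\dot\gamma_p(\tau_p)$, where $J$ is the Jacobi field along $\gamma_p$ with $J(0)=v$ and $\tfrac{DJ}{d\tau}(0)=\nabla_v n$. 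Since $\langle n,n\rangle\equiv-1$ along $\Sigma$, both $J(0)$ and $J'(0)$ are orthogonal to $\dot\gamma_p$, so $\langle J(\tau),\dot\gamma_p(\tau)\rangle\equiv0$ and hence $\|d\pi_p(v)\|^2=\|J(\tau_p)\|^2-\dot\tau_0^2\le\|J(\tau_p)\|^2$. Because $\A^3$ has constant curvature $-1$ and $\gamma_p$ is timelike, the Jacobi equation on $\dot\gamma_p^{\perp}$ is $J''+J=0$ in a parallel frame, so $J(\tau)=P_\tau(\cos\tau\,v-\sin\tau\,Sv)$ with $S:=-\nabla n$ the Weingarten operator and $P_\tau$ parallel transport. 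Future-convexity of $\Sigma$ means precisely $\langle Sv,v\rangle\ge0$, and then
\[\|J(\tau)\|^2=\cos^2\tau\,\|v\|^2-\sin 2\tau\,\langle Sv,v\rangle+\sin^2\tau\,\|Sv\|^2\le\|v\|^2\qquad\text{for all }\tau\in[0,\tau_f(p)],\]
where $\tau_f(p)=\arctan\!\big(1/\lambda_{\max}(p)\big)$ is the first focal distance of $\Sigma$ along $n_p$; the estimate follows from $\langle (S^2-2\lambda_{\max}S)v,v\rangle\le0$ (each eigenvalue $\mu\in[0,\lambda_{\max}]$ satisfies $\mu(\mu-2\lambda_{\max})\le0$). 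Consequently $\|d\pi_p\|\le1$, hence $\pi$ is $1$-Lipschitz, \emph{provided} $\tau_p\le\tau_f(p)$ for every $p$.

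\textbf{The main obstacle.} The remaining point — that the past-normal geodesic from $p$ reaches $\tilde\Sigma_0$ before focusing — is the crux. I would establish it by showing that the past-normal exponential map of $\tilde\Sigma$, restricted to $\{(p,\tau):0\le\tau<\tau_f(p)\}$, is a diffeomorphism onto the entire portion of $\tilde\Omega_\rho$ lying to the past of $\tilde\Sigma$; equivalently, the focal locus of $\tilde\Sigma$ along past normals does not meet $\tilde\Omega_\rho$, collapsing instead onto $\pt\tilde\Omega_\rho$. Granting this, $\tilde\Sigma_0\subset\tilde\Omega_\rho$ lies in the swept region, so $\tau_p<\tau_f(p)$ for all $p$. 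The proof of this structural fact should combine the convexity of the domain $\tilde\Omega_\rho$, the global hyperbolicity of $\tilde\Omega_\rho$ (a past-directed timelike geodesic exits $\tilde\Omega_\rho$), and the constant-curvature geometry of $\A^3$ (as already visible for $\rho=\rho_\circ$, where the focal locus is the single boundary point $o$); one can first check it when $\Sigma_0$ is close to $\Sigma$ — there are then no focal points at all — and extend by a connectedness/exhaustion argument along the past-normal flow of $\tilde\Sigma$.

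\textbf{Conclusion.} Once $\pi$ is $1$-Lipschitz and $\theta_\rho$-equivariant, for every $p\in\tilde\Sigma$ and $\gamma\in\pi_1S$ we get
\[l_{\Sigma_0}(\gamma)\le d_{\tilde\Sigma_0}\!\big(\pi(p),\theta_\rho(\gamma)\pi(p)\big)=d_{\tilde\Sigma_0}\!\big(\pi(p),\pi(\theta_\rho(\gamma)p)\big)\le d_{\tilde\Sigma}\!\big(p,\theta_\rho(\gamma)p\big),\]
and taking the infimum over $p$ gives $l_{\Sigma_0}(\gamma)\le l_\Sigma(\gamma)$; the general (non-smooth) $\Sigma$ then follows from reduction (ii).
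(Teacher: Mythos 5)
Your reductions (i)--(ii) and the final step are fine and match the paper's use of Lemmas~\ref{smoothing} and~\ref{lensystconv}, and your Jacobi-field computation is correct \emph{as far as it goes}. But the step you flag as ``the main obstacle'' is not merely unproven --- the claim is false, and the whole strategy collapses with it. The focal locus of a future-convex Cauchy surface along its past normals does \emph{not} in general avoid $\tilde\Omega_\rho$. Your own formula shows why: the first focal distance at $p$ is $\arctan(1/\lambda_{\max}(p))$, which tends to $0$ as $\lambda_{\max}(p)\to\infty$, while $p$ sits at a definite timelike distance from $\pt^+\tilde\Omega_\rho$. Concretely, in the Fuchsian case take $\Sigma$ to be the surface equidistant from $o$ at distance $s$ close to $\pi/2$, modified by a small sharp past-pointing bump at one point (the boundary of a convex body with a smoothed spike); at the tip, $\lambda_{\max}$ is as large as you like, so the focal point lies at distance $\approx 1/\lambda_{\max}$ below the tip, well inside $\tilde\Omega^+_{\rho_\circ}$. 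If $\Sigma_0$ is an equidistant surface much closer to $o$, then $\tau_p>\tau_f(p)$ at the tip, and your own formula gives $\|J(\tau_p)\|\approx|\cos\tau_p-\lambda_{\max}\sin\tau_p|\,\|v\|\gg\|v\|$: the single-shot normal projection genuinely \emph{expands} there. The equidistant/umbilic example you cite (focal locus $=\{o\}\subset\pt\tilde\Omega_{\rho_\circ}$) is misleadingly special, and no connectedness or exhaustion argument can establish a statement that fails already for a small perturbation of it. Since the curve nearly realizing $l_\Sigma(\gamma)$ may pass through such a region, the pushed-forward curve gives no upper bound on $l_{\Sigma_0}(\gamma)$.

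The repair --- and the route the paper takes --- is to abandon the geodesic normal projection from $\Sigma$ and instead project along the normal flow of a \emph{foliation} by future-convex Cauchy surfaces interpolating between (a smoothing of) $\Sigma$ and a $K$-surface in the past of $\Sigma_0$ (Theorem~\ref{BBZ} and Lemma~\ref{flow}, the latter built from Gerhardt's curvature flow applied to the dual surfaces). Projecting leaf by leaf re-normalizes the direction at every instant, so one only ever uses the infinitesimal contraction between two nearby convex leaves (Lemma~\ref{expand}); the focal behaviour of any single leaf over a macroscopic time interval never enters. Your pointwise Jacobi estimate is essentially the infinitesimal version of that contraction, but it cannot be integrated over a fixed normal congruence --- the existence of the interpolating foliation is the real content you are missing.
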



This lemma resembles a Lorentzian analogue of the renowned Busemann--Feller lemma. There are multiple results of this flavor in the literature: see, e.g.,~\cite[Proposition 6.1]{BBZ}, \cite[Proposition 4.1]{Bel} or~\cite[Lemma 5.3]{BSS}. However, they all do not apply to the generality that we require. We begin with

\begin{lm}
\label{expand}
Suppose that we have a $C^2$-foliation of a globally hyperbolic (2+1)-spacetime $\Omega$ by future-convex Cauchy surfaces.
Let $L \subset \Omega$ be a leaf of the foliation and $\chi: [0,1] \ra \Omega$ be a spacelike rectifiable curve in the past of $L$. Let $\psi$ be the projection of $\chi$ to $L$ along the normal flow of the foliation. Then $\len(\chi)\leq \len(\psi)$.
\end{lm}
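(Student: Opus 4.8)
The assertion is that, for a $C^2$-foliation of a globally hyperbolic $(2+1)$-spacetime by future-convex Cauchy surfaces, the normal flow from a spacelike rectifiable curve $\chi$ in the past of a leaf $L$ up to $L$ is \emph{length-nondecreasing}: $\len(\chi) \leq \len(\psi)$. This is the Lorentzian reversal of the usual Riemannian Busemann--Feller phenomenon, and it is precisely what makes future-convex Cauchy surfaces ``spread apart'' as one flows to the future. The plan is to reduce the statement to an infinitesimal computation about the shape operator of the leaves and then integrate.

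\textbf{Step 1: Reduce to the smooth case and to tangent vectors.} Since $\chi$ is spacelike rectifiable, by~(\ref{spacelikerect}) we have $\len(\chi) = \int_0^1 \|\dot\chi\|$, and $\chi$ is differentiable almost everywhere with non-timelike (in fact, after discarding the lightlike null set, spacelike) tangent vectors. The normal flow $\Phi_t$ of the foliation is $C^1$ in the spacetime coordinates (the unit normal field $N$ is $C^1$ since the foliation is $C^2$), so $\psi = \Phi_{t(\cdot)} \circ \chi$ where $t(u)$ is the (continuous, piecewise-$C^1$ along the flow parameter) ``time needed to reach $L$''; $\psi$ is again differentiable a.e. and $\len(\psi) = \int_0^1\|\dot\psi\|$. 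Thus it suffices to show the pointwise inequality $\|d\Phi_T(v)\| \geq \|v\|$ for every spacelike vector $v$ tangent to a leaf $L'$ in the past of $L$, where $T>0$ is the flow time from $L'$ to $L$ and $d\Phi_T$ carries tangent vectors of $L'$ to tangent vectors of $L$. (Here I use that $\dot\psi(u)$ differs from $d\Phi_{t(u)}(\dot\chi(u))$ by a multiple of the normal direction $N$, which is orthogonal to the leaf and timelike, so it only \emph{increases} the spacelike norm — this is the reverse triangle inequality for the Lorentzian metric restricted to the plane spanned by a spacelike leaf-vector and the timelike normal; I would record this as a short sublemma.)

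\textbf{Step 2: The shape-operator ODE and the sign that matters.} Parameterize the normal flow by arc length $t$ of the timelike normal geodesics; let $g_t$ be the induced (Riemannian) metric on the leaf $L_t$ pulled back to a fixed leaf, and let $B_t$ be the shape operator of $L_t$ with respect to the future-pointing unit normal. The standard Riccati/evolution equation along a normal geodesic in a Lorentzian manifold reads $\frac{d}{dt} g_t(X,X) = 2\, g_t(B_t X, X)$ for a vector field $X$ Lie-transported by the flow, together with $\dot B_t = B_t^2 + R_t$, where $R_t$ is (a sign of) the curvature operator $X \mapsto R(X,N)N$. The key geometric input is the \emph{future-convexity} of the leaves: with our conventions (future cone on the convex side, future-pointing normal), future-convexity forces the second fundamental form, hence $B_t$, to be positive semidefinite. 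Therefore $\frac{d}{dt} g_t(X,X) = 2 g_t(B_t X, X) \geq 0$, i.e.\ the leaf metric is nondecreasing along the future normal flow. Integrating from $L'$ to $L$ gives $g_T(d\Phi_T v, d\Phi_T v) = \|d\Phi_T v\|^2 \geq \|v\|^2$, which is exactly the pointwise inequality of Step 1.

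\textbf{Step 3: Assemble.} Combining Steps 1--2: along $\psi$ we have, for a.e.\ $u$, $\|\dot\psi(u)\|^2 \geq \|d\Phi_{t(u)}(\dot\chi(u))\|^2 \geq \|\dot\chi(u)\|^2$, whence $\len(\psi) = \int_0^1\|\dot\psi\| \geq \int_0^1\|\dot\chi\| = \len(\chi)$. The main obstacle I anticipate is the careful bookkeeping in Step 1: verifying that $u \mapsto t(u)$ has enough regularity for the chain rule to apply a.e.\ (this follows from the $C^1$ structure of the flow and of the leaf containing the endpoint of a given normal geodesic, together with the spacelike-rectifiability of $\chi$ giving an absolutely continuous reparameterization), and making the sublemma about the reverse triangle inequality precise so that the normal component of $\dot\psi$ genuinely contributes nonnegatively rather than being discarded. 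The sign convention tying ``future-convex'' to ``$B_t \geq 0$'' also must be pinned down against the conventions fixed in Section~\ref{sec:geometries} and Section~\ref{sec:bentsurf}; once that is settled, Step 2 is the standard Riccati comparison and Step 3 is immediate. Note that this lemma is the tool behind Lemma~\ref{comparison} via Theorem~\ref{BBZ}, so it is only the direction $\len(\chi)\leq\len(\psi)$ that is needed downstream.
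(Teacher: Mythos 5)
The paper itself does not prove this lemma; it cites~\cite[Proposition 6.1]{BBZ} with the remark that the argument only uses convexity of the leaves, so your attempt at a self-contained proof is welcome, and your overall strategy (reduce to an infinitesimal statement, use future-convexity to get a sign on the shape operator, integrate) is the right one. There is, however, a genuine gap: Steps 1--2 implicitly assume the foliation is \emph{equidistant}. You speak of ``timelike normal geodesics'' and of $d\Phi_T$ ``carrying tangent vectors of $L'$ to tangent vectors of $L$'', but for a general $C^2$-foliation the integral curves of the unit normal field $N$ are not geodesics ($\nabla_N N\neq 0$), the time-$T$ flow does not map leaves to leaves, and a Lie-transported field $X$ that starts leaf-tangent does not stay leaf-tangent. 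One then gets $\tfrac{d}{ds}\langle X,X\rangle = 2\langle\nabla_{X^\top}N,X^\top\rangle+2\langle X,N\rangle\langle\nabla_NN,X^\top\rangle$, whose second term has no sign, so the pointwise inequality $\|d\Phi_T(v)\|\geq\|v\|$ you reduce to is not true in general; moreover, in Step 3 you apply it to $\dot\chi(u)$, which need not be tangent to the leaf through $\chi(u)$ at all, since $\chi$ is an arbitrary spacelike curve. This matters for the paper: the foliation to which the lemma is actually applied (Lemma~\ref{flow}, produced by Gerhardt's curvature flow) is not equidistant, so your argument as written only covers cosmological-time-type foliations as in~\cite{BBZ}.

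The repair is to make the \emph{tangential} norm the monotone quantity. With $X^\top:=X+\langle X,N\rangle N$ the projection onto the leaf through the current point, the two $\nabla_NN$-terms cancel and one finds $\tfrac{d}{ds}\|X^\top\|^2=2\langle\nabla_{X^\top}N,X^\top\rangle\geq 0$, using only the future-convexity of every leaf. Your Step 1 observation correctly gives $\dot\psi(u)=X^\top$ at the top endpoint, and at the bottom $\|X^\top(0)\|^2=\langle\dot\chi,\dot\chi\rangle+\langle\dot\chi,N\rangle^2\geq\|\dot\chi\|^2$; the chain $\|\dot\psi\|^2=\|X^\top(\tau)\|^2\geq\|X^\top(0)\|^2\geq\|\dot\chi\|^2$ then yields the lemma for arbitrary foliations. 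The measure-theoretic bookkeeping you flag is indeed routine, and the Riccati equation $\dot B_t=B_t^2+R_t$ is not needed, since positivity of the shape operator on every leaf is a hypothesis rather than something to be propagated.
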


This is shown in~\cite[Proposition 6.1]{BBZ} by Barbot--Beguin--Zeghib. Note that there the authors speak about a precise foliation, but they only use the convexity of the leaves.

To prove Lemma~\ref{comparison}, we need to construct a foliation. First, we will employ

\begin{lm}
\label{smoothing}
Let $\Sigma$ be a future-convex Cauchy surface in $\Omega_\rho$. Then it can be approximated in the Hausdorff sense by smooth strictly future-convex Cauchy surfaces.
\end{lm}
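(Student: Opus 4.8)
The plan is to reduce the statement to a standard mollification of a convex function, performed $\pi_1S$-equivariantly on the universal cover. First I would pass to $\tilde\Omega_\rho\subset\A^3$, where $\Sigma$ lifts to a $\theta_\rho$-invariant entirely convex spacelike surface $\tilde\Sigma$, the boundary of the convex region $\tilde A\subset\tilde\Omega_\rho$ lying to its future. Since $\Sigma$ is compact, its supporting planes vary continuously and are uniformly spacelike on the compact quotient; hence one can cover $\Sigma$ by finitely many open sets $W_1,\dots,W_N$ together with affine charts $\varphi_j$ of $\RP^3$ and spacelike planes $\Pi_j$ so that each lift of $\Sigma\cap W_j$ is the graph over a convex domain $D_j\subset\Pi_j$ of a convex, uniformly Lipschitz function $u_j$ whose graph has tangent planes uniformly bounded away from the lightcone (this is precisely the local picture of a C-convex function used in Appendix~\ref{sec:intmet}). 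It is harmless to first shrink $\tilde A$ slightly to the past (passing to the convex region of points at timelike distance $\geq\delta$ from the complement), which keeps the boundary future-convex, spacelike and Cauchy and leaves room to stay inside $\tilde\Omega_\rho$ and inside the chart domains after a small perturbation.

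The core step is to smooth the $u_j$ and patch the results $\theta_\rho$-equivariantly. In each chart set $u_j^{\varepsilon}:=u_j*\phi_{\varepsilon}+\varepsilon\,q_j$, where $\phi_{\varepsilon}$ is a standard symmetric mollifier and $q_j$ is a fixed smooth strictly convex function in the coordinates of $\varphi_j$ (say a Euclidean paraboloid); this is smooth and strictly convex, and $C^0$-close to $u_j$ on a slightly shrunk domain. To assemble these into one surface one fixes a $\pi_1S$-invariant, locally finite refinement of $\{W_j\}$ obtained by translating a fundamental domain, with a subordinate smooth partition of unity $\{\chi_j\}$, and takes the perturbed surface to be the boundary of the convex hull (equivalently, the appropriate patched graph $\sum_j\chi_j v_j$ along the timelike fibres of a reference Cauchy surface) of the local models. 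Because all the $u_j$ are restrictions of the same convex surface $\Sigma$, one may arrange that on the overlaps $W_i\cap W_j$ the corresponding local models agree to second order, and that after shrinking the $W_j$ the transition maps between the affine charts are $C^2$-close to affine ones; then the patched surface is globally convex, and in fact strictly convex after replacing $\varepsilon$ by a controlled multiple of itself. By construction everything is $\pi_1S$-equivariant, so the resulting surface $\tilde\Sigma_{\varepsilon}$ descends to a compact surface $\Sigma_{\varepsilon}\subset\Omega_\rho$.

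Finally I would verify the required properties of $\Sigma_{\varepsilon}$. It is smooth and strictly convex (nondegenerate shape operator) by construction. It is spacelike for $\varepsilon$ small, since $\Sigma$ has tangent planes uniformly bounded away from the lightcone and $u_j^{\varepsilon}\to u_j$ in $C^0$ with gradients staying in a fixed compact set of spacelike directions, so $\Sigma_{\varepsilon}$ is a $C^1$-small perturbation of $\Sigma$. Being a compact spacelike surface in the globally hyperbolic spacetime $\Omega_\rho$, it is automatically a Cauchy surface, as used elsewhere in the paper. Future-convexity (rather than past-convexity) is inherited from $\Sigma$ because the perturbation is small. And $\Sigma_{\varepsilon}\to\Sigma$ in the Hausdorff sense since the local graph functions converge uniformly. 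The main obstacle is the middle step: making the smoothing simultaneously $\pi_1S$-equivariant and convexity-preserving, i.e.\ reconciling the fact that the natural mollification lives in affine charts while the deck group acts only projectively; this is exactly what forces the partition-of-unity patching together with the $C^2$-control on chart transitions and the second-order agreement of the local models on overlaps.
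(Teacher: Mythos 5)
The paper does not give an argument for this lemma at all: it is quoted directly from Bonsante--Schlenker, cited as \cite[Lemma 4.2]{BS4}. So your attempt is necessarily a different route from the paper's.

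There is, however, a genuine gap in your patching step, and it is exactly the place you flag as the ``main obstacle''. You mollify each local convex graph function $u_j$ in its own affine chart and then assemble via a partition of unity, claiming global convexity because (i) ``the local models agree to second order on overlaps'' and (ii) ``the transition maps are $C^2$-close to affine''. Neither holds. For (i): the unmollified $u_j$ do describe the same surface, but $u_j^\varepsilon:=u_j*\phi_\varepsilon+\varepsilon q_j$ and $u_i^\varepsilon$ are produced by convolutions in different charts and with different quadratic corrections $q_j$, and mollification does not commute with the nonlinear (projective) transition maps; moreover a general future-convex Cauchy surface is only $C^0$, so ``second order agreement'' is not even defined. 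For (ii): shrinking the $W_j$ does not make the fixed projective transitions uniformly $C^2$-close to affine on the $W_j$ unless the $W_j$ are made arbitrarily small, and then the weighted combination $\sum_j\chi_j v_j$ is precisely the kind of thing that destroys convexity (a partition-of-unity average of convex functions is not convex, since the $\chi_j$ are not affine). Your alternative phrasing, ``boundary of the convex hull of the local models'', does not repair this: the convex hull of finitely many smooth surface pieces that disagree on overlaps has edges and flat faces and is not even $C^1$. So the step from local convex mollifications to a single global smooth strictly convex $\theta_\rho$-invariant surface is not established.

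A fix requires a smoothing operation that is both globally convexity-preserving and $\theta_\rho$-equivariant from the start, rather than chart-by-chart. The standard routes are: (a) regularize the support function (equivalently, perform the mollification on the dual convex body, where convexity is preserved by convolution and the equivariance problem can be handled once and for all in $\tilde\Omega^-_\rho$); or (b) first take a $C^{1,1}$ approximation via the timelike-distance sublevel set (which you already mention as a preliminary shrinking, but which is in fact the substantive step) and then iterate a further regularization that stays within the class of convex spacelike surfaces; or (c) simply invoke the cited result of Bonsante--Schlenker as the paper does. Your outer framework --- lifting to $\tilde\Omega_\rho$, working with C-convex graph functions as in Appendix~\ref{sec:intmet}, deducing spacelikeness and the Cauchy property from compactness in a GH spacetime, and Hausdorff convergence from $C^0$-convergence of graphs --- is sound; only the core smoothing-and-patching argument needs to be replaced by one of these.
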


This is shown in~\cite[Lemma 4.2]{BS4} by Bonsante--Schlenker. Next, we need

\begin{lm}
\label{flow}
Let $\Sigma$ be a smooth strictly future-convex Cauchy surface in $\Omega_\rho$ and $L_K \subset \Omega_\rho$ be a future-convex Cauchy surface of constant Gauss curvature $K$ in the strict past of $\Sigma$ with $K$ greater than the supremum of the Gauss curvature of $\Sigma$. Then the domain between $\Sigma$ and $L_K$ in $\Omega_\rho$ is $C^3$-smoothly foliated by strictly future-convex Cauchy surfaces. The foliation is $C^3$-smooth on $\Sigma$, though not necessarily on $L_K$. 
\end{lm}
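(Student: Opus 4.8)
The plan is to construct the foliation by flowing $\Sigma$ toward the past and checking that the resulting surfaces stay strictly future-convex until they meet $L_K$. I would use the standard mean-curvature-type flow, or more precisely the normal flow combined with a PDE argument: parametrize a putative foliation as graphs over $\Sigma$ via the past-directed normal exponential map, and prescribe the evolution so that the leaves interpolate between $\Sigma$ and $L_K$ while preserving convexity. Concretely, since $\Sigma$ is smooth and strictly convex and $L_K$ has constant Gauss curvature $K$ strictly larger than $\sup_\Sigma K_\Sigma$, the two surfaces are disjoint (by the maximum principle, using that $L_K$ lies in the strict past) and bound a compact region $R$ in $\Omega_\rho$. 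The goal is to foliate $R$.

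First I would set up coordinates: fix the $C^{1,1}$ (in fact $C^3$ here, by the smoothness of $\Sigma$ and $L_K$) identification of $R$ with $\Sigma \times [0,1]$ via the normal evolution of $\Sigma$ into the past. The issue is that the naive equidistant foliation need not have convex leaves, so one must instead choose the right family. The cleanest approach is to invoke the general theory of foliations by constant-Gauss-curvature surfaces, or rather the following interpolation: consider, for each $\kappa$ ranging in some interval, the $K$-surface $L_\kappa$; by Theorem~\ref{BBZ} these foliate $\Omega^+_\rho$, and for $\kappa$ large they are $C^3$-close to $\pt^+C_\rho$, for $\kappa$ small they escape to the future. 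The surface $\Sigma$ is trapped between some $L_{\kappa_1}$ and $L_{\kappa_2}$. The region between $\Sigma$ and $L_K$ is then covered by the portions of the $K$-surface foliation lying in the past of $\Sigma$, together with a thin collar near $\Sigma$ that must be filled in separately. For that collar I would use the normal flow of $\Sigma$ itself: since $\Sigma$ is smooth and strictly convex with a strictly positive shape operator, its past-equidistant surfaces $\Sigma_s$ remain strictly convex and smooth for $s$ in a short interval $[0,\epsilon)$ — this is a routine computation with the Riccati equation for the shape operator along the geodesic normal flow in $\A^3$, where the curvature term $-\mathrm{Id}$ enters with the favorable sign for a short past interval. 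One then patches the equidistant collar to the $K$-surface foliation; the patching can be done smoothly because both families are transverse to the normal lines of $\Sigma$ and one can take a convex combination of the defining functions (convexity of each leaf is an open and convex condition on the graph function's Hessian, so a convex combination of convex graphs over $\Sigma$ is again convex, provided the time functions are ordered correctly).

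The main obstacle I expect is the regularity and matching at the two ends: ensuring the foliation is genuinely $C^3$ across the interface between the equidistant collar and the $K$-surface part, and being honest about the fact that it need not be smooth at $L_K$ (where the $K$-surface foliation has only the regularity coming from Theorem~\ref{BBZ} and the CMC foliation of~\cite{BBZ2}, which is analytic in the interior but whose transverse regularity up to a given leaf requires care). Since the statement only claims $C^3$-smoothness of the foliation, and $C^3$-smoothness on $\Sigma$ but not necessarily on $L_K$, the argument should be arranged so that all the delicate regularity is needed only near $\Sigma$, where we have full control from the equidistant flow, while near $L_K$ we accept whatever regularity the ambient $K$-surface foliation provides. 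I would also need to verify that the interpolating leaves remain Cauchy surfaces — but this is automatic since any compact spacelike surface isotopic to $\Sigma$ in $R$ is Cauchy in $\Omega_\rho$ — and that they remain in the strict past of $\Sigma$ and future of $L_K$, which follows from the ordering of the time functions. Finally I would record that the construction gives exactly the input needed for Lemma~\ref{expand}, namely a $C^2$ (indeed $C^3$) foliation by future-convex Cauchy surfaces of the region between $\Sigma$ and $L_K$.
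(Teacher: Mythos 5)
Your approach is genuinely different from the paper's. The paper does not construct the foliation by patching together an equidistant collar near $\Sigma$ with part of the $K$-surface foliation. Instead it dualizes: it passes to the dual surfaces $\Sigma^*$ and $L_K^*$ (projective polarity with respect to the anti-de Sitter quadric), which are smooth past-convex Cauchy surfaces whose Gauss curvatures are the reciprocals of those of $\Sigma$ and $L_K$. The hypothesis that $K > \sup_\Sigma K_\Sigma$ becomes $K^* = 1/K < \inf_{\Sigma^*} K_{\Sigma^*}$, which is exactly the setting of Gerhardt's Gauss curvature flow~\cite{Ger2}: the flow starting from $\Sigma^*$, with $L_K^*$ as a lower barrier, exists for all time and converges to the unique $K^*$-surface, which by the maximum principle of~\cite{BBZ} is $L_K^*$. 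Dualizing back yields a single, globally defined, smooth foliation of the region between $\Sigma$ and $L_K$. This buys a clean global construction with built-in regularity coming from parabolic PDE theory, and it explains the precise $C^3$ (rather than $C^\infty$) claim: the flow is $C^4$ in Gerhardt's framework and one loses a derivative under polarity, with less control as the flow approaches the limiting $K$-surface.

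There is a genuine gap in your patching step. You assert that future-convexity of a surface written as a normal-exponential graph over $\Sigma$ is a convex condition on the Hessian of the graph function, so that a convex combination of two convex graphs is again convex. That is not justified. Over a curved reference surface $\Sigma \subset \A^3$ the second fundamental form of the graph $\{\exp_{-u(p)T(p)}(p)\}$ is not simply $\nabla^2 u$ plus a positive-definite multiple: it carries lower-order terms in $u$, $\nabla u$, the shape operator of $\Sigma$, and the ambient sectional curvature, and these enter nonlinearly. So the set of $u$ with future-convex graph is not obviously closed under convex combination, and you would need a genuine computation (or a different interpolation scheme with a priori convexity bounds) to make the patch work. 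A second, softer issue is that the ``thin collar'' you describe near $\Sigma$ is not a product collar in general: there is typically a positive range of $\kappa$ for which $L_\kappa$ interleaves with $\Sigma$ (partly in its past, partly in its future), so the transition region between the equidistant family and the $K$-surface family has nontrivial geometry that the proposal does not resolve. The paper's duality-plus-Gerhardt route sidesteps both difficulties because it never has to match two separately constructed families.
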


\begin{proof}
Let $\Sigma^* \subset \Omega$ be the dual surface. To obtain it, one considers the preimage $\tilde\Sigma \subset \tilde\Omega_\rho$, defines the dual surface $\tilde\Sigma^*$ as the set of the dual points to the supporting planes to $\tilde\Sigma$ and considers the $\theta_\rho$-quotient $\Sigma^*$ of $\tilde\Sigma^*$. Then $\Sigma^*$ is a smooth strictly past-convex Cauchy surface, whose Gauss curvature at a point is reciprocal to the Gauss curvature at the corresponding point of $\Sigma$, see~\cite[Section 11]{BBZ}. Similarly, one obtains the dual surface $L_K^* \subset \Omega_\rho$. It follows that it is a past-convex $K^*$-surface of curvature $K^*=1/K$.

Now we claim that there exists a $C^4$-smooth foliation of the domain between $\Sigma^*$ and $L^*_{K}$ by strictly past-convex Cauchy surfaces. That follows from results of Gerhardt~\cite{Ger2}. Namely, $K^*$ is less than the infimum of the Gauss curvature of $\Sigma^*$. In~\cite{Ger2} the author describes a curvature flow that starts from $\Sigma^*$, evolves into the past, exists for all time and converges to a surface of constant curvature $K^*$. For the long-time existence and convergence of the flow, however, it is required to know that there exists a lower barrier, i.e., a smooth Cauchy surface in the past of $\Sigma^*$, whose supremum of the curvature is at most $K^*$. We use $L_K^*$ for this purpose. Due to the maximum principle~\cite[Corollary 4.7]{BBZ}, the past-convex surface of curvature $K^*$ is unique in $\Omega_\rho$, so the flow converges to $L_{K}^*$. By dualizing the flow, we construct the desired flow from $\Sigma$ to $L_K$. 
\end{proof}

\begin{proof}[Proof of Lemma~\ref{comparison}.]
Pick $\gamma \in \pi_1S$. Due to Lemma~\ref{smoothing}, $\Sigma$ can be approximated in the Hausdorff sense by smooth strictly future-convex surfaces. Due to Lemma~\ref{lensystconv}, for arbitrary $\e>0$ there is such a surface $\Sigma'$ and a curve $\psi: [0,1] \ra \Sigma'$ such that $\len(\psi)\leq l_\Sigma(\gamma)+\e$. Additionally, we can pick $\Sigma'$ in the future of $\Sigma_0$. Due to Theorem~\ref{BBZ}, there exists a $K$-surface $L$ in the strict past of $\Sigma_0$. By Lemma~\ref{flow}, there exists a $C^3$-smooth foliation of the domain between $\Sigma'$ and $L$ by strictly future-convex Cauchy surfaces. Let $\chi: [0,1]\ra \Sigma_0$ be the projection of $\psi$ to $\Sigma_0$ along the normal flow of the foliation. By Lemma~\ref{expand}, $\len(\chi)\leq\len(\psi)\leq l_\Sigma(\gamma)+\e$.
Because $\e$ is arbitrary, we obtain the desired result. 
\end{proof}
\end{appendices}

\bibliographystyle{abbrv}
\bibliography{polyhedralAdS}

\end{document}